\PassOptionsToPackage{dvipsnames}{xcolor}
\documentclass{amsart}

\usepackage{zref-clever}
\usepackage{relsize}
\usepackage{amsmath}
\usepackage{amssymb}
\usepackage{amsthm}
\usepackage{adjustbox}
\usepackage{mathrsfs}
\usepackage{subcaption}

\usepackage{amsfonts}%
\usepackage{float}
\usepackage{paralist}
\usepackage{mathtools}
\usepackage{pgfplots}
\usepgfplotslibrary{polar}
\usepackage{xparse}
\usepackage{rotating}
\usepackage[makeroom]{cancel}
\usepackage{euscript}

\allowdisplaybreaks

\usepackage[normalem]{ulem}

\makeatletter
\newcommand*{\saved@uline}{}
\let\saved@uline\uline

\newcommand*{\mathuline}{%
  \mathpalette{\math@uline\saved@uline}%
}
\newcommand*{\math@uline}[3]{%
  \mbox{#1{$#2#3\m@th$}}%
}

\renewcommand*{\uline}{%
  \relax
  \ifmmode
  \expandafter\mathuline%
  \else
  \expandafter\saved@uline%
  \fi
}
\makeatother

\usepackage{enumitem}
\setlist[itemize,1]{label=--}
\setlist[itemize,2]{label=+}
\setlist[itemize,3]{label=$\bullet$}
\setlist[itemize,4]{label=$\circ$}

\setlist[enumerate,1]{label=(\roman*)}

\setlist{nosep}

\usepackage[
  style=alphabetic-verb,
  backref=true,
  backend=biber,
  hyperref=true,
  giveninits=true,
  opcittracker=true,
  maxbibnames=99,
  maxalphanames=4
]{biblatex}

\AtEveryBibitem{
  \clearfield{urlyear}
  \clearfield{urlmonth}
}

\AtBeginBibliography{\small}

\renewbibmacro*{pageref}{%
  \iflistundef{pageref}
  {}
  {%
\printtext{\addperiod\space $\uparrow${\printlist[pageref][-\value{listtotal}]{pageref}}}}}

\usepackage[dvipsnames]{xcolor}
\definecolor{cite}{HTML}{11871E}
\definecolor{url}{HTML}{698996}
\definecolor{link}{HTML}{912F1B}
\usepackage[pdfencoding=unicode, colorlinks=true, linkcolor=link, citecolor=cite, urlcolor=url, linktocpage]{hyperref}
\usepackage[hyphenbreaks]{breakurl}
\usepackage{xurl}
\hypersetup{breaklinks=true}
\hypersetup{final}

\usepackage[protrusion=true, expansion=true]{microtype}

\usepackage{iftex}
\ifLuaTeX
\usepackage{fontspec}
\usepackage[math-style=TeX]{unicode-math}

\directlua{
  fonts.handlers.otf.addfeature {
    name = "th_ligature",
    type = "ligature",
    data = {
      ["T_h"] = { "T", "h" }
    }
  }
}

\DeclareMathAlphabet{\eur}{U}{zeus}{m}{n}
\renewcommand{\mathcal}[1]{\eur{#1}}

\else
\usepackage[T1]{fontenc}
\usepackage[osf]{XCharter}
\usepackage[xcharter, scaled=1.07]{newtxmath}

\usepackage[
  cal=euler,%
  scr=euler,
  bb=esstix
]{mathalfa}
\fi

\usepackage{biolinum}

\renewcommand{\mathsf}[1]{\text{\normalfont\sffamily#1}}

\usepackage[
  letterpaper,
  twoside=true,
  textheight=22cm,
  textwidth=15cm,
  marginparsep=0.75cm,
  marginparwidth=2.5cm,
  heightrounded,
  centering
]{geometry}

\usepackage{pgf,tikz,nicematrix}
\usepackage{tikz-3dplot}
\usepackage{tkz-euclide}
\usepackage{wasysym}
\usepackage{tqft}

\newcommand{\drawGrid}[2]{%
  \pgfmathsetmacro\xbound{#1-1}
  \pgfmathsetmacro\ybound{#2-1}
  \foreach \yn in {0, ..., \ybound}{
    \foreach \xm in {0, ..., \xbound}{
      \path (\xm,-\yn) node (\yn\xm) {};
      \draw[fill=black] (\xm,-\yn) circle (0.04);
    }
  }
}

\NewDocumentCommand{\drawLine}{mmO{0.6}}{
  \draw[thickPath, double distance=#1,double=#2,draw=#2,opacity=#3]
}

\tikzstyle{thickPath} = [line cap=round,rounded corners=0.001pt]

\tikzset{toprule/.style={%
    execute at end cell={%
      \draw [line cap=rect,#1] (\tikzmatrixname-\the\pgfmatrixcurrentrow-\the\pgfmatrixcurrentcolumn.north west) -- (\tikzmatrixname-\the\pgfmatrixcurrentrow-\the\pgfmatrixcurrentcolumn.north east);%
    }
  },
  bottomrule/.style={%
    execute at end cell={%
      \draw [line cap=rect,#1] (\tikzmatrixname-\the\pgfmatrixcurrentrow-\the\pgfmatrixcurrentcolumn.south west) -- (\tikzmatrixname-\the\pgfmatrixcurrentrow-\the\pgfmatrixcurrentcolumn.south east);%
    }
  }
}

\usetikzlibrary[patterns]
\usetikzlibrary{3d, shapes.geometric}
\usetikzlibrary{arrows}
\usetikzlibrary{calc}
\usetikzlibrary{cd}
\usetikzlibrary{decorations.markings}
\usetikzlibrary{decorations.pathmorphing}
\usetikzlibrary{decorations.pathreplacing}
\usetikzlibrary{decorations.text}
\usetikzlibrary{knots}
\usetikzlibrary{matrix}
\usetikzlibrary{shapes.misc}
\usetikzlibrary{arrows, arrows.meta, positioning, calc}
\usetikzlibrary{backgrounds}
\tikzcdset{arrow style=tikz, diagrams={>={Straight Barb[scale=0.8]}}}
\tikzcdset{shift left/.default=0.38ex, shift right/.default=0.38ex}
\tikzset{>={Straight Barb[scale=0.8]}}

\tikzstyle{arrow} = [-{Straight Barb[scale=0.8]}, line width=0.4mm]

\makeatletter
\AddToHook{cmd/@makefntext/before}[zref-clever-footnotes]{%
  \zcsetup{currentcounter=\@mpfn}%
}
\makeatother

\NewDocumentCommand{\newzctheorem}{mO{subsubsection}mO{#3s}}{
  \newtheorem{#1}[#2]{#3}
  \AddToHook{env/#1/begin}{%
    \zcsetup{countertype={
      #2=#1}
    }
  }
  \zcRefTypeSetup{#1}{
    Name-sg= #3 ,
    Name-pl= #4
  }
}

\newcounter{introThmCounter}

\zcsetup{cap=true,
  nameinlink=false,
  lastsep = {,\space and\space}
}

\zcRefTypeSetup{section}{
  Name-sg={\S},
  name-sg={\S},
  Name-pl={\S\S},
  name-pl={\S\S},
}

\zcRefTypeSetup{equation}{
  Name-sg=,
  Name-pl=
}

\zcRefTypeSetup{item}{
  Name-sg=,
  Name-pl=
}

\newzctheorem{introThm}[introThmCounter]{Theorem}

\NewDocumentCommand{\setupTheoremEnv}{O{subsubsection}}{
  \newzctheorem{thm}[#1]{Theorem}
  \newzctheorem{lem}[#1]{Lemma}
  \newzctheorem{prop}[#1]{Proposition}
  \newzctheorem{cor}[#1]{Corollary}[Corollaries]

  \theoremstyle{definition}
  \newzctheorem{const}[#1]{Construction}
  \newzctheorem{defn}[#1]{Definition}
  \newzctheorem{ex}[#1]{Example}
  \newzctheorem{exs}[#1]{Examples}[Examples]
  \newzctheorem{nex}[#1]{Non-example}
  \newzctheorem{nexs}[#1]{Non-examples}[Non-examples]
  \newzctheorem{ntt}[#1]{Notation}
  \newzctheorem{rem}[#1]{Remark}
  \newzctheorem{rex}[#1]{Running Example}
  \newzctheorem{rmk}[#1]{Remark}
  \newzctheorem{conv}[#1]{Convention}
}

\setupTheoremEnv[subsubsection]

\def\op{{\mathsf{op}}}

\def\adq{\mathsf{adq}}

\def\bb{\mathbb}

\def\bP{\bb{P}}

\DeclareMathOperator{\constant}{\mathsf{const}}

\def\cart{\mathsf{cart}}

\def\Cat{\sf{Cat}}

\def\cC{\scr{C}}

\def\Corr{\sf{Corr}}

\def\dec{\mathsf{dec}}
\def\En{\mathsf{E}}

\def\eq{\mathsf{eq}}
\def\Ex{\mathsf{Ex}}

\DeclareMathOperator{\fr}{\sf{fr}}
\DeclareMathOperator{\Fun}{\mathsf{Fun}}
\DeclareMathOperator{\fv}{\mathsf{fv}}

\DeclareMathOperator{\Hom}{\mathsf{Hom}}

\DeclareMathOperator{\id}{\mathsf{id}}

\DeclareMathOperator{\Irr}{\mathsf{Irr}}

\newcommand{\lax}{\mathsf{lax}}
\def\lcart{\mathsf{lcart}}
\def\lex{\mathsf{lex}}

\DeclareMathOperator{\lr}{\mathsf{lr}}
\DeclareMathOperator{\lv}{\mathsf{lv}}
\def\main{\mathsf{main}}
\DeclareMathOperator{\Map}{\mathsf{Map}}
\let\max\undefined
\DeclareMathOperator{\max}{\mathsf{max}}
\def\sfmid{\mathsf{mid}}
\let\min\undefined
\DeclareMathOperator{\min}{\mathsf{min}}
\DeclareMathOperator{\mix}{\mathsf{mix}}
\def\msSet{\sf{msSet}}

\def\NN{\mathbb{N}}
\DeclareMathOperator{\Ob}{\mathsf{Ob}}

\def\oplax{\mathsf{oplax}}
\def\Oriental{\mathbb{O}}
\DeclareMathOperator{\Parity}{\mathsf{Par}}
\DeclareMathOperator{\Power}{\mathbb{P}}

\DeclareMathOperator{\Rel}{\mathsf{Rel}}
\DeclareMathOperator{\Segal}{\mathsf{Se}}
\def\sc{\mathsf{sc}}
\def\scr{\EuScript}
\DeclareMathOperator{\sd}{\mathsf{sd}}
\DeclareMathOperator{\SD}{\mathsf{SD}}
\def\Seg{\sf{Seg}}
\def\seg{\mathsf{seg}}
\def\Set{\sf{Set}}
\DeclareMathOperator{\SParity}{\mathcal{P}\mathsf{ar}}
\DeclareMathOperator{\Special}{\mathsf{Spe}}
\def\sf{\mathsf}

\def\Span{\sf{Span}}

\def\Spc{\sf{Spc}}

\DeclareMathOperator{\St}{\mathsf{St}}

\def\tDelta{{\sf{t}\Delta}}
\DeclareMathOperator{\tmod}{mod}
\DeclareMathOperator{\tr}{\mathsf{tr}}
\DeclareMathOperator{\Tw}{\mathsf{Tw}}

\def\uSpan{\uline{\Span}}

\newcommand{\teq}{\addtocounter{subsubsection}{1}\tag{\thesubsubsection}}

\DeclareMathOperator*\colim{colim}
\DeclareMathOperator*\hocolim{hocolim}

\makeatletter
\ExplSyntaxOn
\NewDocumentCommand { \LeftRightArrow } { m }
{
  \tl_gput_right:Nx \g_nicematrix_code_after_tl
  { \niru__LeftRightArrow:nnn { \arabic { iRow } } { \arabic { jCol } } { #1 } }
}

\cs_new_protected:Nn \niru__LeftRightArrow:nnn
{ \tikz \draw [-LaTeX,<->] (#1.5-|#2) -- (#1.5-|\int_eval:n{#2+#3}) ; }
\ExplSyntaxOff
\makeatother

\newcommand{\lacuna}[2]{%
  \small
  \begin{tabular}[t]{*{#1}{c}}#2%
  \end{tabular}%
}

\newcommand{\lacunaMove}[3]{%
  \begin{table}[h!]
    \small
    \setlength{\tabcolsep}{0.35em}
    \begin{NiceTabular}{lll}
      & #1 \\
      #2
      &
      \LeftRightArrow{1}
      &
      #3
    \end{NiceTabular}
\end{table}}

\title{The $(\infty,\infty)$-Category of Spans}
\author{Jonte G\"odicke}
\address{Max Planck Institute for Mathematics, Vivatgasse 7, 53111 Bonn, Germany}
\email{godicke@mpim-bonn.mpg.de}

\author{Quoc P. Ho}
\address{Department of Mathematics, The Hong Kong University of Science and Technology (HKUST), Clear Water Bay, Hong Kong}
\email{quoc.ho@ust.hk}

\author{Walker H. Stern}
\address{Department of Mathematics, Technical University of Munich, Munich, Germany}
\email{walker.stern@tum.de}

\date{\today}

\keywords{Spans, correspondences, $(\infty, n)$-categories, complicial sets, complicial spaces, categorification.}

\begin{document}
\maketitle
\begin{abstract}
  In this paper, we construct the $(\infty,\infty)$-category $\Span_\infty(\scr{C})$ of spans, also known as correspondences, in any given $(\infty,1)$-category $\scr{C}$ with finite limits. This yields new models for the span $(\infty,n)$-categories for $n \in \mathbb{N} \cup \{\infty\}$. We characterize the mapping $(\infty, n-1)$-categories in these $(\infty,n)$-categories, and thereby verify that our model agrees with other models for spans. Finally, and most importantly, we prove a new universal property, characterizing functors \emph{into} span $(\infty,n)$-categories, which specializes to the well-known relation with the twisted arrow categories in dimension $1$. These results will be used in the sequels to construct higher analogs of the classical Hall algebra construction, where ``higher'' refers to both higher categorical and ``higher monoidal'' structures, i.e., $\En_k$-algebras in $(\infty, n)$-categories for $n, k>1$.
\end{abstract}

\tableofcontents

\section{Introduction}

\subsection{Motivation}
This is the first in a series of papers whose overarching purpose is to explore lax phenomena in \emph{higher categorical settings} and their roles in constructing and studying interesting topological quantum field theories (TQFTs), especially those of \emph{geometric representation theoretic} origin and those relevant to \emph{low dimensional topology} and its \emph{categorification program}. Here, ``higher-categorical'' means $(\infty,n)$-categorical for $n\geq 2$, while ``lax'' refers to structures in which certain morphisms required to be equivalences in the ordinary, non-lax setting are no longer assumed to be so.

The interplay between geometric representation theory and TQFTs, both as a rich source of TQFTs and as a field that benefits from their philosophy is well established, as witnessed by the flourishing of ideas within and inspired by the various forms of the geometric Langlands program~\cite{ben-zvi_betti_2016,ben-zvi_relative_2023,GLC_proof_2024-1,GLC_proof_2024-2,GLC_proof_2024-3,GLC_proof_2024-4,GLC_proof_2024-5,ben-zvi_integral_2010,li_functions_2024,ho_eisenstein_2022,ho_graded_2025}. Recent developments have further demonstrated the power and necessity of deepening this perspective, benefitting from more recent advances in the theory of even higher (i.e., $(\infty,n)$-) and enriched categories~\cite{gaitsgory_toy_2022,ben-zvi_potent_2025,gaitsgory_local_2025,gaitsgory_applications_2025,scholze_geometry_2026}. Central to this perspective are $6$-functor formalisms and categories of correspondences, or spans, as we call them here, which were originally envisioned by Grothendieck's school and have recently received renewed attention in geometric representation theory~\cite{liu_enhanced_2017,gaitsgory_study_2017,stefanich_higher_2020,cnossen_universality_2025,scholze_six-functor_2026,fisher_exchange_2026}, not least because fundamental objects such as Hecke and Hall algebras and categories, together with categorical manipulations and computations involving them, arise elegantly and efficiently by linearizing the corresponding algebraic structures in the $(\infty,1)$-category of spans via a $6$-functor formalism~\cite{mirkovic_geometric_2007,ho_eisenstein_2022,porta_two-dimensional_2022,ho_graded_2025}.

Although our approach is geometrically inspired, it is distinguished from much existing work in categorical and geometric representation theory as it is motivated by the study of categorified link invariants, a subject that, with notable exceptions including~\cite{webster_geometric_2017,oblomkov_soergel_2020,ho_graded_2025,ho_relative_2026}, has largely developed through algebraic rather than geometric methods. Our point of departure is that, while lax structures are interesting in their own right, genuinely new non-lax algebraic structures can arise most naturally from lax geometric structures: the lax structure supplies all the necessary coherent \emph{data}, and the desired structure is recovered at the final stage by checking the \emph{property} that certain morphisms are invertible.

If ordinary, non-lax geometric structures live in $(\infty,1)$-categories of spans, their lax counterparts naturally inhabit higher categories of spans. It is well established (see, for example,~\cite{stefanich_higher_2020,haugseng_iterated_2018}) that for every $(\infty,1)$-category $\mathcal{C}$ with finite limits and every $n\in\mathbb{N}$, there is an $(\infty,n)$-category $\Span_n(\mathcal{C})$ whose $k$-morphisms, for $k\leq n$, are iterated spans in $\mathcal{C}$. We carry this construction to its limit by considering the $(\infty,\infty)$-category $\Span_\infty(\mathcal{C})$, whose objects are those of $\mathcal{C}$, whose morphisms are spans, whose $2$-morphisms are spans between spans, and so on. More formally, one may set
\[
  \Span_\infty(\mathcal{C})
  = \colim_{n\in \mathbb{N}} \Span_n(\mathcal{C}),
  \teq\label{eq:tentative_def_span_infty_colim}
\]
where the colimit is taken in the $(\infty,1)$-category $\Cat_\infty$ of $(\infty,\infty)$-categories.

Constructing lax structures in $\Span_\infty(\mathcal{C})$ is delicate, however, because of the immense amount of coherence data involved. It therefore requires a deeper understanding of how to \emph{map into} $\Span_\infty(\mathcal{C})$, as opposed to \emph{mapping out of} categories of spans. The latter problem, which in special cases corresponds to the construction of 6-functor formalisms, is by far much better understood.

Our central result, \zcref{intro:thm:functor_UP_inftycat} below, addresses precisely this problem by revealing a novel description of the $(\infty,\infty)$-category of functors into $\Span_\infty(\mathcal{C})$, equipped with oplax natural transformations (see \zcref{defn:Fun_oplax}). This structure is particularly well suited to constructing lax ``compound algebraic structures'' whose constituent parts interact only laxly, a feature we will exploit fully in our forthcoming work; see also \zcref{subsec:applications}.

\begin{introThm}[\zcref{thm:functor_UP_inftycat,rmk:span_infty_colim}] \label{intro:thm:functor_UP_inftycat}
  Let $\scr{C}$ be an $(\infty, 1)$-category with finite limits and $\scr{D}$ an $(\infty, \infty)$-category. Then, there is an equivalence
  \[
    \Fun^{\oplax}(\scr{D},\Span_\infty(\scr{C}))\simeq \Span_\infty(\Fun^{\cart}(\SD(\scr{D}),\scr{C}))
  \]
  natural in $\scr{D}$ and $\scr{C}$. Here, $\cart$ denotes the condition that certain diagrams are required to be limit and $\SD: \Cat_\infty \to \Cat_1$ denotes the functor of taking the barycentric subdivision defined via left Kan extending the usual barycentric subdivision functor on simplicial sets (see \zcref{defn:sd_SD_LKE} for a precise definition).
\end{introThm}

From the colimit description in \zcref{eq:tentative_def_span_infty_colim}, \zcref{intro:thm:functor_UP_inftycat} is highly nontrivial, as it concerns functors into a colimit. In this paper, we circumvent this problem completely by introducing a new model of $\Span_\infty(\mathcal{C})$, using complicial sets as a model for $(\infty,\infty)$-categories (note that a sketch of this complicial construction appeared in~\cite{dyckerhoff_cyclic_2026}). \zcref{intro:thm:functor_UP_inftycat} is then proved within this new model and \zcref{eq:tentative_def_span_infty_colim} is obtained by comparing with existing models.

In what follows, we begin in \zcref{subsec:applications} by outlining the applications we have in mind, then explain in \zcref{subsec:ideas} the ideas motivating our construction of $\Span_\infty(\mathcal{C})$, and finally summarize in \zcref{subsec:main_results} the paper's main results and what go into their proofs.

\subsection{Conventions}
Throughout, we follow the conventions of \zcref{subsec:convention}, including the ``implicit $\infty$'' convention whenever no confusion is likely.

\subsection{Applications} \label{subsec:applications}
A driving force for much of the research in low-dimensional topology and quantum algebra over the past few decades, the categorification program of Crane and Frenkel \cite{crane_four-dimensional_1994} seeks to construct $4$-dimensional TQFTs, conjectured to encode categorified link invariants and powerful invariants of $4$-manifolds. Despite the three decades of sustained work and a wealth of interesting examples, including the pioneering works on categorified quantum groups and link homology theories~\cite{rouquier_categorification_2006,khovanov_matrix_2008,khovanov_matrix_2008-1,khovanov_categorification_2010,khovanov_diagrammatic_2009,khovanov_diagrammatic_2010,webster_knot_2017,stroppel_categorification_2022}, we still know very little about how to put what we know together coherently to construct these highly desirable TQFTs. One of the main difficulties has to do with the immense coherence data included in building such a gadget.

The first concrete advance in this direction is the pioneering work~\cite{liu_braided_2024}, which, roughly speaking, assembles module categories over type-$A$ Hecke categories in their Soergel-bimodule realizations (known to govern many categorified link homology theories) into an $\En_2$-algebra object in $(\infty,2)$-categories. The requisite coherence data are constructed indirectly by obstruction theory: low-dimensional data are supplied by hand, the obstruction to lifting, resp. the space of lifts, is shown to vanish, resp. contractible.

By Dunn's additivity theorem, an $\En_n$-algebra may be viewed as an $n$-fold associative algebra: it carries $n$ associative algebra structures that are homotopically compatible in the appropriate sense, and hence is precisely a compound algebraic structure of the kind alluded to above. For $n=2$, an $\En_2$-algebra is, roughly speaking, a braided algebra, in which multiplications in opposite orders are related by a braiding equivalence together with higher coherences.\footnote{An $\En_2$-algebra in categories is precisely a braided monoidal category.} Its lax analog is obtained by retaining the braiding morphisms and coherence data while no longer requiring that these morphisms are equivalences.

In our forthcoming work~\cite{godicke_lax_nodate}, we combine the technology developed here, especially \zcref{intro:thm:functor_UP_inftycat}, with an $n$-fold extension of the $2$-Segal formalism of~\cite{galvez-carrillo_decomposition_2018,dyckerhoff_HigherSegalSpaces_2019,stern_2-segal_2021,godicke_infty-category_2024} to construct lax $\En_n$-algebras in $\Span_\infty(\mathcal{C})$, for instance when $\mathcal{C}$ is the category of stacks, taking as input an iterated version of Waldhausen's $S_\bullet$ construction. Linearizing these objects via a variant of the categorified sheaf theory first appeared in~\cite{stefanich_categorification_2025} then yields lax $\En_n$-algebras in higher categories, providing higher analogs of the classical Hall algebra construction. Here, ``higher'' refers to both higher categorical and ``higher monoidal'' structures, namely $\En_n$-algebras for $n>1$, with the case $n=1$ being the classical story.

Applied to the category of representations of the one-vertex quiver with no loop and combined with the theory of graded sheaves developed in~\cite{ho_revisiting_2025}, this construction recovers the $\En_2$-algebra object in $(\infty,2)$-categories of~\cite{liu_braided_2024}, with all the coherences geometrically encoded. But our procedure is a general Hall-algebraic construction which can handle a broad class of examples, among them general quivers, which interacts profoundly with the theory of categorified quantum groups. Exploring this circle of ideas is a central aim of our long-term project.

While the above are some of the main motivations for us, we believe that this is quite a ubiquitous phenomenon and expect that many other interesting structures can be obtained in a similar manner.

\subsection{The ideas} \label{subsec:ideas}
We will now briefly describe the ideas that go behind our construction. At a first approximation, an $\infty$-category is a kind of structure where we have objects, morphisms, morphisms between morphisms, and so on, ad infinitum, where essentially nothing is required to be invertible. At least intuitively, such a structure is determined by the collection of laxly and strongly commutative diagrams whose shapes are given by the simplices. Below, we will give an intuitive idea for what this means in low dimensions for the $\infty$-category of spans. In several cases, we illustrate both the shape of the lax simplex in $\Span_\infty(\mathcal{C})$ (on the left) and what it actually looks like in terms of spans (on the right).

\subsubsection{Dimensions $0$ and $1$} Let $\mathcal{C}$ be a category with finite limits. Then, the $\infty$-category $\Span_\infty(\mathcal{C})$ of spans in $\mathcal{C}$ has the same objects as $\mathcal{C}$. Given $x, y\in \mathcal{C}$, a morphism $f$ from $x$ to $y$ in $\Span_\infty(\mathcal{C})$ is an object $f\in \mathcal{C}$ equipped with maps to $x$ and $y$.
\[
  \begin{tikzcd}
    x \ar{r}[description]{f} & y
  \end{tikzcd}\hspace{2em}\leftrightsquigarrow\hspace{2em}
  \begin{tikzcd}[sep=small]
    & f \ar{dl} \ar{dr} \\
    x && y
  \end{tikzcd}
\]
Note that this is the same as a functor $\sd(\Delta^1) \to \mathcal{C}$, where $\sd$ is the functor of taking the barycentric subdivision. See \zcref{ntt:subdivision} for a quick review.

\subsubsection{Dimension $2$}
A $2$-morphism between morphisms $f$ and $g$ from $x$ to $y$ in $\Span_\infty(\mathcal{C})$ is then given by a span $h$ between $f$ and $g$, compatible with all other data, visualized below.
\[
  \begin{tikzcd}
    & f \ar{dl} \ar{dr} \\
    x & h \ar{u} \ar{d} & y \\
    & g \ar{ul} \ar{ur}
  \end{tikzcd}
\]
A lax commutative triangle in $\Span_\infty(\mathcal{C})$, i.e., a functor $\Delta^2_\flat \to \Span_\infty(\mathcal{C})$, takes the following form,
\[
  \begin{tikzpicture}[scale=0.7]
    \begin{scope}[xshift=-4cm]
      \coordinate (v0) at (-3,0);
      \coordinate (v2) at (3,0);
      \coordinate (v1) at (0,4.24);

      \coordinate (m01) at ($0.5*(v0) + 0.5*(v1)$);
      \coordinate (m02) at ($0.5*(v0) + 0.5*(v2)$);
      \coordinate (m12) at ($0.5*(v1) + 0.5*(v2)$);
      \coordinate (m012) at ($0.333*(v0) + 0.333*(v1) + 0.333*(v2)$);

      \foreach \x/\label in {0/x,1/y,2/z}{
        \path (v\x) node (v\x) {$\label$};
      };

      \foreach \x/\y/\label in {0/1/f,0/2/h,1/2/g}{
        \path (m\x\y) node (m\x\y) {$\label$};
      };

      \foreach \x/\y/\z/\label in {0/1/2/k}{
        \path (m\x\y\z) node (m\x\y\z) {$\label$};
      };

      \draw[->] (m01) to (v1);
      \draw[->] (m12) to (v2);
      \draw[->] (m02) to (v2);
      \draw[-] (m12) to (v1);
      \draw[-] (m01) to (v0);
      \draw[-] (m02) to (v0);
      \tikzset{myarrow/.tip={_[sep=-1.3pt].To[length=4pt]}}
      \draw[double distance=2pt, -] (m02) -- (m012);
      \draw[double distance=2pt, -myarrow] (m012) -- (v1);
    \end{scope}
    \begin{scope}[xshift=4cm]
      \coordinate (v0) at (-3,0);
      \coordinate (v2) at (3,0);
      \coordinate (v1) at (0,4.24);

      \node (e) at (0,2.7) {$g\times_{y}f$};

      \coordinate (m01) at ($0.5*(v0) + 0.5*(v1)$);
      \coordinate (m02) at ($0.5*(v0) + 0.5*(v2)$);
      \coordinate (m12) at ($0.5*(v1) + 0.5*(v2)$);
      \coordinate (m012) at ($0.333*(v0) + 0.333*(v1) + 0.333*(v2)$);

      \foreach \x/\label in {0/x,1/y,2/z}{
        \path (v\x) node (v\x) {$\label$};
      };

      \foreach \x/\y/\label in {0/1/f,0/2/h,1/2/g}{
        \path (m\x\y) node (m\x\y) {$\label$};
      };

      \foreach \x/\y/\z/\label in {0/1/2/k}{
        \path (m\x\y\z) node (m\x\y\z) {$\label$};
      };

      \draw[blue,->] (m01) to (v1);
      \draw[blue,->] (m12) to (v1);
      \draw[->] (m12) to (v2);
      \draw[->] (m01) to (v0);
      \draw[->] (m02) to (v0);
      \draw[->] (m02) to (v2);

      \foreach \x/\y/\z in {0/1/2}{
        \draw[blue,->] (m\x\y\z) to (m\x\y);
        \draw[->,violet] (m\x\y\z) to (m\x\z);
        \draw[blue,->] (m\x\y\z) to (m\y\z);
      }

      \draw[red,->] (m012) to (e);
      \draw[->] (e) to (m01);
      \draw[->] (e) to (m12);
    \end{scope}
    \draw[<->,semithick,decorate,
    decoration={zigzag,amplitude=0.7pt,segment length=1.2mm,pre=lineto,pre length=2pt,post=lineto,post length=2pt}] (-1,2) to (1,2);
  \end{tikzpicture}
\]
where the subscript $\flat$ indicates that we do not require the $2$-morphism in the source is an equivalence. One can read this diagram as a $2$-morphism between $h$ and $g\circ f$ (which is computed as $g\times_y f$), realized by the span below.
\[
  \begin{tikzcd}
    & g\times_y f \ar{dl} \ar{dr} \\
    x & k \ar[red]{u} \ar[violet]{d} & z \\
    & h \ar{ul} \ar{ur}
  \end{tikzcd}
\]

By the universal property of fiber products, the object $g\times_{y} f$ and the {\textcolor{red}{red}} morphism are uniquely determined by the {\textcolor{blue}{blue}} subdiagram. This means that we can, at least intuitively, remove $g\times_y f$ from the diagram, and still have the data of a $2$-simplex. We can therefore think of a lax triangle in $\Span_\infty(\scr{C})$ as being determined by a diagram $\sd(\Delta^2) \to \scr{C}$. Whether the $2$-morphisms represented by the unique non-degenerate $2$-simplex is invertible is then determined by whether the {\textcolor{blue}{blue}} \emph{and} {\textcolor{violet}{violet}} diagrams are limit diagrams. This simultaneously encodes the facts that compositions in spans are given by pullbacks and that a span is invertible precisely when both legs are.

\subsubsection{Higher dimensions}
One can correctly guess that the data of a map $\Delta^n_\flat \to \Span_\infty(\mathcal{C})$ is the same as a map $\sd(\Delta^n) \to \mathcal{C}$. Moreover, the top cell being invertible is equivalent to certain diagrams being limit. As we will see, the required limit conditions are precisely the higher Segal conditions (see \zcref{subsec:cubes_and_Segal} for a quick review), also known as the upper and lower Segal conditions in the literature \cite{poguntke_higher_2017}.

Working with the complicial set model of~\cite{verity_weak_2008,riehl_complicial_2018,ozornova_model_2020,loubaton_complicial_2024,loubaton_categorical_2024} (see also \zcref{rmk:weak_complicial_terminology}), this is precisely the approach we take in this paper.

\subsection{The main results} \label{subsec:main_results}
We will now outline the main results of the paper.

\subsubsection{The construction}
Let $\mathcal{C}$ be a $1$-category with finite limits, presented as a quasi-category. In \zcref{sec:fibrancy}, we define $\Span_\infty(\mathcal{C})$ as a decorated simplicial set\footnote{also known as a marked/stratified simplicial set. See also \zcref{ftn:marked_vs_stratified_etc}.}, whose $n$-simplices are given by
\[
  \Span_\infty(\mathcal{C})_n \coloneqq \Hom_{\Set_\Delta}(\sd(\Delta^n), \scr{C}).
\]
Moreover, an $n$-simplex is thin if the corresponding map $\sd(\Delta^n) \to \mathcal{C}$ satisfies certain limit conditions called the higher Segal conditions. We call such a functor \emph{Cartesian}. We note that the notion of a Cartesian functor can also be generalized to the case where $\Delta^n$ is replaced by a more general decorated simplicial set or an $\infty$-category; see \zcref{defn:cartesian_conditions,defn:more_general_cart_conditions}.

The first result of the paper then reads as follows.

\begin{introThm}[\zcref{thm:Span_infty_is_complicial}]
  $\Span_\infty(\mathcal{C})$ is a complicial set and hence, models an $(\infty, \infty)$-category.
\end{introThm}

\subsubsection{A model-independent characterization}

While the definition of $\Span_\infty(\mathcal{C})$ a priori depends on the presentation of $\mathcal{C}$ as a quasi-category, we upgrade our definition to a universal property in $\Cat_\infty$ in \zcref{sec:mapping_space_into_Span}. This universal property is, in fact, a model-independent manifestation of the observation that maps into $\Span_\infty(\scr{C})$ are fundamentally related to Cartesian maps out of barycentric subdivisions.

\begin{introThm}[\zcref{thm:SpanUP_I_internal}] \label{intro:thm:SpanUP_I_internal}
  Let $\scr{C}$ be a category with finite limits and $\scr{D}$ an $\infty$-category. Then, there is an equivalence
  \[
    \Map_{\Cat_\infty}(\scr{D},\Span_\infty(\scr{C}))\simeq \Map^{\cart}_{\Cat_1}(\SD(\scr{D}),\scr{C})
  \]
  natural in $\scr{D}\in \Cat_\infty$. Here, $\cart$ denotes the Cartesian conditions and $\SD: \Cat_\infty \to \Cat_1$ denotes the functor of taking the barycentric subdivision defined via left Kan extending the usual barycentric subdivision functor on simplicial sets (see \zcref{defn:sd_SD_LKE} for a precise definition).
\end{introThm}

As an immediate consequence of the theorem above, in \zcref{cor:upgrade_Span_infty_functor}, we upgrade our construction $\Span_\infty$ to a limit preserving functor
\[
  \Span_\infty: \Cat_1^{\lex} \to \Cat_\infty,
\]
where $\Cat_1^{\lex}$ denotes the category whose objects (resp. morphisms) are categories with finite limits (resp. finite limit preserving functors). From here, we deduce in \zcref{cor:monoidal_structure_Span_infty} that the Cartesian monoidal structure on $\mathcal{C}$ induces a symmetric monoidal structure on $\Span_\infty(\mathcal{C})$.

The main ingredient that goes into \zcref{intro:thm:SpanUP_I_internal} is the fact that a certain functor $\sd(\Delta^n \times \Delta^m) \to \sd(\Delta^n) \times \sd(\Delta^m)$ is a localization, whose proof occupies most of \zcref{sec:mapping_space_into_Span}. Despite being simple-looking, this statement goes through a fiber-wise criterion for localizations for locally Cartesian fibrations and an analysis of the Gray tensor product, both of independent interest.

\begin{rmk}
  The paper \cite{gepner_oriented_2026} appeared as we were completing this manuscript. Of particular relevance is its new model-independent description of $\infty$-categories via \emph{complicial spaces}, which suggests an alternative, more directly model-independent construction of $\Span_\infty(\mathcal{C})$ using the new results. Even along this route, however, most of the present work would remain unchanged: much of the proof of \zcref{intro:thm:SpanUP_I_internal}, the localization, in particular, is used crucially in the subsequent, model-independent sections.
\end{rmk}

\subsubsection{The $\infty$-category of functors}

We now come to the main thrust of the paper, \zcref{sec:mapping_categories_into_Span}, whose main goal is to show that the $\infty$-category of functors into $\Span_\infty(\mathcal{C})$ (with oplax natural transformations, see \zcref{defn:Fun_oplax}) reveals a novel nature of $\Span_\infty$.

\begin{introThm}[\zcref{thm:functor_UP_inftycat}] \label{intro:thm:functor_UP_inftycat_2}
  Let $\scr{C}\in \Cat_1^{\lex}$ and $\scr{D}\in \Cat_\infty$. There is an equivalence
  \[
    \Fun^{\oplax}(\scr{D},\Span_\infty(\scr{C}))\simeq \Span_\infty(\Fun^{\cart}(\SD(\scr{D}),\scr{C}))
  \]
  natural in $\scr{D}$ and $\scr{C}$.
\end{introThm}

The proof of this theorem amounts to transporting a large family of limit conditions across the localization constructed in the proof of \zcref{intro:thm:SpanUP_I_internal}. To organize this process, we develop a graphical calculus of lines on a grid that greatly simplifies reasoning about these conditions.

\subsubsection{Comparison with other models}
Finally, in \zcref{sec:comparison_of_models_and_misc}, we compare our construction with existing models of higher categories of spans. For each $n \geq 1$, let $\Span_{n\frac{1}{2}}(\mathcal{C})$ denote the underlying $(n+1)$-category of $\Span_\infty(\mathcal{C})$, in which morphisms above degree $n+1$ are invertible and $(n+1)$-morphisms are genuine morphisms in $\mathcal{C}$ rather than spans. The works \cite{stefanich_higher_2020,haugseng_iterated_2018} construct, by entirely different methods, higher categories of spans denoted respectively by $(n+1)\Corr(\mathcal{C})$ and $\Span^+_n(\mathcal{C})$, with the same informal description. These two models are known to be equivalent; we prove that ours is equivalent to them.

\begin{introThm}[\zcref{cor:functor_is_equiv_sym_mon}] \label{intro:cor:functor_is_equiv_sym_mon}
  For each $n\geq 2$, we have an equivalence of symmetric monoidal categories $n\Corr(\mathcal{C}) \simeq \Span_{(n-1)\frac{1}{2}} (\mathcal{C})$ compatible with the functors from $\mathcal{C}$.
\end{introThm}

We prove \zcref{intro:cor:functor_is_equiv_sym_mon} inductively, using the universal property of $n\Corr(\mathcal{C})$ established in \cite{stefanich_higher_2020} and expected properties about the $\Hom$-categories of $\Span_{n\frac{1}{2}}(\mathcal{C})$ which we prove along the way. Note also that the corresponding comparison statement about $\Span_n(\mathcal{C})$ follows immediately by taking the underlying $(\infty, n)$-categories of $\Span_{n\frac{1}{2}}(\mathcal{C})$ and $(n+1)\Corr(\mathcal{C})$.

In addition to \zcref{cor:functor_is_equiv_sym_mon}, we establish in~\zcref{sec:comparison_of_models_and_misc} analogs of \zcref{intro:thm:functor_UP_inftycat_2} for the various versions of the $(\infty,n)$-categories of spans, see~\zcref{thm:universal_property_Span_k,thm:universal_property_Span_k_1/2}, recovering and generalizing the results of \cite{haugseng_two-variable_2023} concerning spans.

\begin{introThm}[\zcref{thm:universal_property_Span_k,thm:universal_property_Span_k_1/2}]
  Let $\scr{C}$ be a $1$-category with finite limits and $\scr{D}$ an $\infty$-category. Then, there are natural equivalence
  \begin{align*}
    \Span_n^{\mix}(\Fun^{\cart}(\SD(\tau_n(\scr{D})),\scr{C})) &\simeq \Fun^{\oplax}(\scr{D},\Span_n(\scr{C})), \quad\text{and}\\
    \Span_{n\frac{1}{2}}^{\mix}(\Fun^{\cart}(\SD(\tau_{n+1}(\scr{D})),\scr{C}))&\simeq \Fun^{\oplax}(\scr{D},\Span_{n\frac{1}{2}}(\scr{C})),
  \end{align*}
  where $\tau$ is the truncation functor (see \zcref{defn:truncation_and_friends}) and $\Span^{\mix}$ essentially keeps track of invertible (higher) morphisms in the functor categories (see \zcref{defn:Span_k_mix}).
\end{introThm}

These results are necessarily less streamlined than \zcref{intro:thm:functor_UP_inftycat_2}, since they must explicitly track which morphisms are required to be invertible. Philosophically, this reflects the advantage of working directly in $(\infty,\infty)$-categories, where all morphisms are retained but invertibility may be imposed afterward as a property rather than built into the structure.

\subsection*{Acknowledgements}
During the long gestation of this work, we have greatly benefited from discussions with Fernando Abell\'an, Rune Haugseng, Talak Manum, Pelle Steffens, Claudia Scheimbauer, and Tobias Dyckerhoff. We are particularly indebted to Tim Campion for helping us construct the argument of \zcref{app:loc}; to Tashi Walde for helping us understand his notation and arguments in \cite{walde_higher_2020} and aiding us with some of the proofs in \zcref{sec:fibrancy}; and to David Gepner and Germ\'an Stefanich for helpful conversations surrounding the comparison with other models of spans.

We are grateful to the Hong Kong University of Science and Technology, the University of Hamburg, the Technical University of Munich, and the Max-Planck Institut for their hospitality and support. We also thank the Ludwig-Maximilian University of Munich for (unknowingly) providing us space to work on this project.

During our work on this paper,
J. G\"odicke was partially supported by the Collaborative Research Center - SFB 1624 ``Higher structures, moduli spaces and integrability'' - 506632645,
Q. Ho was partially supported by the Hong Kong RGC GRF grants 16304923 and 16301324,
and W. H. Stern was affiliated with the SFB 1624 ``Higher structures, moduli spaces and integrability'' - 506632645 and employed by the Technical University of Munich.

\section{Background and preliminaries}

In this section, we collect the various background results, definitions, and notations which will be necessary for our work. In particular, we recall \emph{decorated simplicial sets} (called \emph{stratified simplicial sets} in \cite{verity_weak_2008}), the model structure on decorated simplicial sets which models $(\infty,n)$-categories by \cite{loubaton_complicial_2024}, the Gray tensor product of complicial sets, and a density theorem for $(\infty,\infty)$-categories. We also recall the \emph{higher Segal conditions} of \cite{dyckerhoff_HigherSegalSpaces_2019} and \cite{poguntke_higher_2017}, as well as some useful properties and descriptions from \cite{walde_higher_2020}.

\subsection{Higher categories and notational conventions} \label{subsec:convention}

In this section, we briefly fix the notation and terminology we will use for $(\infty,n)$-categories.

We adopt the ``implicit $\infty$'' convention. Namely, unless confusion is likely to occur, the term $n$-category is used to refer to an $(\infty, n)$-category, and moreover, the term strict $n$-category is used to refer to a strict $n$-category in the usual sense. In particular, the term $\infty$-category is used to refer to an $(\infty, \infty)$-category. When $n=1$, we usually drop it from the notation together and use the term category instead of $1$-category.

\begin{ntt}
  For $n\in \NN\cup \{\infty\}$, we denote by $\Cat_n$ the $(\infty,1)$-category of $(\infty,n)$-categories. We will sometimes use the notation $\Spc$ for $\Cat_0$.
\end{ntt}

\begin{rmk}
  When we resort to model categorical arguments we will typically view $\Cat_0$ as presented by the Kan--Quillen model structure (\cite{quillen_homotopical_1967}), $\Cat_1$ as presented by either the Joyal model structure (\cite[Sect.2.2.5]{lurie_higher_2017-1}) or the marked model structure (\cite[Sect.3.1]{lurie_higher_2017-1}), and $\Cat_n$ for $n\geq 2$ as presented by the complicial model structures exposed in the next section.
\end{rmk}

\begin{ntt}
  Given an $(\infty,n)$-category $\scr{C}$ and objects $x,y\in \scr{C}$, we denote by $\scr{C}(x,y)$ the mapping $(\infty,n-1)$-category from $x$ to $y$ in $\scr{C}$.
\end{ntt}

\begin{ntt}
  Given two $(\infty,n)$-categories $\scr{C}$ and $\scr{D}$, we will denote by $\Fun(\scr{C},\scr{D})$ the $(\infty,n)$-category of functors from $\scr{C}$ to $\scr{D}$, and by $\Map(\scr{C},\scr{D})$ the underlying $\infty$-groupoid of $\Fun(\scr{C},\scr{D})$. Note that this means the notations  $\Map(\scr{C},\scr{D})$ and  $\Cat_n(\scr{C},\scr{D})$ coincide.
\end{ntt}

\subsection{Decorated simplicial sets and complicial sets}

The connection between the simplex category $\Delta$ and higher categories is well-known, manifesting in simplicial set-based model structures for $(\infty,0)$-categories and  $(\infty,1)$-categories, as well as the Segal space approach to $(\infty,n)$-categories. The idea of complicial sets is to view an $n$-simplex as an avatar of an $n$-morphism, in line with Joyal's notion of the orientals. However, to identify a fully faithful nerve from $\omega$-categories, Street and Roberts \cite{street_algebra_1987} found it necessary to consider additional data on a simplicial set --- a collection of simplicies in each dimension $n\geq 1$ representing \emph{invertible} $n$-morphisms. This leads to the definition of what we call a decorated simplicial set.

\begin{ntt}
  For categories $\scr{C}$ and $\scr{D}$, we write $\scr{C}^\scr{D} \coloneqq \Fun(\scr{D}, \scr{C})$ and $\scr{C}_\scr{D} \coloneqq \Fun(\scr{D}^\op, \scr{C})$.

  In particular, the category of simplicial sets is denoted by $\Set_\Delta$, where $\Delta$ is the simplex category, and the category of simplicial space is denoted by $\Spc_\Delta$.
\end{ntt}

\begin{defn}
  A \emph{decorated simplicial set}\footnote{\label{ftn:marked_vs_stratified_etc}The terms \emph{marked simplicial set} and \emph{stratified simplicial set} are both used in the literature for this concept. However, the former conflicts with the use of marked simplicial sets to denote simplicial sets with a chosen marking on $1$-simplices, and the latter conflicts with the notion of stratified spaces.} $(X,tX)$ is a pair consisting of a simplicial set $X$ together with, in every dimension $n\geq 1$, a subset $tX_n\subset X_n$ which contains all degenerate simplices. We write $tX=\bigsqcup_{n\geq 1} tX_n$ and call simplices in $tX$ \emph{thin simplices}.

  A \emph{morphism of decorated simplicial sets} $(X,tX)\to (Y,tY)$ is a morphism $f:X\to Y$ of simplicial sets such that $f(tX)\subset tY$. We denote the category of decorated simplicial sets $\Set_\Delta^{\dec}$.
\end{defn}

\begin{ntt}
  The forgetful functor $\Set_\Delta^{\dec}\to \Set_\Delta$ admits both a left and a right adjoint, which we will denote by $(-)_\flat$ and $(-)_\sharp$ respectively. That is, for a simplicial set $X$, $X_\flat$ is the decorated simplicial set in which only the degenerate simplices are thin, and $X_\sharp$ is the decorated simplicial set in which every $n$-simplex for $n\geq 1$ is thin.
\end{ntt}

\begin{ntt}
  For $n\geq -1$, we write $\Delta^n$ for the standard $n$-simplex as a simplicial set, letting $\Delta^{-1}=\varnothing$ be the empty simplicial set. We write $\partial\Delta^n$ for its boundary and, for $0\leq k\leq n$, we write  $\Lambda^n_k$ for the $k$\textsuperscript{th} horn of $\Delta^n$.  We also fix the following decorated simplices and horns.
  \begin{enumerate}
    \item For $n\geq 1$, we write $\Delta^n_+$ for the decorated simplicial set obtained from $\Delta_\flat$ by declaring the unique non-degenerate $n$-simplex to be thin.
    \item For $0\leq k\leq n$, we denote by $\Delta^n_k$ the standard $n$-simplex in which a non-degenerate simplex is thin
      if and only if it contains the vertices $\{k-1,k,k+1\}$.
    \item When no decoration on the horn is specified, we denote by $\Lambda^n_k$ the decorated simplicial set whose decoration is inherited from $\Delta^n_k$ under the inclusion.
    \item We denote by $\Delta^n_\partial$ the decorated simplicial set in which a non-degenerate simplex is declared to be thin if and only if it factors through $\partial \Delta^n$.
    \item by $\Delta^m_{k\partial}$ for $0\leq k \leq m$
      the $m$-simplex in which the marked simplices are the ones in
      $\Delta^m_k$ except for the top simplex.
    \item We denote by $\Delta^n_{k^\prime}$ the decorated simplicial set obtained from $\Delta^n_k$ by additionally marking the $(k-1)$- and $(k+1)$-faces.
    \item We denote by $\Delta^n_{k^{\prime\prime}}$ the decorated simplicial set obtained from $\Delta^n_{k^\prime}$ by additionally marking the $k$-face.
  \end{enumerate}
\end{ntt}

\begin{defn}
  Given decorated simplicial sets $(X,tX)$ and $(Y,tY)$, the \emph{join} is the decorated simplicial set $(X\star Y,tX\star tY)$ where $X\star Y$ denotes the usual join of simplicial sets, and a simplex $x\star y$ is thin if either $x$ or $y$ is.
\end{defn}

\begin{defn}
  We define the following four classes of maps
  \begin{enumerate}
    \item The \emph{complicial horn extensions} are the canonical inclusions
      \[
        \begin{tikzcd}
          \Lambda^n_k \arrow[r] & \Delta^n_k
        \end{tikzcd} \qquad n\geq 1, \qquad 0\leq k\leq n
      \]
    \item The \emph{complicial thinness extensions} are the maps
      \[
        \begin{tikzcd}
          \Delta^n_{k^\prime}\arrow[r] & \Delta^{n}_{k^{\prime\prime}}
        \end{tikzcd} \qquad n\geq 2, \qquad 0\leq k \leq n
      \]
    \item For a fixed $m\in \NN\cup{\infty}$, we define the \emph{$m$-triviality extensions} to be the inclusions
      \[
        \begin{tikzcd}
          \Delta^n_\flat \arrow[r] & \Delta^n_+
        \end{tikzcd}\qquad n>m
      \]
    \item For $n\geq 3$, we inductively define the \emph{saturation extensions} $\Delta^n_{\eq}\to\Delta^n_{\eq+}$ as follows:
      \begin{enumerate}
        \item In dimension $3$, $\Delta^3_{\eq}$ has all  simplices of dimension $2$ or higher thin, as well as the $1$-simplices $02$ and $13$. The target $\Delta^3_{\eq+}$ is simply the maximally marked $\Delta^3_\sharp$.
        \item In dimension $n$, the saturation extension is
          \[
            \begin{tikzcd}
              \Delta^n_{\eq}\arrow[r,phantom,"\coloneqq"{description}] &[-2em] \Delta^0\star \Delta^{n-1}_{\eq}\arrow[r] & \Delta^0\star\Delta^{n-1}_{\eq+} \arrow[r,phantom,"\eqqcolon"{description}] &[-2em] \Delta^n_{\eq+}
            \end{tikzcd}
          \]
      \end{enumerate}
  \end{enumerate}
  We collectively term the morphisms from these four sets the \emph{$m$-trivial elementary anodyne extensions}. When $m=\infty$, we simply refer to the \emph{elementary anodyne extensions}. We will call the subset of elementary anodyne extensions obtained by removing the horn extensions and thinness extensions for $k=0,n$ the \emph{inner elementary anodyne extensions}.
\end{defn}

\begin{defn}
  We define an \emph{$m$-complicial set} to be a decorated simplicial set with the right lifting property against all $m$-trivial elementary anodyne extensions. Moreover, we define an \emph{inner $m$-complicial set} to be a decorated simplicial set with the right lifting property against all inner $m$-trivial elementary anodyne extensions.
\end{defn}

\begin{rmk}
  Note that the associativity of the join of decorated simplicial sets implies that our saturation extensions agree with those of \cite{loubaton_complicial_2024}.
\end{rmk}

\begin{defn}
  We define a simplicial set $E$ to be the colimit of the diagram
  \[
    \begin{tikzcd}
      \Delta^1 & \Delta^2 \arrow[l,"s_1"'] \arrow[r,"d_2"] & \Delta^3 & \Delta^2 \arrow[l,"d_1"']\arrow[r,"s_0"] & \Delta^1
    \end{tikzcd}
  \]
  and consider the inclusion $\iota:\Delta^1\to E$ induced by the simplicial map $d_3\circ d_2:\Delta^1\to \Delta^3$.
\end{defn}

\begin{rmk}
  In \cite{verity_weak_2008}, Verity denotes $E$ by $E^-_3$ and identifies it with a simplicial subset of (the nerve of) the walking isomorphism. Loosely speaking, we can think of the inclusion $\iota:\Delta^1_\sharp\to E_\sharp$ as formally appending a $1$-morphism which is left and right inverse to the $1$-morphism of $\Delta^1$ in a coherent way. The import of this inclusion is the next proposition.
\end{rmk}

\begin{prop}\label{prop:innercomptocomp}
  An inner $m$-trivial complicial set is an $m$-trivial complicial set if and only if it has the right lifting property against the morphism
  \[
    \begin{tikzcd}
      \iota:&[-3em]\Delta^1_\sharp \arrow[r] & E_\sharp
    \end{tikzcd}
  \]
\end{prop}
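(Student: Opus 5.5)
Both directions concern only the outer horn and outer thinness extensions, i.e.\ those with $k=0$ or $k=n$. By the evident duality of decorated simplicial sets (reversing the order of vertices, which exchanges $k$ and $n-k$ and preserves $E$ and $\iota$ up to isomorphism), it suffices to treat the case $k=0$ carefully.

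\emph{Step 1 (necessity).} Suppose $X$ is an $m$-trivial complicial set and let $f\colon\Delta^1_\sharp\to X$ be a thin $1$-simplex. We build the extension to $E_\sharp$ by a sequence of outer horn fillings.
\begin{enumerate}
  \item Fill the horn $\Lambda^2_0$ with edges $f$ and the degeneracy $s_0(x_0)$. This produces a thin $2$-simplex and a thin edge $g$, a left inverse of $f$.
  \item Fill the analogous horn $\Lambda^2_2$ to obtain a right inverse.
  \item Fill a $3$-dimensional horn, using the thinness extensions, to produce the $3$-simplex of $E$. Its faces $d_2$ and $d_1$ must be degenerate in the prescribed way, namely $s_1$ and $s_0$ of edges.
\end{enumerate}
This is essentially Verity's argument that thin $1$-simplices in a complicial set are equivalences. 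All simplices produced are thin, because outer thinness extensions force thinness of faces and fillers in the marked setting, so the result is a map from $E_\sharp$.

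\emph{Step 2 (sufficiency), reduction.} Conversely, assume $X$ is inner $m$-complicial and has lifts against $\iota$. We must produce lifts against $\Lambda^n_0\to\Delta^n_0$ and against $\Delta^n_{0'}\to\Delta^n_{0''}$. In $\Delta^n_0$ the edge $01$ is thin. Using the lift against $\iota$, we extend $01$ to a copy of $E$, thereby obtaining an inverse edge $10$ together with coherence $2$- and $3$-simplices.

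\emph{Step 3 (sufficiency), conversion to an inner horn.} We then construct a larger simplicial set containing $\Lambda^n_0\cup E$ in which the outer horn becomes a retract of a composite of inner anodyne extensions. Concretely, take the join-type construction $\Delta^0\star\Lambda^{n}_0$ glued to $E$ along $\Delta^1$, where the new cone vertex plays the role of a vertex before $0$ using the inverse edge. The pushout $\Lambda^n_0\cup_{\Delta^1}E\to\Delta^n_0\cup_{\Delta^1}E$ should then be a retract of an inner anodyne map. The key claim is that $\Delta^n_0\cup_{\Delta^1} E_\sharp$ admits a filtration by inner horn and inner thinness extensions. This is exactly the pattern of Joyal's special outer horn lemma, as in \cite{verity_weak_2008} and \cite{riehl_complicial_2018}. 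For the thinness extensions $k=0$ one argues identically, filling the relevant $(n+1)$-simplex via an inner horn whose thinness propagates to the $0$-face.

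\emph{Main obstacle.} The hard step is exhibiting this retract and filtration with the decorations correct. Inner horns must be admissible, meaning the thin simplices must contain $\{k-1,k,k+1\}$, and the saturation and $m$-triviality conditions must be respected. I would first try to import Verity's combinatorial lemma on ``special outer horns'' verbatim and check the decorations by hand in low dimensions before inducting on $n$. If that proves awkward, an alternative is to use saturation: in an inner complicial set with the $E$-lifting property, thin edges are exactly equivalences, so the outer horn filler can be obtained from the inner filler of the horn precomposed with the inverse, via the fillers of the horn $\Lambda^{n+1}_1$.
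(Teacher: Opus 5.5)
\paragraph{Necessity.} Your argument for this direction works in substance, but drop step (2). $E$ requires the \emph{same} edge (the image of $12$) to appear in both nondegenerate $2$-simplices $012$ and $123$, so a separately produced right inverse cannot be plugged in directly. Step (1) already gives a thin $2$-simplex $\alpha$ with edges $f$, $g$, $\id_{x_0}$, and via $\Delta^2_{0'}\to\Delta^2_{0''}$ it makes $g$ thin. From there, fill $\Lambda^3_0$ with $d_3=\alpha$, $d_2=s_1f$ and $d_1=s_0f$. This horn is admissible because $01$, $012$ and $013$ go to thin simplices. The thinness extension $\Delta^3_{0'}\to\Delta^3_{0''}$ (with $d_1$ degenerate) then makes the new face $d_0$ thin, and this yields the extension $E_\sharp\to X$.

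\paragraph{Sufficiency.} This direction is the real content of the proposition, and your proposal does not prove it. The paper does not prove it either; it simply cites \cite[Lemma 47]{verity_weak_2008}.

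Your reduction has the right shape, but your ``key claim'' is only asserted. That claim is that $\Lambda^n_0\cup_{\Delta^1}E_\sharp\to\Delta^n_0\cup_{\Delta^1}E_\sharp$, and the analogous map for $\Delta^n_{0'}\to\Delta^n_{0''}$, are retracts of composites of pushouts of inner elementary anodyne extensions. The construction you offer for it does not work as described:
\begin{itemize}
\item If the new vertex is placed before $0$, every inner horn of the $(n+1)$-simplex $\Delta^0\star\Delta^n$ contains the face opposite the cone point. That face is the very $n$-simplex you are trying to build.
\item The missing face $\{1,\dots,n\}$ has codimension two in that simplex. So no thinness extension of it says anything about that face, which also undercuts ``one argues identically'' for the outer thinness extensions.
\item Your fallback (``thin edges are equivalences, so precompose with the inverse'') is circular. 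The $\iota$-lift gives you the inverse; converting an inverse into outer horn fillers is exactly the special-outer-horn lemma at stake.
\end{itemize}

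What is actually needed is an induction that inserts a copy of $x_0$ \emph{after} vertex $1$. For $n=2$, write $f,k$ for the images of $01,02$, and take $g,\alpha$ from $E$. First fill $\Lambda^2_1$ to compose $g$ with $k$; this composite $2$-simplex is thin. Then the $3$-simplex on $(x_0,x_1,x_0,x_2)$ with $d_3=\alpha$, $d_1=s_0k$ and $d_0$ the composite is an admissible $\Lambda^3_2$-horn. Its filler has $d_2$ equal to the desired $2$-simplex, which is thin by $\Delta^3_{2'}\to\Delta^3_{2''}$.

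The missing argument is to carry this out for all $n$ and for the outer thinness extensions, tracking decorations throughout. Alternatively, cite Verity's lemma as the paper does, in which case your Steps 2--3 collapse to that citation.
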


\begin{proof}
  This is \cite[Lemma 47]{verity_weak_2008}.
\end{proof}

\begin{thm}
  For every $m\in \NN\cup \{\infty\}$, there is a combinatorial, simplicial, Cartesian model structure on $\Set_\Delta^{\dec}$ such that
  \begin{itemize}
    \item[(C)] The cofibrations are the monomorphisms.
    \item[(F)] The fibrant objects are the $m$-complicial sets.
  \end{itemize}
\end{thm}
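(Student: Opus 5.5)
This result is the existence of the complicial model structure, and the natural route is Jeff Smith's recognition theorem for combinatorial model structures, or equivalently Cisinski–Olschok theory on the presheaf-type category $\Set_\Delta^{\dec}$. We take the monomorphisms as cofibrations. They are generated by the boundary inclusions $\partial\Delta^n_\flat\to\Delta^n_\flat$ together with the marking maps $\Delta^n_\flat\to\Delta^n_+$ for $n\geq 1$. The category $\Set_\Delta^{\dec}$ is locally presentable, being a reflective subcategory of presheaves on the category $\Delta$ with thin simplices adjoined, so the small object argument is available.

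For a cylinder we choose the Cartesian product with $E_\sharp$ (or with $\Delta^1_\sharp$). For the generating anodyne maps we take the $m$-trivial elementary anodyne extensions, together with $\iota$ from \zcref{prop:innercomptocomp}, and close them under pushout-products with the boundary inclusions of the cylinder. The Cisinski machinery then produces a cofibrantly generated model structure whose cofibrations are the monomorphisms. Its fibrant objects are exactly the objects with the right lifting property against this anodyne class.

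The key step is to show that this generated class of anodyne maps is already generated by the elementary anodyne extensions. Concretely, we must verify that the pushout-product of an elementary anodyne extension with a monomorphism is again a complicial anodyne extension. This is Verity's Gray/Leibniz product lemma for stratified simplicial sets. Granting it, the fibrant objects are precisely the $m$-complicial sets: lifting against $\iota$ is automatic for complicial sets (this is the content of \zcref{prop:innercomptocomp}), and the saturation and triviality extensions are included by hand.

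The same pushout-product statement, applied with the decorated product, yields the Cartesian property. The simplicial enrichment comes from restricting the internal hom along $\Delta^n\mapsto \Delta^n_\sharp$ (or along the Kan–Quillen inclusion). With the saturation extensions included, the Cartesian property follows from Verity's and Ozornova–Rovelli's analysis, so we would cite \cite{verity_weak_2008,riehl_complicial_2018,ozornova_model_2020} for it.

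The main obstacle is the combinatorics of the pushout-product of the saturation extensions $\Delta^n_{\eq}\to\Delta^n_{\eq+}$ with boundary inclusions. The first attempt is to use their inductive definition as joins: reduce to the case $n=3$ via the associativity of joins, and then filter $\Delta^3_{\eq}\times \Delta^k$ by simplices, marking them step by step through thinness extensions. In practice we would invoke \cite{loubaton_complicial_2024}, where exactly this model structure is constructed.
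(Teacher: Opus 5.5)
Your proposal is correct and takes essentially the same route as the paper, whose proof consists solely of citing \cite[Theorem 100]{verity_weak_2008}, \cite{riehl_complicial_2018}, and \cite[Theorem 1.25]{ozornova_model_2020}. Your Cisinski--Olschok outline is the standard construction behind those references, and the substantive inputs you defer are exactly what those sources supply: the Leibniz-product lemma for the Cartesian product, not just the Gray tensor, including the saturation and triviality extensions, and hence Cartesianness and the simplicial enrichment.
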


\begin{proof}
  This theorem is an instance of \cite[Theorem 100]{verity_weak_2008} and the result has been formulated in \cite{riehl_complicial_2018}. The result has been proven in \cite[Theorem 1.25]{ozornova_model_2020}.
\end{proof}

\begin{thm}
  The model structure for $m$-complicial sets is a model for $(\infty,n)$-categories.
\end{thm}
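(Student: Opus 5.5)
This statement is a citation-level result: the $m$-complicial model structure presents $(\infty,m)$-categories. I would not reprove it from scratch. Instead I would assemble it from the literature, comparing with a model already known to satisfy the Barwick--Schommer-Pries axioms (for finite $m$) or, for $m=\infty$, with the $(\infty,\infty)$-categories of the paper's conventions. The cleanest route is the one taken by Loubaton \cite{loubaton_complicial_2024}. He constructs a Quillen equivalence between the $m$-complicial model structure on $\Set_\Delta^{\dec}$ and a model structure on (complete) Segal-type presheaves over $\Theta$, or more precisely over the category $\tDelta$ of decorated simplices. That target is known to present $\Cat_m$.

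\textbf{Step 1.} Identify the model structure of the previous theorem with the one in \cite{loubaton_complicial_2024}. Both are Cisinski-type model structures on $\Set_\Delta^{\dec}$ with the monomorphisms as cofibrations. A left Bousfield localization of such a structure is determined by its fibrant objects, and those fibrant objects are characterized by the right lifting property against the same generating anodyne maps. The preceding remark already records the one point that needs checking: our saturation extensions agree with Loubaton's, by associativity of the join. So both structures have the same cofibrations and the same fibrant objects, and hence coincide.

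\textbf{Step 2.} Invoke Loubaton's comparison for $m=\infty$. There the $(\infty,\infty)$-categories are defined as the inductive-limit (or $\omega$-)localization of complete Segal $\Theta$-spaces. For finite $m$, I would restrict to the $m$-trivial objects and use the Barwick--Schommer-Pries unicity theorem, or the explicit comparison of Ozornova--Rovelli with Rezk's $\Theta_m$-spaces, to conclude that the underlying $\infty$-category is $\Cat_m$. As a compatibility check, the $m$-triviality extensions $\Delta^n_\flat\to\Delta^n_+$ for $n>m$ correspond under the comparison functor to the localization that inverts cells of dimension above $m$. This is what makes the finite-$m$ case follow from the $m=\infty$ case.

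\textbf{Main obstacle.} The delicate step is the case $m=\infty$. Here there is a genuine choice between inductive and coinductive $(\infty,\infty)$-categories, and one must make sure the saturation extensions impose exactly the inductive completeness that the paper's $\Cat_\infty$ intends. I would settle this by citing Loubaton's identification of complicial sets with his inductive $\Cat_\infty$, and fixing that as the meaning of $\Cat_\infty$ in \zcref{subsec:convention}.
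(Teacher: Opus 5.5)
Your proposal is correct and matches the paper: the paper's entire proof is the citation \cite[Theorem 3.3.1.11]{loubaton_complicial_2024}, with the statement read as $(\infty,m)$-categories (the $n$ is a typo). Your Step~1 identification of the model structures and your remarks on finite $m$ and on inductive versus coinductive $(\infty,\infty)$-categories are reasonable elaborations, but they are not needed, since the paper uses that single citation for all $m\in\NN\cup\{\infty\}$.
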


\begin{proof}
  This is \cite[Theorem 3.3.1.11]{loubaton_complicial_2024}
\end{proof}

\begin{rem} \label{rmk:weak_complicial_terminology}
  In \cite{verity_weak_2008} Verity originally introduced the term complicial set for decorated simplicial sets that admit \emph{unique} lifts of elementary anodyne extensions and used the term \emph{weak complicial} set for what we would call a complicial set.
\end{rem}

\subsection{The category \texorpdfstring{$\tDelta$}{tΔ} and density}
The category $\tDelta$, along with a dense functor to $\Cat_\infty$, is one of the key technical tools we employ to connect model-categorical computations with model-independent results. Loosely speaking, $\tDelta$ consists of Street's orientals $\mathbb{O}^n$ together with the $n$-categories given by localizing $\mathbb{O}^n$ at its defining $n$-morphism. However, $\tDelta$ does not contain all morphisms between the orientals, but rather, only those which come from the simplex category. As such, the density of the orientals in $\Cat_\infty$ as proven in \cite{masuda_algebra_2024} does not suffice for our purposes.

\begin{defn}
  We define the category $\tDelta$ to be the full subcategory of $\Set_\Delta^{\dec}$ spanned by $\Delta^n_\flat$ for $n\geq 0$ and $\Delta^n_+$ for $n\geq 1$. We define
  \[
    \begin{tikzcd}
      \xi: &[-3em] \tDelta \arrow[r] & \Cat_\infty
    \end{tikzcd}
  \]
  to be the composition of the inclusion with the localization functor
  \[
    \begin{tikzcd}
      \Set_\Delta^{\dec} \arrow[r] & \Cat_\infty.
    \end{tikzcd}
  \]
\end{defn}

\begin{rmk}
  The functor $\xi$ sends $\Delta^n_\flat$ to the oriental $\mathbb{O}^n$ by \cite[Theorem 2.1]{maehara_orientals_2023}.
\end{rmk}

\begin{prop}\label{prop:tDelta_dense}
  The functor $\tDelta\to \Cat_\infty$ is dense, and hence, we have a fully faithful embedding $\Cat_\infty \subset \Spc_{\tDelta} \coloneqq \Fun(\tDelta^\op, \Spc)$.
\end{prop}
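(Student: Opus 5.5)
The plan is to show that the restricted Yoneda functor $\Cat_\infty \to \Spc_{\tDelta}$, $\scr{C}\mapsto \Map_{\Cat_\infty}(\xi(-),\scr{C})$, is fully faithful. Equivalently, every $\scr{C}$ is the canonical colimit $\colim_{(\tDelta\downarrow \scr{C})}\xi$. I would reduce this to an existing density statement for a larger subcategory and then show that the extra objects are generated by $\tDelta$ in a way compatible with the Yoneda restriction.

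\textbf{Step 1: a known dense subcategory.} Take as input the density of the orientals, \cite{masuda_algebra_2024}: the full subcategory of $\Cat_\infty$ on $\mathbb{O}^n$, $n\geq 0$, is dense. Alternatively, use that $\Cat_\infty$ is a localization of $\Set^{\dec}_\Delta$, a presheaf-type category generated under colimits by $\Delta^n_\flat$ and $\Delta^n_+$. In that formulation, the localization functor $L:\Set^{\dec}_\Delta\to\Cat_\infty$ is essentially surjective and preserves colimits. Moreover, each decorated simplicial set $X$ is canonically the colimit over its category of simplices, using the objects of $\tDelta$ (thin simplices map from $\Delta^n_+$). So every $\scr{C}\simeq L(X)$ for a fibrant $X$, and $\scr{C}\simeq\colim_{\tDelta\downarrow X}\xi$.

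\textbf{Step 2: from "generates under colimits" to "dense."} Step 1 alone does not give density, since $\tDelta\downarrow X$ is the wrong indexing category. One must show the canonical diagram indexed by $\tDelta\downarrow\scr{C}$ in $\Cat_\infty$ has colimit $\scr{C}$. I would use that the model structure is simplicial and $X$ is fibrant, so that
\[
\Map_{\Cat_\infty}(\xi(\Delta^n_\epsilon),\scr{C})\simeq \Map^{\dec}(\Delta^n_\epsilon\times\Delta^\bullet_\sharp\text{-ish},X).
\]
Concretely, the presheaf $\Map(\xi(-),\scr{C})$ on $\tDelta$ is computed by the simplicial-set-valued presheaf of simplices of $X$ tensored with the cosimplicial resolution. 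Then the standard argument applies: the left Kan extension along Yoneda of $\xi$, applied to the presheaf $N\scr{C}$, is the realization $\colim_{[k]}L(\mathrm{sk}\cdots)$. This is equivalent to $L$ of the diagonal of a bisimplicial decorated set weakly equivalent to $X$. Hence the counit $|N\scr{C}|\to\scr{C}$ is an equivalence, which is density. Full faithfulness of $\Cat_\infty\subset\Spc_{\tDelta}$ follows formally.

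\textbf{Step 3 (main obstacle).} The main difficulty is the homotopical control in Step 2. The maps of $\tDelta$ are only the simplicial ones, so one cannot simply invoke oriental density. I must check that the restricted mapping spaces $\Map(\xi\Delta^n_\epsilon,\scr{C})$ genuinely recover the thin and non-thin $n$-simplex spaces of a fibrant model. This needs the fact that $\Delta^n_\flat\times\Delta^k_\sharp$-type cylinders are built from $\tDelta$-objects by complicial anodyne maps, e.g. via the Cartesian property of the model structure. I would first try to show the comparison map is a Reedy-type levelwise equivalence. If that fails, I would compare with Masuda's theorem by exhibiting each $\mathbb{O}^n$-indexed morphism as a colimit of $\tDelta$-diagrams, i.e. show $\tDelta$ and the oriental subcategory generate the same density monad.
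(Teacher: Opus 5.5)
Your route differs from the paper's and can be made to work, but as written it has a genuine gap at the point you flag yourself: Step 2's identification of $\xi_!\xi^*\scr{C}$ with $\scr{C}$ is only gestured at. The fix you propose in Step 3 is also aimed at the wrong thing. No anodyne decomposition of cylinders $\Delta^n_\flat\times\Delta^k_\sharp$ into $\tDelta$-objects is needed. What is missing are two specific inputs.

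\emph{First}, for an arbitrary decorated simplicial set $Y$, the strict colimit $Y=\colim_{\tDelta_{/Y}}\Delta^n_t$ must be a homotopy colimit, so that $L(Y)\simeq\colim_{\tDelta_{/Y}}\xi$. This needs the elegant Reedy structure on $\tDelta$, namely fibrant constants plus Reedy cofibrancy of the tautological diagram, as in \zcref{cor:K_t_hocolim}. It is also exactly what your unsupported claim $\scr{C}\simeq\colim_{\tDelta\downarrow X}\xi$ in Step 1 silently uses.

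\emph{Second}, write $\xi^*\scr{C}\simeq\colim_{[k]\in\Delta^\op}P_k$ with $P_k(a)=\Hom(a\times\Delta^k_\sharp,X)$. This is valid because the model structure is simplicial via $-\times\Delta^k_\sharp$ and $X$ is fibrant. Each $P_k$ is the discrete presheaf of the decorated simplicial set $X^{\Delta^k_\sharp}$, since $P_k(\Delta^n_+)\to P_k(\Delta^n_\flat)$ is injective. So the first input gives $\xi_!P_k\simeq L(X^{\Delta^k_\sharp})$. The model structure is Cartesian and $\Delta^k_\sharp\to\Delta^0$ is a weak equivalence, so $[k]\mapsto L(X^{\Delta^k_\sharp})$ is homotopically constant. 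Since $\Delta^\op$ is weakly contractible, $\xi_!\xi^*\scr{C}\simeq L(X)=\scr{C}$, and one checks that this equivalence is the counit.

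With these two inputs supplied, your argument closes. Your Masuda fallback would not work: $\tDelta$ is not full on the orientals and contains non-oriental objects, so density of the full oriental subcategory does not formally give density of $\xi$.

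The paper does none of this. It uses Ozornova--Rovelli's model structure on $\Fun(\tDelta^\op,\Set_\Delta)$, a left Bousfield localization of the injective one, which presents $\Cat_\infty$. Hence $\Cat_\infty$ is a reflective subcategory of $\Spc_{\tDelta}$, and Yoneda followed by the reflector is dense for formal reasons: its restricted Yoneda functor is just the inclusion. The only work is identifying that composite with $\xi$. This uses the left Quillen equivalences $R\colon\Set_{\tDelta}\to\Set_\Delta^{\dec}$ and $p^*\colon\Set_{\tDelta}\to\Fun(\tDelta^\op,\Set_\Delta)$, which commute with the inclusions of $\tDelta$ and only see cofibrant objects. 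The paper's route is short but leans on the full comparison theorem for $\tDelta$-spaces. The completed version of yours needs only that the model structure on $\Set_\Delta^{\dec}$ is simplicial and Cartesian, together with the Reedy structure on $\tDelta$.
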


\begin{proof}
  By \cite{ozornova_model_2020}, there is a model structure on $\Fun(\tDelta^\op,\Set_\Delta)$, constructed as a left Bousfield localization of the injective model structure, which models $(\infty,\infty)$-categories. Thus, $\Cat_\infty$ is a reflective localization of $\Fun(\tDelta^\op, \Set_\Delta)$, and so the composite
  \[
    \begin{tikzcd}
      \tDelta \arrow[r] & \Fun(\tDelta^\op,\Set_\Delta)\arrow[r] & \Cat_\infty
    \end{tikzcd}
  \]
  of the localization with the Yoneda embedding is a dense functor. It thus suffices to identify our functor with the Yoneda embedding under the appropriate Quillen equivalences. In \cite{ozornova_model_2020}, the authors construct left Quillen equivalences
  \[
    \begin{tikzcd}
      \Set_\Delta^{\dec}  & \Set_{\tDelta} \arrow[r,"p^\ast"]\arrow[l,"R"'] & \Fun(\tDelta^\op,\Set_{\Delta})
    \end{tikzcd}
  \]
  where $R$ is the reflection onto a reflective subcategory, and $p^\ast$ takes a $\tDelta$-set to the corresponding discrete $\tDelta$-space. Since $R$ and $p^\ast$ commute with the inclusions of $\tDelta$ and all objects of $\Set_{\tDelta}$ are cofibrant, it follows that we can identify the dense functor defined above with our chosen functor $\xi$, completing the proof.
\end{proof}

\begin{rmk}
  The main use of \zcref{prop:tDelta_dense} in this paper will be that we can identify $(\infty,\infty)$-categories by identifying the restriction of their representing functors along $\xi$.
\end{rmk}

\subsection{The Gray product of decorated simplicial sets}

We now briefly recall the definition of the Gray product of decorated simplicial sets from \cite{verity_weak_2008}.

\begin{const}
  For any $n,p,q\geq 0$ with $n=p+q$, we define the complicial
  \emph{parity operators}
  \begin{align*}
    \sqcup_{p,q}^{1}: [p] &\rightarrow [n] && \text{and} & \sqcup_{p,q}^{2}: [q] &\rightarrow [n] \\
    k &\mapsto k &&& k &\mapsto k+p.
  \end{align*}
\end{const}

\begin{defn}
  Let $(X,tX)$ and $(Y,tY)$ be decorated simplicial sets. The \emph{Gray
  tensor product} of $(X,tX)$ and $(Y,tY)$ is the decorated simplicial set
  \[
    X\otimes Y = (X\times Y, tX\otimes tY)
  \]
  where $tX\otimes tY$ is the set of pairs $(x,y)$, such that for any
  partition $n=p+q$ either $\sqcup_{p,q}^{1}x$ or
  $\sqcup_{p,q}^{2}y$ is thin.
\end{defn}

\begin{defn} \label{defn:Fun_oplax}
  Let $S\in\Set_{\Delta}$ be a decorated simplicial set.
  We define the functor
  \[
    \begin{tikzcd}
      \Fun^{\oplax}(S,-): &[-3em]\Set_\Delta^{\dec}\arrow[r] & \Set_\Delta^{\dec}
    \end{tikzcd}
  \]
  as the right adjoint to $- \otimes S$.
\end{defn}

\subsection{Cubes and Segal conditions} \label{subsec:cubes_and_Segal}
In this subsection, we introduce the cubical version of the higher Segal conditions as introduced by Poguntke \cite{poguntke_higher_2017}.
These form the main ingredient for the construction of the $\infty$-category of spans in the next section.

Our arguments in the coming sections will often require the study of cubes in more general posets which correspond to Segal cubes under various localizations and adjunctions. We thus begin with some notation for cubes in posets.

\begin{ntt} \label{ntt:subdivision}
  For every finite set $M$, we regard the powerset $\bP(M)$ as a poset partially ordered by inclusion and denote $\Power_*(M)\coloneq \Power(M)\setminus \{\varnothing\}$.

  We denote by $\sd(M)$ the poset $\Power_*(M)^\op$. When $M$ is a totally ordered set, we call $\sd(M)$ the \emph{barycentric subdivision} of $M$.
\end{ntt}

\begin{defn}
  Let $\scr{C}$ be a category and $M$ a finite set. An \emph{$M$-shaped cube} in $\scr{C}$ is a functor $\bP(M)\rightarrow \scr{C}$. Similarly, we call a functor $\bP_{\ast}(M) \rightarrow \scr{C}$  an \emph{$M$-shaped punctured cube} in $\scr{C}$. A cube in $\scr{C}$ is called \emph{non-degenerate} if it is a conservative functor.

  We call an $M$-shaped cube a \emph{limit cube} if the canonical comparison morphism $F(\varnothing)\rightarrow \lim_{\Power_*(\mathcal{U})}F$ is an equivalence.
\end{defn}

We can construct cubes in a poset by considering meets and joins, in a way that generalizes the notion of a cover. Since we consider limit cubes, and our typical convention is that subsets are ordered by reverse inclusion, the roles of joins and meets are reversed from what one might expect.

\begin{defn} \label{defn:join_and_meet}
  Let $P$ be a poset and $\mathcal{U} \subset P$.
  \begin{itemize}
    \item The \emph{join} of $p \in \mathcal{U}$, denoted by $\vee_{p\in \mathcal{U}} p$, is the least upper bound (colimit) in $P$ of $p \in \mathcal{U}$.
    \item The \emph{meet} of $p \in \mathcal{U}$, denoted by $\wedge_{p\in \mathcal{U}} p$, is the greatest lower bound (limit) in $P$ of $p \in \mathcal{U}$.
  \end{itemize}
\end{defn}

\begin{defn}
  Let $P$ be a poset. A \emph{sieve $\mathcal{U}$ in $P$} is a finite subset $\mathcal{U} \subset P$. For $q\in P$ such that $q \leq p$ for all $p \in \mathcal{U}$, we can consider $\mathcal{U}$ to be \emph{a sieve on $q$} and denote it by $\mathcal{U}|q$. The sieve $\mathcal{U}$ is \emph{saturated} if for every set $\mathcal{V} \subset \mathcal{U}$, $\vee_{p\in \mathcal{V}} p$ exists.

  A cube in $P$ associated to a sieve $\mathcal{U}|q$ on $q$ is given by
  \[
    \begin{tikzcd}[row sep=0em]
      C_{\mathcal{U}|q}:&[-3em]\Power(\mathcal{U}) \arrow[r] & P\\
      & \mathcal{S} \arrow[r,mapsto] & \bigvee_{p\in \mathcal{S}} p \\
      & \varnothing \arrow[r,mapsto] & q.
    \end{tikzcd}
  \]
  When $q$ is clear from the context (for example, when $\mathcal{U}$ is already being considered to be a sieve on $q$), we simply write $C_{\mathcal{U}}$ for $C_{\mathcal{U}|q}$.

  We call the restriction of $C_{\mathcal{U}|q}$ to $\Power_*(\mathcal{U})$ the \emph{punctured cube associated to $\mathcal{U}|q$}.
\end{defn}

\begin{defn}
  Let $\scr{C}$ be a category with finite limits, $P$ a poset, and $\mathcal{U}\subset P$ a saturated sieve on $q\in P$. A functor $F:P \to \scr{C}$ is a \emph{$\mathcal{U}$-sheaf on $q$}, or equivalently, $\mathcal{U}$ is $F$-local on $q$, if the associated cube
  \[
    \begin{tikzcd}
      F\circ C_{\mathcal{U}|q}:\Power(\mathcal{U}) \arrow[r] & P \arrow[r] & \scr{C}
    \end{tikzcd}
  \]
  is a limit cube. When $q$ is clear from the context, we simply say that $F$ is a \emph{$\mathcal{U}$-sheaf}, or equivalently, $\mathcal{U}$ is $F$-local.
\end{defn}

Specializing to the poset $P=\sd(M)$ for a set $M$, we obtain the following definition.

\begin{defn} \label{defn:sieve_on_subset}
  Let $M$ be a finite set and $U \subset M$. A \emph{sieve on $U$} is a sieve $\mathcal{U}\subset \sd(M)$ on $U$ in $\sd(M)$.

  A sieve on $M$ is additionally called \emph{broad} if $\# (S)=\#(M)-1$ for any $S \in \mathcal{U}$.
\end{defn}

\begin{rmk}
  Any sieve $\mathcal{U}$ in $\sd(M)$ is a sieve on $M$, on $\bigcup_{S \in \mathcal{U}} S$, and on anything in between.
\end{rmk}

\begin{ntt}
  Let $M$ be a set and $I\subset M$. We write $M_{I}\coloneq M\setminus I$. When $I=\{i\}$, we also write $M_{i} \coloneqq M_{\{i\}}$, by abuse of notation.
\end{ntt}

\begin{defn}
  Let $M$ be a finite totally ordered set. The parity of $m$ in $M$, denoted by $\Parity_M(m)$, is defined to be the parity of the number of elements of $M$ greater than $m$. That is, $m$ is said to be \emph{even} (resp. \emph{odd}) if the number of elements in $M$ greater than $m$ is even (resp. odd). We use $-\Parity_M(m)$ to denote the parity opposite that of $m$. For example, when $m$ is odd, then $-\Parity_M(m)$ is even, and vice versa.

  We call $N\subset M$ \emph{even} (resp. \emph{odd}) if it is of the form $M_m \coloneqq M\setminus \{m\}$ for an \emph{even} (resp. \emph{odd}) element $m\in M$.
\end{defn}

\begin{defn}
  The \emph{even} (resp. \emph{odd}) \emph{saturated sieves} on $M$ consists of even (resp. odd) subsets of $M$ and is denoted by $\mathcal{E}_M$ (resp. $\mathcal{O}_M$). We call the associated cubes, denoted by $C_{\mathcal{E}_M}$ (resp. $C_{\mathcal{O}_M}$), the \emph{even} (resp. \emph{odd}) ($M$-)\emph{Segal cube}.

  By abuse of notation, given a sieve $\mathcal{U}$ on $M$ that is either even or odd, we also use $\Parity(\mathcal{U})$ to denote the parity of this sieve.
\end{defn}

\begin{defn} \label{defn:Segal_conditions}
  Let $M$ be a finite totally ordered set. A functor $F:\sd(M)\to \scr{C}$ is called
  \begin{enumerate}
    \item \emph{odd (M-)Segal} if it is an $\mathcal{O}_M$-sheaf; and
    \item \emph{even (M-)Segal} if it is a $\mathcal{E}_M$-sheaf.
  \end{enumerate}
  We further call such a functor ($M$-)\emph{Segal} if it is both even and odd $M$-Segal.

  When $M = [n]$, we also use the term (odd/even) $(n-1)$-\emph{Segal} to refer to (odd/even) $M$-Segal.
\end{defn}

\begin{ntt}
  Let $M$ be as above and $m\in M$. We use $\Parity_M(m)$\emph{-Segal sieve}  (or simply $\Parity(m)$\emph{-Segal sieve}, when $M$ is clear from the context), denoted by $\SParity_M(m)$ (or simply $\SParity(m)$), to refer to the Segal sieve containing $M_m$. We use $\Parity_M(m)$\emph{-Segal cube} (or simply $\Parity(m)$\emph{-Segal cube}) to refer to the cube associated to it.

  Similarly, for $m\in M$, we say that $F: \sd(M) \to \scr{C}$ is $\Parity_M(m)$\emph{-Segal} if it is a $\SParity_M(m)$-sheaf.
\end{ntt}

\begin{rmk}
  Our terminology for the Segal conditions differs from that introduced by Poguntke \cite{poguntke_higher_2017}. More precisely, we call even (resp. odd) $M$-Segal what is called lower (resp. upper) $M$-Segal in the literature.  We adopted this terminology to enhance the readability of our arguments in \zcref{sec:fibrancy}.
\end{rmk}

\begin{ex}
  For $M=[2]$ the even and odd Segal cubes are given by
  \[
    C_{\scr{E}_{2}}=
    \begin{tikzcd}
      012 \arrow[r] \arrow[d] & 01 \arrow[d] \\
      12 \arrow[r] & 1
    \end{tikzcd} \quad \text{and} \quad
    C_{\scr{O}_{2}}=
    \begin{tikzcd}
      012 \arrow[r] & 02
    \end{tikzcd}
  \]
  Note that these might be of different dimensions.
\end{ex}

\begin{ex}
  For $M=[3]$ the even and odd Segal cubes are given by
  \[
    C_{\scr{E}_{3}}=
    \begin{tikzcd}
      0123 \arrow[r] \arrow[d] & 023 \arrow[d] \\
      012 \arrow[r] & 02
    \end{tikzcd} \quad \quad \quad
    C_{\scr{O}_{3}}=
    \begin{tikzcd}
      0123 \arrow[r] \arrow[d] & 123 \arrow[d] \\
      013 \arrow[r] & 13
    \end{tikzcd}
  \]
\end{ex}

\subsection{Moves and lacuna diagrams}
Lacuna diagrams are an efficient bookkeeping device and visualization tool for working with Segal conditions.

\begin{const}
  Let $M$ be a finite set and $S$ a subset of $M$. We call the set $M\setminus S$ the \emph{(M-)lacunae} of $S$.

  The \emph{lacuna diagram of a sieve $\mathcal{U}$ on $M$} is a matrix where the rows (resp. columns) are indexed by elements of $\mathcal{U}$ (resp. $M$). Moreover, the entry in the row corresponding to $S\in \mathcal{U}$ and the column corresponding to $m\in M$ is a $0$ if $m$ is a lacuna of $S$ in $M$, and a $1$ otherwise.

  We typically list the elements of $M$ above the matrix, in order, if $M$ is totally ordered. Moreover, for each row, we leave the $1$-entries blank, writing $\varnothing$ for the $0$-entries.
\end{const}

\begin{rem}
  In the following sections, we will deal with lacuna diagrams of arbitrarily large dimension. We therefore only draw the part of the lacuna diagram that is relevant for the argument while leaving the rest implicit if it is clear from the context.
\end{rem}

\begin{ex}
  As an example, we consider the lacuna diagram associated to the odd and even sieve on $M=[4]$. The lacuna diagrams for the odd and even sieves $\scr{O}_{4}=\{0234,0124\}$ and $\scr{E}_{4}=\{1234,0134,0123\}$ are drawn below.

  \begin{table}[h]
    \lacuna{5}{
      $0$ & $1$ & $2$ & $3$ & $4$ \\ \hline
      & $\varnothing$ \\
      & & & $\varnothing$
    }
    $\quad$\text{and}$\quad$
    \lacuna{5}{
      $0$ & $1$ & $2$ & $3$ & $4$ \\ \hline
      $\varnothing$ \\
      && $\varnothing$ \\
      &&&& $\varnothing$
    }
  \end{table}
\end{ex}

Under the assumption that $F:\sd(M)\to \scr{C}$ is a sheaf for a collection of sieves on $M$, to be called the \emph{mediating} sieves below, the sheaf conditions for a sieve $\mathcal{U}$ can be equivalent to the sheaf condition for another sieve $\mathcal{V}$. These originate from properties of cubical limit diagrams. To establish these equivalences, we first discuss the general property of cubes and subsequently interpret them in the language of sieves.

\begin{lem}[Pasting Lemma]\label{lem:pasting}
  Let $M$ be a finite set and let $Q:[2]\times \Power(M)\rightarrow \scr{C}$ be a stacked pair of cubes. If $Q\vert_{\{1<2\} \times \Power(M)}$ is a limit cube, then $Q\vert_{\{0<1\}\times \Power(M)}$ is a limit cube if and only if the cube $Q\vert_{\{0<2\}\times \Power(M)}$ is a limit cube.
\end{lem}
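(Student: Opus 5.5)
We prove this the way one proves the classical pasting lemma for pullback squares: rewrite each limit cube as a statement about total fibers (iterated fibers), and then apply the pasting law for pullback squares.

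The key reformulation is this. For a cube $F:\Power(M)\to\scr{C}$, write $L(F)=\lim_{\Power_*(M)}F$. Then $F$ is a limit cube exactly when $F(\varnothing)\to L(F)$ is an equivalence. Consider the stacked cube $Q$, and for $i\in[2]$ set $Q_i=Q|_{\{i\}\times\Power(M)}$. The natural transformations $Q_0\to Q_1\to Q_2$ induce maps $L(Q_0)\to L(Q_1)\to L(Q_2)$.

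The first step is to show that the square
\[
  \begin{tikzcd}
    Q_0(\varnothing) \arrow[r]\arrow[d] & L(Q_0)\arrow[d]\\
    Q_1(\varnothing)\arrow[r] & L(Q_1)
  \end{tikzcd}
\]
is a pullback if and only if the cube $Q|_{\{0<1\}\times\Power(M)}$ is a limit cube. We regard $Q|_{\{0<1\}\times\Power(M)}$ as a cube indexed by $\Power(M\sqcup\{*\})$, where the new coordinate $*$ records the direction $1\to 0$ (with the paper's convention that $\varnothing$ is the initial vertex, $0$ corresponds to $\varnothing$). Its punctured part decomposes as the punctured cube $Q_0|_{\Power_*(M)}$ glued to the full cube $Q_1$ along $Q_1|_{\Power_*(M)}$. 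Computing the limit over $\Power_*(M\sqcup\{*\})$ by right Kan extension in stages gives
\[
  \lim_{\Power_*(M\sqcup\{*\})}Q \;\simeq\; L(Q_0)\times_{L(Q_1)}Q_1(\varnothing).
\]
This uses that $\Power(M)$ has an initial object, so the limit over it is evaluation at $\varnothing$. The same argument applies to the pairs $\{1<2\}$ and $\{0<2\}$.

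Once this is established, the statement follows formally. The comparison square for $\{0<2\}$ is the horizontal pasting of the comparison squares for $\{0<1\}$ and $\{1<2\}$, since the maps $L(Q_0)\to L(Q_2)$ factor through $L(Q_1)$. By hypothesis the $\{1<2\}$ square is a pullback. The pasting law for pullbacks in an $\infty$-category then says that the $\{0<1\}$ square is a pullback if and only if the composite $\{0<2\}$ square is. Combined with the first step, this is exactly the claim.

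The main obstacle is the limit decomposition in the first step. The cleanest route is to check that the inclusion of the subposet
\[
  \bigl(\{0\}\times\Power_*(M)\bigr)\cup\bigl(\{1\}\times\Power(M)\bigr)
\]
into $\Power_*(M\sqcup\{*\})$ is an equality of posets, which holds since a nonempty subset either contains $*$ or is nonempty in $M$. One then writes this poset as a pushout (join-like gluing) of $\Power_*(M)\times[1]$ and $\Power(M)$ over $\Power_*(M)$. Limits over such a gluing are pullbacks of the limits over the pieces. On the piece $\Power_*(M)\times[1]$ the limit is $L(Q_0)$, because $\{0\}$ is initial in $[1]$ and so the $[1]$-direction contributes nothing beyond restriction to $\{0\}$. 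On $\Power(M)$ the limit is $Q_1(\varnothing)$, and over $\Power_*(M)$ it is $L(Q_1)$. Verifying that this gluing is a genuine homotopy pushout of indexing categories, so that limits convert it into a pullback, is the one point requiring care. One can instead invoke the standard fact that iterated fibers of cubes can be computed one coordinate at a time, for example via the cube lemmas in Lurie's \emph{Higher Algebra} or Goodwillie calculus.
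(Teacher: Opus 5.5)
The paper gives no proof of this lemma: it is stated without proof or citation, as the cubical version of the pasting law for pullback squares. So there is no argument in the paper to compare against, and your proof is a correct one.

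Your formula $\lim_{\Power_*(M\sqcup\{*\})}Q\simeq L(Q_0)\times_{L(Q_1)}Q_1(\varnothing)$ is right. The step you flag as needing care is in fact immediate. The vertex $(1,\varnothing)$ is incomparable with every element of $\{0\}\times\Power_*(M)$. Hence every chain in $\Power_*(M\sqcup\{*\})$ lies entirely in $\Power_*(M)\times[1]$ or entirely in $\{1\}\times\Power(M)$. So the nerve of $\Power_*(M\sqcup\{*\})$ is literally the union of those two nerves, glued along the nerve of $\{1\}\times\Power_*(M)$. This is a pushout along monomorphisms, hence a homotopy pushout in the Joyal model structure, and taking limits turns it into a pullback. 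The coherence of the pasting is also automatic: the three comparison squares are the image of $Q$, viewed as a functor $[2]\to\Fun(\Power(M),\scr{C})$, under the natural transformation $\mathrm{ev}_\varnothing\Rightarrow\lim_{\Power_*(M)}$.

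For comparison, the lemma can also be deduced formally from the paper's own tools, \zcref{lem:cube} and \zcref{lem:degenerate}. Pull $Q$ back along the monotone map $[1]\times[1]\to[2]$, $(i,j)\mapsto\max(i,2j)$, to get a cube $Q'$ on $\Power(\{a,b\}\sqcup M)$.
\begin{itemize}
\item In the $a$-direction, the face through $-\cup\{a\}$ is the $\{1<2\}$ cube, a limit cube by hypothesis, and the complementary face is the $\{0<2\}$ cube.
\item In the $b$-direction, the face through $-\cup\{b\}$ is $Q_2$ made constant along $a$, hence a limit cube. The complementary face is the $\{0<1\}$ cube.
\end{itemize}
Two applications of the Cube Lemma then give: the $\{0<1\}$ cube is limit iff $Q'$ is iff the $\{0<2\}$ cube is. That route is shorter, but it only moves the work into the Cube Lemma, whose proof is essentially your total-fiber computation. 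Your argument is self-contained and reduces everything to the ordinary pasting law for pullbacks.
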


\begin{const}[Pasting Move]\label{ex:pasting}
  Let $M$ be a finite set, $\mathcal{U}$ a saturated sieve on $M$, $S\in\mathcal{U}$, and $T\subset S$ such that the collection $\mathcal{V}=\{U\cap S\mid U\in \mathcal{U}\setminus \{S\}\}\cup \{T\}$ is a saturated sieve on $S$. We define a functor $[2] \times \Power(\mathcal{U}\setminus\{S\}) \to \Power(\mathcal{U}\cup \{T\})$ given by
  \[
    (0, V) \mapsto V \quad (1, V) \mapsto V\cup \{S\} \quad (2, V)\mapsto V\cup \{S,T\}.
  \]
  Then, the corresponding composition
  \[
    \begin{tikzcd}
      Q: [2] \times \Power(\mathcal{U}\setminus\{S\}) \arrow[r,hook] & \Power(\mathcal{U}\cup\{T\}) \arrow[r, "C_{(\mathcal{U}\cup \{T\})|M}"] &[2em] \sd(M)
    \end{tikzcd}
  \]
  defines a stacked pair of cubes in $\sd(M)$, whose individual cubes are given by
  \[
    Q\vert_{\{0<1\} \times \Power(\mathcal{U}\setminus\{S\})}= C_{\mathcal{U}|M},
    \quad
    Q\vert_{\{0<2\} \times \Power(\mathcal{U}\setminus\{S\})}= C_{(\mathcal{U}\circ \mathcal{V})|M},
    \quad\text{and}\quad
    Q\vert_{\{1<2\} \times \Power(\mathcal{U}\setminus\{S\})}= C_{\mathcal{V}|S},
  \]
  where $\mathcal{U} \circ \mathcal{V}$ is the sieve $(\mathcal{U}\setminus \{S\}) \cup \{T\}$ on $M$.

  By \zcref{lem:pasting}, for $F:\sd(M)\rightarrow \scr{C}$ a $\mathcal{V}$-sheaf on $S$, it is a $\mathcal{U}$-sheaf on $M$ if and only if it is a $\mathcal{U}\circ \mathcal{V}$-sheaf on $M$.
\end{const}

\begin{ex}
  An example is given by the pasting of squares. For the saturated sieve $\mathcal{U}=\{0234,0124\}$ on $[4]$ and the subsets $T=\{124\}\subset S=\{0124\}\in \mathcal{U}$, the above construction yields a stacked pair of squares
  \[
    \begin{tikzcd}
      F(01234)\arrow[r]\arrow[d] & F(0234)\arrow[d]\\
      F(0124)\arrow[r]\arrow[d] & F(024)\arrow[d]\\
      F(124)\arrow[r] & F(24)
    \end{tikzcd}
  \]
  If the bottom square is known to be pullback, then the top square is pullback if and only if the outer square is pullback.
  We illustrate the combinatorial move using lacunae diagrams below, where the left, top, and right diagrams correspond to the sieves $\mathcal{U}$, $\mathcal{V}$, and $\mathcal{U}\circ \mathcal{V}$, respectively. In effect, this move allows us to remove or add lacunae to lacuna diagrams.
  \lacunaMove{
    \lacuna{5}{
      $0$ & $1$ & $2$ & $\cancel{3}$ &$4$ \\ \hline
      & $\varnothing$ \\
      $\varnothing$
    }
  }{
    \lacuna{5}{
      $0$ & $1$ & $2$ & $3$ & $4$ \\ \hline
      & $\varnothing$ \\
      &  & & $\varnothing$
    }
  }{
    \lacuna{5}{
      $0$ & $1$ & $2$ & $3$ & $4$ \\ \hline
      & $\varnothing$ & & & \\
      $\varnothing$ &  & & $\varnothing$
    }
  }
\end{ex}

\begin{lem}[Cube Lemma, {\cite[Lemma 3.3.8]{walde_higher_2020}}]\label{lem:cube}
  Let $M$ be a finite set, $m\in M$, and $Q:\Power(M)\to \scr{C}$ an $M$-shaped cube. If the cube
  \[
    \begin{tikzcd}
      \Power(M\setminus \{m\})\arrow[r, "-\cup \{m\}"] &[2em] \Power(M) \arrow[r,"Q"] &[2em] \scr{C}
    \end{tikzcd}
  \]
  is a limit cube, then $Q$ is a limit cube if and only if the restriction $Q\vert_{\Power(M\setminus\{m\})}$ is a limit cube.
\end{lem}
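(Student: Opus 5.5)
The plan is to view $Q$ as a map of punctured-cube data and to split the limit over $\Power_*(M)$ into two pieces indexed by whether or not a subset contains $m$. Write $M' = M\setminus\{m\}$. The poset $\Power(M)$ is isomorphic to $[1]\times\Power(M')$, where the second coordinate records $S\cap M'$ and the first records whether $m\in S$. Under this isomorphism, $Q$ becomes a natural transformation $Q_0\to Q_1$ of $M'$-shaped cubes, with $Q_0=Q\vert_{\Power(M')}$ and $Q_1=Q\circ(-\cup\{m\})$. The standard fact I will prove and then use is that $Q$ is a limit cube if and only if the induced map of $M'$-cubes
\[
  S\mapsto \bigl(Q(S)\to Q(S\cup\{m\})\bigr),
\]
viewed as a single $M'$-shaped cube in the arrow category $\Fun([1],\scr{C})$, is a limit cube. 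Concretely, this says $Q(\varnothing)\simeq \lim_{\Power_*(M')}Q_0\times_{\lim_{\Power_*(M')}Q_1} Q(\{m\})$.

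To establish that formula, I compute $\lim_{\Power_*(M)}Q$ by right Kan extending in two stages. The poset $\Power_*(M)$ is the union of the subposets $\{S: m\in S\}\cong\Power(M')$, which has initial object $\{m\}$, and $\Power_*(M')$. Since $\{m\}$ is initial in the first, the limit over $\{S:m\in S\}$ is just $Q(\{m\})$. The limit over all of $\Power_*(M)$ is then the pullback of $Q(\{m\})$ and $\lim_{\Power_*(M')}Q_0$ over $\lim_{\Power_*(M')}Q_1$. This uses the decomposition of $\Power_*(M)$ as a Grothendieck construction, or equivalently the cofinality of the pushout of indexing posets. This step is the only real content, and I expect it to be the main obstacle: one has to be careful that the overlap is indexed by $\Power_*(M')$ via $S\mapsto S\cup\{m\}$. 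I would handle it by writing $\Power_*(M)$ as the pushout of posets $\Power(M')\sqcup_{\Power_*(M')}([1]\times\Power_*(M'))$ and using that limits turn such colimit decompositions of the indexing category into pullbacks, as in Lurie's HTT 4.2.3.

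Now assume the hypothesis: $Q_1$ is a limit cube, so $Q(\{m\})\simeq\lim_{\Power_*(M')}Q_1$. The pullback formula then collapses to $\lim_{\Power_*(M)}Q\simeq\lim_{\Power_*(M')}Q_0$, compatibly with the maps from $Q(\varnothing)$. Hence $Q(\varnothing)\to\lim_{\Power_*(M)}Q$ is an equivalence if and only if $Q(\varnothing)\to\lim_{\Power_*(M')}Q_0$ is, which is the claim.

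The edge case $M=\{m\}$ is immediate. There $Q_1$ being a limit cube is vacuous, since the limit over $\Power_*(\varnothing)=\varnothing$ is terminal, so the condition forces $Q(\{m\})$ to be terminal. In that case both conditions say $Q(\varnothing)$ is terminal.
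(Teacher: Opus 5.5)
Your proof is correct. The paper gives no argument of its own here; it only cites Walde. What you wrote is the standard proof. Splitting $\Power_*(M)$ into $\{S : m\in S\}$, which has initial object $\{m\}$, and $\Power_*(M)\setminus\{\{m\}\}\cong[1]\times\Power_*(M')$ gives the hypothesis-free formula
\[
  \lim_{\Power_*(M)}Q\simeq Q(\{m\})\times_{\lim_{\Power_*(M')}Q_1}\lim_{\Power_*(M')}Q_0 .
\]
After that, the lemma is just ``a pullback of an equivalence is an equivalence'' plus 2-out-of-3. The formula is worth keeping as the primary statement: it says $Q$ is a limit cube iff a certain square is a pullback. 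The paper's Pasting Lemma then follows at once from the pasting law for pullback squares.

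There are a few things to fix in the write-up.

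First, the ``standard fact'' in your opening paragraph is false as phrased. Limits in $\Fun([1],\scr{C})$ are computed pointwise. So the $M'$-cube $\widetilde{Q}\colon S\mapsto \bigl(Q(S)\to Q(S\cup\{m\})\bigr)$ is a limit cube in the arrow category iff $Q_0$ and $Q_1$ are \emph{both} limit cubes. That is not equivalent to $Q$ being one: for $M=\{m,a\}$ it would say that the two edges of the square in the $a$-direction are equivalences. The correct arrow-category version is that the comparison morphism $\widetilde{Q}(\varnothing)\to\lim_{\Power_*(M')}\widetilde{Q}$ is a pullback square rather than an equivalence. That is exactly your ``concretely'' formula, and it is the only thing you actually prove and use, so nothing downstream breaks.

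Second, in the edge case the hypothesis is not vacuous: it says precisely that $Q(\{m\})$ is terminal. Your conclusion there is right, and the general argument already covers $M'=\varnothing$ anyway.

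Third, for the step you flag as the main obstacle, record the two facts that make it go through.
\begin{itemize}
\item $\{m\}$ is incomparable with every element of $\Power_*(M')$. So every chain in $\Power_*(M)$ lies in one of the two pieces, and the nerve of $\Power_*(M)$ is the union of the two sub-nerves along the nerve of the overlap. This is what lets the decomposition results of HTT \S4.2.3 apply.
\item $\{0\}\times\Power_*(M')$ is initial in $[1]\times\Power_*(M')$, since the slice over $(i,T)$ has terminal object $(0,T)$. This identifies the limit over that piece with $\lim_{\Power_*(M')}Q_0$.
\end{itemize}
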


\begin{const}[Cube moves] \label{const:cube_moves}
  Given a saturated sieve $\mathcal{U}$ on $M$ and an element $S\in \mathcal{U}$, the \emph{$S$-facet} $d_S(\mathcal{U})$ of  $\mathcal{U}$ is the saturated sieve on $S$ given by intersecting the other elements of $\mathcal{U}$ with $S$. The \emph{anti-$S$ facet} $d_{-S}(\mathcal{U})$ of  $\mathcal{U}$ is the saturated sieve on $M$ given by $\mathcal{U}\setminus\{S\}$.

  It follows from \zcref{lem:cube} that a $d_{S}(\mathcal{U})$-sheaf on $S$, $F:\sd(M)\to \scr{C}$, is a $\mathcal{U}$-sheaf on $M$ if and only if it is a $d_{-S}(\mathcal{U})$-sheaf on $M$.
\end{const}

\begin{ex}
  In the special case where $\mathcal{U}$ has $2$-elements, this is just the well-known statement that if one of the prongs of a pullback claw is an equivalence, then the associated square is pullback if and only if the opposite arrow is an equivalence. For example in case $\scr{U}=\scr{O}_{4}$ and $S=\{0234\}$, we are in the situation depicted in the diagram below.
  \[
    \begin{tikzcd}
      01234\arrow[r]\arrow[d,"g"'] & 0234\arrow[d,"f"]\\
      0124\arrow[r] & 024
    \end{tikzcd}
  \]
  If $f$ is an equivalence, then $g$ is an equivalence if and only if the square is pullback. We illustrate this move in terms of lacuna diagrams below.   In effect, this move allows us to remove or add rows in lacuna diagrams.
  \lacunaMove{
    \lacuna{5}{
      $0$ & $\cancel{1}$ & $2$ & $3$ & $4$ \\ \hline
      &&& $\varnothing$
    }
  }{
    \lacuna{5}{
      $0$ & $1$ & $2$ & $3$ & $4$ \\ \hline
      & $\varnothing$ \\
      &&& $\varnothing$
    }
  }{\lacuna{5}{
      $0$ & $1$ & $2$ & $3$ & $4$ \\ \hline
      &&& $\varnothing$
    }
  }
\end{ex}

As an easy consequence of \zcref{lem:cube}, we obtain the following result.

\begin{lem}\label{lem:cubefacelemma}
  Let $I$ be a finite set together with a decomposition into two disjoint subsets $I=I_{0}\sqcup I_{1}$. Let $Q:\bP(I) \rightarrow \cC$ be an $I$-shaped cube such that for every $J \in \bP_{\ast}(I_{1})$ the $I_{0}$-shaped cube
  \[
    Q(J\cup-):\bP(I_{0}) \rightarrow \cC
  \]
  is limit. Then $Q$ is a limit cube if and only if the $I_{0}$-shaped cube $Q(\varnothing\cup -)$ is.
\end{lem}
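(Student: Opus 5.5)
We argue by induction on the cardinality of $I_1$, using only the Cube Lemma \zcref{lem:cube}. If $I_1=\varnothing$, then $I=I_0$ and $Q=Q(\varnothing\cup -)$, so the statement is a tautology. Assume now $I_1\neq\varnothing$, pick $m\in I_1$, and set $I'=I\setminus\{m\}=I_0\sqcup I_1'$ with $I_1'=I_1\setminus\{m\}$.

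\emph{First application of the Cube Lemma.} Consider the $I'$-shaped cube $Q'\coloneqq Q(\{m\}\cup -):\bP(I')\to\cC$. We claim $Q'$ is a limit cube. Apply the induction hypothesis to $Q'$ with the decomposition $I'=I_0\sqcup I_1'$. For $J\in\bP_\ast(I_1')$, the cube $Q'(J\cup -)=Q(J\cup\{m\}\cup -)$ on $\bP(I_0)$ is limit by hypothesis, because $J\cup\{m\}\in\bP_\ast(I_1)$. Hence $Q'$ is limit if and only if $Q'(\varnothing\cup -)=Q(\{m\}\cup -)|_{\bP(I_0)}$ is. This last cube is limit by hypothesis, since $\{m\}\in\bP_\ast(I_1)$. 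So $Q'$ is a limit cube.

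\emph{Second application of the Cube Lemma.} Since $Q'$ is exactly the cube $\Power(I\setminus\{m\})\xrightarrow{-\cup\{m\}}\Power(I)\xrightarrow{Q}\cC$, \zcref{lem:cube} gives that $Q$ is limit if and only if $Q|_{\bP(I')}$ is limit. Now apply the induction hypothesis to $Q|_{\bP(I')}$ with decomposition $I_0\sqcup I_1'$. Its hypotheses hold, since for $J\in\bP_\ast(I_1')\subset\bP_\ast(I_1)$ the cube $Q(J\cup -)$ is limit. Therefore $Q|_{\bP(I')}$ is limit if and only if $Q(\varnothing\cup -)$ is. Chaining the two equivalences proves the claim.

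There is no real obstacle. The only point requiring care is bookkeeping: the hypotheses must pass to both subcubes, i.e. every $J$ used in the induction must be a nonempty subset of $I_1$. This holds in both steps because $m\in I_1$.
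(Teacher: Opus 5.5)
Your proof is correct and matches the paper's argument. The paper inducts on $\#J$ for $J\in\bP_\ast(I_1)$, picks $j\in J$, and uses the induction hypothesis to show the face $Q(\{j\}\cup -)$ is a limit cube; it then applies a cube move and the induction hypothesis once more, exactly as you do with $m\in I_1$. The only differences are cosmetic: you induct on $\#I_1$ starting from the tautological case $I_1=\varnothing$ rather than $\#J=1$, and your step labelled ``first application of the Cube Lemma'' is really an application of the induction hypothesis.
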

\begin{proof}
  We show by induction on $\#(J)=n$ that the cube
  \[
    Q:\bP(J \cup I_{0}) \rightarrow \cC
  \]
  is a limit cube if and only if the $I_{0}$-shaped cube $Q(\varnothing\cup-)$ is. If $J=\{j\}$ then this follows from a cube move, using the fact that the $I_{0}$-shaped cube $Q(\{j\}\cup-)$ is limit by assumption.

  For the inductive step, we consider $J \in \bP_{\ast}(I_{1})$ with $\#(J)=n$ and consider some $j\in J$. Observe that
  \[
    Q(\{j\} \cup -): \bP((J\setminus \{j\}) \cup I_{0}) \rightarrow \cC
  \]
  is a limit cube. Indeed, by inductive hypothesis, this is equivalent to
  \[
    Q(\{j\} \cup -): \bP(I_{0}) \rightarrow \cC
  \]
  being a limit cube, which is true by assumption. But now, by a cube move,
  \[
    Q:\bP(J \cup I_{0}) \rightarrow \cC
  \]
  is a limit cube if and only if
  \[
    Q:\bP((J\setminus \{j\}) \cup I_{0}) \rightarrow \cC
  \]
  is. But again, by inductive hypothesis, this is equivalent to $Q(\varnothing \cup -):\bP(I_{0}) \rightarrow \cC$ being a limit cube, and we are done.
\end{proof}

\begin{defn}
  Let $M$ be a finite set and $Q:\Power(M)\to \scr{C}$ a cube. For $m\in M$, we call $Q$ \emph{degenerate along the $m$-direction} if for all $J \in \Power_*(M)$, the map
  \[
    \begin{tikzcd}
      Q(J)\arrow[r] & Q(J\cup \{m\})
    \end{tikzcd}
  \]
  is an equivalence.
\end{defn}

\begin{lem}[{\cite[Lemma 3.3.7]{walde_higher_2020}}]\label{lem:degenerate}
  Let $M$ be a finite set, $m\in M$ and $Q:\Power(M)\to \scr{C}$ a cube that is degenerate along the $m$-direction. Then $Q$ is a limit cube if and only if the map $Q(\varnothing)\rightarrow Q(m)$ is an equivalence.
\end{lem}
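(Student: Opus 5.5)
The plan is to write $M=M'\sqcup\{m\}$ with $M'=M\setminus\{m\}$ and view $Q$ as a morphism of $M'$-shaped cubes $Q_0\to Q_1$, where $Q_0=Q\vert_{\Power(M')}$ and $Q_1=Q(-\cup\{m\})\colon\Power(M')\to\scr{C}$. Degeneracy along $m$ says exactly that the components $Q_0(J)\to Q_1(J)$ are equivalences for every nonempty $J\subset M'$. The only component not controlled is the one at $J=\varnothing$, namely $Q(\varnothing)\to Q(\{m\})$. Note also that the component at $J=\{m\}$ itself is vacuous, since $\{m\}\cup\{m\}=\{m\}$. I will treat the degenerate case $M=\{m\}$ separately: there $\Power_*(M)=\{\{m\}\}$, the limit of the punctured cube is $Q(\{m\})$, and the claim is a tautology.

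For $M'\neq\varnothing$ the key step is to compute $\lim_{\Power_*(M)}Q$. The poset $\Power_*(M)$ is the union of $\Power_*(M')$ and $\{J\cup\{m\}\}_{J\in\Power(M')}$, and the latter subposet has initial object $\{m\}$. I claim that the inclusion of $\Power(M')\cong\{J\cup\{m\}\}$ into $\Power_*(M)$ is initial, so that restricting along it does not change limits. To check this, I would use Quillen's Theorem A for limits: for each $K\in\Power_*(M)$, the slice of objects $J\cup\{m\}\subseteq K$ must be weakly contractible. If $m\in K$, this slice has terminal object $K$. If $m\notin K$, there are no elements $J\cup\{m\}\subseteq K$ at all, so the argument fails as stated. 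I therefore need to fix the direction or the indexing.

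Redoing this with the correct variance: $\Power_*(M)$ is ordered by inclusion and the limit is over functors $\Power_*(M)\to\scr{C}$, so a subposet $A$ is initial if, for each $K$, the comma category $A_{/K}=\{a\in A: a\leq K\}$ is contractible. That is exactly the test above, and it fails when $m\notin K$. So instead I will use degeneracy directly: $Q$ restricted to $\Power_*(M)$ is the right Kan extension of its restriction to $A=\{J\cup\{m\}\}$. At $K\not\ni m$, the comma category $\{a\in A: K\leq a\}$ has initial object $K\cup\{m\}$, so the Kan extension value is $Q(K\cup\{m\})\simeq Q(K)$, by degeneracy since $K$ is nonempty. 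At $K\ni m$ the value is tautological. Limits are preserved under right Kan extension, so
\[
\lim_{\Power_*(M)}Q\simeq\lim_{A}Q\simeq Q(\{m\}),
\]
because $A$ has initial object $\{m\}$. Tracing through the identifications, the comparison map $Q(\varnothing)\to\lim_{\Power_*(M)}Q$ is identified with $Q(\varnothing)\to Q(\{m\})$, which gives the lemma.

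The main obstacle is verifying that this identification is compatible with the canonical comparison map, and not just an abstract equivalence. I would handle this by noting that the cone point $\varnothing$ maps to $\{m\}$, and that the equivalence to the Kan-extended diagram restricts to the identity on $A$.
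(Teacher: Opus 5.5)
Your argument is correct, but it takes a different route from the paper. The paper obtains the lemma in one line from \zcref{lem:cubefacelemma} with $I_0=\{m\}$ and $I_1=M\setminus\{m\}$. For $\varnothing\neq J\subseteq M\setminus\{m\}$, the $\{m\}$-shaped cubes $Q(J)\to Q(J\cup\{m\})$ are limit cubes precisely by degeneracy. Hence $Q$ is a limit cube iff the remaining one, $Q(\varnothing)\to Q(\{m\})$, is. That lemma is itself proved by induction on $\#(I_1)$ using iterated cube moves, i.e.\ \zcref{lem:cube}.

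You instead compute the limit of the punctured cube directly. Degeneracy says exactly that $Q\vert_{\Power_*(M)}$ is right Kan extended from the full subposet $A=\{J\cup\{m\}\}$, because each comma poset $\{a\in A: K\subseteq a\}$ has initial object $K\cup\{m\}$. Hence $\lim_{\Power_*(M)}Q\simeq\lim_A Q\simeq Q(\{m\})$.

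Your version is self-contained and bypasses the cube calculus. It also proves slightly more: the punctured limit is $Q(\{m\})$ whether or not $Q$ is a limit cube. The paper's version is shorter only because \zcref{lem:cubefacelemma} is already available, and it keeps the argument inside the cube-move formalism used throughout the paper.

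Two remarks on your write-up. First, delete the abandoned initiality attempt entirely. The inclusion $A\hookrightarrow\Power_*(M)$ genuinely is not initial: for $M=\{m,n\}$ the slice of $A$ over $\{n\}$ is empty, and the limit over $\Power_*(M)$ of a general diagram is a pullback rather than $Q(\{m\})$. So that route cannot be repaired without using degeneracy, which is exactly what your Kan extension step does.

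Second, the compatibility with the comparison map, which you flag as the main obstacle, is automatic. The equivalence $\lim_{\Power_*(M)}Q\to\lim_A Q\vert_A$ supplied by $\lim\circ\mathrm{Ran}\simeq\lim$ is the restriction map; this follows from the triangle identity for the adjunction between restriction to $A$ and right Kan extension. Composing it with evaluation at the initial object $\{m\}$ of $A$ recovers the leg of the limit cone. So the composite $Q(\varnothing)\to\lim_{\Power_*(M)}Q\to Q(\{m\})$ is just $Q(\varnothing\subseteq\{m\})$.
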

\begin{proof}
  This follows from applying \zcref{lem:cubefacelemma} to the case where $I = M$, $I_{0}=\{m\}$, and $I_{1}=M\setminus \{m\}$.
\end{proof}

\begin{ex}(Degenerate cubes)\label{ex:deg-cube}
  Let $\mathcal{U}$ be a saturated sieve on $M$ and $F:\sd(M)\to \cC$ a functor. Assume that there exists an $I\in \mathcal{U}$ such that for all $\varnothing \neq \mathcal{V} \subset \mathcal{U}$ the map
  \[
    \begin{tikzcd}
      F(\cap_{J \in\mathcal{V}}J) \rightarrow F((\cap_{J\in \mathcal{V}}J) \cap I)
    \end{tikzcd}
  \]
  is an equivalence; i.e., $F$ is degenerate along the $I$-direction. Applying \zcref{lem:degenerate} to the cube $F\circ C_{\mathcal{U}}$, it follows that $F$ is a $\mathcal{U}$-sheaf if and only if the map $F(M)\to F(I)$ is an equivalence.
\end{ex}

\section{The \texorpdfstring{$(\infty,\infty)}{(∞,∞)}$-category of spans}\label{sec:fibrancy}

In this section, we construct an $(\infty, \infty)$-category $\Span_\infty(\cC)$ of spans as a decorated simplicial set for every $\infty$-category $\scr{C}$ with finite limits. The basic underlying idea of our construction of $\Span_\infty(\scr{C})$, which we will now explain in the case of a $2$-simplex, is surprisingly simple. Heuristically, the data of a $2$-simplex $\Delta^{2}_{\flat}\rightarrow \Span_\infty(\scr{C})$ is given by the following diagram in $\scr{C}$.

\[
  \begin{tikzpicture}
    \coordinate (v0) at (-3,0);
    \coordinate (v2) at (3,0);
    \coordinate (v1) at (0,4.24);

    \node (e) at (0,2.7) {$01\times_{1}12$};

    \coordinate (m01) at ($0.5*(v0) + 0.5*(v1)$);
    \coordinate (m02) at ($0.5*(v0) + 0.5*(v2)$);
    \coordinate (m12) at ($0.5*(v1) + 0.5*(v2)$);
    \coordinate (m012) at ($0.333*(v0) + 0.333*(v1) + 0.333*(v2)$);

    \foreach \x in {0,1,2}{
      \path (v\x) node (v\x) {$\x$};
    };

    \foreach \x/\y in {0/1,0/2,1/2}{
      \path (m\x\y) node (m\x\y) {$\x\y$};
    };

    \foreach \x/\y/\z in {0/1/2}{
      \path (m\x\y\z) node (m\x\y\z) {$\x\y\z$};
    };

    \draw[blue,->] (m01) to (v1);
    \draw[blue,->] (m12) to (v1);
    \draw[->] (m12) to (v2);
    \draw[->] (m01) to (v0);
    \draw[->] (m02) to (v0);
    \draw[->] (m02) to (v2);

    \foreach \x/\y/\z in {0/1/2}{
      \draw[blue,->] (m\x\y\z) to (m\x\y);
      \draw[->,violet] (m\x\y\z) to (m\x\z);
      \draw[blue,->] (m\x\y\z) to (m\y\z);
    }

    \draw[red,->] (m012) to (e);
    \draw[->] (e) to (m01);
    \draw[->] (e) to (m12);
  \end{tikzpicture}
\]

By the universal property of fiber products, the object $01\times_{1}12$ and the {\textcolor{red}{red}} morphism are uniquely determined by the {\textcolor{blue}{blue}} subdiagram. This means that we can, at least intuitively, remove $01\times_1 12$ from the diagram, and still have the data of a $2$-simplex. We can therefore think of $\Delta^2_\flat \to \Span_\infty(\scr{C})$ as being determined by a diagram $\sd(\Delta^2) \to \scr{C}$. Whether the $2$-morphisms represented by the unique non-degenerate $2$-simplex is invertible is then determined by whether the {\textcolor{blue}{blue}} \emph{and} {\textcolor{violet}{violet}} diagrams are limit diagrams. These are precisely the odd and even $1$-Segal conditions defined in \zcref{defn:Segal_conditions}.

\subsection{The construction and main result}
The discussion above motivates us to make the following definition.

\begin{defn}
  Consider the cosimplicial object $\sd: \Delta \to \Set_\Delta$ which sends $[n]$ to $\sd(\Delta^n)$. The corresponding nerve provides a functor $\uSpan_\infty: \Set_\Delta \to \Set_\Delta$.\footnote{This can be viewed as a variant of Kan's $\Ex$ functor. More precisely $\uSpan_\infty(X)=\Ex(X^\op)$.} More explicitly,
  \[
    \uSpan_\infty (\scr{C})_n \coloneqq \Hom_{\Set_\Delta}(\sd(\Delta^n), \scr{C}).
  \]

  For an $(\infty,1)$-category $\scr{C}$ with finite limits, we define the decorated simplicial set $\Span_\infty(\scr{C})$ with underlying simplicial set $\uSpan_\infty(\scr{C})$ by declaring $\sigma: \Delta^n \to \uSpan_\infty(\scr{C}$) to be thin if the corresponding functor $\sigma^\top: \sd(\Delta^n) \to \scr{C}$ satisfies both the odd and even $(n-1)$-Segal conditions.

  We call $\Span_\infty(\scr{C})$ the \emph{$(\infty,\infty)$-category of spans}.\footnote{This is justified by \zcref{thm:Span_infty_is_complicial} below.}
\end{defn}

\begin{defn} \label{defn:cartesian_conditions}
  For a decorated simplicial set $X_t$, we call a functor $f:\sd(X_t)\rightarrow \cC$ \emph{Cartesian} if for every marked simplex $\sigma:\Delta^{k}\rightarrow X_{t}$ (or equivalently, a morphism $\sigma: \Delta^k_+ \to X_t$ between decorated simplicial sets), the composite functor
  \[
    f\circ \sd(\sigma) : \sd(\Delta^{k})\rightarrow \sd(X_{t}) \rightarrow \cC
  \]
  satisfies the $(k-1)$-Segal condition, where $\sd: \Set_\Delta^{\dec} \to \Set_\Delta$ denotes the left Kan extension of the functor $\sd: \tDelta \to \Set_\Delta$ along the embedding $\tDelta \to \Set_\Delta^{\dec}$.\footnote{See also \zcref{subsec:model_cat_vs_infty_cat}, where basic properties of $\sd$ are established.}
\end{defn}

\begin{rmk}
  By construction, each morphism between decorated simplicial sets $\sigma: X_{t}\rightarrow \Span_{\infty}(\cC)$ corresponds, via a one-to-one correspondence, to a Cartesian functor $\sigma^\top: \sd(X_{t})\rightarrow \cC$.
\end{rmk}

Our first main result is the following theorem, whose proof will occupy the rest of this section.
\begin{thm} \label{thm:Span_infty_is_complicial}
  Let $\scr{C}$ be a category with finite limits. Then $\Span_\infty(\scr{C})$ is a complicial set.
\end{thm}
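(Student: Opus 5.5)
The plan is to check the right lifting property against each of the four families of elementary anodyne extensions in turn. By adjunction, a lifting problem $A\to \Span_\infty(\scr{C})$ along $A\to B$ is the same as extending a Cartesian functor $\sd(A)\to\scr{C}$ along $\sd(A)\to\sd(B)$ to a functor which is Cartesian for the decoration of $B$. In every case $\sd(A)\to\sd(B)$ is either an isomorphism (thinness, triviality and saturation extensions only change markings) or the inclusion of a subposet of $\sd(\Delta^n)$. In the latter case this is the horn case $\sd(\Lambda^n_k)\subset\sd(\Delta^n)$, whose complement consists of the two objects $[n]$ and $[n]_k$. Since $\scr{C}$ is a quasi-category, an extension of the underlying simplicial map always exists in some form. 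The real content is to choose it so that the required Segal conditions hold, and to show that the remaining markings force the new thinness conditions.

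\textbf{Horn extensions.} For $\Lambda^n_k\to\Delta^n_k$ I would define the missing values directly. Put $F([n]_k)$ equal to the limit of the punctured cube of the $\Parity(k)$-Segal sieve restricted to the face $[n]_k$, whose other members already lie in $\sd(\Lambda^n_k)$. Then put $F([n])$ equal to the limit of the punctured cube of the sieve $\SParity_{[n]}(k\pm1)$, which avoids $[n]_k$. Functoriality comes from the universal properties, with the extension built as a right Kan extension in the quasi-categorical sense. The thin simplices of $\Delta^n_k$ are exactly those containing $\{k-1,k,k+1\}$. For each of these, the relevant Segal conditions have to follow from this choice, together with the conditions already known on the thin faces of $\Lambda^n_k$. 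This is where the lacuna-diagram calculus enters: the pasting moves (\zcref{ex:pasting}) and cube moves (\zcref{const:cube_moves}) transport the conditions we imposed, together with those of the thin faces, to the even and odd Segal sieves of each thin simplex. The key combinatorial fact, following Walde's treatment of higher Segal conditions (\cite{walde_higher_2020}), is that adjacent $k-1,k,k+1$ have alternating parities. So removing $k$ from a sieve can be traded for removing $k\pm1$, and the conditions propagate.

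\textbf{Thinness, triviality and saturation.} For the thinness extensions $\Delta^n_{k'}\to\Delta^n_{k''}$ the map on $\sd$ is the identity. One has to show that thinness of the $(k\pm1)$-faces, of the top simplex and of the simplices containing $\{k-1,k,k+1\}$ forces both Segal conditions on the $k$-face. I would prove this by the same lacuna moves: starting from the top cube's Segal sieve, cube moves with the known faces remove rows until only the sieve on $[n]_k$ remains. Triviality extensions only matter for $m=\infty$, where they are absent. For saturation, I would use \zcref{prop:innercomptocomp} instead and verify lifting against $\Delta^1_\sharp\to E_\sharp$. A thin $1$-simplex is a span whose legs are both equivalences, and such a span extends to a Cartesian $\sd(E)\to\scr{C}$ by taking all values equivalent. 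Then I would argue that lifting against $\iota$ together with the inner extensions implies the saturation extensions, as in Verity/Ozornova--Rovelli. Alternatively, one can check $\Delta^n_{\eq}\to\Delta^n_{\eq+}$ inductively via the join description, using \zcref{lem:degenerate}: the marked edges $02,13$ make the relevant cubes degenerate.

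\textbf{Expected obstacle.} The main difficulty is the combinatorial bookkeeping in the horn case for general $n$ and $k$, including the outer horns $k=0,n$ where only one neighbour exists. There one has to show that every thin face of $\Delta^n_k$ receives both Segal conditions from the chosen limit extension. I would first try to organize this as an induction on the dimension of the thin simplex containing $\{k-1,k,k+1\}$, applying \zcref{lem:cubefacelemma} to reduce each cube to a face cube.
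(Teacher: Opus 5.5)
There are two genuine gaps: one in how you use \zcref{prop:innercomptocomp}, and one in the horn construction.

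First, you have \zcref{prop:innercomptocomp} backwards. An \emph{inner} complicial set is defined by lifting against the inner elementary anodyne extensions. These still include the saturation extensions; only the horn and thinness extensions with $k=0,n$ are dropped. So that proposition is the device for \emph{avoiding the outer horns} (this is exactly how the paper uses it, together with \zcref{lem:specliftprop}). It presupposes saturation rather than producing it.

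Your claimed implication ``inner extensions plus lifting against $\iota$ implies saturation'' is also false. Take the Roberts--Street nerve of the walking isomorphism, with a simplex thin iff its top cell is an identity. It fills all complicial horns and thinness extensions, and it lifts against $\iota$, since its thin $1$-simplices are degenerate. Yet the $3$-simplex $x\xrightarrow{f}y\xrightarrow{g}x\xrightarrow{f}y$ gives a map from $\Delta^3_{\eq}$ that does not extend to $\Delta^3_\sharp$.

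So saturation must be checked directly. Your fallback, induction along $\Delta^0\star-$, is the paper's route. However, its base case $\Delta^3_{\eq}\to\Delta^3_\sharp$ is not just a matter of degenerate cubes. After the pullback-of-equivalence steps, one needs a 2-out-of-6 argument: $012\to 01$ is invertible because both $0123\to 012\to 01$ and $012\to 01\to 0$ are (\zcref{lem:3-simplex_saturation_extension}). Conversely, once saturation and $\iota$ are in hand, the outer horns you single out as the main obstacle need not be treated at all. Your argument for $\iota$ itself is fine.

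Second, the horn filler as described does not produce the forced values. In any Cartesian $\sd(\Delta^n_k)\to\scr{C}$, \zcref{lem:k-vanishing} makes $F(U\cup\{k\})\to F(U)$ invertible whenever $\{k-1,k+1\}\subset U$. In particular $F([n])\to F([n]_k)$ must be an equivalence, so $F([n]_k)$ cannot be a limit over a sieve on the non-thin face $[n]_k$.

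Read literally, your prescription fails in small cases:
\begin{itemize}
\item For $n=2$, $k=1$, the sieve $\SParity(1)=\mathcal{O}_2=\{02\}$ has empty facet, so you get $F(02)=\ast$ instead of $F(01)\times_{F(1)}F(12)$.
\item For $n=3$, $k=1$, you get $F(023)=F(02)$.
\item A plain right Kan extension along $\sd(\Lambda^n_k)\subset\sd(\Delta^n)$ fails as well: it gives $F(02)=F(0)\times F(2)$.
\end{itemize}

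The missing idea is to build this degeneracy in.
\begin{itemize}
\item Colocalize via $U\mapsto U\ast\{k\}$ onto $\overline{\sd(\Delta^n_k)}$, which differs from $\overline{\sd(\Lambda^n_k)}$ only by the single object $[n]$.
\item Right Kan extend there, and precompose with $t_k$.
\item The $\Parity(k)$-condition is then automatic, since that cube is degenerate.
\item The other condition follows by induction on $I\supset\{k-1,k,k+1\}$: every broad sieve on $I$ containing $I_{k\pm1}$ but not $I_k$ is local (\zcref{rmk:sheaf_broad_sieve}).
\end{itemize}
Your value of $F([n])$ as the limit over the $\SParity(k\pm1)$-cube is one instance of that last statement.

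The same $-\ast\{k\}$ device is also what the thinness extension needs. Pasting and cube moves only consume sheaf conditions on smaller faces; they never output a condition for a sieve on the face $[n]_k$ from conditions on $[n]$.
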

\begin{proof}[Proof (outline)]
  In the subsections below, we verify the necessary lifting properties individually, first showing that $\Span_\infty(\scr{C})$ is an inner complicial set, and then verifying the additional lifting property against $\Delta^1_\sharp\to E_\sharp$ required to apply \zcref{prop:innercomptocomp}. In particular, the lifting property against inner complicial horns is proven in \zcref{prop:inner_comp_horn}, the lifting property against inner complicial thinness extensions in \zcref{prop:inner_comp_thin}, the lifting property against complicial saturation extensions in \zcref{prop:comp_sat_ext}, and the condition of \zcref{prop:innercomptocomp} in \zcref{lem:specliftprop}.
\end{proof}

\subsection{Inner horns and inner thinness extensions}

We will show in this subsection that $\Span_{\infty}(\cC)$ satisfies the lifting properties against inner horn and thinness extensions.

\begin{ntt}
  Let $n\geq 0$ and $I\subset[n]$. We define
  \[
    I\ast \{k\} \coloneqq
    \begin{cases}
      I\cup\{k\}, & \text{if } \{k-1,k+1\} \subset I,\\
      I, & \text{else}.
    \end{cases}
  \]
  For every $k\in [n]$ and every $\mathcal{U}\subset \bP([n])$, we define $\mathcal{U}\ast\{k\} \subset \bP([n])$ to be the subset obtained from $\mathcal{U}$ by replacing each $I\in \mathcal{U}$ with $I\ast\{k\}$.
\end{ntt}

Observe that the operation $-\ast \{k\}$ defines a functor $\sd(\Delta^m) \to \sd(\Delta^m)$.

\begin{defn}
  For $m\geq 2$ and $0<k<m$, we define the category  $\overline{\sd(\Delta^{m}_k)}$ to be the full subcategory of $\sd(\Delta^m_k)$ spanned by the essential image of $-\ast \{k\}$. More explicitly, its objects consist of $S \subset [m]$ such that either
  \begin{itemize}
    \item $\{k-1,k+1\}\not\subset S$, or
    \item $\{k-1,k,k+1\}\subset S$.
  \end{itemize}
  We denote by $\overline{\sd(\Lambda_{k}^{m})} \coloneqq \sd(\Lambda^m_k) \cap \overline{\sd(\Delta^{m}_k)}$ the full subcategory spanned by subsets not containing $[m]$.\footnote{Note that $[m]_k \coloneqq [m] \setminus \{k\}$ is automatically excluded since it's not in $\overline{\sd(\Delta^m_k)}$.}

  The functor $-\ast \{k\}$ restricts to functors
  \[
    s_k: \sd(\Lambda^m_k) \to \overline{\sd(\Lambda^m_k)},
    \quad\text{and}\quad
    t_k: \sd(\Delta^m_k) \to \overline{\sd(\Delta^m_k)},
  \]
  which are both right adjoints to the inclusion functors.
\end{defn}

\begin{lem}\label{lem:horn_localization}
  The functors $s_{k}$ and $t_{k}$ are, respectively, colocalizations at the set of morphisms
  \begin{align*}
    \scr{M}_{k}&\coloneq
    \{U\cup\{k\} \rightarrow U\mid \text{ with }
    \{k-1,k+1\}\subset U \nsupset [m]_k\}, \quad\text{and} \\
    \scr{N}_{k}&\coloneq \{U\cup\{k\} \rightarrow U\mid \text{ with }
    \{k-1,k+1\}\subset U\}.
  \end{align*}
\end{lem}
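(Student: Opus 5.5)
The plan is to deduce both statements from the general fact that a coreflection is a localization at its counit maps. Suppose $i\colon \scr{A}\hookrightarrow \scr{B}$ is a fully faithful inclusion of categories with right adjoint $r$. Then $r$ exhibits $\scr{A}$ as the localization $\scr{B}[W^{-1}]$, where $W$ is the class of morphisms sent to isomorphisms by $r$. Equivalently, $W$ may be taken to be the class of counit maps $ir(b)\to b$ together with the identities. Indeed, for any functor $G\colon \scr{B}\to \scr{E}$ inverting the counits, the counit gives a natural equivalence $G\circ i\circ r\simeq G$, so restriction along $r$ is fully faithful. Its essential image is exactly the functors inverting $W$. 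Since our categories are posets, this is elementary. So it suffices to check three things: that $s_k$ and $t_k$ really are right adjoint to the inclusions, that every morphism in $\scr{M}_k$ (resp.\ $\scr{N}_k$) is inverted, and that every counit map is an identity or lies in $\scr{M}_k$ (resp.\ $\scr{N}_k$).

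\emph{Adjunction.} Recall that $\sd$ is ordered by reverse inclusion, so a morphism $T\to S$ means $T\supseteq S$. For $T$ in $\overline{\sd(\Delta^m_k)}$ and any $S$, we must show $T\supseteq S$ if and only if $T\supseteq S\ast\{k\}$. The backward direction is clear since $S\ast\{k\}\supseteq S$. For the forward direction, if $\{k-1,k+1\}\subset S\subseteq T$, then $T$ contains $k-1$ and $k+1$. By the defining condition of $\overline{\sd(\Delta^m_k)}$, $T$ then also contains $k$. For $s_k$ we must additionally check that $-\ast\{k\}$ preserves $\sd(\Lambda^m_k)$, i.e.\ the subsets $S$ with $S\not\supseteq [m]_k$. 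If $S\not\supseteq[m]_k$, then $S$ misses some $j\neq k$, and so does $S\ast\{k\}$. Hence $S\ast\{k\}\neq [m]$ and $S\ast\{k\}\not\supseteq [m]_k$, and the same argument applies.

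\emph{Inversion and counits.} Take a morphism $U\cup\{k\}\to U$ with $\{k-1,k+1\}\subset U$. Then $(U\cup\{k\})\ast\{k\}=U\cup\{k\}=U\ast\{k\}$, so $t_k$, and $s_k$ where it is defined, sends it to an identity. The counit at $S$ is the map $S\ast\{k\}\to S$. This is the identity unless $\{k-1,k+1\}\subset S$, in which case it is $S\cup\{k\}\to S$, which lies in $\scr{N}_k$. In the horn case we have $S\not\supseteq[m]_k$, so the counit lies in $\scr{M}_k$. This also shows that $\scr{M}_k$ consists precisely of the morphisms of $\scr{N}_k$ lying in $\sd(\Lambda^m_k)$: for such $U$, the subsets $U$ and $U\cup\{k\}$ both avoid $[m]_k$ simultaneously.

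The only step I expect to need care is the bookkeeping in the horn case. There one must confirm that both the source and target of each morphism in $\scr{M}_k$ lie in $\sd(\Lambda^m_k)$, and that the condition $U\nsupseteq[m]_k$ is the right one. This is exactly the observation above that adding $k$ does not change whether a set contains $[m]_k$. With these checks in place, the general coreflection fact gives both claims.
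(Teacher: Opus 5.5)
Your proof is correct and follows the paper's route. The paper likewise argues that a right adjoint to a fully faithful inclusion is a localization at the morphisms it inverts, and that these are, up to identities, exactly $\scr{M}_k$, resp.\ $\scr{N}_k$; you simply spell out the adjunction check, the counit computation, and the horn bookkeeping that the paper leaves as ``easy to see''. One small correction to your phrasing of the general fact: full faithfulness of restriction along $r$ comes from $r\circ i\cong\id$, whereas the counit-induced equivalence $G\circ i\circ r\simeq G$ is what shows that every functor inverting $W$ lies in its essential image.
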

\begin{proof}
  Being right adjoints to fully faithful functors, they are reflective colocalizations. It is easy to see that $s_{k}$, resp. $t_k$, sends precisely the morphisms in $\scr{M}_{k}$, resp. $\scr{N}_k$, to equivalences. Thus, they define a colocalization at $\scr{M}_k$ and $\scr{N}_k$, respectively.
\end{proof}

\begin{lem}\label{lem:k-vanishing}
  Let $m\geq 2$, $0<k<m$, and
  \[
    \sigma:\Delta^m_k \rightarrow \Span_{\infty}(\cC) \qquad (\text{resp. } \sigma:\Lambda^m_k \rightarrow \Span_{\infty}(\cC))
  \]
  a morphism of decorated simplicial sets. Then, the map $\sigma^\top(U * \{k\})\rightarrow \sigma^\top(U)$ is an equivalence for every $U\subset [m]$ (resp. for every $[m]_k \nsubset M\subset[m]$).
\end{lem}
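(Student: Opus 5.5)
The plan is to reduce everything to the thin faces of $\Delta^m_k$ and then use the degenerate-cube argument of \zcref{ex:deg-cube}, inducting on the size of the face.

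\emph{Reduction.} If $\{k-1,k+1\}\not\subset U$, then $U\ast\{k\}=U$ and there is nothing to prove. So assume $\{k-1,k+1\}\subset U$, and put $T\coloneqq U\cup\{k\}$. Then $T$ contains $\{k-1,k,k+1\}$. Hence the face of $\Delta^m$ spanned by $T$ is thin in $\Delta^m_k$.

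In the horn case we also assume $[m]_k\not\subset U$. Then $T\neq[m]$, and $T\neq[m]_k$ because $k\in T$. So the face spanned by $T$ lies in $\Lambda^m_k$, and it is thin there since the decoration is inherited from $\Delta^m_k$.

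In either case, $F\coloneqq\sigma^\top|_{\sd(T)}$ satisfies both the odd and even $T$-Segal conditions. Every smaller face $T'\subset T$ with $\{k-1,k,k+1\}\subset T'$ is also thin, and lies in the horn in the horn case. We must show that $F(T)\to F(T_k)$ is an equivalence. We prove this, simultaneously for all such $T$, by induction on $\#T\geq 3$.

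\emph{Base case $T=\{k-1,k,k+1\}$.} Here $k$ has exactly one element above it in $T$, so $k$ is odd. The odd Segal sieve is the singleton $\{T_k\}=\{\{k-1,k+1\}\}$. The associated cube is the single arrow $F(T)\to F(T_k)$, and being a limit cube means exactly that this arrow is an equivalence. This matches the $2$-dimensional picture, where the violet arrow is required to be invertible.

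\emph{Inductive step.} Let $\#T>3$ and consider the Segal sieve $\mathcal{U}\coloneqq\SParity_T(k)$ containing $T_k$. Every other member has the form $T_j$ with $j\neq k$ of the same parity as $k$. Since $k-1$ and $k+1$ are adjacent to $k$ in $T$, they have the opposite parity, so $j\notin\{k-1,k+1\}$.

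Let $\varnothing\neq\mathcal{V}\subset\mathcal{U}\setminus\{T_k\}$, and set $W\coloneqq\bigcap_{V\in\mathcal{V}}V$. Then $W$ still contains $\{k-1,k,k+1\}$, and $W\subsetneq T$. Therefore $W$ spans a thin face on which $\sigma^\top$ is Segal, and the inductive hypothesis gives that $F(W)\to F(W\setminus\{k\})=F(W\cap T_k)$ is an equivalence. For $\mathcal{V}$ containing $T_k$ the required map is an identity. So $F\circ C_{\mathcal{U}}$ is degenerate along the $T_k$-direction.

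Since $F$ is a $\mathcal{U}$-sheaf (the face is thin), \zcref{ex:deg-cube}, i.e.\ \zcref{lem:degenerate}, shows that $F(T)\to F(T_k)$ is an equivalence. Because $T_k=U$ and $T=U\ast\{k\}$, this is exactly the claim.

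\emph{Main obstacle.} The delicate point is the parity bookkeeping in the inductive step. One must check that the other members of the sieve containing $T_k$ never remove $k\pm1$, so that all the intersections $W$ remain thin. In the horn case one must also check that these intersections remain faces of $\Lambda^m_k$. Both hold because each such $W$ is a proper subset of $T$ containing $k$, hence is neither $[m]$ nor $[m]_k$.
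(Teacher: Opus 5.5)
Your proposal is correct and follows the paper's own proof. Both induct on $\#(U\cup\{k\})$, take as base case the odd Segal condition on the thin $2$-simplex $\{k-1,k,k+1\}$, and in the inductive step apply \zcref{ex:deg-cube} to the $\Parity_{U\cup\{k\}}(k)$-Segal cube, which is degenerate along the $U$-direction by induction. You also spell out the parity check (that $k\pm 1$ have parity opposite to $k$, so all intersections stay thin and inside the horn), which the paper leaves implicit; the one case you do not mention, $k\in U$, is trivial because the map is then an identity.
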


\begin{proof}
  We will prove the statement for the $\Delta^m_k$ case, as the $\Lambda^m_k$ case is analogous.

  When $U \in \overline{\sd(\Delta^m_k)}$, there is nothing to prove since $U \ast \{k\} = U$. It remains to treat the case where $U \notin \overline{\sd(\Delta^m_k)}$, i.e., $\{k-1, k+1\} \subset U \subsetneq U \cup \{k\}$. We prove the statement by induction on $\#(U)\geq 2$.

  For $U=\{k-1,k+1\}$, this is the odd Segal condition for the thin $2$-simplex $\Delta^{\{k-1,k,k+1\}}_k \subset \Delta^m_k$. In general, since $\{k-1,k,k+1\}\subset U\cup\{k\}$, the functor $\sigma^\top$ is $\Parity_{U\cup\{k\}}(k)$-Segal. By the inductive hypothesis, the $\Parity_{U\cup\{k\}}(k)$-Segal cube is degenerate along the $U$-direction. Consequently, the conditions of \zcref{ex:deg-cube} are satisfied and the map $\sigma(U\cup\{k\})\rightarrow \sigma(U)$ is an equivalence.
\end{proof}

\begin{rmk} \label{rmk:k-vanishing_converse}
  Let $k\in [m]$ and $F: \sd(\Delta^m) \to \mathcal{C}$. The same proof as the above implies that the following are equivalent
  \begin{enumerate}
    \item for each $U \subset [m]$, $F(U \ast \{k\}) \to F(U)$ is an equivalence,
    \item for each $U \subset [m]$ containing $\{k-1,k+1\}$, $F|_{\sd(U \cup \{k\})}$ is $\Parity_{U \cup \{k\}}(k)$-Segal.
  \end{enumerate}
\end{rmk}

We now have all the necessary ingredients to show that $\Span_{\infty}(\cC)$ has the right-lifting property against inner horn and thinness extensions.

\begin{prop}[Inner horn extension] \label{prop:inner_comp_horn}
  Let $\cC$ be an $(\infty,1)$-category with finite limits. The decorated simplicial set $\Span_{\infty}(\cC)$ has the right lifting property against the complicial inner horn extensions.
\end{prop}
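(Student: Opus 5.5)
Given a map $\sigma:\Lambda^m_k\to\Span_\infty(\cC)$ with $m\geq 2$, $0<k<m$, the plan is to produce a Cartesian extension $\sd(\Delta^m_k)\to\cC$ of $\sigma^\top:\sd(\Lambda^m_k)\to\cC$. The underlying simplicial set of $\sd(\Lambda^m_k)$ is the nerve of the full subposet of $\sd(\Delta^m)$ on subsets $S\neq [m],[m]_k$. So we must supply values on $[m]_k$ and $[m]$, together with coherent maps, such that the new thin simplices satisfy the Segal conditions.

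The key tool is \zcref{lem:horn_localization}. By \zcref{lem:k-vanishing}, $\sigma^\top$ inverts every morphism of $\scr{M}_k$. Since $s_k$ is a colocalization at $\scr{M}_k$, $\sigma^\top$ factors, up to equivalence, as $\overline{\sigma}\circ s_k$, where $\overline{\sigma}=\sigma^\top|_{\overline{\sd(\Lambda^m_k)}}$. The category $\overline{\sd(\Delta^m_k)}$ is obtained from $\overline{\sd(\Lambda^m_k)}$ by adding a single object, the initial object $[m]$; the object $[m]_k$ is not in it. We therefore extend $\overline{\sigma}$ to $\overline{\tau}:\overline{\sd(\Delta^m_k)}\to\cC$ by setting $\overline{\tau}([m])=\lim_{\overline{\sd(\Lambda^m_k)}}\overline{\sigma}$, a right Kan extension along a cone inclusion. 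This limit is finite and exists because $\cC$ has finite limits. We then set $\tau=\overline{\tau}\circ t_k$. Since $t_k$ restricts to $s_k$ on $\sd(\Lambda^m_k)$, $\tau$ extends $\sigma^\top$ up to equivalence. Because $\sd(\Lambda^m_k)\subset\sd(\Delta^m_k)$ is a monomorphism, hence a categorical cofibration of quasi-categories, the equivalence can be straightened into an honest extension. Unwinding, this gives $\tau([m]_k)=\tau([m])$, with the map $[m]\to[m]_k$ sent to an identity.

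It remains to check the Cartesian conditions for the newly added thin simplices: the top simplex $[m]$, the face $[m]_k$ when $\Delta^m_k$ marks it (it contains $k-1,k,k+1$ only if... in fact $[m]_k$ omits $k$, so it is not thin in $\Delta^m_k$), and all other thin faces, which already lie in the horn. So the only genuinely new condition is that $\tau$ is $(m-1)$-Segal on $[m]$, both odd and even.

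By \zcref{rmk:k-vanishing_converse}, $\tau$ inverts all of $\scr{N}_k$, so the Segal sieve $\SParity_{[m]}(k)$, the one containing $[m]_k$, is handled by \zcref{ex:deg-cube}: its cube is degenerate along the $[m]_k$-direction and $\tau([m])\to\tau([m]_k)$ is an equivalence. For the opposite-parity sieve, the cube $C_{\mathcal{U}}$ has all its punctured vertices in $\overline{\sd(\Lambda^m_k)}$ after applying $-\ast\{k\}$. The step I expect to be the main obstacle is showing that the limit of $\overline{\sigma}$ over all of $\overline{\sd(\Lambda^m_k)}$ agrees with the limit over this punctured cube, that is, a cofinality or sheaf argument. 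I would attack it by repeatedly applying pasting and cube moves, \zcref{ex:pasting} and \zcref{const:cube_moves}, using the Segal conditions of the thin faces of the horn (those containing $\{k-1,k,k+1\}$) to rewrite the limit over the whole subdivided horn as an iterated pullback along the opposite-parity faces. Alternatively, I would first define $\tau([m])$ as the limit of that Segal punctured cube, and then use the same moves to verify compatibility with all the remaining objects of $\overline{\sd(\Lambda^m_k)}$.
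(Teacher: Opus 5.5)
Your construction is the paper's: right Kan extend $\overline{\sigma}$ along $\overline{\sd(\Lambda^m_k)}\subset\overline{\sd(\Delta^m_k)}$ and precompose with $t_k$. Your treatment of the $\Parity_{[m]}(k)$-Segal condition, by degeneracy along the $[m]_k$-direction, is also correct. But the opposite-parity condition, that the extension is a $\SParity_{[m]}(k+1)$-sheaf, is the entire substance of the proposition, and you leave it open. The cofinality route you name first cannot work. The punctured cube of $\SParity_{[m]}(k+1)$ is not initial in $\overline{\sd(\Lambda^m_k)}$: take $U=[m]_l$ with $l\equiv k \pmod 2$ and $l\neq k$ (such $l$ exists as soon as $m\geq 3$). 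No vertex $[m]\setminus S$ of that cube satisfies $[m]\setminus S\supseteq U$, so the relevant comma category is empty. The limit defining $\overline{\tau}([m])$ therefore genuinely depends on the values on these faces, and their sheaf conditions must be used.

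What is initial in $\overline{\sd(\Lambda^m_k)}$ is the punctured cube of the broad sieve $\mathcal{B}=\{[m]_l : l\neq k\}$. For $U$ in the horn, the comma poset is $\Power_*\bigl(([m]\setminus U)\setminus\{k\}\bigr)$, which is nonempty since $U\neq [m],[m]_k$. So the Kan extension is automatically a $\mathcal{B}$-sheaf.

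One then deletes the faces $[m]_l$ with $l\equiv k$, $l\neq k$, one at a time by cube moves (\zcref{const:cube_moves}). Each move needs the $[m]_l$-facet to be local. That facet is a broad sieve on $I=[m]_l$ containing $I_{k-1},I_{k+1}$ but not $I_k$. Typically it is \emph{not} a Segal sieve of $I$; for instance, the first one is $\{I_j : j\neq k\}$. So the two Segal conditions on the thin faces do not suffice directly.

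You need the strengthened claim: for every $I\supseteq\{k-1,k,k+1\}$, the restriction to $\sd(I)$ is a sheaf for \emph{every} broad sieve on $I$ containing $I_{k\pm 1}$ and not $I_k$. This is proved by induction on $\#I$. Given the claim for smaller sets, the same cube move shows that all such sieves on $I$ are simultaneously local or not. One of them is local: $\SParity_I(k+1)$ when $\Delta^I$ is a proper face (hence thin and in the horn), and $\mathcal{B}$ when $I=[m]$. Applying this at $I=[m]$ gives the missing $\SParity_{[m]}(k+1)$-condition.
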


\begin{proof}
  Let $m\geq 2$ and $0<k<m$. We need to construct an extension $\overline{\sigma}$ given any $\sigma$ in the diagram below.
  \[
    \begin{tikzcd}
      \Lambda^m_k \arrow[r, "\sigma"] \arrow[d,hook] &
      \Span_{\infty}(\cC). \\
      \Delta^m_k \arrow[ur, dotted, swap, "\overline{\sigma}"] &
    \end{tikzcd}
  \]
  It follows from \zcref{lem:k-vanishing} that the functor $\sigma^\top:\sd(\Lambda^m_k)\rightarrow \cC$ (resp. $\overline{\sigma}^\top$) sends (resp. would send) morphisms in $\scr{M}_{k}$ (resp. $\scr{N}_k$) to equivalences. By \zcref{lem:horn_localization}, it therefore suffices to construct the following extension
  \[
    \begin{tikzcd}
      \sd(\Lambda^{m}_{k}) \arrow[r]\arrow[d,hook]   & \overline{\sd(\Lambda^m_k)} \arrow[r, "\tau"] \arrow[d,"\iota", hook] & \cC\\
      \sd(\Delta^{m}_{k}) \arrow[r,"\pi"] &  \overline{\sd(\Delta^m_k)} \arrow[ur,dotted] &
    \end{tikzcd}
  \]
  such that $\sd(\Delta^m_k) \to \scr{C}$ is Cartesian.

  Let $G$ be an extension of $\sigma^\top$ be given by the composition
  \[
    \begin{tikzcd}
      G:\sd(\Delta^{m}_{k}) \arrow[r,"\pi"] & \sd(\Delta^{m}_{k}) \arrow[r, "\iota_{\ast}\tau"] & \cC,
    \end{tikzcd}
  \]
  where $i_{\ast}\tau$ denotes the right Kan extension of $\tau$ along $\iota$. We will show that $G$ is Cartesian.

  Since any thin simplex of $\Delta^m_k$ of dimension less than $m$ is already in $\Lambda^m_k$, we only need to show that $G$ is $[m]$-Segal. Additionally, since $G$ factors through $\pi$ by construction, the restriction of $G$ to the $\Parity_{[m]}(k)$-Segal cube is degenerate along the $[m]_k$ direction and moreover, $G([m]) \to G([m]_k)$. Thus, $G$ is also $\Parity_{[m]}(k)$-Segal. It remains to show that $G$ is $(-\Parity_{[m]}(k))$-Segal.

  We will show the following stronger statement:
  \begin{itemize}
    \item[(*)] For any $I\subset [m]$, $G$ (restricted to $\sd(I)$) is a sheaf for any broad sieve $\mathcal{U}$ on $I$ such that
      \begin{enumerate}
        \item $\{k-1,k,k+1\} \subset I$,\footnote{Note that this implies that $\Delta^I$ is thin in $\Delta^m_k$.}
        \item $I_{k-1},I_{k+1}\in \mathcal{U}$, and
        \item $I_{k}\notin \mathcal{U}$
      \end{enumerate}
  \end{itemize}

  We show (*) by induction on $\#I$. The smallest simplex that satisfies the above conditions is $\Delta^{\{k-1,k,k+1\}}$ and the only choice of sieve is the even Segal cover $\mathcal{U}=\mathcal{E}_{I}=\{\{k-1,k\},\{k,k+1\}\}$. If $\Delta^{\{k-1, k, k+1\}}$ is not the top simplex, then this sieve is already known to be $G$-local. Otherwise, it is local by the construction of $G$ via right Kan extension.

  For the inductive step, we will show the following statement:
  \begin{itemize}
    \item[(**)] Assuming that (*) holds for all subsets of size smaller than $I \subset [m]$, then $G$ is a sheaf for a sieve on $I$ satisfying the condition in (*) if and only if it is also a sheaf for all such sieves on $I$.
  \end{itemize}
  This is sufficient to complete the induction step, and hence, the proof. Indeed, when $\Delta^I$ is not the top simplex, then $G$ is already known to be a $(-\Parity_{I}(k))$-sheaf. When $\Delta^I$ is the top simplex, i.e., $\Delta^I = \Delta^n$, first observe that the punctured cube associated to the broad sieve containing all subsets $[n]_l$ for $l\neq k$ is initial in $\sd(\Lambda^n_k)$. Thus, by the construction of $G$ via right Kan extension, $G$ is a sheaf with respect to the broad sieve containing all subsets $[n]_l$ for $l\neq k$.

  It remains to prove the \emph{only if} direction of (**) as the \emph{if} direction is automatic. To do this, it suffices to show that for sieves $\mathcal{U} \subsetneq \mathcal{V} = \mathcal{U} \cup \{I_l\}$ satisfying the conditions stated in (*), $G$ is a $\mathcal{U}$-sheaf if and only if it is a $\mathcal{V}$-sheaf. For the sieve $\mathcal{V}$, observe that the $I_l$-facet is a sieve of $I_l$ satisfying the assumption in (*), and thus, is $G$-local by the assumption in (**) since $I_l$ is smaller than $I$. Moreover, the anti-$I_l$ facet is precisely the sieve $\mathcal{U}$. Thus, $G$ is a $\mathcal{U}$-sheaf if and only if it is a $\mathcal{V}$-sheaf by a cube move, see \zcref{const:cube_moves}. The proof of (**) concludes.

  \lacunaMove{
    \lacuna{7}{
      $\dots$ & $k-1$ & $k$ & $k+1$ & $\dots$ & $\cancel{l}$ & $\dots$ \\ \hline
      & $\varnothing$ \\
      &&& $\varnothing$
    }
  }{\lacuna{7}{
      $\dots$ & $k-1$ & $k$ & $k+1$ & $\dots$ & $l$ & $\dots$ \\ \hline
      & $\varnothing$ \\
      &&& $\varnothing$
    }
  }{
    \lacuna{7}{
      $\dots$ & $k-1$ & $k$ & $k+1$ & $\dots$ & $l$ & $\dots$ \\ \hline
      & $\varnothing$ \\
      &&& $\varnothing$ \\
      &&&&& $\varnothing$
    }
  }
\end{proof}

\begin{rmk} \label{rmk:sheaf_broad_sieve}
  Consider $m\geq 2$, $0<k<m$, and a Cartesian map $\sigma^\top: \sd(\Delta^m_k) \to \scr{C}$. The proof above implies that for any $I \subseteq [m]$ containing $\{k-1, k, k+1\}$, the restriction of $\sigma^\top$ to $I$ is a sheaf with respect to any broad sieve $\mathcal{U}$ on $I$ such that $I_{k-1}, I_{k+1} \in \mathcal{U}$ and $I_k \notin \mathcal{U}$.
\end{rmk}

\begin{prop}\label{prop:inner_comp_thin}
  Let $\cC$ be an $(\infty,1)$-category with finite limits. Then the
  decorated simplicial set $\Span_{\infty}(\cC)$ has the right
  lifting property against inner complicial thinness
  extensions.
\end{prop}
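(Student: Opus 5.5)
Fix $m\geq 2$ and $0<k<m$, and let $\sigma:\Delta^m_{k'}\to\Span_\infty(\cC)$ be a map of decorated simplicial sets. The underlying simplicial sets of $\Delta^m_{k'}$ and $\Delta^m_{k''}$ agree, and the only additional thin simplex is the face $d_k\Delta^m=\Delta^{[m]_k}$. So the claim is that $F\coloneqq\sigma^\top|_{\sd([m]_k)}$ satisfies both the odd and even $(m-2)$-Segal conditions. What we know is that $\sigma^\top:\sd(\Delta^m)\to\cC$ is Cartesian for $\Delta^m_{k'}$: the top simplex $[m]$ is thin, the faces $[m]_{k-1}$ and $[m]_{k+1}$ are thin, and every simplex containing $\{k-1,k,k+1\}$ is thin.

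\textbf{Step 1: the $k$-vanishing.} Since $\Delta^m_k\subset\Delta^m_{k'}$, \zcref{lem:k-vanishing} gives that $\sigma^\top(U\ast\{k\})\to\sigma^\top(U)$ is an equivalence for every $U\subset[m]$. By \zcref{rmk:sheaf_broad_sieve}, $\sigma^\top$ is moreover a sheaf on $[m]$ for every broad sieve $\mathcal{U}$ with $[m]_{k-1},[m]_{k+1}\in\mathcal{U}$ and $[m]_k\notin\mathcal{U}$.

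\textbf{Step 2: the parity of $[m]_k$ that agrees with $k$.} The face $[m]_k$ lies in the $\Parity_{[m]}(k)$-Segal sieve. Since $\sigma^\top$ is $[m]$-Segal, it is in particular a sheaf for that sieve. Its punctured cube is degenerate along the $[m]_k$-direction by Step 1: intersecting with $[m]_k$ only removes $k$ from sets containing $k\pm1$. Hence, by \zcref{ex:deg-cube}, $\sigma^\top([m])\to\sigma^\top([m]_k)$ is an equivalence.

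\textbf{Step 3: transferring the Segal conditions to the face.} Let $\mathcal{W}$ be either Segal sieve on $[m]_k$; its elements are the sets $[m]_{\{k,l\}}$ for $l$ of a fixed parity in $[m]_k$. Parities in $[m]_k$ agree with those in $[m]$ for $l>k$ and flip for $l<k$. For $\mathcal{W}$ I would use a cube move, as in \zcref{const:cube_moves}, applied to the broad sieve $\mathcal{U}=\{[m]_l : [m]_{\{k,l\}}\in\mathcal{W}\}\cup\{[m]_k\}$ on $[m]$. Its $[m]_k$-facet is exactly $\mathcal{W}$, and its anti-$[m]_k$ facet is $\mathcal{U}'=\{[m]_l:\dots\}$. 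So $F$ is a $\mathcal{W}$-sheaf if and only if $\sigma^\top$ is both a $\mathcal{U}$-sheaf and a $\mathcal{U}'$-sheaf.

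To show $\sigma^\top$ is a $\mathcal{U}'$-sheaf, I would argue as follows. Because $\mathcal{W}$ consists of elements of alternating-compatible parity, it contains at least one of $[m]_{\{k,k-1\}}$ and $[m]_{\{k,k+1\}}$ (these have different parities in $[m]_k$), so $\mathcal{U}'$ contains $[m]_{k-1}$ or $[m]_{k+1}$. If it contains both, Step 1 (broad sieves avoiding $[m]_k$) applies directly. Otherwise, I would add the missing one via a cube move. The facet of the enlarged sieve on $[m]_{k\pm1}$ is a broad-type sieve on a thin simplex containing $\{k-1,k,k+1\}$, or is degenerate by Step 1, so it is local.

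For $\mathcal{U}$ itself, I would compare it with $\mathcal{U}$ with $[m]_k$ removed, using Step 2 together with degeneracy along the $[m]_k$-direction. By \zcref{ex:deg-cube}, $\mathcal{U}$ reduces to the map $\sigma^\top([m])\to\sigma^\top([m]_k)$ being an equivalence, provided the degeneracy holds for all intersections.

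\textbf{Main obstacle.} The hard part is this bookkeeping: checking, for each parity and each position of $l$ relative to $k$, that the intersections needed for degeneracy, and the sieves that appear as facets, are genuinely covered by Step 1 or by \zcref{rmk:sheaf_broad_sieve}. This is most delicate when neither $k-1$ nor $k+1$ is an endpoint. I would track it with lacuna diagrams, adding and removing the columns $k$, $k-1$ and $k+1$ by pasting and cube moves.
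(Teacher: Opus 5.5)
Your Steps 1 and 2 are fine. Step 2 is in fact immediate from \zcref{lem:k-vanishing}, since $[m]_k\ast\{k\}=[m]$. Your argument that $\mathcal{U}'$ is local is exactly the last step of the paper's proof: add the missing face, say $[m]_{k+1}$, by a cube move, then apply \zcref{rmk:sheaf_broad_sieve}. The facet of that cube move is the Segal sieve of the thin face $[m]_{k+1}$ not containing the $k$-lacuna; it is not a sieve on a simplex containing $\{k-1,k,k+1\}$. Also, $\mathcal{U}'$ always contains exactly one of $[m]_{k\pm1}$, so your ``both'' case never occurs.

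The gap is the passage between $\mathcal{U}'$ and $\mathcal{W}$. The degeneracy of $C_{\mathcal{U}}$ along the $[m]_k$-direction is false. In Step 2 it held because the $\Parity_{[m]}(k)$-sieve contains neither $[m]_{k-1}$ nor $[m]_{k+1}$. Here $\mathcal{U}'$ contains, say, $[m]_{k-1}$, so intersections $V$ involving it miss $k-1$, and \zcref{lem:k-vanishing} says nothing about $\sigma^\top(V)\to\sigma^\top(V\setminus\{k\})$.

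Concretely, for $m=3$, $k=1$ and $\mathcal{W}=\{23,02\}$ you get $\mathcal{U}=\{123,012,023\}$. Degeneracy would require $\sigma^\top(123)\to\sigma^\top(23)$ to be invertible. By the Segal condition of the thin face $123$, this map is a base change of $\sigma^\top(12)\to\sigma^\top(2)$. Put a span $\ast\leftarrow S\to\ast$ with $\#S=2$ on the non-thin edge $12$, trivial spans on $01$ and $23$, and let all other values be forced by the thin simplices. This gives a valid map $\Delta^3_{1'}\to\Span_\infty(\mathsf{Set})$ on which that map is $S\to\ast$.

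Moreover, even with $\mathcal{U}$ and $\mathcal{U}'$ both local, the cube move does not give you $\mathcal{W}$. It only says that, \emph{given} the facet is local, the cube is local iff the anti-facet is. Locality of a facet does not follow from the other two, since a base change of a non-equivalence can be an equivalence.

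The missing ingredient is the Segal condition of the thin face $[m]_{k-1}$ that \emph{contains} the $k$-lacuna, which your proposal never uses. The paper proceeds as follows.
\begin{enumerate}
  \item Since $-\ast\{k\}$ commutes with intersections, \zcref{lem:k-vanishing} gives an objectwise equivalence from the cube of $\mathcal{W}\ast\{k\}$ on $[m]$ to $C_{\mathcal{W}}$. No degeneracy is needed, because the problematic element $[m]_{\{k-1,k\}}$ is simply left unchanged by $-\ast\{k\}$.
  \item The sieve $\mathcal{W}\ast\{k\}$ differs from your $\mathcal{U}'$ only by having $[m]_{\{k-1,k\}}$ in place of $[m]_{k-1}$.
  \item The pasting move of \zcref{ex:pasting} with $S=[m]_{k-1}$ and $T=[m]_{\{k-1,k\}}$ identifies the two. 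Its mediating sieve is exactly that Segal sieve of the thin face $[m]_{k-1}$.
  \item Your argument for $\mathcal{U}'$ then finishes the proof.
\end{enumerate}
Replacing your detour through $\mathcal{U}$ by this transport-and-paste step closes the gap.
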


\begin{proof}
  Let $m\geq 2$ and $0<k<m$. We need to construct an extension
  \[
    \begin{tikzcd}
      \Delta^m_{k^\prime} \arrow[r,"\sigma"] \arrow[d,hook] & \Span_\infty(\cC)\\
      \Delta^m_{k^{\prime\prime}} \arrow[ur,dotted] &
    \end{tikzcd}
  \]
  of $\sigma$. For this, it suffices to show that the simplex $\Delta^{[m]_{k}}$ is thin, or equivalently, it suffices to show that the restriction of $\sigma^\top$ to $\sd(\Delta^{[m]_k})$ satisfies the $[m]_k$-Segal conditions. We will show that it satisfies the $\Parity_{[m]_k}(k-1)$-Segal condition, the $(-\Parity_{[m]_k}(k-1))$-Segal condition being analogous.

  By \zcref{lem:k-vanishing}, we can equivalently show that $\SParity_{[m]_k}(k-1) \ast \{k\}$ is $\sigma^\top$-local on $[m]$. Consider the pasting move, where the lacuna diagram on the left represents $\SParity_{[m]_k}(k-1) \ast \{k\}$

  \lacunaMove{
    \lacuna{6}{
      $\dots$ & $\cancel{k-1}$ & $k$ & $k+1$ & $k+2$ & $\dots$ \\ \hline
      && $\varnothing$ \\
      &&&& $\varnothing$
    }
  }{
    \lacuna{6}{
      $\dots$ & $k-1$ & $k$ & $k+1$ & $k+2$ & $\dots$ \\ \hline
      & $\varnothing$ & $\varnothing$ \\
      &&&& $\varnothing$
    }
  }{
    \lacuna{6}{
      $\dots$ & $k-1$ & $k$ & $k+1$ & $k+2$ & $\dots$ \\ \hline
      & $\varnothing$ \\
      &&&& $\varnothing$
    }
  }
  \noindent and the following cube move.

  \lacunaMove{
    \lacuna{6}{
      $\dots$ & $k-1$ & $k$ & $\cancel{k+1}$ & $k+2$ & $\dots$ \\ \hline
      & $\varnothing$ \\
      &&&& $\varnothing$
    }
  }{
    \lacuna{6}{
      $\dots$ & $k-1$ & $k$ & $k+1$ & $k+2$ & $\dots$ \\ \hline
      & $\varnothing$ \\
      &&&& $\varnothing$
    }
  }{
    \lacuna{6}{
      $\dots$ & $k-1$ & $k$ & $k+1$ & $k+2$ & $\dots$ \\ \hline
      & $\varnothing$ \\
      &&& $\varnothing$ \\
      &&&& $\varnothing$
    }
  }

  The first, resp. second, mediating sieve is $\sigma^\top$-local on $[m]_{k-1}$, resp. $[m]_{k+1}$, because it is a Segal sieve on $\Delta^{[m]_{k-1}}$, resp. $\Delta^{[m]_{k+1}}$, both of which are thin simplices in $\Delta^m_{k'}$, by assumption. Moreover, the right sieve on the cube move is also $\sigma^\top$-local due to \zcref{rmk:sheaf_broad_sieve}. Thus, we are done.
\end{proof}

\subsection{Saturation extensions}
We will now show that $\Span_\infty(\scr{C})$ has the right extension property against saturation extensions.

\begin{prop}\label{prop:comp_sat_ext}
  Let $\scr{C}$ be an $(\infty, 1)$-category with finite limits. Then the decorated simplicial set $\Span_{\infty}(\scr{C})$ has the right lifting property with respect to complicial saturation extensions.
\end{prop}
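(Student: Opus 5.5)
The plan is to unwind the join description and reduce to two ingredients: the base case $n=3$, and a ``coning'' step that mirrors $\Delta^n_{\eq}=\Delta^0\star\Delta^{n-1}_{\eq}$. Write the last four vertices of $[n]$ as $a<b<c<d$, and call the first $n-3$ vertices the cone vertices. A non-degenerate simplex $S\subset[n]$ is then thin in $\Delta^n_{\eq}$ exactly when $S\cap\{a,b,c,d\}$ has at least three elements or equals $\{a,c\}$ or $\{b,d\}$. It is thin in $\Delta^n_{\eq+}$ as soon as $S\cap\{a,b,c,d\}$ has at least two elements. Given $\sigma:\Delta^n_{\eq}\to\Span_\infty(\scr{C})$, we must therefore show that $\sigma^\top|_{\sd(S)}$ is $S$-Segal (both parities) whenever $S\cap\{a,b,c,d\}\in\{\{a,b\},\{b,c\},\{c,d\},\{a,d\}\}$. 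We prove this by induction on the number of cone vertices in $S$.

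\emph{Base case (no cone vertices).} Here $S$ is one of the edges $ab,bc,cd,ad$. A $1$-simplex $\tau$ is thin iff $\tau^\top(01)\to\tau^\top(0)$ and $\tau^\top(01)\to\tau^\top(1)$ are both equivalences, since these are the two $0$-Segal conditions. Pass to the homotopy $1$-category $h\Span_1(\scr{C})$, whose morphisms are isomorphism classes of spans composed by pullback. A thin $2$-simplex $ijk$ witnesses $[ik]=[jk]\circ[ij]$: the odd condition identifies $\sigma^\top(ijk)$ with $\sigma^\top(ik)$, and the even condition identifies it with the pullback $\sigma^\top(ij)\times_{\sigma^\top(j)}\sigma^\top(jk)$. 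The thin triangles $abc$ and $bcd$, together with the invertible morphisms $[ac]$ and $[bd]$, give by two-out-of-six that $[ab]$, $[bc]$ and $[cd]$ are isomorphisms; the triangle $abd$ then gives the same for $[ad]$. It remains to check the standard fact that a span which is invertible in $h\Span_1(\scr{C})$ has both legs equivalences. If $g\circ f\simeq\id$ and $f\circ g\simeq\id$, then the legs of $f$ admit one-sided inverses on both sides in $h\scr{C}$, computed from the legs of the pullback composites. This settles all edges.

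\emph{Coning step.} Let $S=\{v\}\cup S'$, where $v$ is the minimal cone vertex of $S$. By induction $\sigma^\top|_{\sd(S')}$ is Segal. We need to show $\sigma^\top|_{\sd(S)}$ is Segal for both parities. We argue exactly as in \zcref{prop:inner_comp_thin}, using the auxiliary simplex $\widehat S=S\cup\{x\}$, where $x\in\{a,b,c,d\}\setminus S$ is chosen so that $\widehat S\cap\{a,b,c,d\}$ has three elements; thus $\widehat S$ is thin in $\Delta^n_{\eq}$. Its faces other than $S$ are:
\begin{itemize}
\item $\widehat S\setminus\{v\}$, which is thin in $\Delta^n_{\eq}$;
\item the faces removing another cone vertex, which still meet $\{a,b,c,d\}$ in three elements and are thin;
\item the faces removing a vertex of $S\cap\{a,b,c,d\}$, which are thin either directly (e.g.\ if they meet $\{a,b,c,d\}$ in $\{a,c\}$ or $\{b,d\}$) or by the induction on cone vertices applied to a smaller simplex.
\end{itemize}
For each parity of the Segal sieve on $S$ we then perform a chain of moves. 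First, a pasting move (\zcref{ex:pasting}) adds the lacuna at $x$; the mediating sieve is a Segal sieve on a thin face. Next, a cube move (\zcref{const:cube_moves}) adds or removes a row, reducing the condition to a sheaf condition for a Segal sieve, or a broad sieve, on $\widehat S$. That condition holds because $\widehat S$ is thin, or via the analogue of \zcref{rmk:sheaf_broad_sieve}.

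The main obstacle is the bookkeeping in the coning step. We must choose $x$ (before $b$, between, or after $c$, depending on the pair $S\cap\{a,b,c,d\}$) so that the parities of the lacunae at $v$ and $x$ line up, and every mediating sieve lands on a simplex already known to be thin. I would first handle $S\cap\{a,b,c,d\}=\{b,c\}$ with $x=d$ and $x=a$ used for the two parities respectively. I would then check whether the edge cases $\{a,d\}$ and $\{a,b\}$ need a two-step chain through an intermediate simplex such as $S\cup\{c\}$ combined with $S\cup\{b,c\}$. If that fails, the fallback is to combine the base-case argument with the horn-lifting of \zcref{prop:inner_comp_horn} and the uniqueness of thin fillers. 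This fallback would identify $\sigma^\top|_{\sd(S)}$ with a right Kan extension that is Segal by construction, as in the proof of that proposition.
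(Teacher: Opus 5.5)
Your reduction to the traces $ab,bc,cd,ad$ on $\{a,b,c,d\}$ is correct. Your base case is also valid: 2-out-of-6 in $h\Span_1(\scr{C})$, together with the fact that an invertible span has invertible legs, gives a shorter substitute for the morphism chase in \zcref{lem:3-simplex_saturation_extension}.

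The gap is the coning step. You do not carry it out, and it does not go through as described.

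First, your third bullet is wrong. A face $\widehat S\setminus\{y\}$ with $y\in S\cap\{a,b,c,d\}$ has the same cone vertices as $S$. It is therefore a target at the same level of your induction, not a smaller case.

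Second, the circularity is unavoidable for moves modelled on \zcref{prop:inner_comp_thin}. Those moves need two things. The vertex $x$ must be inner in $\widehat S$, with the triangle on $x$ and its two neighbours thin, so that \zcref{lem:k-vanishing} applies. And the two faces of $\widehat S$ opposite those neighbours must be thin. Already for $n=4$ with cone vertex $v$, this fails:
\begin{itemize}
\item $\{v,a,b\}$ admits no inner choice of $x$ at all;
\item $\{v,b,c\}$ (with $x=a$) needs $\{v,a,b\}$;
\item $\{v,a,d\}$ needs $\{v,a,b\}$ or $\{v,c,d\}$;
\item $\{v,c,d\}$ needs $\{v,a,d\}$ or $\{v,b,c\}$.
\end{itemize}
So there is no starting point. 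Your fallback via horn fillers meets the same targets inside the required $\Delta^m_k$-decorations.

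This is to be expected. Once the face $\{a,b,c,d\}$ is fully thin, the four target triangles are exactly the $3$-dimensional saturation problem for the diagram $V\mapsto\sigma^\top(\{v\}\cup V)$. Your own base case shows that this problem is solved by a 2-out-of-6 argument, not by pasting and cube moves.

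The missing idea is the paper's: treat the coned functor $g=\sigma^\top\circ\mu$, $g(V)=\sigma^\top(\{v\}\cup V)$, on the subdivision of the vertices above $v$, as a simplex of $\Span_\infty(\scr{C})$ in its own right. The argument runs as follows.
\begin{itemize}
\item Since $\mu$ preserves intersections, the Segal sieve of $\{v\}\cup U$ without the $v$-lacuna is $\mu$ of one Segal sieve of $U$.
\item The Segal sieve containing the $v$-lacuna has $U$ as a member. Its facet at $U$ is a Segal sieve of $U$, which is local whenever $U$ is thin for $\sigma^\top$. A cube move against this facet makes the sieve equivalent to $\mu$ of the other Segal sieve of $U$.
\item Hence $g$ is Cartesian for the $\eq$-decoration.
\item The inductive hypothesis applied to $g$ (at the bottom, your $h\Span_1$ argument) makes $S\setminus\{v\}$ thin for $g$.
\item The same cube move, now using your hypothesis that $S\setminus\{v\}$ is thin for $\sigma^\top$, transfers this back to $S$.
\end{itemize}
For this to work, the induction must quantify over all $\Delta^{n'}_{\eq}$-Cartesian functors, as in the paper's induction on $n$, not only over subsets of a fixed $\sigma$.
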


\begin{proof}
  We will proceed by induction on $n$. The case $n=3$ involves a particularly tedious computation, which we will complete in \zcref{lem:3-simplex_saturation_extension}. Suppose that $X$ has the right lifting property against the complicial saturation extensions for $\ell<k$, we now aim to construct a lift for the saturation extension with $\ell=k$.

  We define an intermediate marking
  \[
    \Delta^n_{\sfmid}\coloneqq \Delta^n_{\eq} \coprod_{\Delta^{n-1}_{\eq}} \Delta^{n-1}_{\eq+}.
  \]
  Since, by assumption, $\Span_\infty(\scr{C})$ has the right lifting property against $\Delta^{n-1}_{\eq}\to \Delta^{n-1}_{\eq+}$, it suffices for us to solve the extension problem $\Delta^n_{\sfmid}\to \Delta^n_{\eq+}$. We thus consider the extension problem
  \[
    \begin{tikzcd}
      \sd(\Delta^n_{\sfmid}) \arrow[r,"f"]\arrow[d,hookrightarrow] & \scr{C} \\
      \sd(\Delta^n_{\eq+})\arrow[ur,dashed] &
    \end{tikzcd}
  \]
  in which $f$ is a sheaf for the Segal covers associated to thin simplices in $\Delta^n_{\sfmid}$.

  We define two maps of posets
  \[
    \begin{tikzcd}
      \rho, \mu:&[-3em] \sd(\Delta^{n-1}) \arrow[r] & \sd(\Delta^n)
    \end{tikzcd}
  \]
  which sends $(s_1,\ldots, s_k)$ to $(s_1+1,\ldots,s_k+1)$ and $(0,s_1+1,\ldots,s_k+1)$, respectively. Note that given a decoration $t$ of $\Delta^{n-1}$, a simplex in $\Delta^0\star \Delta^n$ is thin if and only if it is of the form $\mu(U)$ or $\rho(U)$ for a simplex $U\in t$.

  We first note that if $U$ is thin in $\Delta^{n-1}_{\eq}$, then $f$ is a sheaf associated to the Segal covers for $\rho(U)$ and $\mu(U)$. Moreover, since we can extend $f$ to $\Delta^n_{\sfmid}$ by inductive hypothesis, it follows that $f$ is a sheaf for the Segal covers associated to the thin simplices of the form $\rho(U)$ when $U$ is a thin simplex of $\Delta^{n-1}_{\eq+}$. It remains to show that $f$ is a sheaf for the Segal covers associated to the thin simplices of the form $\mu(U)$ when $U$ is a thin simplex of $\Delta^{n-1}_{\eq+}$.

  For a simplex $U$ of $\Delta^{n-1}$, we fix notation for three covers associated to $\mu(U)$:
  \begin{itemize}
    \item We denote the Segal cover not containing the $0$-lacuna by $C_\mu^1(U)$. Note that this is the image of one Segal cover of $U$ in $\Delta^{n-1}$ under the map $\mu$;
    \item We denote the Segal cover which does contain the $0$-lacuna by $C_\mu^2(U)$. Note that this cover involves elements of the images of both $\rho$ and $\mu$;
    \item We denote by $C_\mu^{2-}(U)$ the cover obtained from $C_\mu^2(U)$ by deleting the $0$-lacuna. Note that this is the image of the other Segal cover of $U$ in $\Delta^{n-1}$ under the map $\mu$.
  \end{itemize}
  We thus need to show that for a thin simplex $U$ of $\Delta^{n-1}_{\eq+}$, $f$ is a sheaf for $C_\mu^1(U)$ and $C_\mu^2(U)$.

  By the cube move, if $f$ is a sheaf for the covers associated to $\rho(U)$, then $f$ is a sheaf for $C_{\mu}^2(U)$ if and only if $f$ is a sheaf for $C_{\mu}^{2-}(U)$. We can thus conclude that $f\circ \mu$ is a sheaf for the Segal covers associated to the thin simplices of $\Delta^{n-1}_{\eq}$. Using the inductive hypothesis, it follows that $f\circ \mu$ is a sheaf for all of the Segal covers associated to the thin simplices of $\Delta^{n-1}_{\eq+}$.

  Thus, for $U$ a thin simplex of $\Delta^{n-1}_{\eq+}$, $f$ is a sheaf for $C_{\mu}^1(U)$ and $C_{\mu}^{2-}(U)$. However, since $f$ is also a sheaf for the Segal covers associated to $\rho(U)$, the cube move tells us that the latter of these conditions is equivalent to $f$ being a sheaf with respect to $C_\mu^2(U)$, completing the proof.
\end{proof}

\subsection{Subdivisions of the $3$-simplex} \label{subsec:3-simplex_saturation_extension}

We now have only two lifting problems to analyze the lowest dimensional saturation extension and the extension along $\Delta^1_\sharp\to E_\sharp$. Both of our arguments will make reference to \zcref{fig:bary3simp}.

\begin{figure}[htb!]
  \begin{tikzpicture}[x=1cm,y=1.25cm,z=-0.5cm,rotate around y=10, scale=6]
    Coordinates of the original vertices
    \coordinate (v0) at (0,0,1);
    \coordinate (v1) at  (0.866,0,-0.5);
    \coordinate (v2) at  (-0.866,0,-0.5);
    \coordinate (v3) at (0,1,0);
    \coordinate (m01) at ($0.5*(v0) + 0.5*(v1)$);
    \coordinate (m02) at ($0.5*(v0) + 0.5*(v2)$);
    \coordinate (m03) at ($0.5*(v0) + 0.5*(v3)$);
    \coordinate (m12) at ($0.5*(v1) + 0.5*(v2)$);
    \coordinate (m13) at ($0.5*(v1) + 0.5*(v3)$);
    \coordinate (m23) at ($0.5*(v2) + 0.5*(v3)$);
    \coordinate (m012) at ($0.333*(v0) + 0.333*(v1) + 0.333*(v2)$);
    \coordinate (m013) at ($0.333*(v0) + 0.333*(v1) + 0.333*(v3)$);
    \coordinate (m023) at ($0.333*(v0) + 0.333*(v2) + 0.333*(v3)$);
    \coordinate (m123) at ($0.333*(v1) + 0.333*(v2) + 0.333*(v3)$);

    \coordinate (m0123) at ($0.25*(v0) + 0.25*(v1) + 0.25*(v2) + 0.25*(v3)$);

    \foreach \x in {0,1,2,3}{
      \path (v\x) node (v\x) {$\x$};
    };

    \foreach \x/\y in {0/1,0/2,0/3,1/2,1/3,2/3}{
      \path (m\x\y) node (m\x\y) {$\x\y$};
    };

    \foreach \x/\y/\z in {0/1/2, 0/1/3, 1/2/3,0/2/3}{
      \path (m\x\y\z) node (m\x\y\z) {$\x\y\z$};
    };

    \path (m0123) node (m0123) {$0123$};

    \foreach \x/\y in {0/1,0/2,1/2,1/3,2/3}{
      \draw[red,->] (m\x\y) to (v\x);
      \draw[red,->] (m\x\y) to (v\y);
    };

    \foreach \x/\y/\z in {0/1/2, 1/2/3}{
      \draw[blue,->] (m\x\y\z) to (m\x\y);
      \draw[blue,->] (m\x\y\z) to (m\x\z);
      \draw[blue,->] (m\x\y\z) to (m\y\z);
    }

    \draw[line width=2mm, white] (m0123) to (m012);
    \draw[line width=2mm, white] (m0123) to (m013);
    \draw[line width=2mm, white] (m0123) to (m023);
    \draw[line width=2mm, white] (m0123) to (m123);

    \draw[purple!60!black,->] (m0123) to (m012);
    \draw[purple!60!black,->] (m0123) to (m013);
    \draw[purple!60!black,->] (m0123) to (m023);
    \draw[purple!60!black,->] (m0123) to (m123);

    \foreach \x/\y/\z in {0/1/3, 0/2/3}{
      \draw[line width=2mm, white] (m\x\y\z) to (m\x\y);
      \draw[line width=2mm, white] (m\x\y\z) to (m\x\z);
      \draw[line width=2mm, white] (m\x\y\z) to (m\y\z);

      \draw[blue,->] (m\x\y\z) to (m\x\y);
      \draw[blue,->] (m\x\y\z) to (m\x\z);
      \draw[blue,->] (m\x\y\z) to (m\y\z);
    }

    \foreach \x/\y in {0/3}{
      \draw[line width=2mm, white]  (m\x\y) to (v\x);
      \draw[line width=2mm, white]  (m\x\y) to (v\y);

      \draw[red,->] (m\x\y) to (v\x);
      \draw[red,->] (m\x\y) to (v\y);
    };

  \end{tikzpicture}
  \caption{The barycentrically subdivided $3$-simplex $\sd(\Delta^3)$. }\label{fig:bary3simp}
\end{figure}

\begin{lem} \label{lem:3-simplex_saturation_extension}
  The decorated simplicial set $\Span_\infty(\scr{C})$ has the right lifting property against the morphism $\Delta^3_{\eq}\to \Delta^3_{\sharp}$.
\end{lem}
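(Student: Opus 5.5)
Since the underlying simplicial sets of $\Delta^3_{\eq}$ and $\Delta^3_\sharp$ agree, there is no extension of data to construct: the plan is to check that for any Cartesian $f\colon \sd(\Delta^3_{\eq})\to\scr{C}$ the simplices that become thin in $\Delta^3_\sharp$ satisfy their Segal conditions. In $\Delta^3_{\eq}$ all simplices of dimension $\geq 2$ and the edges $02$, $13$ are already thin. So it remains to show that the edges $01$, $12$, $23$ are thin, i.e.\ that both legs $f(01)\to f(0)$, $f(01)\to f(1)$ (and similarly for $12$, $23$) are equivalences. The $0$-Segal conditions for an edge $ij$ are exactly that the maps $f(ij)\to f(i)$ and $f(ij)\to f(j)$ are equivalences, since each parity sieve on $\{i,j\}$ has one element.

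\textbf{Step 1: the 2-simplices.} For each $2$-face $\{a<b<c\}$, thinness gives the odd condition $f(abc)\xrightarrow{\sim} f(ac)$ and the even condition $f(abc)\simeq f(ab)\times_{f(b)} f(bc)$. Applied to $012$ with $02$ thin, the composite $f(012)\to f(02)\to f(0)$ is an equivalence, and it factors through $f(01)\to f(0)$. The same works with $f(2)$ and $f(12)\to f(2)$. Similarly, $123$ with $13$ thin makes $f(123)\to f(12)\to f(1)$ and $f(123)\to f(23)\to f(3)$ equivalences. Using $023$ and $013$ likewise gives equivalences $f(023)\simeq f(0)$, $f(023)\simeq f(3)$ through $f(02)$, and $f(013)\simeq f(1)$, $f(013)\simeq f(3)$ through $f(13)$.

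\textbf{Step 2: 2-out-of-6.} This is the classical saturation argument, transported to spans. I expect the main obstacle here: the even Segal pullback must be used to turn the "composite is invertible" statements into invertibility of individual legs.

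For the face $012$, the pullback square $f(012)\simeq f(01)\times_{f(1)} f(12)$ together with $f(012)\simeq f(0)$ and $f(012)\simeq f(2)$ (via $02$) says that the span $0\leftarrow 01\to 1\leftarrow 12\to 2$ composes to an equivalence span. The same holds for $123$, so the composite $12\circ 01$ and the composite $23\circ 12$ are invertible in the homotopy category of spans $\mathrm{Span}(h\scr{C})$. Morphisms there are the $1$-simplices of $\Span_\infty(\scr{C})$ up to the $2$-cells, and invertible spans are exactly those with both legs equivalences. By 2-out-of-6 in the homotopy category of spans, $01$, $12$, $23$ are all invertible spans, hence have both legs equivalences.

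To make this rigorous without appealing to a homotopy category of $\Span_\infty$, I would argue directly in $\scr{C}$. From $f(012)\to f(12)\to f(2)$ and $f(123)\to f(12)\to f(1)$ being equivalences, the map $f(12)\to f(1)$ admits a section and $f(12)\to f(2)$ admits a retraction. Pulling back along the Segal squares, $f(012)\simeq f(01)\times_{f(1)} f(12)$, the base change of the split map shows that $f(01)\to f(0)$ and $f(12)\to f(2)$ are forced to be equivalences after an inverse chase. The chase would use the full $3$-simplex, in particular $f(0123)$ with its Segal squares, and the thin faces $013$, $023$ to glue, exploiting that every square in \zcref{fig:bary3simp} is a pullback. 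The concrete inverse chase is the step most likely to need care, and I would check it by computing in $h\scr{C}$, where equivalences are detected.
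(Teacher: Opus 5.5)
Your reduction is right: $\sd$ ignores decorations, so you only need each edge that is not already thin in $\Delta^3_{\eq}$ to have both legs inverted. The list of such edges is $01$, $12$, $23$ \emph{and} $03$. You dropped $03$, but it is easy once $01$ is done, using the even square of $013$ and 2-out-of-3 against $f(013)\xrightarrow{\sim}f(03)$.

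Your first route is sound in outline, provided the ambient category is $h\Span_1(\scr{C})$: spans up to equivalence of spans, composed by pullback in $\scr{C}$. It is not $\mathrm{Span}(h\scr{C})$. Nor is it $1$-simplices of $\Span_\infty(\scr{C})$ modulo all $2$-cells, since that would identify any two parallel spans via $s\leftarrow s\times_{x\times y}s'\to s'$. In $h\Span_1(\scr{C})$, thinness of $012$ and $02$ makes $12\circ 01$ an isomorphism, likewise $23\circ 12$, and 2-out-of-6 then makes $01,12,23$ isomorphisms.

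The gap is the next sentence, ``invertible spans are exactly those with both legs equivalences.'' All the work of the lemma lies there, and you neither prove nor cite it.

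Your substitute ``direct chase'' is not carried out, and its first step is misstated. That $f(012)\to f(12)\to f(2)$ is an equivalence gives $f(12)\to f(2)$ a \emph{section}, not a retraction. So at that point you only know both legs of $12$ admit sections, which on its own does not give invertibility.

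Step 1 also asserts things that do not yet follow. In $023$ the long edge is $03$, not $02$. Thinness of $02$ does make $f(023)\to f(23)$ invertible. But $f(023)\to f(02)$ is a base change of $f(23)\to f(2)$, which is not yet known to be invertible, so $f(023)\simeq f(0)$ is unjustified. The same problem affects your claims about $013$ via $f(13)$.

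The missing fact has a short proof. Suppose $X\xleftarrow{f}Z\xrightarrow{g}Y$ has inverse $Y\xleftarrow{p}W\xrightarrow{q}X$ in $h\Span_1(\scr{C})$. Then there are equivalences $\alpha\simeq f\pi_Z\simeq q\pi_W\colon Z\times_YW\to X$ and $\beta\simeq p\pi_W\simeq g\pi_Z\colon W\times_XZ\to Y$.
\begin{itemize}
\item In $T=Z\times_YW\times_XZ$, both outer projections $\pi_1,\pi_3\colon T\to Z$ are equivalences, being base changes of $\beta$ and $\alpha$ respectively.
\item Write $\pi_1=\pi_Z\circ r$ with $r\colon T\to Z\times_YW$. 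The defining pullback square gives $\alpha r\simeq f\pi_3$.
\item Hence $\pi_Z\alpha^{-1}f\simeq\pi_1\pi_3^{-1}$ is invertible, so $f$ has a left inverse.
\item Also $f\pi_Z\alpha^{-1}\simeq\id$, so $f$ has a right inverse. The argument for $g$ is symmetric.
\end{itemize}

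With this filled in, your proof is complete and genuinely different from the paper's. The paper never invokes a general fact about invertible spans. Instead it chases through all of $\sd(\Delta^3)$ and shows every morphism is inverted. It uses the Segal squares of the thin $3$-simplex $0123$ and of $013$, $023$ to exhibit two-sided inverses directly, e.g.\ for $f(012)\to f(01)$. Your route uses only the faces $012$ and $123$ (plus $013$ for the edge $03$) and isolates the one structural input, at the cost of proving that input separately.
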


\begin{proof}
  Given a Cartesian functor $f: \sd(\Delta^3_{\eq})\to \mathcal{C}$, we will extend it to a functor $\sd(\Delta^3_\sharp) \to \mathcal{C}$. It is easy to see that it suffices to show that $f$ sends all morphisms to equivalences.\footnote{In fact, such an extension necessarily sends all morphisms to equivalences; see \zcref{lem:max-marked=grpd_subdiv}.} We strongly advise that the reader wishing to check this proof mark the morphisms verified to be sent to equivalences on the above diagram as they go through the proof.
  \begin{enumerate}
    \item Note that, by hypothesis, $02\to 0$, $02\to 2$, $13\to 1$, $13\to 3$, $023\to 03$, $013\to 03$, $012\to 02$, and $123\to 13$ are sent to equivalences.
    \item Since the squares
      \[
        \begin{tikzcd}
          023 \arrow[r]\arrow[d] & 23\arrow[d]\\
          02\arrow[r] & 2
        \end{tikzcd} \quad
        \begin{tikzcd}
          013 \arrow[r]\arrow[d] & 13\arrow[d]\\
          01\arrow[r] & 1
        \end{tikzcd}
      \]
      are sent to pullbacks, the morphisms $023\to 23$ and $013\to 01$ are sent to pullbacks of equivalences, hence equivalences.
    \item Since the squares
      \[
        \begin{tikzcd}
          0123 \arrow[r]\arrow[d] & 023\arrow[d]\\
          012\arrow[r] & 02
        \end{tikzcd} \quad
        \begin{tikzcd}
          0123 \arrow[r]\arrow[d] & 123\arrow[d]\\
          013\arrow[r] & 13
        \end{tikzcd}
      \]
      are sent to pullbacks, it follows that $0123\to 023$ and $0123\to 013$ are sent to equivalences.
    \item The image  of $012\to 01$ has right-inverse  the image of $0123\to 012$ and left-inverse the image of $01\to 0$, and so all three of these morphisms are sent to invertible morphisms.
    \item By 2-out-of-3, it then follows that $03\to 0$, $023\to 02$, and $23\to 2$ are sent to invertible morphisms.
    \item Since the square
      \[
        \begin{tikzcd}
          123 \arrow[r]\arrow[d] & 23\arrow[d]\\
          12\arrow[r] & 2
        \end{tikzcd}
      \]
      is sent to a pullback and $23\to 2$ is sent to an equivalence, the morphism $123\to 12$ is sent to an equivalence.
    \item By 2-out-of-3, it then follows that the morphism $12\to 1$ is sent to an equivalence.
    \item Since the morphisms $23\to 2$ and $123\to 12$ are equivalences, the square
      \[
        \begin{tikzcd}
          123 \arrow[r]\arrow[d] & 23\arrow[d]\\
          12 \arrow[r] & 2
        \end{tikzcd}
      \]
      identifies the morphisms $123 \to 23$ and $12 \to 2$. The former has a left inverse $23 \to 3$ and the latter has a right inverse $012\to 12$, and so all of these morphisms are sent to equivalences.
    \item The remainder of the morphisms are sent to equivalences by iterated 2-out-of-3.
  \end{enumerate}
\end{proof}

\begin{lem}\label{lem:specliftprop}
  The decorated simplicial set $\Span_\infty(\scr{C})$ has the right lifting property against the morphism $\iota:\Delta^1_\sharp\to E_\sharp$
\end{lem}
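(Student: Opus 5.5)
The plan is to reduce the lifting problem to an extension problem for functors valued in the core $\scr{C}^\simeq$, and then to solve that extension problem with the Kan--Quillen model structure. Concretely, a map $\sigma:\Delta^1_\sharp\to\Span_\infty(\scr{C})$ is a functor $\sigma^\top:\sd(\Delta^1)\to\scr{C}$, i.e.\ a span $0\leftarrow 01\rightarrow 1$. This span satisfies both $0$-Segal conditions, which for $M=[1]$ say exactly that both legs $01\to 0$ and $01\to 1$ are sent to equivalences. So $\sigma^\top$ factors through $\scr{C}^\simeq$, which is a Kan complex. A lift $E_\sharp\to\Span_\infty(\scr{C})$ is a Cartesian functor $\sd(E_\sharp)\to\scr{C}$ restricting to $\sigma^\top$. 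Since $\sd$ on $\Set^{\dec}_\Delta$ is a left Kan extension, it commutes with colimits. Its value on $\Delta^n_+$ has underlying poset $\sd(\Delta^n)$, so the underlying simplicial set of $\sd(E_\sharp)$ is $\sd(E)$, the colimit of the subdivided simplices in the defining diagram of $E$.

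First, I will show that any functor $G:\sd(\Delta^n)\to\scr{C}$ sending all morphisms to equivalences satisfies both $(n-1)$-Segal conditions. Every Segal cube $G\circ C_{\mathcal U}$ then has all edges invertible, so it is degenerate along any direction $m$. By \zcref{lem:degenerate} it is a limit cube iff $G(\varnothing\text{-vertex})\to G(m\text{-vertex})$ is an equivalence, which holds. Consequently any functor $\sd(E)\to\scr{C}^\simeq\subset\scr{C}$ is automatically Cartesian for the maximal decoration, since every simplex of $E_\sharp$ is sent to a functor out of some $\sd(\Delta^k)$ inverting all morphisms. It therefore suffices to extend $\sigma^\top:\sd(\Delta^1)\to\scr{C}^\simeq$ along $\sd(\iota):\sd(\Delta^1)\to\sd(E)$.

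Second, I will show that $\sd(\iota)$ is a trivial cofibration in the Kan--Quillen model structure; lifting against the Kan complex $\scr{C}^\simeq$ then gives the extension. It is a monomorphism, because $\iota$ is one and $\sd$ preserves monomorphisms; this is checked simplexwise, as $\sd$ of a face inclusion is a sieve inclusion of posets. For the weak equivalence, I will use that there is a natural weak equivalence $\sd(X)\simeq X$; for the nerve of a poset this is $N(\Power_*(M)^\op)\simeq N(\Power_*(M))\simeq\Delta^{M}$, and it extends to colimits since $\sd$ preserves colimits and cofibrations. So it suffices that $E$ is weakly contractible, since $\Delta^1$ is. This holds because $E$ is the pushout of $\Delta^3$ along the two degeneracies collapsing edges. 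Alternatively, by Verity's identification, $E$ is a simplicial subset of the nerve of the walking isomorphism containing $\Delta^1$, and the collapse description exhibits it as obtained from $\Delta^3$ by collapsing contractible pieces.

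I expect the main obstacle to be the bookkeeping in this last step. One has to check that $\sd(E)$ really is the pushout of $\sd(\Delta^1)\leftarrow\sd(\Delta^2)\to\sd(\Delta^3)\leftarrow\sd(\Delta^2)\to\sd(\Delta^1)$, with the degeneracies inducing the evident poset maps. One also has to confirm that the comparison $\sd\Rightarrow\mathrm{id}$, for instance via Kan's last-vertex map, is natural enough to transfer the weak contractibility of $E$ to $\sd(E)$. If the last-vertex map causes trouble with degeneracies, the fallback is a direct argument: compute $\sd(E)$ as an explicit pushout of posets, and note that collapsing $\sd(\Delta^1)$ to a point along a degeneracy is a homotopy equivalence. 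This shows that $\sd(E)$ is weakly contractible.
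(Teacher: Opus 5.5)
Your argument is correct, but it takes a different route from the paper's. The paper uses no homotopy theory at this point. It writes down an explicit poset map $p\colon \sd(\Delta^3)\to\sd(\Delta^1)$ with
\begin{itemize}
\item $p^{-1}(0)=\{0,02,2\}$,
\item $p^{-1}(1)=\{1,13,3\}$,
\item every other subset sent to $01$.
\end{itemize}
It checks that $p$ is compatible with the collapse $s_1$ on the face $013$ and $s_0$ on the face $023$, so $p$ descends to $\sd(E)$. It then observes $p\circ\sd(\iota)=\id$. The lift is simply $f\circ p$, which is Cartesian because it inverts every morphism; this is your first step.

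What the paper's approach buys: $\sd(\iota)$ is a split monomorphism, so the extension is strict, explicit, and exists for any target. It does not need $\scr{C}^\simeq$ to be a Kan complex, nor any computation of the homotopy type of $E$. Note that $E$ itself does not retract onto $\Delta^1$, since the edge $12$ runs from the class of $1$ back to the class of $0$. It is the subdivision, whose $\sd(\Delta^1)$ is a symmetric zigzag, that makes the retraction possible.

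What your approach buys: you show $\sd(\iota)$ is anodyne and lift against a Kan complex. This is more conceptual and would work for any weakly contractible replacement of $E$. The costs are reliance on Joyal's theorem on cores and a non-explicit lift.

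Several of your anticipated obstacles are already handled by the paper:
\begin{itemize}
\item By \zcref{lem:sdEx}, the first-vertex map $\fv$ is a strictly natural weak equivalence and $\sd$ is left Quillen for Kan--Quillen. This settles both your concern about last-vertex maps and your somewhat hand-wavy monomorphism argument.
\item $\sd(E)$ is the colimit of the subdivided diagram simply because $\sd$ is a left adjoint.
\end{itemize}

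When you write out the contractibility of $E$, present it as two successive pushouts along monomorphisms and invoke left properness. The faces $013$ and $023$ share the edge $03$. Even so, $\Delta^{\{0,2,3\}}$ still maps injectively into $\Delta^3\sqcup_{\Delta^{\{0,1,3\}}}\Delta^1$, because $s_1$ is injective on the vertices $\{0,3\}$.
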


\begin{proof}
  A Cartesian functor $\sd(\Delta^1_\sharp) \to \mathcal{C}$ necessarily sends all morphisms to invertible morphisms. Thus, we are given a morphism $f: \sd(\Delta^1) \to \mathcal{C}^\simeq$, and we need to extend it to $\sd(E)$.

  To do this, we define a map
  \[
    \begin{tikzcd}
      p:&[-3em]\sd(\Delta^3) \arrow[r] & \sd(\Delta^1)
    \end{tikzcd}
  \]
  by specifying the preimages of objects as follows.
  \begin{align*}
    p^{-1}(1) & = \{1,13,3\}\\
    p^{-1}(0) & = \{0,02,2\}\\
    p^{-1}(01) &= \{01,03,23,012,013,023,123,0123\}
  \end{align*}
  It is straightforward to verify that $p$ is a map of posets and that $p$ descends to the quotient $\sd(E)$. As such, $p\circ \iota=\id_{\Delta^1}$, and so we can simply define the desired extension to be $f\circ p$.
\end{proof}

\subsection{Model categories and \texorpdfstring{$\infty$}{∞}-categories} \label{subsec:model_cat_vs_infty_cat}

Our definition thus far has been purely model categorical. We have defined $\Span_\infty(\scr{C})$ as an object in the model category $\Set_{\Delta}^{\dec}$, and then showed that it is fibrant. However, as we will see in the next section, our definition has a much more elegant formulation internal to the $(\infty, 1)$-category of $(\infty, \infty)$-categories. To facilitate this, we here develop some background on the subdivision and Cartesian functors we will need to easily formulate our theorems in the sequel.

\begin{defn} \label{defn:sd_SD_LKE}
  Recall that we denote by $\psi:\tDelta\to \Set_\Delta^{\dec}$ the inclusion, and by $\xi:\tDelta\to \Cat_\infty$ its composition with the localization $\Set_{\Delta}^{\dec}\to \Cat_\infty$. Viewing $\sd$ as a functor $\tDelta\to \Set_\Delta$, we define
  \[
    \begin{tikzcd}
      \sd: &[-3em] \Set_\Delta^{\dec} \arrow[r] & \Set_\Delta
    \end{tikzcd}
  \]
  to be the left Kan extensions of $\sd$ along $\psi$. Viewing $\sd$ as a functor $\tDelta\to \Cat_1$, we define
  \[
    \begin{tikzcd}
      \SD: &[-3em] \Cat_\infty \arrow[r] & \Cat_1
    \end{tikzcd}
  \]
  to be the left Kan extensions of $\sd$ along $\xi$.
\end{defn}

The main results of this paper are proven by working with $\tDelta$ and then extending to $\Set_{\Delta}^{\dec}$ and $\Cat_\infty$ using density. As such, each of our main results comes in two flavors, one model independent, and one model categorical. The key component of this  is to understand the interplay of the Segal conditions with the Kan extensions defined above.

\begin{defn} \label{defn:more_general_cart_conditions}
  For a decorated simplicial set $K_t$, we call a functor
  \[
    \begin{tikzcd}
      f: &[-3em] \sd(K) \arrow[r] & \scr{C}
    \end{tikzcd}
  \]
  \emph{Cartesian} if, for every $\iota:\Delta^n_+\to K_t$, the composite $f\circ \sd(\iota)$ satisfies the $\Delta^n$-Segal conditions. We denote by
  \[
    \Map^{\cart}(\sd(K_t),\scr{C})\subset \Map(\sd(K),\scr{C}),
    \qquad \text{resp.} \quad
    \Fun^{\cart}(\sd(K_t),\scr{C}) \subset \Fun(\sd(K),\scr{C}),
  \]
  the full subspace, resp. subcategory, spanned by the Cartesian functors.

  For an $(\infty,\infty)$-category $\scr{D}$, we call a functor
  \[
    \begin{tikzcd}
      f: &[-3em] \SD(\scr{D}) \arrow[r] & \scr{C}
    \end{tikzcd}
  \]
  \emph{Cartesian} if for every $\mathbb{O}^n\to \scr{D}$ which sends the unique non-invertible $n$-morphism of $\mathbb{O}^n$ to an equivalence, the corresponding composite
  \[
    \begin{tikzcd}
      \sd(\Delta^n)\arrow[r] & \SD(\scr{D}) \arrow[r,"f"] & \scr{C}
    \end{tikzcd}
  \]
  satisfies the $\Delta^n$-Segal conditions. We denote by
  \[
    \Map^{\cart}(\SD(\scr{D}),\scr{C})\subset \Map(\SD(\scr{D}),\scr{C}),
    \qquad \text{resp.} \quad
    \Fun^{\cart}(\SD(\scr{D}),\scr{C}) \subset \Fun(\SD(\scr{D}),\scr{C})
  \]
  the full subspace, resp. subcategory, spanned by the Cartesian functors.
\end{defn}

To use spaces of Cartesian maps effectively, we need some lemmas on their interaction with limits.

\begin{lem} \label{lem:cart_map_SD_lim_sd}
  The canonical map
  \[
    \begin{tikzcd}
      \Map^{\cart}(\SD(\scr{D}),\scr{C}) \arrow[r] & \lim_{\tDelta/\scr{D}} \Map^{\cart}(\sd(\Delta^n),\scr{C})
    \end{tikzcd}
  \]
  is an equivalence.
\end{lem}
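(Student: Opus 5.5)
Since $\SD$ is defined as the left Kan extension of $\sd\colon \tDelta\to\Cat_1$ along $\xi$, we have $\SD(\scr{D})\simeq \colim_{\tDelta/\scr{D}} \sd(-)$, the colimit running over the comma category of pairs $(T,\ \xi(T)\to\scr{D})$ with $T\in\tDelta$. Mapping into $\scr{C}$ turns this colimit into a limit, so
\[
  \Map(\SD(\scr{D}),\scr{C}) \simeq \lim_{(T,\alpha)\in \tDelta/\scr{D}} \Map(\sd(T),\scr{C}).
\]
This is an equivalence of spaces. The proof then reduces to checking that the Cartesian subspaces correspond under it. For $T=\Delta^n_\flat$ we have $\sd(T)=\sd(\Delta^n)$, and $\Map^{\cart}(\sd(\Delta^n_\flat),\scr{C})$ is everything. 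For $T=\Delta^n_+$, the Cartesian maps are those satisfying the $\Delta^n$-Segal conditions on the top simplex; the faces impose no condition, since they are flat. In the limit on the right I interpret $\Map^{\cart}(\sd(\Delta^n),\scr{C})$ as meaning Cartesian with respect to the decoration $T$. Both sides are then full subspaces, that is, unions of path components, of the two sides of the displayed equivalence. A limit of inclusions of unions of components is again such an inclusion of the limit. So it suffices to show that a point $f$ of the left side is Cartesian if and only if, for every $(T,\alpha)$, the restriction $f\circ \sd(\alpha)$ is Cartesian for $T$.

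For the ``only if'' direction: given $\alpha\colon \xi(\Delta^n_+)\to\scr{D}$, the composite $\mathbb{O}^n\simeq\xi(\Delta^n_\flat)\to\xi(\Delta^n_+)\to\scr{D}$ sends the top $n$-morphism to an equivalence, because it is inverted in $\xi(\Delta^n_+)$. The restriction along $\sd(\Delta^n_\flat)\to\sd(\Delta^n_+)$ is the identity on $\sd(\Delta^n)$. Hence the definition of Cartesian for $\scr{D}$ gives exactly the Segal conditions required. For the ``if'' direction: given $\beta\colon\mathbb{O}^n\to\scr{D}$ inverting the top morphism, I need to factor $\beta$ through $\mathbb{O}^n\to\xi(\Delta^n_+)$, so that the condition on $\beta$ is one of the conditions indexed by $\tDelta/\scr{D}$.

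This factorization is the main point. Because $\xi(\Delta^n_+)$ is the localization of $\mathbb{O}^n$ at its top $n$-morphism in $\Cat_\infty$, the map $\Delta^n_\flat\to\Delta^n_+$ is, after applying $\xi$, an epimorphism-type localization. Concretely, $\Map(\xi(\Delta^n_+),\scr{D})\to\Map(\mathbb{O}^n,\scr{D})$ is the inclusion of the components that invert the top cell. I will justify this via the model structure: $\Delta^n_\flat\to\Delta^n_+$ is a cofibration, and mapping into a fibrant complicial replacement of $\scr{D}$ amounts to checking thinness of the image of the top simplex. By saturation, this is the same as that simplex representing an equivalence. With this in hand, $\beta$ lifts (uniquely up to contractible choice) to $(\Delta^n_+,\alpha)$. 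The two composites to $\sd(\Delta^n)$ agree, so the Segal condition for $\beta$ follows from the Cartesian condition on $f\circ\sd(\alpha)$. This finishes the identification of components, and hence the equivalence.
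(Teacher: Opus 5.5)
Your proposal is correct and follows the same route as the paper. You write $\SD(\scr{D})$ as the colimit of $\sd$ over $\tDelta/\scr{D}$, pass to the limit of mapping spaces, and observe that the Cartesian loci are full subspaces whose membership can be tested objectwise. The paper treats the matching of Cartesian conditions as holding by definition; you make explicit the step it leaves implicit, namely that a map $\mathbb{O}^n\to\scr{D}$ inverting the top cell factors through $\xi(\Delta^n_+)$. That is a welcome addition rather than a different argument.
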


\begin{proof}
  Since the $\Map^{\cart}(\SD(\scr{D}),\scr{C})\subset \Map(\SD(\scr{D}),\scr{C})$ are full subcategories, and we have defined $\Map^{\cart}(\SD(\scr{D}),\scr{C})$ such that a map $[0]\to \Map(\SD(\scr{D}),\scr{C})$ factors through $\Map^{\cart}(\SD(\scr{D}),\scr{C})$ if and only if the corresponding cone over $\Map(\sd(\Delta^{-}),\scr{C})$ factors through $\Map^{\cart}(\sd(\Delta^{-}),\scr{C})$, the result follows by universal property.
\end{proof}

A completely analogous argument shows

\begin{lem}
  The canonical map
  \[
    \begin{tikzcd}
      \Fun^{\cart}(\SD(\scr{D}),\scr{C}) \arrow[r] & \lim_{\tDelta/\scr{D}} \Fun^{\cart}(\sd(\Delta^n),\scr{C})
    \end{tikzcd}
  \]
  is an equivalence.
\end{lem}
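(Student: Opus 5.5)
The plan is to mirror the proof of \zcref{lem:cart_map_SD_lim_sd}, upgraded from spaces to categories. The first step is to note that, by definition, $\SD(\scr{D})$ is the colimit $\colim_{\tDelta/\scr{D}}\sd(\Delta^n)$ in $\Cat_1$, where $\sd(\Delta^n)$ stands for $\sd$ of either $\Delta^n_\flat$ or $\Delta^n_+$, both of which have underlying poset $\sd(\Delta^n)$. Since $\Fun(-,\scr{C})\colon \Cat_1^\op\to\Cat_1$ sends colimits to limits (it is the internal hom of the Cartesian closed $\Cat_1$), we get a canonical equivalence
\[
  \Fun(\SD(\scr{D}),\scr{C})\simeq \lim_{\tDelta/\scr{D}}\Fun(\sd(\Delta^n),\scr{C}).
\]
The map in the statement is the restriction of this equivalence. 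Here the transition functors of the diagram $\Fun^{\cart}(\sd(\Delta^n),\scr{C})$ are well defined because the structure maps of $\tDelta$ preserve thin simplices, so precomposition preserves Cartesian functors.

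The second step is to reduce to a statement about objects. A limit of full subcategories $\scr{A}_i\subset\scr{B}_i$ indexed by a compatible diagram is the full subcategory of $\lim\scr{B}_i$ spanned by those objects whose components all lie in the $\scr{A}_i$. This holds because fully faithful functors are the right class of a factorization system on $\Cat_1$ and are therefore stable under limits, and because the objects of a limit are computed as the limit of the object spaces (apply $\Map([0],-)$). Consequently, $\lim_{\tDelta/\scr{D}}\Fun^{\cart}(\sd(\Delta^n),\scr{C})$ is the full subcategory of $\Fun(\SD(\scr{D}),\scr{C})$ on those $f$ whose restrictions along every $\sd(\Delta^n)\to\SD(\scr{D})$ coming from $\tDelta/\scr{D}$ are Cartesian.

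The third step compares this condition with \zcref{defn:more_general_cart_conditions}. For objects $\Delta^n_\flat\to\scr{D}$, Cartesianness is vacuous apart from degenerate simplices. Degenerate simplices impose no content, since a degenerate cube is a limit cube by \zcref{lem:degenerate}. For objects $\Delta^n_+\to\scr{D}$, which are exactly the maps $\mathbb{O}^n\to\scr{D}$ inverting the top cell, the requirement is precisely the Segal condition on $\sd(\Delta^n)$, together with the conditions for thin faces, which are themselves objects of $\tDelta/\scr{D}$. The two full subcategories therefore coincide, and the map is an equivalence.

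The only real subtlety is the assertion that limits in $\Cat_1$ commute with passing to full subcategories. Should a citation be needed, I would argue objectwise on the underlying spaces, which is \zcref{lem:cart_map_SD_lim_sd} itself, and morphismwise via $\Fun([1],-)$, which preserves limits and full subcategories.
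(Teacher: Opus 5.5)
Your proposal is correct and follows the paper's argument. The paper proves this lemma by the same reasoning it uses for \zcref{lem:cart_map_SD_lim_sd}: Cartesian functors span full subcategories, and a functor out of $\SD(\scr{D})$ is Cartesian exactly when each restriction along $\sd(\Delta^n_t)\to\SD(\scr{D})$ is, so the equivalence $\Fun(\SD(\scr{D}),\scr{C})\simeq\lim\Fun(\sd(\Delta^n),\scr{C})$ restricts to the Cartesian parts; you just spell out in more detail why limits in $\Cat_1$ are compatible with passing to full subcategories.
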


We then turn to limit preservation in the second variable.

\begin{defn}
  Define $\Cat_1^{\lex}\subset \Cat_1$ to be the locally full subcategory on the categories with finite limits and the finite limit-preserving functors.
\end{defn}

\begin{lem}[{\cite[Proposition 2.1.12]{bunke_controlled_2024}}] \label{lem:limits_of_cats_with_limits}
  The subcategory $\Cat_1^{\lex}\subset \Cat_1$ is closed under limits.
\end{lem}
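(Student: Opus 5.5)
The claim is that $\Cat_1^{\lex}\subset\Cat_1$ is closed under limits. Concretely: given a diagram $p\colon I\to\Cat_1^{\lex}$ with limit $\scr{L}=\lim_I p$ computed in $\Cat_1$, I will show three things. First, $\scr{L}$ has finite limits. Second, each projection $\pi_i\colon\scr{L}\to p(i)$ preserves them. Third, a cone from $\scr{A}\in\Cat_1^{\lex}$ whose legs are all left exact induces a left exact functor $\scr{A}\to\scr{L}$. Together these identify $\scr{L}$ as the limit in $\Cat_1^{\lex}$, and show that the inclusion preserves it.

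The key device is to encode finite limits through functor categories. For a finite simplicial set $K$, write $K^{\triangleleft}$ for its left cone. A category $\scr{X}$ has $K$-shaped limits if and only if the restriction functor
\[
\Fun^{\lim}(K^{\triangleleft},\scr{X})\to\Fun(K,\scr{X})
\]
is an equivalence. Here $\Fun^{\lim}$ denotes the full subcategory of limit cones. A functor $F$ preserves $K$-limits if and only if postcomposition with $F$ carries $\Fun^{\lim}(K^{\triangleleft},-)$ into $\Fun^{\lim}(K^{\triangleleft},-)$.

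Since $\Fun(J,-)$ preserves limits, we have $\Fun(K^{\triangleleft},\scr{L})\simeq\lim_I\Fun(K^{\triangleleft},p(i))$, and likewise for $K$. The central claim is then that a cone in $\scr{L}$ is a limit cone if and only if its image in every $p(i)$ is. To prove the ``if'' direction, I use that mapping spaces in a limit of categories are limits of mapping spaces: for $x,y\in\scr{L}$,
\[
\Map_{\scr{L}}(x,y)\simeq\lim_I\Map_{p(i)}(\pi_i x,\pi_i y).
\]
Hence if each $\pi_i$ of a cone is a limit cone, the comparison map $\Map(z,\text{apex})\to\lim_K\Map(z,-)$ is a limit over $I$ of equivalences, since limits commute with limits. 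It is therefore an equivalence.

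Given this characterization, $\Fun^{\lim}(K^{\triangleleft},\scr{L})\simeq\lim_I\Fun^{\lim}(K^{\triangleleft},p(i))$. This step uses that the transition functors are left exact, so the subcategories $\Fun^{\lim}$ form a subdiagram, and that limits of full subcategories are computed as the evident full subcategory. The restriction map is then a limit of equivalences, hence an equivalence. So $\scr{L}$ has $K$-limits, and they are detected and preserved by the $\pi_i$; this gives the first two points. The third point follows directly from the characterization: a cone's image under the induced functor $\scr{A}\to\scr{L}$ is a limit cone as soon as its images in each $p(i)$ are.

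The main obstacle I expect is the claim that a limit of categories restricts correctly to the subdiagram of full subcategories, so that the "only if" direction holds. Without it, the limit of $\Fun^{\lim}$ need not sit inside $\Fun^{\lim}$ of the limit. I would address this by observing that a full subcategory is determined by a subspace of objects, that the limit of the subcategories is the full subcategory of $\lim_I\Fun(K^{\triangleleft},p(i))$ on objects whose components lie in the subcategories, and that every limit cone in $\scr{L}$ projects to limit cones. The last fact follows by comparing with the limit cone built componentwise: that cone is a limit cone by the "if" direction, and limit cones are unique up to equivalence. Alternatively, the whole statement can be cited from \cite[Proposition 2.1.12]{bunke_controlled_2024}, or from Lurie's treatment of $\Cat_\infty(\scr{K})$ for $\scr{K}$ the finite simplicial sets.
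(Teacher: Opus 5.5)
Your argument is correct. It differs from the paper's treatment, which contains no argument: the lemma is simply imported from \cite[Proposition 2.1.12]{bunke_controlled_2024}.

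You instead give a self-contained, model-independent proof that uses essentially only that $\Fun(J,-)$, cores and fibre products commute with limits in $\Cat_1$. Along the way you prove the sharper statement that a cone in $\scr{L}=\lim_I p$ is a limit cone if and only if each of its projections is. That detection property is what the paper uses tacitly in \zcref{lem:cartmaps_pres_limits}, where Cartesian (limit-cube) conditions on functors into $\lim_I f(-)$ are checked componentwise. Your route makes that step transparent. The citation is shorter, but the statement as formulated here does not literally include detection; it has to be recovered from preservation together with joint conservativity of the projections.

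Two presentational points:
\begin{enumerate}
\item As written, you invoke the full characterization of limit cones before proving its ``only if'' half. The non-circular order is:
\begin{itemize}
\item first, the ``if'' direction;
\item second, $\lim_I\Fun^{\lim}(K^{\triangleleft},p(i))\to\Fun(K,\scr{L})$ is a limit of equivalences (this needs only that each $p(i)$ has $K$-limits and that the transition functors are left exact), so $\scr{L}$ has $K$-limits;
\item third, every limit cone in $\scr{L}$ is equivalent to one of these componentwise ones, hence is one.
\end{itemize}
Your final paragraph does exactly this, so only the ordering of the exposition needs fixing.
\item In the ``if'' direction, the equivalence $\Map_{\scr{L}}(z,y)\simeq\lim_I\Map_{p(i)}(\pi_i z,\pi_i y)$ must be natural in $y$ for the comparison map to be a limit of the componentwise comparison maps. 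The easiest way to package this is to note that $\bar d$ is a limit cone if and only if $\scr{L}_{/\bar d}\to\scr{L}_{/d}$ is an equivalence. These slices are fibre products of functor categories, so they are the limits over $I$ of the corresponding slices in the $p(i)$.
\end{enumerate}
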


\begin{lem}\label{lem:cartmaps_pres_limits}
  Let $\scr{I}\in \Cat_1$ and let $f: \scr{I}\to \Cat_1^{\lex}$ be a functor. Then for any $(\infty,\infty)$-category $\scr{D}$, the canonical maps
  \[
    \begin{tikzcd}
      \Map^{\cart}(\SD(\scr{D}),\lim_{\scr{I}} f(-)) \arrow[r] & \lim_{\scr{I}} \Map^{\cart} (\SD(\scr{D}),f(-))
    \end{tikzcd}
  \]
  and
  \[
    \begin{tikzcd}
      \Fun^{\cart}(\SD(\scr{D}),\lim_{\scr{I}} f(-)) \arrow[r] & \lim_{\scr{I}} \Fun^{\cart} (\SD(\scr{D}),f(-))
    \end{tikzcd}
  \]
  are equivalences.
\end{lem}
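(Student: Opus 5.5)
We prove the statement in two stages. First we treat the case $\scr{D}=\Delta^n$ for the subdivided simplices, i.e. the space and category of Cartesian functors out of $\sd(\Delta^n)$. Then we pass to a general $\scr{D}$ using \zcref{lem:cart_map_SD_lim_sd} and its $\Fun$-analogue, because limits commute with limits.

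Key observation for the first stage: $\Fun(\sd(\Delta^n),-)$, and likewise $\Map(\sd(\Delta^n),-)$, preserves limits in $\Cat_1$. Since $\sd(\Delta^n)$ is a finite poset, $\Fun(\sd(\Delta^n),-)$ is the right adjoint to $-\times\sd(\Delta^n)$, and $\Map(\sd(\Delta^n),-)$ is its core. Hence the canonical map
\[
\Fun(\sd(\Delta^n),\lim_{\scr{I}} f)\to \lim_{\scr{I}}\Fun(\sd(\Delta^n),f(-))
\]
is an equivalence. It then suffices to check that full subcategories are matched. Since $\Fun^{\cart}\subset\Fun$ is a full subcategory on both sides, the map restricts to a fully faithful functor, and a limit of full subcategories is the full subcategory of the limit spanned by objects whose images all lie in the pieces. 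So we need exactly the following: a functor $F:\sd(\Delta^n)\to\lim_{\scr{I}} f$ satisfies the Segal conditions if and only if each projection $p_i\circ F$ does.

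This reduces to a statement about limit cubes. A cube $Q:\Power(\mathcal{U})\to\lim_{\scr{I}} f$ is a limit cube if and only if each $p_iQ$ is a limit cube in $f(i)$. This is where \zcref{lem:limits_of_cats_with_limits} is used, in the form of its standard proof or content: in the limit of categories with finite limits along lex functors, the projections $p_i$ preserve finite limits and jointly reflect them. Concretely, limits in $\lim_{\scr{I}} f$ are computed pointwise, because the lex transition functors preserve the pointwise limits. Moreover, the projections are jointly conservative, since equivalences in a limit of categories are detected componentwise. Granting this, the comparison map $Q(\varnothing)\to\lim_{\Power_*}Q$ is sent by each $p_i$ to the corresponding comparison map of $p_iQ$. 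It is therefore an equivalence exactly when all of those are. Applying this to the odd and even Segal cubes of every thin simplex gives the claim for $\sd(\Delta^n)$, and the Cartesian condition there involves only the top simplex when $\Delta^n_+$ is used.

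For general $\scr{D}$, write
\[
\Fun^{\cart}(\SD(\scr{D}),\lim f)\simeq\lim_{\tDelta/\scr{D}}\Fun^{\cart}(\sd(\Delta^n_{(+)}),\lim f)\simeq\lim_{\tDelta/\scr{D}}\lim_{\scr{I}}\Fun^{\cart}(\sd(\Delta^n_{(+)}),f(-)),
\]
then interchange the two limits and apply the $\tDelta$-description again in each $f(i)$. The $\Map$ version follows by taking cores, which commute with limits, or by the identical argument. One must check that the composite equivalence is the canonical map; this follows by naturality of the restriction maps in the second variable.

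The main obstacle is the joint reflection of limits by the projections out of $\lim_{\scr{I}} f$. If \zcref{lem:limits_of_cats_with_limits} is only available as closure under limits, I would derive it from the fact that the inclusion $\Cat_1^{\lex}\subset\Cat_1$ preserves limits and the projections are lex. A cone in $\lim f$ that each $p_i$ sends to a limit cone is then compared with the actual limit cone, which $p_i$ also preserves. The comparison map becomes an equivalence after each $p_i$, hence is an equivalence by joint conservativity.
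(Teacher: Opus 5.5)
Your proposal is correct and is essentially the paper's argument: the paper's proof is the single line that the lemma follows from \zcref{lem:limits_of_cats_with_limits}, and you have unpacked that line into two facts. The first is that $\Fun(K,-)$ preserves limits; the second is that limit cubes in $\lim_{\scr{I}} f$ are detected by the lex, jointly conservative projections, so the full subcategories of Cartesian functors match. Your detour through $\sd(\Delta^n)$ via \zcref{lem:cart_map_SD_lim_sd} is harmless but unnecessary, since the same componentwise-detection argument applies verbatim to $\Fun(\SD(\scr{D}),-)$.
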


\begin{proof}
  This follows directly from \zcref{lem:limits_of_cats_with_limits}.
\end{proof}

\section{The space of maps into \texorpdfstring{$\Span_\infty(\scr{C})$}{Span_∞(𝒞)}}
\label{sec:mapping_space_into_Span}

The main goal of this section is to take the model categorical definition of $\Span_\infty(\scr{C})$ and upgrade it to a universal property characterizing $\Span_\infty(\scr{C})$ in terms of the functor it represents.

\subsection{Main results}

\subsubsection{The space of maps into $\Span_\infty(\mathcal{C})$}

While the definition of $\Span_\infty(\mathcal{C})$ a priori depends on the presentation of $\mathcal{C}$ as a quasi-category, we show, in this section, that $\Span_\infty(\mathcal{C})$ in fact is model-independent by characterizing the space of maps into it.

\begin{thm}\label{thm:SpanUP_I_internal}
  Let $\scr{C}$ be an $(\infty,1)$-category with finite limits and $\scr{D}$ an $(\infty,\infty)$-category. There is an equivalence
  \[
    \Map_{\Cat_\infty}(\scr{D},\Span_\infty(\scr{C}))\simeq \Map^{\cart}_{\Cat_1}(\SD(\scr{D}),\scr{C})
  \]
  natural in $\scr{D}\in \Cat_\infty$.
\end{thm}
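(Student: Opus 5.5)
The plan is to reduce to the dense generators $\tDelta$ and then extend by density. Both sides send colimits in $\scr{D}$ to limits. For the left side this is formal. For the right side, $\SD$ is defined as a left Kan extension along $\xi$ and so preserves colimits, and the Cartesian condition is compatible with this by \zcref{lem:cart_map_SD_lim_sd}, which gives $\Map^{\cart}(\SD(\scr{D}),\scr{C})\simeq \lim_{\tDelta/\scr{D}}\Map^{\cart}(\sd(\Delta^n),\scr{C})$. By \zcref{prop:tDelta_dense}, every $\scr{D}$ is the canonical colimit of $\xi$ over $\tDelta_{/\scr{D}}$. It therefore suffices to produce equivalences
\[
  \Map_{\Cat_\infty}(\xi(T),\Span_\infty(\scr{C}))\simeq \Map^{\cart}(\sd(T),\scr{C}),\qquad T\in\{\Delta^n_\flat,\Delta^n_+\},
\]
natural in $T\in\tDelta$, and then right Kan extend along $\xi$.

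Since $\Span_\infty(\scr{C})$ is fibrant by \zcref{thm:Span_infty_is_complicial} and every object of $\Set_\Delta^{\dec}$ is cofibrant, the left side is computed by the simplicial mapping space $\Map^{\dec}(T,\Span_\infty(\scr{C}))$. This is the Kan complex whose $m$-simplices are maps $T\otimes J_m\to\Span_\infty(\scr{C})$ for a suitable cosimplicial cylinder. A natural choice is $T\times\Delta^m_\sharp$ with the product (or Gray) decoration, or alternatively the nerve of the walking groupoid. By the remark after \zcref{defn:cartesian_conditions}, these are Cartesian functors $\sd(T\times\Delta^m_\sharp)\to\scr{C}$. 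The right side is the core of $\Fun^{\cart}(\sd(T),\scr{C})$, whose $m$-simplices are Cartesian functors $\sd(T)\times\Delta^m\to\scr{C}$ that are invertible along $\Delta^m$. Equivalently, using $\sd(\Delta^m_\sharp)$ together with the observation (as in \zcref{lem:specliftprop} and \zcref{lem:max-marked=grpd_subdiv}) that Cartesian functors out of a maximally thin simplex invert all morphisms, they are functors $\sd(T)\times\sd(\Delta^m)\to\scr{C}$ inverting the second factor.

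The comparison map is restriction along the natural poset map
\[
  \pi:\sd(\Delta^n\times\Delta^m)\to\sd(\Delta^n)\times\sd(\Delta^m),
\]
which sends a nondegenerate simplex of the product to its pair of projections. The key claim is that $\pi$ is a localization, and that the class of morphisms it inverts is exactly the class that Cartesian functors out of $\sd(\Delta^n\otimes\Delta^m_\sharp)$ are forced to invert. The latter should follow from the Segal conditions on thin simplices of the product, via the degenerate-cube arguments of \zcref{ex:deg-cube} and \zcref{rmk:k-vanishing_converse}. Granting this, precomposition with $\pi$ identifies Cartesian functors on the product with Cartesian functors on $\sd(T)\times\sd(\Delta^m)$ inverting the $\Delta^m$ direction. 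It then remains to check that the Segal conditions match across $\pi$: for $T=\Delta^n_+$ the top-simplex conditions must correspond, and the extra thin simplices of the product must impose nothing new. Naturality in $T$ is clear because $\pi$ is natural in both variables.

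The main obstacle is proving that $\pi$ is a localization. I would first try to exhibit $\pi$ as a locally Cartesian fibration in posets, with fibers over $(S,R)$ given by the posets of simplices of $\Delta^S\times\Delta^R$ projecting onto $S$ and $R$. Each such fiber is weakly contractible, having a cofinal element or being a subdivided product of simplices. I would then prove a fiberwise criterion: a locally Cartesian fibration whose fibers are weakly contractible, and whose transition functors are suitably compatible, is a localization at its fiberwise morphisms. If the local transport functors are not sufficiently coherent, the fallback is an inductive argument that filters $\sd(\Delta^n\times\Delta^m)$ by the shuffles of the product and localizes one shuffle at a time.
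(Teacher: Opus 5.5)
Your global reduction is the same as the paper's: density of $\tDelta$ plus \zcref{lem:cart_map_SD_lim_sd} reduces everything to a natural equivalence on $\tDelta$. The key input you isolate, that $\pi\colon\sd(\Delta^n\times\Delta^m)\to\sd(\Delta^n)\times\sd(\Delta^m)$ is a localization, is exactly \zcref{prop:product_localization}. The gap is in how you propose to prove it: $\pi$ is not a locally Cartesian fibration, so your fiberwise criterion has nothing to act on. In these posets a morphism $A\to B$ means $B\subseteq A$. So a locally Cartesian lift of $(S,R)\to\pi(B)$ ending at $B$ would have to be a \emph{least} chain $A\supseteq B$ in $S\times R$ with $\pi(A)=(S,R)$. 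Take $n=m=2$, $B=\{(0,0),(2,2)\}$ and $(S,R)=([2],[2])$. Then $\{(0,0),(1,1),(2,2)\}$ and $\{(0,0),(1,0),(2,1),(2,2)\}$ are both minimal lifts and intersect in $B$, so no least lift exists. Switching variance does not help: $\{(0,0),(1,1)\}$ has no face at all over $(\{0\},\{1\})$, so $\pi$ is not locally coCartesian either. Weak contractibility of the fibers does hold (they are the posets of interior faces of the triangulated ball $\Delta^S\times\Delta^R$), but it does not by itself make a functor a localization. The shuffle-filtration fallback is too unspecific to count as an argument.

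The paper avoids exactly this by factoring $\pi=\gamma_{n,m}\circ L_{n,m}$ through the poset $R^{n,m}$ of rectilinear paths.
\begin{itemize}
\item $\gamma_{n,m}$ has a fully faithful left adjoint $\ell_{n,m}$ (L-shaped paths), so it is a localization for formal reasons.
\item For $L_{n,m}$, the fibration structure is taken over $(\Delta^n\times\Delta^m)\times(\Delta^n\times\Delta^m)^{\op}$ via (first vertex, last vertex). There Cartesian lifts do exist: just adjoin the new endpoints. So \zcref{prop:fiberwise_localization_locally_Cartesian} reduces the claim to the fibers.
\item These fibers are the hom-posets of $\mathfrak{C}[\Delta^n\times\Delta^m]$, mapping to hom-categories of the strict Gray product $\mathbb{O}^n_2\otimes_2^s\mathbb{O}^m_2$. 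That these maps are equivalences is the substantive input, coming from the comparison of Gray tensor products of $2$-truncated orientals in \zcref{app:loc}.
\end{itemize}
If you want to treat $\pi$ directly, a Theorem-A-type criterion like the one used for $\kappa_n$ in \zcref{lem:twisted} is the kind of tool that could work. It would, however, need a real lemma you have not formulated: for instance, weak contractibility of the poset of chains $B\supseteq A$ in $S\times R$ with $\pi(B)=(S,R)$, for every chain $A$ with $\pi(A)\subseteq(S,R)$.

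Two smaller points. First, the morphisms $\pi$ inverts are strictly fewer than the antidiagonal morphisms of $\sd(T\times\Delta^m_\sharp)$: by \zcref{cor:antidiagonal_in_t_sharp}, anything mapping to a marked morphism of $\sd^+(T)$ is antidiagonal. What you need is containment together with that description of the marking. Second, passing from the pointwise localizations to an equivalence of the simplicial mapping spaces requires the left Quillen argument behind \zcref{cor:mapping_equiv_without_limits}.
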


As discussed at the end of the preceding section, this also has a model-categorical analogue.

\begin{thm}\label{thm:SpanUP_I_model}
  Let $\scr{C}$ be an $(\infty,1)$-category with finite limits. There is a equivalence
  \[
    \Map_{\Set_\Delta^\dec}(K_t,\Span_\infty(\scr{C}))\simeq \Map^{\cart}_{\Cat_1}(\sd(K_t),\scr{C})
  \]
  natural in $K_t\in \Set_\Delta^{\dec}$.
\end{thm}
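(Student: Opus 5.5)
The plan is to reduce to the representable objects of $\tDelta$ and then compute both sides explicitly there.

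\emph{Reduction.} Both sides of the claimed equivalence send homotopy colimits in $K_t$ to homotopy limits of spaces.
\begin{itemize}
  \item For the left-hand side this holds because $\Span_\infty(\scr{C})$ is fibrant by \zcref{thm:Span_infty_is_complicial}, and the model structure is simplicial.
  \item For the right-hand side, $\sd\colon \Set_\Delta^{\dec}\to\Set_\Delta$ is a left Kan extension and so preserves colimits. It sends monomorphisms to monomorphisms, so the strict colimits it preserves are homotopy colimits for the Joyal structure. The Cartesian condition is defined simplex by simplex, so $\Map^{\cart}(\sd(-),\scr{C})$ is the limit of its values on simplices, exactly as in \zcref{lem:cart_map_SD_lim_sd}.
\end{itemize}
Every decorated simplicial set is the colimit over its category of decorated simplices of copies of $\Delta^n_\flat$ and $\Delta^n_+$. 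This colimit can be built from pushouts along the monomorphisms $\partial\Delta^n_\flat\to\Delta^n_\flat$ and $\Delta^n_\flat\to\Delta^n_+$, together with transfinite compositions, all of which are homotopy colimits since every object is cofibrant. It therefore suffices to construct the equivalence naturally in $K_t\in\tDelta$, that is, for $\Delta^n_\flat$ and $\Delta^n_+$, compatibly with all simplicial operators.

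\emph{Representables.} Since the complicial model structure is simplicial and Cartesian, I would compute the derived mapping space as the Kan complex whose $k$-simplices are maps $\Delta^n_{(t)}\times\Delta^k_\sharp\to\Span_\infty(\scr{C})$. By the remark following \zcref{defn:cartesian_conditions}, these are exactly the Cartesian functors $\sd(\Delta^n_{(t)}\times\Delta^k_\sharp)\to\scr{C}$. On the other side, $\Map^{\cart}(\sd(\Delta^n_{(t)}),\scr{C})$ is the core of the full subcategory $\Fun^{\cart}(\sd(\Delta^n_{(t)}),\scr{C})$. Its $k$-simplices are functors $\sd(\Delta^n)\times\Delta^k\to\scr{C}$ that are Cartesian in the first variable and invertible in the $\Delta^k$-direction; up to equivalence, these are functors out of $\sd(\Delta^n)\times\sd(\Delta^k)^{\simeq}$. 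The comparison map is induced by the canonical poset map
\[
  \sd(\Delta^n\times\Delta^k)\to\sd(\Delta^n)\times\sd(\Delta^k),
\]
which sends a nondegenerate simplex to the pair of its projections. This map is natural in $[n]$ and $[k]$, so the resulting comparison is natural in $K_t\in\tDelta$.

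\emph{Main obstacle.} I then have to show two things. First, the Cartesian conditions for the thin simplices of $\Delta^n\times\Delta^k_\sharp$ (all simplices whose projection to $\Delta^k$ is nondegenerate are thin) force a Cartesian functor $\sd(\Delta^n\times\Delta^k)\to\scr{C}$ to invert precisely the morphisms collapsed by the projection to $\sd(\Delta^n)\times\sd(\Delta^k)$, as well as all of $\sd(\Delta^k)$. Second, this projection is a localization. The first point I would handle simplex by simplex, using degenerate-cube arguments (\zcref{ex:deg-cube}) in the style of \zcref{lem:k-vanishing} and \zcref{lem:3-simplex_saturation_extension}.

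The localization statement is the real difficulty. I would attack it by viewing $\sd(\Delta^n\times\Delta^k)\to\sd(\Delta^k)$ as a locally Cartesian fibration and proving a fibrewise criterion: a map of such fibrations over a common base is a localization once it is one on each fibre and respects transport. The fibres are subdivisions of products of faces, so I would argue by induction on $n+k$.

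Given the localization, Cartesian functors out of $\sd(\Delta^n\times\Delta^k_\sharp)$ are identified with functors $\sd(\Delta^n)\times|\sd(\Delta^k)|\to\scr{C}$ that are Cartesian in the first variable. Here $|\sd(\Delta^k)|$, the groupoidification of $\sd(\Delta^k)$, is contractible. The Cartesian conditions must also be checked to transfer correctly across the localization. Hence both Kan complexes are levelwise equivalent to $\Map^{\cart}(\sd(\Delta^n_{(t)}),\scr{C})$, which gives the equivalence on representables and, by the reduction step, for all $K_t$.
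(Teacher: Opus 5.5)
\paragraph{Overall assessment.} Your overall architecture is the paper's. You reduce to $\tDelta$ by homotopy colimits; your cellular reduction is an acceptable substitute for the Reedy-cofibrant homotopy colimit over $\tDelta_{/K_t}$. You then compute the mapping space via maps out of $\Delta^n_t\times\Delta^k_\sharp$, compare along $\sd(\Delta^n\times\Delta^k)\to\sd(\Delta^n)\times\sd(\Delta^k)$, identify the antidiagonal morphisms, and transfer the Cartesian conditions (cf.\ \zcref{prop:mappingUP_prop}). The gap is in the step you yourself single out as the main obstacle.

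\paragraph{Where the localization step fails.} For $q\colon\sd(\Delta^n\times\Delta^k)\to\sd(\Delta^k)$, transport along $S\to T$ is restriction $c\mapsto c\cap([n]\times T)$.
\begin{itemize}
\item With $\sd(M)=\Power_*(M)^\op$, this makes $q$ coCartesian but not locally Cartesian. For $n=k=1$, over $\{0,1\}\to\{0\}$ the chain $\{(0,0)\}$ has no universal extension.
\item More seriously, and regardless of variance, $Z$ does not preserve this transport. Restriction can shrink the first projection: $\{(0,0),(1,1)\}\mapsto\{(0,0)\}$ changes it from $\{0,1\}$ to $\{0\}$. By contrast, the (co)Cartesian edges of the product $\sd(\Delta^n)\times\sd(\Delta^k)\to\sd(\Delta^k)$ are constant in the first factor. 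So no fibrewise criterion applies to $Z$ over this base.
\item The fibre over $T$ is not a subdivision of a product of faces. It is the poset of chains in $[n]\times T$ whose second projection is all of $T$. The fibre over $T=[k]$ is as large as the original problem, so your induction on $n+k$ never starts.
\end{itemize}
The paper proceeds differently:
\begin{itemize}
\item It fibres over $(\fv,\lv)$, so the fibres are the hom-posets of $\mathfrak{C}[\Delta^n\times\Delta^m]$.
\item It factors $Z$ through the poset $R^{n,m}$ of rectilinear paths; that map does preserve locally Cartesian edges.
\item It obtains the fibrewise localization from a comparison with the strict Gray tensor product of $2$-truncated orientals (\zcref{thm:the_localization}, Appendix~A).
\item It finishes with an adjunction (\zcref{prop:product_localization}).
\end{itemize}

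\paragraph{A cheaper repair.} Only the decorated factor matters here, so it suffices that $\sd(p_1)\colon\sd(\Delta^n\times\Delta^k)\to\sd(\Delta^n)$ is a localization at the morphisms it sends to identities. All of these are antidiagonal by the ``if'' direction of \zcref{lem:antidiagonal_on_flat+sharp}, which only uses \zcref{rmk:k-vanishing_converse}. This map is a coCartesian fibration, with transport $c\mapsto c\cap(U'\times[k])$. Its fibre over $U$ consists of the simplices of $\Delta^U\times\Delta^k$ not contained in $\partial\Delta^U\times\Delta^k$. Its nerve is homotopy equivalent to $\mathrm{int}(\Delta^U)\times\Delta^k$, so it is weakly contractible. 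The dual of \zcref{prop:fiberwise_localization_locally_Cartesian}, applied to $\sd(p_1)$ versus $\id_{\sd(\Delta^n)}$ (where preserving transport is automatic), then gives the localization.

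\paragraph{Two smaller points.}
\begin{itemize}
\item A simplex of $\Delta^n_t\times\Delta^k_\sharp$ is thin iff its $\Delta^n$-component is thin in $\Delta^n_t$; your parenthetical misstates this. Your conclusion about which morphisms get inverted is nonetheless the right one.
\item A localization for each $\Delta^k$ separately does not by itself make the two Kan complexes weakly equivalent, since their levels are sets. You need that $X\mapsto\sd^+(\Delta^n_t\times X_\sharp)$ is left Quillen and naturally weakly equivalent to $\sd^+(\Delta^n_t)\times X_\sharp$, as in \zcref{cor:mapping_equiv_without_limits}.
\end{itemize}
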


Both of these propositions follow immediately from the following proposition.

\begin{prop}\label{prop:mappingUP_prop}
  Let $\scr{C}\in \Cat_1^{\lex}$. There is an equivalence, natural in $\tDelta$,
  \[
    \Map_{\Set_\Delta^\dec}(\Delta^n_t,\Span_\infty(\scr{C}))\simeq \Map^{\cart}_{\Cat_1}(\sd(\Delta^n_t),\scr{C}).
  \]
\end{prop}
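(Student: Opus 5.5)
The plan is to compute both sides levelwise, using the simplicial enrichment of $\Set_\Delta^{\dec}$, and to compare them through a localization of subdivisions. Since $\Span_\infty(\scr{C})$ is fibrant by \zcref{thm:Span_infty_is_complicial} and every object is cofibrant, the derived mapping space on the left can be computed as the simplicial set $[m]\mapsto \Hom_{\Set_\Delta^{\dec}}(\Delta^n_t\times \Delta^m_\sharp,\Span_\infty(\scr{C}))$. Here the product carries the decoration of the Cartesian model structure, in which $\Delta^m_\sharp$ plays the role of the cylinder/path object. Unwinding the definition of $\Span_\infty$ and the left Kan extension $\sd$ of \zcref{defn:sd_SD_LKE}, these $m$-simplices are exactly the Cartesian functors $\sd(\Delta^n_t\times\Delta^m_\sharp)\to\scr{C}$ in the sense of \zcref{defn:more_general_cart_conditions}. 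On the right-hand side, $\Map^{\cart}(\sd(\Delta^n_t),\scr{C})$ is the core of $\Fun^{\cart}(\sd(\Delta^n_t),\scr{C})$. Its $m$-simplices are modelled by functors $\sd(\Delta^n)\times (\Delta^m)^{\simeq}\to\scr{C}$ that are Cartesian in the first variable, where $(\Delta^m)^{\simeq}$ is the groupoid localization, which is contractible. Equivalently, one can use functors $\sd(\Delta^n)\times\sd(\Delta^m)\to\scr{C}$ that invert everything in the second factor, since $\sd(\Delta^m)$ localized at all arrows is $|\Delta^m|\simeq\ast$.

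The bridge is the canonical poset map
\[
  \lambda\colon \sd(\Delta^n\times\Delta^m)\to \sd(\Delta^n)\times\sd(\Delta^m),
\]
which sends a nondegenerate simplex of the product to its pair of projections. I would first prove that $\lambda$ is a localization at an explicit class $W$ of morphisms: those $S\to S'$ with $S\supset S'$ having the same images in both factors. I expect this to be the main technical obstacle. My first attempt would exhibit $\lambda$ as a locally Cartesian fibration whose fibres are weakly contractible (each fibre is a poset of simplices of $\Delta^n\times\Delta^m$ with prescribed projections, and should be contractible by a cone-point argument). I would then invoke a fibrewise criterion saying that such a fibration is a localization at its fibre morphisms, checking the compatibility of the Cartesian transport with the fibres by hand.

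Granting this, the second step is to show that a functor $F\colon\sd(\Delta^n\times\Delta^m)\to\scr{C}$ is Cartesian for the decoration $\Delta^n_t\times\Delta^m_\sharp$ if and only if two things hold. First, $F$ inverts $W$ and every morphism coming from the $\Delta^m$-direction, so that $F$ factors through $\sd(\Delta^n)\times\sd(\Delta^m)$ and then through $\sd(\Delta^n)\times\ast$. Second, the resulting functor on $\sd(\Delta^n)$ is Cartesian for $t$.

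The "only if" direction proceeds as in \zcref{lem:k-vanishing}. Thin simplices of the product that are degenerate in the $\Delta^n$-direction give Segal cubes that are degenerate along suitable directions. Applying \zcref{ex:deg-cube} inductively on the size of the simplex, together with the cube and pasting moves (\zcref{const:cube_moves}, \zcref{ex:pasting}), then shows that the relevant maps are equivalences. The "if" direction checks that, for such factored $F$, every Segal cube of a thin simplex of the product becomes either degenerate or the pullback of a Segal cube of the corresponding face of $\Delta^n_t$ (which is thin when $t=+$ and the face is the top one), and is therefore a limit cube by \zcref{lem:cubefacelemma}.

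Combining the two steps, the $m$-simplices on both sides are identified compatibly with the simplicial structure maps, giving an equivalence of Kan complexes. Naturality in $\tDelta$ is automatic, because $\lambda$ and all the identifications are natural with respect to maps of decorated simplices $\Delta^n_t\to\Delta^{n'}_{t'}$ through functoriality of $\sd$.
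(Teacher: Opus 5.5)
\textbf{The gap: the mechanism for the key localization fails.} Your outer architecture is the paper's. You compute the left side as $[m]\mapsto \Hom(\Delta^n_t\times\Delta^m_\sharp,\Span_\infty(\scr{C}))$, i.e.\ Cartesian functors on $\sd(\Delta^n_t\times\Delta^m_\sharp)$. You compare this with $\sd(\Delta^n)\times\sd(\Delta^m)$ via your $\lambda$ (the paper's $Z_{n,m}$). You show that such a functor is Cartesian iff it inverts everything whose image in $\sd(\Delta^n)$ is an identity and descends to a Cartesian functor on $\sd(\Delta^n_t)$. Your second step is essentially \zcref{lem:antidiagonal_on_flat+sharp}, \zcref{cor:antidiagonal_in_t_sharp} and \zcref{lem:mapping_equiv_with_limits}.

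The problem is the step you flagged as the main obstacle: $\lambda$ is neither a locally Cartesian nor a locally coCartesian fibration, so the fibrewise criterion does not apply. Take $n=2$, $m=1$, the chain $S'=\{(0,0),(2,1)\}$ over $(\{0,2\},\{0,1\})$, and the morphism $(\{0,1,2\},\{0,1\})\to(\{0,2\},\{0,1\})$. A locally Cartesian lift at $S'$ would be a smallest chain $S\supseteq S'$ with $\lambda(S)=(\{0,1,2\},\{0,1\})$. There are two incomparable minimal ones, $a=\{(0,0),(1,0),(2,1)\}$ and $b=\{(0,0),(1,1),(2,1)\}$, so no such lift exists. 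In the other variance, $\{(0,0),(1,0),(1,1)\}$ has no subchain at all over $(\{0,1\},\{1\})$.

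The cone-point argument for fibres also fails. The fibre over $(\{0,1,2\},\{0,1\})$ is the zigzag $c\to a\leftarrow d\to b\leftarrow e$, with $c=a\cup\{(2,0)\}$, $d=a\cup b$, $e=b\cup\{(0,1)\}$. It has neither an initial nor a terminal object. It happens to be contractible, but contractible fibres without a fibration property do not make $\lambda$ a localization.

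\textbf{How the paper gets around this.} The paper never treats $\lambda$ in one piece. It factors $\lambda=\gamma_{n,m}\circ L_{n,m}$ through the poset $R^{n,m}$ of rectilinear paths.
\begin{itemize}
\item For $L_{n,m}$, \zcref{prop:fiberwise_localization_locally_Cartesian} is applied over a different base, $(\Delta^n\times\Delta^m)\times(\Delta^n\times\Delta^m)^\op$, via $(\fv,\lv)$. Here locally Cartesian lifts exist canonically: $S\mapsto\{\mathbf{f}_0\}\cup S\cup\{\mathbf{l}_0\}$.
\item The fibrewise statement is not mere contractibility. $L_{n,m}$ must localize the hom-posets of $\mathfrak{C}[\Delta^n\times\Delta^m]$ onto the nontrivial hom-categories of $\mathbb{O}^n_2\otimes_2^s\mathbb{O}^m_2$. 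The paper obtains this from the Gray tensor comparison in Appendix~\ref{app:loc} plus a rigidity argument.
\item $\gamma_{n,m}$ is then a localization for a cheap reason: it has the fully faithful left adjoint $\ell_{n,m}$ onto L-shaped paths.
\end{itemize}
If you want to keep $\lambda$ whole, you would need a different criterion, for instance a Theorem A-type criterion as used for $\kappa_n$ in \zcref{lem:twisted}. That would require proving contractibility of the posets $\{T\supseteq S:\lambda(T)=(A,B)\}$ for all $S$ with $\lambda(S)\subseteq(A,B)$. This is nontrivial combinatorics that your proposal does not address.

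\textbf{A secondary point: the final levelwise identification.} In your last paragraph the $m$-simplices are not literally identified. Precomposition with $\lambda$ only embeds functors out of $\sd(\Delta^n)\times\sd(\Delta^m)$ into the functors out of $\sd(\Delta^n\times\Delta^m)$ that invert $W$, because the latter need not factor strictly through $\lambda$. A levelwise localization statement therefore has to be upgraded homotopically to a weak equivalence of simplicial sets.

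The paper does this in \zcref{cor:mapping_equiv_without_limits}. The left Kan extension of $[m]\mapsto\sd^+(\Delta^n_t\times\Delta^m_\sharp)$ is left Quillen from Kan--Quillen to marked simplicial sets. It agrees with $\sd^+(\Delta^n_t\times(-)_\sharp)$, including markings. It is naturally weakly equivalent to $\sd^+(\Delta^n_t)\times(-)_\sharp$. The right adjoints evaluated at $\scr{C}$ are then weakly equivalent, and \zcref{lem:mapping_equiv_with_limits} (your step two) restricts this to Cartesian components. A Reedy-cofibrant, homotopically constant cosimplicial resolution argument would serve equally well.
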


We will first briefly derive \zcref{thm:SpanUP_I_internal,thm:SpanUP_I_model} from \zcref{prop:mappingUP_prop}. The proof of \zcref{prop:mappingUP_prop} will occupy the rest of this section.

\begin{proof}[Proof (of \zcref{thm:SpanUP_I_internal})]
  The desired equivalence follows from the following string of equivalences
  \begin{align*}
    \Map(\scr{D},\Span_\infty(\scr{C})) & \simeq \Map(\colim_{\Delta^n_t \in \tDelta/\scr{D}}\Oriental^n_t,\Span_\infty(\scr{C})) \tag{\zcref{prop:tDelta_dense}} \\
    & \simeq \lim_{\Delta^n_t \in \tDelta/\scr{D}}\Map(\Delta^n_t,\Span_\infty(\scr{C}))\\
    & \simeq \lim_{\Delta^n_t \in \tDelta/\scr{D}}\Map^{\cart}(\sd(\Delta^n_t),\scr{C}) \tag{\zcref{prop:mappingUP_prop}}\\
    & \simeq \Map^{\cart}(\SD(\scr{D}),\scr{C}). \tag{\zcref{lem:cart_map_SD_lim_sd}}
  \end{align*}
\end{proof}

\begin{proof}[Proof (of \zcref{thm:SpanUP_I_model})]
  By \zcref{cor:K_t_hocolim}, an arbitrary decorated simplicial set $K_t$ is the homotopy colimit over the $1$-categorical slice $\tDelta_{/ K_t}$. Moreover, by \zcref{cor:sd_hocolim}, $\sd(K_t)$ is the homotopy colimit (in the Joyal model structure) over the $1$-categorical slice $\tDelta_{/K_t}$. Thus, the natural equivalence
  \[
    \Map(\Delta^n_t,\Span_\infty(\scr{C}))\simeq \Map^{\cart}(\sd(\Delta^n_t),\scr{C})
  \]
  induces an equivalence
  \[
    \Map(K_t,\Span_\infty(\scr{C}))\simeq \Map^{\cart}(\sd(K_t),\scr{C})
  \]
  on homotopy limits, as desired.
\end{proof}

\subsubsection{Immediate consequences}
We will now collect immediate consequences of the main results stated above.

\begin{defn}
  Let $(X,tX)$ be a decorated simplicial set. Its opposite is the decorated simplicial set $(X^{\op},tX)$.
\end{defn}

\begin{prop}\label{prop:span is self dual}
  Let $\cC$ be an $(\infty,1)$-category with finite limits. There exists an equivalence of $\infty$-categories
  \[
    \Span_{\infty}(\cC)^{\op}\simeq \Span_{\infty}(\cC).
  \]
\end{prop}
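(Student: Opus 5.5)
The plan is to produce an isomorphism of decorated simplicial sets $\Span_\infty(\cC)^{\op}\cong\Span_\infty(\cC)$ directly. The opposite of a decorated simplicial set is the composite of $X$ with the involution $[n]\mapsto[n]^{\op}$ of $\Delta$, which reverses the order on each $[n]$. The key observation is that $\sd$ is compatible with this involution. For the order-reversing bijection $r_n\colon[n]\to[n]$, $i\mapsto n-i$, the induced map on nonempty subsets is an isomorphism of posets $\sd(\Delta^n)\cong\sd(\Delta^n)$. This holds because $\sd(M)=\Power_*(M)^\op$ depends only on the underlying set of $M$, and not on its order. These isomorphisms are natural: for $\alpha\colon[m]\to[n]$ we have $\sd(r_n\alpha r_m)=\sd(r_n)\sd(\alpha)\sd(r_m)$, since $\sd$ is functorial in maps of sets.

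Next I would compare the $n$-simplices. An $n$-simplex of $\Span_\infty(\cC)^{\op}$ is an $n$-simplex $\sigma$ of $\Span_\infty(\cC)$, i.e.\ a functor $\sigma^\top\colon\sd(\Delta^n)\to\cC$, with simplicial operators acting through the conjugation $\alpha\mapsto r\alpha r$. Sending $\sigma^\top$ to $\sigma^\top\circ\sd(r_n)$ is then a bijection $(\Span_\infty(\cC)^{\op})_n\to\Span_\infty(\cC)_n$. The naturality above makes these bijections into a simplicial isomorphism $\uSpan_\infty(\cC)^{\op}\cong\uSpan_\infty(\cC)$.

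The main point to check is that this isomorphism preserves thin simplices in both directions. A simplex is thin exactly when $\sigma^\top$ is both odd and even $(n-1)$-Segal, so we must see how the two Segal sieves behave under $r_n$. The map $r_n$ carries $M_m$ to $M_{n-m}$. The parity of $m$ is the parity of $n-m$, the number of elements above $m$, and the parity of $n-m$ is the parity of the number of elements above $n-m$, which is $m$. So $r_n$ preserves each Segal sieve precisely when $n$ is even, and exchanges the odd and even sieves when $n$ is odd.

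In either case, $r_n$ carries the pair $\{\mathcal{E}_{[n]},\mathcal{O}_{[n]}\}$ to itself. Moreover $\sd(r_n)$ is a poset isomorphism that preserves joins, so it carries the associated cubes to the associated cubes. Since thinness requires \emph{both} Segal conditions, $\sigma^\top$ is thin if and only if $\sigma^\top\circ\sd(r_n)$ is thin. Thus the bijection is an isomorphism of decorated simplicial sets. Passing to $\Cat_\infty$, where $(-)^{\op}$ of decorated simplicial sets presents the odd-dual, the isomorphism gives the asserted equivalence.

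I expect no real obstacle. The only care needed is the parity bookkeeping, which is harmless precisely because the thinness condition is symmetric in odd and even. Alternatively, one could argue model-independently via \zcref{thm:SpanUP_I_internal}, using $\SD(\scr{D}^{\op})\simeq\SD(\scr{D})$ from the same reversal isomorphisms on $\tDelta$. The direct argument is simpler.
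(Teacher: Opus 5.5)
Your proof is correct. Its key ingredient is the same as the paper's: the reversal $i\mapsto n-i$ induces a poset automorphism of $\sd(\Delta^n)$ that respects the Cartesian conditions. The difference is the level at which you use it.

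The paper argues through the represented functor. By density of $\tDelta$ (\zcref{prop:tDelta_dense}) and the mapping-space characterization \zcref{prop:mappingUP_prop}, it reduces to natural equivalences $\alpha\colon \sd(\Delta^n_t)\to\sd(\Delta^{n,\op}_t)$ induced by the reversal.

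You instead use that $\Span_\infty(\cC)_n$ is by definition $\Hom(\sd(\Delta^n),\cC)$. Precomposing with the reversal is then already a simplicial isomorphism $\Span_\infty(\cC)^{\op}\cong\Span_\infty(\cC)$; your identity $\sd(r_n\alpha r_m)=\sd(r_n)\sd(\alpha)\sd(r_m)$, together with $r_m^2=\id$, is exactly what this needs. Your parity computation then shows that the reversal permutes $\{\mathcal{E}_{[n]},\mathcal{O}_{[n]}\}$, fixing each sieve when $n$ is even and exchanging them when $n$ is odd. Hence thinness is preserved in both directions.

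Your route is more elementary, since it uses neither density nor the localization results behind \zcref{prop:mappingUP_prop}. It also gives a stronger conclusion, an isomorphism of decorated simplicial sets. It spells out the Segal-sieve bookkeeping that the paper's proof needs (for $\alpha$ to match Cartesian functors on $\Delta^n_+$) but leaves implicit. In exchange, the paper's formulation fits the model-independent pattern of the surrounding section, where statements are proven by checking them on $\tDelta$.
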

\begin{proof}
  It suffices to construct, for every $n\geq 0$ and every decorated $n$-simplex $\Delta^{n}_{t}$ a natural equivalence
  \[
    \Map(\Delta^{n}_{t},\Span_{\infty}(\cC)^{\op}) \simeq \Map(\Delta^{n,\op}_{t},\Span_{\infty}(\cC)) \simeq \Map(\Delta^{n}_{t},\Span(\cC)).
  \]
  By \zcref{prop:tDelta_dense,prop:mappingUP_prop}, it suffices to construct a natural equivalence
  \[
    \alpha:\sd(\Delta^{n}_{t}) \rightarrow \sd(\Delta^{n,\op}_{t}).
  \]
  We define $\alpha$ as the extension to $\sd(\Delta^{n})$ of the following map
  \[
    [n] \rightarrow [n]^{\op}, \quad i \mapsto n-i.
  \]
\end{proof}

\begin{cor} \label{cor:upgrade_Span_infty_functor}
  The assignment $\scr{C}\mapsto \Span_\infty(\scr{C})$ extends to a limit preserving functor
  \[
    \begin{tikzcd}
      \Cat_1^{\lex} \arrow[r] & \Cat_\infty.
    \end{tikzcd}
  \]
\end{cor}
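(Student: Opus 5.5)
The plan is to construct the functor on objects using the universal property of \zcref{thm:SpanUP_I_internal}, then on morphisms by naturality, and then check limit preservation objectwise via the Yoneda embedding.

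Let $\Cat_1^{\lex}$ act on the right-hand side of \zcref{thm:SpanUP_I_internal}. For fixed $\scr{D}\in\Cat_\infty$, the assignment $\scr{C}\mapsto \Map^{\cart}(\SD(\scr{D}),\scr{C})$ is functorial in finite-limit-preserving functors $F:\scr{C}\to\scr{C}'$. The Segal conditions are finite limit conditions, so postcomposition with $F$ preserves Cartesian functors. It is also functorial in $\scr{D}$ through $\SD$, with the Cartesian condition stable under precomposition by $\SD(\scr{D}'\to\scr{D})$, since maps $\mathbb{O}^n\to\scr{D}'$ inverting the top cell compose to such maps into $\scr{D}$. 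This gives a functor
\[
  \Phi:\Cat_1^{\lex}\to \Fun(\Cat_\infty^\op,\Spc),\qquad \scr{C}\mapsto \Map^{\cart}(\SD(-),\scr{C}).
\]
To make this honestly functorial in both variables, I would write $\Map^{\cart}(\SD(\scr{D}),\scr{C})$ as the full subspace of the bifunctor $\Map_{\Cat_1}(\SD(-),-)$ restricted to $\Cat_1^{\lex}$. Full subfunctors of a functor into $\Spc$ are determined by objectwise conditions that are stable under the transition maps, and these stabilities were just checked.

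By \zcref{thm:SpanUP_I_internal}, each $\Phi(\scr{C})$ is representable by $\Span_\infty(\scr{C})$. The full subcategory of representables is equivalent to $\Cat_\infty$ via Yoneda, so $\Phi$ factors through a functor $\Span_\infty:\Cat_1^{\lex}\to\Cat_\infty$ extending the construction on objects. A minor point needs care here: the naturality in \zcref{thm:SpanUP_I_internal} is stated only in $\scr{D}$. The representing objects are nevertheless determined up to contractible choice, so the factorization through the essential image of Yoneda produces the functor without any further naturality in $\scr{C}$.

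For limit preservation, the Yoneda embedding $\Cat_\infty\to\Fun(\Cat_\infty^\op,\Spc)$ preserves and reflects limits, and limits in the presheaf category are computed objectwise. It therefore suffices to show that for every $\scr{D}$ and every diagram $f:\scr{I}\to\Cat_1^{\lex}$, the map $\Map^{\cart}(\SD(\scr{D}),\lim f)\to\lim\Map^{\cart}(\SD(\scr{D}),f(-))$ is an equivalence. This is exactly \zcref{lem:cartmaps_pres_limits}, which relies on $\Cat_1^{\lex}\subset\Cat_1$ being closed under limits (\zcref{lem:limits_of_cats_with_limits}).

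The main obstacle is the bifunctoriality bookkeeping in the first step, namely producing $\Phi$ coherently rather than objectwise. I would handle it by taking the full subfunctor of the restricted mapping-space bifunctor, as described above; the remaining steps are formal.
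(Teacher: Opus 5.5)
Your proposal is correct and follows essentially the same route as the paper: form $\Map^{\cart}(\SD(-),-)\colon \Cat_\infty^\op\times\Cat_1^{\lex}\to\Cat_0$, use \zcref{thm:SpanUP_I_internal} to see that its adjoint lands in representables and so factors through $\Cat_\infty$, and deduce limit preservation from \zcref{lem:cartmaps_pres_limits} together with the Yoneda embedding. Your extra bookkeeping only makes explicit what the paper leaves implicit: that the Cartesian condition is stable in both variables, and that Yoneda reflects as well as preserves limits, which is the direction actually needed.
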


\begin{proof}
  There is a functor
  \[
    \begin{tikzcd}
      \Map^{\cart}(\SD(-),-): &[-3em] \Cat_\infty^\op\times \Cat_1^{\lex} \arrow[r] & \Cat_0
    \end{tikzcd}
  \]
  which preserves limits in the second variable by \zcref{lem:cartmaps_pres_limits}. By \zcref{thm:SpanUP_I_internal}, for every $\scr{C}$ in $\Cat_1^{\lex}$, the resulting presheaf $\Cat_\infty^\op\to \Cat_0$ is represented by $\Span_\infty$. Thus, the adjoint
  \[
    \begin{tikzcd}
      \Cat_1^{\lex}\arrow[r] & \Fun(\Cat_\infty^\op,\Cat_0)
    \end{tikzcd}
  \]
  factors through $\Cat_\infty$, and since the Yoneda embedding preserve limits, it follows that the resulting functor also does.
\end{proof}

\begin{cor} \label{cor:monoidal_structure_Span_infty}
  Let $\scr{C}\in \Cat_1^{\lex}$. Then the Cartesian symmetric monoidal structure on $\scr{C}$ induces a symmetric monoidal structure on $\Span_\infty(\mathcal{C})$.
\end{cor}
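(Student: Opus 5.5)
The plan is to obtain the symmetric monoidal structure formally from \zcref{cor:upgrade_Span_infty_functor}. A symmetric monoidal structure on an object of an $(\infty,1)$-category with finite products is the same as a commutative monoid object in it, and a finite-product-preserving functor carries commutative monoids to commutative monoids. So it suffices to exhibit $\scr{C}$ as a commutative monoid in $\Cat_1^{\lex}$ with respect to the Cartesian product there. Applying $\Span_\infty$ then gives a commutative monoid in $\Cat_\infty$ with respect to its Cartesian product, i.e.\ a symmetric monoidal $(\infty,\infty)$-category.

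The first step is to check that the product in $\Cat_1^{\lex}$ is computed as in $\Cat_1$. By \zcref{lem:limits_of_cats_with_limits}, $\Cat_1^{\lex}$ is closed under limits in $\Cat_1$. Hence finite products of categories with finite limits are formed as ordinary products, and the projections, together with the terminal category $[0]$, are lex. The next step is to observe that, for $\scr{C}$ with finite limits, the functors
\[
  \times\colon \scr{C}\times\scr{C}\to\scr{C}, \qquad [0]\xrightarrow{*}\scr{C}
\]
preserve finite limits, since limits commute with limits. So they are morphisms in $\Cat_1^{\lex}$. Finally, I want to upgrade these maps to a full commutative monoid structure, i.e.\ a functor $M\colon \mathsf{Fin}_*\to\Cat_1^{\lex}$ satisfying the Segal condition. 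The Cartesian symmetric monoidal structure gives such a functor $\mathsf{Fin}_*\to\Cat_1$, sending $\langle k\rangle\mapsto\scr{C}^{k}$. Its structure maps are built from projections, diagonals and iterated products (the standard construction $\scr{C}^\times$ of Lurie). All these functors are lex, and $\Cat_1^{\lex}\subset\Cat_1$ is a locally full subcategory. It follows that $M$ factors through $\Cat_1^{\lex}$: the factorization of a functor into a locally full subcategory is determined by objects and $1$-morphisms landing there, with no further coherence required. The Segal condition still holds, since products agree.

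Composing with $\Span_\infty$, which preserves limits and in particular finite products, gives a functor $\mathsf{Fin}_*\to\Cat_\infty$ satisfying the Segal condition. This is a commutative monoid, hence a symmetric monoidal structure on $\Span_\infty(\scr{C})$ whose underlying object is $\Span_\infty(\scr{C})$.

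The main obstacle is the factorization step. We need the Cartesian structure on $\scr{C}$ packaged as a functor out of $\mathsf{Fin}_*$ whose value morphisms are identifiable as lex functors. The cleanest approach is to note that the Cartesian structure is the canonical commutative monoid structure on any object of a category with finite products. Since $\scr{C}$ has finite products, one checks that $\scr{C}$ is a commutative monoid in $\Cat_1^{\lex}$: multiplication and unit are lex and inherit associativity and commutativity data from $\Cat_1$, because a locally full inclusion is faithful on mapping spaces. If needed, the Segal-condition verification reduces to the product computation above.
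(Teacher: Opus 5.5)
Your proposal is correct and follows the same route as the paper: regard the Cartesian structure on $\scr{C}$ as a commutative monoid in $\Cat_1^{\lex}$, then push it forward along the limit-preserving functor $\Span_\infty\colon \Cat_1^{\lex}\to\Cat_\infty$ of \zcref{cor:upgrade_Span_infty_functor}. You spell out details the paper's one-line proof leaves implicit, namely products in $\Cat_1^{\lex}$, lexness of the multiplication and unit, and factorization through the locally full subcategory. One harmless slip: diagonals do not actually occur among the structure maps of the Cartesian structure, which only involve projections, finite products and the terminal object.
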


\begin{proof}
  The Cartesian symmetric monoidal structure on $\scr{C}$ is a symmetric monoid in $\Cat_1^{\lex}$ and limit-preserving functors preserve symmetric monoids.
\end{proof}

\subsubsection{The proof strategy for \zcref{prop:mappingUP_prop}}
Let $\mathcal{C}$ be an $(\infty, 1)$-category with finite limits, presented as a fibrant object in the Joyal model structure. The groupoid $\Map^{\cart}_{\Cat_1}(\sd(\Delta^n_t), \mathcal{C})$ is presented as
\[
  \Hom_{\Set_\Delta}^{\cart}(\sd(\Delta^n_t) \times -, \mathcal{C}): \Delta^\op \to \Set,
\]
where, by abuse of notation, $f$ satisfies the $\cart$ condition if and only if $f$ is Cartesian on the first variable and sends all morphisms on the second variable to equivalences. This can be expressed conveniently in terms of homomorphisms between \emph{marked} simplicial sets
\[
  \Hom^{\cart}_{\Set_\Delta^+}(\sd(\Delta^n_t) \times (-)_\sharp, \mathcal{C}): \Delta^\op \to \Set, \teq \label{eq:unwinding_Map_sd}
\]
where we marked all the equivalences in $\mathcal{C}$.

On the other hand, the groupoid $\Map_{\Cat_\infty}(\Delta^n_t, \Span_\infty(\mathcal{C}))$ can be presented as
\[
  \Hom_{\Set_\Delta^\dec}(\Delta^n_t \times (-)_\sharp, \Span_\infty(\mathcal{C})): \Delta^\op \to \Set,
\]
which, by definition, is \emph{isomorphic} to
\[
  \Hom_{\Set_\Delta}^{\cart}(\sd(\Delta^n_t \times (-)_\sharp), \mathcal{C}): \Delta^\op \to \Set.
\]
Marking all the antidiagonal morphisms in $\sd(\Delta^n_t \times (-)_\sharp)$, i.e., those which are sent to equivalences by any Cartesian functor, the above is \emph{isomorphic} to
\[
  \Hom_{\Set_\Delta^+}^{\cart}(\sd^+(\Delta^n_t \times (-)_\sharp), \mathcal{C}): \Delta^\op \to \Set. \teq \label{eq:unwinding_Map_Span}
\]

The equivalence between \zcref{eq:unwinding_Map_sd,eq:unwinding_Map_Span} essentially follows from a natural equivalence of marked simplicial sets $\sd^+(\Delta^n_t \times (-)_\sharp) \simeq \sd(\Delta^n_t) \times (-)_\sharp$ (\zcref{cor:mapping_equiv_without_limits}) and carefully tracking the Cartesian conditions (\zcref{lem:mapping_equiv_with_limits}).

We will carry out the details of this argument in the remainder of this section. Most of the technical work centers around establishing the said equivalence, which goes through an important localization (\zcref{prop:product_localization})
\[
  \sd(\Delta^n \times \Delta^m) \to \sd(\Delta^n) \times \sd(\Delta^m). \teq\label{eq:key_localization}
\]

\subsection{Subdivision, Gray products, and antidiagonal morphisms}
The localization \zcref{eq:key_localization} factors through an intermediate localization
\[
  \begin{tikzcd}
    \sd(\Delta^n \times \Delta^m) \ar{r}{L_{n,m}} & R^{n,m} \ar{r} & \sd(\Delta^n) \times \sd(\Delta^m).
  \end{tikzcd}
\]
In this subsection, we will do the preparatory work necessary to define $L_{n,m}$ in \zcref{const:L-functor}. The proof that $L_{n,m}$, resp. the composite, is indeed a localization is carried out in \zcref{subsec:the_localization_L}, resp. \zcref{subsec:product_localization}.

\begin{const}
  We define maps
  \[
    \begin{tikzcd}[row sep=0em]
      \fv: &[-3em] \sd(\Delta^n)\arrow[r] & \Delta^n \\
      & U \arrow[r,mapsto] & \min(U)
    \end{tikzcd}
  \]
  and
  \[
    \begin{tikzcd}[row sep=0em]
      \max: &[-3em] \Delta^n \arrow[r] & \sd(\Delta^n)\\
      & k \arrow[r,mapsto] & {[k,n]}
    \end{tikzcd}
  \]
  Note that while the former are natural, the latter are not. By Kan extension, we obtain natural maps $\fv: \sd(K)\to K$ for every simplicial set $K$.
\end{const}

\begin{lem}\label{lem:sdEx}
  The natural maps $\fv$ are natural weak homotopy equivalences. Consequently, the adjunction
  \[
    \begin{tikzcd}
      \sd: &[-3em] \Set_\Delta \arrow[r,shift left] & \Set_\Delta\arrow[l,shift left] &[-3em] :\uSpan_\infty
    \end{tikzcd}
  \]
  is a Quillen self-equivalence of the Kan--Quillen model structure naturally equivalent to the identity.
\end{lem}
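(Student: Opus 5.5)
The claim is Kan's classical result that $\fv:\sd(K)\to K$ is a weak equivalence, adapted to the reversed ordering of subsets used here. By naturality and the standard gluing argument, it suffices to check it on simplices. Both $\sd$ and the identity preserve colimits and monomorphisms. Moreover, $\sd$ sends pushouts along monomorphisms to homotopy pushouts in the Kan--Quillen model structure, since these are pushouts along cofibrations of simplicial sets. So I will induct on skeleta: writing $K$ as a transfinite composite of pushouts of $\partial\Delta^n\hookrightarrow\Delta^n$, the gluing lemma together with the fact that filtered colimits of weak equivalences are weak equivalences reduces the claim to $K=\Delta^n$ (and $\partial\Delta^n$, which is handled inductively by the same argument).

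For $K=\Delta^n$, both sides are weakly contractible. The target $\Delta^n$ is contractible. The source is the nerve of the poset $\sd([n])=\Power_*([n])^\op$, which has a terminal object in the opposite ordering, namely the whole set $[n]$. This is an initial object of $\sd([n])$, so its nerve is contractible. Hence $\fv$ is a map between contractible Kan-equivalent objects and is a weak equivalence. One small check is that the Kan-extended $\fv$ restricted to a simplex is indeed the poset map $U\mapsto\min U$. This is order preserving because $U\supseteq V$ implies $\min U\le\min V$, and it is natural in face and degeneracy maps of $[n]$ because monotone maps preserve minima of images.

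For the consequence, $\sd$ is left adjoint to $\uSpan_\infty$ by the nerve/realization formalism. It preserves monomorphisms, since it is computed levelwise on nondegenerate simplices. It also preserves weak equivalences by two-out-of-three applied to the natural square with $\fv$. Hence it is left Quillen, and it is homotopically naturally equivalent to the identity via $\fv$, so its derived functor is an equivalence. The right adjoint is therefore a Quillen equivalence as well; it is $\Ex((-)^{\op})$, and the op-twist is harmless for Kan–Quillen.

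The main obstacle is rigor in the skeletal induction, namely that $\sd$ of a pushout along a monomorphism is a homotopy pushout. I would take this directly from the fact that $\sd$ is cocontinuous and monomorphism-preserving, so these are honest pushouts along cofibrations, which are homotopy pushouts in a left proper model category.
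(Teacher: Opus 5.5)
Your argument is correct and is the classical Kan/Goerss--Jardine proof: skeletal induction with the gluing lemma reduces to $K=\Delta^n$, where both sides are contractible, and left Quillen-ness and the equivalence then follow from two-out-of-three and the naturality of $\fv$. The paper does not write this out; it cites \cite[Proposition 10.1.4]{dyckerhoff_HigherSegalSpaces_2019} and identifies the adjunction with the classical $\sd\dashv\Ex$ via $\uSpan_\infty(X)=\Ex(X^\op)$, so your direct check of the first-vertex version is the same approach, just spelled out.
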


\begin{proof}
  The proof is effectively the same as~\cite[Proposition 10.1.4]{dyckerhoff_HigherSegalSpaces_2019}. Equivalently, this is the $\sd\dashv \Ex$ adjunction from classical homotopy theory.
\end{proof}

Our next goal is to identify $\sd(\Delta^n)\times \sd(\Delta^m)$ as a localization of $\sd(\Delta^n\times \Delta^m)$ at morphisms which every Cartesian functor must send to weak equivalences. This will allow us to relate functors in $\Span_\infty(\scr{C})$ to spans of functors into $\scr{C}$. Because we will reuse much of this work later in this paper, we rephrase our work in terms of Gray products $\Delta^n_t\otimes \Delta^m_s$, where $t, s \in \{\flat, +\}$. Our key result in this regard is \zcref{thm:the_localization}, which we will use repeatedly in later sections when we generalize the functoriality of our universal property.

\begin{defn}\label{defn:sd+_antidiag}
  We call a morphism $\nu$ in $\sd(X_t)$ \emph{antidiagonal} if for any finitely complete $(\infty,1)$-category $\scr{C}$ and any Cartesian functor
  \[
    \begin{tikzcd}
      f: &[-3em] \sd(X_t) \arrow[r] &\scr{C}
    \end{tikzcd}
  \]
  $f(\nu)$ is an equivalence. We denote by $\sd^+(X_t)$ the marked simplicial set given by marking the antidiagonal morphisms. It is immediate that the formation $X_t \mapsto \sd^+(X_t)$ is functorial, and we obtain a functor
  \[
    \sd^+: \Set_\Delta^\dec \to \Set_\Delta^+.
  \]
\end{defn}

\begin{rmk}
  By definition, we have an \emph{isomorphism} of sets
  \[
    \Hom^{\cart,+}(\sd^+(K_t),\scr{C})\cong \Hom^{\cart}(\sd(K_t),\scr{C}),
  \]
  where $\Hom^{\cart,+}$ denotes the set of Cartesian functors which respect markings, and $\mathcal{C}$ is viewed as a marked simplicial set by marking the equivalences in $\mathcal{C}$.
\end{rmk}

\begin{lem}\label{lem:max-marked=grpd_subdiv}
  The marked simplicial set $\sd^+(\Delta^n_\sharp)$ is maximally marked. Consequently, every morphism of marked simplicial sets $\sd^+(\Delta^n_\sharp)\to \scr{C}$ is Cartesian.
\end{lem}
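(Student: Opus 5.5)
We have to show that every morphism of $\sd(\Delta^n_\sharp)$ is antidiagonal, i.e.\ is sent to an equivalence by every Cartesian functor $f:\sd(\Delta^n_\sharp)\to\scr{C}$. Once this holds, $\sd^+(\Delta^n_\sharp)$ is maximally marked. The second claim then follows by reading the argument backwards. A marked map $\sd^+(\Delta^n_\sharp)\to\scr{C}$ sends every morphism to an equivalence, so every cube is degenerate along every direction. By \zcref{ex:deg-cube}, every Segal cube of every face is then a limit cube, so the map is Cartesian.

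Because every morphism of $\sd(\Delta^n)$ factors as a composite of inclusions $U\supset U\setminus\{u\}$ deleting one element, two-out-of-three reduces the claim to these generating morphisms. I argue by induction on $n$.
\begin{itemize}
\item For $n=0$ there is nothing to prove.
\item For $n=1$, all thin $1$-simplices satisfy the $0$-Segal conditions. For $M=\{0,1\}$, the odd and even sieves are the singletons $\{\{0\}\}$ and $\{\{1\}\}$. The associated one-dimensional cubes are exactly the maps $01\to 0$ and $01\to 1$, so both are equivalences.
\item For the inductive step, every proper face of $\Delta^n_\sharp$ is a maximally marked simplex $\Delta^U_\sharp$. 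By induction, all morphisms of $\sd(\Delta^n)$ whose source is a proper subset $U\subsetneq[n]$ are already sent to equivalences, since they lie in $\sd(\Delta^U)$.
\end{itemize}

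It remains to treat the maps $[n]\to[n]_l$ for $l\in[n]$. Here I use that $\Delta^n$ itself is thin, so $f$ is a sheaf for both Segal sieves $\mathcal{E}_{[n]}$ and $\mathcal{O}_{[n]}$, for $n\geq 2$. Take the Segal sieve $\SParity_{[n]}(l)$ containing $[n]_l$. All vertices of its punctured cube are proper subsets, and all maps among them are equivalences by induction. The limit of a punctured cube in which every map is an equivalence is equivalent to any of its vertices. Concretely, apply \zcref{lem:degenerate}: the cube is degenerate along the $[n]_l$-direction except possibly at $\varnothing$, which is the form of degeneracy the lemma uses. Hence $f([n])\to f([n]_l)$ is an equivalence.

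The one step that needs care is checking that the degeneracy hypothesis of \zcref{lem:degenerate} only concerns maps between nonempty index sets $J\in\Power_*(M)$, so that it is supplied entirely by the inductive hypothesis. Since the definition quantifies exactly over $J\in\Power_*(M)$, this holds. Nothing else should be an obstacle; the whole argument is this short induction, in the spirit of \zcref{lem:3-simplex_saturation_extension} but much easier because every face is thin.
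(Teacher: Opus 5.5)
Your proposal is correct and follows the paper's proof essentially step for step: induction on $n$, reduction to the generating maps $[n]\to[n]_l$ (strictly speaking via closure of equivalences under composition rather than two-out-of-three), and use of the Segal cube with cone point $[n]$, whose punctured part consists of equivalences by induction. Your appeal to \zcref{lem:degenerate} simply makes precise the paper's remark that a limit of a diagram of equivalences over a contractible indexing category is equivalent to each of its vertices.
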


\begin{proof}
  The second statement follows from the first because a cube consisting entirely of equivalences is always a limit cube.

  We prove the first statement by induction on $n$. It is immediate when $n=0$ and $n=1$. For general $n$, it then follows from the inductive hypothesis that every morphism in $\sd(\partial\Delta^n_\sharp)$ is antidiagonal. Since the antidiagonal marking is closed under composition, it then suffices to check that the morphisms $[n]\to [n]\setminus\{k\}$ are antidiagonal. Each such morphism is part of a Segal cube with cone point $[n]$. By induction, the Segal cube minus the cone point is a diagram consisting of only equivalences, so the image of $[n]$ under a Cartesian functor is the limit over a contractible indexing category of a diagram consisting only of equivalences. Thus, all of the morphisms in the Segal cone must be sent to equivalences, completing the proof.
\end{proof}

\begin{lem} \label{lem:sd^+(Delta^n_flat)=sd(Delta^n)_flat}
  The marked simplicial set $\sd^+(\Delta^n_\flat)$ is minimally marked. Consequently, every morphism of marked simplicial sets $\sd^+(\Delta^n_\flat)\to \scr{C}$ is Cartesian.
\end{lem}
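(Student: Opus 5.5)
The plan is to prove both assertions at once: first show that \emph{every} functor $\sd(\Delta^n)\to\scr{C}$ is Cartesian with respect to the decoration $\flat$, and then exhibit, for each non-identity morphism of $\sd(\Delta^n)$, a Cartesian functor into a finitely complete category that does not invert it. By \zcref{defn:sd+_antidiag}, this shows that only identities are antidiagonal, so $\sd^+(\Delta^n_\flat)$ is minimally marked. Since the marking is minimal, every morphism of marked simplicial sets $\sd^+(\Delta^n_\flat)\to\scr{C}$ is just a functor, and the first step then shows it is Cartesian.

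\emph{Step 1: the Cartesian condition is vacuous.} The thin simplices of $\Delta^n_\flat$ are the degenerate ones. Thus a map $\Delta^k_+\to\Delta^n_\flat$ factors through a degeneracy $s_j\colon\Delta^k\to\Delta^{k-1}$, which collapses $j$ and $j+1$. It therefore suffices to show that for any $F\colon\sd(\Delta^{k-1})\to\scr{C}$, the composite $G=F\circ\sd(s_j)$ is both even and odd $[k]$-Segal. Since $j$ and $j+1$ are adjacent, they have opposite parities, so each Segal sieve contains exactly one of $[k]_j$ and $[k]_{j+1}$. Consider the sieve containing $[k]_{j+1}$ (the other case is symmetric). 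All of its members contain $j$, so every intersection $V$ of members contains $j$. Hence $s_j(V)=s_j(V\setminus\{j+1\})$, and $G(V)\to G(V\cap[k]_{j+1})$ is an identity. The cube is therefore degenerate along the $[k]_{j+1}$-direction. Moreover, $G([k])\to G([k]_{j+1})$ is an identity, because $s_j([k])=s_j([k]_{j+1})=[k-1]$. By \zcref{ex:deg-cube}, the cube is a limit cube. This bookkeeping, namely checking that degeneracy holds for every intersection including those that omit $j+1$, is the only place where care is needed.

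\emph{Step 2: separating morphisms.} Let $U\to W$ be a non-identity morphism in $\sd(\Delta^n)$, so that $U\supsetneq W$. Take $\scr{C}=[1]=\{0<1\}$, which has finite limits (given by minima, with terminal object $1$). Define $f(X)=0$ if $X\supseteq U$ and $f(X)=1$ otherwise. This is monotone: if $X\supseteq Y$ and $f(Y)=0$, then $Y\supseteq U$, hence $X\supseteq U$ and $f(X)=0$. So $f$ is a functor, and it is Cartesian by Step 1. Since $f(U)=0\neq 1=f(W)$, the morphism $U\to W$ is not sent to an equivalence, and hence it is not antidiagonal. This completes the plan.
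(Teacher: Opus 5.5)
Your proof is correct and is essentially what the paper's one-line proof (``immediate from the definitions'') leaves implicit. The thin simplices of $\Delta^n_\flat$ are degenerate, so their Segal cubes are degenerate along the collapsed direction and hence limit, which makes every functor $\sd(\Delta^n)\to\scr{C}$ Cartesian. Your monotone test functor $\sd(\Delta^n)\to[1]$ then shows that no non-identity edge is forced to be inverted; $[1]$ is a good choice of target, since $\sd(\Delta^n)$ itself has no terminal object for $n\geq 1$ and so is not finitely complete.
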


\begin{proof}
  Immediate from the definitions.
\end{proof}

\begin{rmk} \label{rmk:paths_in_grid}
  Each object of $\sd(\Delta^n\times \Delta^m)$ is a chain of pairs $(\vec{u},\vec{v}) \coloneqq \{(u_0,v_0)<\cdots<(u_k,v_k)\}$, and thus can be viewed uniquely as a path in an $n\times m$ oriented grid. These paths are ordered by simple refinement of chains. We will, in general draw these grids with $n$ along the $y$ axis (increasing in the downward direction) and $m$ along the $x$ axis (increasing in the rightwards direction).

  Below are some examples in $\sd(\Delta^2\times \Delta^3)$.
  \[
    \begin{tikzpicture}
      \drawGrid{4}{3}
      \begin{scope}[on background layer]
        \drawLine{3pt}{red} (00.center) -- (01.center) -- (13.center);
        \drawLine{3pt}{blue} (11.center) -- (12.center) -- (22.center);
      \end{scope}
  \end{tikzpicture}\]
\end{rmk}

\begin{rmk} \label{rmk:antidiagonal_in_product_vs_gray}
  By the functoriality of $\sd^+$, if a morphism is antidiagonal in $\sd(\Delta^n_t\otimes \Delta^m_s)$ or $\sd(\Delta^m_s\otimes \Delta^n_t)$, then it is antidiagonal in $\sd(\Delta^n_t\times \Delta^m_s)$.
\end{rmk}

We now identify an important class of antidiagonal morphisms in $\sd(\Delta^n_\flat \otimes \Delta^m_\flat)$. For the remainder of this subsection, unless otherwise specified, the default marking on any simplex $\Delta^n$ is that of $\Delta^n_\flat$.

\begin{lem}\label{lem:marked_in_flat_gray}
  A non-degenerate $k$-simplex $(\vec{u},\vec{v})$ in $\Delta^n\otimes\Delta^m$ is thin if and only if there are indices $0\leq \ell_1<\ell_2<k$ such that $u_{\ell_1}=u_{\ell_1+1}$ and $v_{\ell_2}=v_{\ell_2+1}$.
\end{lem}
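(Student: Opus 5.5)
The plan is to unwind the definition of the Gray tensor product directly. A $k$-simplex of $\Delta^n\otimes\Delta^m$ is a pair $(\vec u,\vec v)$, where $\vec u$ is a $k$-simplex of $\Delta^n$ and $\vec v$ one of $\Delta^m$, both with only degenerate simplices thin. It is thin exactly when, for every partition $k=p+q$, either $\sqcup^1_{p,q}\vec u$ or $\sqcup^2_{p,q}\vec v$ is thin. Here $\sqcup^1_{p,q}\vec u=(u_0,\dots,u_p)$ is the front $p$-face and $\sqcup^2_{p,q}\vec v=(v_p,\dots,v_k)$ is the back $q$-face. In $\Delta^n_\flat$ a simplex is thin iff it is degenerate, i.e.\ iff it has a repeated consecutive vertex, and $0$-simplices are never thin.

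So the condition reads: for every $0\le p\le k$, either some $\ell<p$ has $u_\ell=u_{\ell+1}$, or some $\ell\ge p$ (with $\ell<k$) has $v_\ell=v_{\ell+1}$. Set $a=\min\{\ell: u_\ell=u_{\ell+1}\}$ and $b=\max\{\ell : v_\ell=v_{\ell+1}\}$, with the conventions $a=\infty$ and $b=-\infty$ if these sets are empty. Then the condition holds for a given $p$ iff $a<p$ or $b\ge p$. It holds for all $p\in[0,k]$ iff there is no $p$ with $b<p\le a$. The edge cases $p=0$ and $p=k$ force $b\ge 0$ and $a<k$ respectively, and in general the condition is equivalent to $a<b+1$, i.e.\ $a\le b$.

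It remains to reconcile this with the stated strict inequality $\ell_1<\ell_2$. Non-degeneracy of $(\vec u,\vec v)$ means no index $\ell$ has both $u_\ell=u_{\ell+1}$ and $v_\ell=v_{\ell+1}$. Hence the witnesses $a$ and $b$ can never coincide, so $a\le b$ is equivalent to $a<b$. This is in turn equivalent to the existence of $\ell_1<\ell_2<k$ with $u_{\ell_1}=u_{\ell_1+1}$ and $v_{\ell_2}=v_{\ell_2+1}$: for the forward direction take $\ell_1=a$ and $\ell_2=b$; for the converse note $a\le\ell_1<\ell_2\le b$.

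The main point requiring care is the bookkeeping of the parity operators. In particular, one must make sure that the front face $\sqcup^1_{p,q}$ covers vertices $0,\dots,p$ and the back face covers $p,\dots,k$, sharing vertex $p$. One must also handle the extreme partitions $p=0,k$, where one side is a $0$-simplex and hence never thin. Getting these conventions wrong would swap the roles of $\vec u$ and $\vec v$ or shift an index by one, so I would check the statement against $k=2$, for example $(0,0,1)\times(0,1,1)$, before writing out the general argument.
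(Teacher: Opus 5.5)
Your proof is correct and is essentially the paper's argument. The paper also takes the smallest index $\ell_1$ with $u_{\ell_1}=u_{\ell_1+1}$ and the largest $\ell_2$ with $v_{\ell_2}=v_{\ell_2+1}$, tests the parity faces at $p=\ell_1$ and $p=\ell_2+1$ to get $\ell_1\leq\ell_2$, and uses non-degeneracy to make the inequality strict; you just package both implications, including the extreme partitions $p=0,k$, into the single condition that no $p$ satisfies $b<p\leq a$, whereas the paper proves the two directions separately.
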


\begin{proof}
  First assume that such indices $\ell_{1},\ell_{2}$ exist. Then it follows that for every $p\leq \ell_{1}$ the simplex $\sqcup^{2}_{p,q}\vec{v} = (v_{p}, \dots , v_{\ell_{2}}, v_{\ell_{2}+1}=v_{\ell_2},\dots,v_{k})$ is degenerate and for every $p > \ell_{1}$ the simplex $\sqcup^{1}_{p,q}\vec{u} = (u_{1}, \dots, u_{\ell_{1}}, u_{\ell_{1}+1}=u_{\ell_{1}}, \dots, u_{p})$ is degenerate.

  On the other hand, assume that the non-degenerate simplex $(\vec{u},\vec{v})\in\Delta^{m}_{\flat} \otimes\Delta^{n}_{\flat}$ is thin. Since both $\vec{u}$ and $\vec{v}$ are degenerate, there exists a smallest index $0\leq \ell_{1}<k$ such that $u_{\ell_{1}}=u_{\ell_{1}+1}$ and a largest index $0\leq \ell_{2}<k$ such that $v_{\ell_{2}}=v_{\ell_{2}+1}$. It follows that $\sqcup_{\ell_{1},n-\ell_{1}}^{1}\vec{u}$ and $\sqcup_{\ell_{2}+1,n-\ell_{2}-1}^{2}\vec{v}$ are non-degenerate. The first implies that $\sqcup^2_{\ell_1, n-\ell_1}$ is degenerate, and hence, $\ell_1 < \ell_2 + 1$, or equivalently, $\ell_1 \leq \ell_2$. Since $(\vec{u}, \vec{v})$ is non-degenerate, $\ell_1 \neq \ell_2$. Thus, $\ell_1 < \ell_2$, completing the proof.
\end{proof}

\begin{defn} \label{defn:corner_collapse}
  There is a particular, special case of the lemma: the case where we can choose $\ell_2=\ell_1+1$. The corresponding path in the grid then contains an upper right angle. In this case, we call each triple $\{\ell_1,\ell_1+1,\ell_1+2\}$ such that in \zcref{lem:marked_in_flat_gray} we can take $\ell_2=\ell_1+1$ an \emph{upper right corner} of $(\vec{u},\vec{v})$.

  If $(\vec{s},\vec{t})$ is the face of $\sigma$ obtained by deleting the middle points of upper right corners, we refer to the resulting morphism  $(\vec{u},\vec{v})\to (\vec{s},\vec{t})$ in $\sd(\Delta^n \otimes \Delta^m)$ or any composite of such morphisms as a \emph{corner collapse}. We denote the set of all corner collapse morphisms in $\sd(\Delta^n\otimes \Delta^m)$ by $C_{n,m}$.

  Let $(\vec{u},\vec{v})$ be a simplex containing an upper right corner with indices $\{\ell_1,\ell_1+1,\ell_1+2\}$. We call the Segal condition on $(\vec{u},\vec{v})$ which contains the face removing the middle vertex of the corner a \emph{corner Segal condition}.

  Reversing the roles of $\vec{u}$ and $\vec{v}$,\footnote{or, in view of \zcref{lem:marked_in_flat_gray}, using the opposite convention when forming the Gray tensor product} we can similarly define the notions \emph{lower left corner}, the corresponding \emph{dual corner collapse}, and \emph{dual corner Segal condition}.
\end{defn}

In this new terminology, \zcref{lem:marked_in_flat_gray,lem:k-vanishing} together imply the following
\begin{cor} \label{cor:corner_collapse_antidiagonal}
  The corner collapse morphisms are antidiagonal.
\end{cor}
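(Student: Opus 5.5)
We show that every corner collapse morphism in $\sd(\Delta^n\otimes\Delta^m)$ is sent to an equivalence by every Cartesian functor $f\colon \sd(\Delta^n\otimes \Delta^m)\to\scr{C}$. Antidiagonal morphisms are closed under composition, since equivalences compose. So it suffices to treat a single elementary collapse $(\vec{u},\vec{v})\to(\vec{s},\vec{t})$ that deletes the middle vertex $\ell_1+1$ of one upper right corner $\{\ell_1,\ell_1+1,\ell_1+2\}$. Deleting the middle points of several corners at once factors as a composite of such single deletions. Each intermediate face still has its remaining corners as upper right corners, because removing a vertex not among the three indices of another corner leaves that corner's three vertices consecutive.

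Write $\sigma=(\vec{u},\vec{v})\colon\Delta^k\to\Delta^n\otimes\Delta^m$ and $k'=\ell_1+1$. Since $\ell_1$ is an upper right corner, we have $u_{\ell_1}=u_{\ell_1+1}$ and $v_{\ell_1+1}=v_{\ell_1+2}$. We now check which faces of $\sigma$ are thin. By \zcref{lem:marked_in_flat_gray}, any face of $\sigma$ containing the three vertices $\{k'-1,k',k'+1\}$ is thin: taking the indices $\ell_1<\ell_2=\ell_1+1$ in that face witnesses thinness, and such a face is non-degenerate, or else it is degenerate and thin anyway. Hence $\sigma$ factors as a map of decorated simplicial sets $\Delta^k_{k'}\to\Delta^n\otimes\Delta^m$, with $0<k'<k$. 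Pulling $f$ back along $\sd(\sigma)$ then gives a Cartesian functor $\sigma^\top\colon\sd(\Delta^k_{k'})\to\scr{C}$, that is, a map $\Delta^k_{k'}\to\Span_\infty(\scr{C})$.

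Now apply \zcref{lem:k-vanishing} to this map with $U=[k]\setminus\{k'\}$. This $U$ contains $\{k'-1,k'+1\}$, so $U\ast\{k'\}=[k]$. The lemma says $\sigma^\top([k])\to\sigma^\top([k]_{k'})$ is an equivalence. Under $\sd(\sigma)$, this is exactly the image under $f$ of the corner collapse $(\vec{u},\vec{v})\to(\vec{s},\vec{t})$, since $(\vec{s},\vec{t})$ is the face of $\sigma$ obtained by deleting vertex $k'$.

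The only real point to verify is the identification of thin faces with those of $\Delta^k_{k'}$. We need every face containing $\{k'-1,k',k'+1\}$ to be thin, and this is exactly what the corner pattern gives through \zcref{lem:marked_in_flat_gray}. Once that holds, everything else is formal.
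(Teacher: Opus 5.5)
Your argument is correct and follows the paper's route: the paper derives the corollary directly from \zcref{lem:marked_in_flat_gray} and \zcref{lem:k-vanishing}. The corner pattern makes every face containing the corner thin, so the simplex is a decorated map out of $\Delta^k_{k'}$, and $k$-vanishing applied with $U=[k]\setminus\{k'\}$ gives the equivalence. You have additionally made explicit the factorization through $\Delta^k_{k'}$ and the reduction to a single corner; that reduction works because, by non-degeneracy, the middle vertex of one upper right corner is never a vertex of another corner.
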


\begin{lem} \label{lem:corner_Segal_vs_corner_collapse}
  Let $\scr{C}$ be an $(\infty,1)$-category, considered as a marked $\infty$-category by marking the equivalences. Let $\sd(\Delta^n \otimes \Delta^m)^C$ be the marked simplicial set obtained by marking the corner collapses $C_{n,m}$ in $\sd(\Delta^n\otimes \Delta^m)$.
  \begin{enumerate}
    \item \label{item:lem_corner_Segal_vs_corner_collapse:functoriality} The formation $\Delta^n \times \Delta^m \mapsto \sd(\Delta^n\otimes \Delta^m)^C$ is functorial and we obtain a functor $\Delta \times \Delta \to \Set_\Delta^+$.
    \item \label{item:lem_corner_Segal_vs_corner_collapse:equivalence} A map $\sd(\Delta^n\otimes \Delta^m)\to \scr{C}$ satisfies the corner Segal conditions if and only if it respects markings $C_{n,m}$.
  \end{enumerate}
\end{lem}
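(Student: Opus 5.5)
For \zcref{item:lem_corner_Segal_vs_corner_collapse:functoriality}, I would check directly that for maps $\alpha:[n']\to[n]$ and $\beta:[m']\to[m]$ the induced poset map $\sd(\alpha\times\beta):\sd(\Delta^{n'}\times\Delta^{m'})\to\sd(\Delta^n\times\Delta^m)$ sends corner collapses to corner collapses or identities. It suffices to treat a single elementary collapse, deleting the middle vertex $(u_{\ell+1},v_{\ell+1})$ of an upper right corner, where $u_\ell=u_{\ell+1}$ and $v_{\ell+1}=v_{\ell+2}$. The image chain is the set $\{(\alpha u_i,\beta v_i)\}$, with repetitions removed. If $(\alpha u_{\ell+1},\beta v_{\ell+1})$ coincides with a neighbouring point, the image morphism is an identity. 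Otherwise $\alpha u_\ell=\alpha u_{\ell+1}$ and $\beta v_{\ell+1}=\beta v_{\ell+2}$ still hold, so the three image points again form an upper right corner, and the image morphism is again an elementary collapse. Composites of collapses map to composites, so markings are preserved. Functoriality in $\Delta\times\Delta$ then follows from functoriality of $\sd$ on products.

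For \zcref{item:lem_corner_Segal_vs_corner_collapse:equivalence}, the key observation is that a corner Segal condition on $(\vec u,\vec v)$ at the corner $\{\ell,\ell+1,\ell+2\}$ is the $\Parity(\ell+1)$-Segal condition on the simplex $(\vec u,\vec v)$, viewed as $\sd(\Delta^k)\to\sd(\Delta^n\times\Delta^m)$. I would then apply \zcref{rmk:k-vanishing_converse} with the index $k=\ell+1$. The corner collapses out of faces of $(\vec u,\vec v)$ are exactly the maps $U\ast\{\ell+1\}\to U$ for those $U\subset[k]$ containing $\{\ell,\ell+2\}$. This identification works because any such face still has $(u_\ell,v_\ell),(u_{\ell+1},v_{\ell+1}),(u_{\ell+2},v_{\ell+2})$ as consecutive points forming an upper right corner. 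Moreover, for $U\supset\{\ell,\ell+2\}$, the corner condition on the face $U\cup\{\ell+1\}$ is precisely $\Parity_{U\cup\{\ell+1\}}(\ell+1)$-Segal.

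The equivalence of the two statements in \zcref{rmk:k-vanishing_converse} then says: $F$ sends all elementary collapses of all faces at this corner to equivalences if and only if $F$ satisfies the corner Segal conditions for every such face. Ranging over all simplices and all corners gives the equivalence with sending the elementary collapses to equivalences. Equivalences are closed under composition, so the composite corner collapses in $C_{n,m}$ are then handled as well.

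The main obstacle is matching the two index sets exactly. First, every elementary corner collapse must arise as some $U\ast\{\ell+1\}\to U$ inside some simplex at some corner; one can take the source simplex itself. Second, each simplex with a corner must have its corner condition appear among those quantified in \zcref{rmk:k-vanishing_converse}. I would also double-check that \zcref{rmk:k-vanishing_converse} holds for an arbitrary functor $F:\sd(\Delta^k)\to\scr{C}$ by the inductive degenerate-cube argument of \zcref{lem:k-vanishing}, without using any thinness hypothesis.
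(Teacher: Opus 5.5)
Your proposal is correct and is essentially the paper's argument. The paper checks (i) on face and degeneracy maps, which is the same computation as your direct check for arbitrary $(\alpha,\beta)$, and it obtains (ii) from \zcref{rmk:k-vanishing_converse} applied at the middle vertex of each corner, exactly as you do. On the point you wanted to double-check: the inductive degenerate-cube argument of \zcref{lem:k-vanishing} uses no thinness hypothesis, so the remark does hold for an arbitrary functor $F\colon\sd(\Delta^k)\to\scr{C}$.
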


\begin{proof}
  Item \ref{item:lem_corner_Segal_vs_corner_collapse:functoriality} is checked on face and degeneracy maps. Item \ref{item:lem_corner_Segal_vs_corner_collapse:equivalence} is a direct consequence of \zcref{rmk:k-vanishing_converse}.
\end{proof}

\begin{defn}
  We define an order on pairs $(u,v)\in \Delta^n\times \Delta^m$, which we call the \emph{twisted arrow order}, by declaring $(u_1,v_1)\leq_{\Tw} (u_2,v_2)$ if and only if $u_1\geq u_2$ and $v_1\leq v_2$. Note that, as this is the twisted arrow category of a poset, it is itself a poset. Visually, $(u,v)<_{\Tw}(s,t)$ implies that $(u,v)$ lies in the right-angled cone in the grid to the lower-left of $(s,t)$, including the edges which share one coordinate with $(s,t)$, but excluding the point $(s,t)$ itself.

  We will additionally write $(u,v)\ll_{\Tw} (s,t)$ if $(u,v)<_{\Tw}(s,t)$ \emph{and} neither coordinate of $(u,v)$ agrees with the corresponding coordinate of $(s,t)$.

  We say that a path $(\vec{u},\vec{v})$ lies \emph{below-left} of a path $(\vec{s},\vec{t})$ if for each $(s_i,t_i)$ in the latter path and each $(u_j,v_j)$ in the former, $(s_i,t_i)\not\ll_{\Tw} (u_j,v_j)$. We then also say that $(\vec{s},\vec{t})$ lies \emph{above-right} of $(\vec{u},\vec{v})$. Note that this is not a partial order on the set of paths.
\end{defn}

\begin{defn}
  We call a non-degenerate $k$-simplex (or path) $(\vec{u},\vec{v})$ in $\Delta^n\times \Delta^m$ \emph{rectilinear} if, for every index $0\leq j<k$, either $u_k=u_{k+1}$ or $v_k=v_{k+1}$.

  We define a partial order $\preceq$ on the set of paths in $\Delta^n\times \Delta^m$ by requiring $(\vec{u},\vec{v})\preceq (\vec{s},\vec{t})$ if and only if
  \begin{enumerate}
    \item $(\vec{u},\vec{v})$ is below-left of $(\vec{s},\vec{t})$,
    \item \emph{and} the minimal subgrid $\Delta^{U}\times \Delta^V\subset \Delta^n\times \Delta^m$ containing $(\vec{u},\vec{v})$ also contains $(\vec{s},\vec{t})$.
  \end{enumerate}

  We denote the subposet of \emph{rectilinear} paths equipped with the order $\preceq$ by $R^{n,m}$.
\end{defn}

\begin{lem}
  The relation $\preceq$ is a partial order on $R^{n,m}$.
\end{lem}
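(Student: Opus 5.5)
We verify the three axioms of a partial order for $\preceq$ on $R^{n,m}$: reflexivity, antisymmetry and transitivity. Throughout, write $\mathrm{Sub}(\vec{u},\vec{v})=\Delta^{U}\times\Delta^{V}$ for the minimal subgrid containing a path. Since a path is a chain in the product order starting at its minimum and ending at its maximum, this subgrid is simply the rectangle $[u_0,u_k]\times[v_0,v_k]$ spanned by its endpoints. We will not need anything beyond the definitions of $\ll_{\Tw}$ and of below-left.

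\emph{Reflexivity.} Condition (2) is trivial. For condition (1) we must show that no two points $p,q$ of a single path satisfy $p\ll_{\Tw}q$. Points on a path are comparable in the product order, say $p\le q$ coordinatewise. Then $p\ll_{\Tw} q$ would force $p_1>q_1$, a contradiction; and $q\ll_{\Tw}p$ would force $q_2<p_2$, again a contradiction.

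\emph{Antisymmetry.} Suppose $(\vec{u},\vec{v})\preceq(\vec{s},\vec{t})$ and conversely. Condition (2) in both directions gives equal subgrids, hence the same start and end points. Both paths are rectilinear and monotone. If they differed, there would be a first point at which they diverge, and the next point of one path would move in the $u$-direction while that of the other moves in the $v$-direction. Following the two paths until they next meet (they must, since they share an endpoint), one path stays strictly above-right of the other on this stretch. We then pick a corner point of the upper path and a corner point of the lower path inside this stretch that are strictly separated in both coordinates, so that one is $\ll_{\Tw}$ the other. This contradicts one of the two below-left conditions. The only care needed is to exhibit these strictly separated vertices; this uses rectilinearity, since the paths contain all intermediate lattice points along each segment.

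\emph{Transitivity.} This is the main obstacle, because below-left is not transitive on arbitrary paths (as the paper itself notes), so condition (2) must do the work. Condition (2) is clearly transitive, since the subgrids are nested. So assume $A\preceq B\preceq C$, all with subgrids inside $\mathrm{Sub}(A)$, and suppose for contradiction that $c\ll_{\Tw} a$ for some $c\in C$ and $a\in A$.

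Since $B$ is a rectilinear path whose endpoints lie at opposite corners of a rectangle containing both $a$ and $c$, it must cross the region between them. Concretely, consider the rectangle with corners $a$ and $c$; $B$ is a monotone lattice path from a point with coordinates $\le$ both to a point with coordinates $\ge$ both. By a discrete intermediate value argument, $B$ passes through a point $b$ with $b_1$ between $a_1$ and $c_1$, or through a point with $b_2$ between $a_2$ and $c_2$. We then show that either $c\ll_{\Tw} b$ or $b\ll_{\Tw} a$ holds, contradicting $B\preceq C$ or $A\preceq B$ respectively. This requires a short case analysis on where $B$ enters the box, and this is where rectilinearity is essential: a diagonal step could jump over the box. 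Because of the strict inequalities in $\ll_{\Tw}$, boundary cases (where $b$ shares a coordinate with $a$ or $c$) need handling. In those cases we move one step along $B$, which is possible since rectilinear steps change only one coordinate at a time, to obtain a strict instance.
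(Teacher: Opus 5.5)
Your reflexivity and transitivity arguments are essentially right, but your antisymmetry argument has a genuine gap, and it comes from a misreading of the definitions.

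\textbf{The misreading.} In the paper, \emph{rectilinear} only means that each step changes one coordinate. Steps may skip values: for example $((0,0),(2,0))$ is an object of $R^{2,m}$, since it is $L_{2,m}$ of the edge $\{(0,0),(2,0)\}$. So your claim that rectilinear paths ``contain all intermediate lattice points along each segment'' is false. Correspondingly, the minimal subgrid $\Delta^U\times\Delta^V$ is the product of the \emph{sets} $U,V$ of coordinate values the path actually visits. It is not the bounding rectangle.

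\textbf{Why this breaks antisymmetry.} Under your reading, antisymmetry is actually false. Take $P=((0,0),(1,0),(2,0))$ and $Q=((0,0),(2,0))$. They have the same bounding rectangle. They are mutually below-left, because all second coordinates agree, so no $\ll_{\Tw}$ relation can occur. Yet $P\neq Q$. Your step ``the next point of one path would move in the $u$-direction while that of the other moves in the $v$-direction'' silently excludes exactly this case: two steps in the same direction with different lengths.

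\textbf{How to close it.} Use condition (2) with the correct reading, as the paper does. Equal subgrids mean both paths visit exactly the values in $U$ and in $V$. Since paths are monotone, neither path can skip an element of $U$ or $V$. Hence inside $\Delta^U\times\Delta^V\cong\Delta^{n'}\times\Delta^{m'}$ both are maximal unit-step paths. At the first divergence, both cannot step in the same direction, since they would reach the same next vertex. So after reindexing the next vertices are $(u_k+1,v_k)$ and $(u_k,v_k+1)$, and these are already $\ll_{\Tw}$-related. There is no need to follow the paths until they rejoin.

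\textbf{Transitivity.} Your strategy works for arbitrary step lengths, but one step is unjustified. The point $a$ need not lie in the subgrid of $B$, so ``$B$ starts weakly above-left of $a$'' does not follow from condition (2). It follows from $A\preceq B$ applied to the first vertex $s$ of $B$: since $s_2\le c_2<a_2$, having $s_1>a_1$ would give $s\ll_{\Tw}a$. The last vertex of $B$ has first coordinate at least $c_1>a_1$. So there is a unique step $p\to q$ of $B$ whose first coordinate passes from $\le a_1$ to $>a_1$. By rectilinearity this step keeps the second coordinate fixed at some value $y$. If $y<a_2$, then $q\ll_{\Tw}a$. If $y\ge a_2$, then $c\ll_{\Tw}p$. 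This replaces your vaguer intermediate-value case analysis.
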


\begin{proof}
  Reflexivity and transitivity are immediate from the definitions. We will now prove anti-symmetry.

  Suppose that $(\vec{u},\vec{v})$ and $(\vec{s},\vec{t})$ are rectilinear paths with $(\vec{s},\vec{t})\preceq (\vec{u},\vec{v})$ and $(\vec{u},\vec{v})\preceq (\vec{s},\vec{t})$. Then the second condition in the definition of $\preceq$ implies that the minimal subgrids defined by $(\vec{s},\vec{t})$ and $(\vec{u},\vec{v})$ are the same. We thus may assume without loss of generality that $(\vec{s},\vec{t})$ and $(\vec{u},\vec{v})$ are maximal paths in $\Delta^n\times \Delta^m$, that is, the minimal subgrid containing (either of) them is $\Delta^n\times \Delta^m$ itself.

  Let $k$ be the last index such that $(u_k,v_k)=(s_k,t_k)$. Suppose by way of contradiction that $k\neq n+m$. Then there are two possibilities.
  \begin{itemize}
    \item If $(u_{k+1},v_{k+1})=(u_k+1,v_k)$ and $(s_{k+1},t_{k+1})=(s_k,t_{k}+1)$, then $(u_{k+1},v_{k+1})\ll_{\Tw}(s_{k+1},t_{k+1})$, which contradicts the assumption that $(\vec{s},\vec{t})\preceq (\vec{u},\vec{v})$.
    \item If $(u_{k+1},v_{k+1})=(u_k,v_k+1)$ and $(s_{k+1},t_{k+1})=(s_k+1,t_{k})$, then $(s_{k+1},t_{k+1})\ll_{\Tw}(u_{k+1},v_{k+1})$, another contradiction.
  \end{itemize}
  Thus, the two paths agree at every point, completing the proof.
\end{proof}

\begin{rmk}
  There is another, more visually intuitive description of the partial order $\preceq$ on $R^{n,m}$. Interpret $(\vec{u},\vec{v})$ and $(\vec{s},\vec{t})$ as piecewise linear paths $\mu$ and $\alpha$ in the rectangle $[0,n]\times[0,m]$ in the obvious way, and call a positive slope ray originating from the bottom or left edges of the grid an \emph{upward ray}. We say that $(\vec{u},\vec{v})\preceq (\vec{s},\vec{t})$ if and only if every upward ray which intersects $\alpha$ does not intersect $\alpha$ first. For example, in the diagram below, {\color{red} Red Path} $\preceq$ {\color{blue}Blue Path}.
  \[
    \begin{tikzpicture}
      \drawGrid{4}{3}

      \begin{scope}[on background layer]
        \drawLine{3pt}{red}  (10.center) -- (11.center) -- (21.center) -- (22.center) -- (23.center);
        \drawLine{3pt}{blue} (11.center) -- (12.center) -- (13.center);
      \end{scope}
    \end{tikzpicture}
  \]
\end{rmk}

\begin{rmk}
  Note that if a rectilinear path $(\vec{s},\vec{t})$ is a subsimplex of $(\vec{u},\vec{v})$, then $(\vec{u},\vec{v})\preceq (\vec{s},\vec{t})$.
\end{rmk}

\begin{const}\label{const:L-functor}
  We define natural functors
  \[
    \begin{tikzcd}
      L_{n,m}:&[-3em] \sd(\Delta^n\times \Delta^m) \arrow[r] & R^{n,m}
    \end{tikzcd}
  \]
  as follows. Given a simplex $(\vec{u},\vec{v})$, $L_{n,m}(\vec{u},\vec{v})$ is the rectilinear path  constructed by adding, for each index $i$ such that $u_i<u_{i+1}$ and $v_i<v_{i+1}$, the intermediate vertex $(u_i,v_{i+1})$. Note that, by construction, the minimal grids of $(\vec{u},\vec{v})$ and $L_{n,m}(\vec{u},\vec{v})$ coincide.
\end{const}

\subsection{The localization \texorpdfstring{$\sd(\Delta^n\times \Delta^m) \to R^{n,m}$}{sd(Δⁿ×Δᵐ) → Rⁿ,ᵐ}} \label{subsec:the_localization_L}

The following key technical result will be used throughout the paper.

\begin{thm}\label{thm:the_localization}
  The functors
  \[
    \begin{tikzcd}
      L_{n,m}:&[-3em] \sd(\Delta^n\times \Delta^m) \arrow[r] & R^{n,m}
    \end{tikzcd}
  \]
  are localizations of $(\infty,1)$-categories at the corner collapses.
\end{thm}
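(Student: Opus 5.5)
Our plan is to show that $L_{n,m}$ is a localization via an adjunction-type argument: exhibit $R^{n,m}$ as (equivalent to) a reflective or coreflective full subcategory of $\sd(\Delta^n\times\Delta^m)$, or more realistically, prove the localization property using a Quillen~A/cofinality style criterion on fibers. Since $L_{n,m}$ sends corner collapses to identities (a corner collapse only deletes the middle point of an upper right corner, while $L_{n,m}$ re-inserts exactly the lower-left intermediate vertex $(u_i,v_{i+1})$, so both source and target have the same rectilinearization), we get an induced functor $\sd(\Delta^n\times\Delta^m)[C_{n,m}^{-1}]\to R^{n,m}$, and the task is to show it is an equivalence. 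First we check $L_{n,m}$ is well defined and monotone: if $(\vec s,\vec t)$ is a face of $(\vec u,\vec v)$, then $L(\vec s,\vec t)$ is below-left of $L(\vec u,\vec v)$ with the same-or-smaller minimal grid, which amounts to observing that a lower-left completion of a diagonal step lies below-left of any completion of a refined path.

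The key structural input is a criterion for localizations. We would use the following standard fact: a functor $p:A\to B$ between $1$-categories (posets here) is a localization at $W$ if for every $b\in B$ the comma category $A_{b/}$ (or $A_{/b}$) has a localization to a contractible category, i.e.\ $A_{/b}[W^{-1}]$ is weakly contractible and $p$ is essentially surjective, with $W$ consisting exactly of morphisms mapping to identities and composing appropriately. Concretely, we would aim to show: (1) $L_{n,m}$ is surjective on objects and in fact restricts to the identity on rectilinear paths (which is true since a rectilinear path has no diagonal steps), giving a section $j:R^{n,m}\to\sd(\Delta^n\times\Delta^m)$ as sets; (2) for each rectilinear $r$, the fiber $L^{-1}(r)$ has a terminal-like object, namely $r$ itself, reachable by corner collapses from every fiber element (a path whose rectilinearization is $r$ is obtained from $r$ by deleting some lower-left corner points; conversely, any element of the fiber refined by corner collapses). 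Note that $j$ is not a functor for $\preceq$ in general, since $\preceq$ contains non-face relations, so we cannot simply argue via an adjunction; this is the source of difficulty.

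Therefore the route we would take is the fiberwise criterion for locally (co)Cartesian fibrations, which the introduction advertises. We would show $L_{n,m}$, restricted appropriately, is a locally coCartesian fibration (or its opposite), whose fibers $L^{-1}(r)$ localize to a point at the corner collapses (each fiber is a poset of subsets of the corner points of $r$ with a top element $r$ after inverting collapses, hence contractible after localization), and whose transition functors preserve corner collapses. Then the criterion gives that $L$ exhibits $R^{n,m}$ as the localization at $C_{n,m}$. To verify local coCartesianness, for a relation $r\preceq r'$ and a fiber element $x$ over $r$, we would construct the pushforward by inserting into $r'$ those vertices needed to remain a refinement lying over $r'$, using the twisted arrow order to see the construction is canonical.

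The main obstacle is precisely the identification of the morphisms of $R^{n,m}$: showing every relation $r\preceq r'$ is realized by a zig-zag of faces and inverse corner collapses in $\sd(\Delta^n\times\Delta^m)$, and that these zig-zags are coherently unique, i.e.\ the (co)Cartesian lift condition. I would attack this first in the case of maximal paths in a fixed subgrid, where $\preceq$ is generated by flipping a single lower-left corner to an upper-right one across a unit square (the diagonal of the square is a common face of both, one being a corner collapse target), then reduce general relations to sequences of such flips plus face maps, proceeding by induction on $n+m$.
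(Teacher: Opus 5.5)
\textbf{There is a genuine gap: the fibration over $R^{n,m}$ that your key step relies on does not exist.} $L_{n,m}$ is neither a locally coCartesian nor a locally Cartesian fibration over $R^{n,m}$. The reason is that a relation $r\preceq r'$ is generally not realized by a single morphism of $\sd(\Delta^n\times\Delta^m)$ starting (or ending) at a given preimage.

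Concretely, in $\Delta^2\times\Delta^2$ take $x=(0,0),(1,0),(2,0),(2,1),(2,2)$ and $r'=(0,0),(0,1),(0,2),(1,2),(2,2)$. Then $L(x)=x\preceq r'$. A coCartesian lift would be a face $y\subseteq x$ with $L(y)=r'$. Since $y\subseteq L(y)$, this forces $y\subseteq x\cap r'=\{(0,0),(2,2)\}$, and $L$ of any such $y$ is not $r'$. So the morphism has no lift at $x$ at all. Dually, already in $\Delta^1\times\Delta^1$ the relation from the lower-left path to the upper-right path has no Cartesian lift ending at the upper-right path.

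The same examples show that the Quillen-A-type criterion (contractible fibers plus (co)initiality of fiber into slice) also fails for $L$ over $R^{n,m}$, because the relevant comma categories are empty. Your fibers $L^{-1}(r)$ are cubes with an extremal object $r$. (With the paper's conventions $r$ is initial, since $\sd$ is ordered by reverse inclusion, and the inserted vertex $(u_i,v_{i+1})$ is the upper-right corner.) They are therefore trivially contractible. If the fibration claim held, the theorem would be a triviality, which signals that all of the content lies elsewhere.

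\textbf{The missing content is exactly what you call the main obstacle, and generation alone does not supply it.} Showing that $\preceq$ is generated by faces and single-square corner flips only tells you that the induced functor $\sd(\Delta^n\times\Delta^m)[C_{n,m}^{-1}]\to R^{n,m}$ is surjective on objects and that the relevant mapping spaces are nonempty. Being a localization additionally requires the spaces of zigzags realizing a given relation to be contractible. That is a higher-coherence statement, and your induction on flips does not address it.

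The paper avoids fibering over $R^{n,m}$ altogether:
\begin{enumerate}
\item It fibers both $\sd(\Delta^n\times\Delta^m)$ and $R^{n,m}$ over the endpoint pair $(\fv,\lv)$. There both are locally Cartesian: lift by adjoining a new first and last vertex, then rectilinearize. $L_{n,m}$ preserves these lifts.
\item It applies a fiberwise localization criterion for locally Cartesian fibrations, proved via the lax morphism classifier.
\item The fibers are the hom-posets of $\mathfrak{C}[\Delta^n\times\Delta^m]$ and of the strict Gray product $\mathbb{O}^n_2\otimes_2^s\mathbb{O}^m_2$. Fiberwise localization is the genuinely nontrivial input. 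It comes from comparing the $(\infty,2)$-Gray tensor product of $2$-truncated orientals with the strict one (Appendix A). A rigidity argument then identifies $L$ with the resulting natural equivalence, since both are determined by the case $n=m=1$.
\end{enumerate}
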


We first note that (1) every corner collapse is sent to an identity by $L_{n,m}$, and (2) the canonical morphism $L_{n,m}(\vec{u},\vec{v})\to(\vec{u},\vec{v})$ always lies in $C_{n,m}$. Thus, if an inclusion $(\vec{s},\vec{t})\subset(\vec{u},\vec{v}) $ becomes an isomorphism (i.e., equality) after applying $L_{n,m}$ there is a zigzag
\[
  \begin{tikzcd}
    (\vec{u},\vec{v}) & \arrow[l] L_{n,m}(\vec{u},\vec{v}) = L_{n,m}(\vec{s},\vec{t}) \ar{r} & (\vec{s},\vec{t})
  \end{tikzcd}
\]
of corner collapses which necessarily commutes with the original inclusion. Thus, if $L_{n,m}$ is a localization, it is a localization at $C_{n,m}$, and so it suffices to show that $L_{n,m}$ is a localization.

\begin{defn}
  We define a functor $\fr:R^{n,m}\to \Delta^n\times \Delta^m$ by sending a rectilinear path to its first vertex. It is immediate that the diagram
  \[
    \begin{tikzcd}
      \sd(\Delta^n\times \Delta^m) \arrow[dr,"\fv"']\arrow[rr,"L_{n,m}"] & & R^{n,m}\arrow[dl,"\fr"]\\
      & \Delta^n\times \Delta^m
    \end{tikzcd}
  \]
  commutes.

  We define functors $\lr:R^{n,m}\to (\Delta^n\times \Delta^m)^\op$ and $\lv:\sd(\Delta^n\times \Delta^m)\to (\Delta^n\times \Delta^m)^\op$ analogously by taking the last vertex.
\end{defn}

\begin{ntt}
  We denote elements of $\Delta^n\times \Delta^m$ by boldface lowercase Latin letters $\mathbf{a}$, $\mathbf{b}$, etc. We will denote the components of the pair $\mathbf{a}$ by $a_1$ and $a_2$.
\end{ntt}

\begin{lem}
  The functor $(\fv,\lv): \sd(\Delta^n \times \Delta^m) \to (\Delta^n\times \Delta^m)\times (\Delta^n\times \Delta^m)^\op$ is a locally Cartesian fibration.
\end{lem}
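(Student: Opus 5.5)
The plan is to check the definition directly, using the fact that everything here is a poset. Both $\sd(\Delta^n\times\Delta^m)$ and $(\Delta^n\times\Delta^m)\times(\Delta^n\times\Delta^m)^\op$ are nerves of posets. Any functor between nerves of ordinary categories is an inner fibration, because inner horns in a nerve have unique fillers. So it remains to produce locally Cartesian lifts. In a poset, a morphism $x\to y$ lying over $\phi\colon p\to q$ is locally Cartesian precisely when it is terminal among objects of the fiber over $p$ that admit a map to $y$. Equivalently, every $x'$ in the fiber over $p$ with $x'\to y$ satisfies $x'\to x$; uniqueness of the factorization is automatic in a poset.

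Recall the orientation of morphisms. In $\sd(\Delta^n\times\Delta^m)$ a morphism $U\to V$ means $V\subseteq U$. Hence $\fv(U)=\min U\le \min V=\fv(V)$, and $\lv(U)=\max U\ge\max V$, which is a morphism $\lv(U)\to\lv(V)$ in $(\Delta^n\times\Delta^m)^\op$. So a morphism in the base from $(\mathbf a,\mathbf a')$ to $(\mathbf b,\mathbf b')$ consists of inequalities $\mathbf a\le\mathbf b$ and $\mathbf b'\le\mathbf a'$ in $\Delta^n\times\Delta^m$.

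Now fix a chain $y$ with $\min y=\mathbf b$ and $\max y=\mathbf b'$, and fix such a base morphism. Define the candidate lift
\[
  x\coloneqq y\cup\{\mathbf a,\mathbf a'\}.
\]
The key steps are as follows.
\begin{enumerate}
  \item $x$ is a chain. This holds because $\mathbf a\le\mathbf b\le \cdots\le\mathbf b'\le \mathbf a'$ in the product order.
  \item $x$ lies over $(\mathbf a,\mathbf a')$, since $\min x=\mathbf a$ and $\max x=\mathbf a'$.
  \item The inclusion $y\subseteq x$ gives a morphism $x\to y$ lying over the given base morphism.
  \item $x$ is terminal among such objects. If $x'$ is a chain with $\min x'=\mathbf a$, $\max x'=\mathbf a'$ and $y\subseteq x'$, then $x'\supseteq y\cup\{\mathbf a,\mathbf a'\}=x$. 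This is the morphism $x'\to x$ in $\sd$, and it lies over the identity of $(\mathbf a,\mathbf a')$.
\end{enumerate}
It follows that $x\to y$ is locally Cartesian. When the base morphism is an identity, $x=y$, consistent with identities being Cartesian.

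There is no real obstacle here. The only delicate point is keeping the variances straight: the reverse-inclusion convention on $\sd$ has to be matched with the $\op$ on the second factor, so that both $\fv$ and $\lv$ are genuinely functors and the lift runs in the correct direction. If one also wants to record that the lifts are not Cartesian in general, note that composing two such minimal enlargements $y\mapsto y\cup\{\mathbf a,\mathbf a'\}$ can differ from a single enlargement along the composite base morphism. That observation is not needed for the statement.
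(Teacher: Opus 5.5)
Your proposal is correct and follows the paper's proof: your lift $y\cup\{\mathbf a,\mathbf a'\}\to y$ is the paper's $\{\mathbf f_0\}\cup S\cup\{\mathbf l_0\}\to S$. You also supply what the paper leaves to the reader, namely the inner-fibration remark, the variance bookkeeping, and the check that this lift is terminal among objects of the source fiber mapping to $y$.
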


\begin{proof}
  We must show that for every arrow $\Delta^1\to (\Delta^n\times \Delta^m)\times (\Delta^n\times \Delta^m)^\op$, corresponding to a map $(\mathbf{f}_0, \mathbf{l}_0) \to (\mathbf{f}_1, \mathbf{l}_1)$, the corresponding pullback is a Cartesian fibration. If $\mathbf{f}_1 \nleq \mathbf{l}_1$, then the fiber over $(\mathbf{f}_1,\mathbf{l}_1)$ is empty, so the statement is trivial. If $\mathbf{f}_1 \leq \mathbf{l}_1$, then for any $S\in \sd(\Delta^n\times \Delta^m)$ with $\mathbf{f}_1=\min(S)$ and $\mathbf{l}_1=\max(S)$, the Cartesian lift of $(\mathbf{t}_0,\mathbf{t}_0)\to (\mathbf{s}_0,\mathbf{s}_0)$ ending in $S$ is $\{\mathbf{f}_0\}\cup S\cup\{\mathbf{l}_0\}\to S$. We leave the verification that this morphism is Cartesian to the reader.
\end{proof}

\begin{lem}
  The functor $(\fr,\lr): R^{n,m} \to (\Delta^n\times \Delta^m)\times (\Delta^n\times \Delta^m)^\op$ is a locally Cartesian fibration.
\end{lem}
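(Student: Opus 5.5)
The approach mirrors the preceding lemma for $(\fv,\lv)$. Both $R^{n,m}$ and $(\Delta^n\times\Delta^m)\times(\Delta^n\times\Delta^m)^\op$ are posets, so a locally Cartesian lift is a statement about the existence of a suitable maximum. Fix an arrow $(\mathbf{f}_0,\mathbf{l}_0)\to(\mathbf{f}_1,\mathbf{l}_1)$, so $\mathbf{f}_0\le\mathbf{f}_1$ and $\mathbf{l}_1\le \mathbf{l}_0$. Fix also a rectilinear path $S$ with $\fr(S)=\mathbf{f}_1$ and $\lr(S)=\mathbf{l}_1$; if no such $S$ exists, the fiber is empty and there is nothing to show. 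After pulling back along this arrow, we must produce a rectilinear path $S'$ over $(\mathbf{f}_0,\mathbf{l}_0)$ with $S'\preceq S$ such that every rectilinear $T$ over $(\mathbf{f}_0,\mathbf{l}_0)$ with $T\preceq S$ also satisfies $T\preceq S'$. In other words, $S'$ should be the $\preceq$-maximum of the set of such $T$.

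The candidate is
\[
  S' \coloneqq L_{n,m}\bigl(\{\mathbf{f}_0\}\cup S\cup\{\mathbf{l}_0\}\bigr).
\]
This is $S$ extended at both ends by the ``upper-right'' two-segment rectilinear paths: first horizontal, then vertical, via the intermediate vertex $(u_i,v_{i+1})$ of \zcref{const:L-functor}. These two segments join $\mathbf{f}_0$ to $\mathbf{f}_1$ and $\mathbf{l}_1$ to $\mathbf{l}_0$. Any other rectilinear completion would be forced to pass weakly below-left of these corner paths, which is why this is the natural candidate for the maximum.

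\textbf{Step 1.} Show that $S'\preceq S$. The path $S$ is a subsimplex of $S'$, so this follows from the remark that a rectilinear subsimplex is $\succeq$ the path containing it.

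\textbf{Step 2.} Check the subgrid condition for $T\preceq S'$. The minimal subgrid of $S'$ is spanned by the coordinates of $S$, $\mathbf{f}_0$ and $\mathbf{l}_0$, since $L_{n,m}$ does not change minimal subgrids. The minimal subgrid of $T$ contains the coordinates of $\mathbf{f}_0$ and $\mathbf{l}_0$, because these are its endpoints. It also contains those of $S$, because $T\preceq S$. Hence it contains $S'$.

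\textbf{Step 3.} Show that $T$ lies below-left of $S'$. This is the main obstacle. For points of $S'$ lying on $S$, the condition is inherited from $T\preceq S$. For points on the new corner segments, one must rule out a point $\mathbf{p}$ of the segment and a vertex $\mathbf{q}$ of $T$ with $\mathbf{p}\ll_{\Tw}\mathbf{q}$. I would treat the initial segment by cases according to whether $\mathbf{p}$ lies on its horizontal or vertical leg, and the terminal segment symmetrically.

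On the horizontal leg of the initial segment, $\mathbf{p}$ has first coordinate equal to that of $\mathbf{f}_0$, the minimum over $T$, so $\mathbf{p}\ll_{\Tw}\mathbf{q}$ is impossible. On the vertical leg, $\mathbf{p}$ has second coordinate equal to that of $\mathbf{f}_1$. Then $\mathbf{p}\ll_{\Tw}\mathbf{q}$ would put $\mathbf{q}$ strictly right of and strictly above $\mathbf{p}$, hence strictly above-right of the point $\mathbf{f}_1\in S$ in the $\ll_{\Tw}$ sense. I expect this to contradict $T\preceq S$, possibly after using monotonicity of $T$ to replace $\mathbf{q}$ by an appropriate vertex of $T$.

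Making this last comparison precise is where care is needed. The visual ``upward ray'' description of $\preceq$ may streamline it: every upward ray meeting the corner segment meets $S'$ no later than it meets $T$.
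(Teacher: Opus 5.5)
Your proposal is correct and follows the paper's own proof, which uses exactly the lift $L_{n,m}(\{\mathbf{f}_0\}\cup S\cup\{\mathbf{l}_0\})$ and leaves the Cartesianness check to the reader. The step you hedge on is immediate and needs no monotonicity argument: a point $\mathbf{p}$ on the vertical leg satisfies $\mathbf{p}\ll_{\Tw}\mathbf{q}$ only if $\mathbf{f}_1\ll_{\Tw}\mathbf{q}$, which already contradicts $T\preceq S$. In fact only the new vertices $\mathbf{f}_0$, the inserted corner, and their terminal analogues need checking, and these all lie in the row of $\mathbf{f}_0$ or the column of $\mathbf{l}_0$, so your extremality argument alone handles them.
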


\begin{proof}
  The proof is virtually identical to the above, replacing $\{\mathbf{f}_0\}\cup S\cup\{\mathbf{l}_0\}$ with $L_{n,m}(\{\mathbf{f}_0\}\cup S\cup\{\mathbf{l}_0\})$.
\end{proof}

\begin{prop}\label{prop:fiberwise_localization_locally_Cartesian}
  Let $p:\scr{C}\to \scr{B}$ and $q: \scr{D}\to \scr{B}$ be locally Cartesian fibrations. Let $f:\scr{C}\to \scr{D}$ preserve locally Cartesian morphisms. If $f$ is a localization on every fiber, then $f$ is a localization.
\end{prop}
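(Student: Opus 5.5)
The plan is to reduce to the case $\scr{B}=\Delta^1$ by decomposing $\scr{B}$ as a colimit of simplices and pulling back. Two facts drive this. First, localizations, viewed as functors $\scr{C}\to\scr{C}[W^{-1}]$, are stable under colimits in $\Fun(\Delta^1,\Cat_1)$. This holds because $\scr{C}[W^{-1}]$ is the pushout of $\scr{C}$ along $\coprod_W \Delta^1\to\coprod_W \Delta^0$, and colimits commute with colimits. Second, the classes of inverted morphisms glue. Throughout I take $f$ to lie over $\scr{B}$, i.e. $q\circ f\simeq p$, and I write $W\subset\scr{C}$ for the morphisms lying in some fiber and inverted by $f$. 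The goal is to show that $f$ exhibits $\scr{D}$ as $\scr{C}[W^{-1}]$.

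\textbf{Step 1: reduce to $\scr{B}=\Delta^n$.} Write $\scr{B}\simeq\colim_{\Delta^n\to\scr{B}}\Delta^n$ in $\Cat_1$. Locally Cartesian fibrations are exponentiable (Conduché) functors, so pulling back along $p$ and along $q$ preserves colimits. This gives $\scr{C}\simeq\colim \scr{C}\times_{\scr{B}}\Delta^n$, similarly for $\scr{D}$, and $f$ is the colimit of the pulled-back maps $f_\sigma$. Each $f_\sigma$ again satisfies the hypotheses, since local Cartesianness is detected over edges. By the colimit stability above, it suffices to treat $\scr{B}=\Delta^n$. I expect this step to be the main obstacle. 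One has to confirm that exponentiability genuinely applies to locally Cartesian fibrations in the $\infty$-setting; I would cite Lurie or Ayala--Francis for this. One also has to check that the classes $W_\sigma$ assemble to exactly $W$, so that the colimit of the localizations $\scr{C}_\sigma\to\scr{C}_\sigma[W_\sigma^{-1}]$ is $\scr{C}\to\scr{C}[W^{-1}]$.

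\textbf{Step 2: reduce $\Delta^n$ to its spine.} The Segal condition gives $\Delta^n\simeq\Delta^{\{0,1\}}\sqcup_{\Delta^{\{1\}}}\cdots\sqcup_{\Delta^{\{n-1\}}}\Delta^{\{n-1,n\}}$ in $\Cat_1$. Pulling back once more gives
\[
\scr{C}\times_{\scr{B}}\Delta^n\simeq \scr{C}|_{\{0,1\}}\sqcup_{\scr{C}_1}\cdots\sqcup_{\scr{C}_{n-1}}\scr{C}|_{\{n-1,n\}},
\]
and compatibly for $\scr{D}$. The fiber maps $\scr{C}_i\to\scr{D}_i$ are localizations by hypothesis. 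So again by colimit stability it suffices to treat $\scr{B}=\Delta^1$.

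\textbf{Step 3: the case $\scr{B}=\Delta^1$.} Here $p$ and $q$ are Cartesian fibrations, classified by functors $g:\scr{C}_1\to\scr{C}_0$ and $h:\scr{D}_1\to\scr{D}_0$. Hence $\scr{C}$ is the mapping cylinder $\scr{C}_1\times\Delta^1\sqcup_{\scr{C}_1\times\{0\}}\scr{C}_0$, and likewise for $\scr{D}$. Because $f$ preserves Cartesian morphisms, it induces a commuting square $f_0\circ g\simeq h\circ f_1$ and is the induced map of mapping cylinders. The map $f_1\times\Delta^1$ is a localization, since products with $\Delta^1$ preserve localizations, $\Cat_1$ being Cartesian closed. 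The maps $f_0$ and $f_1$ are localizations by hypothesis. Therefore the induced map of pushouts is a localization. Its inverted class consists of the fiberwise morphisms in $W$ together with their images under $-\times\Delta^1$, and these images are already generated by $W$ and the Cartesian edges $\{c\}\times\Delta^1$, which are not inverted. So the class is exactly $W$, completing the argument.
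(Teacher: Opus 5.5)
The gap is in Step~2, and the justification in Step~1 fails for the same reason. You rightly flagged exponentiability as the crux, and it is false: locally Cartesian fibrations are not exponentiable, and pulling back along them does not preserve the spine decomposition of $\Delta^n$.

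Over $\Delta^2$, a locally Cartesian fibration $\scr{C}\to\Delta^2$ has transition functors $f_{01},f_{12},f_{02}$ together with a comparison $f_{01}f_{12}\Rightarrow f_{02}$ that need not be invertible. Its mapping spaces are $\Map_{\scr{C}}(x_0,x_2)\simeq\Map_{\scr{C}_0}(x_0,f_{02}x_2)$. In $\scr{C}|_{\{0,1\}}\sqcup_{\scr{C}_1}\scr{C}|_{\{1,2\}}$ this space is replaced by $\Map_{\scr{C}_0}(x_0,f_{01}f_{12}x_2)$: only morphisms factoring through the fiber over $1$ survive. So the spine decomposition over $\Delta^2$ holds exactly when the comparison is an equivalence, i.e.\ when the fibration is Cartesian.

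A minimal counterexample: take fibers $\{a\to a'\}$, $\{b\}$, $\{c\}$, with morphisms $a\to b\to c$ and $a'\to c$, where $a\to a'\to c$ equals $a\to b\to c$. This is locally Cartesian. The spine pushout has no morphism $a'\to c$, and the Conduch\'e condition fails for $a'\to c$, which has no factorization through the fiber over $1$.

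Your reduction to $\Delta^1$ therefore discards precisely the lax comparison data the proposition is about. This is not a corner case. The fibration $(\fv,\lv)$ on $\sd(\Delta^n\times\Delta^m)$, to which the paper applies the statement, is genuinely non-Cartesian: composing two of its locally Cartesian lifts adjoins the intermediate endpoints, which the direct lift does not. The paper avoids this by straightening over the lax morphism classifier $\mathbb{B}$, where these comparison $2$-cells are part of the data.

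Your framework can be repaired.
\begin{itemize}
\item \emph{Step~1.} The decomposition is true, just not by exponentiability. For a categorical fibration over a quasi-category $Q$, the strict colimit $\scr{C}=\colim_{\Delta_{/Q}}\scr{C}\times_Q\Delta^n$ is the colimit of a Reedy cofibrant diagram: its latching maps are the monomorphisms $\scr{C}\times_Q\partial\Delta^n\to\scr{C}\times_Q\Delta^n$. The indexing Reedy category has fibrant constants, so this is a homotopy colimit, as in the paper's Appendix~B.
\item \emph{Step~2.} Replace the spine by the collapse $\Delta^n\to\Delta^1$, sending $0\mapsto 0$ and $i\mapsto 1$ for $i\geq 1$. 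The composite $\scr{C}\times_{\scr{B}}\Delta^n\to\Delta^1$ is an honest Cartesian fibration: the Cartesian lift at $y$ in the fiber over $j$ is the locally Cartesian lift over $0\to j$. Its fibers are $\scr{C}_0$ and $\scr{C}\times_{\scr{B}}\Delta^{\{1,\ldots,n\}}$, the latter locally Cartesian over $\Delta^{n-1}$, and $f$ preserves these Cartesian edges.
\end{itemize}
Your Step~3 together with induction on $n$ then goes through. The lax comparison data is absorbed into the functoriality of the classifying functor $y\mapsto f_{0j}(y)$.
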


\begin{proof}
  Let $\mathbb{B}$ denote the \emph{lax morphism classifier} of $\scr{B}$ from~\cite[Construction 2.1.12]{abellan_straightening_2024}. Then pullback along the natural functor  $\scr{B}\to \mathbb{B}$ defines an equivalence of $\infty$-categories between $(0,1)$-Cartesian fibrations over $\mathbb{B}$ and locally Cartesian fibrations over $\scr{B}$. Denote by $\mathbb{C}\to \mathbb{B}$ and $\mathbb{D}\to \mathbb{B}$ the $(0,1)$-fibrations corresponding to $p$ and $q$.

  Given an $(\infty,1)$-category $\scr{E}$, we then have natural equivalences
  \begin{align*}
    \Map(\scr{D},\scr{E}) & \simeq \Map_{\scr{B}}(\scr{D},\scr{E}\times \scr{B}) \\
    & \simeq  \Map_{\mathbb{B}}(\mathbb{D},\scr{E}\times \mathbb{B}) \\
    & \simeq \Map^{\lax}(\St_{\mathbb{B}}(\mathbb{D}),\constant_\scr{E})
  \end{align*}
  Here, $\St_{\mathbb{B}}(\mathbb{D})$ is the straightening of the $(0, 1)$-fibration $\mathbb{D}\to \mathbb{B}$, $\constant_\scr{E}$ is the constant functor with value $\scr{E}$, and $\Map^{\lax}(-,-)$ is the space of lax natural transformations between two lax functors. Moreover, the second equivalence follows using~\cite[Theorem 3.3.6]{abellan_straightening_2024} and the fact that Gray hom-objects into a fibrant object in $\Set_\Delta^{\sc}$ are stable under composition along a trivial cofibration, and the third equivalence follows from~\cite[Theorem 3.3.6]{abellan_straightening_2024}.

  Since, by hypothesis, the natural transformation $\St_{\mathbb{B}}(\mathbb{C})\to \St_{\mathbb{B}}(\mathbb{D})$ is an objectwise localization, it follows that composition with this transformation induces an equivalence
  \[
    \Map^{\lax}(\St_{\mathbb{B}}(\mathbb{D}),\constant_\scr{E})\simeq \Map^{\lax,\dagger}(\St_{\mathbb{B}}(\mathbb{C}),\constant_\scr{E})
  \]
  where the superscript $\dagger$ indicates the subspace spanned by those natural transformations whose components send the localized morphisms to equivalences. Unstraightening, the same natural equivalences as before show us that here  is a natural equivalence
  \[
    \Map^{\lax,\dagger}(\St_{\mathbb{B}}(\mathbb{C}),\constant_\scr{E})\simeq \Map^\dagger(\scr{C},\scr{E})
  \]
  where $\Map^\dagger(\scr{C},\scr{E})$ denotes the space of maps which send the localized morphisms of the fibers to equivalences. Thus, we have a natural equivalence
  \[
    \Map(\scr{D},\scr{E})\simeq \Map^\dagger(\scr{C},\scr{E})
  \]
  induced by composition with $f$, and so $f$ is a localization.
\end{proof}

Consequently, it will suffice for us to show that the maps
\[
  \begin{tikzcd}
    \sd(\Delta^n\times \Delta^m)_{\mathbf{f},\mathbf{l}} \arrow[r] & R^{n,m}_{\mathbf{f},\mathbf{l}}
  \end{tikzcd}
\]
induced by $L_{n,m}$ on fibers are localizations. Without loss of generality, we can consider the case $\mathbf{t}=(0,0)$ and $\mathbf{s}=(n,m)$.

\begin{rmk}\label{rmk:univ_prop_of_gray_loc}
  This fiberwise reformulation of our desired localization also allows us to understand the statement more systematically and intuitively. We note that $\sd(\Delta^n\times \Delta^m)_{a,b}$ is isomorphic to the hom-category $\mathfrak{C}[\Delta^n\times \Delta^m](a,b)$ in the rigidification of the product, and $R^{n,m}_{a,b}$ is isomorphic to the hom-category from $a$ to $b$ in the strict $2$-categorical gray product $\tr_2^g(\mathbb{O}^n)\otimes_2^s \tr_2^g(\mathbb{O}^m_2)$ of $2$-truncated orientals.

  Folklorically, $\mathfrak{C}[\scr{P}]$ for $\scr{P}$ a poset is the lax morphism classifier of $\scr{P}$. Thus, our claimed localization is equivalently the statement that the Gray product $\tr_2^g(\mathbb{O}^n)\otimes \tr_2^g(\mathbb{O}^m_2)$ is a homwise localization of the lax morphism classifier of $\mathbb{O}^n_2\otimes_2 \mathbb{O}^m_2$ --- precisely the universal property of the Gray product in terms of cubical lax functors. Intuitively speaking, this is the reason we should expect $L_{n,m}$ to be a localization.
\end{rmk}

\begin{ntt}
  For ease of notation, write $\mathbb{O}^n_2\coloneqq\tr_2^g(\mathbb{O}^n)$ for the strict $2$-truncation of the $n$\textsuperscript{th} oriental.
\end{ntt}

\begin{lem}\label{lem:equiv_gray_loc_det_by_nat}
  Let
  \begin{tikzcd}
    F,G:&[-3em] \mathfrak{C}^{\sc}[\Delta^n_\flat \otimes \Delta^m_\flat] \arrow[r] & \mathbb{O}^n_2\otimes^2\mathbb{O}^m_2
  \end{tikzcd}
  be maps of marked-simplicial-set-enriched categories, natural in $\Delta\times\Delta$. If $F$ and $G$ agree on  $\mathfrak{C}^{\sc}[\Delta^1_\flat \otimes \Delta^1_\flat]$, then $F=G$.
\end{lem}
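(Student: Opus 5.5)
The approach is to exploit two facts. The target $\mathbb{O}^n_2\otimes^2\mathbb{O}^m_2$ is a strict $2$-category, that is, locally a poset. And $\mathfrak{C}^{\sc}[\Delta^n_\flat\otimes\Delta^m_\flat]$ is generated under composition by data that live in small Gray products. Since $F$ and $G$ are maps of enriched categories, it suffices to check that they agree on objects, on vertices of the mapping marked simplicial sets, and on edges. Markings are a property of edges, so they impose no further data. Higher simplices need not be checked: every mapping object of the target is the nerve of a poset (with some marking), so a simplex there is determined by its vertices.

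\textbf{Objects.} On objects, naturality in $\Delta\times\Delta$ does the work. Restricting along the vertex inclusions $\Delta^0\times\Delta^0\to\Delta^n\times\Delta^m$ reduces the question to $\Delta^0\otimes\Delta^0$, which is a face of $\Delta^1\otimes\Delta^1$. There the hypothesis applies, so $F$ and $G$ agree on objects. Hence both send $(i,j)$ to the same object, which must be $(i,j)$ itself.

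\textbf{Vertices of mapping objects.} A vertex of a mapping object in $\mathfrak{C}^{\sc}[\Delta^n\times\Delta^m]$ is a chain from $\mathbf a$ to $\mathbf b$. It is a composite of the chains along consecutive pairs $(u_i,v_i)<(u_{i+1},v_{i+1})$. Each such step is the image of an edge under a map $\Delta^1\to\Delta^n\times\Delta^m$, and that map factors through a square $\Delta^1\times\Delta^1$ (or through a degenerate version of one). By functoriality and naturality, $F$ and $G$ therefore agree on every vertex.

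\textbf{Edges.} An edge of a mapping object is an inclusion of chains $S\subset S'$, and it factors as a composite of single-vertex insertions. Whiskering by the composition in the enriched category reduces such an insertion to the case where $S=\{\mathbf a<\mathbf c\}$ and $S'=\{\mathbf a<\mathbf b<\mathbf c\}$. This is an edge in the image of a $2$-simplex $\Delta^2\to\Delta^n\times\Delta^m$. Because the target is locally a poset, an edge is determined by its endpoints, so agreement on edges already follows from agreement on vertices. This is exactly where the $2$-truncation is used.

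\textbf{Main obstacle.} The hardest step is the rigour of the vertex reduction. One has to show that the edge $\mathbf a<\mathbf b$ in $\Delta^n\times\Delta^m$ really is the image of an edge under some structure map of the bisimplicial diagram $\Delta^1\times\Delta^1\to\Delta^n\times\Delta^m$. For a diagonal edge with $a_1<b_1$ and $a_2<b_2$, I would take the product of the maps $[1]\to[n]$ and $[1]\to[m]$ that pick out $a_1<b_1$ and $a_2<b_2$; the edge is then the diagonal of $\Delta^1\times\Delta^1$. For edges that are horizontal or vertical, I would use degenerate maps $[1]\to[n]$ or $[1]\to[m]$ in place of one factor. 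I would also check that the objects and generators of $\mathfrak{C}^{\sc}$ are preserved by these maps. That holds because $\mathfrak{C}^{\sc}$ is functorial, and because marked maps are determined by their underlying simplicial maps.
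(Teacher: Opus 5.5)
Your proposal is correct and follows the paper's proof: objects agree immediately, each chain is decomposed into consecutive steps $\mathbf{c}_i<\mathbf{c}_{i+1}$, each step is pulled back from $\Delta^1\otimes\Delta^1$ via naturality, and agreement then extends to all morphisms. You also spell out the step the paper leaves implicit in ``thus on all morphisms'': the target's mapping objects are nerves of posets, so agreement on vertices already forces agreement everywhere. This makes your separate whiskering argument for edges unnecessary.
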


\begin{proof}
  It is immediate that $F$ and $G$ must agree on objects.

  For fixed $n$ and $m$, every path $\mathbf{a}<\mathbf{c}_1<\cdots<\mathbf{c}_{k-1}<\mathbf{b}$ from $\mathbf{a}$ to $\mathbf{b}$ in $\mathfrak{C}^{\sc}[\Delta^n_\flat \otimes \Delta^m_\flat]$ can be written as a composite of the paths $\mathbf{c}_i<\mathbf{c}_{i+1}$.  By the commutativity of the diagrams
  \[
    \begin{tikzcd}
      \mathfrak{C}^{\sc}[\Delta^n_\flat \otimes \Delta^m_\flat] \arrow[r,"F_{n,m}"] & \mathbb{O}^n_2\otimes^2\mathbb{O}^m_2 \\
      \mathfrak{C}^{\sc}[\Delta^1_\flat \otimes \Delta^1_\flat] \arrow[r,"F_{1,1}"']\arrow[u,"{\{\mathbf{c}_i,\mathbf{c}_{i+1}\}}"] & \mathbb{O}^1_2\otimes^2\mathbb{O}^1_2\arrow[u,"{\{\mathbf{c}_i,\mathbf{c}_{i+1}\}}"']
    \end{tikzcd}
    \qquad
    \begin{tikzcd}
      \mathfrak{C}^{\sc}[\Delta^n_\flat \otimes \Delta^m_\flat] \arrow[r,"G_{n,m}"] & \mathbb{O}^n_2\otimes^2\mathbb{O}^m_2 \\
      \mathfrak{C}^{\sc}[\Delta^1_\flat \otimes \Delta^1_\flat] \arrow[r,"G_{1,1}"']\arrow[u,"{\{\mathbf{c}_i,\mathbf{c}_{i+1}\}}"] & \mathbb{O}^1_2\otimes^2\mathbb{O}^1_2\arrow[u,"{\{\mathbf{c}_i,\mathbf{c}_{i+1}\}}"']
    \end{tikzcd}
  \]
  It follows that $F$ and $G$ agree on the morphisms $\mathbf{c}_i<\mathbf{c}_j$, and thus on all morphisms, completing the proof.
\end{proof}

\begin{lem}\label{lem:unique_equiv_gray_loc}
  There is a unique equivalence of marked-simplicial-set-enriched categories
  \[
    \begin{tikzcd}
      \mathfrak{C}^{\sc}[\Delta^1_\flat \otimes \Delta^1_\flat] \arrow[r] & \mathbb{O}^1_2\otimes^2\mathbb{O}^1_2.
    \end{tikzcd}
  \]
\end{lem}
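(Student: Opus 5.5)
Both sides have only four objects, so I would compute them explicitly. The objects of $\Delta^1\times\Delta^1$ are $00,01,10,11$, and I would argue hom-object by hom-object. In both $\mathfrak{C}^{\sc}[\Delta^1_\flat\otimes\Delta^1_\flat]$ and $\mathbb{O}^1_2\otimes^2\mathbb{O}^1_2$, the hom-object from $\mathbf{a}$ to $\mathbf{b}$ is empty unless $\mathbf{a}\le\mathbf{b}$. Every hom-object other than $\mathrm{Hom}(00,11)$ is a point:
\begin{itemize}
\item on the rigidification side, the relevant posets of chains have a single element, or, for $\mathbf{a}=\mathbf{b}$, are the identity;
\item on the Gray side, the only non-identity $2$-cell of $\mathbb{O}^1_2\otimes^2\mathbb{O}^1_2$ lives in $\mathrm{Hom}(00,11)$.
\end{itemize}
So all the content sits in $\mathrm{Hom}(00,11)$.

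Step 1: describe $\mathrm{Hom}(00,11)$ on the source side. In $\mathfrak{C}^{\sc}[\Delta^1\times\Delta^1]$ it is the nerve of the poset of chains from $00$ to $11$: the three chains $\{00,11\}$, $\{00,01,11\}$, $\{00,10,11\}$, with the diagonal chain below both rectilinear ones. This is a cospan-shaped $\Delta^1\cup_{\Delta^0}\Delta^1$, i.e.\ the zigzag $\{00,01,11\}\leftarrow\{00,11\}\rightarrow\{00,10,11\}$, read in the nerve of inclusions. The marking comes from the thin simplices. By \zcref{lem:marked_in_flat_gray}, the $2$-simplex $00<01<11$ is thin, being an upper right corner, while $00<10<11$ is not. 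Hence exactly one leg is marked: the corner collapse $\{00,01,11\}\to\{00,11\}$, which lies in $C_{1,1}$.

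Step 2: describe $\mathrm{Hom}(00,11)$ on the target side. In the strict $2$-categorical Gray product it is the category $\Delta^1$, whose single non-invertible $2$-cell is the Gray interchange $2$-cell between the two composites. Viewed as a marked simplicial set it carries only degenerate markings; if the $\sc$-convention marks equivalences, the identities suffice.

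Step 3: construct the map and prove uniqueness. Any enriched functor must be the identity on objects, since objects correspond and the enriched functor must respect the source and target structure. The trivial hom-objects force the functor on all other hom-objects. On $\mathrm{Hom}(00,11)$, compatibility with composition forces the endpoint values:
\begin{itemize}
\item the chain $\{00,01,11\}$, which factors through $01$, must go to the composite through $01$;
\item likewise $\{00,10,11\}$ must go to the composite through $10$.
\end{itemize}
The marked edge must go to an equivalence, hence to an identity in the target, so the diagonal chain $\{00,11\}$ must go to the same object as $\{00,01,11\}$. The remaining edge then must go to the unique non-identity $2$-cell, provided the orientation matches. Checking that this orientation is the Gray one, consistent with the upper right corner convention fixed in \zcref{defn:corner_collapse}, is the one genuinely delicate point. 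So the functor exists and is unique. It is an equivalence because, on $\mathrm{Hom}(00,11)$, the map collapsing a marked edge of $\Delta^1\cup\Delta^1$ to get $\Delta^1$ is a marked weak equivalence, namely a localization at the marked edge. This is the $n=m=1$ case of \zcref{thm:the_localization}; here it can be checked by hand, since it is a pushout of the marked anodyne map $\Delta^1_\sharp\to\Delta^0$.
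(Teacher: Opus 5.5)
Your proposal is correct and is essentially the paper's own argument written out in more detail: the objects are fixed, every hom other than $(00,11)$ is trivial, and on $(00,11)$ the only equivalence compatible with composition is the map $\sd(\Delta^1)\to\Delta^1$ that collapses the marked corner-collapse leg. A few points to tighten.

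\begin{itemize}
\item Being the identity on objects is forced for an equivalence only after the $2$-cell orientation rules out the swap $01\leftrightarrow 10$; it does not hold for an arbitrary enriched functor.
\item The orientation check you defer is settled by the natural equivalence of \zcref{cor:gray_prod_nat_equiv_scaled} at $n=m=1$: naturality makes it the identity on objects, so the orientations must match.
\item Justify the collapse by your direct pushout argument rather than by \zcref{thm:the_localization}, since the paper deduces that theorem from this very lemma.
\item In that argument, $\Delta^1_\sharp\to\Delta^0$ is not a monomorphism, so it is not anodyne. It is a marked weak equivalence, and its pushout along the monomorphism is a weak equivalence by left properness.
\end{itemize}
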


\begin{proof}
  By considering homotopy (strict) $1$-categories, it is immediate that any such equivalence must act as the identity of sets of objects. The only mapping space which is not terminal is the mapping space from $(0,0)$ to $(1,1)$. In this case, it is easy to verify that the only equivalence of marked simplicial sets compatible with the composition is the first-vertex map
  \[
    \begin{tikzcd}
      \sd(\Delta^1) \arrow[r] & \Delta^1
    \end{tikzcd}
  \]
  completing the proof.
\end{proof}

\begin{prop}\label{prop:L_nat_equiv}
  The functors
  \[
    \begin{tikzcd}
      \sd(\Delta^n\times \Delta^m)_{\mathbf{f},\mathbf{l}} \arrow[r] & R^{n,m}_{\mathbf{f},\mathbf{l}}
    \end{tikzcd}
  \]
  induced by $L_{n,m}$ are natural weak equivalences.
\end{prop}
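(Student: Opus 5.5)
We have to show that for fixed $\mathbf{f}\leq\mathbf{l}$ in $\Delta^n\times\Delta^m$ the induced functor
\[
  L\colon \sd(\Delta^n\times \Delta^m)_{\mathbf{f},\mathbf{l}} \to R^{n,m}_{\mathbf{f},\mathbf{l}}
\]
is a weak equivalence, naturally in $(n,m)$. As noted after \zcref{prop:fiberwise_localization_locally_Cartesian}, we may restrict the fiber to the subgrid spanned by $\mathbf{f}$ and $\mathbf{l}$ and so assume $\mathbf{f}=(0,0)$ and $\mathbf{l}=(n,m)$. The source is then the poset of chains in $\Delta^n\times\Delta^m$ from $(0,0)$ to $(n,m)$, ordered by reverse refinement. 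The target is the poset of rectilinear paths with these endpoints, ordered by $\preceq$.

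Naturality needs no work: $L_{n,m}$ is natural by \zcref{const:L-functor}, so only the pointwise weak equivalence has to be proved. Here "weak equivalence" is meant in the Kan--Quillen sense on nerves. This is the sense that feeds into the localization statement, since a functor that is a weak equivalence on classifying spaces and is already known to invert exactly a class of morphisms is controlled by the fiberwise criterion.

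The plan is to compare both sides with a contractible or explicitly identified space. For the source, I first note that $\sd(\Delta^n\times\Delta^m)_{\mathbf{f},\mathbf{l}}$ has an initial object: the maximal chains are not unique, but the one-point fiber structure suggests a better route. The fiber is isomorphic to the opposite of the face poset of the cube $\Power(\text{interior points})$ restricted to totally ordered subsets. That is the hom-poset of $\mathfrak{C}[\Delta^n\times\Delta^m]$, which is the nerve of a poset of subsets of $[n]\times[m]$ containing both endpoints, closed under unions of chains. It is weakly contractible because the full chain-compatible structure gives a cubical decomposition. Concretely, the subposet of chains that are not maximal has the unique minimal chain $\{\mathbf{f},\mathbf{l}\}$ as a terminal object (under reverse inclusion, $\{\mathbf{f},\mathbf{l}\}$ is refined by everything). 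So the source has a terminal object and is contractible.

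For the target, I will show $R^{n,m}_{\mathbf{f},\mathbf{l}}$ is contractible as well. Every rectilinear path from $(0,0)$ to $(n,m)$ lies above-right of the lower-left boundary path (go down to $(n,0)$, then right). Both conditions of $\preceq$ hold for it, since its minimal grid is the whole grid. Hence the lower-left boundary path is an initial object of $R^{n,m}_{\mathbf{f},\mathbf{l}}$, and the target is contractible.

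A functor between weakly contractible categories is a weak equivalence, so $L$ is a weak equivalence. The main obstacle is checking the initial-object claim carefully. I must verify that for every rectilinear path $(\vec{s},\vec{t})$, no vertex of $(\vec{s},\vec{t})$ is $\ll_{\Tw}$ a vertex of the boundary path, using that every boundary vertex has first coordinate $n$ or second coordinate $0$. I also need to note that a path with the fixed endpoints has the whole grid as its minimal subgrid, so the second condition is automatic. If naturality of the weak equivalence is needed in a stronger, homotopy-coherent sense, I would deduce it from uniqueness: maps between contractible objects are unique up to contractible choice. This matches the rigidity philosophy of \zcref{lem:equiv_gray_loc_det_by_nat,lem:unique_equiv_gray_loc}.
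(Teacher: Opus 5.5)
Your argument proves a weaker statement than the one the paper needs, so there is a genuine gap. You read ``weak equivalence'' in the Kan--Quillen sense, and in that sense the claim is trivial. The chain poset $\sd(\Delta^n\times\Delta^m)_{\mathbf{f},\mathbf{l}}$ has the terminal object $\{\mathbf{f},\mathbf{l}\}$ (morphisms in $\sd$ go from larger to smaller chains). The L-shaped path is initial in $R^{n,m}_{\mathbf{f},\mathbf{l}}$, and your check of this is fine. So both nerves are contractible.

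That is not how the proposition is used. The proof of \zcref{thm:the_localization} feeds it into \zcref{prop:fiberwise_localization_locally_Cartesian}, whose hypothesis is that $L_{n,m}$ is a \emph{localization} on each fiber. In the paper, the fibers are the hom-objects of the marked-simplicial-set-enriched categories $\mathfrak{C}^{\sc}[\Delta^n_\flat\otimes\Delta^m_\flat]$ and $\mathbb{O}^n_2\otimes^2\mathbb{O}^m_2$. ``Weak equivalence'' means a weak equivalence of marked simplicial sets, with the corner collapses marked in the source. Equivalently, $R^{n,m}_{\mathbf{f},\mathbf{l}}\simeq\sd(\Delta^n\times\Delta^m)_{\mathbf{f},\mathbf{l}}[C_{n,m}^{-1}]$ as $(\infty,1)$-categories.

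Your assertion that a weak homotopy equivalence inverting exactly a prescribed class of morphisms ``is controlled by the fiberwise criterion'' is false. Any functor between categories with initial or terminal objects is a weak homotopy equivalence. For example, the inclusion $\{0\}\to[1]$ is one, and it is not essentially surjective. Another is the functor from the poset $\{a<b,\,a<c\}$ to $[2]$ given by $a,b,c\mapsto 0,1,2$. It inverts no non-identity morphism, yet it is not a localization, since $[2]$ has a morphism $1\to 2$ while there is none between $b$ and $c$. So contractibility of both sides says nothing about whether $R^{n,m}_{\mathbf{f},\mathbf{l}}$ is the localization of the chain poset at the corner collapses, and that localization is the whole content of the proposition.

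What is missing is an actual computation of that localization. The paper obtains it as follows.
\begin{enumerate}
  \item It identifies the fibers with hom-objects of $\mathfrak{C}^{\sc}[\Delta^n_\flat\otimes\Delta^m_\flat]$ and of the strict Gray product of $2$-truncated orientals.
  \item It uses the natural equivalence of \zcref{cor:gray_prod_nat_equiv_scaled}. This rests on monoidality of $2$-truncation on strong Steiner complexes, gauntness, and the identification of $\Delta^n_\flat$ with $\mathbb{O}^n$.
  \item It uses the rigidity lemmas \zcref{lem:unique_equiv_gray_loc,lem:equiv_gray_loc_det_by_nat} to show that $L$ coincides with that equivalence.
\end{enumerate}
Your closing appeal to rigidity does not substitute for this: those lemmas only compare enriched functors once a marked-level equivalence $W$ is already in hand. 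A more direct repair would be a Theorem~A-type criterion for localizations, like the one used in the proof of \zcref{lem:mapcatloc-Joyal_equivalence}. That would require a genuine combinatorial analysis of slices relative to the corner collapses, and nothing of that kind appears in your proposal.
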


\begin{proof}
  By \zcref{cor:gray_prod_nat_equiv_scaled}, there is a natural weak equivalence
  \[
    \begin{tikzcd}
      W:&[-3em] \mathfrak{C}^{\sc}[\Delta^n_\flat \otimes \Delta^m_\flat] \arrow[r] & \mathbb{O}^n_2\otimes^2\mathbb{O}^m_2.
    \end{tikzcd}
  \]
  However, the maps
  \[
    \begin{tikzcd}
      \sd(\Delta^n\times \Delta^m)_{\mathbf{f},\mathbf{l}} \arrow[r] & R^{n,m}_{\mathbf{f},\mathbf{l}}
    \end{tikzcd}
  \]
  induced by $L_{n,m}$ also piece together into such a natural transformation
  \[
    \begin{tikzcd}
      L:&[-3em] \mathfrak{C}^{\sc}[\Delta^n_\flat \otimes \Delta^m_\flat] \arrow[r] & \mathbb{O}^n_2\otimes^2\mathbb{O}^m_2.
    \end{tikzcd}
  \]

  It is easy to check that $L$ induces an equivalence on $\mathfrak{C}^{\sc}[\Delta^1_\flat \otimes \Delta^1_\flat]$, and so by \zcref{lem:unique_equiv_gray_loc}, $L$ and $W$ agree on $\mathfrak{C}^{\sc}[\Delta^1_\flat \otimes \Delta^1_\flat]$. Thus, by \zcref{lem:equiv_gray_loc_det_by_nat}, $L\cong W$, and so $L$ is a natural weak equivalence, completing the proof.
\end{proof}

\begin{proof}[Proof (of \zcref{thm:the_localization})]
  We have already argued that if $L_{n,m}$ is a localization, then it is a localization at the corner collapses, so it suffices to show that it is a localization. However, \zcref{prop:fiberwise_localization_locally_Cartesian} shows that it suffices to show that $L_{n,m}$ is a fiberwise localization, which is true due to \zcref{prop:L_nat_equiv}.
\end{proof}

\subsection{The localization \texorpdfstring{$\sd(\Delta^n\times \Delta^m) \to \sd(\Delta^n)\times \sd(\Delta^m)$}{sd(Δⁿ×Δᵐ) → sd(Δⁿ)×sd(Δᵐ)}} \label{subsec:product_localization}

We are now ready to prove the localization \zcref{eq:key_localization} and extract important equivalences of marked simplicial sets. Note that, by \zcref{cor:corner_collapse_antidiagonal}, every corner collapse in $\sd(\Delta^m\otimes \Delta^n)$ is antidiagonal. Thus, by \zcref{rmk:antidiagonal_in_product_vs_gray}, in $\sd^+(\Delta^n\times\Delta^m)$ both the corner collapses and their duals are marked.

\begin{prop}\label{prop:product_localization}
  The universal map $Z_{n,m}:\sd(\Delta^n\times \Delta^m)\to \sd(\Delta^n)\times \sd(\Delta^m)$ is a localization at the collection of corner collapses and dual corner collapses.
\end{prop}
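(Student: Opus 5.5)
Factor $Z_{n,m}$ through $L_{n,m}$ and write $Z_{n,m}=Z'_{n,m}\circ L_{n,m}$, where $Z'_{n,m}:R^{n,m}\to \sd(\Delta^n)\times\sd(\Delta^m)$ sends a rectilinear path to the pair of its projections $(\{u_i\},\{v_i\})$. The first step is to check that $Z'$ is well defined and order preserving. Concretely, if $(\vec u,\vec v)\preceq(\vec s,\vec t)$ then the projections of $(\vec s,\vec t)$ should be contained in those of $(\vec u,\vec v)$. Condition (2) of $\preceq$ gives containment in the minimal subgrid, and for a rectilinear path the projections are exactly the sides of that subgrid. We should also confirm that $Z$ sends corner collapses and dual corner collapses to identities, since deleting a corner point does not change either projection.

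By \zcref{thm:the_localization}, $L_{n,m}$ is a localization at $C_{n,m}$. Localizations compose and $L$ is essentially surjective, so it suffices to show that $Z'$ is a localization at the images under $L$ of the dual corner collapses. These are the morphisms between rectilinear paths with the same minimal subgrid, and they switch lower-left corners to upper-right corners. Identify $Z'$ restricted to a fixed subgrid $\Delta^U\times\Delta^V$ with a map to the point $(U,V)$. The fiber of $Z'$ over $(U,V)$ is the poset of maximal rectilinear paths in $\Delta^U\times\Delta^V$ under $\preceq$. This poset is the lattice of lattice paths, which has a minimum (the path going down then right) and a maximum, so it is weakly contractible. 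Moreover, any two such paths are connected by a zigzag of dual corner flips, so inverting all morphisms inside a fiber collapses it to a point.

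To globalize, I would mimic the strategy used for $L$ and show that $Z'$ is a Cartesian (or locally Cartesian) fibration, or at least that $(\fv,\lv)$-type projections make both sides locally Cartesian over a common base compatible with $Z'$. Then I would apply \zcref{prop:fiberwise_localization_locally_Cartesian}. A cleaner option is to check directly that $Z'$ is a Cartesian fibration: given $(U',V')\subseteq(U,V)$ and a path $\gamma$ in the grid $U'\times V'$, a lift over $(U,V)\to(U',V')$ should be the $\preceq$-maximal (upper-right-most) rectilinear path on $U\times V$ lying below-left of $\gamma$. Such a path exists because of the lattice structure. If this works, a Cartesian fibration with weakly contractible fibers in which all fiber morphisms are marked is a localization at the fiber morphisms, since each fiber's localization is a contractible groupoid.

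The main obstacle is the last step: verifying the Cartesian-lift property of $Z'$ (existence and universality of the maximal below-left refinement) and ensuring that the marked morphisms in $R^{n,m}$ are exactly the fiber morphisms, i.e.\ images of dual corner collapses. If the direct Cartesian argument fails, I would fall back on Quillen's Theorem A style arguments. Specifically, I would show that for each $(U,V)$ the comma category $Z'_{/(U,V)}$ localized at the marked morphisms is contractible, using the lattice minima to produce contracting homotopies.
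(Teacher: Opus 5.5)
Your reduction is the same as the paper's: factor $Z_{n,m}$ through $L_{n,m}$ using \zcref{thm:the_localization}, then show that $Z'\colon R^{n,m}\to\sd(\Delta^n)\times\sd(\Delta^m)$ is a localization. Your fiber analysis is also correct. The gap is in the globalization step, and the version you call ``cleaner'' is false: $Z'$ is \emph{not} a Cartesian fibration. Here is a counterexample in $\Delta^1\times\Delta^1$. Take $\gamma=\{(1,1)\}$ and the morphism $(\{1\},\{0,1\})\to(\{1\},\{1\})=Z'(\gamma)$. The fiber over $(\{1\},\{0,1\})$ is the single path $\tilde\gamma=(1,0)<(1,1)$, and $\tilde\gamma\preceq\gamma$. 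Now take $x=(0,0)<(0,1)<(1,1)$. It satisfies $x\preceq\gamma$ and $Z'(x)=(\{0,1\},\{0,1\})\to(\{1\},\{0,1\})$. However $x\not\preceq\tilde\gamma$, because $(1,0)\ll_{\Tw}(0,1)$. So universality of your ``maximal below-left refinement'' fails once the competitor lives in a strictly larger grid.

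What survives is local Cartesianness. Over a single arrow you only need a maximum of $\{P\in Z'^{-1}(U,V): P\preceq\gamma\}$. Your join argument supplies it, because every vertex of the upper envelope of two staircase paths lies on one of them. Then \zcref{prop:fiberwise_localization_locally_Cartesian}, applied with $q=\id$ and your weakly contractible fibers, would finish the proof. So the proposal can be repaired, but as written it neither commits to this working version nor checks it. Your Theorem-A fallback also aims at the comma category $Z'_{/(U,V)}$, which need not have a terminal object; the same example shows this.

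The idea you are missing is that the fiber minima you already found assemble into an adjoint, which makes every fibration argument unnecessary. This is the paper's proof. Let $\ell(U,V)$ be the down-then-right path in $\Delta^U\times\Delta^V$. No vertex of a path contained in that grid can lie strictly lower-left of a vertex on its left column or bottom row. Hence $\ell(U,V)\preceq P$ holds if and only if $P\subseteq\Delta^U\times\Delta^V$, which holds if and only if $(U,V)\to Z'(P)$. This gives $\ell\dashv Z'$ with $Z'\ell=\id$, so $\ell$ is fully faithful and $Z'$ is a localization at the morphisms it inverts. The counit maps $\ell Z'(P)\to P$ are composites of corner flips, which are images of dual corner collapses, and this identifies the localized class.
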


\begin{proof}
  As above, it suffices to show that $Z_{n,m}$ is a localization.

  On elements, $Z_{n,m}(\vec{u}, \vec{v})$ consists of the underlying nondegenerate simplex of $\vec{u}$ and the underlying nondegenerate simplex of $\vec{v}$. Thus, $Z_{n,m}$ sends all corner collapses to identities, and so, it factors through
  \[
    \begin{tikzcd}
      \gamma_{n,m}: R^{n,m} \arrow[r] & \sd(\Delta^n) \times \sd(\Delta^m)
    \end{tikzcd}
  \]
  by \zcref{thm:the_localization}. It remains to show that $\gamma_{n,m}$ is a localization. However, this functor admits a fully faithful left adjoint $\ell_{n,m}$, which sends $(\vec{u},\vec{v})$ to the path
  \[
    (u_0,v_0)<\cdots<(u_k,v_0)< (u_k,v_1)<\cdots<(u_k,v_\ell).
  \]
  It is in fact a coreflective colocalization, and hence, a localization.
\end{proof}

\begin{rmk} \label{rmk:L-shaped}
  In terms of paths (see \zcref{rmk:paths_in_grid}), the essential image of $\ell_{n,m}$ consists of rectilinear paths which start with a vertical line, followed by a horizontal line. We refer to these as \emph{L-shaped} paths.
\end{rmk}

\begin{cor}
  For any decorated simplices $\Delta^n_t$ and $\Delta^m_s$, the universal map
  \[
    \begin{tikzcd}
      Z_{n,m}: &[-3em] \sd^+(\Delta^n_t\times\Delta^m_s)\arrow[r] & (\sd(\Delta^n)\times \sd(\Delta^m))^{(t,s)}
    \end{tikzcd}
  \]
  is an equivalence of marked simplicial sets, where the superscript $(t,s)$ indicates the minimal marking such that $Z_{n,m}$ preserves markings.
\end{cor}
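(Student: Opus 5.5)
The plan is to reduce the claim to \zcref{prop:product_localization} and to the universal property of localizations of marked simplicial sets. First, recall the marked model structure, in which a marked simplicial set presents the localization of its underlying category at its marked morphisms. In that language, \zcref{prop:product_localization} says that $Z_{n,m}$ exhibits $\sd(\Delta^n)\times\sd(\Delta^m)$ as the localization of $\sd(\Delta^n\times\Delta^m)$ at the corner and dual corner collapses. Both families of collapses are antidiagonal, by \zcref{cor:corner_collapse_antidiagonal} and \zcref{rmk:antidiagonal_in_product_vs_gray}. So $\sd^+(\Delta^n_t\times\Delta^m_s)$ is obtained from the marked simplicial set $\sd(\Delta^n\times\Delta^m)^{C\cup C'}$ by marking additional morphisms $A$: the remaining antidiagonal ones, which depend on $t$ and $s$. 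The first key step is therefore that $Z_{n,m}\colon \sd(\Delta^n\times\Delta^m)^{C\cup C'}\to (\sd(\Delta^n)\times\sd(\Delta^m))^\flat$ is a marked equivalence. Marking further morphisms $A$ on the source and their images $Z_{n,m}(A)$ on the target is a pushout along a cofibration, so it preserves this equivalence. This is the standard fact that localizing a localization at the images of further morphisms agrees with localizing at everything at once.

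The second step identifies the marking $(t,s)$. By definition, it is the minimal marking of $\sd(\Delta^n)\times\sd(\Delta^m)$ making $Z_{n,m}$ marking-preserving, namely the image $Z_{n,m}(\text{antidiagonal morphisms})$, closed under composition as needed. After the previous paragraph, this image is exactly what the pushout produces, up to saturation. Saturation does not affect the marked equivalence class, since a marked equivalence may add composites and equivalences of markings.

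I would present the argument concretely via mapping spaces. For any quasi-category $\scr{E}$ with its equivalences marked, precomposition with $Z_{n,m}$ gives a map
\[
\Map^+((\sd(\Delta^n)\times\sd(\Delta^m))^{(t,s)},\scr{E})\to \Map^+(\sd^+(\Delta^n_t\times\Delta^m_s),\scr{E}).
\]
The task is to show this map is an equivalence. By \zcref{prop:product_localization}, functors out of the source of the corner-collapse localization are precisely functors out of the product. Under this correspondence, a functor $g$ sends $Z_{n,m}(A)$ to equivalences if and only if $g\circ Z_{n,m}$ sends $A$ to equivalences. This is tautological.

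The expected obstacle is a subtlety in the definition of $(t,s)$. If $(t,s)$ is read as the set of \emph{images} of antidiagonal morphisms, then every marked morphism of the target must be hit, and composites need care. Images of composable morphisms need not be images of composable morphisms upstairs. I would handle this by observing that both sides may be replaced by their saturations, so only the class of morphisms inverted matters. Then nothing specific to the Cartesian conditions is needed, and the corollary follows formally from \zcref{prop:product_localization}.
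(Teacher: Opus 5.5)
Your argument is correct and is essentially the paper's own. The paper states this corollary without proof, as an immediate consequence of \zcref{prop:product_localization} together with the observation just before it (via \zcref{cor:corner_collapse_antidiagonal} and \zcref{rmk:antidiagonal_in_product_vs_gray}) that the corner and dual corner collapses are already marked in $\sd^+(\Delta^n_t\times\Delta^m_s)$. Marking the remaining antidiagonal morphisms and their images is then a homotopy pushout, which preserves the equivalence exactly as you say. Your worry about composites is unnecessary: a marking need not be closed under composition, so the minimal marking $(t,s)$ is literally the image set, and this is precisely what your pushout produces.
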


We now specialize to the case where the marking on $\Delta^m$ is maximal.

\begin{lem}\label{lem:antidiagonal_on_flat+sharp}
  A morphism of $\sd^+(\Delta^n_\flat\times \Delta^m_\sharp)$ is marked if and only if its image in $\sd(\Delta^n)$ is degenerate. In particular, the marking $(\sd(\Delta^n)\times \sd(\Delta^m))^{\flat,\sharp}$
  coincides with the marking $\sd^+(\Delta^n_\flat)\times \sd^+(\Delta^m_\sharp)$.
\end{lem}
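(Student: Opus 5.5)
We must show: a morphism $\nu$ of $\sd(\Delta^n\times\Delta^m)$ is antidiagonal for the decoration $\Delta^n_\flat\times\Delta^m_\sharp$ if and only if $Z_{n,m}(\nu)$ has degenerate (identity) first component in $\sd(\Delta^n)$. The second claim then follows at once. Indeed, $\sd^+(\Delta^n_\flat)$ is minimally marked by \zcref{lem:sd^+(Delta^n_flat)=sd(Delta^n)_flat} and $\sd^+(\Delta^m_\sharp)$ is maximally marked by \zcref{lem:max-marked=grpd_subdiv}. So the product marking consists exactly of the pairs whose first component is an identity, and it suffices to compare this with the minimal marking making $Z_{n,m}$ marking-preserving.

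For the ``if'' direction, take $\nu\colon(\vec u,\vec v)\to(\vec s,\vec t)$ with $\vec u$ and $\vec s$ having the same underlying nondegenerate simplex $U\subseteq[n]$. I would factor $\nu$ through the L-shaped representatives. By \zcref{prop:product_localization} and the fact that corner and dual corner collapses are antidiagonal, it suffices to treat the case where both paths are L-shaped, namely $\ell_{n,m}(U,V)\to \ell_{n,m}(U,V')$ with $V'\subseteq V$. Because antidiagonal morphisms are closed under composition, we may further reduce to deleting a single horizontal vertex $(u_k,v)$ from the horizontal segment $\{u_k\}\times V$.

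That horizontal segment is a simplex of $\{u_k\}\times\Delta^m_\sharp$, which is maximally thin. By functoriality of $\sd^+$ applied to the inclusion $\Delta^0\times\Delta^m_\sharp\to\Delta^n_\flat\times\Delta^m_\sharp$ and \zcref{lem:max-marked=grpd_subdiv}, all morphisms in $\sd$ of that segment are antidiagonal. The remaining work is to propagate this to the whole path $\ell_{n,m}(U,V)$, which still has its vertical prefix attached. My plan is to use the Segal cube of the full L-path simplex on the vertex being removed. If it is thin, this works directly. If it is not thin, I would proceed inductively as in \zcref{lem:k-vanishing}: show that $F(W\cup\{(u_k,v)\})\to F(W)$ is an equivalence for every Cartesian $F$. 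The induction uses \zcref{ex:deg-cube} on simplices $\{x\}\star\sigma$, where $\sigma$ is horizontal. Such simplices are thin in the product decoration exactly when $\sigma$ is degenerate in the first coordinate direction, since the thin simplices of $\Delta^n_\flat\times\Delta^m_\sharp$ are those whose first projection is degenerate. Handling this bookkeeping, that is, showing that the cubes appearing are degenerate in the right direction, is the step I expect to be the main obstacle. I would try induction on the length of the vertical prefix.

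For the ``only if'' direction, I need a witness: for each morphism whose $\sd(\Delta^n)$-component $U\to U'$ is a proper inclusion, a Cartesian functor that does not invert it. I would take the composite
\[
\sd(\Delta^n\times\Delta^m)\to\sd(\Delta^n)\times\sd(\Delta^m)\to\sd(\Delta^n)\to\scr{C},
\]
where the last map is a conservative functor, for instance the Yoneda-type embedding into presheaves of sets on $\sd(\Delta^n)$, or simply the inclusion of the poset $\sd(\Delta^n)$ into a category with finite limits that reflects isomorphisms. This functor inverts no non-identity morphism of $\sd(\Delta^n)$. It is Cartesian: every thin simplex of $\Delta^n_\flat\times\Delta^m_\sharp$ has degenerate $\Delta^n$-projection, so its Segal cubes become degenerate along a direction after projecting, and \zcref{lem:degenerate} makes them limit cubes.
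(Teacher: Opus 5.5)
Your ``only if'' direction is fine. The composite $\sd(\Delta^n\times\Delta^m)\to\sd(\Delta^n)$ is $\sd$ of the decorated projection $\Delta^n_\flat\times\Delta^m_\sharp\to\Delta^n_\flat$, so your witness is an unwound form of the paper's functoriality argument via \zcref{lem:sd^+(Delta^n_flat)=sd(Delta^n)_flat}.

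The ``if'' direction has a gap, starting with the reduction to L-shaped paths, which does not reach all morphisms. Chains satisfy $\ell_{n,m}(U,V')\subseteq\ell_{n,m}(U,V)$ only when $|U|=1$ or $\min V'=\min V$, because $\ell_{n,m}(U,V')$ contains $(u_0,\min V')$. Moreover, deleting $(u_k,\min V)$ from $\ell_{n,m}(U,V)$ with $k\geq 1$ is a dual corner collapse. Its image under $Z_{n,m}$ is an identity, so it never realizes $(U,V)\to(U,V\setminus\{\min V\})$. Concretely, the morphism $\{(0,0),(0,1),(1,1)\}\to\{(0,1),(1,1)\}$ of $\sd(\Delta^1\times\Delta^1)$ has identity first component but is not covered by your reduction. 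You could patch this: antidiagonality of $\nu$ depends only on $Z_{n,m}(\nu)$, so you may choose non-L-shaped lifts. But then you need the local statement below anyway, and the detour through \zcref{prop:product_localization} buys nothing.

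The core step is left open and misdiagnosed. In your setting the L-path simplex is always thin, because its first projection repeats $u_k$. Thinness of the top simplex does not by itself make the relevant leg invertible, since a limit cube need not have invertible legs. So there is no case that ``works directly,'' and induction on the vertical prefix is the wrong parameter. The missing idea is the argument of \zcref{lem:k-vanishing}, cf.\ \zcref{rmk:k-vanishing_converse}, which is exactly what the paper uses:
\begin{itemize}
\item Let $p$ be the deleted vertex and $p'$ a vertex adjacent to it in the chain with the same first coordinate.
\item Every sub-chain $W\ni p,p'$ is thin in $\Delta^n_\flat\times\Delta^m_\sharp$.
\item The Segal sieve of $W$ containing $W\setminus\{p\}$ never removes $p'$, because adjacent vertices have opposite parity.
\item By induction on $\#W$, with base case the thin edge $\{p,p'\}$, this cube is degenerate along the $W\setminus\{p\}$ direction.
\item So \zcref{ex:deg-cube} gives that $F(W)\to F(W\setminus\{p\})$ is an equivalence for every Cartesian $F$.
\end{itemize}
This handles every morphism whose image in $\sd(\Delta^n)$ is an identity directly. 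Such a morphism factors into deletions of this type, because vertices with a fixed first coordinate form a contiguous block of the chain and each block keeps a vertex of the target. With this, no L-shaped paths or localization are needed.
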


\begin{proof}
  By the functoriality of $\sd^+$, we have a morphism of marked simplicial sets $\sd^+(\Delta^n_\flat \times \Delta^m_\sharp) \to \sd^+(\Delta^n_\flat)$. But since $\sd^+(\Delta^n_\flat) = \sd(\Delta^n)_\flat$, by \zcref{lem:sd^+(Delta^n_flat)=sd(Delta^n)_flat}, the \emph{only if} direction follows.

  The \emph{if} direction follows from the claim that for $(\vec{u},\vec{v})\in \sd(\Delta^n\times \Delta^m)$ such that $\vec{u}$ is degenerate at $i$, the morphism $(\vec{u},\vec{v})\to d_i(\vec{u},\vec{v})$ is antidiagonal. But this claim follows from \zcref{rmk:k-vanishing_converse}, completing the proof.

\end{proof}

\begin{cor}\label{cor:antidiagonal_in_t_sharp}
  A morphism of $\sd^+(\Delta^n_t\times \Delta^m_\sharp)$ is marked if and only if its image in $\sd^+(\Delta^n_t)$ is marked.
\end{cor}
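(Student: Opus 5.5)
The plan is to reduce \zcref{cor:antidiagonal_in_t_sharp} to \zcref{lem:antidiagonal_on_flat+sharp} together with the description of antidiagonal morphisms in $\sd^+(\Delta^n_t)$ for $t\in\{\flat,+\}$. Only these two markings occur in $\tDelta$. For $t=\flat$ the claim is exactly \zcref{lem:antidiagonal_on_flat+sharp}, since $\sd^+(\Delta^n_\flat)$ is minimally marked by \zcref{lem:sd^+(Delta^n_flat)=sd(Delta^n)_flat}. So the work lies in the case $t=+$. The \emph{only if} direction is free in both cases: the projection $\Delta^n_t\times \Delta^m_\sharp\to \Delta^n_t$ is a map of decorated simplicial sets, so by functoriality of $\sd^+$ it carries marked morphisms to marked morphisms.

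For the \emph{if} direction with $t=+$, take a morphism $\nu\colon(\vec{u},\vec{v})\to(\vec{s},\vec{t})$ whose image $\vec u\to\vec s$ in $\sd(\Delta^n)$ (after passing to the underlying nondegenerate simplices) is antidiagonal in $\sd^+(\Delta^n_+)$. I would identify which morphisms of $\sd(\Delta^n)$ are antidiagonal for $\Delta^n_+$. Since every proper face of $\Delta^n_+$ is flat, a Cartesian functor is only required to be $[n]$-Segal. The natural guess is that the non-identity antidiagonal morphisms are exactly those of the form $[n]\to [n]_k$. Combined with the computation for $\flat$, this should reduce the problem to morphisms $\nu$ whose first component is, up to degeneracies, the face $[n]\to[n]_k$ for some $k$.

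Such a $\nu$ then factors as two pieces. The first is a morphism whose first component is degenerate, which is marked by \zcref{lem:antidiagonal_on_flat+sharp} (its \emph{if} proof via \zcref{rmk:k-vanishing_converse} works for any marking on $\Delta^n$). The second is a morphism $(\vec u,\vec v)\to d_j(\vec u,\vec v)$ deleting a vertex $(u_j,v_j)=(k,v_j)$, where $\vec u$ is the full nondegenerate path $0,\dots,n$ and $k$ occurs exactly once. To see that this second piece is antidiagonal, I would work in the $1$-simplex-level localization. By \zcref{prop:product_localization}, a Cartesian functor $f$ on $\sd(\Delta^n_+\times\Delta^m_\sharp)$ factors through $\sd(\Delta^n)\times\sd(\Delta^m)$ up to equivalence, because corner collapses and dual corner collapses are antidiagonal. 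Every morphism in the $\sd(\Delta^m)$ direction is inverted, since the projection to $\Delta^m_\sharp$ is a map of decorated sets and \zcref{lem:max-marked=grpd_subdiv} applies. Hence $f$ is equivalent to $g\circ\mathrm{pr}_1$ for some functor $g$ on $\sd(\Delta^n)$. Restricting $f$ along the diagonal-type section $\Delta^n_+\to\Delta^n_+\times\Delta^m_\sharp$, $i\mapsto (i,v)$, shows that $g$ is Cartesian on $\Delta^n_+$. Consequently $f(\nu)\simeq g(\vec u\to\vec s)$ is an equivalence.

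The main obstacle is making the last step rigorous. The factorization through $\sd(\Delta^n)\times\sd(\Delta^m)$ is only available as a localization, so I must check that $g$ can be recovered precisely as the restriction along a section, and that the equivalence $f\simeq g\circ\mathrm{pr}_1$ is natural enough to compare $f(\nu)$ with $g$ of the image. Identifying the antidiagonals of $\sd^+(\Delta^n_+)$ exactly is also needed, though strictly only the inclusion ``image marked $\Rightarrow$ $g$ inverts it'' is used. That inclusion holds by definition of antidiagonal once $g$ is Cartesian, so this route may bypass the explicit description entirely. That is what I would try first.
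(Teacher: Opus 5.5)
Your final route is the paper's proof. A Cartesian $f$ inverts the corner and dual corner collapses, so it descends along $Z_{n,m}$, and then along $\mathrm{pr}_1$ to some $g$ on $\sd(\Delta^n)$. The section $s$ induced by $\Delta^n_t\times\{v\}\hookrightarrow\Delta^n_t\times\Delta^m_\sharp$ shows that $g$ is Cartesian, so $g$ inverts the marked image of $\nu$.

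The obstacle you flag is not a real one. Put $\pi=\mathrm{pr}_1\circ Z_{n,m}$; then $\pi\circ s=\id$. So $f\simeq g\circ\pi$ forces $g\simeq f\circ s$, and $f(\nu)\simeq g(\pi\nu)$.

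The detour identifying the antidiagonals of $\sd^+(\Delta^n_+)$ should simply be dropped. Your guess there is also false:
\begin{itemize}
\item for $n=2$, only $012\to 02$ is forced;
\item for $n\geq 3$, the functor $U\mapsto\prod_{u\in U}X_u$ into sets is Cartesian and inverts no non-identity morphism.
\end{itemize}

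One step is mis-justified, however. You claim the morphisms in the $\sd(\Delta^m)$-direction are inverted ``since the projection to $\Delta^m_\sharp$ is a map of decorated sets and \zcref{lem:max-marked=grpd_subdiv} applies.'' Functoriality of $\sd^+$ only pushes markings forward. Knowing that $\sd^+(\Delta^m_\sharp)$ is maximally marked says nothing about which morphisms of $\sd^+(\Delta^n_t\times\Delta^m_\sharp)$ are marked. Using the inclusions $\{u\}\times\Delta^m_\sharp$ instead would only handle the slices $\{u\}\times\sd(\Delta^m)$ over singletons, not general $U$.

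The correct input is the \emph{if} half of \zcref{lem:antidiagonal_on_flat+sharp}. It says that a morphism of $\sd(\Delta^n\times\Delta^m)$ whose image in $\sd(\Delta^n)$ is an identity is antidiagonal for $\Delta^n_\flat\times\Delta^m_\sharp$. By functoriality along $\Delta^n_\flat\times\Delta^m_\sharp\to\Delta^n_t\times\Delta^m_\sharp$, it is then antidiagonal for $\Delta^n_t\times\Delta^m_\sharp$ as well. These morphisms map onto generators of the $\{\id\}\times\sd(\Delta^m)$-direction.

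This is exactly what the paper invokes, and you had already noted it for your ``first piece.'' Make that substitution, and use that $\sd(\Delta^m)$ has an initial object, so $\mathrm{pr}_1$ is a localization at that direction. With these, your argument is complete.
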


\begin{proof}
  As above, the \emph{only if} direction follows from the functoriality of $\sd^+$. It remains to show that if the image in $\sd^+(\Delta^n_t)$ is marked, then so is the original morphism. By \zcref{lem:antidiagonal_on_flat+sharp}, the projection
  \[
    \begin{tikzcd}
      (\sd(\Delta^n)\times \sd(\Delta^m))^{ (t,\sharp)} \arrow[r] & \sd(\Delta^n)^{(t,\sharp)}
    \end{tikzcd}
  \]
  is an equivalence of marked simplicial sets, where the superscripts indicate the markings given by marking the images of marked morphisms from $\sd^+(\Delta^n_t\times \Delta^m_\sharp)$. By functoriality, we see that
  \[
    \begin{tikzcd}
      \sd(\Delta^n)^{(t,\sharp)}\arrow[r] & \sd^+(\Delta^n_t)
    \end{tikzcd}
  \]
  preserves markings. Since there is an inclusion of decorated simplicial sets $\Delta^n_t\times\{v\}\to \Delta^n_t\times \Delta^m_\sharp$, it follows that
  \[
    \sd^+(\Delta^n)^{(t,\sharp)}=\sd^+(\Delta^n_t).
  \]
  Thus, every Cartesian functor $f:\sd(\Delta^n_t\times \Delta^m_\sharp)\to \scr{C}$ descends to a marking preserving functor $\sd^+(\Delta^n_t)\to \scr{C}$. But this means that if the image of a morphism in $\sd^+(\Delta^n_t)$ is marked, then it is sent to an equivalence. Thus the original morphism is antidiagonal, as desired.
\end{proof}

\subsection{Mapping spaces}
We will now use the above results to prove \zcref{prop:mappingUP_prop}, starting with some elementary model category theoretic results.

\begin{defn}
  For a decorated simplex $\Delta^n_t$, we define a functor
  \[
    \begin{tikzcd}
      \sd^K(\Delta^n_t\times(-)_\sharp): &[-3em] \Set_\Delta \arrow[r] & \Set_\Delta^+
    \end{tikzcd}
  \]
  by left Kan extending $\sd^+(\Delta^n_t\times \Delta^m_\sharp)$. Its right adjoint,
  \[
    M_{\Delta^n_t}:\Set_\Delta^+ \to \Set_\Delta
  \]
  is given by $M_{\Delta^n_t}(\mathcal{C}) = \Hom_{\Set_\Delta^+}(\sd^+(\Delta^n_t\times (-)_\sharp), \mathcal{C})$.
\end{defn}

\begin{lem}
  For each decorated simplex $\Delta^n_t$, the functor $\sd^K(\Delta^n_t\times (-)_\sharp)$ is left Quillen from the Kan--Quillen model structure to the marked model structure. Consequently, $M_{\Delta^n_t}$ is right Quillen.
\end{lem}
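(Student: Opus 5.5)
We check the left Quillen condition on the generating cofibrations and generating trivial cofibrations of the Kan--Quillen model structure. Since $\sd^K(\Delta^n_t\times(-)_\sharp)$ is defined as a left Kan extension along the Yoneda embedding $\Delta\to\Set_\Delta$, it preserves colimits and therefore has the right adjoint $M_{\Delta^n_t}$. It then suffices to show two things: that it sends $\partial\Delta^m\to\Delta^m$ to monomorphisms, and that it sends horn inclusions $\Lambda^m_k\to\Delta^m$ to trivial cofibrations in the marked (Cartesian, over the point) model structure.

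\emph{Cofibrations.} We first identify $\sd^K(\Delta^n_t\times K_\sharp)$ with $\sd^+(\Delta^n_t\times K_\sharp)$ for an arbitrary simplicial set $K$. The underlying simplicial set is $\sd(\Delta^n\times K)$, since $\sd$ on simplicial sets commutes with colimits and $\Delta^n\times -$ preserves colimits in $\Set_\Delta$. The marking is the image of the markings on the simplices of $K$. By \zcref{cor:antidiagonal_in_t_sharp}, this marking is intrinsic: a morphism is marked if and only if its image in $\sd^+(\Delta^n_t)$ is marked. Consequently the marking on a colimit is again described by this projection criterion. The same criterion shows that $\sd^K(\Delta^n_t\times(-)_\sharp)$ preserves monomorphisms, because $\sd$ and $\Delta^n\times -$ both do, and the marking is pulled back along the projection to $\sd^+(\Delta^n_t)$.

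\emph{Trivial cofibrations.} Using the projection description, $\sd^K(\Delta^n_t\times K_\sharp)$ is the pullback $\sd(\Delta^n\times K)\times_{\sd(\Delta^n)} \sd^+(\Delta^n_t)$ of markings. Equivalently, it is obtained from $\sd(\Delta^n\times K)$ by marking exactly the edges lying over marked edges of $\sd^+(\Delta^n_t)$. For a horn inclusion $\Lambda^m_k\to\Delta^m$, the plan is to reduce via the localization of \zcref{prop:product_localization} and its marked refinement $Z_{n,m}$. This refinement identifies $\sd^+(\Delta^n_t\times\Delta^m_\sharp)\simeq \sd^+(\Delta^n_t)\times\sd(\Delta^m)_\sharp$ as marked simplicial sets, and by \zcref{lem:antidiagonal_on_flat+sharp} the corresponding marking is $\sd^+(\Delta^n_t)\times\sd^+(\Delta^m_\sharp)$, which is maximal in the second factor by \zcref{lem:max-marked=grpd_subdiv}. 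The same localization argument applies to each simplex of the horn, and gluing gives $\sd^K(\Delta^n_t\times(\Lambda^m_k)_\sharp)\simeq \sd^+(\Delta^n_t)\times \sd(\Lambda^m_k)_\sharp$. The map in question is then a weak equivalence because $\sd(\Lambda^m_k)\to\sd(\Delta^m)$ is a weak homotopy equivalence by \zcref{lem:sdEx}. Maximally marked objects model spaces, so $\sd(\Lambda^m_k)_\sharp\to\sd(\Delta^m)_\sharp$ is a marked weak equivalence, and the marked model structure is Cartesian, so taking the product with $\sd^+(\Delta^n_t)$ preserves it.

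\emph{Main obstacle.} The hard step is the gluing. It requires that the marked equivalences $Z$ on simplices assemble into an equivalence on the horn, which means knowing that the colimit defining $\sd^K(\Delta^n_t\times(\Lambda^m_k)_\sharp)$ is a homotopy colimit. I would handle this by using that all objects are cofibrant and that the diagram over the simplices of $\Lambda^m_k$ is Reedy cofibrant, since it consists of monomorphisms by the first step. The natural maps $Z$ are compatible with faces, so they induce a weak equivalence of homotopy colimits. The right Quillen property of $M_{\Delta^n_t}$ then follows formally from the adjunction.
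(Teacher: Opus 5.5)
Your proposal is correct and follows essentially the paper's route. Both arguments compare $\sd^K(\Delta^n_t\times X_\sharp)$ with $\sd^+(\Delta^n_t)\times\sd(X)_\sharp$, which the paper then maps on to $\sd^+(\Delta^n_t)\times X_\sharp$ via $\fv$. Both use the marked localization $Z_{n,m}$ on simplices, glue it by the Dyckerhoff--Kapranov-style skeletal or homotopy-colimit argument, and finish with \zcref{lem:sdEx}. The only difference is that the paper proves this natural weak equivalence for every $X$, so the functor preserves all weak equivalences. You instead run the same gluing only over horns, to check the generating trivial cofibrations.
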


\begin{proof}
  It is immediate that the functor preserves cofibrations. A similar argument to that of \cite[Proposition 10.1.3]{dyckerhoff_HigherSegalSpaces_2019} applied to the natural maps $\sd^+(\Delta^n_t\times X_\sharp)\to \sd^+(\Delta^n_t)\times X_\sharp$ shows that these are weak equivalences, so that this functor preserves weak equivalences.
\end{proof}

\begin{cor} \label{cor:antidiagonal_in_t_sharp_Kan}
  A $1$-simplex in $\sd^K(\Delta^n_t\times X_\sharp)$ is marked if and only if its image in $\sd^+(\Delta^n_t)$ is marked.
\end{cor}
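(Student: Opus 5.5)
The plan is to reduce to simplices of $X$ and then invoke \zcref{cor:antidiagonal_in_t_sharp}. Recall that $\sd^K(\Delta^n_t\times X_\sharp)$ is defined as a left Kan extension, namely the colimit over $\Delta_{/X}$ of the marked simplicial sets $\sd^+(\Delta^n_t\times \Delta^m_\sharp)$. Colimits in $\Set_\Delta^+$ are computed on underlying simplicial sets, and the marking of a colimit is the image of the markings. So every $1$-simplex of $\sd^K(\Delta^n_t\times X_\sharp)$ is the image of a $1$-simplex $e$ of some $\sd^+(\Delta^n_t\times\Delta^m_\sharp)$ under a map induced by a simplex $\sigma:\Delta^m\to X$. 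Moreover, it is marked if and only if \emph{some} such representative $e$ is marked. The map to $\sd^+(\Delta^n_t)$ is itself induced by the projections $\Delta^n_t\times \Delta^m_\sharp\to \Delta^n_t$, which are compatible with the colimit diagram.

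For the \emph{only if} direction, I would use that the projection is a map of marked simplicial sets. A marked edge is the image of a marked edge $e$ in some $\sd^+(\Delta^n_t\times\Delta^m_\sharp)$. By functoriality of $\sd^+$ (\zcref{defn:sd+_antidiag}), the image of $e$ in $\sd^+(\Delta^n_t)$ is marked, and this image does not depend on which representative was chosen.

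For the \emph{if} direction, let $\bar e$ be an edge whose image in $\sd^+(\Delta^n_t)$ is marked. I would choose any representative $e$ in $\sd^+(\Delta^n_t\times\Delta^m_\sharp)$ for a simplex $\sigma:\Delta^m\to X$. Since the projection $\sd^+(\Delta^n_t\times\Delta^m_\sharp)\to\sd^+(\Delta^n_t)$ factors the projection from the colimit, $e$ has the same image in $\sd^+(\Delta^n_t)$, so that image is marked. By \zcref{cor:antidiagonal_in_t_sharp}, $e$ is then marked in $\sd^+(\Delta^n_t\times\Delta^m_\sharp)$, and hence its image $\bar e$ is marked.

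The only point that needs care is the description of colimits in $\Set_\Delta^+$. The marked edges of the colimit are exactly the images of marked edges, and the degenerate edges remain marked. This is standard, since the forgetful functor to $\Set_\Delta$ preserves colimits and the marking of the colimit is generated by the images. With that in hand, no compatibility issues between different representatives arise, because the criterion depends only on the image in $\sd^+(\Delta^n_t)$.
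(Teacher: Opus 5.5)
Your proposal is correct and takes the paper's approach: the paper's one-line proof is exactly ``compute the marking in the left Kan extension using \zcref{cor:antidiagonal_in_t_sharp}.'' You spell this out by writing the extension as a colimit over $\Delta_{/X}$, noting that marked edges of a colimit in $\Set_\Delta^+$ are precisely images of marked edges, and applying \zcref{cor:antidiagonal_in_t_sharp} to any representative edge, whose image in $\sd^+(\Delta^n_t)$ is the same as that of the original edge.
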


\begin{proof}
  This follows from explicitly computing the marking in the left Kan extension using \zcref{cor:antidiagonal_in_t_sharp}.
\end{proof}

\begin{cor}
  The functors $\sd^+(\Delta^n_t\times (-)_\sharp)$ and $\sd^K(\Delta^n_t\times (-)_\sharp)$ coincide.
\end{cor}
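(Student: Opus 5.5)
The plan is to compare both functors on the underlying simplicial sets and on the markings separately. By definition, $\sd^K(\Delta^n_t\times(-)_\sharp)$ is the left Kan extension along the Yoneda embedding $\Delta\to\Set_\Delta$ of $[m]\mapsto\sd^+(\Delta^n_t\times\Delta^m_\sharp)$. So the first step is to write down a canonical comparison map
\[
\sd^K(\Delta^n_t\times X_\sharp)\to \sd^+(\Delta^n_t\times X_\sharp),
\]
natural in $X$. It is induced by the universal property of the left Kan extension, since the two functors agree on representables.

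For the underlying simplicial sets, the key observation is that $X\mapsto \Delta^n\times X$ preserves colimits in $\Set_\Delta$. The functor $\sd:\Set_\Delta^{\dec}\to\Set_\Delta$ is itself a left Kan extension, and a product with $X_\sharp$ carries the decoration $(-)_\sharp$ on the $X$ factor. Writing $X=\colim_{\Delta^m\to X}\Delta^m$, I would show that
\[
\Delta^n_t\times X_\sharp\simeq\colim_{\Delta^m\to X}\Delta^n_t\times\Delta^m_\sharp
\]
holds in $\Set_\Delta^{\dec}$. A simplex of the product is thin iff its $\Delta^n_t$-component is thin, because every simplex of $X_\sharp$ is thin, and this condition is detected on any $\Delta^m$ through which the simplex factors. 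Since $\sd$ is a left adjoint it preserves colimits, so the map is an isomorphism on underlying simplicial sets.

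For the markings, I will use \zcref{cor:antidiagonal_in_t_sharp_Kan} on the source: a $1$-simplex of $\sd^K(\Delta^n_t\times X_\sharp)$ is marked iff its image in $\sd^+(\Delta^n_t)$ is marked. On the target, I would prove the analogous statement for $\sd^+(\Delta^n_t\times X_\sharp)$ directly. The \emph{only if} direction follows from functoriality of $\sd^+$ along the projection $\Delta^n_t\times X_\sharp\to\Delta^n_t$. For the \emph{if} direction, take a Cartesian functor $f:\sd(\Delta^n_t\times X_\sharp)\to\scr{C}$ and a $1$-simplex $\nu$ whose image is marked. Since $\nu$ is a single morphism of $\sd(\Delta^n\times X)$, it comes from some $\sd(\Delta^n\times\Delta^m)$ via a simplex $\Delta^m\to X$. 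Restricting $f$ along it gives a Cartesian functor on $\sd(\Delta^n_t\times\Delta^m_\sharp)$, and \zcref{cor:antidiagonal_in_t_sharp} then shows that $f(\nu)$ is an equivalence. Hence both markings are the pullback of the marking of $\sd^+(\Delta^n_t)$, and they coincide.

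The step I expect to be the main obstacle is the colimit compatibility in the underlying-simplicial-set step. One has to confirm that the decorated product really is the colimit of the decorated pieces, with no extra thin simplices arising that fail to factor through some $\Delta^m$. One also has to confirm that $\sd$, being a left Kan extension along the non-full-image inclusion $\tDelta\to\Set_\Delta^{\dec}$, commutes with this particular colimit. If the left-adjointness argument is delicate, I would fall back on computing simplices of both sides explicitly: a simplex of $\sd(\Delta^n\times X)$ is a chain of nondegenerate simplices of $\Delta^n\times X$, and each such chain factors through some $\Delta^n\times\Delta^m$.
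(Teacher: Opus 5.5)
Your proposal is correct and is essentially the paper's proof. The paper also invokes \zcref{cor:antidiagonal_in_t_sharp_Kan} together with the marking-preserving maps $\sd^K(\Delta^n_t\times X_\sharp)\to\sd^+(\Delta^n_t\times X_\sharp)\to\sd^+(\Delta^n_t)$. These sandwich the marking of $\sd^+(\Delta^n_t\times X_\sharp)$ between that of $\sd^K(\Delta^n_t\times X_\sharp)$ and the pullback of the marking of $\sd^+(\Delta^n_t)$, which agree by that corollary, so your separate direct proof of the ``if'' direction for the target is not needed.

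The underlying-simplicial-set step you flag as the main obstacle is routine, and the paper leaves it implicit: $\sd\colon\Set_\Delta\to\Set_\Delta$ is a left adjoint, $\Delta^n\times(-)$ preserves colimits, and the decoration plays no role in the underlying simplicial set. Your fallback description of simplices of $\sd(\Delta^n\times X)$ as chains of nondegenerate simplices would anyway only hold for regular simplicial sets, not for arbitrary $X$.
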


\begin{proof}
  This is immediate from the characterization of the marking on $\sd^K(\Delta^n_t\times (-)_\sharp)$ stated in \zcref{cor:antidiagonal_in_t_sharp_Kan} and the fact that we have morphisms of marked simplicial sets
  \[
    \begin{tikzcd}
      \sd^K(\Delta^n_t\times(-)_\sharp)\arrow[r] & \sd^+(\Delta^n_t\times (-)_\sharp) \arrow[r] & \sd^+(\Delta^n_t).
    \end{tikzcd}
  \]
\end{proof}

\begin{cor}\label{cor:mapping_equiv_without_limits}
  For any decorated simplex $\Delta^n_t$, the natural transformations
  \[
    \begin{tikzcd}
      \sd^+(\Delta^n_t\times X_\sharp) \arrow[r] & \sd^+(\Delta^n_t)\times \sd(X)_\sharp \arrow{r}{(\id, \fv)} & \sd^+(\Delta^n_t)\times X_\sharp
    \end{tikzcd}
  \]
  are natural weak equivalences of left Quillen functors. In particular, for any $(\infty,1)$-category $\scr{C}$ presented as a fibrant marked simplicial set, we have an induced weak equivalence of right adjoint functors induced by composition with these maps.
\end{cor}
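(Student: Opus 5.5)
We treat the two maps in turn and then take right adjoints. Both are natural in $X\in\Set_\Delta$, and both sides are left Kan extended from simplices: the source by the preceding corollary identifying $\sd^+(\Delta^n_t\times(-)_\sharp)$ with $\sd^K(\Delta^n_t\times(-)_\sharp)$, and the target because products with a fixed marked simplicial set preserve colimits and $\sd$ is left Kan extended. Hence both functors $\Set_\Delta\to\Set_\Delta^+$ preserve colimits and monomorphisms. So both transformations are transformations of left Quillen functors (Kan--Quillen to marked), and it suffices to check they are weak equivalences on representables $X=\Delta^m$. To pass from simplices to all $X$, we use a cube/skeletal induction: a natural weak equivalence between cofibration-preserving, colimit-preserving functors is detected on $\Delta^m$, by the usual Reedy argument over the skeleta of $X$ as pushouts along $\partial\Delta^m\to\Delta^m$.

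For the first map at $X=\Delta^m$, the map $\sd^+(\Delta^n_t\times\Delta^m_\sharp)\to\sd^+(\Delta^n_t)\times\sd(\Delta^m)_\sharp$ is $Z_{n,m}$. Its source carries the marking from \zcref{cor:antidiagonal_in_t_sharp}: a morphism is marked iff its image in $\sd^+(\Delta^n_t)$ is. The corollary following \zcref{prop:product_localization} says $Z_{n,m}$ is an equivalence onto $(\sd(\Delta^n)\times\sd(\Delta^m))^{(t,\sharp)}$. By \zcref{cor:antidiagonal_in_t_sharp} and \zcref{lem:antidiagonal_on_flat+sharp}, that marking is precisely the product marking $\sd^+(\Delta^n_t)\times\sd(\Delta^m)_\sharp$. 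Concretely, a pair of morphisms $(a,b)$ is marked iff $a$ is marked, and the image markings contain all such pairs. To see this, lift a marked $a$ together with an arbitrary $b$ along the L-shaped section $\ell_{n,m}$ of \zcref{rmk:L-shaped} to a morphism whose $\Delta^n$-image is $a$, hence marked. So the first map is an equivalence for $X=\Delta^m$.

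For the second map, $(\id,\fv)$ is the product of $\sd^+(\Delta^n_t)$ with $\fv_\sharp:\sd(X)_\sharp\to X_\sharp$. By \zcref{lem:sdEx}, $\fv$ is a weak homotopy equivalence. Maximally marked simplicial sets model spaces inside the marked model structure, so $(-)_\sharp$ sends Kan--Quillen equivalences to marked equivalences. The marked model structure is Cartesian, and every object is cofibrant, so taking the product with $\sd^+(\Delta^n_t)$ preserves weak equivalences. The composite of the two natural weak equivalences is again one.

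For the final statement, if $\mathcal{C}$ is fibrant, then precomposition gives natural maps between the right adjoints $M(\mathcal{C})$, namely $\Hom(\sd^+(\Delta^n_t)\times(-)_\sharp,\mathcal{C})\to\Hom(\sd^+(\Delta^n_t\times(-)_\sharp),\mathcal{C})$. By the standard fact that a natural weak equivalence of left Quillen functors between model categories with all objects cofibrant induces a weak equivalence of derived right adjoints on fibrant objects (Ken Brown), these maps are Kan equivalences.

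The main obstacle is the first step: verifying that the marking induced along $Z_{n,m}$ is exactly the product marking, and that a localization of underlying categories gives a weak equivalence in the marked model structure. For the latter we would note that a localization $A\to A[W^{-1}]$ is a marked equivalence from $(A,W)$ to $A[W^{-1}]$ with the images of $W$ marked. The extension from simplices to general $X$ is routine once the left Quillen lemma is in hand.
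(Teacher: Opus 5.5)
Your proposal is correct and follows the paper's route: on representables the first map is $Z_{n,m}$, which is an equivalence by the localization of \zcref{prop:product_localization} together with the marking computations of \zcref{lem:antidiagonal_on_flat+sharp,cor:antidiagonal_in_t_sharp}; the map $(\id,\fv)$ is handled by \zcref{lem:sdEx}; and one extends to all $X$ by the Dyckerhoff--Kapranov-style skeletal argument before passing to right adjoints. Two small repairs are needed: preserving colimits and monomorphisms does not by itself make the source functor left Quillen (take this from the preceding lemma, or deduce it afterwards by two-out-of-three from the natural weak equivalence); and $\ell_{n,m}$ lands in $R^{n,m}$ rather than $\sd(\Delta^n\times\Delta^m)$, so the L-shaped paths of $(U,V)$ and $(U',V')$ need not be nested (e.g.\ $U=U'=\{0,1\}$, $V=\{0,1\}$, $V'=\{1\}$) --- instead obtain the lift of $(a,b)$ by inserting the vertices of $U\setminus U'$ and $V\setminus V'$ into the L-shaped path of $(U',V')$, and this lift is marked by \zcref{cor:antidiagonal_in_t_sharp} because its image in $\sd^+(\Delta^n_t)$ is $a$.
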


\begin{lem} \label{lem:mapping_equiv_with_limits}
  Let $\mathcal{C}$ be an $(\infty,1)$-category with finite limits. Then, $\Map(\Delta^n_t,\Span_\infty(\scr{C}))$ is the full sub-groupoid of $M_{\Delta^n_t}(\scr{C})$ spanned by the Cartesian functors $\sd^+(\Delta^n_t) \to \scr{C}$.
\end{lem}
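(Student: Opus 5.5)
We work with the explicit simplicial presentations from the proof strategy above. The groupoid $\Map(\Delta^n_t,\Span_\infty(\scr{C}))$ is presented by the simplicial set
\[
  [m]\mapsto \Hom_{\Set_\Delta^{\dec}}(\Delta^n_t\times \Delta^m_\sharp,\Span_\infty(\scr{C}))\cong \Hom^{\cart}_{\Set_\Delta}(\sd(\Delta^n_t\times\Delta^m_\sharp),\scr{C}).
\]
Here we use that $\Span_\infty(\scr{C})$ is fibrant (\zcref{thm:Span_infty_is_complicial}) and that $(-)\times\Delta^\bullet_\sharp$ is a cylinder/path resolution in the Cartesian model structure. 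Since every Cartesian functor sends antidiagonal morphisms to equivalences, this is isomorphic to $\Hom^{\cart,+}(\sd^+(\Delta^n_t\times\Delta^m_\sharp),\scr{C})$. By the last corollary, $\sd^+(\Delta^n_t\times(-)_\sharp)=\sd^K(\Delta^n_t\times(-)_\sharp)$, so this is exactly the simplicial subset of $M_{\Delta^n_t}(\scr{C})$ on those $m$-simplices which are Cartesian. The first step is therefore to show that this simplicial subset is a \emph{full} sub-Kan complex, i.e.\ a union of path components.

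For fullness it suffices to show that Cartesianness of $F:\sd^+(\Delta^n_t\times\Delta^m_\sharp)\to\scr{C}$ is detected on vertices. Concretely, $F$ is Cartesian if and only if each restriction $F|_{\sd(\Delta^n_t\times\{v\})}$ is Cartesian. The plan for this step is as follows.

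\begin{enumerate}
  \item By \zcref{cor:antidiagonal_in_t_sharp}, all marked morphisms of $\sd^+(\Delta^n_t\times\Delta^m_\sharp)$ are those lying over marked morphisms of $\sd^+(\Delta^n_t)$. In particular, morphisms moving only in the $\Delta^m$-coordinate, or collapsing $\vec v$-degeneracies, are inverted by $F$.
  \item Hence $F$ factors, up to the equivalence $Z$ of \zcref{prop:product_localization} together with the identification of markings in \zcref{lem:antidiagonal_on_flat+sharp}, through $\sd(\Delta^n)\times\sd(\Delta^m)_\sharp\to\scr{C}$. The $\sd(\Delta^m)$ factor is sent to equivalences, so $F$ is determined up to equivalence by its pullback $G$ along $\sd(\Delta^n)\times\{\{v\}\}$, for any $v$.
  \item A thin simplex $\sigma=(\vec u,\vec v)$ of $\Delta^n_t\times\Delta^m_\sharp$ has $\vec u$ thin in $\Delta^n_t$. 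Its Segal cubes, after applying $Z$, are identified with Segal cubes of $\vec u$ in $\sd(\Delta^n)$ that are degenerate in the $\vec v$-directions.
  \item By the cube lemma and degenerate-cube arguments (\zcref{lem:cubefacelemma,lem:degenerate}), each such cube is a limit cube if and only if the corresponding Segal cube of $G$ at $\vec u$ is.
\end{enumerate}

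Thus the Cartesian condition on $F$ is equivalent to $G$ being Cartesian, which is a condition on the vertex. Closure under equivalence then gives fullness.

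Finally, the weak equivalence of \zcref{cor:mapping_equiv_without_limits} identifies $M_{\Delta^n_t}(\scr{C})$ with the presentation of $\Map(\sd^+(\Delta^n_t),\scr{C})$, and it is compatible with restriction to vertices. Under this identification, the Cartesian components on both sides match. This gives the statement, and also the equivalence in \zcref{prop:mappingUP_prop}. Naturality in $\tDelta$ is clear, since every map used is natural.

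The main obstacle is the third and fourth points of step two: carefully matching the Segal cubes of the product simplices $(\vec u,\vec v)$ (with possibly degenerate $\vec u$ or $\vec v$) with Segal cubes of $\vec u$. This needs a case analysis on where the degeneracies of $\vec u$ and $\vec v$ occur. I would try first to reduce, via the localization $L_{n,m}$, to rectilinear paths, where the cube is visibly degenerate along all horizontal lacunae.
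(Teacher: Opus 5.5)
Your proposal is correct and follows the paper's own proof. You reduce to showing that Cartesianness of $F\colon \sd^+(\Delta^n_t\times\Delta^m_\sharp)\to\scr{C}$ is detected on the slices $\Delta^n_t\times\{v\}$, descend $F$ along the localization to $\sd^+(\Delta^n_t)$, and observe that Segal cubes of thin simplices $(\vec u,\vec v)$ project either to Segal cubes of $\vec u$ or to degenerate cubes.

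The step you flag as the main obstacle is immediate and needs no detour through $L_{n,m}$. The image in $\sd(\Delta^n)$ of a Segal cube of $(\vec u,\vec v)$ is $\sd(\vec u)$ applied to a Segal cube of $\Delta^k$. When $\vec u$ is non-degenerate, this is literally a Segal cube of $\vec u$. Otherwise some $u_i=u_{i+1}$; since $i$ and $i+1$ have opposite parity, each Segal sieve removes exactly one of them. That direction lies over identities, so the cube is automatically a limit.

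Separately, drop \emph{collapsing $\vec v$-degeneracies} from your first step. Removing a vertex at which only $\vec v$ repeats changes the $\sd(\Delta^n)$-image, so such a morphism is not inverted in general (for instance when $t=\flat$).
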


\begin{proof}
  This is equivalently the statement that a functor $f:\sd^+(\Delta^n_t\times \Delta^m_\sharp)\to \scr{C}$ is Cartesian if and only if its restriction to each $\sd^+(\Delta^n_t\times\{v\})$ is Cartesian. The \emph{only if} implication is immediate.

  To see the other implication, note that by localization, $f$ descends to a functor $\tilde{f}:\sd^+(\Delta^n_t)\to \scr{C}$, and that the image under the canonical map
  \[
    \begin{tikzcd}
      \sd^+(\Delta^n_t\times \Delta^m_\sharp) \arrow[r] & \sd^+(\Delta^n_t)
    \end{tikzcd}
  \]
  of a Segal cube in $\sd^+(\Delta^n_t\times \Delta^m_\sharp)$ is either a degenerate cube or a Segal cube in $\sd(\Delta^n_t)$ (this is because a simplex of $\Delta^n_t\times \Delta^m_\sharp$ is thin precisely if its image in $\Delta^n_t$ is). As such $f$ is Cartesian if and only if $\widetilde{f}$ is Cartesian. But if the composites
  \[
    \begin{tikzcd}
      \sd^+(\Delta^n_t\times \{v\})\arrow[r] & \sd^+(\Delta^n_t\times \Delta^m_\sharp) \arrow[r] & \sd^+(\Delta^n_t)\arrow[r,"\tilde{f}"] & \scr{C}
    \end{tikzcd}
  \]
  are Cartesian, then $\widetilde{f}$ is Cartesian completing the proof.
\end{proof}

We now have all the pieces in place for the proof of \zcref{prop:mappingUP_prop}.

\begin{proof}[Proof (of \zcref{prop:mappingUP_prop})] \label{proof:prop:mappingUP_prop}
  By \zcref{cor:mapping_equiv_without_limits}, we have an equivalence
  \[
    M_{\Delta^n_t}(\scr{C})\simeq \Map(\sd^+(\Delta^n_t),\scr{C})
  \]
  natural in $\Delta^n_t$. By \zcref{lem:mapping_equiv_with_limits}, restricting to the full subgroupoids on Cartesian vertices yields a natural equivalence
  \[
    \Map(\Delta^n_t,\Span_\infty(\scr{C}))\simeq \Map^{\cart}(\sd^+(\Delta^n_t),\scr{C}).
  \]
  Note that by the definition of the marking, the latter space is simply $\Map^{\cart}(\sd(\Delta^n_t),\scr{C})$.
\end{proof}

\section{The \texorpdfstring{$(\infty,\infty)}{(∞,∞)}$-category of functors into \texorpdfstring{$\Span_\infty(\mathcal{C})$}{Span_∞(𝒞)}}
\label{sec:mapping_categories_into_Span}

The main result of this paper is, in effect, an upgrade of the mapping space property of $\Span_\infty(\scr{C})$ proven in the previous section to accommodate transfors of all dimensions.

\subsection{Main results}
We briefly recall some notation and definitions on functor categories before stating the main results.

\begin{defn}
  Let $\Fun(\scr{C},\scr{D})$ denote the exponential objects in both $\Cat_\infty$ and $\Set_\Delta^{\dec}$ associated to the Cartesian product. This notational collision is justified by the fact that the two coincide when mapping from a cofibrant object to a fibrant object.
\end{defn}

\begin{defn}
  For $K_t\in \Set_\Delta^{\dec}$ and $\scr{C}\in \Cat_\infty$, we denote by
  \[
    \begin{tikzcd}
      \Fun^{\oplax}(\scr{C},-):&[-3em] \Cat_\infty \arrow[r] & \Cat_\infty
    \end{tikzcd}
  \]
  and
  \[
    \begin{tikzcd}
      \Fun^{\oplax}(K_t,-):&[-3em] \Set_\Delta^{\dec} \arrow[r] & \Set_\Delta^{\dec}
    \end{tikzcd}
  \]
  the right adjoints to $\scr{C}\otimes-$ and $K_t\otimes-$, respectively. Again, the notational collision is justified by the fact that the two agree on fibrant-cofibrant objects.
\end{defn}

With these definitions fixed, we can now state our main theorem in both of its forms.

\begin{thm}\label{thm:functor_UP_inftycat}
  Let $\scr{C}\in \Cat_1^{\lex}$ and $\scr{D}\in \Cat_\infty$. There is an equivalence
  \[
    \Fun^{\oplax}(\scr{D},\Span_\infty(\scr{C}))\simeq \Span_\infty(\Fun^{\cart}(\SD(\scr{D}),\scr{C}))
  \]
  natural in $\scr{D}$ and $\scr{C}$.
\end{thm}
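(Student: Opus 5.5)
By density (\zcref{prop:tDelta_dense}) it suffices to identify the functors represented by both sides on the test objects $\Delta^m_s\in\tDelta$, naturally in $\Delta^m_s$, $\scr{D}$ and $\scr{C}$. For the left side, the adjunction defining $\Fun^{\oplax}$ gives
\[
  \Map(\Delta^m_s,\Fun^{\oplax}(\scr{D},\Span_\infty(\scr{C})))\simeq \Map(\scr{D}\otimes\Delta^m_s,\Span_\infty(\scr{C})).
\]
By \zcref{thm:SpanUP_I_internal} this becomes $\Map^{\cart}(\SD(\scr{D}\otimes\Delta^m_s),\scr{C})$. For the right side, \zcref{thm:SpanUP_I_internal} applied to $\Fun^{\cart}(\SD(\scr{D}),\scr{C})$ gives $\Map^{\cart}(\sd(\Delta^m_s),\Fun^{\cart}(\SD(\scr{D}),\scr{C}))$. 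This requires knowing that $\Fun^{\cart}(\SD(\scr{D}),\scr{C})$ has finite limits, computed pointwise. That holds because Segal conditions are limit conditions, which commute with pointwise limits; it is also stable under restriction along the $\scr{D}$-variable. By currying, the right side becomes a space of functors $\SD(\scr{D})\times\sd(\Delta^m)\to\scr{C}$ satisfying the following conditions:
\begin{itemize}
  \item Cartesian in the $\SD(\scr{D})$ variable for each fixed object of $\sd(\Delta^m)$;
  \item Cartesian in the $\sd(\Delta^m_s)$ variable, where the limits are computed in $\Fun(\SD(\scr{D}),\scr{C})$, hence pointwise.
\end{itemize}
Both sides commute with colimits in $\scr{D}$: $\SD$ is a left Kan extension, $-\otimes\Delta^m_s$ preserves colimits, and we have \zcref{lem:cart_map_SD_lim_sd}. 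We therefore reduce further to $\scr{D}=\Delta^n_t$. It remains to produce an equivalence
\[
  \Map^{\cart}(\sd(\Delta^n_t\otimes\Delta^m_s),\scr{C})\simeq \Map^{\cart\times\cart}(\sd(\Delta^n_t)\times\sd(\Delta^m_s),\scr{C}),
\]
natural in $(\Delta^n_t,\Delta^m_s)\in\tDelta\times\tDelta$. Here $\Delta^n_t\otimes\Delta^m_s$ is modelled as a decorated simplicial set, using that $\SD$ agrees with $\sd$ on the Gray product of simplices; this follows from the homotopy colimit results of \zcref{subsec:model_cat_vs_infty_cat}, once $\Delta^n_t\otimes\Delta^m_s$ is written as a colimit over $\tDelta_{/\Delta^n_t\otimes\Delta^m_s}$.

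The equivalence comes from \zcref{prop:product_localization}. The map $Z_{n,m}\colon\sd(\Delta^n\times\Delta^m)\to\sd(\Delta^n)\times\sd(\Delta^m)$ is a localization at the corner and dual corner collapses. In $\sd(\Delta^n\otimes\Delta^m)$, however, only the corner collapses are antidiagonal (\zcref{cor:corner_collapse_antidiagonal}); the dual ones are not forced by the Gray decorations. So I would first show that the dual corner collapses are \emph{also} sent to equivalences by every Cartesian functor out of $\sd(\Delta^n_t\otimes\Delta^m_s)$. If this failed, I would instead work with $R^{n,m}$ via \zcref{thm:the_localization}. In that case the right-hand side must be re-expressed through the fully faithful adjoint $\ell_{n,m}$, which picks out the L-shaped paths, so that functors on $R^{n,m}$ are determined by their restriction to L-shaped paths together with limit conditions. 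Either way, precomposition with the localization identifies the underlying functor spaces. It then remains to match limit conditions: a functor $F\colon\sd(\Delta^n)\times\sd(\Delta^m)\to\scr{C}$ satisfies the Segal conditions of every thin simplex of the Gray product after pulling back along $Z_{n,m}$ (or $L_{n,m}$) if and only if it satisfies the separate Cartesian conditions of the right-hand side.

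This matching is the main obstacle. A thin $k$-simplex $(\vec{u},\vec{v})$ of $\Delta^n_t\otimes\Delta^m_s$ is a path in the grid, and its Segal cubes map under the localization to cubes in $\sd(\Delta^n)\times\sd(\Delta^m)$ that are neither purely horizontal nor purely vertical. I would handle this with the grid and lacuna calculus of \zcref{sec:fibrancy}. Using pasting moves and cube moves (\zcref{lem:pasting}, \zcref{const:cube_moves}, \zcref{lem:cubefacelemma}), and degenerate cubes (\zcref{ex:deg-cube}) along the collapsed directions, I would reduce each such Segal cube to an iterated composite of two kinds of cubes:
\begin{itemize}
  \item Segal cubes of a single simplex $\vec{u}$ with $\vec{v}$ held fixed, or vice versa;
  \item cubes along the L-shaped paths, which reduce to Segal conditions on the factors.
\end{itemize}
The argument would be an induction on the length of the path and on its number of corners, in the style of (*) and (**) in \zcref{prop:inner_comp_horn}. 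The parity bookkeeping of the thin condition in the Gray product, via \zcref{lem:marked_in_flat_gray} and its decorated version for $t,s=+$, is what should make the even and odd sieves line up. Conversely, every factorwise Segal condition appears among these conditions, namely as the Segal conditions of the simplices $\Delta^n_t\times\{v\}$ and $\{u\}\times\Delta^m_s$. Naturality in $\scr{C}$ is automatic, since all constructions are precomposition with functors independent of $\scr{C}$. Naturality in the simplices follows from the naturality of $Z_{n,m}$ and $L_{n,m}$, and \zcref{lem:equiv_gray_loc_det_by_nat} pins down any coherence ambiguity.
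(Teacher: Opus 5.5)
Your outer reduction is the same as the paper's. Density of $\tDelta$, the adjunction defining $\Fun^{\oplax}$, the limit formula over $\tDelta_{/\scr{D}}$, and pointwise finite limits in $\Fun^{\cart}(\SD(\scr{D}),\scr{C})$ reduce the theorem to \zcref{prop:functor_UP_prop}. (You do not need $\SD$ to agree with $\sd$ on $\Delta^n_t\otimes\Delta^m_s$ as categories. The paper only uses the equivalence of \emph{Cartesian} mapping spaces, i.e.\ \zcref{thm:SpanUP_I_internal} together with \zcref{thm:SpanUP_I_model}.)

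The gap is in the core step, and your first option is false: dual corner collapses are not antidiagonal. Already in $\Delta^1_\flat\otimes\Delta^1_\flat$, the only non-degenerate thin simplex is the upper-right triangle $\{(0,0),(0,1),(1,1)\}$ (\zcref{lem:marked_in_flat_gray}). So nothing constrains $f(\{(0,0),(1,0),(1,1)\})\to f(\{(0,0),(1,1)\})$. Moreover, for $F\colon\sd(\Delta^1)\times\sd(\Delta^1)\to\scr{C}$, the functor $F\circ Z_{1,1}$ satisfies the even Segal condition of that triangle if and only if
\[
  F(\{0,1\},\{0,1\})\simeq F(\{0\},\{0,1\})\times_{F(\{0\},\{1\})}F(\{0,1\},\{1\}),
\]
whereas the right-hand side of \zcref{prop:functor_UP_prop} imposes no condition at all for flat decorations.

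The same example shows your matching strategy cannot work. In the flat case there are no factorwise Segal conditions, yet there are many Gray-thin Segal conditions, so the latter cannot be ``reduced'' to the former. Pulled back along $Z_{n,m}$ they are genuinely extra constraints. ``Pulling back along $L_{n,m}$'' is not even defined, since $L_{n,m}$ lands in $R^{n,m}$, not in $\sd(\Delta^n)\times\sd(\Delta^m)$. For the same reason, in the $R^{n,m}$ route precomposition does \emph{not} identify the underlying functor spaces: $\Fun(\sd(\Delta^n)\times\sd(\Delta^m),\scr{C})$ is only a full subcategory of $\Fun(R^{n,m},\scr{C})$.

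The missing idea is the correct comparison map $F\mapsto L_{n,m}^*(\ell_{n,m})_*F$. This is right Kan extension along the fully faithful $\ell_{n,m}$, followed by precomposition with $L_{n,m}$. Pulling back along the right adjoint $\gamma_{n,m}$ would instead be left Kan extension, and just gives back $Z_{n,m}^*F$. In the example above, the displayed pullback is not a condition on $F$ but the formula for the value at the upper-right path.

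In general the key lemma is \zcref{prop:SegaliffKE}: for the flat decoration, the localized Segal conditions hold exactly when $g\colon R^{n,m}\to\scr{C}$ is right Kan extended along $\ell_{n,m}$. Its proof compares $\lim_I g(M_I)$ with the limit over the slice of $\sd(\Delta^n)\times\sd(\Delta^m)$ under $M$. It does so by decomposing along the minimal L-shaped paths $N_s$ under $M$ and finding a degenerate direction for each piece (\zcref{lem:constantdirectionSegalcube}). Nothing in your sketch supplies this.

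Only afterwards do the extra conditions for $t$ or $s=+$ enter, and they must be transported through the right Kan extension. The value at a non-L-shaped path is a limit over a slice, not a value of $F$, so lacuna moves on $F$ do not apply directly. The paper handles this with the hyper Segal cube and L-saturated decorations (\zcref{prop:transfer_L-saturated_decorations}), arriving at the mixed Segal conditions of \zcref{prop:mixed_segal_suffices_for_L_shaped}.

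Finally, your converse direction is incomplete. Cartesianness of $F(K,-)$ or $F(-,U)$ for non-singleton $K$ or $U$ is not the Segal condition of any $\{u\}\times\Delta^m_s$ or $\Delta^n_t\times\{v\}$. The paper extracts it from the mixed Segal conditions by induction on length via \zcref{lem:cubefacelemma} (\zcref{prop:one_sided_decorated_case}).
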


Model-categorically, this becomes

\begin{thm}\label{thm:functor_UP_modelcat}
  Let $K_t\in \Set_\Delta^{\dec}$ and let $\scr{D}$ be a $1$-category with finite limits. There is an equivalence
  \[
    \Fun^{\oplax}(K_t,\Span_\infty(\scr{C}))\simeq \Span_\infty(\Fun^{\cart}(\sd(K_t),\scr{C}))
  \]
  natural in $K_t$.
\end{thm}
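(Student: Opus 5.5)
The plan is to reduce, via density and the Yoneda lemma, to a statement about decorated simplices, and then to transport Cartesian conditions along the localization of \zcref{prop:product_localization}. Both sides of \zcref{thm:functor_UP_modelcat} are fibrant decorated simplicial sets. We have $\Fun^{\oplax}(K_t,\Span_\infty(\scr{C}))$ because $\Span_\infty(\scr{C})$ is complicial by \zcref{thm:Span_infty_is_complicial} and the Gray product is left Quillen. We have $\Span_\infty(\Fun^{\cart}(\sd(K_t),\scr{C}))$ because $\Fun^{\cart}(\sd(K_t),\scr{C})$ has finite limits, computed pointwise: Segal conditions are limit conditions and limits commute with limits. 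By \zcref{prop:tDelta_dense} it therefore suffices to produce equivalences of mapping spaces out of $\Delta^m_s\in\tDelta$, natural in $\Delta^m_s$. By adjunction and \zcref{thm:SpanUP_I_model}, the left side gives
\[
\Map(\Delta^m_s,\Fun^{\oplax}(K_t,\Span_\infty(\scr{C})))\simeq \Map^{\cart}(\sd(K_t\otimes\Delta^m_s),\scr{C}),
\]
and by \zcref{thm:SpanUP_I_model} applied to $\Fun^{\cart}(\sd(K_t),\scr{C})$ the right side gives $\Map^{\cart}(\sd(\Delta^m_s),\Fun^{\cart}(\sd(K_t),\scr{C}))$, i.e.\ functors $\sd(K_t)\times\sd(\Delta^m_s)\to\scr{C}$ that are Cartesian in each variable separately. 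Since $K_t$ is a homotopy colimit of its simplices (\zcref{cor:K_t_hocolim,cor:sd_hocolim}), and both $\sd(-\otimes\Delta^m_s)$ and the product side turn these colimits into limits of mapping spaces (using \zcref{lem:cart_map_SD_lim_sd}), we reduce further to $K_t=\Delta^n_t$. The $\Cat_\infty$ version, \zcref{thm:functor_UP_inftycat}, then follows by the same density argument used in deducing \zcref{thm:SpanUP_I_internal}, with naturality in $\scr{C}$ coming from \zcref{lem:cartmaps_pres_limits}.

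The core claim is the following. The comparison functor $\sd(\Delta^n\times\Delta^m)\to\sd(\Delta^n)\times\sd(\Delta^m)$, namely $Z_{n,m}$, induces an equivalence
\[
\Map^{\cart}(\sd(\Delta^n_t\otimes\Delta^m_s),\scr{C})\simeq \Map^{\cart\times\cart}(\sd(\Delta^n_t)\times\sd(\Delta^m_s),\scr{C}).
\]
First I would check that every morphism inverted by $Z_{n,m}$ is antidiagonal for the Gray decoration, so that restricting along $Z_{n,m}$ lands in marked functors. The corner collapses are handled by \zcref{cor:corner_collapse_antidiagonal}, but the dual corner collapses are only antidiagonal for the product (\zcref{rmk:antidiagonal_in_product_vs_gray}), not for the Gray product. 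This means the product alone is the wrong target, and I would have to correct for it. Let $Z_{n,m}$ factor through $R^{n,m}$ via \zcref{thm:the_localization}. I expect the dual corner Segal conditions, which are no longer automatic, to become the Segal conditions of the span direction in $\Span_\infty(\Fun^{\cart}(\sd(\Delta^n_t),\scr{C}))$, via the left adjoint $\ell_{n,m}$ onto L-shaped paths. Concretely, the Cartesian functors out of $R^{n,m}$ should be exactly the right Kan extensions along $\ell_{n,m}$ of functors on $\sd(\Delta^n)\times\sd(\Delta^m)$ that are Cartesian in each variable. This identifies the Gray-side space with maps $\sd(\Delta^m_s)\to\Fun^{\cart}(\sd(\Delta^n_t),\scr{C})$, where the extra limit conditions coming from non-L-shaped rectilinear paths are precisely the pointwise limits computing the fiber products in the functor category.

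The main obstacle is the combinatorial verification in this last step. For each thin simplex of $\Delta^n_t\otimes\Delta^m_s$, described by \zcref{lem:marked_in_flat_gray} together with the thin simplices inherited from $t$ and $s$, one must show that its odd and even Segal conditions are equivalent, modulo corner conditions, to the combination of the following: separate Cartesianness in each factor, and the conditions saying that the value on a rectilinear path is the iterated pullback along its L-shaped refinements. I would organize this by an induction on the number of turns of a path, drawing lacuna diagrams in the grid and applying the pasting move (\zcref{ex:pasting}) and cube moves (\zcref{const:cube_moves}), in the style of \zcref{prop:inner_comp_horn}. The first thing I would try is to show that every Segal cube of a grid simplex pastes, through corner-Segal mediating sieves, onto a cube whose vertices are L-shaped paths. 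Naturality in $\Delta^n_t$ and $\Delta^m_s$ is then inherited from the naturality of $L_{n,m}$ and $\ell_{n,m}$.
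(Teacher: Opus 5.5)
Your outer reduction matches the paper's: density, \zcref{thm:SpanUP_I_model}, reduction to $K_t=\Delta^n_t$, descending Cartesian functors along $L_{n,m}$, and right Kan extension along $\ell_{n,m}$. The claim you isolate is exactly \zcref{prop:auxialiary_functor_UP_prop}. But that proposition carries essentially all the content of the theorem, you only state it as an expectation, and the route you sketch for it would fail.

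First, once corner collapses are inverted, the corner-Segal cubes become degenerate in $R^{n,m}$ and carry no information. Second, pasting and cube moves only manipulate cubes of faces of a single simplex $\vec{\mathbf u}$ and their images under $L_{n,m}$. Right Kan extension along $\ell_{n,m}$ at $M=L_{n,m}(\vec{\mathbf u})$ instead sees the minimal L-shaped paths under $M$, which are typically not of that form. For $M=(0,0),(0,1),(1,1),(1,2)$ one of them is $(0,0),(0,1),(0,2)$, which ends at a vertex not on $M$.

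Your induction on the number of turns is not well-founded either. In $\Delta^3_\flat\otimes\Delta^3_\flat$ the thin path $(0,0),(0,1),(1,1),(2,1),(3,1),(3,2),(3,3)$ has two turns and its only upper-right corner at vertex $1$. So its Segal cube deleting $\{0,2,4,6\}$ is non-corner. Yet deleting vertex $4$ and applying $L_{3,3}$ gives $(0,0),(0,1),(1,1),(2,1),(2,2),(3,2),(3,3)$, which spans the same grid and has four turns. A workable induction follows the order of $R^{n,m}$, in which $M\prec M_I$ for every nonempty $I$.

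What is missing is that right Kan extendedness at $M$ is itself a cube condition. It must be compared with the localized Segal cube through a cube of iterated limits rather than a cube of paths. Put $P=\sd(\Delta^n)\times\sd(\Delta^m)$, let $N_1,\dots,N_k$ be the minimal L-shaped paths under $M$, set $U_T=\bigcap_{t\in T}P_{N_t/}$, and let $\{M\to M_j\}_{j\in J}$ be the arms of the localized Segal cube. Since $P_{M_I/}\simeq\colim_{T}(U_T)_{M_I/}$ (\zcref{lem:colim_of_USs}), you get a cube $C$ on $\Power(\{1,\dots,k\})\times\Power(J)$, $(T,I)\mapsto\lim_{(U_T)_{M_I/}}g$, with $C(\varnothing,\varnothing)=g(M)$. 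It has three relevant kinds of faces:
\begin{itemize}
\item the faces $C(-,I)$ for $I\neq\varnothing$ are limit cubes;
\item the face $C(-,\varnothing)$ is a limit cube iff $g$ is right Kan extended at $M$;
\item the face $C(\varnothing,-)$ is the localized Segal cube once $g$ is Kan extended at the $M_I$.
\end{itemize}

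The geometric input is \zcref{lem:constantdirectionSegalcube}. A Segal cube deletes one of any two consecutive vertices, hence an endpoint of a horizontal step and an endpoint of a later vertical step of $\vec{\mathbf u}$. For each $T\neq\varnothing$, one of these deletions leaves every $(U_T)_{M_I/}$ unchanged, so $C(T,-)$ is constant in that direction. Applying \zcref{lem:cubefacelemma} to both splittings of the index set then yields the equivalence.

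For $+$-decorated factors you further need three things:
\begin{itemize}
\item the explicit thin simplices of the decorated Gray products (\zcref{prop:decorated_one_sided,prop:thin_simplices_in_fully_decorated_case});
\item L-saturation and the sorting of minimals into irrelevant, Segal and special ones, which reduces everything to mixed Segal cubes (\zcref{prop:transfer_L-saturated_decorations});
\item the induction identifying mixed Segal conditions with Cartesianness in each variable (\zcref{prop:one_sided_decorated_case}).
\end{itemize}
Dual corner collapses play no role in this. They are not inverted by $L_{n,m}$, and the Segal conditions in the span direction come from those extra thin simplices.
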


As in the previous section, both of these theorems follow from the same proposition.

\begin{prop}\label{prop:functor_UP_prop}
  There is an equivalence
  \[
    \Map^{\cart}(\sd(\Delta^n_t\otimes \Delta^m_s),\scr{C})\simeq \Map^{\cart}(\sd(\Delta^n_t), \Fun^{\cart}(\sd(\Delta^m_s),\scr{C})),
  \]
  where $t, s \in \{+,\flat\}$, natural in $\tDelta\times \tDelta$.
\end{prop}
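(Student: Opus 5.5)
Both sides will be compared by passing through the product $\Delta^n\times\Delta^m$ and the key localization of \zcref{prop:product_localization}. As simplicial sets, $\sd(\Delta^n_t\otimes\Delta^m_s)=\sd(\Delta^n\times\Delta^m)$, because the Gray product only changes the decoration. Likewise $\Map(\sd(\Delta^n),\Fun(\sd(\Delta^m),\scr{C}))\simeq\Map(\sd(\Delta^n)\times\sd(\Delta^m),\scr{C})$. Restriction along $Z_{n,m}\colon\sd(\Delta^n\times\Delta^m)\to\sd(\Delta^n)\times\sd(\Delta^m)$ therefore gives a natural map from the right-hand side to $\Map(\sd(\Delta^n\times\Delta^m),\scr{C})$. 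By \zcref{prop:product_localization} this map is fully faithful, with essential image the functors inverting corner collapses and dual corner collapses. Naturality in $\tDelta\times\tDelta$ is clear, since $Z_{n,m}$ is natural.

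The proof then reduces to matching Cartesian conditions, which I would do in two steps.

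\textbf{Step 1.} Show that every Cartesian $f\colon\sd(\Delta^n_t\otimes\Delta^m_s)\to\scr{C}$ inverts both kinds of collapse, so that it factors through $Z_{n,m}$. Corner collapses are handled by \zcref{cor:corner_collapse_antidiagonal}. Dual corner collapses are not antidiagonal in the Gray product, so for them I would argue directly. A lower-left corner $(u,v)\to(u+1,v)\to(u+1,v+1)$ together with the diagonal gives a $2$-simplex that is not thin in $\Delta^n\otimes\Delta^m$. However, the relevant thin simplices (those with a later upper-right corner) combined with \zcref{rmk:k-vanishing_converse} and pasting/cube moves should force the needed Segal condition. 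If this direct route fails, the fallback is to transport through $R^{n,m}$ via \zcref{thm:the_localization}, so that only dual corners need treatment, and then use the coreflection $\ell_{n,m}$ onto L-shaped paths. This is the step I would check first, and I would test it on the case $n=m=1$.

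\textbf{Step 2.} Given $\tilde f\colon\sd(\Delta^n)\times\sd(\Delta^m)\to\scr{C}$, show that $\tilde f\circ Z_{n,m}$ is Cartesian for $\Delta^n_t\otimes\Delta^m_s$ if and only if three conditions hold:
\begin{enumerate}
\item $\tilde f(-,V)$ is Cartesian on $\Delta^n_t$ for each $V$;
\item $\tilde f(U,-)$ is Cartesian on $\Delta^m_s$ for each $U$;
\item $\tilde f$ is Cartesian in the $\Delta^n_t$-variable as a functor into $\Fun^{\cart}(\sd(\Delta^m_s),\scr{C})$.
\end{enumerate}
The last condition is exactly the requirement that the corresponding Segal cubes are limits in the functor category. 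Since $\Fun^{\cart}(\sd(\Delta^m_s),\scr{C})$ is closed under finite limits computed pointwise (by \zcref{lem:limits_of_cats_with_limits}), it amounts to Segal conditions in the first variable at every $V\in\sd(\Delta^m)$.

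To establish the equivalence in Step 2, I would use \zcref{lem:marked_in_flat_gray} to describe the thin simplices of $\Delta^n_t\otimes\Delta^m_s$. A thin simplex either has an upper-right corner, or has thin projections by the parity condition. Its image under $Z_{n,m}$ is then a product of faces $(U,V)$. The two Segal cubes of such a simplex map, after the collapses, to cubes in $\sd(\Delta^n)\times\sd(\Delta^m)$ that are degenerate in some directions, so \zcref{lem:degenerate} and \zcref{lem:cubefacelemma} reduce them to Segal cubes in a single variable. I would organize this with lacuna diagrams of grid paths, splitting a path at its last $\vec u$-step.

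\textbf{Main obstacle.} I expect the hard part to be the converse direction of Step 2: checking that the single-variable conditions imply every Segal condition of every thin simplex in the Gray product. This requires a systematic induction on path length through cube and pasting moves, and it is where I anticipate needing a more elaborate graphical calculus on the grid.
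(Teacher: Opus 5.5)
You have a genuine gap, and it already appears in the test case you proposed, $n=m=1$, $t=s=\flat$.

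\textbf{Step 1 is false.} By \zcref{lem:marked_in_flat_gray}, the only non-degenerate thin $2$-simplex of $\Delta^1_\flat\otimes\Delta^1_\flat$ is the upper-right path $(0,0),(0,1),(1,1)$. The L-shaped simplex $(0,0),(1,0),(1,1)$ is not thin, so its value is unconstrained. Consider the functor $f\colon\sd(\Delta^1\times\Delta^1)\to\Set$ that sends the L-shaped $2$-simplex to $\varnothing$ and everything else to a point. It is Cartesian, but it does not invert the dual corner collapse $\{(0,0),(1,0),(1,1)\}\to\{(0,0),(1,1)\}$. Dual corner collapses are antidiagonal for the Cartesian product $\Delta^n\times\Delta^m$, which is the setting of \zcref{prop:product_localization}, but not for the Gray product. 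So Cartesian functors on $\sd(\Delta^n_t\otimes\Delta^m_s)$ factor through $R^{n,m}$, not through $Z_{n,m}$.

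\textbf{Step 2 fails in the other direction.} Take $\tilde f\colon\sd(\Delta^1)\times\sd(\Delta^1)\to\Set$ with $\tilde f(\{0,1\},\{0,1\})=\varnothing$ and every other value a point. Your conditions (1)--(3) are vacuous here. Yet $\tilde f\circ Z_{1,1}$ is not Cartesian: $Z_{1,1}$ carries the even Segal square of the thin triangle to the inner mixed square $\tilde f(\{0,1\},\{0,1\})\to\tilde f(\{0\},\{0,1\})\times_{\tilde f(\{0\},\{1\})}\tilde f(\{0,1\},\{1\})$, which is not a pullback. So restriction along $Z_{n,m}$ neither lands in nor exhausts the Cartesian functors, and no cube or pasting moves can repair this. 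Your fallback through $R^{n,m}$ uses the right intermediate object, but it still aims to invert dual corners, which fails for the same reason.

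\textbf{The missing idea is the comparison map.} Since $Z_{n,m}=\gamma_{n,m}\circ L_{n,m}$ and $\gamma_{n,m}$ is right adjoint to the fully faithful $\ell_{n,m}\colon\sd(\Delta^n)\times\sd(\Delta^m)\hookrightarrow R^{n,m}$, precomposing with $Z_{n,m}$ is $L_{n,m}^*$ composed with the \emph{left} Kan extension along $\ell_{n,m}$. The paper instead uses the \emph{right} Kan extension $(\ell_{n,m})_*$ followed by $L_{n,m}^*$. This only needs \zcref{thm:the_localization}, i.e.\ inverting corner collapses.

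The core of the argument is \zcref{prop:SegaliffKE}: a functor $R^{n,m}\to\scr{C}$ sends the localized non-corner Segal cubes to limit cubes if and only if it is right Kan extended along $\ell_{n,m}$. That proposition is proved by decomposing the slices under a path into pieces indexed by its minimal L-shaped paths. In the example above, the right Kan extension assigns to the upper-right path exactly the pullback above. In fact, $L_{1,1}^*(\ell_{1,1})_*\tilde f$ is precisely the Cartesian functor $f$ from the Step 1 counterexample, so the correct equivalence matches your two counterexamples to each other.

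The decorated cases then require transporting the extra Segal conditions through this equivalence. This produces mixed Segal conditions on $\sd(\Delta^n)\times\sd(\Delta^m)$, handled by the L-saturation argument of \zcref{prop:transfer_L-saturated_decorations} and \zcref{prop:one_sided_decorated_case,prop:fully_decorated_case}. None of that machinery is reachable from your Step 2, because your reduction targets the wrong subspace.
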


As before, we first briefly derive the theorems from the proposition, and then devote the remainder of the section to the (more technical) proof of the proposition. As the proof of \zcref{thm:functor_UP_modelcat} is conceptually identical, we only present the proof of \zcref{thm:functor_UP_inftycat}.

\begin{proof}[Proof (of \zcref{thm:functor_UP_inftycat})]
  We write a string of natural equivalences
  \begin{align*}
    \Map\left(\Delta^n_t,\Fun^{\oplax}(\scr{D},\Span_\infty(\scr{C}))\right)
    & \simeq \Map\left(\Delta^n_t,\Fun^{\oplax}\left(\colim_{\tDelta_{/\scr{D}}}\Oriental^m_s,\Span_\infty(\scr{C})\right)\right) \\
    & \simeq \Map\left(\Delta^n_t,\lim_{\tDelta_{/\scr{D}}^\op}\Fun^{\oplax}\left(\Oriental^m_s,\Span_\infty(\scr{C})\right)\right) \\
    & \simeq \lim_{\tDelta_{/\scr{D}}^\op}\Map\left(\Delta^n_t,\Fun^{\oplax}\left(\Oriental^m_s,\Span_\infty(\scr{C})\right)\right) \\
    & \simeq \lim_{\tDelta_{/\scr{D}}^\op}\Map\left(\Delta^n_t,\Fun^{\oplax}\left(\Delta^m_s,\Span_\infty(\scr{C})\right)\right) \\
    & \simeq \lim_{\tDelta_{/\scr{D}}^\op} \Map(\Delta^n_t \otimes \Delta^m_s, \Span_\infty(\scr{C})) \\
    & \simeq \lim_{\tDelta_{/\scr{D}}^\op}\Map^{\cart}\left(\sd(\Delta^n_t\otimes \Delta^m_s),\scr{C}\right) \tag{\zcref{prop:mappingUP_prop}} \\
    & \simeq \lim_{\tDelta_{/\scr{D}}^\op}\Map^{\cart}\left(\sd(\Delta^n_t), \Fun^{\cart}\left(\sd(\Delta^m_s),\scr{C}\right)\right) \tag{\zcref{prop:functor_UP_prop}}\\
    & \simeq \Map^{\cart}\left(\sd(\Delta^n_t),\lim_{\tDelta_{/\scr{D}}^\op} \Fun^{\cart}\left(\sd(\Delta^m_s),\scr{C}\right)\right)\\
    & \simeq \Map^{\cart}\left(\sd(\Delta^n_t), \Fun^{\cart}\left(\colim_{\tDelta_{/\scr{D}}}\sd(\Delta^m_s),\scr{C}\right)\right)\\
    & \simeq \Map^{\cart}\left(\sd(\Delta^n_t), \Fun^{\cart}\left(\SD(\scr{D}),\scr{C}\right)\right)
  \end{align*}
  The theorem then follows by the density of $\tDelta$ in $\Cat_\infty$, \zcref{prop:tDelta_dense}.
\end{proof}

\subsection{The strategy}

The strategy for proving \zcref{prop:functor_UP_prop} can be summarized in the following commutative diagram, where our aim is to construct the vertical arrow on the left and show that it is an equivalence. The idea is to embed both the source and target to a larger functor category, and then identify the essential images. Below, we explain the maps in the diagram.

\noindent\adjustbox{scale=0.965,center}{%
  \begin{tikzcd}[column sep=small]
    \Map^\cart(\sd(\Delta^n_t), \Fun^\cart(\sd(\Delta^m_s), \scr{C})) \ar[dashrightarrow]{d} & \ar{l}[swap]{\simeq} \Map^{\cart,\cart}(\sd(\Delta^n_t) \times \sd(\Delta^m_s), \scr{C}) \ar[hookrightarrow]{r} \ar{d}{\text{\zcref{prop:auxialiary_functor_UP_prop}}}[swap]{\simeq} & \Map(\sd(\Delta^n_t) \times \sd(\Delta^m_s), \scr{C}) \ar[hookrightarrow]{d}{(\ell_{n,m})_*} \\
    \Map^\cart(\sd(\Delta^n_t \otimes \Delta^m_s), \scr{C}) & \ar{l}[swap]{L_{n,m}^*}{\simeq}  \Map^\lcart(R^{n,m}_{t,s}, \scr{C}) \ar[hookrightarrow]{r} & \Map(R^{n,m}, \scr{C})
  \end{tikzcd}
}

\subsubsection{Removing the Cartesian conditions} We start with the fully faithful functor $(\ell_{n,m})_*$ on the right.

Recall the proof of \zcref{prop:product_localization}, which displays $\sd(\Delta^n)\times \sd(\Delta^m)$ as a coreflective colocalization of $R^{n,m}$, where, in particular, we have a fully faithful left adjoint $\ell: \sd(\Delta^n) \times \sd(\Delta^m) \to R^{n,m}$. We wish to consider the \emph{right} adjoint $(\ell_{n,m})_*$ of $\ell_{n,m}^*$ given by right Kan extension
\[
  \begin{tikzcd}
    \ell_{n,m}^\ast:&[-3em] \Fun(R^{n,m},\scr{C}) \arrow[r,shift left] & \Fun(\sd(\Delta^n)\times \sd(\Delta^m),\scr{C})\arrow[l,shift left, hookrightarrow]  &[-3em]: (\ell_{n,m})_\ast.
  \end{tikzcd}
\]
Note that $\ell_{n,m}^*$ is fully faithful since $\ell_{n,m}$ is. Thus, $(\ell_{n,m})_*$ naturally identifies $\Fun(\sd(\Delta^n)\times \sd(\Delta^m),\scr{C})$ with the full subcategory of $\Fun(R^{n,m},\scr{C})$ spanned by the functors given by right Kan extension.

\begin{rmk}
  The adjunction of posets induces an adjunction $\gamma_{n,m}^* \dashv \ell_{n,m}^*$ on functor categories
  \[
    \begin{tikzcd}
      \gamma_{n,m}^\ast:&[-3em] \Fun(\sd(\Delta^n)\times \sd(\Delta^m),\scr{C})\arrow[shift left, hookrightarrow]{r} & \Fun(R^{n,m},\scr{C})\arrow[l,shift left] &[-3em]: \ell_{n,m}^\ast,
    \end{tikzcd}
  \]
  where $\gamma_{n,m}^*$ can also be defined as the \emph{left} Kan extension $(\ell_{n,m})_!$, which is also fully faithful. However, this is \emph{not} the one we consider.
\end{rmk}

\subsubsection{The localized Cartesian condition} We move on to the middle column.

\begin{defn}
  Let $\vec{\bf{u}}\in \sd(\Delta^n_t \otimes \Delta^m_s)$ be a marked simplex, and $C\to \sd(\Delta^n\otimes \Delta^m)$ a non-corner Segal cube diagram associated to this simplex. We denote the composite of this cube with $L_{n,m}$ by $f_{\vec{\bf{u}}}$ and call it a \emph{localized Segal cube}. We will denote by $\Map^{\lcart}(R^{n,m}_{t,s},\scr{C})\subset \Map(R^{n,m},\scr{C})$ the full subcategory spanned by the functors which send the localized Segal cubes to limit cubes and call such a functor \emph{localized Cartesian}.
\end{defn}

By definition, we have the two horizontal embeddings on the right and the top left horizontal equivalence, where the superscript in the top middle term indicates the Cartesian condition in each variable separately. The bottom left horizontal equivalence follows from \zcref{lem:corner_Segal_vs_corner_collapse}.\zcref{item:lem_corner_Segal_vs_corner_collapse:equivalence} and the observation that the corner collapses are marked in $\sd^+(\Delta^n_t\otimes \Delta^m_s)$ regardless of the decorations $t$ and $s$.

Altogether, it remains to establish the equivalence represented by the middle vertical arrow, which is the content of the following proposition.

\begin{prop}\label{prop:auxialiary_functor_UP_prop}
  A functor $f:R^{n,m}_{t,s}\to \scr{C}$ sends localized Segal cubes to limit cubes if and only if $f$ is right Kan extended from a functor $f: \sd(\Delta^n_t)\times \sd(\Delta^m_s) \to \scr{C}$ along $\ell_{n,m}$ which is Cartesian in each variable separately.
\end{prop}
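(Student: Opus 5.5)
We split the equivalence into two parts. First we identify which localized Segal cubes, when $f$ is right Kan extended along $\ell_{n,m}$, say nothing beyond the Cartesian conditions on $\sd(\Delta^n_t)$ and $\sd(\Delta^m_s)$. Second we show the remaining localized Segal cubes force $f$ to be right Kan extended.

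\emph{Right Kan extensions.} Since $\ell_{n,m}$ is fully faithful with right adjoint $\gamma_{n,m}$, we have $(\ell_{n,m})_*g=g\circ\gamma_{n,m}$. So $f$ is right Kan extended from $\sd(\Delta^n)\times\sd(\Delta^m)$ exactly when it inverts the counit maps $\ell_{n,m}\gamma_{n,m}(p)\to p$ (or unit, depending on orientation). These maps compare a rectilinear path $p$ with the L-shaped path having the same underlying vertical and horizontal simplices. We factor each such map into elementary steps. Each step replaces a horizontal-then-vertical configuration $(u,v)\to(u,v')\to(u',v')$ by the vertical-then-horizontal one $(u,v)\to(u',v)\to(u',v')$, moving the path towards the L-shape.

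\emph{Inverted counits are forced by localized Segal cubes.} For the ``only if'' direction we show each elementary step is inverted by any localized Cartesian $f$. Take a marked simplex $\vec{\mathbf u}$ of $\Delta^n_\flat\otimes\Delta^m_\flat$ with a lower-left corner; these are thin by the dual form of \zcref{lem:marked_in_flat_gray}, which is thin in the Gray product. Its non-corner Segal cube, after applying $L_{n,m}$, becomes degenerate in all but one direction. By \zcref{ex:deg-cube}, the limit condition then says exactly that the corresponding swap map is an equivalence. This is the analogue of \zcref{lem:k-vanishing}, with the proof of \zcref{rmk:k-vanishing_converse} run inside $R^{n,m}$. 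An induction on the number of corners, using the cube moves of \zcref{const:cube_moves}, should give all the elementary steps, hence all counits, so $f=g\circ\gamma_{n,m}$ with $g=\ell_{n,m}^*f$.

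\emph{Remaining localized cubes versus separate Cartesianness.} Now assume $f=g\circ\gamma_{n,m}$. A localized Segal cube maps under $\gamma_{n,m}$ into $\sd(\Delta^n)\times\sd(\Delta^m)$. We would sort the thin simplices $\vec{\mathbf u}$ of $\Delta^n_t\otimes\Delta^m_s$ into three kinds, as follows.
\begin{enumerate}
\item Simplices lying in a slice $\Delta^n_t\times\{v\}$ or $\{u\}\times\Delta^m_s$. Their Segal cubes map isomorphically to Segal cubes of $g(-,v)$ or $g(u,-)$. Since every such slice simplex occurs, the conditions on these cubes are precisely separate Cartesianness.
\item Simplices with corners but no genuinely diagonal thin part. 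Their cubes become degenerate after $\gamma$, and are automatically limit cubes by \zcref{lem:degenerate}.
\item Simplices thin because $\sqcup^1_{p,q}\vec u$ or $\sqcup^2_{p,q}\vec v$ is thin. Using pasting and cube moves (\zcref{ex:pasting}, \zcref{lem:cubefacelemma}), we reduce their cubes to products of the slice conditions.
\end{enumerate}
This gives both directions: in the ``if'' direction, separate Cartesianness of $g$ makes every localized cube a limit cube; in the ``only if'' direction, we restrict to the slice cubes. The paper's announced grid-line calculus is the natural tool for keeping track of these reductions.

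\emph{Main obstacle.} The hard step is the third kind of simplex in general position: a path that mixes diagonal and rectilinear segments, with thinness coming from both coordinates. The difficulty is to exhibit explicit mediating sieves whose locality is already known, so that the cube moves go through. I would first try induction on the number of non-rectilinear segments of $\vec{\mathbf u}$, inserting one corner at a time with a pasting move, then run a secondary induction on the length of the path inside the minimal subgrid.
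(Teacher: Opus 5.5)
Your first step is false, and the rest of the plan depends on it. Since $\ell_{n,m}\dashv\gamma_{n,m}$, precomposition gives $\gamma_{n,m}^*\dashv\ell_{n,m}^*$. So $g\circ\gamma_{n,m}$ is the \emph{left} Kan extension $(\ell_{n,m})_!g$; the paper flags exactly this and says it is not the extension meant in the statement. The right Kan extension is $((\ell_{n,m})_*g)(M)=\lim_{V\in\SD_{M/}}g(V)$, where $\SD=\sd(\Delta^n)\times\sd(\Delta^m)$ and $\SD_{M/}=\{V : M\preceq\ell_{n,m}(V)\}$. This slice typically has several minimal L-shaped paths $N_1,\dots,N_k$ and no initial object, so it is not evaluation at $\gamma_{n,m}(M)$.

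Consequently, what you set out to prove in the ``only if'' direction---that localized Cartesian functors invert the counits $\ell_{n,m}\gamma_{n,m}(p)\to p$---is false. Take $n=m=1$ with minimal decorations. The only nondegenerate thin simplex is $(0,0),(0,1),(1,1)$. Its non-corner localized cube says only that $f(U)\simeq f\bigl((0,0)(0,1)\bigr)\times_{f(0,1)}f\bigl((0,1)(1,1)\bigr)$ for $U=(0,0)\to(0,1)\to(1,1)$. The map $f(L)\to f(U)$ out of the L-shaped path $L=\ell_{1,1}\gamma_{1,1}(U)$ records the $2$-cell of the oplax square and is unconstrained: in $\Set$, take $f(L)=\varnothing$ and a point everywhere else. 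This $f$ is right Kan extended along $\ell_{1,1}$ but does not invert the counit.

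Your supporting claim is also wrong. By \zcref{lem:marked_in_flat_gray}, a simplex of $\Delta^n_\flat\otimes\Delta^m_\flat$ is thin only if some horizontal step precedes some vertical step. A vertical-then-horizontal corner therefore does not make a simplex thin, and nondegenerate L-shaped paths are not thin there. The dual statement concerns the opposite Gray product (or the Cartesian product), not this one.

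In the minimal case (\zcref{prop:SegaliffKE}), the argument needs instead an induction on the subgrid. For $M$ outside the image of $\ell_{n,m}$, one shows that the non-corner localized cube with arms $M\to M_i$ has limit $\lim_{\SD_{M/}}g$, as follows.
\begin{enumerate}
\item The joins $M_I$ lie in proper subgrids, so $g(M_I)\simeq\lim_{\SD_{M_I/}}g$ by induction.
\item $\SD_{M_I/}$ is the colimit of the pieces $(U_S)_{M_I/}$ cut out by joins of the minimals (\zcref{lem:colim_of_USs}).
\item For each $S$, the cube $I\mapsto(U_S)_{M_I/}$ is constant in some direction (\zcref{lem:constantdirectionSegalcube}).
\end{enumerate}
After exchanging limits, this gives $\lim_I g(M_I)\simeq\lim_S\lim_{U_S}g\simeq\lim_{\SD_{M/}}g$.

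For the $+$ decorations, the conditions from the extra thin simplices must then be pushed through the resulting hyper Segal cube. This splits the minimals into irrelevant, Segal and special ones, and is where L-saturation is used; the outcome is mixed Segal conditions on $\SD$. These match separate Cartesianness only after an induction on the length of $J$ for $g(J,-)$, using \zcref{lem:cubefacelemma}. Your item (i) would at best give Cartesianness of $g(J,-)$ for vertices $J=\{u\}$. Separate Cartesianness requires it for every $J\in\sd(\Delta^n)$, and symmetrically in the other variable.
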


We will first prove \zcref{prop:functor_UP_prop} (or equivalently, \zcref{prop:auxialiary_functor_UP_prop}) in the case where the decorations on the two simplices are minimal in \zcref{prop:unmarkedcase}, and then establish the remaining cases in \zcref{prop:one_sided_decorated_case,prop:fully_decorated_case} by tracing the Segal conditions through the equivalence established in the minimally decorated case. For the reader's convenience, we have visualized in \zcref{fig:zigzag_11flat} a low-dimensional example of the procedure.

\begin{figure}[htb!]
  \begin{subfigure}{1\textwidth}
    \centering
    \begin{tikzpicture}[scale=1.7]
      \path (0,1.5) node {$\sd(\Delta^1_\flat\otimes\Delta^1_\flat)$};
      \coordinate (v1) at  (-1,1);
      \coordinate (v2) at  (1,1);
      \coordinate (v3) at (-1,-1);
      \coordinate (v4) at (1,-1);

      \coordinate (m12) at ($0.5*(v1) + 0.5*(v2)$);
      \coordinate (m13) at ($0.5*(v1) + 0.5*(v3)$);
      \coordinate (m14) at ($0.5*(v1) + 0.5*(v4)$);
      \coordinate (m24) at ($0.5*(v2) + 0.5*(v4)$);
      \coordinate (m34) at ($0.5*(v3) + 0.5*(v4)$);

      \coordinate (m124) at ($0.333*(v1) + 0.333*(v2) + 0.333*(v4)$);
      \coordinate (m134) at ($0.333*(v1) + 0.333*(v3) + 0.333*(v4)$);

      \foreach \x in {1,2,3,4}{
        \path (v\x) node (v\x) {$\bullet$};
      };

      \foreach \x/\y in {1/2,1/3,1/4,2/4,3/4}{
        \path (m\x\y) node (m\x\y) {$\bullet$};
      };

      \foreach \x/\y/\z in {1/2/4,1/3/4}{
        \path (m\x\y\z) node (m\x\y\z) {$\bullet$};
      };

      \foreach \x/\y in {1/3,1/4,3/4}{
        \draw[->] (m\x\y) to (v\x);
        \draw[->] (m\x\y) to (v\y);
      };

      \draw[->] (m12) to (v1);
      \draw[blue,->] (m12) to (v2);

      \draw[->] (m24) to (v4);
      \draw[blue,->] (m24) to (v2);

      \draw[blue,->] (m124) to (m12);
      \draw[red,->] (m124) to (m14);
      \draw[blue,->] (m124) to (m24);

      \foreach \x/\y/\z in {1/3/4}{
        \draw[->] (m\x\y\z) to (m\x\y);
        \draw[->] (m\x\y\z) to (m\x\z);
        \draw[->] (m\x\y\z) to (m\y\z);
      }
      \draw[->] (1.3,0) to (1.7,0);

      \begin{scope}[xshift=3cm]
        \path (0,1.5) node {$R^{1,1}$};
        \coordinate (v1) at  (-1,1);
        \coordinate (v2) at  (1,1);
        \coordinate (v3) at (-1,-1);
        \coordinate (v4) at (1,-1);

        \coordinate (m12) at ($0.5*(v1) + 0.5*(v2)$);
        \coordinate (m13) at ($0.5*(v1) + 0.5*(v3)$);
        \coordinate (m14) at ($0.5*(v1) + 0.5*(v4)$);
        \coordinate (m24) at ($0.5*(v2) + 0.5*(v4)$);
        \coordinate (m34) at ($0.5*(v3) + 0.5*(v4)$);

        \coordinate (m124) at ($0.333*(v1) + 0.333*(v2) + 0.333*(v4)$);
        \coordinate (m134) at ($0.333*(v1) + 0.333*(v3) + 0.333*(v4)$);

        \foreach \x in {1,2,3,4}{
          \path (v\x) node (v\x) {$\bullet$};
        };

        \foreach \x/\y in {1/2,1/3,1/4,2/4,3/4}{
          \path (m\x\y) node (m\x\y) {$\bullet$};
        };

        \foreach \x/\y/\z in {1/3/4}{
          \path (m\x\y\z) node (m\x\y\z) {$\bullet$};
        };

        \foreach \x/\y in {1/3,1/4,3/4}{
          \draw[->] (m\x\y) to (v\x);
          \draw[->] (m\x\y) to (v\y);
        };

        \draw[->] (m12) to (v1);
        \draw[blue,->] (m12) to (v2);

        \draw[->] (m24) to (v4);
        \draw[blue,->] (m24) to (v2);

        \foreach \x/\y/\z in {1/3/4}{
          \draw[->] (m\x\y\z) to (m\x\y);
          \draw[->] (m\x\y\z) to (m\x\z);
          \draw[->] (m\x\y\z) to (m\y\z);
        }
        \draw[blue,->] (m14) to (m12);
        \draw[blue,->] (m14) to (m24);
      \end{scope}

    \end{tikzpicture}
    \caption{The localization map $L_{1,1}$. The edge marked in red is the antidiagonal morphism sent to an equivalence by $L_{1,1}$ and the blue square to the left is the remaining Segal cube associated to the lone decorated $2$-simplex. The blue square on the right is the localized Segal cube.} \label{subfig:zigzag_11flat_L}
  \end{subfigure}

  \begin{subfigure}{1\textwidth}
    \centering
    \begin{tikzpicture}[scale=1.7]
      \begin{scope}[xshift=3cm]
        \path (0,1.5) node {$R^{1,1}$};
        \coordinate (v1) at  (-1,1);
        \coordinate (v2) at  (1,1);
        \coordinate (v3) at (-1,-1);
        \coordinate (v4) at (1,-1);

        \coordinate (m12) at ($0.5*(v1) + 0.5*(v2)$);
        \coordinate (m13) at ($0.5*(v1) + 0.5*(v3)$);
        \coordinate (m14) at ($0.5*(v1) + 0.5*(v4)$);
        \coordinate (m24) at ($0.5*(v2) + 0.5*(v4)$);
        \coordinate (m34) at ($0.5*(v3) + 0.5*(v4)$);

        \coordinate (m124) at ($0.333*(v1) + 0.333*(v2) + 0.333*(v4)$);
        \coordinate (m134) at ($0.333*(v1) + 0.333*(v3) + 0.333*(v4)$);

        \foreach \x in {1,2,3,4}{
          \path (v\x) node (v\x) {$\bullet$};
        };

        \foreach \x/\y in {1/2,1/3,1/4,2/4,3/4}{
          \path (m\x\y) node (m\x\y) {$\bullet$};
        };

        \foreach \x/\y/\z in {1/3/4}{
          \path (m\x\y\z) node (m\x\y\z) {$\bullet$};
        };

        \foreach \x/\y in {1/2,1/3,2/4,3/4}{
          \draw[red,->] (m\x\y) to (v\x);
          \draw[red,->] (m\x\y) to (v\y);
        };
        \draw[->] (m14) to (v1);
        \draw[->] (m14) to (v4);

        \foreach \x/\y/\z in {1/3/4}{
          \draw[red,->] (m\x\y\z) to (m\x\y);
          \draw[->] (m\x\y\z) to (m\x\z);
          \draw[red,->] (m\x\y\z) to (m\y\z);
        }
        \draw[->] (m14) to (m12);
        \draw[->] (m14) to (m24);
        \draw[dashed,red,->] (m134) to (m12);
        \draw[dashed,red,->] (m134) to (m24); \draw[<-] (1.3,0) to (1.7,0);
      \end{scope}

      \begin{scope}[xshift=6cm]
        \path (0,1.5) node {$\sd(\Delta^1_\flat)\times\sd(\Delta^1_\flat)$};
        \coordinate (v1) at  (-1,1);
        \coordinate (v2) at  (1,1);
        \coordinate (v3) at (-1,-1);
        \coordinate (v4) at (1,-1);

        \coordinate (m12) at ($0.5*(v1) + 0.5*(v2)$);
        \coordinate (m13) at ($0.5*(v1) + 0.5*(v3)$);
        \coordinate (m14) at ($0.5*(v1) + 0.5*(v4)$);
        \coordinate (m24) at ($0.5*(v2) + 0.5*(v4)$);
        \coordinate (m34) at ($0.5*(v3) + 0.5*(v4)$);

        \coordinate (m124) at ($0.333*(v1) + 0.333*(v2) + 0.333*(v4)$);
        \coordinate (m134) at ($0.333*(v1) + 0.333*(v3) + 0.333*(v4)$);

        \foreach \x in {1,2,3,4}{
          \path (v\x) node (v\x) {$\bullet$};
        };

        \foreach \x/\y in {1/2,1/3,1/4,2/4,3/4}{
          \path (m\x\y) node (m\x\y) {$\bullet$};
        };

        \foreach \x/\y in {1/2,1/3,2/4,3/4}{
          \draw[->] (m\x\y) to (v\x);
          \draw[->] (m\x\y) to (v\y);
        };

        \draw[->] (m14) to (m12);
        \draw[->] (m14) to (m24);

        \draw[->] (m14) to (m34);
        \draw[->] (m14) to (m13);
      \end{scope}
    \end{tikzpicture}
    \caption{The inclusion $\ell_{1,1}$. The image of $\sd(\Delta^1_\flat)\times \sd(\Delta^1_\flat)$ in $R^{1,1}$ is marked in red. The remaining vertex is added by right Kan extension, which in this case corresponds to the localized Segal cube above being a limit cube.} \label{subfig:zigzag_11flat_ell}
  \end{subfigure}
  \caption{The zigzag defining the equivalence of \zcref{prop:functor_UP_prop} in the case of $\Delta^1_\flat\otimes \Delta^1_\flat$.} \label{fig:zigzag_11flat}
\end{figure}

\subsection{The minimally decorated case}
We will now prove the minimally decorated case, encapsulated in \zcref{prop:unmarkedcase} below. Note that in this case, there are no Segal cubes to consider in $\sd(\Delta^n_\flat)\times \sd(\Delta^m_\flat)$.

\begin{prop}\label{prop:unmarkedcase}
  There is an equivalence of spaces
  \[
    \Map(\sd(\Delta^{n})\times \sd(\Delta^{m}),\cC) \simeq \Map^{\cart}(\sd(\Delta^{n}_{\flat}\otimes\Delta^{m}_{\flat}),\cC)
  \]
  given by right Kan extension along $\ell_{n,m}$ followed by precomposition with $L_{n,m}$.
\end{prop}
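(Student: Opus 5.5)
The plan is to factor the claimed equivalence through $\Map(R^{n,m},\cC)$, following the strategy diagram above. By \zcref{thm:the_localization}, precomposition with $L_{n,m}$ identifies $\Map(R^{n,m},\cC)$ with the full subspace of $\Map(\sd(\Delta^n\times\Delta^m),\cC)$ on functors inverting the corner collapses $C_{n,m}$. By \zcref{lem:corner_Segal_vs_corner_collapse}, these are exactly the functors satisfying the corner Segal conditions. Since right Kan extension $(\ell_{n,m})_*$ is fully faithful, the composite in the statement is automatically fully faithful on mapping spaces. It therefore suffices to identify its essential image with the Cartesian functors on $\sd(\Delta^n_\flat\otimes\Delta^m_\flat)$.

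Equivalently, we must show the following: a functor $g\colon R^{n,m}\to\cC$ is localized Cartesian (that is, sends every localized non-corner Segal cube to a limit cube) if and only if $g$ is right Kan extended from its restriction along $\ell_{n,m}$. The thin simplices of $\Delta^n_\flat\otimes\Delta^m_\flat$ are characterized in \zcref{lem:marked_in_flat_gray}.

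For the direction ``right Kan extended $\Rightarrow$ localized Cartesian'', I would compute $g(\alpha)$ for a rectilinear path $\alpha$ as the limit over the comma poset $\alpha/\ell_{n,m}$. This is the poset of L-shaped paths $\beta$ with $\alpha\preceq\beta$, the ones lying above-right of $\alpha$ within its minimal subgrid. I expect a cofinality argument to reduce this limit to an iterated fiber product over the upper-right corners of $\alpha$: each lower-left corner of $\alpha$ is ``flipped'' toward the L-shape, producing a finite cube of paths whose limit computes $g(\alpha)$. The same cubes, transported through $L_{n,m}$, should be exactly the localized Segal cubes associated to the non-corner Segal conditions of thin simplices with a lower-left turn. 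To check the remaining localized Segal cubes, I would use pasting moves (\zcref{ex:pasting}) and cube moves (\zcref{const:cube_moves}) to reduce them to these, inducting on the number of non-L-shaped turns.

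For the converse, I would induct on $\alpha$ with respect to $\preceq$, or on the number of lower-left corners. If $\alpha$ is L-shaped there is nothing to show. Otherwise, pick the last lower-left corner of $\alpha$. The localized Segal cube for the corresponding thin simplex (whose thinness \zcref{lem:marked_in_flat_gray} provides) expresses $g(\alpha)$ as the limit of $g$ on strictly $\preceq$-larger paths. By induction, these values are right Kan extended, so the limit cube condition makes the comparison map $g(\alpha)\to\lim_{\alpha/\ell}g\ell$ an equivalence, again via the cofinality identification from the first step.

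The main obstacle is this cofinality and combinatorics step: showing that the comma posets $\alpha/\ell_{n,m}$ admit initial subposets given by the cubes of corner flips, and that these cubes match the localized Segal cubes (rather than some other limit diagrams). I would first try to do this explicitly for $n=m=1$, as in \zcref{fig:zigzag_11flat}, and then carry out the general case with the grid-line calculus. Naturality in $\Delta\times\Delta$ should be routine, because all the constructions involved ($L$, $\ell$, right Kan extension) are natural.
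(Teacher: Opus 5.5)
Your reduction is the paper's: via \zcref{thm:the_localization} and \zcref{lem:corner_Segal_vs_corner_collapse}, everything comes down to \zcref{prop:SegaliffKE}, namely that $g\colon R^{n,m}\to\cC$ is localized Cartesian if and only if it is right Kan extended along $\ell_{n,m}$. The gap is in the bridge you propose for this step.

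Your guess that $(\ell_{n,m})_*g(\alpha)$ is an iterated fiber product is reasonable. The minimal L-shaped paths above $\alpha$ have their corners at the lower-left turns of $\alpha$, and possibly at its endpoints. However, your claim that these cubes ``should be exactly'' localized Segal cubes is false. The vertices $M_I$ of a localized Segal cube are in general not L-shaped, so they are not even objects of $\alpha/\ell_{n,m}$.

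Here is a counterexample. Take $n=2$, $m=1$ and the thin simplex $\vec{\mathbf{u}}=\alpha=((0,0),(1,0),(1,1),(2,1))$, which has a lower-left turn. Its odd Segal cube deletes the middle vertex $(1,1)$ of the corner $(1,0),(1,1),(2,1)$, and no other thin simplex maps to $\alpha$. So the only localized Segal cube at $\alpha$ is the even one. Indexing by deleted vertices, its vertices are $M_1=((0,0),(0,1),(1,1),(2,1))$, $M_3=((0,0),(1,0),(1,1))$ and $M_{13}=((0,0),(0,1),(1,1))$.

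The minimal elements of $\alpha/\ell_{2,1}$, by contrast, are $N_1=((0,0),(1,0),(1,1))$ and $N_2=((0,1),(1,1),(2,1))$, with join $((0,1),(1,1))$. So right Kan extension says $g(\alpha)\simeq g(N_1)\times_{g((0,1),(1,1))}g(N_2)$, which is a different square. The two limit conditions agree only after you substitute $g(M_1)\simeq g((0,0),(0,1))\times_{g(0,1)}g(N_2)$ and $g(M_{13})\simeq g((0,0),(0,1))\times_{g(0,1)}g((0,1),(1,1))$, which are themselves Kan extension formulas, and then interchange limits. Your plan has no mechanism for this. ``Reduce the remaining cubes to these by pasting and cube moves'' has nothing to reduce to, and your converse direction defers to the same false identification.

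The missing idea is the paper's double-cube argument. Put $P=\sd(\Delta^n)\times\sd(\Delta^m)$, let $N_1,\dots,N_k$ be the minimal elements of $P_{M/}$, and let $U_S=P_{N_S/}$ for the joins $N_S$. Granting right Kan extension at the $M_I$, one rewrites $\lim_{I}g(M_I)\simeq\lim_I\lim_{P_{M_I/}}g\simeq\lim_S\lim_I\lim_{(U_S)_{M_I/}}g$, using that the $(U_S)_{M_I/}$ cover $P_{M_I/}$ (\zcref{lem:colim_of_USs}).

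Then, for each $S$, the cube of posets $I\mapsto(U_S)_{M_I/}$ is constant in some direction $i_0$ (\zcref{lem:constantdirectionSegalcube}). The reason is that a Segal cube deletes one of any two consecutive vertices, hence one vertex on each leg of a right-then-down stretch of $\vec{\mathbf{u}}$. One of these two deletions is invisible above any given L-shaped path. So the punctured colimit of that cube is $U_S$, and everything collapses to $\lim_S\lim_{U_S}g\simeq\lim_{P_{M/}}g$. This single zigzag gives both implications at once.

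Your downward induction along $\preceq$ is the right way to supply the hypothesis at the $M_I$. For a non-corner cube one has $M\prec M_I$ strictly, although $M_I$ need not lie in a proper subgrid ($M_1$ above is an example).
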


To avoid clutter, we remove subscripts $\flat$ from our notation in what follows. By the discussion above, \zcref{prop:unmarkedcase} follows from the following special case of \zcref{prop:auxialiary_functor_UP_prop}.

\begin{prop}\label{prop:SegaliffKE}
  A functor $g:R^{n,m}\to \scr{C}$ sends localized Segal cubes to limit diagrams if and only if $g$ is right Kan extended along $\ell_{n,m}$.
\end{prop}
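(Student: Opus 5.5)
We compare two families of conditions on $g:R^{n,m}\to\scr{C}$. Since $\ell_{n,m}$ is fully faithful with essential image the L-shaped paths (\zcref{rmk:L-shaped}), $g$ is right Kan extended along $\ell_{n,m}$ if and only if, for every non-L-shaped rectilinear path $P$, the map
\[
  g(P)\to \lim_{(P\downarrow \ell_{n,m})} g\circ \ell_{n,m}
\]
is an equivalence. The plan is to show that this family of conditions is equivalent, by pasting and cube moves (\zcref{ex:pasting,const:cube_moves}), to the family asserting that the localized Segal cubes are limit cubes.

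\textbf{Step 1: identify the localized Segal cubes.} In $\Delta^n_\flat\otimes\Delta^m_\flat$, \zcref{lem:marked_in_flat_gray} says the thin nondegenerate simplices are the paths containing a vertical step strictly before a horizontal step. The non-corner Segal cubes of such a simplex become, after applying $L_{n,m}$, cubes in $R^{n,m}$. The basic instance is the blue square of \zcref{fig:zigzag_11flat}: for a single upper right corner it expresses
\[
  g(\text{corner path}) \simeq g(\text{vertical edge})\times_{g(\text{corner})} g(\text{horizontal edge}).
\]
In general, a localized Segal cube on a rectilinear path $P$ containing an upper right corner decomposes $g(P)$ as a limit over faces of $P$ at its corners. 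I would first record this combinatorially: the localized cubes are exactly the cubes in $R^{n,m}$ attached to sieves on $P$ that split the path at vertices where a vertical step is followed by a horizontal one, together with the degenerate directions coming from straight segments.

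\textbf{Step 2: the direction ``Kan extended $\Rightarrow$ localized Cartesian''.} I would compute $(P\downarrow\ell_{n,m})$ explicitly. An L-shaped path $Q$ satisfies $P\preceq Q$ exactly when $Q$ lies in the subgrid of $P$ and weakly above-right of $P$. For each corner of $P$, this category has an initial or cofinal subdiagram given by the punctured cube obtained by cutting $P$ at its upper right corners and replacing each piece by its L-shaped hull. Combined with transitivity of right Kan extensions along the factorizations through intermediate posets of paths with fewer corners, this shows that every localized Segal cube is a limit cube whenever $g$ is right Kan extended. Cofinality will be checked by showing the relevant comma categories are contractible posets, using meets in the grid.

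\textbf{Step 3: the converse, by induction.} I would induct on the number of ``non-L'' corners of $P$, that is, horizontal-then-vertical turns, and on path length. An L-shaped path is trivially fine. For $P$ with a horizontal-then-vertical turn, choose the last such turn. Using one localized Segal cube, whose pieces have fewer bad turns, together with cube moves, rewrite the condition at $P$ as a limit over paths handled by the inductive hypothesis. The pasting lemma (\zcref{lem:pasting}) and \zcref{lem:cubefacelemma} then let us assemble these into the Kan-extension limit.

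\textbf{Main obstacle.} The hard step is the cofinality and identification argument in Steps 2 and 3. One must match the comma category $(P\downarrow\ell_{n,m})$ with an iterated cube built from localized Segal cubes, and this must hold for arbitrary paths, not just single corners. I would first try to reduce to the fiberwise setting via the locally Cartesian fibration $(\fr,\lr)$, fixing the endpoints. After that reduction, I would argue by induction on $n+m$, splitting the grid at the last row the path touches, so that the comma category factors as a product of smaller ones.
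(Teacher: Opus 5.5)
Your overall frame matches the paper's: pointwise right Kan extension along the fully faithful $\ell_{n,m}$, compared inductively with the localized Segal conditions. However, the step that carries all the content is only asserted, and the concrete structures you propose for it are not right.

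First, localized Segal cubes are not ``cuts at the upper right corners''. The arms of a Segal cube are the faces deleting single vertices of one fixed parity; the non-corner cube merely avoids corner midpoints. Take $P=((0,0),(0,1),(0,2),(1,2))$. The corner cut gives the cospan $((0,0),(0,1),(0,2))\to(0,2)\leftarrow((0,2),(1,2))$, which here is the cube on the minimal elements of $\SD_{P/}$ and is indeed initial. But the only localized Segal cube at $P$ is the square with arms $((0,0),(0,2),(1,2))$ and $((0,0),(0,1),(0,2))$ over $((0,0),(0,2))$, and its first arm is not L-shaped. So Step~2 as sketched only reproves the pointwise formula. The real task is to compare these two kinds of cubes, which requires Kan extension at non-L arms.

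Second, your induction measure does not decrease. For $P=((0,0),(0,1),(1,1),(1,2))$ the unique localized cube deletes the vertices of index $0$ and $2$. The map $L_{1,2}$ sends the face $((0,0),(0,1),(1,2))$ to $((0,0),(0,1),(0,2),(1,2))$, which has the same length and the same number of bad turns, and lies in the same full grid. What does decrease is $\preceq$: every arm satisfies $P\prec P_I$, so you should induct downward in $R^{n,m}$.

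Third, the $(\fr,\lr)$ reduction does not apply, since $\SD_{P/}$ contains L-paths with other endpoints. Nor is $\SD_{P/}$ a product: for $P=((0,0),(0,1),(1,1))$ it is the zigzag $(0,0)\leftarrow((0,0),(0,1))\to(0,1)\leftarrow((0,1),(1,1))\to(1,1)$, a union of two pieces glued at $(0,1)$.

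Finally, for the \emph{if} direction you need every non-L path to be the cone of some localized Segal cube, and you do not address this. For example, both Segal cubes of $((0,0),(0,1),(1,1),(2,1),(2,2),(3,2))$ itself are corner cubes, so one must first collapse a corner.

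The missing idea is the paper's decomposition of the slice. Fix a non-L path $M$, a thin $\vec{\mathbf{u}}$ with $L_{n,m}(\vec{\mathbf{u}})=M$, and a localized Segal cube with arms $M_I$. Let $N_1,\dots,N_k$ be the minimal elements of $\SD_{M/}$, and put $U_S=\SD_{N_S/}$ for $S\in\Power_*(k)$. Two facts are needed:
\begin{itemize}
\item For every $I$, $\SD_{M_I/}=\colim_{S}(U_S)_{M_I/}$; this is a union of subposets, hence a homotopy colimit.
\item Crucially, for each $S$ there is a direction $i_0$ of the cube with $(U_S)_{M_{I\cup\{i_0\}}/}=(U_S)_{M_I/}$ for all $I$.
\end{itemize}
The second fact is where thinness enters. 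The simplex $\vec{\mathbf{u}}$ has a horizontal step joined to a later vertical step by a (possibly trivial) diagonal segment. The Segal cube deletes exactly one endpoint of each of these two steps. An L-path lying above, to the right of, or between them is blind to one of these deletions.

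Hence each cube of posets $I\mapsto(U_S)_{M_I/}$ is a colimit cube with value $U_S$ at $I=\varnothing$, and
\[
  \lim_{I}g(M_I)\simeq\lim_I\lim_{\SD_{M_I/}}g\simeq\lim_S\lim_I\lim_{(U_S)_{M_I/}}g\simeq\lim_S\lim_{U_S}g\simeq\lim_{\SD_{M/}}g .
\]
Only the first equivalence uses that $g$ is Kan extended at the arms. Thus, given the statement at all paths strictly above $M$, the localized Segal condition at $M$ holds if and only if $g$ is Kan extended at $M$. This gives both of your directions at once.
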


\begin{proof}
  The basic strategy of proof is induction, together with careful manipulation of diagram categories. First, as base cases, if $n$ or $m$ is $0$, then $\ell_{n,m}$ is an isomorphism, and there are no localized Segal conditions.

  Inductively, we assume that the statement is true for any $(n',m')$ such that $n'\leq n$ and $m'\leq m$, with one of these inequalities being strict. Consider a rectilinear path $M\in R^{n,m}$ of maximal length and a localized Segal cube defined by arms $\{M\to M_i\}_{i=1}^\ell$. Fix a functor $g: R^{n,m}\to \scr{C}$ and assume, by induction, that on every subgrid, it is both Kan extended and sends localized Segal cubes to limit cubes. For each $I\in \Power_*(\ell)$, we write $M_I$ for the join (see \zcref{defn:join_and_meet}).

  Since we are dealing with posets, the slice $\sd(\Delta^n)\times \sd(\Delta^m)_{M_I/}$ is simply a subposet of $\sd(\Delta^n)\times \sd(\Delta^m)$. Let $N_1,\ldots, N_k$ be the minimal elements in the poset $\sd(\Delta^n)\times \sd(\Delta^m)_{M/}$. As with the $M_I$, we write $N_S$ for the join for any $S \in \Power_*(k)$. For conciseness, in what follows, we will write $\SD$ for  $\sd(\Delta^n)\times \sd(\Delta^m)$ and denote
  \[
    U_S\coloneqq \SD_{N_S/} = \bigcap_{s\in S} (\SD)_{N_s/}.
  \]

  We will then consider the zigzag of morphisms
  \[
    \begin{tikzcd}[row sep=tiny]
      & \lim_{I\in \Power_*(\ell)} g(M_I) \arrow[r,"\text{(1)}"]
      & \lim_{I\in \Power_*(\ell)}\lim_{V\in\SD_{M_I/}}g(V) \\
      \phantom{a} \arrow[r,"\text{(2)}"]
      & \lim_{I\in \Power_*(\ell)}\lim_{S\in \Power_*(k)}\lim_{V\in (U_S)_{M_I/}}g(V) \arrow[r,"\cong"]
      & \lim_{S\in \Power_*(k)}\lim_{I\in \Power_*(\ell)}\lim_{V\in (U_S)_{M_I/}}g(V) \\
      \phantom{a}
      & \lim_{S\in \Power_*(k)}\lim_{V\in U_S}g(V) \arrow[l,"\text{(3)}"']
      & \lim_{V \in \SD_{M/}} g(V) \arrow[l,"\text{(4)}"']
    \end{tikzcd}
  \]
  and will show that, under our hypotheses, all four are equivalences. Briefly,
  \begin{itemize}
    \item The morphism (1) is an equivalence because $g$ is Kan extended from $\SD$ by induction, using the fact that $M_I$ lives in a proper subgrid, guaranteed by the fact that the Segal condition under consideration is non-corner.
    \item The morphism (2) is an equivalence because
      \[
        \colim_{S\in \Power_*(k)^\op} (U_S)_{M_I/}\simeq \SD_{M_I/}
      \]
      which we will prove as \zcref{lem:colim_of_USs} below.
    \item The unlabeled equivalence is simply the fact that limits commute with limits.
    \item The morphism (3) is an equivalence because, for any $S$, there is a choice of $i_0$ such that, for all $I\in \Power(\ell)$ with $i_0\notin I$, the induced morphism
      \[
        \begin{tikzcd}
          (U_{S})_{M_{I\cup\{i_0\}}/} \arrow[r] & (U_S)_{M_I/}
        \end{tikzcd}
      \]
      is an isomorphism of posets (\zcref{lem:constantdirectionSegalcube}). Thus, the cube consisting of the $(U_S)_{M_I/}$ indexed by the poset $\Power(\ell)^\op$ is a colimit cube, with colimit $(U_S)_{M/}=U_S$.
    \item The morphism (4) is an equivalence due to the same reason as (2).
  \end{itemize}
  Since the morphisms $g(M)\to \lim_{I\in \Power_*(\ell)} g(M_I)$ and $g(M)\to \lim_{V\in \SD_{M/}} g(V)$ commute with the zigzag, it follows that $g$ is satisfied the chosen Segal condition with vertex $M$ if and only if the value of $g$ at $M$ is Kan extended from $\SD$. We conclude the proof by observing that $M \in R^{n,m}$ is the cone of a localized Segal cube if and only if $M$ does not live in the image of $\ell_{n,m}$.
\end{proof}

\begin{lem}\label{lem:colim_of_USs}
  Using the notation of \zcref{prop:SegaliffKE}, for any $I\in \Power(\ell)$
  \[
    \colim_{S\in \Power_*(k)^\op} (U_S)_{M_I/}\simeq \SD_{M_I/}.
  \]
\end{lem}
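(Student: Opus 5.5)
The key observation I would build everything on is that $\{U_S\}$ is obtained from a finite family of up-closed subposets by intersection. So the colimit over $\Power_*(k)^\op$ of the intersections is a Čech-nerve-type colimit. I would therefore reduce the lemma to the standard fact that a poset covered by finitely many up-closed subposets is the (homotopy) colimit of the Čech diagram of finite intersections, once each relevant intersection is handled.

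\textbf{Setup.} Fix $I$ and write $P\coloneqq \SD_{M_I/}$, a subposet of $\SD=\sd(\Delta^n)\times\sd(\Delta^m)$. Since $M\leq M_I$ in $R^{n,m}$, we have $\SD_{M_I/}\subseteq \SD_{M/}$. The subposets $(U_s)_{M_I/}=P\cap \SD_{N_s/}$ are up-closed in $P$, and their pairwise and higher intersections are exactly $(U_S)_{M_I/}$, since $U_S=\bigcap_{s\in S}\SD_{N_s/}$.

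\textbf{Step 1: the cover property.} I would show that $\bigcup_s (U_s)_{M_I/}=P$. Any $V\in P$ satisfies $M\leq M_I\leq V$, i.e.\ $V\in \SD_{M/}$. Because $\SD_{M/}$ is a finite poset, $V$ lies above some minimal element $N_s$. Hence $V\in \SD_{N_s/}$, as required. Some care is needed with ``slice'' conventions, since morphisms in $R^{n,m}$ go from longer paths to shorter ones. I would check directly that $\SD_{M/}$ means $\{V : M\to \ell_{n,m}(V)\}$ and that the $N_s$ are its minimal elements.

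\textbf{Step 2: the colimit formula.} For a finite poset $P$ written as a union of up-closed (cosieve) subposets $P_1,\dots,P_k$, the canonical map
\[
\colim_{S\in\Power_*(k)^\op} \bigcap_{s\in S}P_s\to P
\]
is an equivalence in $\Cat_1$. I would prove this by showing that for each $V\in P$ the comma poset of $S$ with $V\in P_S$ is $\Power_*(T_V)^\op$, where $T_V=\{s : V\in P_s\}\neq\varnothing$. This poset has a terminal object $T_V$ and is therefore weakly contractible. Then I would apply the standard criterion for colimits of diagrams of subposets whose inclusions are cosieves: such a colimit, computed as a left Kan extension or as a Grothendieck construction localized at cocartesian edges, recovers the union when each pointwise index category is weakly contractible. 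An alternative route is to use that the Čech nerve of a cover by cosieves is effective in $\Cat_1$, because cosieve inclusions are exponentiable and pullback-stable.

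\textbf{Main obstacle.} The hardest part is making Step 2 rigorous in $(\infty,1)$-categories rather than just for nerves of simplicial sets. Joyal-model colimits of simplicial sets along monomorphisms are homotopy colimits, so the question reduces to whether the nerve of $P$ is the union of the nerves of the $P_s$ as simplicial sets. This fails in general: a chain in $P$ need not lie in a single $P_s$.

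The fix is up-closedness. Every chain $V_0<\dots<V_r$ lies in $P_s$ for any $s$ with $V_0\in P_s$. So the nerve is a union of subcomplexes, and the cube of intersections is a strict pushout cube of monomorphisms, i.e.\ a homotopy colimit. The simplicial colimit of these nerves is the union, which equals the nerve of $P$. This yields the equivalence, naturally in $I$.
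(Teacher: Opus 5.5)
Your proposal is correct and is essentially the paper's argument. The paper just observes that the diagram $S\mapsto (U_S)_{M_I/}$ of intersections of subposets is Reedy cofibrant, so its strict colimit is already the $(\infty,1)$-categorical colimit; your final paragraph (the intersection cube of nerves, whose strict colimit is the union) is exactly this. Your verification that the $(U_s)_{M_I/}$ cover $\SD_{M_I/}$, and that up-closedness puts every chain inside a single $(U_s)_{M_I/}$, supplies the step the paper leaves implicit: that the strict colimit really is the nerve of $\SD_{M_I/}$. One harmless slip: $T_V$ is initial, not terminal, in $\Power_*(T_V)^\op$, but either way that poset is weakly contractible.
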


\begin{proof}
  This is a colimit diagram over the poset of intersections of subposets. As such, it is a Reedy cofibrant diagram of simplicial sets, and so is an $(\infty,1)$-categorical colimit since it is a $1$-categorical colimit.
\end{proof}

\begin{lem}\label{lem:constantdirectionSegalcube}
  Using the notation of \zcref{prop:SegaliffKE}, given an element $S$ in the poset of intersections of the $N_s$, there is an element $i_0$ such that for all $I\in\Power(\ell)$, the induced morphism
  \[
    \begin{tikzcd}
      (U_{S})_{M_{I\cup\{i_0\}}/} \arrow[r] & (U_S)_{M_I/}
    \end{tikzcd}
  \]
  is an isomorphism of posets.
\end{lem}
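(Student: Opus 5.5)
The plan is to make everything explicit in terms of paths in the grid and then pick $i_0$ by a local combinatorial argument. Throughout, I use $\gamma_{n,m}\dashv\ell_{n,m}$ from the proof of \zcref{prop:product_localization}. For $V=(\vec u,\vec v)\in\SD$ and any $P\in R^{n,m}$ we have $P\le \ell_{n,m}(V)$ in $R^{n,m}$ if and only if $\gamma_{n,m}(P)\le V$. So each slice $\SD_{P/}$, in particular $(U_S)_{M_I/}$, is the set of pairs $V$ for which the L-shaped path $\ell_{n,m}(V)$ satisfies $P\preceq \ell_{n,m}(V)$. That means $\ell_{n,m}(V)$ lies above-right of $P$ and is contained in the minimal subgrid of $P$.

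First I will describe $M_I$ concretely. Since $M$ is maximal and the Segal cube is non-corner, each arm $M\to M_i$ deletes a vertex $p_i$ of $M$ that is not the middle of an upper-right corner. The join $M_I$ is then $L_{n,m}$ applied to $M$ with the vertices $p_i$, $i\in I$, deleted. Deleting a lower-left corner $p_i$ and applying $L_{n,m}$ replaces it by the opposite (upper-right) corner of its unit square. Deleting a vertex in the interior of a straight segment changes nothing in $R^{n,m}$ beyond shrinking the path. Deleting an endpoint shrinks the minimal subgrid. Thus the condition ``$M_I\preceq\ell_{n,m}(V)$'' differs from ``$M\preceq \ell_{n,m}(V)$'' only through local modifications near the $p_i$, $i\in I$, and these modifications are independent for distinct $i$.

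Second, I will describe $U_S$. The minimal elements $N_1,\dots,N_k$ of $\SD_{M/}$ correspond to the minimal L-shaped paths above-right of $M$ inside the full grid. Each is determined by a ``turning vertex'' $(a,b)$ with $(a,b)\not\ll_{\Tw}$ any vertex of $M$, together with the endpoints of $M$. An element $V\in U_S$ has $\ell_{n,m}(V)$ above-right of every $N_s$, $s\in S$. In particular, $V$ contains the first and last vertices of the $N_s$, and its vertical and horizontal segments stay on the correct side of the turning vertices of the $N_s$.

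Third, I choose $i_0$. Among the vertices $p_i$, I will take one for which the region affected by deleting $p_i$ is already controlled by a single $N_s$ with $s\in S$. Concretely, let $(a,b)$ be the turning vertex of some fixed $N_{s_0}$, $s_0\in S$. I pick $p_{i_0}$ to be a deleted vertex of the Segal cube that is not adjacent to the part of $M$ near which $N_{s_0}$ touches $M$. Equivalently, I want the unit square at $p_{i_0}$ to lie strictly below-left of $N_{s_0}$ when $p_{i_0}$ is interior, or I want the endpoint of $M$ being removed to already be a vertex of $N_{s_0}$. Such a vertex exists because the Segal sieve deletes alternating vertices: every other vertex along $M$ is available, and at most a bounded number of these are adjacent to the contact of $N_{s_0}$. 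Then any $V$ above-right of $N_{s_0}$ automatically avoids the square at $p_{i_0}$. So for every $I$ the conditions $M_I\preceq\ell_{n,m}(V)$ and $M_{I\cup\{i_0\}}\preceq\ell_{n,m}(V)$ coincide on $U_S$. Since the map $(U_S)_{M_{I\cup\{i_0\}}/}\to(U_S)_{M_I/}$ is an inclusion of full subposets, this identification of object sets gives the claimed isomorphism.

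The main obstacle is the third step: showing that a suitable $i_0$ always exists, uniformly in $I$. This requires a case analysis on the shape of $M$ near each $p_i$ (lower-left corner, straight interior point, endpoint) and on the parity of the Segal sieve. Endpoints need separate care, because deleting them shrinks the minimal subgrid rather than bending the path. If the ``alternating vertices'' counting argument fails in small cases, I would instead take $s_0\in S$ with turning vertex closest to an end of $M$, and choose $p_{i_0}$ at the opposite end.
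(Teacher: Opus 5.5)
Your reduction from $U_S$ to a single $U_{s_0}\supseteq U_S$ is fine; the paper does the same. The gap is in your third step: the choice of $i_0$ is wrong, and the rule you give picks vertices that fail. The missing ingredient is the second clause of $\preceq$, containment in the minimal subgrid. Your second step waves this away (``changes nothing beyond shrinking the path''), and your fourth step never checks it.

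If $N_{s_0}$ has turning vertex $(a,b)$, then $U_{s_0}$ consists of \emph{all} pairs $(\vec u,\vec v)$ with $\vec u\subseteq\{0,\dots,a\}$ and $\vec v\subseteq\{b,\dots,m\}$. Its elements lie inside the subgrid of $N_{s_0}$; they do not contain its endpoints. Deleting a vertex that is alone in its row or column (an interior vertex of a straight run, or an endpoint) removes that row or column from the minimal subgrid of $M_I$. This can change $(U_{s_0})_{M_I/}$ whenever the row is $\le a$ or the column is $\ge b$, however far the vertex is from $(a,b)$.

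Concretely, let $n=m=3$ and $M=\overline{M}=(0,0),(1,0),(2,0),(2,1),(2,2),(3,2),(3,3)$. Its only upper right corner has middle $(2,2)$, so the non-corner Segal cube deletes $(1,0)$, $(2,1)$, $(3,2)$. One minimal $N$ has turning vertex $(3,2)$.
\begin{itemize}
\item Your rule, and your fallback, pick $(1,0)$. But $V=(\{1\},\{2\})$ lies in $(U_N)_{M/}$ and not in $(U_N)_{M_{\{(1,0)\}}/}$, because row $1$ leaves the subgrid.
\item Deleting $(3,2)$ also fails, since the new corner $(2,3)$ obstructs $V=(\{3\},\{2\})$.
\item The only valid choice is $(2,1)$, which is adjacent to where $N$ meets $M$.
\end{itemize}
Your endpoint criterion is backwards for the same reason. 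If a deleted endpoint lies on $N_{s_0}$, its row or column lies in the quadrant, and deleting it changes the slice already for $I=\varnothing$.

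What is missing is any use of the thinness of $\overline{M}$, which is what the paper's proof exploits. Since $\overline{M}$ is thin, it contains a horizontal step $A\to B$ followed, via a possibly trivial diagonal, by a vertical step $C\to D$. The non-corner Segal cube deletes exactly one of $A,B$ and one of $C,D$. The turning vertex $(a,b)$ of any L-shaped path under $M$ lies in one of three places; otherwise $B$ or $C$ would be strictly above-right of it.
\begin{itemize}
\item Weakly above the row of $A,B$: deleting whichever of $C,D$ the cube removes only alters $M$ in rows $\ge a$ and only drops rows $>a$.
\item Weakly right of the column of $C,D$: deleting whichever of $A,B$ the cube removes only alters $M$ in columns $\le b$ and only drops columns $<b$.
\item In between, next to the diagonal: any of the deleted vertices among $A,B,C,D$ works.
\end{itemize}
This handles both the subgrid clause and the new-corner obstructions. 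It is uniform in $I$ because deleted vertices are never adjacent.

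A minor slip: the adjunction is $\ell_{n,m}\dashv\gamma_{n,m}$, not the other way round. Your ``$P\preceq\ell(V)$ iff $\gamma(P)\le V$'' is false for non-L-shaped $P$, and would give $\SD_{P/}$ an initial object. The description of the slice you actually use is correct, though.
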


\begin{proof}
  When $i_0 \in I$, there is nothing to show. Thus, we will assume that $i_0 \notin I$.

  Observe that $U_S = \SD_{N_S/}$ is a subposet of $\SD_{N_{s}/}$ for any $s\in S$, where, by definition, $N_s$ is an L-shaped path (see \zcref{rmk:L-shaped}) under $M$. We will show the following stronger statement:
  \begin{itemize}
    \item[(*)] For any L-shaped path $L$ under $M$, there exists $i_0$ such that for all subposets $V$ of $\SD_{L/}$, the natural morphism
      \[
        V_{M_{I\cup \{i_0\}}/} \to V_{M_I/}
      \]
      is an isomorphism of posets.
  \end{itemize}

  First, by assumption, $M \in R^{n,m}$ comes from $\overline{M} \in \sd(\Delta^n\times \Delta^m)$ such that
  \begin{enumerate}[label=(\arabic*)]
    \item $\overline{M}$ is mapped to $M$ under the localization $L_{n,m}$;
    \item $\overline{M}$ contains a horizontal line segment preceding a vertical line segment; in particular, it contains a horizontal segment joined to a vertical segment by a (possibly trivial) diagonal segment as illustrated below.
  \end{enumerate}
  \[
    \begin{tikzpicture}
      \drawGrid{4}{3}
      \begin{scope}[on background layer]
        \begin{scope}[red]
          \drawLine{3pt}{red} (00.center) -- (01.center) -- (13.center) -- (23.center);
          \path (-0.5,0.5) node {\huge$\ddots$};
          \path (3.5,-2.5) node {\huge$\ddots$};
          \path (0,0.5) node {$A$};
          \path (1,0.5) node {$B$};
          \path (3.5,-1) node {$C$};
          \path (3.5,-2) node {$D$};
        \end{scope}

      \end{scope}
    \end{tikzpicture}
  \]

  Now, consider the four vertices on our chosen horizontal and vertical segments in $\overline{M}$, labeled $A$--$D$ in the schematic above. Note that the Segal cube we are considering must involve either removing $A$ or $B$, and must involve either removing $C$ or $D$. We then have the following cases.
  \begin{enumerate}[label=(\arabic*)]
    \item If $L$ lies \emph{above} the schematic, removing one of $C$ or $D$ does not impose any extra condition for things under $L$.
    \item If $L$ lies \emph{to the right} of the schematic, removing one of $A$ or $B$ does not impose any extra condition for things under $L$.
    \item If $L$ lies \emph{in the middle} of the schematic (see \zcref{fig:third-case}), removing $A$, $B$, $C$, or $D$ does not impose any extra condition for things under $L$.
  \end{enumerate}
  \begin{figure}[htb]
    \[
      \begin{tikzpicture}
        \drawGrid{4}{4}

        \begin{scope}[on background layer]
          \begin{scope}[red]
            \drawLine{3pt}{red}  (00.center) -- (01.center) -- (12.center) -- (23.center) -- (33.center);
            \draw ;
            \path (-0.5,0.5) node {\huge$\ddots$};
            \path (3.5,-2.5) node {\huge$\ddots$};
            \path (0,0.5) node {$A$};
            \path (1,0.5) node {$B$};
            \path (3.5,-2) node {$C$};
            \path (3.5,-3) node {$D$};
          \end{scope}

          \begin{scope}[color=blue]
            \drawLine{3pt}{blue} (02.center) -- (12.center) -- (13.center);
            \path (3.8,-1) node {\huge$\dots$};
            \path (2,0.5) node {\huge$\vdots$};
          \end{scope}
        \end{scope}
      \end{tikzpicture}
    \]
    \caption{Schematic description of case $(3)$}\label{fig:third-case}
  \end{figure}

  The claim (*) follows from the observation that for any $I$, $V_{M_I/}$ is a subposet of $\SD_{L/}$, concluding the proof.
\end{proof}

\subsection{Transferring Segal conditions}

To deduce the general case from \zcref{prop:unmarkedcase}, we will repeatedly need to transfer limit conditions through the zigzag of equivalences. In this subsection, we introduce a special class of decorations called L-saturated (see \zcref{defn:decoration_L-saturated}) and show in \zcref{prop:transfer_L-saturated_decorations} that this class of decorations is particularly well-adapted for this purpose. We will adopt the notation of the preceding subsection.

We start with \zcref{prop:mixed_segal_suffices_for_L_shaped}, which shows that Segal conditions on a single L-shaped path correspond to the mixed Segal conditions we will now define.

\begin{defn}
  Let $\sigma:\Delta^k\to \Delta^n$ and $\tau:\Delta^\ell\to \Delta^m$ be non-degenerate simplices. When $k,l \geq 1$, we define the \emph{mixed Segal cubes} in $\sd(\Delta^n)\times \sd(\Delta^m)$ associated to $(\sigma,\tau)$ as follows:
  \begin{itemize}
    \item The \emph{inner mixed Segal cube} is the product of the Segal cube of $\sigma$ containing the $k$-lacuna and the Segal cube of $\tau$ containing the $0$-lacuna.
    \item The \emph{outer mixed Segal cube} is the product of the Segal cube of $\sigma$ not containing the $k$-lacuna and the Segal cube of $\tau$  not containing the $0$-lacuna.
  \end{itemize}
  When $k=0$ (resp. $\ell=0$), the \emph{mixed Segal cubes} associated to $(\sigma,\tau)$ are simply the products of the Segal cubes for $\sigma$ (resp. $\tau$) and $\tau(0)$ (resp. $\sigma(0)$). In this case, the \emph{inner} (resp. \emph{outer}) \emph{mixed Segal cubes} are defined to contain the appropriate lacuna (as defined above) in the non-degenerate direction.

  We say that a functor $f:\sd(\Delta^n)\times \sd(\Delta^m)\to \scr{C}$ satisfies the \emph{mixed Segal conditions} if it sends the mixed Segal cubes to limit cubes. In the inner mixed Segal cube, we call the directions in the cube corresponding to the $k$ and $0$-lacunae the \emph{corner directions}.
\end{defn}

\begin{prop}\label{prop:mixed_segal_suffices_for_L_shaped}
  Let $f:R^{n,m} \to \scr{C}$ be a functor which is right Kan extended from a functor $\overline{f}: \SD \coloneqq \sd(\Delta^n)\times \sd(\Delta^m)\to \scr{C}$. Let $L$ be an L-shaped path in $\sd(\Delta^n\otimes \Delta^m)$, $L_{R^{n,m}}$ (resp. $L_{\SD}$) its image in $R^{n,m}$ (resp. $\SD$) under $L_{n,m}$ (resp. $Z_{n,m}$). Let $(\sigma, \tau)$ denote the pair of simplices in $\Delta^n\times \Delta^m$ corresponding $L_{\SD}$. Then, $f$ sends localized Segal cubes associated to the simplex corresponding to $L$ to limit cubes if and only if $\bar{f}$ satisfies the mixed Segal conditions for $(\sigma, \tau)$.
\end{prop}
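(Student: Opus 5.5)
Let $L$ be an L-shaped path, say with vertical part $(u_0,v_0)<\cdots<(u_k,v_0)$ followed by horizontal part $(u_k,v_0)<(u_k,v_1)<\cdots<(u_k,v_\ell)$, so that $L=\ell_{n,m}(\sigma,\tau)$. The plan is to list the non-corner Segal cubes of the simplex $L$ in $\sd(\Delta^n\otimes\Delta^m)$ and push them forward along $L_{n,m}$. Then we identify each resulting localized cube, up to the moves of \zcref{const:cube_moves} and \zcref{ex:pasting}, with the image under $\ell_{n,m}$ of a mixed Segal cube.

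\textbf{Step 1: sort the lacunae.} The vertices of $L$ are indexed by $0,\dots,k+\ell$. The unique upper right corner of $L$ is, after possibly a dual convention, the corner vertex $(u_k,v_0)$, at index $k$. The parities of the two Segal sieves on $L$ split the lacunae into two classes:
\begin{itemize}
\item the lacunae $i<k$ on the vertical part, which correspond to lacunae of $\sigma$;
\item the lacunae $i>k$ on the horizontal part, which correspond to lacunae of $\tau$ shifted by $k$.
\end{itemize}
The corner lacuna $k$ lies in exactly one Segal sieve. That sieve is the corner Segal cube, which is excluded. The other sieve is the non-corner one, whose localized cube we must analyse.

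\textbf{Step 2: compute the localized cube.} Removing a set of vertices from $L$ and applying $L_{n,m}$ gives a rectilinear path. If the removed set avoids index $k$, this path is again L-shaped, namely $\ell_{n,m}(\sigma_A,\tau_B)$, where $\sigma_A$ and $\tau_B$ are the corresponding faces. Otherwise the corner has been cut and $L_{n,m}$ fills it in by inserting the vertex $(u_{k-1},v_1)$. The resulting path is no longer L-shaped, and $f$ on it is computed by right Kan extension from $\SD$.

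For the terms avoiding index $k$, the localized cube is literally the image under $\ell_{n,m}$ of the product cube $C_\sigma\times C_\tau$ of the appropriate parity sieves. A product of a cube with a limit cube, with the other factor degenerate, reduces via \zcref{lem:cubefacelemma}.

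\textbf{Step 3: match parities.} Checking parities (the number of vertices after a given one changes by $\ell$ or $k$ between $L$ and $\sigma$, $\tau$) should show the following.
\begin{itemize}
\item When the non-corner sieve avoids the corner, its lacunae are exactly those of the \emph{outer} mixed cube. In that case the localized condition is equivalent to $\bar f$ sending the outer mixed cube to a limit, using \zcref{lem:cubefacelemma} to split off the $\sigma$- and $\tau$-directions.
\item Similarly, the other parity combination gives the \emph{inner} mixed cube, whose corner directions correspond to the lacunae $k$ on the $\sigma$ side and $0$ on the $\tau$ side.
\end{itemize}
Since the inner mixed cube has two corner directions while $L$ has a single corner lacuna, the matching there must pass through the value of $f$ on the corner-cut path.

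\textbf{Step 4: the main obstacle.} I expect the main obstacle to be handling faces that cut the corner. There I would use the description of right Kan extension from the proof of \zcref{prop:SegaliffKE}: the value of $f$ at a non-L-shaped path $P$ is the limit over $\SD_{P/}$. The minimal elements of that slice are two L-shaped paths, one through $(u_k,v_0)$ and one through the other bend, and their join is a pullback square in the $(k,0)$-directions. This pullback is exactly what supplies the extra corner direction of the inner mixed cube.

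Concretely, I would apply a pasting move (\zcref{lem:pasting}) to replace the single corner lacuna by the pair of corner directions. The remaining cube is then the inner mixed cube, and the pasted square is a limit square automatically by the Kan extension property. This gives both implications simultaneously.

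The degenerate cases $k=0$ or $\ell=0$ are handled identically. In those cases there is no corner, and $L_{n,m}$ and $\ell_{n,m}$ are the identity on the relevant subgrid.
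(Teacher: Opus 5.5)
\textbf{There is a genuine gap, in Step 1.} You misread the geometry of $L$, and this breaks the bookkeeping of the whole argument.

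In the paper's conventions, an upper right corner is a horizontal step followed by a vertical one. These are the corners that $L_{n,m}$ inserts into diagonal steps and that corner collapses delete. An L-shaped path goes vertically and then horizontally, so its bend $(u_k,v_0)$ is a \emph{lower left} corner. Hence $L$ has no corner Segal conditions at all, and \emph{both} Segal cubes of $L$ are localized Segal cubes. Deleting the bend is not inverted by $L_{n,m}$: that face goes to the path $U$ through $(u_{k-1},v_1)$. The dual convention does not help, since $L_{n,m}$ only inverts upper-right collapses.

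If you really excluded the sieve containing the lacuna $k$, you would be left with one cube. That cube is literally $\ell_{n,m}$ applied to the outer mixed cube, so $f\circ\ell_{n,m}\simeq\bar f$ settles it; no splitting via \zcref{lem:cubefacelemma} is needed. But then the inner mixed condition could never be recovered. Already for $n=m=1$ and $L=(0,0)<(1,0)<(1,1)$ this is visible:
\begin{itemize}
\item the even sieve gives the outer mixed square;
\item the odd sieve $\{02\}$, which removes the bend, gives the arrow $f(L)\to f(U)$ with $f(U)\simeq \bar f(\{0\},\{0,1\})\times_{\bar f(\{0\},\{1\})}\bar f(\{0,1\},\{1\})$, and its invertibility is exactly the inner mixed condition.
\end{itemize}
Your Steps 3 and 4 tacitly treat this bend-removing cube, so as written the proposal contradicts its own Step 1. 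The correct setup, as in the paper, is that the cube avoiding the bend lacuna gives the outer mixed condition, and the cube containing it must be matched with the inner one.

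For that second cube, your idea is the paper's: $f$ on a corner-cut path is a pullback, by right Kan extension. Two details are off, however.
\begin{itemize}
\item The minimal elements of $\SD_{U/}$ are $\ell_{n,m}(\sigma\setminus\{u_k\},\tau)$ and $\ell_{n,m}(\sigma,\tau\setminus\{v_0\})$, which bend at $(u_{k-1},v_0)$ and $(u_k,v_1)$. No path through $(u_k,v_0)$ lies in this slice, since $(u_k,v_0)\ll_{\Tw}(u_{k-1},v_1)$.
\item \zcref{lem:pasting} is about stacked cubes; it does not replace one direction by two.
\end{itemize}
What works, and what the paper does, is to enlarge the cover by these two minimals $M,N$ to a cube $C_2$, and argue in three steps.
\begin{itemize}
\item For every nonempty $K\subseteq\{M,N\}$, the cube $C_2(K\cup -)$ is constant in the $U$-direction. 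So by \zcref{lem:cubefacelemma}, $C_2$ is limit if and only if the localized cube is.
\item The $\{M,N\}$-faces of $C_2$ through $U$ are the Kan-extension pullback squares. So, by \zcref{lem:cubefacelemma} again, the $U$-face of $C_2$ is limit.
\item A cube move then removes $U$, leaving precisely the inner mixed cube.
\end{itemize}
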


\begin{proof}
  The case where the horizontal or vertical component of $L$ consists of a single point is immediate. In what follows, we will assume that both components are non-degenerate.

  There are two localized Segal conditions we need to consider on $L_{R^{n,m}}$. The one which does not contain the $(\sigma(\dim \sigma),\tau(0))$-lacuna is simply the image of the outer mixed Segal condition. It remains to show that the other is equivalent to the inner mixed Segal condition.

  Let $C_{1}: \Power(I)\to R^{n,m}$ be the remaining localized Segal cube. Observe that of the defining prongs of the cube, one of them, denoted by $U$, does not live in the image of $\SD$. The slice under $U$ contains precisely two minimals, denoted by $N$ and $M$ (see \zcref{fig:multicube} for a schematic depiction).
  \begin{figure}[htb]
    \begin{tikzpicture}
      \begin{scope}
        \drawGrid{3}{3}
        \begin{scope}[on background layer]
          \drawLine{6pt}{red}  (00.center) -- (10.center) -- (20.center) -- (21.center) -- (22.center);
        \end{scope}
      \end{scope}

      \begin{scope}[xshift=4cm]
        \drawGrid{3}{3}
        \begin{scope}[on background layer]
          \drawLine{6pt}{red}  (00.center) -- (10.center) -- (11.center) -- (21.center) -- (22.center);
        \end{scope}
      \end{scope}

      \begin{scope}[xshift=8cm,yshift= 2cm]
        \drawGrid{3}{3}
        \begin{scope}[on background layer]
          \drawLine{6pt}{red}  (00.center) -- (10.center) -- (11.center) -- (21.center) -- (22.center);
          \drawLine{3pt}{blue}  (01.center) -- (11.center) -- (21.center) -- (22.center);
        \end{scope}
      \end{scope}

      \begin{scope}[xshift=8cm,yshift= -2cm]
        \drawGrid{3}{3}
        \begin{scope}[on background layer]
          \drawLine{6pt}{red} (00.center) -- (10.center) -- (11.center) -- (21.center) -- (22.center);
          \drawLine{3pt}{blue}  (00.center) -- (10.center) -- (11.center) -- (12.center) ;
        \end{scope}
      \end{scope}
      \draw[->] (2.5,-1) to (3.5,-1);
      \draw[->] (6.5,-0.5) to (7.5,1);
      \draw[->] (6.5,-1.5) to (7.5,-3);
      \path (1,-3) node {$L_{R^{n,m}}$};
      \path (5,-3) node {$U$};
    \end{tikzpicture}
    \caption{Schematic depiction of the minimals in the slice under $U$.}\label{fig:multicube}
  \end{figure}
  The (image in $R^{n,m}$ under $\ell_{n,m}$ of the) inner mixed Segal cube is precisely the cube obtained by removing $U$ from the cover, and including $N$ and $M$ instead. Let $C_2:\Power(I\cup\{M,N\})\to R^{n,m}$ be defined by adding $M$ and $N$ to the cover defining $C_1$. Finally, let $C_3:\Power(J)\to R^{n,m}$ be the cube obtained from $C_{2}$ by removing $U$ from the cover. Note that this is precisely the (image of the) inner mixed Segal cube.

  In this notation, we need to show that $f\circ C_1$ is limit if and only if $f\circ C_3$ is, which follows from the following observations:
  \begin{enumerate}
    \item As $M,N<U$ in $R^{n,m}$, for every $K \in \Power_*(\{M,N\})$ the $I$-indexed cube $C_{2}(K\cup-)$ is constant in the direction of $U$. Thus, $f\circ C_{2}$ is limit if and only if $f\circ C_{1}$ is, by \zcref{lem:cubefacelemma}.
    \item For every $K \in \Power(J \setminus \{M,N\})$ the $\{M,N\}$-indexed cube $f\circ C_{2}(\{U\}\cup K\cup -)$ is limit. Indeed, this is just the Kan extension condition applied to the path $U \cap \bigcap_{P \in K} P$ (see \zcref{fig:MN-cube} for a schematic depiction). \zcref{lem:cubefacelemma} then implies that the $J$-indexed cube $f\circ C_2(\{U\} \cup -)$ is limit.
    \item By \zcref{const:cube_moves}, given that the $J$-indexed cube $f\circ C_2(\{U\}\cup -)$ is limit, $f\circ C_{3}$ is limit if and only if $f\circ C_{2}$ is.
      \begin{figure}[htb]
        \begin{tikzpicture}
          \begin{scope}
            \drawGrid{4}{4}
            \begin{scope}[on background layer]
              \drawLine{3pt}{red}  (00.center) --  (20.center) -- (21.center) -- (31.center)--  (33.center);
            \end{scope}
          \end{scope}

          \begin{scope}[xshift=5cm]
            \drawGrid{4}{4}
            \begin{scope}[on background layer]
              \drawLine{3pt}{red}  (00.center) --  (20.center) -- (21.center) --  (23.center);
            \end{scope}
          \end{scope}

          \begin{scope}[yshift=-5cm, xshift=5cm]
            \drawGrid{4}{4}
            \begin{scope}[on background layer]
              \drawLine{3pt}{red}  (01.center) --  (21.center) -- (23.center);
            \end{scope}
          \end{scope}

          \begin{scope}[yshift=-5cm]
            \drawGrid{4}{4}
            \begin{scope}[on background layer]
              \drawLine{3pt}{red}  (01.center) --  (21.center) -- (31.center) -- (31.center)--  (33.center);
            \end{scope}
          \end{scope}
          \draw[->] (3.5,-1.5) to (4.5,-1.5);
          \draw[->] (3.5,-6.5) to (4.5,-6.5);
          \draw[->] (1.5,-3.5) to (1.5, -4.5);
          \draw[->] (6.5,-3.5) to (6.5,-4.5);
        \end{tikzpicture}
        \caption{Schematic depiction of the $\{M,N\}$-indexed cube $C_{2}(U\cup K\cup-)$.}\label{fig:MN-cube}
      \end{figure}
  \end{enumerate}
\end{proof}

\begin{ntt}
  Let $\vec{\mathbf{u}}$ be a non-degenerate simplex in $\Delta^{n}\times \Delta^{m}$ represented by a path in a grid. We call a subsimplex $\vec{\mathbf{u}}_\mathbf{p} \subset \vec{\mathbf{u}}$ a \emph{subpath}, if it arises from a spine inclusion. In terms of paths, the subpath $\vec{\mathbf{u}}_\mathbf{p}$ of $\vec{\mathbf{u}}$ is determined by the start and end points, as it contains everything in between.
\end{ntt}

\begin{ntt}
  Let $\vec{\mathbf{u}}$ be a non-degenerate simplex in $\Delta^{n}\times \Delta^{m}$ represented by a path in a grid. We call a subpath $\vec{\mathbf{u}}_\mathbf{v}=(\mathbf{u}_{i})_{i\in I_{v}}\subset \vec{\mathbf{u}}= (\mathbf{u}_{i})_{i\in I}$ a \emph{vertical component} if
  \begin{itemize}
    \item $u_{i}^{1}=u_{j}^{1}$ for all $i,j \in I_{v}$
    \item $u_{i}^{1} \neq u_{j}^{1}$ for all $i\in I_{v}$ and $j \in I \setminus I_{v}$.
  \end{itemize}
  Analogously, we call a subpath a \emph{horizontal component} if it satisfies the dual conditions.
\end{ntt}

\begin{defn} \label{defn:main_part}
  Let $\vec{\mathbf{u}}$ non-degenerate simplex in $\Delta^{n}_\flat\otimes \Delta^{m}_\flat$ that is not thin. Then, by \zcref{lem:marked_in_flat_gray}, vertical components of $\vec{\mathbf{u}}$, if present, precede the horizontal ones, if present. If both are present, we define the \emph{main part} of $\vec{\mathbf{u}}$ to be the minimal subpath $\vec{\mathbf{u}}_{\main}\subset \vec{\mathbf{u}}$ that contains the last (i.e., maximal) vertical component and first (i.e., minimal) horizontal component of $\vec{\mathbf{u}}$, where the order on the vertical (resp. horizontal) components is induced by that on the vertices a simplex. If no vertical (resp. horizontal) component is present, we define the main part to be the minimal subpath containing the first (resp. last) vertex of $\vec{\mathbf{u}}$ and the first (resp. last) horizontal (resp. vertical) component of $\vec{\mathbf{u}}$. If neither is present, the main part is the whole of $\vec{\mathbf{u}}$.
\end{defn}

\begin{defn}
  Let $\vec{\mathbf{u}}$ be as in \zcref{defn:main_part} with main part $\vec{\mathbf{u}}_{\main}$ and rectilinear paths $M$ and $M_{\main}$, which are images under $L_{n,m}$ of $\vec{\mathbf{u}}$ and $\vec{\mathbf{u}}_{\main}$, respectively. We denote by $\{N_{s}\}_{s\in S}$ the set of minimal L-shaped paths under $M$, i.e., the minimals of $\SD_{M/}$.

  We view any collection of minimal paths $T\subset S$ as an L-shaped path $N_T$, the join of $N_t$ for $t\in T$. We call a collection of minimal paths $T\subset S$ \emph{irrelevant} if the poset $(U_{T})_{M_{\main}/}$ is empty, where $U_T \coloneqq \SD_{N_T/} = \bigcap_{s\in T} \SD_{N_s/}$. We denote by $\Irr(M)\subset \Power_*(S)$ the subset of irrelevant paths and by $\Irr_{1}(M)\subset S$ the subset of irrelevant minimal paths. We denote by $\Rel(M) \subset \Power(S)$  the subset of relevant paths, where a collection of minimal paths is \emph{relevant} if it is not irrelevant.
\end{defn}

\begin{rmk}
  Since $T'\subset T\subset S$ implies $U_{T} \subset U_{T'}$, the set $\Irr(M)$ is closed under taking unions with arbitrary elements of $\Power(S)$.
\end{rmk}

\begin{defn}\label{defn:decoration_L-saturated}
  Let $(\Delta^{n}\otimes \Delta^{m})_{\mathrm{d}}$ be a decorated simplicial set, where the decoration extends that on the Gray tensor product $\Delta^n_\flat \otimes \Delta^m_\flat$. The decoration is said to be \emph{L-saturated} if for every rectilinear path $M$ which is the image under $L_{n,m}$ of a (non-degenerate) thin simplex $\vec{\mathbf{u}}$, the simplex associated to any relevant minimal $N\in \Rel(M)$ is also thin.
\end{defn}

We are now ready to state the main result of this subsection, whose proof, after some preparation, will appear at the end of the subsection.

\begin{prop}~\label{prop:transfer_L-saturated_decorations}
  Let $(\Delta^{n}\otimes \Delta^{m})_{\mathrm{d}}$ be equipped with an L-saturated decoration. Then, the equivalence of \zcref{prop:unmarkedcase} restricts to an equivalence of spaces
  \[
    \Map^{\cart}(\sd(\Delta^{n}\otimes \Delta^{m})_{\mathrm{d}}, \cC) \simeq \Map^{\mix}((\sd(\Delta^{n})\times \sd(\Delta^{m}))_{\mathrm{d}}, \cC), \teq \label{eq:transfer_L-saturated_decoration}
  \]
  where the right-hand side denotes the space of functors satisfying the mixed Segal conditions for $(\sigma,\tau) \in \sd(\Delta^{n})\times \sd(\Delta^{m})$, when $(\sigma,\tau)$ is the image under $Z_{n,m}$ of an L-shaped path in $\sd(\Delta^n\otimes \Delta^m)$ corresponding to a thin simplex in $(\Delta^n \otimes \Delta^m)_\mathrm{d}$ (see also \zcref{prop:mixed_segal_suffices_for_L_shaped}).
\end{prop}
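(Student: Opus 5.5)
Both sides of \zcref{eq:transfer_L-saturated_decoration} are full subspaces of the two sides of \zcref{prop:unmarkedcase}. The equivalence there is given by right Kan extension along $\ell_{n,m}$ followed by precomposition with $L_{n,m}$, and it identifies $\Map(\SD,\cC)$ with the functors $g:R^{n,m}\to\cC$ that satisfy all localized Segal conditions (\zcref{prop:SegaliffKE}). It therefore suffices to show the following: for a functor $\bar f:\SD\to\cC$ with $f=(\ell_{n,m})_*\bar f$, the functor $f\circ L_{n,m}$ sends the Segal cubes of every thin simplex of $(\Delta^n\otimes\Delta^m)_{\mathrm d}$ to limit cubes if and only if $\bar f$ satisfies the mixed Segal conditions for the images of thin L-shaped paths. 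The corner Segal conditions hold automatically because $f\circ L_{n,m}$ inverts corner collapses (\zcref{lem:corner_Segal_vs_corner_collapse}), so only the localized Segal cubes need to be considered.

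The \emph{only if} direction is immediate from \zcref{prop:mixed_segal_suffices_for_L_shaped}. Every L-shaped thin simplex gives localized Segal cubes in $R^{n,m}$, and these are limit cubes exactly when $\bar f$ satisfies the corresponding mixed Segal conditions.

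For the converse, let $\vec{\mathbf u}$ be an arbitrary non-degenerate thin simplex and let $M=L_{n,m}(\vec{\mathbf u})$. I would reuse the zigzag from the proof of \zcref{prop:SegaliffKE}. Because $f$ is right Kan extended, the limit of $f$ over the localized Segal cube on $M$ can be rewritten as
\[
\lim_{S\in\Power_*(k)}\lim_{I\in\Power_*(\ell)}\lim_{V\in(U_S)_{M_I/}}\bar f(V),
\]
and $f(M)\simeq\lim_{S}\lim_{V\in U_S}\bar f(V)$. This reduces the Segal condition at $M$ to a family of conditions indexed by $S\in\Power_*(k)$, one for each join $N_S$ of minimal L-shaped paths under $M$.

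I would split these conditions into two kinds. For irrelevant $S$, meaning $(U_S)_{M_{\main}/}=\varnothing$, the proof of \zcref{lem:constantdirectionSegalcube} should adapt to show that the cube $I\mapsto(U_S)_{M_I/}$ is degenerate in a direction coming from the main part. The corresponding inner cube is then trivially a limit cube by \zcref{lem:degenerate}. For relevant $S$, L-saturation says that $N_S$ is thin. The localized Segal cube of $M$ restricted to $U_S$ should then be identified, after cube and pasting moves (\zcref{const:cube_moves}, \zcref{ex:pasting}), with a localized Segal cube of $N_S$. By \zcref{prop:mixed_segal_suffices_for_L_shaped} that cube is limit exactly when the mixed Segal conditions hold at the pair $Z_{n,m}(N_S)$. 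Combining the two kinds with \zcref{lem:cubefacelemma} then gives the Segal condition at $M$.

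I expect the main obstacle to be this last identification for relevant $S$. It requires matching the lacunae of the localized Segal cube of $\vec{\mathbf u}$, whose parity is determined by the main part, with the lacunae of the corresponding Segal cube of $N_S$, and checking that the prongs falling outside $N_S$ are degenerate directions. I would handle it by a case split on how $N_S$ meets the corner of the main part, using the three cases of \zcref{lem:constantdirectionSegalcube}, and would verify each case with lacuna diagrams.
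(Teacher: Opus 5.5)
Your \emph{only if} direction, your disposal of the corner conditions, and your irrelevant case all agree with the paper. The reduction for relevant joins, however, does not work.

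Write $C(T,I)=\lim_{V\in (U_T)_{M_I/}}\bar f(V)$ on $\Power(S\sqcup K)$. Your ``family of conditions indexed by $S$'' is \zcref{lem:cubefacelemma} with $I_0=K$ and $I_1=S$. That requires every row $C(T,-)$ with $T\neq\varnothing$ to be a limit cube on its own.

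This is true for irrelevant $T$. It is also true for a single relevant minimal $N_t$ whose corner point $\mathbf{c}_t$ is \emph{not} deleted by the chosen Segal cube of $\vec{\mathbf u}$, since that row is the outer mixed Segal cube of $N_t$. It is false when $\mathbf{c}_t$ \emph{is} deleted. The prong removing $\mathbf{c}_t$ does not cut $U_t$ along a Segal prong of $N_t$, and no cube or pasting move inside that row can repair this.

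Here is a concrete example. Take $n=m=1$ and declare thin the diagonal, the top edge $N_1=(\{0\},\{0,1\})$ and the right edge $N_2=(\{0,1\},\{1\})$. This decoration is L-saturated, and $M$ is the upper-right path. For the Segal cube deleting $(0,0)$, the prong $M_{\{i\}}$ is the point $(\{1\},\{1\})$, and $(U_{\{1\}})_{M_{\{i\}}/}=\varnothing$. Hence:
\begin{itemize}
\item the row for $\{1\}$ is $\bar f(N_1)\to *$;
\item the row for $\{1,2\}$ is $\bar f(\{0\},\{1\})\to *$.
\end{itemize}
Neither map is an equivalence when $\bar f$ is constant with non-terminal value. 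Yet such an $\bar f$ satisfies all the mixed conditions, and its Kan extension satisfies all the localized Segal conditions.

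What is missing is the paper's split of relevant minimals into Segal minimals and special minimals (those whose corner point is deleted). For a special $N_t$, the corner directions of its inner mixed Segal cube come from the $S$-directions of the neighbouring minimals. That cube is $C(\{t\}\cup -)$ on $\{t_0,t_1\}\cup(K\setminus\{i\})$, or on $\{t_0\}\cup K$ when there is one neighbour (\zcref{lem:special_path}). In the example it is the square with vertices $\bar f(N_1)$, $\bar f(\{0\},\{1\})$, $*$, $*$, which is precisely the inner mixed condition for $N_1$.

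So the $S$-directions cannot be treated row by row. You need to apply \zcref{lem:cubefacelemma} iteratively with non-product splittings of $S\sqcup K$:
\begin{itemize}
\item first strip off the irrelevant minimals;
\item then the Segal minimals, where joins of strangers have a constant direction by \zcref{lem:neighbors};
\item finally the special minimals, using their inner mixed cubes and checking that the remaining faces have constant directions.
\end{itemize}
You correctly identify the relevant case as the main obstacle, but the resolution you sketch would require proving a false statement.
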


Note that both spaces appearing in \zcref{eq:transfer_L-saturated_decoration} are subspaces of the ones in \zcref{prop:unmarkedcase}. Hence, it suffices to trace, for every decorated simplex $\vec{\mathbf{u}} \in \sd(\Delta^{n}\otimes\Delta^{m})_{\mathrm{d}}$ that is not decorated in $\sd(\Delta^{n}_{\flat} \otimes\Delta_{\flat}^{m})$, the corresponding limit conditions through the equivalence of \zcref{prop:unmarkedcase}.

\begin{conv} \label{conv:L_saturatedness_and_chosen_Segal}
  For the remainder of this subsection, we fix an L-saturated decoration $(\Delta^n \otimes \Delta^m)_\mathrm{d}$, a rectilinear path $M$ corresponding to a non-degenerate thin simplex $\vec{\mathbf{u}}$ of $(\Delta^n\otimes \Delta^m)_\mathrm{d}$ (that is not thin in $\Delta^n\otimes \Delta^m$), and one of the Segal cubes for $\vec{\mathbf{u}}$.

  We denote by $M$ the rectilinear path corresponding to $L_{n,m}(\vec{\mathbf{u}})$ and by $\{M_{i}\}_{i\in K}$ a localized Segal cover. In the following, we adopt the notation from \zcref{prop:SegaliffKE}. In particular, we denote by $\{N_{s}\}_{s\in S}$ the collection of minimal L-shaped paths under $M$.
\end{conv}

The proof of \zcref{prop:transfer_L-saturated_decorations} relies heavily on the following construction.

\begin{const}
  Let $f:R^{n,m}\to \scr{C}$ be a functor which satisfies the localized Segal conditions with respect to the minimal markings on $\Delta^n$ and $\Delta^m$. By \zcref{prop:SegaliffKE}, $f$ is right Kan extended from $\sd(\Delta^n) \times \sd(\Delta^m)$ along the fully faithful functor $\ell_{n,m}$. As in the proof of \zcref{prop:SegaliffKE}, we have the following equivalences
  \[
    \lim_{I\in \Power_*(K)}f(M_I)\simeq \lim_{I\in \Power_*(K)} \lim_{V\in \SD_{M_I/}} f(V)\simeq \lim_{I\in \Power_*(K)}\lim_{T\in \Power_*(S)} \lim_{V\in (U_T)_{M_I/}} f(V)\simeq \lim_{T\in \Power_*(S)}  \lim_{I\in \Power_*(K)}\lim_{V\in (U_T)_{M_I/}} f(V).
  \]
  Using the functoriality of limits, we can extend this to a cubical diagram
  \[
    C_{\vec{\mathbf{u}}}:\Power(S \sqcup K)\simeq \Power(S)\times \Power(K) \to \scr{C}, \quad (T,I) \mapsto \lim_{V\in (U_{T})_{M_{I}/}} f(V)
  \]
  by defining $C_{\vec{\mathbf{u}}}(T,\varnothing)= \lim_{V\in (U_{T})_{M/}} f(V)\simeq f(N_{T})$ and $ C_{\vec{\mathbf{u}}}(\varnothing, I)\simeq \lim_{V\in (\SD)_{M_{I}/}} f(V)\simeq f(M_{I})$. Note that in particular, the restriction $C_{\vec{\mathbf{u}}}(\varnothing,-)$
  agrees with the localized Segal cube of $\vec{\mathbf{u}}$. We call the cube $C_{\vec{\mathbf{u}}}$ the \emph{hyper Segal cube}.

  Under the bijection $\Power(S) \times \Power(K) \simeq \Power(S\sqcup K)$, $(T, I)$ corresponds to $T \cup I$. Thus, when treating the two directions on equal footing, we also use the notation $C_{\vec{\mathbf{u}}}(-)$ instead of $C_{\vec{\mathbf{u}}}(-,-)$.
\end{const}

The idea for the proof of \zcref{prop:transfer_L-saturated_decorations} is to separate the indexing set of the hyper Segal cube into the components corresponding to irrelevant and relevant paths and repeatedly apply \zcref{lem:cubefacelemma} to this situation. To this end, we first need to study the properties of these paths separately.

\begin{rmk}
  As most proofs below can be best understood visually in terms of paths on a grid, we include many figures to aid the readers. Since the localized Segal conditions on $M$ originate from Segal conditions $\vec{\mathbf{u}}$, we draw our figures in terms of $\vec{\mathbf{u}}$ and $\vec{\mathbf{u}}_\main$ rather than $M$ and $M_\main$.
\end{rmk}

\begin{lem}\label{lem:irrelevants constant}
  Let $T \in \Irr(M)$ be an irrelevant path. Then, there exists an index $i_{0} \in K$ such that for all $I \in \Power(K)$, the induced morphism
  \[
    (U_{T})_{M_{I \cup \{i_0\}}/} \rightarrow (U_{T})_{M_{I/}}
  \]
  is an isomorphism of posets.
\end{lem}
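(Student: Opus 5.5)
The plan is to reduce to the situation of \zcref{lem:constantdirectionSegalcube}, using the definition of irrelevance to locate the direction $i_0$. Since $T$ is irrelevant, $(U_T)_{M_{\main}/}$ is empty: no L-shaped path above $N_T$ lies under $M_{\main}$. Geometrically, the L-shaped path $N_T$ therefore fails to cover the main part of $\vec{\mathbf{u}}$. It lies either strictly above the last vertical component of the main part or strictly to the right of its first horizontal component, in the sense of the grid pictures used in \zcref{lem:constantdirectionSegalcube}. First I will make this dichotomy precise. Write the main part as in that proof, with a vertical segment, then a (possibly trivial) diagonal, then a horizontal segment. Then $N_T$ either misses a vertex of the vertical segment of the main part or misses a vertex of its horizontal segment, in such a way that every $V\geq N_T$ in $\SD$ also misses it.

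Next I will choose $i_0$. The chosen Segal cube for $\vec{\mathbf{u}}$ is non-corner (\zcref{conv:L_saturatedness_and_chosen_Segal}), so its prongs $M\to M_i$ remove vertices. Because the cube is a Segal cube of $\vec{\mathbf{u}}$, one of its lacunae must sit at a vertex of the main part that $N_T$ does not see. Concretely, a Segal cube has alternating lacunae, so among two consecutive vertices of the vertical (resp. horizontal) segment of the main part, exactly one is removed. Call the corresponding prong $M_{i_0}$. This is analogous to the choice among $A,B,C,D$ in \zcref{lem:constantdirectionSegalcube}.

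Then I will show $(U_T)_{M_{I\cup\{i_0\}}/}\to (U_T)_{M_I/}$ is an isomorphism for every $I$. Both sides are subposets of $U_T$: the target consists of $V\in U_T$ with $V\geq M_I$, and the source of $V\in U_T$ with $V\geq M_I\vee M_{i_0}$. Because $M_{I\cup\{i_0\}}$ is the join, it suffices to show that any $V\in U_T$ with $V\geq M_I$ automatically satisfies $V\geq M_{i_0}$. Here $\geq$ in $R^{n,m}$ is reverse refinement modulo corner collapses, and elements of $\SD$ are L-shaped. The condition $V\geq M_{i_0}$ only adds the requirement that $V$ avoid the vertex removed by $i_0$ (up to the subgrid condition of $\preceq$). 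Since $V\geq N_T$ already avoids that whole region of the main part by the first step, this requirement is automatic.

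The main obstacle is the first step together with the compatibility in the last step. I must check carefully, using the order $\preceq$ (the below-left condition and the minimal-subgrid condition), that emptiness of $(U_T)_{M_{\main}/}$ really forces $N_T$ to miss a full segment containing a lacuna of the chosen cube. I must also check that removing that lacuna does not alter the minimal subgrid condition relevant for $V$. I would first try the three-case analysis of \zcref{lem:constantdirectionSegalcube} (above, to the right, in the middle), observing that the middle case cannot occur for irrelevant $T$, since there $N_T$ lies under $M_{\main}$.
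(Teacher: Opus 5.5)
Your plan is correct and follows the paper's proof. In both, irrelevance places $N_T$ entirely above, or entirely to the right of, the main part. One then takes $i_0$ to be whichever of the two consecutive vertices $A,B$ of the vertical segment (resp.\ $C,D$ of the horizontal segment) the Segal cube removes, and observes that this removal changes nothing for elements under $N_T$.

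The subgrid check you flag works because $A,B$ share a column (resp.\ $C,D$ share a row), while their rows (resp.\ columns) lie strictly below (resp.\ to the left of) anything used by an element of $U_T$. Any corner created by the removal also sits in that unreachable region. In fact, your join reduction shows it suffices to verify the single relation $M_{i_0}\preceq N_T$.
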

\begin{proof}
  The argument is exactly the same as in the proof of \zcref{lem:constantdirectionSegalcube}. As in there, we can assume that $i_0 \notin I$ and moreover, it suffices to consider the case where $T$ is given by a single minimal $N_t$. By definition, if $N_t$ is irrelevant, the minimal $N_t$ either lies completely above or to the right of $M_{\main}$. Schematically, we have the following figure,
  \[
    \begin{tikzpicture}
      \drawGrid{5}{5}
      \begin{scope}[on background layer]
        \begin{scope}[red]
          \drawLine{6pt}{red}  (00.center) -- (11.center) -- (21.center) -- (32.center) -- (33.center) -- (44.center);
          \path (-0.5,0.5) node {\huge$\ddots$};
          \path (4.5,-4.5) node {\huge$\ddots$};
          \path (1.5,-1) node {$A$};
          \path (1.5,-2) node {$B$};
          \path (2,-2.5) node {$C$};
          \path (3,-2.5) node {$D$};
        \end{scope}

        \drawLine{3pt}{cyan} (11.center) -- (21.center) -- (32.center) -- (33.center);

        \drawLine{3pt}{blue}  (00.center) -- (01.center) -- (02.center) -- (03.center) -- (04.center);

        \drawLine{3pt}{green} (04.center) -- (14.center) --(24.center) -- (34.center) -- (44.center);
      \end{scope}
    \end{tikzpicture}
  \]
  where the red (resp. cyan) path represents $\vec{\mathbf{u}}$ (resp. $\vec{\mathbf{u}}_\main$), while the blue and green paths represent irrelevant paths. Note that the Segal cube we are considering must involve either removing $A$ or $B$, and must involve removing either $C$ or $D$. We then have the following cases:
  \begin{itemize}
    \item If $N_{t}$ lies above $M_{\main}$, as illustrated by the blue path in the schematics, removing $A$ or $B$ does not change anything for things under $N_{t}$.
    \item If $N_{t}$ lies to the right of $M_{\main}$, as illustrated by the green path in the schematics, removing $C$ or $D$ does not change anything under $N_{t}$.
  \end{itemize}
\end{proof}

In contrast to the irrelevant paths, the relevant paths behave differently.

\begin{ntt}
  Let $N_t$ be a minimal L-shaped path corresponding to some $t\in S$, which corresponds to a pair of non-degenerate simplices $(\sigma, \tau) \in \sd(\Delta^n) \times \sd(\Delta^m)$. We call the vertex of $N_t$ arising as the image of $(\sigma(\dim \sigma),\tau(0))$ the corner point of $N_t$ and denote it by $\mathbf{c}_t$.
\end{ntt}

Note that the corner point of a minimal $N_t$ agrees with one of the vertices of $\vec{\mathbf{u}}$.

\begin{ntt}
  We call a relevant minimal path $N_{t}$ \emph{Segal} if the chosen Segal cube for $M$ (see \zcref{conv:L_saturatedness_and_chosen_Segal}) does not remove the corner point of $N_{t}$. We denote the collection of Segal minimals by $\Segal(M)$. We call a relevant minimal $N_{t}$ \emph{special} if it is not Segal. We denote the collection of special paths by $\Special(M)$.
\end{ntt}

\begin{lem}\label{lem:Segal paths are limit}
  Let $N_{t}$ be a Segal path for $M$. Then, the $K$-shaped cube $C_{\vec{\mathbf{u}}}(\{t\},-):\Power(K)\rightarrow \cC$ is the outer mixed Segal cube for $N_t$ and hence, is limit, by the L-saturatedness assumption (see \zcref{conv:L_saturatedness_and_chosen_Segal}).
\end{lem}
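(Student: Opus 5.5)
Fix $t \in \Segal(M)$. The plan is first to identify the $K$-shaped cube $C_{\vec{\mathbf{u}}}(\{t\},-)$ as a cube in $\SD$, and then to read off its limit property from the L-saturated decoration.

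By construction, $C_{\vec{\mathbf{u}}}(\{t\},I)=\lim_{V\in (U_t)_{M_I/}} f(V)$, where $U_t=\SD_{N_t/}$. Since $N_t$ is an L-shaped path, the poset $(U_t)_{M_I/}$ has an initial object whenever it is nonempty, namely the L-shaped path $N_t\vee M_I$ (the join computed in $\SD$). Concretely, it is obtained from the pair of simplices $(\sigma,\tau)$ underlying $N_t$ by deleting the lacunae that $M_I$ imposes. Hence $C_{\vec{\mathbf{u}}}(\{t\},I)\simeq \bar f(N_t\vee M_I)$, where $\bar f$ is the restriction of $f$ to $\SD$.

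The combinatorial step is to check what the cover does to $N_t$. Each arm $M_i$ removes one vertex of $\vec{\mathbf{u}}$, so it either removes a vertex lying on $N_t$ or leaves $N_t$ unchanged.
\begin{enumerate}
\item Arms of the second kind give a direction in which the cube is constant.
\item Arms of the first kind, by the definition of Segal minimals, never remove the corner point $\mathbf{c}_t$.
\end{enumerate}
Moreover, the chosen Segal cube of $\vec{\mathbf{u}}$ removes vertices of a fixed parity. I would check, using the parity bookkeeping already used in the proof of \zcref{prop:mixed_segal_suffices_for_L_shaped}, that the vertices it removes from $N_t$ are exactly those of the Segal cube of $\sigma$ omitting the $\dim\sigma$-lacuna, together with those of the Segal cube of $\tau$ omitting the $0$-lacuna. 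The product of these two cubes is precisely the outer mixed Segal cube of $(\sigma,\tau)$. Any constant directions can then be discarded by \zcref{lem:cubefacelemma} (equivalently \zcref{lem:degenerate}), which shows that $C_{\vec{\mathbf{u}}}(\{t\},-)$ is limit if and only if this outer mixed cube is.

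Next, because $t$ is relevant and the decoration is L-saturated (\zcref{defn:decoration_L-saturated}), the simplex associated to $N_t$ is thin. By hypothesis $f$ satisfies the Cartesian conditions for the L-saturated decoration. Passing through \zcref{prop:mixed_segal_suffices_for_L_shaped}, this means $\bar f$ satisfies the mixed Segal conditions for $(\sigma,\tau)$, and in particular sends the outer mixed Segal cube to a limit cube.

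The main obstacle is the first step: verifying that the removed vertices restricted to $N_t$ give exactly the outer cube, with parities matching, rather than the inner cube or some degenerate mixture. I would handle it by cases on whether $\mathbf{c}_t$ lies in the vertical or the horizontal part of $\vec{\mathbf{u}}_\main$. In each case I would compare parities of positions in $\vec{\mathbf{u}}$ with positions in $\sigma$ and $\tau$, using that the corner point is excluded from the removed set.
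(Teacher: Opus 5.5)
Your outline is the paper's argument: the slices $(U_t)_{M_I/}$ have initial objects $M_I\cap N_t$, these form the outer mixed Segal cover of the thin L-shaped path $N_t$, and that cube is limit. However, the treatment of your ``arms of the second kind'' is both unnecessary and misstated, and it replaces the one combinatorial fact that is actually needed.

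\emph{The misstatement.} Suppose an arm $i$ had $(U_t)_{M_{I\cup\{i\}}/}=(U_t)_{M_I/}$ for all $I$, including $I=\varnothing$. Then \zcref{lem:degenerate} makes the cube limit outright. So \zcref{lem:cubefacelemma} would not give ``limit iff the remaining subcube is''. Nor would you get the lemma's identification of $C_{\vec{\mathbf{u}}}(\{t\},-)$ with the outer mixed cube, which \zcref{lem:special_path} reuses.

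\emph{The missing fact: such arms never occur.} Note first that $N_t$ is the L-shaped path with corner $\mathbf{c}_t$ through all rows of $M$ up to $\mathbf{c}_t$ and all columns from $\mathbf{c}_t$ on. Relevance forces $\mathbf{c}_t$ to lie between the end of the last vertical component and the start of the first horizontal component of $\vec{\mathbf{u}}$ (suitably interpreted if one is absent); otherwise $\sigma$ misses the rows, or $\tau$ the columns, of $M_{\main}$.
\begin{itemize}
\item Hence no horizontal step precedes $\mathbf{c}_t$ and no vertical step follows it.
\item So each row of $\sigma$ other than that of $\mathbf{c}_t$ is the row of exactly one vertex of $\vec{\mathbf{u}}$, lying before $\mathbf{c}_t$. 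Likewise each column of $\tau$ other than that of $\mathbf{c}_t$ is the column of exactly one vertex, lying after it.
\item Since $\mathbf{c}_t$ is never deleted, the initial object of $(U_t)_{M_I/}$ is $N_t$ minus exactly the vertices corresponding to $I$.
\end{itemize}
Thus $N_t$ has the same length as $\vec{\mathbf{u}}$ and every arm acts nontrivially. Positions and parities match on the nose, so lacunae avoiding $\mathbf{c}_t$ go to the same-parity Segal cube of $N_t$ avoiding its corner, i.e.\ the outer mixed cube. This uniform argument also covers corners at interior vertices of the diagonal stretch of $\vec{\mathbf{u}}_{\main}$, which your vertical/horizontal case split omits.

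Finally, do not draw the limit property from ``$f$ satisfies the Cartesian conditions for the L-saturated decoration''. This lemma is a step in proving, in \zcref{prop:transfer_L-saturated_decorations}, that the mixed conditions imply Cartesianness, so that assumption is circular. The hypothesis actually in force is that $\bar f$ satisfies the mixed Segal conditions at images of thin L-shaped paths. $N_t$ is such a path by L-saturatedness, and this gives the limit directly.
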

\begin{proof}
  We consider the following schematic situation:
  \begin{figure}[htb]
    \begin{tikzpicture}
      \drawGrid{5}{5}

      \begin{scope}[on background layer]
        \begin{scope}[red]
          \drawLine{9pt}{red}  (00.center) -- (11.center) -- (21.center) --(32.center) -- (33.center) -- (44.center);
          \path (-0.5,0.5) node {\huge$\ddots$};
          \path (4.5,-4.5) node {\huge$\ddots$};
          \path (1.5,-1) node {$A$};
          \path (2,-2.5) node {$B$};
          \path (4.5,-4) node {$C$};
        \end{scope}

        \drawLine{6pt}{cyan} (11.center) -- (21.center) --(32.center) -- (33.center);

        \drawLine{3pt}{blue} (01.center) -- (11.center) -- (21.center) -- (22.center) -- (23.center) --(24.center);
      \end{scope}
    \end{tikzpicture}
    \caption{Schematic description of the Segal path. Note that removing either of the vertices $A,B$ or $C$ in the Segal condition for $\vec{\mathbf{u}}$ corresponds to a direction in the Segal cube for $N_{t}$.}\label{fig:schema Segal}
  \end{figure}
  First note that by assumption on the decoration, the simplex associated to the path $N_{t}$ is itself decorated. Note that in this case for every $I\in \Power(K)$ the slice $(U_t)_{M_I/}$ has an initial object, which we denote by $M_I\cap N_t$, and that the objects $M_I\cap N_t$ form a Segal cover for the L-shaped path $N_t$ (consider \zcref{fig:schema Segal} for a schematic description). In this case we then have
  \[
    \lim_{I\in \Power(K)_\ast}\lim_{V\in (U_t)_{M_I/}} f(V)\simeq \lim_{I\in \Power(K)_\ast} f(M_I\cap N_t)\simeq f(N_t),
  \]
  as desired.
\end{proof}

\begin{defn} \label{defn:linera_order_on_minimals_and_neighbor_vs_stranger}
  The linear order on the vertices of the simplex $\vec{\mathbf{u}}$ induces a linear order on the set of minimals $\{N_{s}\}_{s\in S}$ by declaring $N_{s}\leq N_{t}$ if and only if $\mathbf{c}_{s}\leq \mathbf{c}_{t}$ in the linear order of $\vec{\mathbf{u}}$.

  Let $N_{s},N_{t}$ be two relevant minimal paths with $N_{s}<N_{t}$. We call $N_{s}$ and $N_{t}$ \emph{neighbors} if there exists no vertex $\mathbf{c}_{s}<\mathbf{v} <\mathbf{c}_{t}$ that gets removed by the Segal cube of $\vec{\mathbf{u}}$. Otherwise, we call $N_{s}$ and $N_{t}$ \emph{strangers}.
\end{defn}

\begin{lem}\label{lem:neighbors}
  Let $T$ be a collection of relevant minimal paths such that for some $t_{0},t_{1}\in T$, the corresponding minimals $N_{t_{0}}$ and $N_{t_{1}}$ are stranger. Denote by $\mathbf{c}_{t_{0}}< \mathbf{v}<\mathbf{c_{t_{1}}}$ a vertex that gets removed by the Segal cube for $\vec{\mathbf{u}}$. Then the $K$-shaped cube $C_{\vec{\mathbf{u}}}(T,-):\Power(K)\rightarrow \cC$ is constant in the direction that removes the vertex $\mathbf{v}$.
\end{lem}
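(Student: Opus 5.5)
The plan is to follow the pattern of \zcref{lem:constantdirectionSegalcube} and \zcref{lem:irrelevants constant}: reduce the statement about the $K$-shaped cube $C_{\vec{\mathbf{u}}}(T,-)$ to a statement about posets. Let $i_{\mathbf{v}}\in K$ be the direction of the chosen Segal cube for $\vec{\mathbf{u}}$ that removes $\mathbf{v}$. By construction $C_{\vec{\mathbf{u}}}(T,I)=\lim_{V\in (U_T)_{M_I/}} f(V)$, so it suffices to show the following: for every $I\in\Power(K)$ with $i_{\mathbf{v}}\notin I$, the inclusion
\[
  (U_T)_{M_{I\cup\{i_{\mathbf{v}}\}}/}\to (U_T)_{M_I/}
\]
is an isomorphism of posets. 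The case $i_{\mathbf{v}}\in I$ is trivial. Since $U_T=\bigcap_{t\in T}U_{\{t\}}\subset U_{\{t_0\}}\cap U_{\{t_1\}}$, it is enough to prove the stronger statement (in the spirit of (*) in \zcref{lem:constantdirectionSegalcube}): for every subposet $V$ of $\SD_{N_{t_0}/}\cap \SD_{N_{t_1}/}$, the map $V_{M_{I\cup\{i_{\mathbf{v}}\}}/}\to V_{M_I/}$ is an isomorphism. So we may replace $T$ by $\{t_0,t_1\}$.

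Next I would describe the elements of $\SD_{N_{t_0}/}\cap\SD_{N_{t_1}/}$ concretely. Since $\sd$ carries the reverse-inclusion order, such an element is a pair $(\sigma',\tau')$ with $\sigma'$ a face of both $\sigma_{t_0}$ and $\sigma_{t_1}$, and $\tau'$ a face of both $\tau_{t_0}$ and $\tau_{t_1}$. Equivalently, its L-shaped image under $\ell_{n,m}$ has its vertical leg inside the common part of the vertical legs of $N_{t_0}$ and $N_{t_1}$, and its horizontal leg inside the common part of their horizontal legs. The vertical leg of $N_{t_0}$ ends at the row of $\mathbf{c}_{t_0}$, and the horizontal leg of $N_{t_1}$ starts at the column of $\mathbf{c}_{t_1}$. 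Since $\mathbf{c}_{t_0}<\mathbf{v}<\mathbf{c}_{t_1}$ along $\vec{\mathbf{u}}$, the corner of any such L-shaped path lies weakly above-left of $\mathbf{c}_{t_0}$ in row and weakly right of $\mathbf{c}_{t_1}$ in column. Hence the whole path stays in the region of the grid "before $\mathbf{c}_{t_0}$ vertically, after $\mathbf{c}_{t_1}$ horizontally". I would draw the schematic with $\vec{\mathbf{u}}$, $\vec{\mathbf{u}}_\main$, the two minimals, and $\mathbf{v}$ strictly between their corner points, as in the figures for \zcref{lem:irrelevants constant,lem:Segal paths are limit}.

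The key step is then to verify, using the description of $\preceq$ on $R^{n,m}$ by upward rays/below-left, that for $V$ in this region we have $M_I\preceq V$ if and only if $M_{I\cup\{i_{\mathbf{v}}\}}\preceq V$. In both cases the minimal subgrid condition is automatic, because both relations already hold after passing to $N_{t_0}$ and $N_{t_1}$. Removing $\mathbf{v}$ from $M_I$ only changes the path $L_{n,m}(\ldots)$ locally, between the neighbouring vertices of $\mathbf{v}$ on $\vec{\mathbf{u}}$: it inserts a new corner point or shortcut strictly inside the rectangle spanned by $\mathbf{c}_{t_0}$ and $\mathbf{c}_{t_1}$. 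Any upward ray hitting $V$ meets the modified part of $M_I$ either in both versions or in neither, in the same order relative to $V$. So the below-left condition is unchanged.

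I expect this last geometric check to be the main obstacle. It needs a careful case split according to whether $\mathbf{v}$ is a corner of $M$ (vertical-to-horizontal or horizontal-to-vertical) or an interior point of a straight segment. It also needs care at the boundary, where $V$ touches the row of $\mathbf{c}_{t_0}$ or the column of $\mathbf{c}_{t_1}$. I would handle these by reducing to the two-vertex neighbourhood of $\mathbf{v}$ and using that $\mathbf{v}\neq\mathbf{c}_{t_0},\mathbf{c}_{t_1}$ strictly.
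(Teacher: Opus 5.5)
Your plan is correct and is essentially the paper's proof. You reduce to the two strangers, and your reduction via $U_T\subseteq U_{\{t_0,t_1\}}$ plus a statement for arbitrary subposets is a harmless variant of the paper's identification $U_T\cong U_{\{t_{\min},t_{\max}\}}$. You then show that $(U_{\{t_0,t_1\}})_{M_{I\cup\{i_{\mathbf{v}}\}}/}\to(U_{\{t_0,t_1\}})_{M_I/}$ is an isomorphism, which the paper only asserts is easy to see.

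Your justification of the subgrid condition by appeal to $N_{t_0},N_{t_1}$ does not work, and the ray argument is not needed. The fact doing the work is that a Segal cube never removes two adjacent vertices, so the neighbours of $\mathbf{v}$ survive in $M_I$. Hence the modified segment, in both versions, lies in rows $\geq$ that of $\mathbf{c}_{t_0}$ and columns $\leq$ that of $\mathbf{c}_{t_1}$, whereas every element of $U_{\{t_0,t_1\}}$ lies in rows $\leq$ and columns $\geq$ these. So no modified point is strictly above-right of such an element. Moreover, the only row or column that can drop out of the minimal subgrid is that of $\mathbf{v}$, and if it meets this region it is still occupied by a surviving neighbour. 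This settles both conditions without any case split.
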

\begin{proof}
  Let $t_{\min},t_{\max}\in T$ be the minimals that are furthest apart in the linear order on the vertices of $\vec{\mathbf{u}}$ defined in \zcref{defn:linera_order_on_minimals_and_neighbor_vs_stranger}. Note, that we have an equivalence of posets $(U_{T})_{M_{I}/}\simeq (U_{\{t_{\min},t_{\max}\}})_{M_{I}/}$ (see \zcref{fig:poset_depends_on_minimals}).
  \begin{figure}[htb]
    \begin{tikzpicture}
      \drawGrid{4}{4}

      \begin{scope}[on background layer]
        \drawLine{6pt}{red}  (00.center) -- (10.center) -- (21.center) -- (32.center) -- (33.center);

        \drawLine{3pt}{blue} (00.center) -- (10.center) -- (11.center) -- (12.center) --(13.center);

        \drawLine{3pt}{teal} (01.center) -- (11.center) -- (21.center) -- (22.center) -- (23.center);

        \drawLine{3pt}{green} (02.center) -- (12.center) -- (22.center) -- (32.center) -- (33.center);
      \end{scope}
    \end{tikzpicture}
    \caption{Schematic depiction of the equivalence $(U_{T})_{M_{I}/}\simeq (U_{\{t_{\min},t_{\max}\}})_{M_{I}/}$. Note that the area cut out by blue and green path does not depend on the intermediate red path.}\label{fig:poset_depends_on_minimals}
  \end{figure}
  Hence, it suffices to treat the case that $T=\{t_{0},t_{1}\}$ consists of two strangers. Let $i_{0}\in K$ be the direction that removes the vertex $\mathbf{v}$ in between $\mathbf{c}_{t_{0}},\mathbf{c}_{t_{1}}$. Then it is easy to see that for every $I$, we have an isomorphism of posets
  \[
    (U_{\{t_{0},t_{1}\}})_{M_{I\cup\{i_{0}\}}/} \rightarrow (U_{\{t_{0},t_{1}\}})_{M_{I}/}. \teq \label{eq:strangers}
  \]
  Hence, the cube $C_{\vec{\mathbf{u}}}(T,-)$ is constant in direction $i_{0}$ as desired.
  \begin{figure}[htb]
    \begin{tikzpicture}
      \begin{scope}
        \drawGrid{4}{4}

        \begin{scope}[on background layer]
          \drawLine{6pt}{red} (00.center) -- (10.center) -- (21.center) -- (32.center) -- (33.center);
          \drawLine{3pt}{blue} (00.center) -- (10.center) -- (11.center) -- (12.center) --(13.center);
          \drawLine{3pt}{green} (02.center) -- (12.center) -- (22.center)  --(32.center) -- (33.center);
        \end{scope}

        \node (A) at (-0.5,-1) {$\mathbf{c}_{t_{0}}$};
        \node (B) at (0.5,-2) {$\mathbf{v}$};
        \node (C) at (1.5,-3) {$\mathbf{c}_{t_{1}}$};
      \end{scope}

      \begin{scope}[xshift=6cm]
        \drawGrid{4}{4}

        \begin{scope}[on background layer]
          \drawLine{6pt}{red}  (00.center) -- (10.center) -- (32.center) -- (33.center);
          \drawLine{3pt}{blue} (00.center) -- (10.center) -- (12.center) -- (13.center);
          \drawLine{3pt}{green} (02.center) -- (12.center) -- (32.center) -- (33.center);
        \end{scope}
      \end{scope}

      \draw[->](3.5,-1.5) to (5.5,-1.5);
    \end{tikzpicture}
    \caption{Schematic depiction of the equivalence in \zcref{eq:strangers}. Note that the area to the top right of the blue, green and red path does not change under removing the vertex $\mathbf{v}$}
  \end{figure}
\end{proof}

\begin{lem}\label{lem:special_path}
  Let $N_{t}$ be a special path and denote by $i\in K$ the direction in the cube $C_{\vec{\mathbf{u}}}$ that removes the vertex $\mathbf{c}_{t}$.
  \begin{enumerate}
    \item \label{item:lem:special_path:two_neighbors} If $N_{t}$ has two neighbors $N_{t_{0}},N_{t_{1}}$ then the cube
      \[
        C_{\vec{\mathbf{u}}}(\{t\}\cup-):\Power(\{t_{0},t_{1}\}\cup (K\setminus\{i\})) \rightarrow \cC
      \]
      is the inner mixed Segal cube of $N_{t}$.
    \item \label{item:lem:special_path:one_neighbor} If $N_{t}$ has only one neighbor $N_{t_{0}}$ then the cube
      \[
        C_{\vec{\mathbf{u}}}(\{t\}\cup-):\Power(\{t_{0}\} \cup K) \rightarrow \cC
      \]
      is the inner mixed Segal cube of $t$.
  \end{enumerate}
\end{lem}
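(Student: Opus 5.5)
The plan is to compute the restricted cube $C_{\vec{\mathbf{u}}}(\{t\}\cup -)$ directly, vertex by vertex, and match it against the inner mixed Segal cube of $N_t$. Write $N_t$ as the L-shaped path corresponding to $(\sigma,\tau)$ with corner point $\mathbf{c}_t$. The inner mixed Segal cube is the product of the Segal cube of $\sigma$ containing the last-vertex lacuna and the Segal cube of $\tau$ containing the $0$-lacuna. Its corner directions remove $\mathbf{c}_t$ from the vertical leg and from the horizontal leg respectively. Its remaining directions remove the non-corner vertices of $\sigma$ and $\tau$ of the appropriate parity.

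The first step identifies the value of $C_{\vec{\mathbf{u}}}$ at $(\{t\}\cup T', I)$ for $T'\subset\{t_0,t_1\}$ (resp.\ $\{t_0\}$) and $I\subset K\setminus\{i\}$ (resp.\ $I\subset K$). As in the proof of \zcref{lem:Segal paths are limit}, each slice $(U_{\{t\}\cup T'})_{M_I/}$ has an initial object. This gives $C_{\vec{\mathbf{u}}}(\{t\}\cup T',I)\simeq f(N_{\{t\}\cup T'}\cap M_I)$, where the intersection is the join in $\SD$, computed as in \zcref{fig:poset_depends_on_minimals}.

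We then read off which paths these are:
\begin{itemize}
\item The directions $I\subset K\setminus\{i\}$ remove vertices of $\vec{\mathbf{u}}$ other than $\mathbf{c}_t$. Since $N_t$ is relevant, those vertices lying on $N_t$ are exactly the non-corner vertices of $\sigma$ and $\tau$ removed by the chosen Segal cube. Vertices off $N_t$ impose nothing, by the same argument as in \zcref{lem:irrelevants constant}.
\item Since $N_t$ and $N_{t_0}$ are neighbors, no vertex strictly between their corners is removed. Hence the join $N_t\vee N_{t_0}$ is the L-shaped path obtained from $N_t$ by cutting the corner $\mathbf{c}_t$ on the side of $t_0$, that is, deleting $\mathbf{c}_t$ from one leg. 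The same holds for $t_1$ on the other leg.
\end{itemize}
So the two neighbor directions play the role of the two corner directions. This matches the parity requirement precisely because $N_t$ is special: the chosen Segal cube removes $\mathbf{c}_t$, so the complementary parity on $N_t$ is the one containing the corner lacunae.

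In case \ref{item:lem:special_path:two_neighbors} the direction $i$ is dropped, since its effect is already produced by $t_0$ and $t_1$ together. In case \ref{item:lem:special_path:one_neighbor} $\mathbf{c}_t$ is extremal on one side: either $\sigma$ or $\tau$ is degenerate there, or the missing neighbor lies in $\Irr(M)$. The direction $i$ then itself supplies the remaining corner direction, which is why all of $K$ is kept.

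I expect the main obstacle to be the bookkeeping in this last identification. One must check that the parity of the removed vertices on $N_t$ agrees with the inner mixed cube's parity convention, and that in case \ref{item:lem:special_path:one_neighbor} the extremal direction $i$ gives exactly the missing corner lacuna rather than a redundant one. I would settle this with a schematic grid argument in the style of \zcref{fig:schema Segal}, treating separately the situations where $\mathbf{c}_t$ lies at the start or the end of $\vec{\mathbf{u}}_{\main}$.
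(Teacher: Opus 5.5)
Your route is the paper's: the $\Power(K\setminus\{i\})$-subcube is the non-corner part as in \zcref{lem:Segal paths are limit}, the neighbour directions are the corner directions, and in (ii) the direction $i$ supplies the other corner direction. The step that is new here, however, is asserted rather than derived: the behaviour of the direction $i$ that deletes $\mathbf{c}_t$, which (ii) depends on. Your justification, initial objects ``as in'' \zcref{lem:Segal paths are limit}, does not reach it, because there the corner is never deleted. The missing input is the second clause in the definition of $\preceq$. An L-shaped path under $M_I$ must lie in the minimal subgrid of $M_I$, and deleting $\mathbf{c}_t$ removes its row, resp.\ column, from that subgrid exactly when it is not shared with an adjacent vertex. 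When $\mathbf{c}_t$ ends a genuine vertical run (one neighbour, lying after it), only its row goes, so $i$ is exactly the corner direction of $\sigma$; dually at the start of a horizontal run. When $\mathbf{c}_t$ sits between two diagonal steps (two neighbours), both go, so $i$ is the composite of the two corner directions. That is what justifies your remark that in (i) its effect is produced by $t_0$ and $t_1$ together. With the below-left condition alone, $(U_t)_{M_{\{i\}}/}$ would have two incomparable minimal elements (corner row deleted, corner column deleted) and no initial object.

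Two intermediate claims also need correcting.

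First, ``vertices off $N_t$ impose nothing'' is false. For relevant $N_t$ the corner lies in $\vec{\mathbf{u}}_{\main}$, so steps before it are vertical or diagonal and steps after it are diagonal or horizontal. Hence rows before and columns after $\mathbf{c}_t$ are pairwise distinct. Every vertex of $\vec{\mathbf{u}}$, including ones not lying on $N_t$ geometrically (such as $C$ in \zcref{fig:schema Segal}), therefore corresponds to a vertex of $N_t$, and removing it deletes that vertex. A constant direction would stop the cube from being the inner mixed cube. This bijection is what you actually need, and it also settles your parity check: the removed positions transport to exactly the parities containing the two corner lacunae. Similarly, the join $N_t\vee N_{t_0}=(\sigma_t,\tau_{t_0})$ does not depend on which vertices are removed. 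It cuts only the corner because $t_0$ is the \emph{adjacent} relevant minimal, i.e.\ the Segal neighbour, as used in the proof of \zcref{prop:transfer_L-saturated_decorations}; a special neighbour two steps away would cut two columns.

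Second, in your degenerate subcase of (ii) ($\sigma$ a point, i.e.\ $\mathbf{c}_t=\mathbf{u}_0$, or dually), $i$ supplies no corner direction. The inner mixed cube has only one, coming from $t_0$. Deleting $\mathbf{u}_0$ takes the unique row of $\sigma$ out of the subgrid, so every slice with $i\in I$ is empty and that face is constant at the terminal object. The cube is then the inner mixed cube with a terminal face attached. It is a limit cube iff the inner mixed cube is (\zcref{lem:cube}), which is all \zcref{prop:transfer_L-saturated_decorations} uses, but it is not literally the inner mixed cube; the paper's proof passes over this case too.
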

\begin{proof}
  We first show \zcref{item:lem:special_path:two_neighbors}. It follows as in \zcref{lem:Segal paths are limit} that the cube
  \[
    C_{\vec{\mathbf{u}}}(\{t\}\cup -):\Power(K\setminus\{i\}) \rightarrow \cC
  \]
  is the sub-cube of the mixed Segal cube that does not contain the corner directions. Hence, it suffices, to show that the directions $t_{0}$ and $t_{1}$ coincide with the corner directions. But it is easy to see that for every $I\in K\setminus \{i\}$ and $j \in \{0,1\}$ the poset $(U_{\{t_{j},t\}})_{M_{I}/}$ has a unique minimal element that recovers the corner direction of the mixed Segal cube (see \zcref{fig:mixed1} for a schematic description).
  \begin{figure}[htb]
    \begin{tikzpicture}
      \begin{scope}
        \drawGrid{4}{4}

        \begin{scope}[on background layer]
          \drawLine{6pt}{red} (00.center) -- (10.center) -- (21.center) -- (32.center) -- (33.center);
          \drawLine{3pt}{green} (02.center) -- (12.center) -- (22.center) -- (32.center) -- (33.center);
          \drawLine{3pt}{teal} (01.center) -- (11.center) -- (21.center) -- (22.center) -- (23.center);
        \end{scope}
      \end{scope}

      \begin{scope}[xshift=6cm]
        \drawGrid{4}{4}

        \begin{scope}[on background layer]
          \drawLine{6pt}{red} (00.center) -- (10.center) -- (32.center) -- (33.center);
          \drawLine{3pt}{teal} (01.center) -- (11.center) -- (21.center) -- (22.center) -- (23.center);
          \drawLine{3pt}{blue} (00.center) -- (10.center) -- (12.center) -- (13.center);
        \end{scope}
      \end{scope}
    \end{tikzpicture}
    \caption{Schematic description of the poset $(U_{\{t_{j},t\}})_{M_{I}/}$ in \zcref{item:lem:special_path:two_neighbors}. Note the intersection of the path always contains a unique minimal, that corresponds to a vertex of the mixed Segal cube.}\label{fig:mixed1}
  \end{figure}

  For \zcref{item:lem:special_path:one_neighbor} we assume that $N_{t}$ has only one neighbor. In this case it suffices to show that the additional directions in the cube $i$ and $t_{0}$ recover the corner condition of the mixed Segal cube. But this can be seen analogously to \zcref{item:lem:special_path:two_neighbors} (see \zcref{fig:mixed2} for a schematic description.)
  \begin{figure}[htb]
    \begin{tikzpicture}
      \begin{scope}
        \drawGrid{4}{4}
        \begin{scope}[on background layer]
          \drawLine{6pt}{red} (00.center) -- (21.center) -- (32.center) -- (33.center);
          \drawLine{3pt}{teal} (00.center) -- (10.center) -- (12.center) -- (13.center);
          \drawLine{3pt}{blue} (00.center) -- (01.center) -- (02.center) -- (03.center);
        \end{scope}
      \end{scope}

      \begin{scope}[xshift=6cm]
        \drawGrid{4}{4}
        \begin{scope}[on background layer]
          \drawLine{6pt}{red} (00.center) -- (10.center) -- (32.center) -- (33.center);
          \drawLine{3pt}{teal} (00.center) -- (10.center) -- (12.center) -- (13.center);
          \drawLine{3pt}{green} (01.center) -- (11.center) -- (21.center) -- (22.center) -- (23.center);
        \end{scope}
      \end{scope}
    \end{tikzpicture}
    \caption{Schematic description of the posets $(U_{\{t\}})_{M_{\{i\}\cup I}/}$ left and $(U_{\{t_{0},t\}})_{M_{I}/}$ right. Note that the poset always contains a unique minimal (drawn in green on the left) that corresponds to a vertex of the mixed Segal cube in the corner direction.}
    \label{fig:mixed2}
  \end{figure}
\end{proof}

\begin{proof}[Proof of \zcref{prop:transfer_L-saturated_decorations}] \label{proof:prop:transfer_L-saturated_decorations}
  By \zcref{prop:mixed_segal_suffices_for_L_shaped}, we see that the LHS of \zcref{eq:transfer_L-saturated_decoration} is a subspace of the right. It remains to prove the other inclusion. Note that also by \zcref{prop:mixed_segal_suffices_for_L_shaped} and the L-saturatedness assumption, we already know that the Segal conditions are satisfied for relevant minimal L-shaped paths.

  Fix one of the Segal conditions and consider the hypercube $C_{\vec{\mathbf{u}}}$. As the $K$-shaped cube $C_{\vec{\mathbf{u}}}(\varnothing,-)$ is the localized Segal cube for $M$, our goal is to show that it is a limit cube. Observe that by \zcref{lem:colim_of_USs}, for every $I\in \Power(K)$ the $S$-shaped cube $C_{\vec{\mathbf{u}}}(-,I)$ is a limit cube, and hence, by \zcref{lem:cubefacelemma}, so is the whole cube $C_{\vec{\mathbf{u}}}$. It thus suffices to show that $C_{\vec{\mathbf{u}}}(\varnothing,-)$ is a limit cube if and only if the whole cube $C_{\vec{\mathbf{u}}}$ is. We do so by decomposing $S= \Special(M) \cup \Segal(M) \cup \Irr_{1}(M)$ and iteratively applying \zcref{lem:cubefacelemma}.

  By \zcref{lem:irrelevants constant}, the $K$-shaped cube $C_{\vec{\mathbf{u}}}(T,-)$ is degenerate along a certain direction for every irrelevant collection $T\in \Irr(M)$. Hence, we can conclude from \zcref{lem:cubefacelemma} that the cube $C_{\vec{\mathbf{u}}}(\varnothing, -)$ is limit if and only if
  \[
    C_{\vec{\mathbf{u}}}(-,-): \Power(\Irr_{1}(M)) \times \Power(K) \simeq \Power(\Irr_{1}(M)\cup K) \rightarrow \cC
  \]
  is also limit.

  Our next goal is to show that this cube is limit if and only if the cube
  \[
    C_{\vec{\mathbf{u}}}(-,-):\Power(\Segal(M) \cup \Irr_{1}(M) \cup K) \rightarrow \cC
  \]
  is. By \zcref{lem:cubefacelemma}, it suffices to show that for every non-empty collection of Segal simplices $T\subset \Segal(M)$ the cube
  \[
    C_{\vec{\mathbf{u}}}(T\cup-,-):\Power(\Irr_{1}(M)\cup K) \rightarrow \cC
  \]
  is limit. We need to distinguish the following cases
  \begin{itemize}
    \item if $\#(T)>1$, then by \zcref{lem:neighbors} the cube has a constant direction and hence is limit.
    \item if $\#(T)=1$ we apply \zcref{lem:cubefacelemma} once again. As the intersection with an irrelevant minimal is irrelevant, it suffices to show that the $K$-shaped cube
      $C_{\vec{\mathbf{u}}}(T,-)$ is limit. But this is the statement of \zcref{lem:Segal paths are limit}.
  \end{itemize}

  To summarize, we have shown that the cube $C_{\vec{\mathbf{u}}}(\varnothing, -)$ is limit if and only if the cube that in addition contains all Segal minimals and irrelevant paths is limit. To finish the proof, we once again apply \zcref{lem:cubefacelemma} to the cube
  \[
    C_{\vec{\mathbf{u}}}(-,-):\Power(\Special(M) \cup \Segal(M) \cup \Irr_{1}(M) \cup K) \rightarrow \cC
  \]
  Hence, we need to show that for every non-empty collection of special paths $T \subset \Special(M)$, the cube
  \[
    C_{\vec{\mathbf{u}}}(T\cup-,-): \Power(\Segal(M)\cup \Irr_{1}(M) \cup K) \rightarrow \cC
  \]
  is a limit cube. We consider the following cases
  \begin{itemize}
    \item if $\#(T)>1$ and contains strangers, then the cube has a constant direction, by \zcref{lem:neighbors}, and hence, is limit.
    \item if $\#(T)=2$ consists of two neighbors, then there exists a Segal path $N_{t}$ in between. It follows as in \zcref{lem:neighbors} that we have an isomorphism of posets
      \[
        (U_{T\cup\{t\}})_{M_{I}/} \rightarrow (U_{T})_{M_{I}/}
      \]
      and hence the cube is constant along direction $t$.
    \item if $T=\{t\}$ where $t$ is special for the Segal direction $i\in K$ and $N_{t}$ admits two neighbors $N_{t_{0}},N_{t_{1}} \in \Seg(M)$, we apply \zcref{lem:cubefacelemma} to the decomposition
      \[
        \Segal(M)\cup \Irr_{1}(M)\cup K = ((\Segal(M) \setminus \{t_{0},t_{1}\}) \cup \Irr_{1}(M) \cup\{i\})\sqcup (\{t_{0},t_{1}\} \cup K\setminus \{i\})
      \]
      and the cube
      \[
        C_{\vec{\mathbf{u}}}(\{t\}\cup-,-): \Power(\Segal(M)\cup \Irr_{1}(M)\cup K) \rightarrow \cC.
      \]
      As the underlying cube
      \[
        C_{\vec{\mathbf{u}}}(\{t\}\cup -,-):\Power(\{t_{0},t_{1}\}\cup K\setminus\{i\}) \rightarrow \cC
      \]
      is a limit cube by \zcref{lem:special_path}.\zcref{item:lem:special_path:two_neighbors}. We hence need to show that for every $I\in \Power_*((\Segal(M) \setminus \{t_{0},t_{1}\}) \cup \Irr_{1}(M) \cup\{i\})$ the cube
      \[
        C_{\vec{\mathbf{u}}}(\{t\}\cup I\cup -):\Power(\{t_{0},t_{1}\}\cup K\setminus\{i\}) \rightarrow \cC
      \]
      is a limit cube. We have the following cases
      \begin{itemize}
        \item if $I$ contains an irrelevant path, then the cube admits a constant direction by \zcref{lem:irrelevants constant}. Note that since $N_{t}$ admits two neighbors, the Segal direction corresponding to $i$ we are excluding is not the one used in the proof of the Lemma.
        \item if $I$ contains no irrelevant path but the direction of a Segal path $t_{s}$, then the path $N_{t_{s}}$ is not a neighbor of $N_{t}$ by assumption. Hence, the cube is constant in the direction of the Segal condition between $N_{t}$ and $N_{t_{s}}$ by \zcref{lem:neighbors}.
        \item if $I$ contains no irrelevant path and no Segal condition, it must be of the form $I=\{i\}$ In this case the cube is constant in the direction of any of the neighbors $N_{t_{0}}$ and $N_{t_{1}}$ of $N_{t}$. Indeed, it is easy to see that for every $J \in \Power(K)$ with $i\in J$ and any $T\in \Power((\Segal(M)\setminus\{t_{0},t_1\}) \cup \Irr_{1}(M))$, the map
          \[
            (U_{{T} \cup \{t\}})_{M_{J}/} \rightarrow (U_{T\cup \{t, t_{0}\}})_{M_{J}/}
          \]
          is an isomorphism of posets (consider \zcref{fig:finale} for a schematic description).
      \end{itemize}

      \begin{figure} [htb]
        \begin{tikzpicture}
          \begin{scope}
            \drawGrid{4}{4}
            \begin{scope}[on background layer]
              \drawLine{9pt}{red} (00.center) -- (10.center) --(21.center)  -- (32.center) -- (33.center);
              \drawLine{3pt}{teal} (02.center) -- (12.center) -- (13.center);
            \end{scope}
          \end{scope}

          \begin{scope}[xshift=6cm]
            \drawGrid{4}{4}

            \begin{scope}[on background layer]
              \drawLine{9pt}{red} (00.center) -- (10.center) -- (32.center) -- (33.center);
              \drawLine{6pt}{blue} (00.center) -- (10.center) -- (12.center) -- (13.center);
              \drawLine{3pt}{teal} (02.center) -- (12.center) -- (13.center);
            \end{scope}
          \end{scope};
          \draw[->] (3.5,-1.5) to (5.5,-1.5);
        \end{tikzpicture}
        \caption{Schematic description of the equivalence $(U_{T\cup \{t\}})_{M_{J}/} \rightarrow (U_{T\cup \{t,t_{0}\}})_{M_{J}/}$. Note that intersecting the teal path with the blue path does not change the area to the upper right of both paths. }
        \label{fig:finale}
      \end{figure}
    \item if $T=\{t\}$ and $N_{t}$ admits only one neighbor $N_{t_{0}}$, we apply \zcref{lem:cubefacelemma} to the decomposition
      \[
        \Segal(M)\cup \Irr_{1}(M)\cup K = ((\Segal(M)\setminus\{t_{0}\}) \cup \Irr_{1}(M)) \sqcup (\{t_{0}\}\cup K)
      \]
      and the cube
      \[
        C_{\vec{\mathbf{u}}}(\{t\}\cup -,-):\Power(\Irr_{1}(M)\cup \Segal(M)\cup K) \rightarrow \cC.
      \]
      It follows from \zcref{lem:special_path}.\zcref{item:lem:special_path:one_neighbor} that the cube
      \[
        C_{\vec{\mathbf{u}}}(\{t\}\cup-):\Power(\{t_{0}\}\cup K) \rightarrow \cC
      \]
      is limit. Hence, we need to show that for every $I\in \Power_*((\Segal(M)\setminus\{t_{0}\}) \cup \Irr_{1}(M))$ the cube
      \[
        C_{\vec{\mathbf{u}}}(\{t\}\cup I\cup-):\Power(\{t_{0}\}\cup K) \rightarrow \cC
      \]
      is a limit cube. We have the following cases
      \begin{itemize}
        \item if $I$ contains an irrelevant path, then the cube admits a constant direction by \zcref{lem:irrelevants constant}, and hence, is limit.
        \item So we can assume that $I$ contains no irrelevant path but a Segal path. By assumption there is a Segal path $t_{s}\in I$ that is not a neighbor of $t$. Hence, the cube is constant in the intermediate Segal condition by \zcref{lem:neighbors}.
      \end{itemize}
  \end{itemize}
  This finally finishes the proof.
\end{proof}

\subsection{Decorated simplices}

In this section, we apply \zcref{prop:transfer_L-saturated_decorations} to the cases of $\Delta^{n}_{\flat}\otimes \Delta^{m}_{+}$, $\Delta^{n}_{+}\otimes \Delta^{m}_{\flat}$, and $\Delta^{n}_{+}\otimes \Delta^{m}_{+}$. In what follows, the cases where $n = 1$ or $m=1$ are trivially true. We will thus only explicitly argue the cases where $n,m>1$.

\begin{prop}\label{prop:decorated_one_sided}
  The additional marked simplices in $\Delta^{n}_{+}\otimes \Delta^{m}_{\flat}$ are precisely those non-degenerate $k$-simplices $\vec{\mathbf{u}}$, with $\sqcup_{n,k-n}^{1}\vec{\mathbf{u}}=(0,\dots,n)$ and $u^2_{n-1} = u^2_{n}$.

  Dually, the additional marked simplices in $\Delta^{n}_{\flat}\otimes \Delta^{m}_{+}$ are precisely those non-degenerate $k$-simplices $\vec{\mathbf{u}}$, with $\sqcup_{k-m,m}^{2}\vec{\mathbf{u}}=(0,\dots,m)$ and $u^{1}_{k-m}=u^{1}_{k-m+1}$.
\end{prop}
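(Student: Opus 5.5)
We will unwind the definition of the Gray tensor product directly. A non-degenerate $k$-simplex $\vec{\mathbf{u}}=(\vec{u},\vec{v})$ of $\Delta^n\times\Delta^m$ is thin in $\Delta^n_+\otimes\Delta^m_\flat$ if and only if, for every partition $k=p+q$, either $\sqcup^1_{p,q}\vec{u}$ is thin in $\Delta^n_+$ or $\sqcup^2_{p,q}\vec{v}$ is thin in $\Delta^m_\flat$. In $\Delta^n_+$ the thin simplices are the degenerate ones together with the single non-degenerate $n$-simplex $(0,\dots,n)$. In $\Delta^m_\flat$ only the degenerate ones are thin. The \emph{additional} marked simplices are therefore those thin in the $+$-decoration but not in the $\flat\otimes\flat$ decoration. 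For these, some partition $p$ must use the non-degenerate top simplex, i.e.\ $\sqcup^1_{p,q}\vec{u}=(u_0,\dots,u_p)=(0,\dots,n)$, which forces $p=n$.

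The first step is to record which $p$ are forced. Since $\vec{u}$ is a monotone sequence, $(u_0,\dots,u_p)$ is non-degenerate exactly for $p\le \ell_1$, where $\ell_1$ is the first index with $u_{\ell_1}=u_{\ell_1+1}$ (or $\ell_1=k$ if there is none). Likewise $(v_p,\dots,v_k)$ is non-degenerate exactly for $p\ge \ell_2+1$, where $\ell_2$ is the last index with $v_{\ell_2}=v_{\ell_2+1}$.

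The thinness condition then reads as follows. For every $p\le\ell_1$ with $p\ge\ell_2+1$ we need $(u_0,\dots,u_p)=(0,\dots,n)$. This can hold for at most the single value $p=n$, so the interval $[\ell_2+1,\ell_1]$ must be empty or equal to $\{n\}$. Emptiness is exactly the $\flat\otimes\flat$ criterion of \zcref{lem:marked_in_flat_gray}, which gives $\ell_2\ge\ell_1$, and non-degeneracy sharpens this to $\ell_1<\ell_2$. The additional simplices are therefore precisely those with $\ell_1=n=\ell_2+1$ and $(u_0,\dots,u_n)=(0,\dots,n)$.

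Next we translate this back into the stated form.
\begin{itemize}
\item The condition $\ell_1=n$ together with $(u_0,\dots,u_n)=(0,\dots,n)$ says exactly $\sqcup^1_{n,k-n}\vec{u}=(0,\dots,n)$. Here $\ell_1\ge n$ is automatic: $(u_0,\dots,u_n)$ is strictly increasing, and since $u_n=n$ is the maximum, $u_{n+1}=u_n$ whenever $k>n$.
\item The condition $\ell_2=n-1$ says $v_{n-1}=v_n$ and that $v$ is strictly increasing from index $n$ on.
\end{itemize}
The main obstacle is to check that the last requirement is automatic, or else implicit in the statement's phrasing. From index $n$ on, $u$ is constant, so non-degeneracy of $\vec{\mathbf{u}}$ forces $v$ to be strictly increasing there. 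This recovers exactly the stated conditions $\sqcup^1_{n,k-n}\vec{\mathbf{u}}=(0,\dots,n)$ and $u^2_{n-1}=u^2_n$.

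Conversely, given these two conditions, we verify every partition $p$ directly:
\begin{itemize}
\item for $p<n$, the tail $(v_p,\dots,v_k)$ contains the repeat $v_{n-1}=v_n$, so $\sqcup^2_{p,q}\vec{v}$ is degenerate;
\item for $p=n$, $\sqcup^1_{n,k-n}\vec{u}=(0,\dots,n)$ is the thin top simplex of $\Delta^n_+$;
\item for $p>n$, $(u_0,\dots,u_p)$ contains the repeat $u_n=u_{n+1}$, so $\sqcup^1_{p,q}\vec{u}$ is degenerate.
\end{itemize}
Finally, such a simplex is not thin in $\flat\otimes\flat$, since at $p=n$ both $(0,\dots,n)$ and $(v_n,\dots,v_k)$ are non-degenerate.

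The dual statement for $\Delta^n_\flat\otimes\Delta^m_+$ follows by the same argument with the roles of the two factors exchanged. Now the partition must have $q=m$, i.e.\ $p=k-m$, with $\sqcup^2_{k-m,m}\vec{v}=(0,\dots,m)$. The repeat must occur in $u$ immediately after that index, $u_{k-m}=u_{k-m+1}$, so that all partitions with $p>k-m$ are thin through $\vec{u}$.
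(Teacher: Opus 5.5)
Your proof is correct and follows the paper's approach. Your converse check (partitions $p<n$, $p=n$, $p>n$) is exactly the paper's argument. The one difference is that you prove the necessity direction via the interval $[\ell_2+1,\ell_1]$ of partitions that fail in the $\flat\otimes\flat$ decoration (cf.\ \zcref{lem:marked_in_flat_gray}), which is where the condition $u^2_{n-1}=u^2_n$ comes from, whereas the paper just asserts that these are ``by definition'' the only candidates.
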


\begin{proof}
  We consider the first case, as the other one is dual. By definition, these are the only possible additional marked simplices. It therefore suffices to show every simplex $\vec{\mathbf{u}}$ of the above form is marked.

  We consider their images under the complicial parity operators $\sqcup_{p,q}^{1}, \sqcup_{p,q}^{2}$:

  \begin{itemize}
    \item for $p\leq n-1$ the simplex $\sqcup_{p,q}^{2}\vec{\mathbf{u}}=(u_{p}^{2},\dots,u_{n-1}^{2}=u_{n}^{2},\dots, u_{k}^{2})$ is degenerate and hence decorated,
    \item for $p=n$ the simplex $\sqcup^{1}_{p,q}\vec{\mathbf{u}}=(u^{1}_{0},\dots, u^{1}_{n})=(0,\dots,n)$ is decorated,
    \item for $p>n$ the simplex $\sqcup^{1}_{p,q}\vec{\mathbf{u}}$ is degenerate and hence decorated.\qedhere
  \end{itemize}
\end{proof}

Schematically, the additional marked path in $\Delta^{n}_{+}\otimes \Delta^{m}_{\flat}$ start at the top of the grid and strictly monotonically increase their $y$-coordinate by $1$ in each step, until they reach the bottom of the grid by a vertical step, followed by possible horizontal piece. Dually, the additional marked path in $\Delta^{n}_{\flat}\otimes \Delta^{m}_{+}$ start at the left side of the grid, by a possible vertical piece, and then starting by a horizontal step, monotonically increase their $x$-coordinate until they reach the right side of the grid.

\begin{figure}[htb]
  \label{fig:additional_path}
  \[
    \begin{tikzpicture}
      \begin{scope}
        \drawGrid{4}{3}
        \begin{scope}[on background layer]
          \drawLine{3pt}{red} (00.center) -- (11.center) -- (21.center) -- (22.center);
        \end{scope};
      \end{scope};

      \begin{scope}[xshift=5cm]
        \drawGrid{4}{3}
        \begin{scope}[on background layer]
          \drawLine{3pt}{red} (00.center) -- (10.center) -- (11.center) -- (12.center) -- (23.center);
        \end{scope};
      \end{scope};
    \end{tikzpicture}
  \]
  \caption{Examples of additional marked path in $\Delta^{n}_{+}\otimes \Delta^{m}_{\flat}$ on the left and on $\Delta^{n}_{\flat}\otimes \Delta^{m}_{+}$ the right.}
\end{figure}

\begin{prop} \label{prop:one_sided_decorated_case}
  The equivalence from \zcref{prop:unmarkedcase} induces equivalences of spaces
  \[
    \Map(\Delta^{n}_{\flat}\otimes \Delta^{m}_{+},\Span_{\infty}(\cC)) \simeq \Map^{\cart,\cart}(\sd(\Delta^n_\flat) \times \sd(\Delta^m_+), \mathcal{C}) \simeq \Map(\sd(\Delta^n_\flat), \Fun^\cart(\sd(\Delta^m_+), \mathcal{C})),
  \]
  and
  \[
    \Map(\Delta^{n}_{+}\otimes \Delta^{m}_{\flat},\Span_{\infty}(\cC)) \simeq \Map^{\cart,\cart}(\sd(\Delta^n_+) \times \sd(\Delta^m_\flat), \mathcal{C}) \simeq \Map^\cart(\sd(\Delta^n_+), \Fun(\sd(\Delta^m), \mathcal{C})).
  \]
\end{prop}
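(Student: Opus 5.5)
We treat the first equivalence; the second is dual, obtained by exchanging the roles of $\vec{u}$ and $\vec{v}$, i.e.\ reflecting the grid and using dual corners. The plan is to apply \zcref{prop:transfer_L-saturated_decorations} to the decoration $\Delta^n_\flat\otimes\Delta^m_+$. By \zcref{prop:mappingUP_prop}, the left side is $\Map^{\cart}(\sd(\Delta^n_\flat\otimes\Delta^m_+),\scr{C})$. The rightmost equivalence is formal: a functor out of $\sd(\Delta^n)\times\sd(\Delta^m)$ is Cartesian in the second variable exactly when its adjoint lands in $\Fun^{\cart}(\sd(\Delta^m_+),\scr{C})$, and there are no conditions in the first variable. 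So everything reduces to two checks. First, the decoration is L-saturated. Second, the mixed Segal conditions it imposes on $\sd(\Delta^n)\times\sd(\Delta^m)$ are exactly the $m$-Segal conditions on each slice $\{\sigma\}\times\sd(\Delta^m)$.

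For L-saturation, take an additional thin simplex $\vec{\mathbf{u}}$ as in \zcref{prop:decorated_one_sided}. It has a possibly empty vertical piece, then a horizontal step, and then strictly increasing $x$-coordinate until reaching column $m$. Its image $M=L_{n,m}(\vec{\mathbf{u}})$ is rectilinear with full horizontal projection $(0,\dots,m)$. From the schematic, $\vec{\mathbf{u}}_{\main}$ runs from the last vertical component through the first horizontal step. I will argue that a relevant minimal L-shaped path $N$ under $M$ must have its horizontal leg covering the whole of $[m]$. Otherwise $N$ lies entirely above or to the right of $M_{\main}$, as in the proof of \zcref{lem:irrelevants constant}, and so it is irrelevant. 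Such an $N$ has vertical part some $\sigma$, ending at row $u_{k-m}$, and horizontal part $(0,\dots,m)$. By \zcref{prop:decorated_one_sided} it is therefore an additional thin simplex, which gives L-saturation.

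Next I identify the mixed Segal conditions. Each thin L-shaped path corresponds to some $(\sigma,\iota_{[m]})$. Its outer mixed Segal cube is the product of the Segal cube of $\sigma$ omitting the last lacuna with the Segal cube of $[m]$ omitting the $0$-lacuna. Its inner cube includes both corner directions. When $\sigma$ is a point, these are exactly the two $m$-Segal cubes of the slice $\{\sigma\}\times\sd(\Delta^m)$. For $\dim\sigma\ge1$, I plan to show the product cube conditions follow from the slice conditions by applying \zcref{lem:cubefacelemma} to the decomposition into the $\sigma$-directions and the $\tau$-directions. Fixing any nonempty set of $\sigma$-lacunae gives a $\tau$-cube inside a slice $\{\sigma'\}\times\sd(\Delta^m)$, which is a Segal cube of $[m]$ and hence a limit by the slice condition. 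The remaining $\sigma$-only face must then be shown to be a limit. For the outer cube with $\tau$ lacking the $0$-lacuna, I expect that face to be degenerate and to reduce, via \zcref{lem:degenerate}, to a $\tau$-direction statement. The converse direction is immediate, since the $\sigma=$ vertex cases are among the mixed conditions.

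The main obstacle is this last reduction: checking that the $\sigma$-directional face of each mixed cube is genuinely trivial given the slice conditions, especially for the inner cube with its corner directions. If the direct product-cube argument fails there, the fallback is to perform the comparison back in $R^{n,m}$. There the cube moves of \zcref{const:cube_moves} can peel off the vertical lacunae one at a time, using that every L-shaped path with full horizontal part is thin.
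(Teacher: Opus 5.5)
Your reduction to \zcref{prop:transfer_L-saturated_decorations} and your L-saturation argument match the paper's. The only relevant minimal under $M$ is the L-shaped path with corner $(u_{k-m},0)$ and full horizontal leg $[m]$, and it is thin by \zcref{prop:decorated_one_sided}. The gap is in comparing the mixed conditions with the slice-wise Cartesian conditions, where you misapply \zcref{lem:cubefacelemma}.

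Write a mixed cube as $C\colon \Power(S_\sigma)\times\Power(S_{[m]})\to\mathcal{C}$, with $C(J,I)=F(\sigma_J,\tau_I)$, where $\sigma_\varnothing=\sigma$ and $\tau_\varnothing=[m]$. Take $I_1=S_\sigma$ and $I_0=S_{[m]}$, as you do. Once the faces $C(J,-)$ for $J\neq\varnothing$ are limits, the lemma says $C$ is a limit if and only if $C(\varnothing,-)$ is. That face is the $[m]$-Segal cube in the slice $\{\sigma\}\times\sd(\Delta^m)$, not a ``$\sigma$-only face''. So the obstacle you isolate never arises, and the slice conditions imply the mixed conditions immediately.

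Worse, the statement you propose to prove is false. Take $F(\sigma,\tau)=G(\sigma)$ with $G\colon\sd(\Delta^n)\to\mathcal{C}$ arbitrary. Every slice is constant, hence Cartesian, yet $J\mapsto G(\sigma_J)$ is in general not a limit. So no degeneracy argument via \zcref{lem:degenerate} can work there.

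Conversely, the direction you call immediate is the one that needs an argument. The vertex cases only show that $F(\{v\},-)$ is Cartesian. Landing in $\Fun^{\cart}(\sd(\Delta^m_+),\mathcal{C})$ requires $F(\sigma,-)$ to be Cartesian for every $\sigma\in\sd(\Delta^n)$. This needs an induction on $\dim\sigma$, which is the paper's statement $(*_k)$:
\begin{itemize}
\item Each Segal cube of $[m]$ pairs with a unique Segal cube of $\sigma$ to form a mixed cube $C$.
\item The faces $C(J,-)$ for $J\neq\varnothing$ are $[m]$-Segal cubes at proper faces $\sigma_J\subsetneq\sigma$, hence limits by induction.
\item $C$ is a limit by hypothesis, so \zcref{lem:cubefacelemma} gives that $C(\varnothing,-)$ is a limit. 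This is the corresponding Segal condition on the slice at $\sigma$.
\end{itemize}
As written, without this induction and with the misidentified face, your proposal does not establish $\Map^{\mix}\simeq\Map^{\cart,\cart}$. With the lemma read correctly, both directions follow from the same cube-face argument, and the fallback via cube moves in $R^{n,m}$ is unnecessary.
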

\begin{proof}
  Note that the second equivalences on both lines follow from the definition. We will treat the first equivalence on the first line, as the one on the second is completely analogous. Since the main part $\vec{\mathbf{u}}_{\main}$ of every decorated simplex $\vec{\mathbf{u}}$ contains no diagonals, $\vec{\mathbf{u}}_{\main}$ has a unique relevant minimal (see \zcref{fig:onedecorated}). As it is of maximal length in the horizontal direction, it is marked as well. Hence, the decoration is L-saturated and by \zcref{prop:transfer_L-saturated_decorations}, we have an equivalence of spaces
  \[
    \Map(\Delta^n_\flat \otimes \Delta^m_+, \Span_\infty(\mathcal{C})) \simeq \Map^{\mix}(\sd(\Delta^n_\flat) \times \sd(\Delta^m_+), \mathcal{C}).
  \]
  \begin{figure}[htb]
    \begin{tikzpicture}
      \begin{scope}
        \drawGrid{4}{4}

        \begin{scope}[on background layer]
          \drawLine{6pt}{red}  (00.center) -- (11.center) -- (21.center) -- (31.center) -- (32.center);
          \drawLine{3pt}{blue} (01.center) -- (11.center) -- (21.center) -- (31.center) -- (32.center);
        \end{scope}
      \end{scope}

      \begin{scope}[xshift=6cm]
        \drawGrid{4}{4}
        \begin{scope}[on background layer]
          \drawLine{6pt}{red}  (00.center) -- (10.center) -- (11.center) -- (22.center) -- (23.center);
          \drawLine{3pt}{blue} (00.center) -- (10.center) -- (11.center) -- (12.center) -- (13.center);
        \end{scope}
      \end{scope}
    \end{tikzpicture}
    \caption{Decorated path (red) with its unique minimal (blue). Note that the minimal path in the left (resp. right) figure is also of maximal vertical (resp. horizontal) length.}
    \label{fig:onedecorated}
  \end{figure}

  It remains to show that we have an equivalence of spaces
  \[
    \Map^{\mix}(\sd(\Delta^n_\flat) \times \sd(\Delta^m_+), \mathcal{C}) \simeq \Map^{\cart,\cart}(\sd(\Delta^n_\flat) \times \sd(\Delta^m_+), \mathcal{C}).
  \]
  Dropping the $\mix$ and $\cart$ conditions, we have an equivalence of spaces by definition. It remains to show that a functor $F$ satisfies the mixed Segal conditions if and only if it satisfies the Cartesian conditions on the second variable (as the one on the first variable, which is minimally marked, is empty). We will prove the following statement by induction on $1 \leq k \leq n$ (with the $k=n$ case being the desirable conclusion)
  \begin{itemize}
    \item[$(*_k)$] $F(K, -): \sd(\Delta^m_+) \to \mathcal{C}$ is Cartesian for all $K \in \sd(\Delta^n)$ of length at most $k$ if and only if $F$ satisfies the mixed Segal condition for all L-shaped paths of maximal horizontal length and with vertical component of length at most $k$.
  \end{itemize}

  The $k=1$ case is clear since for any $\{v\} \in \sd(\Delta^n)$, $\{v\}\times [m]$ is thin, and the mixed Segal conditions coincide with the Segal conditions on $[m]$. Assume that $(*_{k-1})$ is true and let $(K, [m]) \in \sd(\Delta^n) \times \sd(\Delta^m)$ where $K$ has length $k$. Observe that for each Segal cube on $[m]$, of some $S_{[m]}$-shape, there exists a unique Segal cube on $K$, of some $S_K$-shape, such that
  \[
    C(-,-): \Power(S_K) \times \Power(S_{[m]}) \simeq \Power(S_K \cup S_{[m]}) \to \sd(\Delta^n) \times \sd(\Delta^m) \xrightarrow{f} \mathcal{C}
  \]
  is one of the mixed Segal cubes associated to the pair $(K, [m])$ (and vice versa). It thus suffices to show that $C(\varnothing, -)$ is limit if and only if $C(-,-)$ is limit. By induction, for each $I \subset \Power_*(S_K)$, $C(I, -)$ is limit. The desired conclusion then follows from \zcref{lem:cubefacelemma}.
\end{proof}

It remains to consider the case $\Delta^{n}_{+}\otimes \Delta^{m}_{+}$.
\begin{prop} \label{prop:thin_simplices_in_fully_decorated_case}
  A non-degenerate simplex in $\Delta^{n}_{+}\otimes \Delta^{m}_{+}$ is thin if and only if it is thin in either
  $\Delta^{n}_{+}\otimes \Delta^{m}_{\flat}$ or $\Delta^{n}_{\flat}\otimes \Delta^{m}_{+}$ or is the following $(n+m-1)$-simplex
  \[
    \vec{\mathbf{u}} = ((0,0), (1, 0), (2, 0), \dots, (n-1, 0), (n, 1), (n, 2), \dots, (n,m)).
  \]
\end{prop}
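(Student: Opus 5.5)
The plan is to unwind the Gray tensor product decoration directly, as in the proofs of \zcref{lem:marked_in_flat_gray} and \zcref{prop:decorated_one_sided}. A non-degenerate $k$-simplex $\vec{\mathbf{u}}$ of $\Delta^n_+\otimes\Delta^m_+$ is thin if and only if, for every partition $k=p+q$, either $\sqcup^1_{p,q}\vec{\mathbf{u}}=(u^1_0,\dots,u^1_p)$ is thin in $\Delta^n_+$ or $\sqcup^2_{p,q}\vec{\mathbf{u}}=(u^2_p,\dots,u^2_k)$ is thin in $\Delta^m_+$. Thin simplices of $\Delta^n_+$ are exactly the degenerate ones together with the full simplex $(0,1,\dots,n)$, and similarly for $\Delta^m_+$.

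The ``if'' direction is straightforward. Thin simplices of $\Delta^n_+\otimes\Delta^m_\flat$ and $\Delta^n_\flat\otimes\Delta^m_+$ remain thin by functoriality of $\otimes$ along the identity maps $\Delta^n_\flat\to\Delta^n_+$ and $\Delta^m_\flat\to\Delta^m_+$. For the special simplex, with $k=n+m-1$, I check the partitions directly:
\begin{itemize}
\item for $p<n$, the vector $(u^1_0,\dots,u^1_p)=(0,\dots,p)$ is non-degenerate but not the full simplex, whereas $(u^2_p,\dots,u^2_k)$ for $p\le n-1$ begins $0,\dots,0,1,\dots,m$;
\item at $p=n-1$ this second vector is exactly $(0,1,\dots,m)$, hence thin;
\item for $p<n-1$ it is degenerate, since it contains the repeated $0$ at positions $n-2,n-1$;
\item for $p\ge n$, the first vector ends with $u^1_{n-1}=n-1,\ u^1_n=n,\dots$: at $p=n$ it is $(0,\dots,n)$, hence thin, and for $p>n$ it is degenerate because of the repeated $n$.
\end{itemize}
So every partition is covered.

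For the ``only if'' direction, suppose $\vec{\mathbf{u}}$ is thin but not thin in either one-sided tensor. Let $a$ be the smallest index with $u^1_a=u^1_{a+1}$ and $b$ the largest index with $u^2_b=u^2_{b+1}$; if either fails to exist I handle it separately. Exactly as in \zcref{lem:marked_in_flat_gray}, the partitions with $p$ strictly between the ``non-degenerate first part'' and the ``non-degenerate second part'' must be rescued by the full simplices. That is, some $p$ has $(u^1_0,\dots,u^1_p)=(0,\dots,n)$, or $(u^2_p,\dots,u^2_k)=(0,\dots,m)$. Since $\vec{\mathbf{u}}$ is not thin in $\Delta^n_\flat\otimes\Delta^m_\flat$, \zcref{lem:marked_in_flat_gray} says there is no $\ell_1<\ell_2$ pattern. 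Hence all horizontal repeats precede all vertical repeats, in the sense $b<a$ or one of them is absent.

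Now I argue that both full simplices must be used. By \zcref{prop:decorated_one_sided}, if only the $\Delta^n$-full simplex is needed, then $\vec{\mathbf{u}}$ is of the one-sided form, and symmetrically for $\Delta^m$. Using both forces $\sqcup^1_{p,q}\vec{\mathbf{u}}=(0,\dots,n)$ for $p=n$ and $\sqcup^2_{p',q'}\vec{\mathbf{u}}=(0,\dots,m)$ for $p'=k-m$, with every intermediate partition $p'<p''<n$ covered by degeneracy or by a full simplex. Counting positions then forces $k-m=n-1$. The first coordinates run $0,\dots,n$ along indices $0,\dots,n$, and the second run $0,\dots,m$ along indices $n-1,\dots,n+m-1$. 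Non-degeneracy together with the absence of an $\ell_1<\ell_2$ pattern pins $\vec{\mathbf{u}}$ down to the stated simplex.

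I expect this final index bookkeeping to be the main obstacle. The gap and overlap cases ($k-m<n-1$ versus $k-m>n-1$) have to be shown either to reduce to a one-sided thin simplex or to yield an uncovered partition, and I would handle them by comparing $a$, $b$ with $n$ and $k-m$ directly.
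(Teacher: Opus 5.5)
\textbf{Gap in the ``only if'' direction.} Your ``if'' direction is the paper's argument. So is the start of the ``only if'' direction: both full simplices must occur, necessarily as $\sqcup^1_{n,k-n}\vec{\mathbf{u}}=(0,\dots,n)$ and $\sqcup^2_{k-m,m}\vec{\mathbf{u}}=(0,\dots,m)$. The problem is the decisive step, $k-m=n-1$. You only assert it (``counting positions then forces'') and then defer it as the main obstacle, so as written the ``only if'' direction is incomplete. The missing argument is short and needs neither your $a,b$ bookkeeping nor \zcref{lem:marked_in_flat_gray}.

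\textbf{Lower bound.} Look only at the partition $p=n-1$. The first part $\sqcup^1_{n-1,k-n+1}\vec{\mathbf{u}}=(0,\dots,n-1)$ is neither degenerate nor the full simplex. Hence the second part $\sqcup^2_{n-1,k-n+1}\vec{\mathbf{u}}=(u^2_{n-1},\dots,u^2_k)$ must be thin. If $n-1>k-m$, this is a proper final face of the non-degenerate $(0,\dots,m)$, so it is non-degenerate of dimension $<m$ and not thin, a contradiction. Thus $k-m\geq n-1$; this is exactly where your ``gap'' case produces the uncovered partition.

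\textbf{Upper bound.} A non-degenerate simplex of $\Delta^n\times\Delta^m$ has dimension $k\leq n+m$, so $k-m\leq n$. If $k-m=n$, monotonicity forces $\vec{\mathbf{u}}=((0,0),\dots,(n,0),(n,1),\dots,(n,m))$, the maximal L-shaped path. That path is already thin in $\Delta^n_+\otimes\Delta^m_\flat$, since $u^2_{n-1}=u^2_n$ (cf.\ \zcref{prop:decorated_one_sided}), contradicting the hypothesis. So your ``overlap'' case collapses to this single path by the dimension bound, and $k=n+m-1$.

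\textbf{Pinning down $\vec{\mathbf{u}}$.} The final identification does not need the absence of an $\ell_1<\ell_2$ pattern. From $u^1_i=i$ for $i\leq n$ and $u^2_{n-1+j}=j$ for $j\leq m$, monotonicity together with $u^2\geq 0$ and $u^1\leq n$ gives $u^2_0=\dots=u^2_{n-1}=0$ and $u^1_n=\dots=u^1_k=n$. This is precisely the stated simplex.
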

\begin{proof}
  First, we check that the extra non-degenerate simplex is thin. Indeed, this follows from the following observations
  \begin{itemize}
    \item for $p\leq n-2$, $\sqcup^2_{p,q} \vec{\mathbf{u}}$ is degenerate,
    \item $\sqcup^2_{n-1,m} \vec{\mathbf{u}}$ is thin by definition,
    \item $\sqcup^1_{n,m-1} \vec{\mathbf{u}}$ is thin by definition,
    \item for $p\geq n+1$, $\sqcup^1_{p, q} \vec{\mathbf{u}}$ is degenerate.
  \end{itemize}

  To complete the proof, it remains to show that the only possible thin simplex in addition to those in $\Delta^n_+ \otimes \Delta^m_\flat$ and $\Delta^n_\flat \otimes \Delta^m_+$ is given above. Indeed, let $\vec{\mathbf{u}}$ be such a simplex and $k$ the dimension of $\vec{\mathbf{u}}$. Then, $\sqcup^1_{n,k-n} \vec{\mathbf{u}} = (0, 1, \dots, n)$ and $\sqcup^2_{k-m,m} \vec{\mathbf{u}} = (0, 1, \dots, m)$. It suffices to show that $k = n+m-1$. Since $\sqcup^1_{n-1, k-(n-1)} \vec{\mathbf{u}}$ is not thin, $\sqcup^2_{n-1, k-(n-1)} \vec{\mathbf{u}}$ is. But since $\sqcup^2_{p,q} \vec{\mathbf{u}}$ is not thin when $p>k-m$, $n-1 \leq k-m$ and hence, $n+m-1 \leq k$.

  Since $k \leq n+m$, the dimension of the largest non-degenerate simplex  on $\Delta^n \otimes \Delta^m$, it remains to show that $k \neq n+m$. Indeed, when $k=n+m$, $\vec{\mathbf{u}}$ is the maximal L-shaped path, which is already thin in both $\Delta^n_\flat \otimes \Delta^m_+$ and $\Delta^n_+ \otimes \Delta^m_\flat$, and the proof concludes.
\end{proof}

\begin{prop} \label{prop:fully_decorated_case}
  The equivalence from \zcref{prop:unmarkedcase} induces equivalences of spaces
  \[
    \Map(\Delta^{n}_{+}\otimes \Delta^{m}_{+},\Span_{\infty}(\scr{C})) \simeq \Map^{\cart,\cart}(\sd(\Delta^n_+) \times \sd(\Delta^m_+), \mathcal{C}) \simeq \Map^{\cart}(\sd(\Delta^{n}_{+}),\Fun^{\cart}(\sd(\Delta^{m}_{+}),\scr{C})).
  \]
\end{prop}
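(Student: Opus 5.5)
The plan mirrors the proof of \zcref{prop:one_sided_decorated_case}. The second equivalence holds by definition, so the work is in the first. By \zcref{prop:thin_simplices_in_fully_decorated_case}, the thin simplices of $\Delta^n_+\otimes\Delta^m_+$ are of three kinds: those of $\Delta^n_+\otimes\Delta^m_\flat$, those of $\Delta^n_\flat\otimes\Delta^m_+$, and the single extra $(n+m-1)$-simplex
\[
  \vec{\mathbf{u}}_0=((0,0),\dots,(n-1,0),(n,1),\dots,(n,m)).
\]

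First I will check that this decoration is L-saturated. For the simplices coming from the one-sided decorations, the argument in the proof of \zcref{prop:one_sided_decorated_case} applies verbatim: the main part has a unique relevant minimal, and this minimal is of maximal horizontal (resp. vertical) length, hence thin.

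For $\vec{\mathbf{u}}_0$, its image $L_{n,m}(\vec{\mathbf{u}}_0)$ inserts the single corner $(n-1,1)$. The main part is the diagonal step from $(n-1,0)$ to $(n,1)$, and the relevant minimals are the L-shaped paths with corners near this step. Concretely, they are the maximal L-shaped path (corner $(n,0)$) and paths such as $(n-1,0)\to\cdots$, i.e. paths running vertically to $(n-1,0)$ and then horizontally, or starting at $(0,1)$. I expect each of these to have either full vertical length $\{0,\dots,n\}$ with trailing horizontal segment, or full horizontal length, which makes them thin in $\Delta^n_+\otimes\Delta^m_\flat$ or $\Delta^n_\flat\otimes\Delta^m_+$ by \zcref{prop:decorated_one_sided}. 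The one I expect to need care is the minimal through $(n-1,0)$ and $(n-1,1)$ onward. I will check directly, via the parity-operator computation used in \zcref{prop:thin_simplices_in_fully_decorated_case}, that it is thin, or else that it is irrelevant.

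With L-saturatedness in hand, \zcref{prop:transfer_L-saturated_decorations} identifies the left side with $\Map^{\mix}$ on $\sd(\Delta^n)\times\sd(\Delta^m)$, where the mixed Segal conditions are those for the $(\sigma,\tau)$ coming from thin L-shaped paths. The remaining step is to show that these mixed conditions are equivalent to being Cartesian in each variable. The L-shaped paths of the form $(K,[m])$ and $([n],K')$ give exactly the conditions already treated in \zcref{prop:one_sided_decorated_case}. Running the induction $(*_k)$ there, using \zcref{lem:cubefacelemma}, in both variables yields Cartesianness in each variable. The converse follows the same way, since a product of a limit cube with a cube whose faces are limit is limit by \zcref{lem:cubefacelemma}.

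Finally, the extra simplex $\vec{\mathbf{u}}_0$ must impose no new condition. Its relevant L-shaped minimals contribute mixed conditions for pairs $(\sigma,\tau)$ with $\sigma=[n]$ or $\tau=[m]$, and these are already implied. I expect the main obstacle to be this L-saturatedness and enumeration for $\vec{\mathbf{u}}_0$. My first attempt will be to draw its grid picture and list the minimals $N_s$ explicitly, as in \zcref{fig:onedecorated}.
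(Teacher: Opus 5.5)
Your proposal is correct and follows the paper's proof: show that the decoration on $\Delta^n_+\otimes\Delta^m_+$ is L-saturated (the extra simplex $\vec{\mathbf{u}}_0$ has no relevant minimals that are not already thin), apply \zcref{prop:transfer_L-saturated_decorations}, and then conclude as in \zcref{prop:one_sided_decorated_case}, using both families of thin L-shaped paths $(K,[m])$ and $([n],K')$. Your enumeration needs a small fix: the main part of $\vec{\mathbf{u}}_0$ is all of $\vec{\mathbf{u}}_0$, not just the diagonal step, and the minimals under $L_{n,m}(\vec{\mathbf{u}}_0)$ are exactly the two L-shaped paths with corners $(n-1,0)$ and $(n,1)$ --- the maximal L-shaped path is not one of them, since its corner $(n,0)$ lies strictly below-left of the vertex $(n-1,1)$ --- and these two are thin in $\Delta^n_\flat\otimes\Delta^m_+$, resp. $\Delta^n_+\otimes\Delta^m_\flat$, directly by \zcref{prop:decorated_one_sided}, so no extra parity computation or irrelevance argument is needed.
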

\begin{proof}
  Observe that the additional thin simplex does not add any new relevant L-shaped minimals (see schematic below). Thus, the decoration on $\Delta^n_+ \otimes \Delta^m_+$ is L-saturated, thanks to \zcref{prop:thin_simplices_in_fully_decorated_case} and the fact that the decorations on $\Delta^n_+ \otimes \Delta^m_\flat$ and $\Delta^n_\flat \otimes \Delta^m_+$ are L-saturated. But now, the result follows from \zcref{prop:one_sided_decorated_case}.

  \begin{figure}[htb]
    \begin{tikzpicture}
      \drawGrid{4}{4}

      \begin{scope}[on background layer]
        \drawLine{6pt}{red} (00.center) -- (10.center) -- (20.center) -- (31.center) -- (32.center) -- (33.center);
        \drawLine{3pt}{blue} (00.center) -- (10.center) -- (20.center) -- (21.center) -- (22.center) -- (23.center);
        \drawLine{3pt}{green} (01.center) -- (11.center) -- (21.center) -- (31.center) -- (32.center) -- (33.center);
      \end{scope}
    \end{tikzpicture}
  \end{figure}

\end{proof}

\section{Span $n$-categories, hom-categories, and other models}
\label{sec:comparison_of_models_and_misc}

As a coda, we establish a number of more basic properties of $\Span_\infty(\scr{C})$. In \zcref{subsec:subcats_Span_infty}, we identify two kinds of span $n$-categories. Namely, for a fixed $n$, $\Span_{n}(\scr{C})$ is the $n$-category whose objects are the objects $\scr{C}$, and morphisms in all dimensions up to and including $n$ are spans in $\scr{C}$. On the other hand $\Span_{n\frac{1}{2}}(\scr{C})$ is the $(n+1)$-subcategory of $\Span_{n+1}(\scr{C})$ where the top-dimensional morphisms are genuine morphisms in $\scr{C}$, which we can identify with left-degenerate spans. Then, in~\zcref{subsec:universal_props_Span_n}, we formulate and prove universal properties for mapping into these categories. In~\zcref{subsec:Hom_in_Span}, we show that, as expected, $\Hom$-categories in these span categories have a recursive description involving span categories in lower dimension and the slice categories. As a final sanity check, in~\zcref{subsec:comparison_models}, we use the universal property of $\Span_\infty(\scr{C})$ to prove that $\Span_n(\scr{C})$ is equivalent to other models of $n$-categories of iterated spans in the literature. In particular, we show that the categories $\Span_{n\frac{1}{2}}(\scr{C})$ agree with those defined in \cite{stefanich_higher_2020}. The base case requires a reformulation of our results in terms of twisted arrow categories, carried out in \zcref{subsec:Span_small}.

\subsection{Subcategories of \texorpdfstring{$\Span_{\infty}(\scr{C})}{Span_∞(𝒞)}$} \label{subsec:subcats_Span_infty}

Having proven that $\Span_{\infty}(\scr{C})$ is an $\infty$-category (i.e., an $(\infty, \infty)$-category, using our implicit-$\infty$ convention), we immediately obtain models for the span $n$-categories.

\begin{defn}
  Let $\scr{C}$ be a category with finite limits. The $n$-category of \emph{$n$-fold spans} in $\scr{C}$ is the maximal $n$-trivial decorated simplicial subset $\Span_n(\scr{C})\subset \Span_{\infty}(\scr{C})$. That is, $\Span_n(\scr{C})$ is the underlying $n$-category of $\Span_{\infty}(\scr{C})$.
\end{defn}

There is, however, another variant of span constructions which will be important to our comparison. In $\Span_n(\scr{C})$ as we have constructed it, morphisms at all levels (from $1$ to $n$) are spans. However, there is also an $(n+1)$-category of $n$-fold spans in $\scr{C}$, in which the top dimensional non-invertible morphisms are simply morphisms in $\scr{C}$ which commute with the defining morphisms of the spans. We will generally denote this $n+1$-category by $\Span_{n\frac{1}{2}}(\scr{C})$.

\begin{defn}
  We call an $n$-simplex $\sd(\Delta^n)\to \scr{C}$ of $\Span_{\infty}(\scr{C})$ \emph{one-sided} if it satisfies the Segal condition which does not contain the $0$-lacuna. Note that the one-sided Segal cover is precisely the one with the same parity as $n+1$.

  For each $n$, we define the decorated simplicial subset
  \[
    \Span_{n\frac{1}{2}}(\scr{C})\subset \Span_{\infty}(\scr{C})
  \]
  to be the maximal one consisting of only one-sided simplices in dimension $n+1$, and only thin simplices in all higher dimensions. The thin simplices are those inherited from $\Span_{\infty}(\scr{C})$.
\end{defn}

\begin{prop} \label{prop:fibrancy_truncated_span_1-half}
  The decorated simplicial subset $\Span_{n\frac{1}{2}}(\scr{C})$ is fibrant.
\end{prop}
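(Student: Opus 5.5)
We verify that $\Span_{n\frac{1}{2}}(\scr{C})$ has the right lifting property against every elementary anodyne extension. Since $\Span_\infty(\scr{C})$ is a complicial set by \zcref{thm:Span_infty_is_complicial}, every lifting problem against $\Span_{n\frac{1}{2}}(\scr{C})$ already has a solution in $\Span_\infty(\scr{C})$. It then suffices to show that this solution can be chosen to land in the subobject. Membership in $\Span_{n\frac{1}{2}}(\scr{C})$ is determined by the simplices of dimension $\geq n+1$: non-degenerate $(n+1)$-simplices must be one-sided, and simplices of dimension $\geq n+2$ must be thin. (The definition forces all higher simplices to be thin, so the $(n+1)$-triviality extensions are automatic.)

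The plan is to go through the generating extensions one at a time.

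\begin{enumerate}
\item \emph{Thinness and saturation extensions.} These do not change the underlying simplicial set, so any lift in $\Span_\infty(\scr{C})$ automatically lies in the subobject. Thin simplices are inherited from $\Span_\infty(\scr{C})$.
\item \emph{The extension $\Delta^1_\sharp\to E_\sharp$ (\zcref{prop:innercomptocomp}).} The lift of \zcref{lem:specliftprop} sends every morphism to an equivalence. Hence all simplices in its image are thin, and thin simplices are one-sided.
\item \emph{Horn extensions $\Lambda^m_k\to\Delta^m_k$, for $m\leq n+1$.} The new top simplex has dimension $\leq n+1$. If $m<n+1$ there is nothing to check. If $m=n+1$, the horn filler lies in $\Delta^m_k$, whose top simplex is thin. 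Thin means satisfying both Segal conditions, so the filler is in particular one-sided.
\item \emph{Horn extensions $\Lambda^m_k\to\Delta^m_k$, for $m\geq n+2$.} The top simplex is marked in $\Delta^m_k$, so the lift makes it thin. The only potentially new simplex of dimension $n+1$ is the missing face $d_k$, and only when $m=n+2$. This case needs an argument.
\end{enumerate}

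The main obstacle is the case $m=n+2$: we must show that the filled face $d_k$ is one-sided. The approach is a limit-transfer argument in the style of \zcref{prop:inner_comp_thin}.

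For $0<k<m$, the horn contains the $(k-1)$- and $(k+1)$-faces, which are one-sided $(n+1)$-simplices, and the top simplex is thin. Combining pasting and cube moves on lacuna diagrams, we want to show that the Segal sieve of $[m]_k$ without the $0$-lacuna is $\sigma^\top$-local. We would mediate through the one-sided Segal sieves of the neighbouring faces, together with \zcref{rmk:sheaf_broad_sieve} and \zcref{lem:k-vanishing}. The key point to check is that the one-sided condition, namely the sieve avoiding the $0$-lacuna, is stable under these moves; the relevant parities line up because removing $k\neq 0$ preserves which cover omits $0$.

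For the outer horns $k=0$ or $k=m$, a direct check may be required. If that fails, we would instead show that a thin $(n+2)$-simplex whose other faces are one-sided forces the remaining face to be one-sided. To do this we apply \zcref{lem:cubefacelemma} to the even/odd Segal cube of the top simplex, using its $d_k$-facet.

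This last verification for outer horns is where we expect the most difficulty.
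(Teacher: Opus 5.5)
Your handling of the thinness and saturation extensions, of $\iota\colon\Delta^1_\sharp\to E_\sharp$, and of the essential case matches the paper's argument in outline. The essential case is an inner horn with $m=n+2$, done via \zcref{lem:k-vanishing}, a pasting move through the one-sided sieve of one neighbouring face, a cube move through the other, and \zcref{rmk:sheaf_broad_sieve}.

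\textbf{The gap: outer horns.} You leave the outer horns $k\in\{0,m\}$ with $m=n+2$ open, and your fallback would not close them. \zcref{lem:cube} and \zcref{lem:cubefacelemma} only transport a limit condition \emph{from} a facet: if the facet is a limit, then the cube is a limit iff the anti-facet is. They never let you deduce that the $d_k$-facet is a limit from the top simplex's Segal cube being one.

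For $k=0$ this is concrete. Thinness of the edge $01$ in $\Delta^m_0$ makes $f(U\cup\{0\})\to f(U)$ an equivalence whenever $1\in U$. So the Segal cube of $[m]$ containing $[m]_0$ is degenerate in the $[m]_0$-direction, hence automatically a limit cube, and it says nothing about its facet. A direct proof would need outer analogues of \zcref{lem:k-vanishing} and \zcref{rmk:sheaf_broad_sieve}, together with the one-sidedness of $d_1$.

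\textbf{The missing idea.} You do not need outer horns at all. Your step (2) already shows that $\Span_{n\frac{1}{2}}(\scr{C})$ lifts against $\iota$. By \zcref{prop:innercomptocomp}, an inner $(n+1)$-trivial complicial set with this lifting property is $(n+1)$-trivial complicial. So it suffices to check the inner elementary anodyne extensions, which by definition include the saturation and triviality extensions. This is exactly how the paper restricts to $0<k<n+2$. As written, your plan mixes two strategies: step (2) is only useful through \zcref{prop:innercomptocomp}, and once you invoke it the outer horns drop out.

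\textbf{A smaller omission in step (4).} For $m\geq n+3$, the missing face $d_k$ has dimension $m-1\geq n+2$ and so must be thin. It is not marked in $\Delta^m_k$, so the filler taken from $\Span_\infty(\scr{C})$ need not make it thin. This is easy to repair for inner $k$:
\begin{enumerate}
\item The faces $d_{k\pm1}$ lie in the horn and have dimension $\geq n+2$, so they are thin.
\item Hence the filler is a map $\Delta^m_{k'}\to\Span_\infty(\scr{C})$.
\item Lifting against the thinness extension $\Delta^m_{k'}\to\Delta^m_{k''}$ in $\Span_\infty(\scr{C})$ then forces $d_k$ to be thin.
\end{enumerate}
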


\begin{proof}
  Since the complicial thinness extensions and saturation extensions only change the decoration, the lifting property against those follows from the lifting properties for $\Span_\infty(\scr{C})$.

  Consider the complicial inner horn extensions $\Lambda^m_k\to \Delta^m_k$. It is easy to see that only the case where $m=n+1$ or $m=n+2$ needs to be justified. Since every thin $(n+1)$-simplex is, in particular, one-sided, the lift for $\Span_{\infty}(\scr{C})$ suffices here. We are thus left to consider the case $m=n+2$.

  In light of the fibrancy of $\Span_{\infty}(\scr{C})$ it suffices to show that for a map $f:\Delta^{n+2}_k\to \Span_{n\frac{1}{2}}(\scr{C})$, if all $(n+1)$-simplices except $d_k$ are one-sided, then so is $d_k$.  Since we have already established the lifting property against $\Delta^1_\sharp\to E_\sharp$, we can restrict to the case $0<k<n+2$.

  Consider the parity $(n+2)\tmod 2$ Segal sieve $\mathcal{F}_k$ for the $k$-face of $\Delta^{n+2}$. By \zcref{lem:k-vanishing}, the corresponding cube is sent to a limit if and only if the cube corresponding to $\scr{F}_k\ast \{k\}$ is sent to a limit. Now, there are two possibilities, $\mathcal{F}_k$ contains either the $(k-1)$-lacuna or the $(k+1)$-lacuna. Moreover, in the $(k-1)$-lacuna case, by assumption, $k-1\neq 0$.

  We will treat the first case below, as the other case is similar. We have the following pasting move
  \lacunaMove{
    \lacuna{6}{
      $k-3$ & $k-2$ & $\cancel{k-1}$ & $k$ & $k+1$ & $k+2$ \\ \hline
      $\varnothing$ \\
      & & & $\varnothing$ \\
      & & & & & $\varnothing$
    }
  }{
    \lacuna{6}{
      $k-3$ & $k-2$ & $k-1$ & $k$ & $k+1$ & $k+2$ \\ \hline
      $\varnothing$ \\
      & & $\varnothing$ & $\varnothing$ \\
      & & & & & $\varnothing$
    }
  }{
    \lacuna{6}{
      $k-3$ & $k-2$ & $k-1$ & $k$ & $k+1$ & $k+2$ \\ \hline
      $\varnothing$ \\
      & & $\varnothing$ \\
      & & & & & $\varnothing$
    }
  }
  where the sieve on the left is $\mathcal{F}_k \ast \{k\}$ and the cube associated to the mediating sieve is sent to a limit because the $(k-1)$-face is one-sided.

  The limit condition for the right sieve follows from the cube move below
  \lacunaMove{
    \lacuna{6}{
      $k-3$ & $k-2$ & $k-1$ & $k$ & $\cancel{k+1}$ & $k+2$ \\ \hline
      $\varnothing$ \\
      & & $\varnothing$ \\
      & & & & & $\varnothing$
    }
  }{
    \lacuna{6}{
      $k-3$ & $k-2$ & $k-1$ & $k$ & $k+1$ & $k+2$ \\ \hline
      $\varnothing$ \\
      & & $\varnothing$ \\
      & & & & & $\varnothing$
    }
  }{
    \lacuna{6}{
      $k-3$ & $k-2$ & $k-1$ & $k$ & $k+1$ & $k+2$ \\ \hline
      $\varnothing$ \\
      & & $\varnothing$ \\
      & & & & $\varnothing$ \\
      & & & & & $\varnothing$
    }
  }
  where the limit condition for the intermediate sieve (resp. the sieve on the right) follows from the one-sided assumption on the $(k+1)$-face (resp. from \zcref{rmk:sheaf_broad_sieve}). The proof thus concludes.
\end{proof}

We thus obtain a filtration of $\Span_\infty (\scr{C})$
\[
  \Span_0(\scr{C}) \subset \Span_{\frac{1}{2}}(\scr{C}) \subset \Span_1(\scr{C}) \subset \Span_{1\frac{1}{2}}(\scr{C})\subset \cdots
\]
Note that there are canonical equivalences $\Span_{\frac{1}{2}}(\scr{C})\simeq \scr{C}$ induced by the first-vertex maps $\sd{(\Delta^n)}\to \Delta^n$ which restrict to equivalences $\Span_0(\scr{C})\simeq \scr{C}^\simeq$.

\begin{lem} \label{lem:span_infty_colimit_span_lower}
  We have the following equivalences in $\Cat_\infty$
  \begin{align*}
    \Span_\infty(\mathcal{C})
    &\simeq \colim(\Span_0(\mathcal{C}) \to \Span_1(\mathcal{C}) \to \Span_2(\mathcal{C}) \to \cdots) \\
    &\simeq \colim(\Span_0(\mathcal{C}) \to \Span_{\frac{1}{2}}(\mathcal{C}) \to \Span_1(\mathcal{C}) \to \cdots) \\
    &\simeq \colim(\Span_{\frac{1}{2}}(\mathcal{C}) \to \Span_{1\frac{1}{2}}(\mathcal{C}) \to \Span_{2\frac{2}{3}}(\mathcal{C}) \to \cdots).
  \end{align*}
\end{lem}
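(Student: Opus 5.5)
The plan is to exploit the interleaving of the three filtrations and then compare the colimit with $\Span_\infty(\scr{C})$ via the density of $\tDelta$.

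\textbf{Reduction to one colimit.} The chain $\Span_0(\scr{C}) \subset \Span_{\frac{1}{2}}(\scr{C}) \subset \Span_1(\scr{C}) \subset \Span_{1\frac{1}{2}}(\scr{C})\subset \cdots$ of decorated simplicial subsets contains both the integer chain and the half-integer chain as cofinal subsequences. Cofinality of $\NN \to \NN$, $n\mapsto 2n$ (resp. $2n+1$) in the interleaved sequence then identifies all three colimits in $\Cat_\infty$. So it suffices to show that $\colim_n \Span_n(\scr{C}) \to \Span_\infty(\scr{C})$ is an equivalence.

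\textbf{Computing the colimit in the model.} Each $\Span_n(\scr{C})$ is fibrant: it is the underlying $n$-category, and Proposition~\ref{prop:fibrancy_truncated_span_1-half} handles the half-integer case. The inclusions are monomorphisms, hence cofibrations, and all objects are cofibrant. A sequential colimit of cofibrations between cofibrant objects is a homotopy colimit, so $\colim_n \Span_n(\scr{C})$ in $\Cat_\infty$ is presented by the $1$-categorical union $\bigcup_n \Span_n(\scr{C})\subset \Span_\infty(\scr{C})$. The same argument applies to the half-integer chain.

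\textbf{Identifying the union.} We now check that the union is all of $\Span_\infty(\scr{C})$ as a decorated simplicial set. An $m$-simplex of $\Span_\infty(\scr{C})$ together with its decoration lies in the maximal $n$-trivial decorated subset once $n\geq m$. The reason is that $\Span_n(\scr{C})$ consists of those simplices all of whose faces of dimension $>n$ are thin; for $n\ge m$ there are no non-degenerate faces of dimension $>n$, and degenerate simplices are thin. Thin simplices are inherited from $\Span_\infty(\scr{C})$. Hence the union is an isomorphism onto $\Span_\infty(\scr{C})$ on both simplices and decorations.

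\textbf{Expected main subtlety.} The main point to verify is that $\Span_n(\scr{C})$, defined as a maximal $n$-trivial decorated simplicial subset, genuinely presents the underlying $n$-category and is fibrant. Equivalently, in the $\infty$-complicial model structure the inclusion must be the counit of the right adjoint $\Cat_n\hookrightarrow\Cat_\infty$. I would cite Loubaton's identification of $n$-trivial complicial sets for this. Fibrancy in the $\infty$-complicial structure follows as in Proposition~\ref{prop:fibrancy_truncated_span_1-half}: lifts against horns of dimension $>n+1$ land in thin faces by the thinness extensions. If this turns out to be delicate, I would instead argue model-independently via Proposition~\ref{prop:tDelta_dense}. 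Maps $\Delta^m_t\to\colim_n\Span_n(\scr{C})$ factor through some stage because the objects of $\tDelta$ are compact, so both sides corepresent the same functor on $\tDelta$.
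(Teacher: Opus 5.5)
Your proof is correct, and the cofinality step (comparing all three colimits through the interleaved filtration $\Span_0(\scr{C})\subset\Span_{\frac12}(\scr{C})\subset\Span_1(\scr{C})\subset\Span_{1\frac12}(\scr{C})\subset\cdots$) is exactly what the paper does. For the first equivalence, though, your route differs. The paper computes nothing. It cites Loubaton's general result that every $(\infty,\infty)$-category, in the sense modelled by complicial sets, is the colimit of its underlying $n$-categories. It applies this to $\Span_\infty(\scr{C})$ via the identification $\Span_n(\scr{C})\simeq\Span_\infty(\scr{C})^{\leq n}$ asserted at the definition.

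You instead prove this instance directly in the model. The strict union of the decorated subsets is all of $\Span_\infty(\scr{C})$, since an $m$-simplex and its decoration already lie in $\Span_n(\scr{C})$ for $n\geq m$. And a sequential colimit of monomorphisms is a homotopy colimit because every object is cofibrant. This step uses only cofibrancy. The fibrancy of the $\Span_n(\scr{C})$ and your ``main subtlety'' are therefore not needed for the statement about the objects these decorated subsets present. They are needed only to identify those objects with the underlying $n$-categories, which is the same input the paper's citation also relies on. So your argument is more elementary and self-contained, while the paper's is model-independent and holds for any $(\infty,\infty)$-category.

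Your fallback, via compactness of the $\xi(\Delta^m_t)$ and density of $\tDelta$, is essentially a proof of Loubaton's proposition specialized to this case. It also relies on $\Span_n(\scr{C})\simeq\Span_\infty(\scr{C})^{\leq n}$, to see that $\Map(\xi(\Delta^m_t),\Span_n(\scr{C}))$ stabilizes for $n\geq m$. Also, ``corepresent'' there should read ``represent''.
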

\begin{proof}
  The first is a direct consequence of \cite[Proposition 2.2.1.53]{loubaton_categorical_2024}. The remaining two equivalences are due to cofinality.
\end{proof}

\subsection{Universal properties for span $n$-categories} \label{subsec:universal_props_Span_n}

We now briefly explore the implications of our two main results in the context of the subcategories $\Span_k(\scr{C})$ and $\Span_{k\frac{1}{2}}(\scr{C})$. Both of these turn out to be obtained as a special instance of \zcref{prop:transfer_L-saturated_decorations}.

\begin{defn} \label{defn:truncation_and_friends}
  We write
  \[
    \begin{tikzcd}
      (-)^{\leq k} : &[-3em] \Cat_\infty \arrow[r] & \Cat_k
    \end{tikzcd}
  \]
  for the right adjoint to the inclusion $\iota_k:\Cat_k\to \Cat_\infty$ and
  \[
    \begin{tikzcd}
      \tau_k: &[-3em] \Cat_\infty \arrow[r] & \Cat_k
    \end{tikzcd}
  \]
  for the left adjoint to the inclusion. Note that $\Span_k(\scr{C})=\Span_\infty(\scr{C})^{\leq k}$.
\end{defn}

Our aim is now to identify the subcategory
\[
  \Fun^{\oplax}(\scr{D},\Span_\infty(\scr{C})^{\leq k}) \subset \Fun^{\oplax}(\scr{D},\Span_\infty(\scr{C}))
\]
with a subcategory of $\Span_\infty(\Fun^{\cart}(\SD(\scr{D}),\scr{C})).$

To do this, we again work with the objects of $\tDelta$. As we will see, it will suffice to consider the minimally decorated case.

\begin{defn}
  For a decorated simplicial set $(X,tX)$ and an integer $k\geq 0$, we denote by $k^\sharp(X,tX)$ the decorated simplicial set obtained by adding all simplices of dimension greater than $k$ to the decoration $tX$.
\end{defn}

\begin{rmk}
  If we abusively denote by $\scr{D}$ a fibrant decorated simplicial set and the corresponding $\infty$-category, then, by \cite[2.2.1.15]{loubaton_complicial_2024}, $\tau_k(\scr{D})$ is modelled complicially by $k^\sharp(\scr{D})$. It follows that for a $1$-category $\scr{C}$ with finite limits, there is a natural equivalence
  \[
    \Map(\Delta^n_\flat,  \Fun^{\oplax}(\Delta^m_\flat,\Span_\infty(\scr{C})^{\leq k}))\simeq \Map(k^\sharp(\Delta^n_\flat\otimes \Delta^m_\flat),\Span_\infty(\scr{C})).
  \]
\end{rmk}

\begin{lem}\label{lem:dec_simps_trunc_gray}
  The decorated simplices of $\tau_k(\Delta^n_\flat\otimes \Delta^m_\flat)$ which are not decorated in $\Delta^n_\flat\otimes \Delta^m_\flat$ are precisely those nondegenerate $\vec{\mathbf{u}}$ of dimension greater than $k$ such that, if there is an $i$ such that $u^1_i=u^1_{i+1}$, then for all $j\geq i$, $u^{2}_{j}\neq u^{2}_{j+1}$.
\end{lem}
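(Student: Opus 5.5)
The plan is to read $\tau_k(\Delta^n_\flat\otimes \Delta^m_\flat)$ through its complicial model and reduce everything to \zcref{lem:marked_in_flat_gray}. By the preceding remark (citing \cite[2.2.1.15]{loubaton_complicial_2024}), $\tau_k$ is modelled by $k^\sharp(-)$. So I take the decorated simplices of $\tau_k(\Delta^n_\flat\otimes\Delta^m_\flat)$ to be those of $k^\sharp(\Delta^n_\flat\otimes \Delta^m_\flat)$. By definition, these are the thin simplices of the Gray product together with all simplices of dimension greater than $k$. Hence the simplices that are decorated in the truncation but not in the Gray product are exactly the non-degenerate simplices of dimension $>k$ that are not thin in $\Delta^n_\flat\otimes\Delta^m_\flat$. (Degenerate simplices are already thin, so they contribute nothing new.)

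It then remains to show that the condition in the statement is precisely the negation of the thinness criterion of \zcref{lem:marked_in_flat_gray}, for a non-degenerate simplex $\vec{\mathbf{u}}$. That lemma says $\vec{\mathbf{u}}$ is thin if and only if there are indices $\ell_1<\ell_2$ with $u^1_{\ell_1}=u^1_{\ell_1+1}$ and $u^2_{\ell_2}=u^2_{\ell_2+1}$. Negating it gives: whenever $u^1_i=u^1_{i+1}$, we have $u^2_j\neq u^2_{j+1}$ for all $j>i$. The statement asks for this for all $j\geq i$, so the only extra case is $j=i$. But $u^1_i=u^1_{i+1}$ together with $u^2_i=u^2_{i+1}$ would make the $i$-th and $(i+1)$-st vertices coincide, contradicting non-degeneracy. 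Thus, for non-degenerate simplices, the two formulations agree.

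The only step needing care is the first one: identifying the decoration of $\tau_k$ with that of $k^\sharp$ on the nose, rather than only up to equivalence. The cited result gives $k^\sharp(\scr{D})$ as a model when $\scr{D}$ is fibrant, but $\Delta^n_\flat\otimes\Delta^m_\flat$ need not be fibrant. I would handle this by noting that $k^\sharp$ is left Quillen-compatible: the maps $\Delta^j_\flat\to\Delta^j_+$ for $j>k$ are $k$-trivial anodyne extensions, so $X\to k^\sharp X$ is a $k$-trivial weak equivalence for any $X$. The lemma is then understood, as the paper does, as a statement about this canonical decorated model $k^\sharp(\Delta^n_\flat\otimes\Delta^m_\flat)$, after which the combinatorics above completes the argument.
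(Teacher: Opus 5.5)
Your proposal is correct and is exactly the paper's argument: the paper proves this in one line (``immediate from the definitions and \zcref{lem:marked_in_flat_gray}''), reading the decoration of $\tau_k(\Delta^n_\flat\otimes\Delta^m_\flat)$ off the model $k^\sharp(\Delta^n_\flat\otimes\Delta^m_\flat)$ and negating the thinness criterion. You have spelled out the details it leaves implicit, including that the extra case $j=i$ is automatic by non-degeneracy and that $X\to k^\sharp X$ is a pushout of $k$-triviality extensions.
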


\begin{proof}
  Immediate from the definitions and \zcref{lem:marked_in_flat_gray}.
\end{proof}

\begin{defn} \label{defn:k-truncated_mixed}
  We call the decoration of \zcref{lem:dec_simps_trunc_gray} the \emph{$k$-truncated decoration} of $\Delta^n\otimes \Delta^m$.

  We call the set of mixed Segal cubes in $\sd(\Delta^{n}) \times \sd(\Delta^{m})$ associated to all pair of simplices $\sigma:\Delta^{l}\rightarrow \Delta^{n}$ and $\tau:\Delta^{j}\rightarrow \Delta^{m}$ with $l+j> k$, the $k$-truncated mixed Segal cubes.

  We say a functor $f:\sd(\Delta^{n})\times \sd(\Delta^{m})\rightarrow \cC$ satisfies the $k$-truncated mixed Segal conditions if it sends all $k$-truncated mixed Segal cubes to limit cubes.
\end{defn}

The following lemma is immediate.

\begin{lem}\label{lem:ktrunc_dec_L-saturated}
  The $k$-truncated decoration of $\Delta^n\otimes \Delta^m$ is $L$-saturated in the sense of \zcref{defn:decoration_L-saturated}.
\end{lem}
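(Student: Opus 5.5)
Unwinding \zcref{defn:decoration_L-saturated}, we must show the following. Let $\vec{\mathbf{u}}$ be a non-degenerate simplex that is thin in the $k$-truncated decoration and not already thin in $\Delta^n_\flat\otimes\Delta^m_\flat$, and put $M=L_{n,m}(\vec{\mathbf{u}})$. Then every relevant minimal L-shaped path $N\in\Rel(M)$ is again thin. Simplices already thin in the Gray product need not be treated. For them the main-part argument is void, or else they are covered by the corner-collapse discussion, since the statement is about the extension of the decoration beyond $\Delta^n_\flat\otimes\Delta^m_\flat$.

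By \zcref{lem:dec_simps_trunc_gray}, the added thin simplices are exactly the non-thin non-degenerate simplices of dimension $>k$. Every L-shaped path is non-thin in $\Delta^n_\flat\otimes\Delta^m_\flat$, because its vertical part precedes its horizontal part. So it suffices to prove the following dimension estimate.
\begin{quote}
Every relevant minimal $N$ under $M$ has dimension at least $\dim\vec{\mathbf{u}}$.
\end{quote}
Equivalently, if $N$ corresponds to the pair $(\sigma,\tau)\in\sd(\Delta^n)\times\sd(\Delta^m)$, then $\dim\sigma+\dim\tau\geq \dim\vec{\mathbf{u}}>k$. This also matches the indexing of the $k$-truncated mixed Segal cubes in \zcref{defn:k-truncated_mixed}.

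To prove the estimate, use the path picture. Since $\vec{\mathbf{u}}$ is non-thin, its vertical components precede its horizontal ones, and the main part $\vec{\mathbf{u}}_{\main}$ consists of the last vertical component, the first horizontal component, and the diagonal steps between them. A minimal $N$ under $M$ is an L-shaped path whose corner point $\mathbf{c}$ is a vertex of $\vec{\mathbf{u}}$. It runs vertically through all first coordinates of $\vec{\mathbf{u}}$ up to $\mathbf{c}$, then horizontally through all second coordinates from $\mathbf{c}$ on. Concretely, $\sigma$ is the set of first coordinates of vertices up to $\mathbf{c}$, and $\tau$ is the set of second coordinates of vertices from $\mathbf{c}$ on. Relevance means $(U_N)_{M_{\main}/}\neq\varnothing$, which forces the corner $\mathbf{c}$ to lie on $\vec{\mathbf{u}}_{\main}$.

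With $\mathbf{c}=\mathbf{u}_j$, count vertices. Along $\vec{\mathbf{u}}$ the first coordinate is non-decreasing, and among the first $j+1$ vertices any repetition of the first coordinate can only occur inside the last vertical component before $\mathbf{c}$. Every step before the main part changes the first coordinate, since it is vertical or diagonal. Hence $\dim\sigma\geq j$. Symmetrically, every step after the main part changes the second coordinate, so $\dim\tau\geq \dim\vec{\mathbf{u}}-j$. Adding the two gives $\dim\sigma+\dim\tau\geq\dim\vec{\mathbf{u}}>k$.

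The main obstacle is making precise the claim that relevance confines $\mathbf{c}$ to the main part. This is where the careful bookkeeping of which steps change which coordinate is needed; the schematics in \zcref{lem:irrelevants constant} indicate that irrelevant minimals are exactly those lying entirely above or to the right of $\vec{\mathbf{u}}_{\main}$. Once that is settled, and the counting above holds with the thinness criterion of \zcref{lem:dec_simps_trunc_gray}, the lemma follows.
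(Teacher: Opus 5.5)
Your reduction is the paper's. Its whole proof is the observation that every relevant minimal $N_t$ has the same length as $\vec{\mathbf{u}}$, and your estimate $\dim\sigma+\dim\tau\geq\dim\vec{\mathbf{u}}$ is that statement. The vertex count you give for it, however, has a gap. Write $\mathbf{c}=\mathbf{u}_j$ and $d=\dim\vec{\mathbf{u}}$. Then exactly $\dim\sigma=j-h$ and $\dim\tau=(d-j)-v$, where $h$ is the number of horizontal steps (first coordinate constant) before $\mathbf{c}$ and $v$ is the number of vertical steps after $\mathbf{c}$. So you need $h=v=0$. Knowing only that $\mathbf{c}$ lies on $\vec{\mathbf{u}}_{\main}$ does not give this, because the main part contains the whole first horizontal component and the whole last vertical component. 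If $\mathbf{c}$ were the last vertex of the former, the steps of that component would precede $\mathbf{c}$ and $h>0$. Dually, if $\mathbf{c}$ were the first vertex of the latter, then $v>0$. Your argument only controls steps outside the main part, so the inequalities $\dim\sigma\geq j$ and $\dim\tau\geq d-j$ are not justified as written.

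What is missing is the minimality of $N$. For any vertex $\mathbf{u}_i$, take the L-shaped path with corner $\mathbf{u}_i$, namely the rows of $\vec{\mathbf{u}}$ up to $\mathbf{u}_i$ and the columns from $\mathbf{u}_i$ on. This path lies in $\SD_{M/}$: it sits in the minimal subgrid of $M$, and it lies weakly above-right of its corner, which is a point of the monotone path $M$.
\begin{itemize}
\item If $\mathbf{c}=\mathbf{u}_j$ were entered by a horizontal step, the path with corner $\mathbf{u}_{j-1}$ would have the same $\sigma$ and a strictly larger $\tau$, contradicting minimality.
\item Dually, $\mathbf{c}$ is never left by a vertical step.
\end{itemize}
The relevance step you flag as the main obstacle is easy. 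A corner strictly before the last vertical component has all its rows strictly smaller than those of $M_{\main}$. A corner strictly after the first horizontal component has all its columns strictly larger than those of $M_{\main}$. In either case no element of $\SD$ lies in both minimal subgrids. Together, these facts confine $\mathbf{c}$ to the purely diagonal stretch from the end of the last vertical component to the start of the first horizontal component. There non-thinness gives $h=v=0$, hence $\dim N=\dim\vec{\mathbf{u}}>k$, which is exactly the equality the paper asserts.
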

\begin{proof}
  We need to show that for every simplex $\vec{\mathbf{u}}$ of length greater than $k$, all relevant minimals also have length greater than $k$. But this is clear, as every relevant minimal $N_{t}$ has the same length as $\vec{\mathbf{u}}$ (see \zcref{fig:k-truncated} for a schematic description)

  \begin{figure}[htb]
    \begin{tikzpicture}
      \drawGrid{5}{5}

      \begin{scope}[on background layer]
        \begin{scope}[red]
          \drawLine{9pt}{red}  (00.center) -- (11.center) -- (21.center) --(32.center) -- (33.center) -- (44.center);
          \path (-0.5,0.5) node {\huge$\ddots$};
          \path (4.5,-4.5) node {\huge$\ddots$};
          \path (1.5,-1) node {$A$};
          \path (2,-2.5) node {$B$};
          \path (4.5,-4) node {$C$};
        \end{scope}

        \drawLine{6pt}{cyan} (11.center) -- (21.center) --(32.center) -- (33.center);

        \drawLine{3pt}{blue} (01.center) -- (11.center) -- (21.center) -- (22.center) -- (23.center) --(24.center);
      \end{scope}
    \end{tikzpicture}
    \caption{Note that the relevant minimal $N_{t}$ (blue) has the same length as the original simplex $\vec{\mathbf{u}}$ (red). Note that the Segal condition for the original simplex that includes forgetting vertices $A$,$B$ and $C$ induces a Segal condition of the same parity for the relevant minimal.}
    \label{fig:k-truncated}
  \end{figure}
\end{proof}

\begin{cor}\label{cor:trunc_map_equiv}
  Let $k\geq 1$. The equivalence of \zcref{prop:unmarkedcase} restricts to an equivalence of spaces
  \[
    \Map((\Delta^{n}_{\flat}\otimes \Delta_{\flat}^{m}),\Span_{k}(\cC)) \simeq \Map^{\mix}((\sd(\Delta^{n}_{\flat}) \times \sd(\Delta^{m}_{\flat}))_{\leq k},\cC)
  \]
  where the right-hand side denotes the space of those functors that satisfy the $k$-truncated mixed Segal conditions.
\end{cor}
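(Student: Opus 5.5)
The plan is to reduce the corollary to \zcref{prop:transfer_L-saturated_decorations}, applied to the $k$-truncated decoration of $\Delta^n\otimes\Delta^m$. There are three steps: identify the left-hand side with Cartesian maps out of $\sd$ of a decorated Gray product, apply the transfer proposition, and match the resulting mixed Segal conditions with the $k$-truncated ones.

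First, I rewrite the left-hand side. $\Span_k(\cC)=\Span_\infty(\cC)^{\leq k}$ is right adjoint to the inclusion $\Cat_k\to\Cat_\infty$, and the preceding remark says that $\tau_k$ is modelled by $k^\sharp$. Hence
\[
\Map(\Delta^n_\flat\otimes\Delta^m_\flat,\Span_k(\cC))\simeq\Map(k^\sharp(\Delta^n_\flat\otimes\Delta^m_\flat),\Span_\infty(\cC)).
\]
The decoration $k^\sharp(\Delta^n_\flat\otimes\Delta^m_\flat)$ is exactly the $k$-truncated decoration of \zcref{lem:dec_simps_trunc_gray}: everything of dimension $>k$ becomes thin, in addition to the Gray-thin simplices. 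By \zcref{thm:SpanUP_I_model} (or \zcref{prop:mappingUP_prop} and its extension to decorated simplicial sets), this mapping space is $\Map^{\cart}(\sd(\Delta^n\otimes\Delta^m)_{\leq k},\cC)$. This is naturally a subspace of the left-hand side of \zcref{prop:unmarkedcase}, and the equivalence there restricts to it.

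Second, \zcref{lem:ktrunc_dec_L-saturated} says the $k$-truncated decoration is L-saturated. \zcref{prop:transfer_L-saturated_decorations} therefore identifies the Cartesian maps with the functors $\sd(\Delta^n)\times\sd(\Delta^m)\to\cC$ satisfying the mixed Segal conditions for every pair $(\sigma,\tau)=Z_{n,m}(L)$, where $L$ ranges over L-shaped paths that are thin in the $k$-truncated decoration.

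The remaining step, which is the only real bookkeeping, is to show that these pairs are exactly the pairs with $\dim\sigma+\dim\tau>k$. I expect this to be the main obstacle, mostly around degenerate edge cases. An L-shaped path with vertical part $\sigma$ of dimension $l$ and horizontal part $\tau$ of dimension $j$ has dimension $l+j$, since the corner vertex is shared. Its vertical steps all precede its horizontal steps, so it is never thin in $\Delta^n_\flat\otimes\Delta^m_\flat$ by \zcref{lem:marked_in_flat_gray}. By \zcref{lem:dec_simps_trunc_gray}, it is therefore thin in the $k$-truncated decoration if and only if $l+j>k$. Every pair $(\sigma,\tau)$ of nondegenerate simplices arises as $Z_{n,m}$ of the L-shaped path $\ell_{n,m}(\sigma,\tau)$, and that path has dimension $l+j$. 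This gives the identification with the $k$-truncated mixed Segal conditions of \zcref{defn:k-truncated_mixed}.

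Two edge cases need a check. The first is $l=0$ or $j=0$, where the mixed Segal cubes reduce to ordinary Segal cubes in one factor. The second is that the hypothesis $k\geq1$ excludes the degenerate $0$-dimensional thinness that would otherwise force vertex maps to be equivalences. Both should follow directly from the definitions.
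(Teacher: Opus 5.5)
Your proposal is correct and follows essentially the same route as the paper, whose proof simply applies \zcref{prop:transfer_L-saturated_decorations} to the $k$-truncated decoration, which is L-saturated by \zcref{lem:ktrunc_dec_L-saturated}. Your extra bookkeeping is carried out correctly and is exactly what the paper leaves implicit: you identify $\Span_k$-valued maps with Cartesian maps out of $\sd(k^\sharp(\Delta^n_\flat\otimes\Delta^m_\flat))$, and you match thin L-shaped paths with pairs $(\sigma,\tau)$ of total dimension $>k$.
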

\begin{proof}
  Direct from \zcref{prop:transfer_L-saturated_decorations}.
\end{proof}

\begin{defn} \label{defn:Span_k_mix}
  Let $\scr{C}$ be a $1$-category with finite limits, and $\scr{D}$ an $\infty$-category. We define a subobject $\Span_k^{\mix}(\Fun^{\cart}(\SD(\tau_k(\scr{D})),\scr{C}))$ of $\Span_\infty(\Fun^{\cart}(\SD(\scr{D}),\scr{C}))$ in $\Spc_{\tDelta}$ as follows.

  We declare
  \[
    \Map(\Delta^n_t,\Span_k^{\mix}(\Fun^{\cart}(\SD(\tau_k(\scr{D})),\scr{C})))\subset \Map^{\cart,\cart}(\sd(\Delta^n_t)\times\SD(\scr{D}),\scr{C})
  \]
  to be the full subspace spanned by functors such that for every $\Delta^m_s\to \scr{D}$, the corresponding composite
  \[
    \begin{tikzcd}
      \sd(\Delta^n_t)\times \sd(\Delta^m_s)\arrow[r] &  \sd(\Delta^n_t)\times\SD(\scr{D})\arrow[r] & \scr{C}
    \end{tikzcd}
  \]
  satisfies the $k$-truncated mixed Segal conditions. Note that $\Span_n^{\mix}(\Fun^{\cart}(\SD(\tau_k(\scr{D})),\scr{C}))$ is, in fact, a subobject of $\Span_n(\Fun^{\cart}(\SD(\tau_k(\scr{D})),\scr{C}))$.
\end{defn}

\begin{thm} \label{thm:universal_property_Span_k}
  Let $\scr{C}$ be a $1$-category with finite limits and $\scr{D}$ an $\infty$-category. Then, there is a natural equivalence
  \[
    \Span_k^{\mix}(\Fun^{\cart}(\SD(\tau_k(\scr{D})),\scr{C}))\simeq \Fun^{\oplax}(\scr{D},\Span_k(\scr{C})).
  \]
  In particular $\Span_k^{\mix}(\Fun^{\cart}(\SD(\tau_k(\scr{D})),\scr{C}))$ is a $k$-category.
\end{thm}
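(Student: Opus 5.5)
The plan is to run the same density argument used for \zcref{thm:functor_UP_inftycat}, now tracking the truncation. By \zcref{prop:tDelta_dense}, both sides are determined by their values on $\Delta^n_t\in\tDelta$. Since $\Span_k^{\mix}(\cdots)$ is defined as a subobject of $\Span_\infty(\Fun^{\cart}(\SD(\scr{D}),\scr{C}))$ in $\Spc_{\tDelta}$, and $\Fun^{\oplax}(\scr{D},\Span_k(\scr{C}))$ is a subobject of $\Fun^{\oplax}(\scr{D},\Span_\infty(\scr{C}))$, it suffices to check one thing. Under the equivalence of \zcref{thm:functor_UP_inftycat}, evaluated on $\Delta^n_t$, the two full subspaces must correspond. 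Once this is done, the right-hand side is a $k$-category, so the left-hand side is one too, which gives the final clause.

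The key steps are as follows. First, I reduce to the representables $\Delta^m_s\to\scr{D}$ via $\scr{D}\simeq\colim_{\tDelta/\scr{D}}\Oriental^m_s$. The $\Map(\Delta^n_t,-)$ of the left side is the limit over $\tDelta_{/\scr{D}}$ of spaces of maps $\Delta^n_t\otimes\Delta^m_s\to\Span_k(\scr{C})$. A map into $\Span_\infty(\scr{C})^{\leq k}$ is the same as a map factoring through the $k$-truncation of the source, which by the remark after \zcref{defn:truncation_and_friends} is $k^\sharp(\Delta^n_t\otimes\Delta^m_s)$. The condition is therefore pointwise in $\tDelta_{/\scr{D}}$ and compatible with the limit. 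It thus suffices to show, naturally in $(\Delta^n_t,\Delta^m_s)$, that the equivalence of \zcref{prop:functor_UP_prop} identifies
\[
\Map^{\cart}(\sd(k^\sharp(\Delta^n_t\otimes\Delta^m_s)),\scr{C})
\]
with the functors in $\Map^{\cart,\cart}(\sd(\Delta^n_t)\times\sd(\Delta^m_s),\scr{C})$ that satisfy the $k$-truncated mixed Segal conditions.

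Second, I handle the minimally decorated case. This is exactly \zcref{cor:trunc_map_equiv}, which in turn comes from \zcref{lem:ktrunc_dec_L-saturated} and \zcref{prop:transfer_L-saturated_decorations}.

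Third, I treat the general decorations $t,s\in\{\flat,+\}$. Here I observe that the decoration $k^\sharp(\Delta^n_t\otimes\Delta^m_s)$ is the union of the $k$-truncated decoration with the one from \zcref{prop:decorated_one_sided} and \zcref{prop:thin_simplices_in_fully_decorated_case}. A union of L-saturated decorations is again L-saturated, because the condition is imposed simplex by simplex. So \zcref{prop:transfer_L-saturated_decorations} applies directly, and identifies the left side with functors satisfying the mixed Segal conditions for both families. The non-truncated family was already converted into the $\cart,\cart$ conditions in \zcref{prop:one_sided_decorated_case} and \zcref{prop:fully_decorated_case} by the induction using \zcref{lem:cubefacelemma}. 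The truncated family is, by definition, the $k$-truncated mixed Segal conditions. Combining these gives the claimed identification.

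The step I expect to be the main obstacle is the bookkeeping in the third step. I need to check that adding the truncation decoration does not interfere with the inductive argument of \zcref{prop:one_sided_decorated_case}, which converts mixed conditions into separate Cartesian conditions. That induction used only mixed Segal cubes for L-shaped paths of maximal length in one direction, so the extra truncated cubes simply persist as additional conditions. My first attempt would be to verify this by noting that \zcref{lem:cubefacelemma} is applied only to cubes lying in the non-truncated family.

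A second concern is naturality in $\scr{D}$, i.e.\ that the subobject $\Span_k^{\mix}$ is functorial along $\tDelta_{/\scr{D}}$. This holds because the $k$-truncated mixed conditions are stable under restriction along faces and degeneracies: a face of a pair $(\sigma,\tau)$ with $l+j>k$ either still has total dimension $>k$ or drops out. I would verify this on generators only.
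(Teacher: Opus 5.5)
Your proposal is correct and shares the paper's skeleton. Both reduce by density over $\tDelta$, exactly as \zcref{thm:functor_UP_inftycat} is derived from \zcref{prop:functor_UP_prop}. Both then match the subspaces using \zcref{cor:trunc_map_equiv}.

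The only divergence is your third step. The paper proves only the case $\Delta^n_\flat\otimes\Delta^m_\flat$ and says the minimally decorated case suffices. The reason is this:
\begin{itemize}
\item both subobject conditions depend only on the underlying undecorated data (the conditions are landing in the maximal $k$-trivial subobject $\Span_k(\scr{C})$, whose thin simplices are inherited, and the $k$-truncated mixed Segal conditions);
\item the $(t,s)$-equivalences of \zcref{prop:functor_UP_prop} are restrictions of the $(\flat,\flat)$ one to full subspaces.
\end{itemize}
Hence $\Map^{\cart}(\sd(k^\sharp(\Delta^n_t\otimes\Delta^m_s)),\scr{C})$ is the intersection of two full subspaces. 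The $(\flat,\flat)$ equivalence already sends these to the $\cart,\cart$ functors and to the $k$-truncated mixed functors respectively, so one intersects and is done.

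Your route is also valid. The union of the L-saturated decorations $\Delta^n_t\otimes\Delta^m_s$ and $k^\sharp(\Delta^n_\flat\otimes\Delta^m_\flat)$ is again L-saturated, since L-saturation is a closure condition checked one thin simplex at a time. You then rerun \zcref{prop:transfer_L-saturated_decorations} and convert. This costs a second pass through the transfer machinery, whereas the paper's route needs only naturality in $\tDelta\times\tDelta$.

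The bookkeeping you worry about disappears for a simple reason. \zcref{prop:one_sided_decorated_case,prop:fully_decorated_case} establish an equivalence of conditions on each individual functor. So adding the $k$-truncated conditions to both sides changes nothing. You do not need any claim about which cubes \zcref{lem:cubefacelemma} is applied to, and those cubes can in fact overlap the truncated family.

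On naturality, the maps to check are degeneracies, not faces:
\begin{itemize}
\item faces send $k$-truncated mixed cubes to $k$-truncated mixed cubes of the same dimension, so there is nothing to check;
\item a mixed cube pushed forward along a degeneracy is constant, with identity maps, in the direction that removes one vertex of the repeated pair, so it is automatically a limit by \zcref{lem:degenerate}.
\end{itemize}
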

\begin{proof}
  This is a direct consequence of \zcref{cor:trunc_map_equiv} and the density of $\tDelta$ in $\Spc_{\tDelta}$ from \zcref{lem:ktrunc_dec_L-saturated}, similar to how \zcref{thm:functor_UP_inftycat} follows from \zcref{prop:functor_UP_prop}.
\end{proof}

It now only remains for us to provide  universal properties for the $(k+1)$-categories $\Span_{k\frac{1}{2}}(\mathcal{C})$. To this end, we introduce one-sided versions of our $k$-truncated mixed Segal conditions.

\begin{defn} \label{defn:one_sided_k-truncated_mixed}
  We call the set of cubes in $\sd(\Delta^{n})\times \sd(\Delta^{m})$ consisting of
  \begin{itemize}
    \item the mixed Segal cubes associated to all pairs of simplices $\sigma:\Delta^{l}\to \Delta^{n}$ and $\tau:\Delta^{j} \to \Delta^{m}$ with $l+j>k+1$,
    \item the inner mixed Segal cube associated to all pairs of simplices $\sigma:\Delta^{l}\to \Delta^{n}$ and $\tau:\Delta^{j} \to \Delta^{m}$ with $l+j=k+1$ and $l$ odd,
    \item the outer mixed Segal condition associated to all pairs of simplices $\sigma:\Delta^{l}\to \Delta^{n}$ and $\tau:\Delta^{j} \to \Delta^{m}$ with $l+j=k+1$ and $l$ even
  \end{itemize}
  the \emph{one-sided $k$-truncated mixed Segal cubes}. We say that a map $f:\sd(\Delta^{n})\times \sd(\Delta^{m})\to \cC$ satisfies the \emph{one-sided $k$-truncated mixed Segal conditions} if it maps every one-sided $k$-truncated mixed Segal cube to a limit cube.
\end{defn}

\begin{cor}
  Let $k\geq 1$. The equivalence of \zcref{prop:unmarkedcase} restricts to an equivalence of spaces
  \[
    \Map((\Delta^{n}_{\flat}\otimes \Delta_{\flat}^{m}),\Span_{k\frac{1}{2}}(\cC)) \simeq \Map^{\mix}((\sd(\Delta^{n}_{\flat})\times \sd(\Delta^{m}_{\flat}))_{\leq k\frac{1}{2}},\cC,)
  \]
  where the right-hand side denotes the space of those functors that satisfy the one-sided $k$-truncated mixed Segal conditions.
\end{cor}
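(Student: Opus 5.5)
The plan is to mimic the proof of \zcref{cor:trunc_map_equiv}: identify maps $\Delta^n_\flat\otimes\Delta^m_\flat\to\Span_{k\frac{1}{2}}(\cC)$ as maps out of a decorated Gray tensor product into $\Span_\infty(\cC)$ carrying an extra limit condition, show that this decoration is L-saturated, and then apply \zcref{prop:transfer_L-saturated_decorations}. A map $\Delta^n_\flat\otimes\Delta^m_\flat\to\Span_\infty(\cC)$ lands in $\Span_{k\frac{1}{2}}(\cC)$ exactly when two things hold. First, every nondegenerate simplex of dimension $>k+1$ goes to a thin simplex. Second, every simplex of dimension $k+1$ goes to a one-sided simplex.

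The first condition is the $(k+1)$-truncated decoration. By \zcref{lem:ktrunc_dec_L-saturated}, this decoration is L-saturated, and \zcref{prop:transfer_L-saturated_decorations} translates it into the mixed Segal conditions with $l+j>k+1$.

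The second condition is a single Segal condition on each $(k+1)$-simplex $\vec{\mathbf{u}}$, namely the one without the $0$-lacuna. This is not a decoration, so I will rerun the proof of \zcref{prop:transfer_L-saturated_decorations} restricted to that one chosen Segal cube. That proof fixes one Segal cube of $\vec{\mathbf{u}}$ (\zcref{conv:L_saturatedness_and_chosen_Segal}) and uses the hyper Segal cube $C_{\vec{\mathbf{u}}}$. What it needs is that the relevant minimals $N_t$ satisfy the \emph{induced} mixed Segal condition: the outer one for Segal minimals (\zcref{lem:Segal paths are limit}) and the inner one for special minimals (\zcref{lem:special_path}). The argument is otherwise cube-by-cube. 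Since the $(k+1)$-simplices in the minimally decorated Gray product are not already thin, \zcref{lem:marked_in_flat_gray} applies. By the figure in \zcref{lem:ktrunc_dec_L-saturated}, every relevant minimal $N_t$ has the same length $k+1$ as $\vec{\mathbf{u}}$. So the one-sided condition on $\vec{\mathbf{u}}$ is equivalent to the conditions it induces on its relevant minimal L-shaped paths. Conversely, by \zcref{prop:mixed_segal_suffices_for_L_shaped}, the one-sided condition on an L-shaped $(k+1)$-path $L$ corresponds to exactly one of the inner or outer mixed Segal conditions for its image $(\sigma,\tau)$ in $\sd(\Delta^n)\times\sd(\Delta^m)$.

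The main obstacle is the parity bookkeeping: checking that the one-sided cube of each relevant minimal is the inner mixed cube when $l$ is odd and the outer one when $l$ is even. I would check this directly on an L-shaped path $(0,\dots,l \text{ vertical}, \text{then } j \text{ horizontal})$ with $l+j=k+1$. The one-sided cube omits the $0$-lacuna and has the parity of $k+2$. The inner mixed cube contains the corner lacuna at index $l$, whose parity in a path with $k+2$ vertices is that of $k+1-l$. Comparing the two, the corner lacuna has the same parity as the $0$-lacuna, so it is excluded by the one-sided cube, exactly when $l$ is even. That gives the outer cube for $l$ even and the inner cube for $l$ odd, matching \zcref{defn:one_sided_k-truncated_mixed}. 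I also need to confirm that the induced Segal condition on a relevant minimal keeps the same parity, which the caption of \zcref{fig:k-truncated} asserts, and then check the case analysis for Segal versus special minimals.

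Once both directions of the transfer are in place, the two inclusions of subspaces inside the equivalence of \zcref{prop:unmarkedcase} coincide, which proves the corollary.
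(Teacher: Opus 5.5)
Your proposal is correct and is essentially the paper's argument: the paper also reruns the proof of \zcref{prop:transfer_L-saturated_decorations} for one chosen Segal cube of $\vec{\mathbf{u}}$. It relies on the facts that each relevant minimal $N_t$ has the same length as $\vec{\mathbf{u}}$ and that the mixed Segal condition induced on $N_t$ has the same parity as the chosen cube (\zcref{fig:k-truncated}). Your explicit parity check, that the one-sided cube of an L-shaped $(k+1)$-path is the inner mixed cube when $l$ is odd and the outer one when $l$ is even, spells out a step the paper leaves implicit when it matches this with \zcref{defn:one_sided_k-truncated_mixed}.
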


\begin{proof}
  Let $\vec{\mathbf{u}}$ be a decorated simplex and choose one of its Segal cubes. Note that for every relevant minimal $N_{t}$ of $\vec{\mathbf{u}}$, the mixed Segal condition associated to $N_{t}$ used in the proof of \zcref{prop:transfer_L-saturated_decorations} is of the same parity as the chosen Segal condition for $\vec{\mathbf{u}}$ (compare \zcref{fig:k-truncated}). Hence, the result follows from the same argument as \zcref{prop:transfer_L-saturated_decorations}.
\end{proof}

Defining $\Span_{k\frac{1}{2}}^{\mix}(\Fun^{\cart}(\SD(\tau_{k+1}(\mathcal{D})), \mathcal{C}))$ as in \zcref{defn:Span_k_mix} but using \zcref{defn:one_sided_k-truncated_mixed} instead of \zcref{defn:k-truncated_mixed}, we obtain the following theorem.

\begin{thm} \label{thm:universal_property_Span_k_1/2}
  Let $\scr{C}$ be a $1$-category with finite limits and $\scr{D}$ an $\infty$-category. Then, there is a natural equivalence
  \[
    \Span_{k\frac{1}{2}}^{\mix}(\Fun^{\cart}(\SD(\tau_{k+1}(\scr{D})),\scr{C}))\simeq \Fun^{\oplax}(\scr{D},\Span_{k\frac{1}{2}}(\scr{C})).
  \]
  In particular $\Span_{k\frac{1}{2}}^{\mix}(\Fun^{\cart}(\SD(\tau_k(\scr{D})),\scr{C}))$ is a $(k+1)$-category.
\end{thm}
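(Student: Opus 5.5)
The proof follows the same pattern as \zcref{thm:universal_property_Span_k}, which was derived from \zcref{cor:trunc_map_equiv}. Here the input is the one-sided corollary stated just above. I work in $\Spc_{\tDelta}$ via the density of \zcref{prop:tDelta_dense}, so it suffices to produce equivalences
\[
  \Map\bigl(\Delta^n_t,\Span_{k\frac{1}{2}}^{\mix}(\Fun^{\cart}(\SD(\tau_{k+1}(\scr{D})),\scr{C}))\bigr)\simeq \Map\bigl(\Delta^n_t,\Fun^{\oplax}(\scr{D},\Span_{k\frac{1}{2}}(\scr{C}))\bigr)
\]
natural in $\Delta^n_t\in\tDelta$ and in $\scr{D}$.

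For the right-hand side, I rewrite it by adjunction as $\Map(\Delta^n_t\otimes\scr{D},\Span_{k\frac{1}{2}}(\scr{C}))$. I then write $\scr{D}$ as the colimit of $\Delta^m_s$ over $\tDelta_{/\scr{D}}$ and pass to the limit $\lim_{\tDelta_{/\scr{D}}^\op}\Map(\Delta^n_t\otimes\Delta^m_s,\Span_{k\frac{1}{2}}(\scr{C}))$, exactly as in the proof of \zcref{thm:functor_UP_inftycat}. By fibrancy (\zcref{prop:fibrancy_truncated_span_1-half}), $\Span_{k\frac{1}{2}}(\scr{C})$ is a full decorated sub-object of $\Span_\infty(\scr{C})$. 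So this mapping space is the subspace of $\Map(\Delta^n_t\otimes\Delta^m_s,\Span_\infty(\scr{C}))$ of maps whose $(k+1)$-simplices are one-sided and whose higher simplices are thin.

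The main step is to compute this subspace in the decorated cases $t,s\in\{\flat,+\}$. For $t=s=\flat$ it is exactly the one-sided corollary above. For the decorated cases I combine it with \zcref{prop:one_sided_decorated_case,prop:fully_decorated_case}. Both the extra thin simplices of $\Delta^n_+\otimes\Delta^m_s$ and the one-sided/truncation decoration are L-saturated, and their union should remain L-saturated. Running the argument of \zcref{prop:transfer_L-saturated_decorations} on the union, together with the parity observation used in the preceding corollary, then identifies the subspace with $\Map^{\cart,\cart}(\sd(\Delta^n_t)\times\sd(\Delta^m_s),\scr{C})$ cut out by the one-sided $k$-truncated mixed Segal conditions. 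This matches the definition of $\Span_{k\frac{1}{2}}^{\mix}$ restricted along $\Delta^m_s\to\scr{D}$. I expect the main obstacle here to be confirming that the one-sided (parity-restricted) conditions interact correctly with the extra thin simplices of $\Delta^n_+$ and $\Delta^m_+$. Those simplices impose full Segal conditions, so the $\mix$ conditions must absorb the Cartesian conditions via \zcref{lem:cubefacelemma}, as in the induction $(*_k)$ of \zcref{prop:one_sided_decorated_case}.

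Finally, I take the limit over $\tDelta_{/\scr{D}}$ and use \zcref{lem:cart_map_SD_lim_sd} to assemble the right-hand side into the defining subspace of $\Span^{\mix}_{k\frac{1}{2}}$. The one-sided $k$-truncated mixed conditions force every simplex $\Delta^m_s\to\scr{D}$ of dimension $>k+1$ to behave as if thin. Hence $\SD(\scr{D})$ may be replaced by $\SD(\tau_{k+1}(\scr{D}))$, since $\tau_{k+1}$ is modelled by $(k+1)^\sharp$, and the colimit computing $\SD$ sees exactly these extra markings. Naturality in $\scr{D}$ follows from the naturality of each step. The resulting presheaf is represented by $\Fun^{\oplax}(\scr{D},\Span_{k\frac{1}{2}}(\scr{C}))$, which lies in $\Cat_{k+1}$ because $\Span_{k\frac{1}{2}}(\scr{C})$ does. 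This gives the final assertion.
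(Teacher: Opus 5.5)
Your proposal is correct and follows the route the paper indicates: density of $\tDelta$, the one-sided truncated corollary for $\Delta^n_\flat\otimes\Delta^m_\flat$, and assembly over $\tDelta_{/\scr{D}}$, exactly as for \zcref{thm:universal_property_Span_k}. The obstacle you anticipate in the decorated cases does not actually arise, and neither does the $\SD(\tau_{k+1}(\scr{D}))$ step need its own argument. Under the single fixed equivalence of \zcref{prop:unmarkedcase}, \zcref{prop:one_sided_decorated_case,prop:fully_decorated_case} identify the decorated subspace and the one-sided corollary identifies the truncation subspace, so you simply intersect the two (this is why treating the minimally decorated case suffices), and the passage from $\SD(\scr{D})$ to $\SD(\tau_{k+1}(\scr{D}))$ is already part of the definition of $\Span^{\mix}_{k\frac{1}{2}}$.
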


\subsection{Hom-\texorpdfstring{$(\infty,n)$}{(∞,n)}-categories} \label{subsec:Hom_in_Span}

To identify the hom categories in spans, we make use of the well known suspension-hom adjunction (see, e.g., \cite[4.3.21]{gepner_enriched_2015}), as well as the model for the suspension provided by \cite[Construction 2.2.2.15]{loubaton_complicial_2024}, which we now recall.

\begin{defn}
  Let $(K,tK)$ be a decorated simplicial set. The \emph{suspension} of $(K,tK)$ is the pushout
  \[
    \Sigma (K,tK)\coloneqq \Delta^0\coprod_{(K,tK)} ((K,tK)\star \Delta^0).
  \]
  For an $(\infty,n)$-category $\scr{D}$ containing objects $x$ and $y$, the \emph{mapping $(\infty,n)$-category} $\scr{D}(x,y)$  is defined by the adjunction
  \[
    \Map_{\Cat_\infty^{\ast,\ast}}(\Sigma (K,tK),\scr{D}_{x,y})\simeq \Map_{\Cat_\infty}((K,tK),\scr{D}(x,y))
  \]
  where $\Cat_\infty^{\ast,\ast}$ denotes the category of bipointed $\infty$-categories, and $\scr{D}_{x,y}$ denotes $\scr{D}$ considered as a bipointed $\infty$-category via the points $x$ and $y$.
  In the context where $\scr{D}$ is presented by a fibrant complicial set, this can also be viewed as a strict Quillen adjunction.
\end{defn}

\begin{thm}
  For an $(\infty,1)$-category with enough limits, and $x,y\in \scr{C}$, there is an equivalence
  \[
    \Span_{\infty}(\scr{C})(x,y)\simeq \Span_{\infty}(\scr{C}_{/x\times y}).
  \]
  On $\Span_n(\mathcal{C})$, this induces an equivalence
  \[
    \Span_n(\scr{C})(x,y)\simeq \Span_{n-1}(\scr{C}_{/x\times y}).
  \]
  On $\Span_{n\frac{1}{2}} (\mathcal{C})$, this also induces an equivalence
  \[
    \Span_{n\frac{1}{2}}(\scr{C})(x,y)\simeq \Span_{(n-1)\frac{1}{2}}(\scr{C}_{/x\times y}).
  \]
\end{thm}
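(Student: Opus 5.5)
We identify both sides through the functors they represent on $\tDelta$, using \zcref{prop:tDelta_dense}. By the suspension--hom adjunction, for a decorated simplex $\Delta^k_t$,
\[
  \Map(\Delta^k_t,\Span_\infty(\scr{C})(x,y))\simeq \Map_{x,y}(\Sigma\Delta^k_t,\Span_\infty(\scr{C})).
\]
\zcref{thm:SpanUP_I_model} turns the right-hand side into the space of Cartesian functors $\sd(\Sigma\Delta^k_t)\to\scr{C}$ that restrict to $x$ and $y$ on the two cone points. The goal is to show this space is naturally equivalent to $\Map^{\cart}(\sd(\Delta^k_t),\scr{C}_{/x\times y})$, which by \zcref{prop:mappingUP_prop} is $\Map(\Delta^k_t,\Span_\infty(\scr{C}_{/x\times y}))$. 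Naturality in $\tDelta$ then gives the first equivalence.

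The combinatorial heart is a description of $\sd(\Sigma\Delta^k)$. Since $\Sigma\Delta^k$ is $\Delta^{k+1}=\Delta^k\star\Delta^0$ with the face $\Delta^k$ collapsed to a point, I first compute $\sd(\Delta^{k+1})$ and then take the pushout; here I use that $\sd$ on $\Set_\Delta^{\dec}$ is a left Kan extension and hence preserves colimits. Objects of $\sd(\Delta^{k+1})$ are nonempty $S\subset[k+1]$. Collapsing $\sd(\Delta^k)$ identifies every $S\subset[k]$ with a single object $x$. The remaining objects are $\{k+1\}=y$ and the sets $U\cup\{k+1\}$ with $\varnothing\ne U\subset[k]$; the latter form a copy of $\sd(\Delta^k)$, each mapping to both $x$ and $y$.

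However, the pushout in $\Cat_1$ does not simply produce a cone: the collapse is a homotopy pushout, and the morphisms $U\cup\{k+1\}\to U'$ with $U'\subsetneq U$ all land in $x$. I plan to show that the induced functor $\sd(\Sigma\Delta^k)\to \sd(\Delta^k)^{\triangleright\triangleright}$ is a localization, or at least becomes an equivalence after imposing the Cartesian conditions. The target here is $\sd(\Delta^k)$ with two cone points $x,y$ adjoined. I would prove this by using the lacuna/cube moves to show that the relevant morphisms are forced to equivalences. This is the step I expect to be the main obstacle.

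Once $\sd(\Sigma\Delta^k)$ is identified with a double cone on $\sd(\Delta^k)$, a functor with fixed cone values $x,y$ is exactly a functor $\sd(\Delta^k)\to\scr{C}_{/x\times y}$. I then check that the Segal cubes of $\Sigma\Delta^k_t$ correspond to those of $\Delta^k_t$ with the vertex $k+1$ appended. Under this correspondence the cubes containing the $(k+1)$-lacuna become degenerate, by \zcref{lem:degenerate}, while the others become limit cubes over $x\times y$. Since limits in $\scr{C}_{/x\times y}$ are computed in $\scr{C}$, being Cartesian is preserved in both directions.

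For the truncated statements, I restrict the equivalence above to the subobjects. A simplex $\Delta^k$ of the hom corresponds to the $(k+1)$-simplex $\Sigma\Delta^k$. So $n$-triviality of $\Span_n(\scr{C})$ corresponds to $(n-1)$-triviality of the hom, and the one-sided condition in dimension $n+1$ corresponds to the one-sided condition in dimension $n$. The one-sided cover omits the $0$-lacuna; the suspension appends a top vertex rather than a bottom one, so it preserves the $0$-lacuna. The parity shift by one is absorbed by the change of dimension, which matches the rule ``same parity as $n+1$''. A direct check on lacuna diagrams then gives $\Span_{n\frac12}(\scr{C})(x,y)\simeq\Span_{(n-1)\frac12}(\scr{C}_{/x\times y})$.
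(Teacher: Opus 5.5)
You set up the reduction the same way the paper does: the suspension--hom adjunction, density of $\tDelta$, $\sd$ commuting with the collapse pushout, and functors out of a double cone as functors into the slice. However, the step you flag as the crux is left unproved, and the tool you propose for it cannot prove it.

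The comparison map $\psi_k\colon \Delta^0\coprod_{\sd(\Delta^k)}\sd(\Delta^k\star\Delta^0)\to \sd(\Delta^k)\star\partial\Delta^1$ has to be an equivalence for purely homotopical reasons, before any Cartesian condition enters. For $\Delta^k_\flat$ the suspension has no non-degenerate thin simplices, so there are no Segal conditions to ``force'' anything, yet the identification is still required. Nor is the issue that some morphisms must be inverted. What must be shown is that the parallel edges $U\cup\{k+1\}\to U'$ (for $\varnothing\neq U'\subseteq U$), which all land in $x$, form a contractible space of maps to $x$; they are related by the $2$-simplices coming from the collapsed $\sd(\Delta^k)$. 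That is a statement about the indexing simplicial set, whereas lacuna and cube moves are statements about limits of functors out of it.

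The missing argument, as in the paper, has two steps.
\begin{enumerate}
\item The functor $\xi_k\colon\sd(\Delta^k\star\Delta^0)\to\sd(\Delta^k)\star\partial\Delta^1$ is a localization at the morphisms of $\sd(\Delta^k)$. It sends $U$ to $w_0$ if $k+1\notin U$, sends $\{k+1\}$ to $w_1$, and otherwise sends $U$ to $U\setminus\{k+1\}$. The localization property is checked by a slice/weak-fiber criterion at the three kinds of objects, all of whose weak fibers are contractible.
\item Since $\sd(\Delta^k)$ has an initial object, inverting all its morphisms is equivalent to collapsing it. One sees this by comparing the homotopy pushouts $\sd(\Delta^k)^\sharp\coprod_{\sd(\Delta^k)^\flat}\sd(\Delta^{k+1})^\flat$ and $\Delta^0\coprod_{\sd(\Delta^k)}\sd(\Delta^{k+1})$ in the left proper marked model structure.
\end{enumerate}
Together these make $\psi_k$ a Joyal equivalence.

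Your bookkeeping of the Cartesian conditions also needs correcting. Take a thin simplex $U$ of $\Delta^k_t$. The Segal cube of $U\star\{k+1\}$ containing the $(k+1)$-lacuna is not degenerate in the sense of \zcref{lem:degenerate}. Its edges $f((U\setminus S)\cup\{k+1\})\to x$ are not equivalences, and that lemma would return the wrong condition that $f(U\cup\{k+1\})\to x$ be invertible.

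What does hold is that the face of this cube in the $(k+1)$-lacuna direction is constant at $x$. By the cube move (\zcref{lem:cube}), the cube is then a limit if and only if the opposite face is. That face maps isomorphically onto the \emph{odd} Segal cube of $U$, because appending a top vertex flips parities. The other Segal cube of $U\star\{k+1\}$ maps onto the even one. Only in this way are both Segal conditions of each thin simplex recovered.

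If you discard the $(k+1)$-lacuna cubes as automatic, half the conditions are lost. Your one-sidedness check for $\Span_{n\frac12}$ then breaks for even $n$. There, the Segal cover of $\Delta^n\star\Delta^0$ omitting the $0$-lacuna is exactly the one containing the lacuna at the cone vertex, and it is what encodes the one-sided condition on $\Delta^n$.
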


\begin{proof}
  Considering the defining adjunction
  \[
    \Map_{\Cat_\infty^{\ast,\ast}}(\Sigma (K,tK), \Span_{\infty}(\scr{C})_{x,y}) \simeq \Map_{\Cat_\infty}((K,tK), \Span_{\infty}(\scr{C})(x,y)).
  \]
  By the density of $\tDelta$ in $\Cat_\infty$ and the fact that $\Sigma$ preserves (homotopy) colimits, it suffices to provide the comparison for $(K,tK)\in \tDelta$. Passing through the defining adjunctions of $\Span_\infty$, the product, and the slice quasi-category, it will thus suffice for us to show
  \[
    \Map^{\cart}(\Delta^0\coprod_{\sd(\Delta^n_t)}\sd(\Delta^n_t\star\Delta^0),\scr{C})\simeq \Map^{\cart}(\sd{(\Delta^n_t)}\star \partial \Delta^1,\scr{C}).
  \]
  Here, the Cartesian condition on the left is defined via the equivalence
  \[
    \Delta^0 \coprod_{\sd(\Delta^n_t)} \sd(\Delta^n_t \star \Delta^0) \simeq \sd(\Delta^0 \coprod_{\Delta^n_t} \Delta^n_t \star \Delta^0).
  \]
  The Cartesian condition on the right is simply defined to be the Cartesian condition on the composite
  \[
    \begin{tikzcd}
      \sd(\Delta^n_t)\arrow[r] & \sd(\Delta^n_t)\star\partial \Delta^1 \arrow[r] & \scr{C}
    \end{tikzcd}
  \]
  since limits of non-empty connected diagrams in the slice $\mathcal{C}_{/x\times y}$ can be computed in $\mathcal{C}$.

  The natural maps of \zcref{const:mapcatloc-nat_trans} provide natural maps
  \[
    \begin{tikzcd}
      \Map(\sd{(\Delta^n_t)}\star \partial \Delta^1,\scr{C})\arrow[r] & \Map(\Delta^0\coprod_{\sd(\Delta^n_t)}\sd(\Delta^n_t\star\Delta^0),\scr{C})
    \end{tikzcd} \teq \label{eq:mapcatloc-natmap}
  \]
  which are equivalences by \zcref{lem:mapcatloc-Joyal_equivalence}. It thus only remains for us to check that these restrict to equivalences on spaces of Cartesian functors.

  \zcref{lem:mapcatloc-Segal_conditions} then shows that the natural equivalence of (\ref{eq:mapcatloc-natmap}) restricts to a natural equivalence
  \[
    \Map^{\cart}(\Delta^0\coprod_{\sd(\Delta^n_t)}\sd(\Delta^n_t\star\Delta^0),\scr{C})\simeq \Map^{\cart}(\sd{(\Delta^n_t)}\star \partial \Delta^1,\scr{C}),
  \]
  completing the proof.
\end{proof}

\begin{const}\label{const:mapcatloc-nat_trans}
  We construct natural maps of posets
  \[
    \begin{tikzcd}
      \xi_{n}:&[-3em] \sd(\Delta^n\star \Delta^0) \arrow[r] & \sd(\Delta^n)\star \partial \Delta^1
    \end{tikzcd}
  \]
  as follows. Denote the terminal vertex of $\Delta^n\star \Delta^0$ by $v$, and denote the two vertices appended by join in $\sd(\Delta^n)\star \partial \Delta^1$ as $w_0$ and $w_1$. Given a subset $U\subset \Delta^n\star \Delta^0$ representing a non-degenerate simplex, we define
  \[
    \xi_n(U)=
    \begin{cases}
      w_0 & v\notin U \\
      w_1 & U=\{v\}\\
      U\setminus \{v\} & \text{else}.
    \end{cases}
  \]
  It is immediate that this is a natural map of bipointed simplicial sets.

  Since $\xi_n$ collapses the image of the inclusion $\sd(\Delta^n)\to \sd(\Delta^n\star \Delta^0)$, it descends to a natural map of simplicial sets
  \[
    \begin{tikzcd}
      \psi_n: &[-3em] \Delta^0\coprod_{\sd(\Delta^n)} \sd(\Delta^n\star \Delta^0) \arrow[r] & \sd(\Delta^n)\star \partial \Delta^1
    \end{tikzcd}
  \]
  which we denote by $\psi_n$.
\end{const}

\begin{lem}\label{lem:mapcatloc-Joyal_equivalence}
  For every $n$, the map
  \[
    \begin{tikzcd}
      \psi_n: &[-3em] \Delta^0\coprod_{\sd(\Delta^n)} \sd(\Delta^n\star \Delta^0) \arrow[r] & \sd(\Delta^n)\star \partial \Delta^1
    \end{tikzcd}
  \]
  is a Joyal equivalence.
\end{lem}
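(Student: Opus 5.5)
The plan is to show that $\psi_n$ is a categorical equivalence by exhibiting it as a localization-type map between posets (plus one collapse). Concretely, $\sd(\Delta^n\star\Delta^0)$ is the poset of nonempty subsets $U\subset[n]\sqcup\{v\}$ ordered by reverse inclusion. It decomposes into three pieces: the copy of $\sd(\Delta^n)$ (subsets without $v$), the single object $\{v\}$, and the subsets $U\ni v$ with $U\ne\{v\}$. The last piece is canonically isomorphic to $\sd(\Delta^n)$ via $U\mapsto U\setminus\{v\}$. Under this identification, the poset is a cograph-like gluing: each $U\cup\{v\}$ maps to $U$ (in the first copy) and to $\{v\}$. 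In particular, $\sd(\Delta^n\star\Delta^0)$ is the nerve of the poset $P$ obtained from $\sd(\Delta^n)\times\Delta^1$, with objects $(U,0)=U\cup\{v\}$ and $(U,1)=U$, by adjoining the object $\{v\}$ below all $(U,0)$. More precisely, $P$ is the pushout $\big(\sd(\Delta^n)\times\Delta^1\big)\cup_{\sd(\Delta^n)\times\{0\}}\big(\sd(\Delta^n)\star\Delta^0\big)$ of posets.

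First, I would check that this description is correct at the level of simplicial sets. Nerves of posets are determined by chains, and a chain in $\sd(\Delta^n\star\Delta^0)$ visibly decomposes this way. So there is an isomorphism of simplicial sets
\[
\sd(\Delta^n\star\Delta^0)\cong \big(\sd(\Delta^n)\times\Delta^1\big)\coprod_{\sd(\Delta^n)\times\{0\}}\big(\sd(\Delta^n)\star\Delta^0\big),
\]
up to the subtlety that the product is $\sd(\Delta^n)\times\Delta^1$ rather than its nerve-of-poset join. This subtlety is harmless, since both are nerves of the product poset. Collapsing the copy $\sd(\Delta^n)\times\{1\}$ to a point then gives
\[
\Delta^0\coprod_{\sd(\Delta^n)}\sd(\Delta^n\star\Delta^0)\cong \big(\sd(\Delta^n)\times\Delta^1\big)/\big(\sd(\Delta^n)\times\{1\}\big)\coprod_{\sd(\Delta^n)\times\{0\}}\big(\sd(\Delta^n)\star\Delta^0\big).
\]
The quotient $\big(K\times\Delta^1\big)/\big(K\times\{1\}\big)$ is the cone $K\star\Delta^0$ only up to Joyal equivalence. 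The standard comparison map $K\times\Delta^1\to K\star\Delta^0$ induces a Joyal equivalence $\big(K\times\Delta^1\big)\cup_{K\times\{1\}}\Delta^0\to K\star\Delta^0$ (cf. the comparison of the two cone constructions, HTT 4.2.1.2). Applied with $K=\sd(\Delta^n)$, and since Joyal cofibrant pushouts along monomorphisms preserve equivalences, the source becomes equivalent to $\big(\sd(\Delta^n)\star\Delta^0\big)\coprod_{\sd(\Delta^n)}\big(\Delta^0\star\sd(\Delta^n)\big)$, where the cone point appended on the left is $\{v\}$. That pushout is isomorphic to $\sd(\Delta^n)\star\partial\Delta^1$ with $w_0,w_1$ the two cone points.

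Next, I would verify that this chain of identifications is compatible with $\psi_n$, i.e.\ that $\psi_n$ is exactly the composite of the comparison equivalence with the isomorphism. Checking on objects: subsets without $v$ go to the collapsed point $w_0$, $\{v\}$ goes to $w_1$, and $U\cup\{v\}$ goes to $U$, which matches the definition of $\xi_n$. The orientation (which appended vertex lies on which side of the join) should be checked on $n=0$, where both sides are $\Delta^0\to\bullet\leftarrow\Delta^0$-type spans, i.e.\ $\sd(\Delta^1)$ versus $\partial\Delta^1\star\Delta^0$ reoriented.

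I expect the main obstacle to be the cone comparison step. It needs the correct variance, since $\sd$ is the opposite poset, so the "cone" on the $w_0$ side is really a cocone and the relevant HTT statement must be applied to opposites. It also needs care that the pushouts are homotopy pushouts in the Joyal model structure. Both hold because all maps involved are monomorphisms and every object is cofibrant, so left properness applies.
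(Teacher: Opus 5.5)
Your route is correct once you fix an orientation slip, and it differs genuinely from the paper's.

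\textbf{The slip.} In the paper's convention ($\sd(M)=\Power_*(M)^{\op}$, so arrows run from larger to smaller subsets), both new vertices are \emph{terminal} cone points. The arrows $U\cup\{v\}\to U$ run from $K\times\{0\}$ to $K\times\{1\}$, where $K=\sd(\Delta^n)$. Collapsing $K\times\{1\}$ therefore yields exactly Lurie's $K\diamond\Delta^0=(K\times\Delta^1)\cup_{K\times\{1\}}\Delta^0$, whose cone point is terminal. The arrows $U\cup\{v\}\to\{v\}$ make the other piece $K\star\Delta^0$, as your own first display already says.

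So the source is $(K\diamond\Delta^0)\cup_K(K\star\Delta^0)$, and the comparison target is $(K\star\Delta^0)\cup_K(K\star\Delta^0)\cong K\star\partial\Delta^1$. No passage to opposites is needed, and neither side is a ``cocone''. As you wrote it, with $\Delta^0\star\sd(\Delta^n)$ for the $\{v\}$-piece, the claimed isomorphism is false. For $n=0$ it would produce the composable pair $\Lambda^2_1$ rather than the span $\Lambda^2_0=\Delta^0\star\partial\Delta^1$; your proposed $n=0$ sanity check would catch this.

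With the correction, the argument closes as follows. The map from the source to $K\star\partial\Delta^1$ is the pushout of the categorical equivalence $K\diamond\Delta^0\to K\star\Delta^0$ (HTT 4.2.1.2) along the monomorphism $K\diamond\Delta^0\hookrightarrow(K\diamond\Delta^0)\cup_K(K\star\Delta^0)$. It is therefore an equivalence by left properness of the Joyal model structure. It coincides with $\psi_n$ because the two agree on vertices and the target is the nerve of a poset.

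\textbf{Comparison with the paper.} The paper first shows that $\xi_n$ itself is a localization at the morphisms of $\sd(\Delta^n)$, using Abell\'an Garc\'ia's Theorem~A-type criterion (contractibility of the relevant weak fibers in the slices). It then compares two homotopy pushouts of marked simplicial sets, $\sd(\Delta^n)^\sharp\coprod_{\sd(\Delta^n)^\flat}\sd(\Delta^n\star\Delta^0)^\flat$ and the collapsed pushout. The key input there is that $\sd(\Delta^n)$ has an initial object, so $\sd(\Delta^n)^\sharp\simeq\Delta^0$.

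You instead give an explicit cylinder-plus-cone decomposition of the poset $\sd(\Delta^n\star\Delta^0)$ and use the $\diamond$-versus-$\star$ comparison. This is more elementary, avoids the localization criterion entirely, and never uses that $\sd(\Delta^n)$ is weakly contractible. So after the decomposition step it works verbatim for an arbitrary simplicial set $K$. The paper's route, in exchange, also shows that $\xi_n$ is a localization, which fits the localization-based arguments used throughout the paper.
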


\begin{proof}
  We equivalently show that the $\psi_n$ are equivalences of marked simplicial sets with minimal marking.

  We first show that $\xi_n$ is a localization map, showing that the criterion of \cite[Proposition 5.0.6]{abellan_garcia_theorem_2022} applies to $\sd(\Delta^n\star \Delta^0)_{a/}$. We have three cases to consider, though in all, the weak fibers are contractible.
  \begin{enumerate}
    \item When $a=U\in \sd(\Delta^n)$, the slice $\sd(\Delta^n\star \Delta^0)_{U/}$ can be identified with the full subposet of $\sd(\Delta^n\star \Delta^0)$ on the objects which are subsets of $U\cup \{v\}$. The weak fiber is the one-point poset on $U\cup \{v\}$, which is an initial object of the slice, and so the criterion holds.
    \item When $a=w_0$, the slice is isomorphic to $\sd(\Delta^n)$, as is the weak fiber. Since identity maps are coinitial, the criterion holds.
    \item When $a=w_1$, both the slice and the weak fiber are one-point posets, and so again the criterion holds.
  \end{enumerate}

  Since $\xi_n$ is a localization at the morphisms of $\sd(\Delta^n)$, it suffices to show that the morphism
  \[
    \begin{tikzcd}
      \sd(\Delta^n)^\sharp\coprod_{\sd(\Delta^n)^\flat}\sd(\Delta^n\star\Delta^0)^\flat \arrow[r] & \left(\Delta^0\coprod_{\sd(\Delta^n)} \sd(\Delta^n\star \Delta^0)\right)^\flat
    \end{tikzcd}
  \]
  is a weak equivalence of marked simplicial sets. Since both pushouts are pushouts of diagrams consisting of cofibrant objects in which one leg is a cofibration and since the marked model structure is left proper, both are homotopy pushouts. But as the map is induced by a natural equivalence, it is itself an equivalence, completing the proof.
\end{proof}

\begin{lem}\label{lem:mapcatloc-Segal_conditions}
  Let $\Delta^n_t$ be a decorated simplex. Then a functor
  \[
    \begin{tikzcd}
      f: &[-3em] \sd(\Delta^n_t)\star \partial \Delta^1 \arrow[r] & \scr{C}
    \end{tikzcd}
  \]
  is Cartesian if and only if $f\circ \psi_n$ is Cartesian.
\end{lem}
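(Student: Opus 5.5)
The plan is to compare, simplex by simplex, the Segal cubes on the two sides through the explicit description of $\xi_n$. First we unwind the two Cartesian conditions. On the right, $f$ is Cartesian exactly when its restriction to $\sd(\Delta^n_t)$ sends the Segal cubes of the thin simplices $\sigma\subset\Delta^n_t$ to limit cubes; the cone points $w_0,w_1$ impose nothing. On the left, the thin simplices of $\Delta^0\coprod_{\Delta^n_t}(\Delta^n_t\star\Delta^0)$ are the images of thin simplices of $\Delta^n_t\star\Delta^0$. These are the thin $\sigma\subset\Delta^n$, together with the simplices $\sigma\star v$ where $\sigma$ is thin in $\Delta^n_t$ or $\sigma$ is empty. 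The simplices contained in $\Delta^n$ have been collapsed to the basepoint, so their Segal conditions are vacuous because they are degenerate after the pushout. Hence the only conditions on $f\circ\psi_n$ are the Segal conditions on the simplices $\sigma\star v$ with $\sigma$ thin, viewed as cubes in $\sd(\Delta^n\star\Delta^0)$ and pushed forward along $\xi_n$.

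The key step is to compute $\xi_n\circ C_{\mathcal{U}}$ for the two Segal sieves $\mathcal{U}$ of $\sigma\star v=\sigma\cup\{v\}$, where $\sigma$ is a nondegenerate subset of $[n]$ of size $k+1$.
\begin{itemize}
\item The sieve containing the lacuna $v$ has the face $\sigma$, which $\xi_n$ sends to $w_0$. Every join involving $\sigma$ then lacks $v$ and is also sent to $w_0$. The other members $U$ with $v\in U$ are sent to $U\setminus\{v\}$, so the cube is constant in the $\sigma$-direction away from the initial vertex, with value $w_0$, and is not initially constant there.
\item The other sieve consists of faces $(\sigma\setminus\{i\})\cup\{v\}$, and on it $\xi_n$ acts by deleting $v$. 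It therefore becomes exactly the corresponding Segal cube of $\sigma$ in $\sd(\Delta^n)$, of the same parity. One must check that deleting $v$ shifts the parity of the elements of $\sigma$ consistently, and that the face $\{v\}$ occurs only as a join when $\sigma$ is a point.
\end{itemize}

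With this computation the comparison separates into two directions.
\begin{itemize}
\item The non-$v$ Segal cubes of $\sigma\star v$ correspond bijectively to one of the two Segal cubes of $\sigma$. The cube containing the $v$-lacuna should be handled by a cube move (\zcref{lem:cube}) or the degenerate-cube lemma (\zcref{lem:degenerate}). This reduces its limit condition to the other Segal cube of $\sigma$ intersected with $\sigma$, namely the $\sigma$-facet. For the face $\{v\}$, the facet lands in $w_1$, so the limit condition becomes a pullback over the cone point. That pullback is automatic because $f(U)\to f(w_0)\times f(w_1)$ is a morphism in $\mathcal{C}_{/x\times y}$, and the relevant limits of nonempty connected diagrams are computed in $\mathcal{C}$.
\item Conversely, every Segal condition of a thin $\sigma$ arises this way, from one of the two cubes of $\sigma\star v$.
\end{itemize}

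The main obstacle is this parity and facet bookkeeping for the $v$-lacuna cube. In particular, we must verify that its facet after $\xi_n$ is precisely the remaining Segal cube of $\sigma$, rather than a cube degenerate at $w_0$, and treat the small cases $\dim\sigma\le 1$ by hand. If the cube move yields a punctured cube with terminal corner $w_0$, we would instead use that cones over $w_0$ express limits in the slice over $x$, via the same slice argument as in the preceding theorem.
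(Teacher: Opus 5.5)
Your overall plan is the paper's. For each thin $\sigma$ you compare the two Segal cubes of $\sigma\star v$ with those of $\sigma$. The cube without the $v$-lacuna has all vertices containing $v$, and $\xi_n$ carries it isomorphically onto a Segal cube of $\sigma$.

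The gap is in the $v$-lacuna cube, where you run the cube move in the wrong direction. This is exactly the step you flag as the main obstacle. Write that sieve as $\mathcal{U}=\{\sigma\}\cup\mathcal{U}'$, with $\mathcal{U}'=\{(\sigma\star v)\setminus\{i\} : i \text{ odd in } \sigma\}$.

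\begin{itemize}
\item \emph{The $\sigma$-facet carries no information.} The $\sigma$-facet $d_\sigma(\mathcal{U})$ is, as a subposet of $\sd(\Delta^n)$, literally the odd Segal cube of $\sigma$. But $\psi_n$ collapses it to the constant cube at $w_0$, so $f\circ\psi_n$ records nothing about $f$ there. Your proposed check, that this facet becomes the remaining Segal cube rather than something degenerate at $w_0$, therefore fails. The fallback through slices over $x$ does not repair anything.
\item \emph{It is the facet you discharge.} In \zcref{lem:cube}, this constant facet is the one assumed to be a limit cube. It is indexed by $\Power(\mathcal{U}')$ with $\mathcal{U}'\neq\varnothing$ (because $\dim\sigma\geq 1$), so as a constant cube it is automatically limit.
\item \emph{The content sits in the anti-$\sigma$ facet.} The lemma then makes the $\mathcal{U}$-condition for $f\circ\psi_n$ equivalent to the condition for $\mathcal{U}'$. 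Every vertex of that facet contains $v$. Deleting $v$ maps it isomorphically onto the odd Segal cube of $\sigma$; the parity flips because $v$ lies above every element of $\sigma$.
\end{itemize}

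This is the paper's argument: the face in the $v$-lacuna direction is constant, and the face containing $v$ is carried isomorphically onto the corresponding Segal cube of $\sigma$. Your own first bullet already records the constancy needed.

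Your discussion of $\{v\}$ and $w_1$ is moot. For $\dim\sigma\geq 1$, every vertex of the cube of $\mathcal{U}'$ contains $\max\sigma$. Every vertex of the non-$v$ cube contains the second-largest element of $\sigma$. So $\{v\}$, and hence $w_1$, never occurs. In any case, a map lying over $x\times y$ would not by itself make a square a pullback.

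Two minor slips, neither of which affects the bijective matching of conditions:
\begin{itemize}
\item $\varnothing\star v$ is a $0$-simplex, so it is not thin.
\item The non-$v$ cube of $\sigma\star v$, which is its odd cube, becomes the \emph{even} cube of $\sigma$, not one ``of the same parity''.
\end{itemize}
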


\begin{proof}
  By the definition of the join, there is a bijection between thin simplices $U$ of $\sd(\Delta^n_t)\subset \sd(\Delta^n_t)\star\partial \Delta^1$, and thin simplices $U\star \{v\}$ of $\sd(\Delta^n_t \star\Delta^0)$. We can thus compare the Segal conditions simplex-by-simplex.

  Given a thin simplex $U$ of $\sd(\Delta^n_t)$, there are two Segal conditions associated to $U\star\{v\}$ the one containing the $v$-lacuna, and the one not containing the $v$-lacuna.

  The Segal condition containing the $v$-lacuna gives rise to a cube in $\Delta^0\coprod_{\sd(\Delta^n_t)}\sd(\Delta^n_t\star \Delta^0)$ whose face corresponding to the $v$-lacuna is constant, and whose face containing $v$ is mapped isomorphically under $\psi_n$ to the corresponding Segal cube in $\sd(\Delta^n_t)$. Thus the $v$-lacuna Segal condition is satisfied by $f\circ \psi_n$ if and only if the corresponding Segal condition is satisfied by $f$.

  The Segal condition not containing the $v$-lacuna gives rise to a Segal cube in $\sd(\Delta^n_t\star\Delta^0)$ all of whose vertices contain $v$. As such, it is mapped isomorphically to the corresponding Segal cube in $\sd(\Delta^n_t)\star\partial\Delta^1$, completing the proof.
\end{proof}

\subsection{$\Span_1$, $\Span_{1\frac{1}{2}}$, and twisted arrows} \label{subsec:Span_small}

We briefly digress to establish the relation of $\Span_1(\scr{C})$ to existing approaches to span $(\infty,1)$-categories. Along the way, we also will display a new approach to the $(\infty,2)$-category $\Span_{1\frac{1}{2}}(\scr{C})$.

\begin{defn}
  We define an \emph{interval} in $[n]$ to be a subset of the form
  \[
    \langle i,j\rangle =\{i,i+1,\ldots, j\}
  \]
  for $0\leq i\leq j\leq n$.

  For every $n\geq 0$, we define the \emph{twisted arrow category of $[n]$}, denoted by $\Tw([n])$, to be the poset of intervals in $[n]$, ordered by reverse inclusion.
\end{defn}

For every $0\leq i \leq k\leq j \leq n$, the inclusion $\{i,j,k\} \hookrightarrow [n]$ induces a functor $\Tw([2])\hookrightarrow\Tw([n])$. We call a functor $f:\Tw([n])\rightarrow \cC$ \emph{adequate} if for every $0\leq i \leq j \leq k\leq n$ the composite
\[
  f\vert_{\{i,j,k\}}:\Tw([2])\hookrightarrow \Tw([n]) \rightarrow \cC
\]
is a limit diagram.

It was shown in \cite{haugseng_two-variable_2023} that there exists an category $\Span_{1}^{B}(\cC)$ with the universal property
\[
  \Map([n],\Span_{1}^{B}(\cC)) \simeq \Map^{\adq}(\Tw([n]),\cC),
\]
where the right hand side denotes the subspace of adequate functor. Our first aim is to compare this version of the category of spans to $\Span_1(\cC)$.

\begin{const}
  We construct natural functors $\kappa_n:\sd(\Delta^n)\to \Tw([n])$ as follows. Given $U\in \sd(\Delta^n)$, $\kappa_n(U)$ is the smallest interval containing $U$.
\end{const}

\begin{rmk}\label{rmk:kappa_adequate}
  It is easy to see that a functor $f:\Tw([2])\to \scr{C}$ is adequate if and only if $f\circ \kappa_n$ satisfies the even $1$-Segal conditions.
\end{rmk}

\begin{lem}\label{lem:twisted}
  For every $n\geq 0$, $\kappa_n$ is a localization at the set of endpoint-preserving morphisms.
\end{lem}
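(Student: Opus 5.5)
The plan is to show that $\kappa_n$ has a fully faithful adjoint; then it is a (co)reflective (co)localization, hence a localization, and we only need to identify the morphisms it inverts. The obvious candidate is the inclusion $j_n\colon \Tw([n])\to\sd(\Delta^n)$ that regards an interval $\langle i,j\rangle$ as a subset of $[n]$. This is an order-preserving map of posets, since reverse inclusion of intervals is reverse inclusion of subsets. It is fully faithful because both orders are restrictions of reverse inclusion, and $\kappa_n\circ j_n=\id$ because the smallest interval containing an interval is itself.

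The first step is to check the adjunction. In $\sd(\Delta^n)$ a morphism $U\to V$ means $U\supseteq V$. For $U\in\sd(\Delta^n)$ and an interval $I$, we have $U\supseteq I$ in $\sd$ if and only if $\kappa_n(U)\supseteq I$. The forward direction holds because $\kappa_n(U)\supseteq U$. The backward direction fails in general, so this is not the right adjunction. The correct direction comes from $I\supseteq U \iff I\supseteq\kappa_n(U)$, which holds because $I$ is an interval containing $U$ and $\kappa_n(U)$ is the smallest such. In the $\sd$ ordering, $I\supseteq U$ is a morphism $j_n(I)\to U$, and $I\supseteq\kappa_n(U)$ is a morphism $I\to\kappa_n(U)$ in $\Tw$. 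This gives $\Hom_{\sd}(j_n I,U)\cong\Hom_{\Tw}(I,\kappa_n U)$, so $j_n\dashv\kappa_n$ with $j_n$ fully faithful. A right adjoint to a fully faithful functor is a localization at the morphisms it inverts, exactly as in \zcref{lem:horn_localization} and \zcref{prop:product_localization}.

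It remains to identify the inverted morphisms. A morphism $U\supseteq V$ is sent to an identity exactly when $\kappa_n(U)=\kappa_n(V)$, which happens exactly when $\min U=\min V$ and $\max U=\max V$, that is, when the morphism preserves endpoints. The endpoint-preserving morphisms are therefore precisely the morphisms inverted by $\kappa_n$.

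Naturality in $[n]$ is not required for the localization claim. It does hold for $\kappa_n$ itself under face maps, but under degeneracies one has to check that the smallest-interval construction is compatible with images. The step I expect to need the most care is fixing the variance of the adjunction, which the computation above resolves. After that, the standard fact that an adjoint of a fully faithful functor is an $(\infty,1)$-categorical localization finishes the argument, just as in the proof of \zcref{prop:product_localization}.
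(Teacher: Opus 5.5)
Your proof is correct, but it takes a different route from the paper.

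\textbf{The paper's route.} The paper uses the fiberwise localization criterion of Abell\'an Garc\'ia [Proposition 5.0.7]. It checks that the weak fiber of $\kappa_n$ over $\langle i,j\rangle$, and the relevant slices of it, are contractible because each has an initial object, namely $\langle i,j\rangle$.

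\textbf{Your route.} You exhibit the inclusion $j_n\colon\Tw([n])\to\sd(\Delta^n)$ as a fully faithful left adjoint of $\kappa_n$. This rests on the equivalence $I\supseteq U\iff I\supseteq\kappa_n(U)$ for intervals $I$, and your variance check is right. You then apply the standard fact that a functor with a fully faithful adjoint is a localization at the morphisms it inverts. Here those are the morphisms with $\kappa_n(U)=\kappa_n(V)$, i.e.\ the endpoint-preserving ones.

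\textbf{Comparison.} Both arguments rest on the same observation. The paper's initial object $\langle i,j\rangle$ is $j_n\kappa_n(U)$ for every $U$ in the fiber, and its initiality is the counit $j_n\kappa_n(U)\to U$ of your adjunction.

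Your version has three advantages:
\begin{itemize}
\item it is more elementary and avoids the external criterion;
\item it matches the arguments the paper itself uses in \zcref{lem:horn_localization} and \zcref{prop:product_localization};
\item it gives an explicit inverse on functor categories: restriction along $j_n$ is inverse to $\kappa_n^*$ on functors that invert endpoint-preserving maps.
\end{itemize}
The paper's criterion would still apply in situations with no adjoint, but that generality is not needed here.

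\textbf{Cosmetic points.} You can delete the abandoned attempt at $\kappa_n\dashv j_n$. Your hedge about degeneracies is also unnecessary. Any monotone $\alpha$ satisfies $\min\alpha(U)=\alpha(\min U)$ and $\max\alpha(U)=\alpha(\max U)$, so $\kappa$ is natural with respect to all of $\Delta$.
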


\begin{proof}
  The endpoint-preserving morphisms are precisely those sent to identities by $\kappa_n$. It thus suffices to show that $\kappa_n$ is a localization.

  To see this, consider the slice $\sd(\Delta^n)_{\langle i,j\rangle/}$ and the weak fiber $\sd(\Delta^{n})_{\langle i,j\rangle}$. The latter has an initial element and hence is contractible. By \cite[Proposition 5.0.7]{abellan_garcia_theorem_2022}, it suffices to show that for every $U\in \sd(\Delta^{n})_{\langle i,j\rangle/}$ the category $(\sd(\Delta^{n})_{\langle i,j\rangle})_{/U}$ is contractible. But this has a initial element given by $\langle i,j \rangle$, completing the proof.
\end{proof}

\begin{lem}\label{lem:characterization_Cartesian_in_Span1}
  Let $\cC$ be an $(\infty, 1)$-category with finite limits. A functor $f:\sd(1^\sharp(\Delta^{n}))\rightarrow \cC$ is Cartesian if and only if it sends endpoint preserving maps to equivalences and it satisfies all the even $1$-Segal conditions.
\end{lem}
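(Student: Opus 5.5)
We prove both directions by unwinding the Cartesian conditions on $1^\sharp(\Delta^n)$. In $1^\sharp(\Delta^n)$ every simplex of dimension $\geq 2$ is thin, so $f$ is Cartesian if and only if for every subset $U\subseteq[n]$ with $\#U\geq 3$ the restriction $f|_{\sd(U)}$ is both even and odd $U$-Segal.

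For the \emph{only if} direction, the even $1$-Segal conditions hold by definition, since they are among the Segal conditions for thin $2$-simplices. It remains to see that endpoint-preserving morphisms $U\to V$ (so $V\subseteq U$ with $\min U=\min V$ and $\max U=\max V$) go to equivalences. Every such morphism factors as a composite of single-vertex deletions $W\to W\setminus\{w\}$ with $w$ an inner vertex of $W$, so it suffices to treat these. Write $w=k$, viewed as an inner vertex of $W$ with neighbours $k-1,k+1$ in the induced order on $W$. I would apply \zcref{rmk:k-vanishing_converse} (equivalently the argument of \zcref{lem:k-vanishing}) to $f|_{\sd(W)}$. Every subset of $W$ containing the neighbours of $k$ is thin, so its $\Parity(k)$-Segal condition holds. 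Hence $f(U\ast\{k\})\to f(U)$ is an equivalence for all $U\subseteq W$, and in particular $f(W)\to f(W\setminus\{k\})$ is. Composing these equivalences gives the claim.

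For the \emph{if} direction, the even $1$-Segal conditions together with inversion of endpoint-preserving maps give a functor $\bar f:\Tw([n])\to\scr{C}$ through the localization $\kappa_n$ of \zcref{lem:twisted}. By \zcref{rmk:kappa_adequate}, $\bar f$ is adequate. Now fix $U\subseteq[n]$ with $\#U\geq 3$, and consider the two Segal cubes of $U$.
\begin{enumerate}
  \item \emph{The Segal cube not deleting endpoints.} If it deletes only inner vertices, every arrow of the cube is endpoint-preserving, so the cube is sent to a cube of equivalences. Such a cube is a limit cube.
  \item \emph{The cube containing an endpoint lacuna.} Only the parity class including $\min U$ or $\max U$ causes trouble. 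Here I would use \zcref{lem:degenerate} and \zcref{ex:deg-cube}: deleting an inner vertex $u$ changes nothing after applying $\kappa_n$ when combined with other deletions that do not remove the endpoints. The cube is therefore degenerate along any inner direction, which reduces the claim to one map being an equivalence or to a square.
\end{enumerate}

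The main obstacle is the case where the even (or odd) sieve removes both endpoints, for instance the sieve $\{U_{\min U},U_{\max U},\dots\}$. After applying $\kappa_n$ the cube becomes a cube in $\Tw([n])$ built from intervals. I expect to show, using \zcref{lem:cubefacelemma} to strip the inner directions, which are constant after $\kappa_n$, that it reduces to the square
\[
  \langle a,b\rangle \to \langle a',b\rangle,\ \langle a,b'\rangle \to \langle a',b'\rangle,
\]
with $a'$ and $b'$ the second-smallest and second-largest elements. This square is a pullback by adequacy; concretely, it is a pasting of $2$-Segal (adequate) squares along $\{a,a',b'\}$ and $\{a,b',b\}$, handled by \zcref{lem:pasting}. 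The case $\#U=3$ is exactly adequacy, and the general case follows by induction on $\#U$ via these cube and pasting moves.
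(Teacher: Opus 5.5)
Your proposal is correct, and your ``only if'' direction is the paper's argument: factor into single inner-vertex deletions and apply \zcref{rmk:k-vanishing_converse}, i.e.\ \zcref{lem:k-vanishing}. The ``if'' direction is where you take a longer route.

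\textbf{How the paper does it.} Write $U=\{u_0<\cdots<u_k\}$. Every Segal cube of a thin $U$, other than the even square for $k=2$, has a prong $U\to U_{u}$ with $u$ an inner vertex. Two lacunae in the same Segal sieve are never adjacent, so the neighbours of $u$ survive in every intersection in the cube. Hence $u$ stays inner there, and the cube is degenerate in the $u$-direction. \zcref{lem:degenerate} then reduces the limit condition to the single endpoint-preserving map $f(U)\to f(U_u)$.

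\textbf{Your ``main obstacle'' is not one.} A sieve containing both $U_{u_0}$ and $U_{u_k}$ forces $k$ even. If $k\geq 4$ it also contains $U_{u_2}$, so the same degeneracy handles it. If $k=2$ it is exactly the hypothesis.

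\textbf{The detour.} Your justification of degeneracy (``combined with other deletions that do not remove the endpoints'') understates what is true. Non-adjacency of lacunae is the real reason, and it holds even when endpoints are deleted. For the same reason, \zcref{lem:degenerate} always reduces to one map, never to a square.

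Your alternative still works: passing through $\kappa_n$ and \zcref{lem:twisted}, using adequacy of $\bar f$, and showing the square $\langle a,b\rangle,\langle a',b\rangle,\langle a,b'\rangle,\langle a',b'\rangle$ is a pullback. It buys nothing over the direct argument, though.

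\textbf{One correction to the pasting.} The adequate squares you name, for $\{a,a',b'\}$ and $\{a,b',b\}$, share no edge, so they do not paste. Use instead the squares for $\{a,b',b\}$ and $\{a',b',b\}$ (horizontal pasting), or for $\{a,a',b\}$ and $\{a,a',b'\}$ (vertical pasting), with \zcref{lem:pasting}.
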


\begin{proof}
  Assume first that $f$ sends endpoint-preserving maps to equivalences and satisfies the even $1$-Segal conditions. For any other Segal condition, the corresponding Segal cube is degenerate, since one of the prongs of the cube must be the image of an endpoint-preserving map.

  On the other hand, if $f$ is Cartesian, it satisfies the even $1$-Segal conditions by assumption. It follows from \zcref{lem:k-vanishing} that every endpoint preserving map is sent to an equivalence.
\end{proof}

\begin{prop}
  Let $\cC$ be an $(\infty,1)$-category with finite limits. There exists an equivalence of $(\infty,1)$-categories
  \[
    \Span_{1}(\cC)\simeq \Span_{1}^{B}(\cC).
  \]
\end{prop}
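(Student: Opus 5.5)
The plan is to compare the two $(\infty,1)$-categories through the functors they represent on $\Delta$, which suffices because both are complete Segal objects: an $(\infty,1)$-category is determined by the presheaf $[n]\mapsto \Map([n],-)$ restricted to $\Delta\subset\Cat_1$. For $\Span_1^B(\cC)$ this presheaf is $\Map^{\adq}(\Tw([n]),\cC)$ by \cite{haugseng_two-variable_2023}. For $\Span_1(\cC)=\Span_\infty(\cC)^{\leq 1}$ we use the truncation adjunction to rewrite it as maps out of $\tau_1$ of the oriental, so that $\Map([n],\Span_1(\cC))\simeq\Map(1^\sharp(\Delta^n_\flat),\Span_\infty(\cC))$, modelled complicially by the remark following \zcref{defn:truncation_and_friends}. \zcref{thm:SpanUP_I_model} (or \zcref{prop:mappingUP_prop} via density) then identifies this with $\Map^{\cart}(\sd(1^\sharp(\Delta^n)),\cC)$. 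Both identifications are natural in $[n]\in\Delta$.

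It remains to give an equivalence $\Map^{\cart}(\sd(1^\sharp(\Delta^n)),\cC)\simeq \Map^{\adq}(\Tw([n]),\cC)$ natural in $[n]$. The candidate is precomposition with the natural functors $\kappa_n$. By \zcref{lem:twisted}, $\kappa_n$ is a localization at the endpoint-preserving morphisms. Hence $\kappa_n^*$ identifies $\Map(\Tw([n]),\cC)$ with the full subspace of $\Map(\sd(\Delta^n),\cC)$ consisting of functors that invert endpoint-preserving maps.

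Next, \zcref{lem:characterization_Cartesian_in_Span1} says that the Cartesian functors out of $\sd(1^\sharp(\Delta^n))$ are exactly those that invert endpoint-preserving maps and satisfy all even $1$-Segal conditions. Among functors of the form $g\circ\kappa_n$, \zcref{rmk:kappa_adequate} identifies the even $1$-Segal conditions, taken on each $2$-simplex $\{i,j,k\}$, with adequacy of $g$ on $\Tw(\{i,j,k\})$. This needs the small check that the restriction of $\kappa_n$ to $\sd(\{i,j,k\})$ factors as $\kappa_2$ followed by the inclusion $\Tw([2])\to\Tw([n])$ induced by $\{i,j,k\}$. The degenerate triples with repeated indices are automatically fine, because the corresponding diagrams are trivially limits. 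Therefore $\kappa_n^*$ restricts to an equivalence between the Cartesian and the adequate subspaces. Naturality in $[n]$ follows from the naturality of $\kappa_n$.

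The main obstacle is expected to be the compatibility bookkeeping in the first step: the truncation $\tau_1$ of $\Delta^n_\flat$ in $\Cat_\infty$ must really be presented by $1^\sharp(\Delta^n_\flat)$, and \zcref{prop:mappingUP_prop}, which is stated for $\tDelta$, must apply to this non-$\tDelta$ decoration. Here we invoke \zcref{thm:SpanUP_I_model}, valid for arbitrary $K_t$, together with the cited Loubaton result. Once the two presheaves on $\Delta$ are naturally equivalent, Yoneda in complete Segal spaces gives $\Span_1(\cC)\simeq\Span_1^B(\cC)$.
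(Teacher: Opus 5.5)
Your proposal is correct and follows essentially the same route as the paper, namely the chain $\Map([n],\Span_1(\cC))\simeq\Map^{\cart}(\sd(1^\sharp(\Delta^n)),\cC)\simeq\Map^{\adq}(\Tw([n]),\cC)\simeq\Map([n],\Span_1^B(\cC))$, with the middle step coming from \zcref{lem:twisted}, \zcref{lem:characterization_Cartesian_in_Span1}, and \zcref{rmk:kappa_adequate}. You spell out some steps the paper treats as definitional: the first identification via truncation and \zcref{thm:SpanUP_I_model}, and the compatibility of $\kappa_n$ with the inclusions $\Tw([2])\to\Tw([n])$.
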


\begin{proof}
  The claim follows from the natural equivalence of mapping spaces
  \[
    \Map([n],\Span_{1}(\cC)) \simeq \Map^{\cart}(\sd(1^\sharp(\Delta^{n})),\cC) \simeq \Map^{\adq}(\Tw([n]),\cC)\simeq \Map([n],\Span^{B}_{1}(\cC)).
  \]
  Here, the first equivalence is definitional (but is also a special case of \zcref{cor:trunc_map_equiv}), the second is \zcref{lem:twisted}, \zcref{lem:characterization_Cartesian_in_Span1}, and \zcref{rmk:kappa_adequate}, and the third is definitional.
\end{proof}

The relation between $\sd(\Delta^n)$ and $\Tw([n])$ also allows us to provide a universal property of the $2$-category $\Span_{1\frac{1}{2}}(\scr{C})$.

\begin{cor}\label{cor:TwSpan112}
  Let $\cC$ be a category with finite limits. There is a natural equivalence
  \[
    \Map(\Delta^n_t,\Span_{1\frac{1}{2}}(\cC)) \simeq \Map^{\adq}(\Tw(\Delta^n_t),\cC),
  \]
  where the right-hand side denotes the space of functors such that the map $\Tw([2])\to \cC$ corresponding to a map $\Delta^2_+\to \Delta^n$ is adequate.
\end{cor}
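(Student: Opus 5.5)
We prove \zcref{cor:TwSpan112} by the same strategy as the comparison $\Span_1(\scr{C})\simeq\Span_1^B(\scr{C})$ above, now tracking the extra dimension. Since $\Span_{1\frac{1}{2}}(\scr{C})\subset\Span_\infty(\scr{C})$ is a full decorated simplicial subset, \zcref{prop:mappingUP_prop} identifies $\Map(\Delta^n_t,\Span_{1\frac{1}{2}}(\scr{C}))$ with the full subspace of $\Map^{\cart}(\sd(\Delta^n_t),\scr{C})$ on those $f$ satisfying three conditions. First, every $2$-simplex of $\Delta^n$ restricts to a one-sided functor on $\sd(\Delta^2)$, i.e.\ satisfies the even $1$-Segal condition (the one without the $0$-lacuna). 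Second, every simplex of dimension $\geq 3$ is thin, i.e.\ $f$ is Cartesian for $2^\sharp$ of the decoration. Third, $f$ satisfies the Segal conditions coming from the decoration $t$ (relevant only for $n=2$, $t=+$).

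The first step is an analogue of \zcref{lem:characterization_Cartesian_in_Span1}. We claim that $f:\sd(\Delta^n)\to\scr{C}$ satisfies the second condition together with the one-sidedness of the first if and only if $f$ inverts endpoint-preserving maps and satisfies the even $1$-Segal conditions. For the forward direction, take an endpoint-preserving map $U\to U'$ with $\#U\geq 3$. Each interior vertex $k$ of $U$ lies in some thin simplex of dimension $\geq 3$ containing $k\pm1$ (or, when $\#U=3$, in a one-sided $2$-simplex). Then \zcref{lem:k-vanishing}, or rather its converse \zcref{rmk:k-vanishing_converse}, applied inductively on $\#U$ as in the proof of \zcref{lem:k-vanishing}, shows that deleting interior vertices is inverted. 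We expect this step to be the main obstacle: for $\#U=3$ the only available condition is the even $1$-Segal square, and this square does not by itself invert $012\to 02$. This is exactly why $2$-simplices must remain non-invertible. We therefore refine the claim: $f$ inverts endpoint-preserving maps $U\to U'$ whenever $\#U'\geq 3$ or $\#U\geq 4$, and $f$ factors through the quotient of $\sd$ that remembers endpoints together with the ``$02$ versus $012$'' distinction. We would try to show that this quotient is precisely $\Tw([n])$ after all, because $012\to02$ maps to $\langle0,2\rangle\to\langle0,2\rangle$; this suggests reinterpreting $\Tw$-valued maps together with the adequacy square as encoding exactly the non-invertible $2$-cell.

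Concretely, the plan is to prove, as in \zcref{lem:twisted}, that $\kappa_n$ restricted to the relevant morphisms is a localization. We then check that a functor $g:\Tw([n])\to\scr{C}$ has $g\circ\kappa_n$ satisfying the one-sided and thinness conditions if and only if $g$ is adequate on the triples $\{i,j,k\}$ spanning $2$-simplices that are required to be thin, and merely satisfies the even square condition otherwise. The converse direction parallels the first half of \zcref{lem:characterization_Cartesian_in_Span1}: given $g$, the Segal cubes of $g\circ\kappa_n$ for simplices of dimension $\geq 3$ are degenerate along an endpoint-preserving prong, so \zcref{ex:deg-cube} reduces them to the even $1$-Segal squares, and \zcref{rmk:kappa_adequate} translates these into adequacy.

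Finally, the decoration $t=+$ for $n=2$ adds the odd condition $012\to02$ being invertible. Under $\kappa_2$ this map is an identity, so the odd condition is automatic on the $\Tw$ side. The even condition then becomes adequacy of the triple $\{0,1,2\}$, which is exactly the adequacy imposed by $\Delta^2_+\to\Delta^n$ in the statement. Naturality in $\tDelta$ follows since $\kappa$ is natural, and assembling these identifications gives the equivalence.
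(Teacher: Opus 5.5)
Your overall architecture matches the paper's: reduce via \zcref{prop:mappingUP_prop} to functors $f\colon\sd(\Delta^n)\to\cC$, show that the $\Span_{1\frac{1}{2}}$-conditions amount to inverting endpoint-preserving maps, descend along $\kappa_n$ by \zcref{lem:twisted}, and read off adequacy via \zcref{rmk:kappa_adequate}. The proposal fails, however, at its first identification. One-sidedness of a $2$-simplex is \emph{not} the even $1$-Segal condition. For $M=[2]$ the even sieve is $\{\{0,1\},\{1,2\}\}$, and $\{1,2\}=M_0$ has lacuna $0$. The Segal sieve without the $0$-lacuna is therefore the odd one, $\{\{0,2\}\}$, consistent with ``same parity as $n+1=3$''. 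So a triangle of $\Span_{1\frac{1}{2}}(\cC)$ is one for which $f(\{i,j,k\})\to f(\{i,k\})$ is an equivalence, i.e.\ an endpoint-preserving map is inverted. The even pullback square is imposed only on thin triangles.

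Everything downstream inherits this error.

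\emph{The obstacle is an artifact.} With the correct parity, the case $\#U=3$ is literally the one-sided condition. The induction on $\#U$ then goes through, via \zcref{ex:deg-cube}, or via \zcref{rmk:k-vanishing_converse} applied to $f|_{\sd(U)}$ after re-indexing $U\cong[\#U-1]$. The key point is that vertices of equal parity in $U$ are never adjacent, so an interior vertex stays interior in every intersection of the relevant Segal cube. Conversely, for $\#U\geq 4$ each of the two Segal sieves of $U$ contains the deletion of an interior vertex, and the cube is degenerate along it. Hence inverting endpoint-preserving maps yields thinness in dimensions $\geq 3$.

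\emph{Your repair cannot work.} A functor that does not invert $012\to 02$ does not factor through $\kappa_n$, since $\kappa_n$ sends this map to the identity of $\langle 0,2\rangle$. So no quotient ``remembering the $02$ versus $012$ distinction'' is $\Tw([n])$.

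\emph{The final bookkeeping is reversed.} Under your reading, the even square would be imposed on every triple. By \zcref{rmk:kappa_adequate} that square \emph{is} adequacy, so ``merely satisfies the even square condition otherwise'' means adequate everywhere, which contradicts the statement. Likewise, $\Delta^2_+$ would add the odd condition, whereas it actually adds the even one.

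With the parity corrected, the odd conditions on all triangles are absorbed by $\kappa_n$. The even square survives exactly on triangles hit by $\Delta^2_+\to\Delta^n_t$, which is the paper's argument. (Minor: the decoration also matters for $n=1$, $t=+$, where it forces both legs to be invertible, so it is not relevant only for $n=2$.)
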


\begin{proof}
  The proof of  \zcref{lem:characterization_Cartesian_in_Span1} shows that a functor $f:\sd(\Delta^n)\to \scr{C}$ satisfies the odd $1$-Segal conditions if and only if it is a localization at the endpoint-preserving maps. The corollary follows from \zcref{rmk:kappa_adequate}.
\end{proof}

\subsection{Comparison with other models}\label{subsec:comparison_models}

We now compare our construction of $\Span_{n\frac{1}{2}}(\scr{C})$ to that of \cite{stefanich_higher_2020}. There, Stefanich constructs an $n$-category $n\Corr(\scr{C})$, which will be shown to correspond to our category $\Span_{(n-1)\frac{1}{2}}(\scr{C})$, and an inclusion
\[
  \begin{tikzcd}
    \iota_{\scr{C}}^n: &[-3em] \scr{C}\arrow[r] & n\Corr(\scr{C}).
  \end{tikzcd}
\]
He then proves a universal property of $n\Corr(\scr{C})$.

\begin{thm}[{\cite[Theorem 1.2.1]{stefanich_higher_2020}}] \label{thm:stefanich_UP}
  Let $\scr{C}$ be a 1-category admitting pullbacks and $\scr{D}$ an $n$-category. Restricting along $\iota_{\scr{C}}^n$ induces an equivalence between $\Map_{\Cat_n}(n\Corr(\scr{C}),\scr{D})$ and the subspace of $\Map_{\Cat_n}(\scr{C},\scr{D})$ consisting of functors which satisfy the $(n-1)$-fold left Beck--Chevalley condition.
\end{thm}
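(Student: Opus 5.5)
This statement is not proved in the present paper. It is imported verbatim from \cite[Theorem 1.2.1]{stefanich_higher_2020}, and in the text we simply invoke it as a black box. It will serve as the input for the inductive comparison $n\Corr(\scr{C})\simeq\Span_{(n-1)\frac{1}{2}}(\scr{C})$. If one wanted to reconstruct the argument, the plan would be as follows.

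\emph{Reduction to the case $n=2$.} We would induct on $n$. The case $n=1$ is tautological, since $1\Corr(\scr{C})=\scr{C}$ and the Beck--Chevalley condition is empty. For $n=2$, the restriction map $\Map_{\Cat_2}(2\Corr(\scr{C}),\scr{D})\to\Map_{\Cat_2}(\scr{C},\scr{D})$ is analysed in two steps.
\begin{enumerate}
\item It is a monomorphism. Every span $x\xleftarrow{g} z\xrightarrow{f} y$ in $2\Corr(\scr{C})$ factors as $f\circ g^R$, where $g^R$ is right adjoint to the image of $g$. The space of right adjoints to a given $1$-morphism is contractible, so a functor out of $2\Corr(\scr{C})$ is determined by its restriction to $\scr{C}$.
\item Its image is exactly the left Beck--Chevalley functors. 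One inclusion holds because pullback squares in $\scr{C}$ become Beck--Chevalley squares in $2\Corr(\scr{C})$. For the converse, given a Beck--Chevalley functor $F\colon\scr{C}\to\scr{D}$, we would extend it to $2\Corr(\scr{C})$ by sending a span to $F(f)\circ F(g)^R$. Compatibility with composition of spans is exactly the Beck--Chevalley isomorphism for the pullback square defining the composite.
\end{enumerate}

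\emph{Inductive step.} We would work enriched over $\Cat_{n-1}$, using the description of hom-categories
\[
  n\Corr(\scr{C})(x,y)\simeq(n-1)\Corr(\scr{C}_{/x\times y}).
\]
Here $(n-1)$-fold left Beck--Chevalley for $F$ unwinds to two conditions:
\begin{enumerate}
\item the $1$-morphisms of $\scr{C}$ are sent to left adjoints satisfying Beck--Chevalley;
\item the induced functors on hom-objects, $\scr{C}_{/x\times y}\to\scr{D}(Fx,Fy)$, are $(n-2)$-fold Beck--Chevalley.
\end{enumerate}
The inductive hypothesis applied homwise then gives the extension on hom-$(n-1)$-categories, and the $n=2$ argument assembles these into an $n$-functor.

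\emph{Main obstacle.} The hardest part is the coherence of this assembly: showing that the homwise extensions are compatible with composition in a fully homotopy-coherent way, not merely up to homotopy. The approach I would try first, following Stefanich, is to present $n\Corr(\scr{C})$ as an enriched localization. Adjoining the right adjoints freely is then a universal construction, and the Beck--Chevalley conditions become exactly the local objects of that localization. In this formulation coherence is automatic from the universal property.
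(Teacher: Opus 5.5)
Your proposal matches the paper: the statement is not proved there but imported verbatim from \cite[Theorem 1.2.1]{stefanich_higher_2020} and used as a black box, both to produce the essentially unique functors $F_n\colon n\Corr(\scr{C})\to\Span_{(n-1)\frac{1}{2}}(\scr{C})$ and in the inductive proof that they are equivalences. Your optional reconstruction is only heuristic, as you partly acknowledge. The coherence problem you place in the inductive assembly already arises in full at $n=2$, where your steps (1) and (2) determine the functor on objects, $1$-morphisms and $2$-cells only up to homotopy rather than as a coherent functor of $(\infty,2)$-categories, so the $n=2$ case is itself a substantial theorem and not a routine base case.
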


We will use this universal property to prove the following.

\begin{thm}\label{thm:compare_models}
  Let $\scr{C}$ be a $1$-category with finite limits. Then, there is an essentially unique equivalence $n\Corr(\scr{C})\simeq \Span_{(n-1)\frac{1}{2}}(\scr{C})$ such that the diagram
  \[
    \begin{tikzcd}
      n\Corr(\scr{C}) \arrow[r,"\simeq"]
      & \Span_{(n-1)\frac{1}{2}}(\scr{C}) \\
      \scr{C}\arrow[u,"\iota_{\scr{C}}^n"]\arrow[r,"\simeq"'] & \Span_{\frac{1}{2}}(\scr{C})\arrow[u,hookrightarrow]
    \end{tikzcd}
  \]
  commutes.
\end{thm}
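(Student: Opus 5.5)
The plan is induction on $n$, using Stefanich's universal property (\zcref{thm:stefanich_UP}) to produce a comparison functor $\Phi_n\colon n\Corr(\scr{C})\to \Span_{(n-1)\frac{1}{2}}(\scr{C})$, and then the hom-category description of \zcref{subsec:Hom_in_Span} to show that $\Phi_n$ is an equivalence.

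\emph{Constructing $\Phi_n$.} Since $\Span_{(n-1)\frac{1}{2}}(\scr{C})$ is an $n$-category by \zcref{prop:fibrancy_truncated_span_1-half}, \zcref{thm:stefanich_UP} reduces this to checking one thing. The composite $j\colon \scr{C}\simeq \Span_{\frac{1}{2}}(\scr{C})\hookrightarrow \Span_{(n-1)\frac{1}{2}}(\scr{C})$ must satisfy the $(n-1)$-fold left Beck--Chevalley condition. Unwinding this:
\begin{enumerate}
\item Each morphism $f\colon x\to y$ of $\scr{C}$, viewed as the left-degenerate span, has a left adjoint. This is the right-degenerate span $x\xleftarrow{f} x\to x$, read in the opposite direction.
\item The unit and counit are given by explicit $2$-simplices $\sd(\Delta^2)\to\scr{C}$ that are one-sided.
\item For every pullback square, the mate is invertible. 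Composition in $\Span_\infty(\scr{C})$ is computed by fiber products (the even $1$-Segal condition, cf.\ \zcref{subsec:Span_small}), so the mate of a pullback square is represented by a thin $2$-simplex.
\item The higher iterates of the condition are handled the same way inside the hom-categories.
\end{enumerate}
For the iterates we invoke the hom-category theorem $\Span_{(n-1)\frac{1}{2}}(\scr{C})(x,y)\simeq \Span_{(n-2)\frac{1}{2}}(\scr{C}_{/x\times y})$ and induct, since the restriction of $j$ to homs is again of the form $\scr{C}_{/x\times y}\to \Span_{(n-2)\frac{1}{2}}(\scr{C}_{/x\times y})$. This gives $\Phi_n$ together with the commuting square. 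Essential uniqueness also follows from \zcref{thm:stefanich_UP}: the space of extensions of $j$ is contractible, provided we show that any equivalence making the square commute is such an extension. That holds trivially.

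\emph{Showing $\Phi_n$ is an equivalence.} We induct on $n$.
\begin{enumerate}
\item Base case $n=1$: $1\Corr(\scr{C})=\scr{C}$, and the claim is the equivalence $\Span_{\frac{1}{2}}(\scr{C})\simeq\scr{C}$ via first-vertex maps.
\item If one prefers $n=2$ as the base, combine \zcref{cor:TwSpan112} with Stefanich's description of $2\Corr$ through twisted arrows, or simply run the inductive step.
\item Essential surjectivity: both sides have objects the objects of $\scr{C}$, and $\Phi_n$ is compatible with $\iota^n_{\scr{C}}$.
\item Full faithfulness: we need $\Phi_n$ to induce equivalences $n\Corr(\scr{C})(x,y)\to \Span_{(n-1)\frac{1}{2}}(\scr{C})(x,y)\simeq \Span_{(n-2)\frac{1}{2}}(\scr{C}_{/x\times y})$. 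Stefanich's construction gives $n\Corr(\scr{C})(x,y)\simeq (n-1)\Corr(\scr{C}_{/x\times y})$, which by induction is equivalent to the right-hand side.
\end{enumerate}

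\emph{Main obstacle.} The hardest step is showing that the induced map on homs agrees with this composite equivalence. Our plan is to again use uniqueness in \zcref{thm:stefanich_UP}, applied to $\scr{C}_{/x\times y}$. Both functors $(n-1)\Corr(\scr{C}_{/x\times y})\to \Span_{(n-2)\frac{1}{2}}(\scr{C}_{/x\times y})$ restrict on $\scr{C}_{/x\times y}$ to the canonical inclusion. For the one coming from $\Phi_n$, this restriction must be checked directly by tracing the suspension adjunction through \zcref{const:mapcatloc-nat_trans}. Hence the two functors agree, and the induced map on homs is an equivalence by the inductive hypothesis.
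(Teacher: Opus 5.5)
There is a genuine gap. It sits where you place the ``main obstacle'', but it also undermines your verification of the iterated Beck--Chevalley condition.

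In both places you need to know one specific functor and identify it with $j$ for $\scr{C}_{/x\times y}$. That functor is $\scr{C}_{/x\times y}\simeq 2\Corr(\scr{C})(x,y)\to \Span_{(n-1)\frac{1}{2}}(\scr{C})(x,y)$, obtained from the extension of $j$ along $\iota^2_{\scr{C}}$ (equivalently, $\Phi_n$ restricted along $2\Corr(\scr{C})\to n\Corr(\scr{C})$). This is not ``the restriction of $j$ to homs''. The homs of $\scr{C}$ are the discrete spaces $\Map_{\scr{C}}(x,y)$, whereas $\scr{C}_{/x\times y}$ consists of spans and maps of spans. On these, $\Phi_n$ acts through adjoints, units and counits that \zcref{thm:stefanich_UP} supplies only abstractly.

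Since $\Phi_n$ exists only through that universal property, tracing the suspension adjunction through \zcref{const:mapcatloc-nat_trans} gives you no control over it. That construction identifies the hom-categories of $\Span_\infty(\scr{C})$ with spans in the slice. It says nothing about which functor $\Phi_n$ induces into them.

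Relatedly, your base case $n=1$ is empty. Uniqueness in the universal property of $1\Corr(\scr{C})=\scr{C}$ is vacuous. So the step $1\to 2$ asks exactly whether $\scr{C}_{/x\times y}\simeq 2\Corr(\scr{C})(x,y)\to \Span_{1\frac{1}{2}}(\scr{C})(x,y)\simeq \scr{C}_{/x\times y}$ is an equivalence, and ``simply running the inductive step'' cannot decide this.

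The missing idea is an explicit, non-universal comparison in dimension $2$, which the paper proves first: $N^{\seg}_2(\Span_{1\frac{1}{2}}(\scr{C}))\simeq 2\Corr(\scr{C})$. It is induced by $(\fv,\kappa_n)\colon \sd(\Delta^m)\times\sd(\Delta^n)\to \Delta^m\times \Tw([n])$. The key step is showing that the one-sided $1$-truncated mixed Segal conditions amount to inverting minimum-preserving maps in the first variable. This comparison then feeds the rest of the argument:
\begin{enumerate}
\item It gives the $1$-fold Beck--Chevalley condition for free.
\item By uniqueness in \zcref{thm:stefanich_UP} for $2\Corr$, every extension of $j$ along $\iota^2_{\scr{C}}$ is this explicit functor followed by the inclusion $\Span_{1\frac{1}{2}}(\scr{C})\subset\Span_{(n-1)\frac{1}{2}}(\scr{C})$.
\item Its effect on hom-objects can then actually be traced, via a $\Tw$-analogue of \zcref{const:mapcatloc-nat_trans}, and yields $j$ for $\scr{C}_{/x\times y}$.
\item Combined with Stefanich's Lemma~4.2.9, this gives the iterated Beck--Chevalley condition.
\item After promoting your commuting triangle from $\scr{C}$ to $2\Corr(\scr{C})$, it gives exactly the compatibility on homs needed to identify $\Map_{\Phi_n}(x,y)$ with $\Phi_{n-1}$ for $\scr{C}_{/x\times y}$.
\end{enumerate}
With this ingredient added and the induction started at $n=2$, your argument becomes the paper's.
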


Our proof will consist of three key steps. Firstly, in \zcref{prop:base_comparison} we will construct an explicit equivalence $2\Corr(\scr{C})\simeq \Span_{1\frac{1}{2}}(\scr{C})$, and secondly, we will verify in \zcref{prop:sat_BC} that
\[
  \begin{tikzcd}
    \scr{C}\arrow[r,"\simeq"] & \Span_{\frac{1}{2}}(\scr{C})\arrow[r,hookrightarrow] & \Span_{(n-1)\frac{1}{2}}(\scr{C})
  \end{tikzcd}
\]
satisfies the $(n-1)$-fold left Beck--Chevalley condition. Finally, in \ref{prop:functor_is_equiv}, we verify inductively that each of these functors is an equivalence.

For the first statement, we briefly recall the definition of $2\Corr(\scr{C})$.

\begin{defn}[{\cite[Notation 3.1.9]{stefanich_higher_2020}}]
  For $\scr{C}\in \Cat_1^{\lex}$, the $2$-category $2\Corr(\scr{C})$ is defined to be the Segal category
  \[
    \begin{tikzcd}[row sep=0em]
      2\Corr(\scr{C}): &[-3em] \Delta^\op \arrow[r] & \Cat_1 \\
      & {[n]}\arrow[r,mapsto] & \Map(\{0,\ldots,n\},\scr{C})\times_{\Fun(\{0,\ldots,n\},\scr{C})} \Fun^{\adq}(\Tw([n]),\scr{C}).
    \end{tikzcd}
  \]
  Which is shown to lie in the essential image of the inclusion
  \[
    \begin{tikzcd}
      \Cat_2\arrow[r] & \sf{Seg}(\Cat_1)
    \end{tikzcd}
  \]
  of $2$-categories into Segal categories in \cite[Cor 3.1.12]{stefanich_higher_2020}.
\end{defn}

To make the comparison, we provide the following explicit description of the inclusion $\Cat_2\to \Seg(\Cat_{1})$.

\begin{defn}
  Let $\scr{D}$ be an $n$-category. The \emph{Segal nerve} of $\scr{D}$ is the Segal object in $\Cat_{n-1}$ which assigns to $[n]$ the $(n-1)$-category
  \[
    \Map(\{0,\ldots,n\},\scr{C})\times_{\Fun^{\oplax}(\{0,\ldots,n\},\scr{C})^{\leq n-1}}\Fun^{\oplax}(1^\sharp(\Delta^n), \scr{C})^{\leq{n-1}}.
  \]
  We write
  \[
    \begin{tikzcd}
      N^{\seg}_n: &[-3em] \Cat_n \arrow[r] & \sf{Seg}(\Cat_{n-1})
    \end{tikzcd}
  \]
  for the functor which sends each $n$-category to its Segal nerve.
\end{defn}

\begin{thm}[{\cite[Theorem 2.65]{abellan_comparing_2023}}]
  The Segal nerve
  \[
    \begin{tikzcd}
      N^{\seg}_2: &[-3em] \Cat_2 \arrow[r] & \sf{Seg}(\Cat_{1})
    \end{tikzcd}
  \]
  is fully faithful and its essential image consists of the complete Segal objects in $\Cat_1$.
\end{thm}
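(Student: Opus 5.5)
The plan is to reduce to the known equivalence between $2$-categories and complete Segal objects in $\Cat_1$ (for instance Lurie's or Haugseng's model of $(\infty,2)$-categories as complete Segal objects in $(\infty,1)$-categories). To do this we identify $N^{\seg}_2$ with the standard ``categorical nerve'' $\scr{D}\mapsto ([n]\mapsto \Map(\Delta^n, \scr{D})$-fibred family of hom-categories$)$. That nerve is already known to be fully faithful with essential image the complete Segal objects. Concretely, it suffices to produce a natural equivalence of Segal objects between $N^{\seg}_2(\scr{D})$ and the standard nerve, natural in $\scr{D}\in\Cat_2$.

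The first step is to compute the fibres of
\[
  \Fun^{\oplax}(1^\sharp(\Delta^n),\scr{D})^{\leq 1} \to \Fun^{\oplax}(\{0,\ldots,n\},\scr{D})^{\leq 1}
\]
over a tuple of objects $(x_0,\ldots,x_n)$. I would show that this fibre is naturally equivalent to $\scr{D}(x_0,x_1)\times\cdots\times\scr{D}(x_{n-1},x_n)$. The intended route is as follows.
\begin{enumerate}
  \item Use density of $\tDelta$ (\zcref{prop:tDelta_dense}) and test against $\Delta^k_s$. A $k$-simplex of the fibre is then a map $\Delta^k_s\otimes 1^\sharp(\Delta^n)\to\scr{D}$, truncated and constant on $\Delta^k_s\otimes\{i\}$.
  \item Observe that, after collapsing the slices $\Delta^k_s\otimes\{i\}$, the Gray product $\Delta^k_s\otimes 1^\sharp(\Delta^n)$ becomes a wedge of $n$ suspensions $\Sigma(\Delta^k_s)$. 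This is because $1^\sharp(\Delta^n)$ is the nerve of $[n]$ with all higher cells thin, so the Gray cells spanning two consecutive edges become invertible.
  \item Apply the suspension--hom adjunction (used in \zcref{subsec:Hom_in_Span}) to turn maps out of each $\Sigma(\Delta^k_s)$ into simplices of the hom-categories $\scr{D}(x_{i-1},x_i)$.
\end{enumerate}
The Segal maps then correspond to the evident projections, so the Segal condition holds automatically.

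The second step is to compare the degeneracy and face structure with that of the standard categorical nerve. Under the identification above, the inner face maps correspond to composition in $\scr{D}$, which is what the Gray cells on $\Delta^k\otimes\{i-1<i<i+1\}$ record. Since both Segal objects are determined by their values on $[0]$ and $[1]$ together with the Segal maps, it suffices to match them in degrees $\leq 2$, naturally in $\scr{D}$. Full faithfulness and the identification of the essential image (completeness, which is checked in degree $1$ via the groupoid core) are then inherited from the standard nerve.

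The main obstacle I expect is step (ii) of the first step: proving that the collapsed Gray product $\Delta^k_s\otimes 1^\sharp(\Delta^n)$ is equivalent, in the complicial model structure, to the wedge of suspensions. This requires checking that the extra thin simplices coming from $1^\sharp$ produce exactly the $1$-truncation and the invertibility of interchange cells. I would attempt it by an explicit sequence of complicial anodyne extensions, modelled on the proof of \zcref{lem:mapcatloc-Joyal_equivalence}. The fallback is to cite the corresponding computation of oplax functor categories out of $[n]$ from \cite{abellan_comparing_2023}.
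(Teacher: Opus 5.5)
The paper gives no argument of its own for this statement; it imports it from \cite{abellan_comparing_2023}. Your proposal therefore has to stand on its own, and it has a genuine gap in the second step. The assertion that ``both Segal objects are determined by their values on $[0]$ and $[1]$ together with the Segal maps'' is false. The Segal condition only constrains a simplicial object along the \emph{inert} maps. The active maps (composition and units), together with their coherences in every degree, are additional data. Two Segal objects in $\Cat_1$ can have equivalent restrictions to $\Delta_{\leq 2}^{\op}$ and still differ in their higher associativity data, so ``matching them in degrees $\leq 2$'' proves nothing by itself.

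What is true is that a \emph{map} of Segal objects is an equivalence as soon as it is one in degrees $0$ and $1$. But your first step only produces levelwise equivalences, built from the spine decomposition of $[n]$ and hence visibly compatible only with inert maps. No map of simplicial objects is ever constructed. The same issue affects the target: a ``$\Map(\Delta^n,\scr{D})$-fibred family of hom-categories'' becomes a simplicial object only after composition has been specified coherently. That is exactly the content of the theorem, so the ``standard nerve'' you want to reduce to has not actually been defined.

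To close the gap, argue on corepresenting objects.
\begin{itemize}
\item By Yoneda and \zcref{prop:tDelta_dense}, $N^{\seg}_2$ is corepresented by the diagram $([n],\Delta^k_s)\mapsto\Delta^k_s\odot^2 1^\sharp(\Delta^n)$ on $\Delta\times\tDelta$.
\item A concrete standard nerve is the $\Theta_2$-nerve, which is fully faithful with image the complete Segal objects by the standard comparison between $\Theta_2$-spaces and Segal objects in $\Cat_1$. It is corepresented by $[n](\tau_1\Delta^k_s,\ldots,\tau_1\Delta^k_s)\simeq\Sigma(\tau_1\Delta^k_s)^{\vee n}$.
\item Your step (ii) identifies these two diagrams objectwise.
\item All of these $2$-categories are gaunt, so the mapping spaces between them are discrete. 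An equivalence of diagrams is then just a strictly natural family of isomorphisms (compare the rigidity argument of \zcref{lem:equiv_gray_loc_det_by_nat}).
\item Using the colimit decomposition of $[n]$, naturality reduces to the inert maps, $s_0\colon[1]\to[0]$, the $\tDelta$-direction, and $d_1\colon[1]\to[2]$.
\item For $d_1$ you must check that the interchange cells of $\tau_1\Delta^k_s\otimes[2]$ induce the map $\Sigma[k]\to\Sigma[k]\vee\Sigma[k]$ classified by the diagonal $[k]\to[k]\times[k]$.
\end{itemize}
It is only this rigidity that makes a low-degree check legitimate.

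By contrast, the step you single out as the main obstacle is formal. The product $\otimes$ preserves colimits in each variable, and $[n]\simeq[1]\amalg_{[0]}\cdots\amalg_{[0]}[1]$ in $\Cat_\infty$ because $\Cat_1\hookrightarrow\Cat_\infty$ is a left adjoint. The Gray cylinder on $X$ with both ends collapsed is $\Sigma X$, so no explicit anodyne extensions are needed.
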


\begin{rmk}
  While we expect that the Segal nerve provides the equivalence between $\Cat_n$ and complete Segal objects in $\Cat_{n-1}$, to our knowledge this statement as not yet appeared in the literature in full generality. We do not need this more general statement in the paper.
\end{rmk}

\begin{ntt}
  We note that by passing through the defining adjunctions, for any $k$-category $\scr{D}$ we have
  \[
    \Map(\Delta^m_t,N^{\seg}_k(\scr{D})_n)\simeq \Map\left(\Delta^m_{\sharp}\otimes \Ob(\Delta^n)\coprod_{\Delta^m_{t}\otimes \Ob(\Delta^n)}(k-1)^\sharp(\Delta^m)\otimes 1^\sharp(\Delta^n),\scr{D}\right).
  \]
  We therefore set the notational convention
  \[
    \Delta^m_t\odot^k 1^\sharp(\Delta^n)\coloneqq \Delta^m_{\sharp}\otimes \Ob(\Delta^n)\coprod_{\Delta^m_{t}\otimes \Ob(\Delta^n)}(k-1)^\sharp(\Delta^m)\otimes 1^\sharp(\Delta^n)
  \]
\end{ntt}

\begin{prop}\label{prop:base_comparison}
  For a $1$-category $\scr{C}$ with finite limits, there is an equivalence \[
    N^{\seg}_2(\Span_{1\frac{1}{2}}(\scr{C}))\simeq 2\Corr(\scr{C}).
  \]
\end{prop}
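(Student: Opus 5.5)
We compare the two Segal objects levelwise in $\Cat_1$, naturally in $[n]\in\Delta$, using density of $\tDelta$ in $\Cat_1$: it suffices to produce equivalences
\[
  \Map(\Delta^m_t, N^{\seg}_2(\Span_{1\frac{1}{2}}(\scr{C}))_n)\simeq \Map(\Delta^m_t, 2\Corr(\scr{C})_n)
\]
natural in $\Delta^m_t\in\tDelta$ with $m\le 1$ (since both sides are $1$-categories) and in $[n]$. By the Notation preceding the statement, the left side is $\Map(\Delta^m_t\odot^2 1^\sharp(\Delta^n),\Span_{1\frac{1}{2}}(\scr{C}))$. The right side unwinds to pairs of a functor $\Delta^m\to\Fun^{\adq}(\Tw([n]),\scr{C})$ (all of whose morphisms are equivalences when $t=\sharp$) together with the condition that the restriction to the objects $\{0,\dots,n\}$ is invertible in the $\Delta^m$-direction. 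In other words, it is the space of functors $\Delta^m\times\Tw([n])\to\scr{C}$ that are adequate in the $\Tw$ variable and invert the $\Delta^m$-direction over the diagonal $\{\langle i,i\rangle\}$, and everywhere if $t=\sharp$.

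For the left side we apply \zcref{prop:mappingUP_prop} in its model-categorical form \zcref{thm:SpanUP_I_model}. Pushouts of decorated simplicial sets go to pushouts under $\sd$, so this yields Cartesian functors out of
\[
  \sd\bigl(\Delta^m_\sharp\otimes\Ob(\Delta^n)\bigr)\coprod_{\sd(\Delta^m_t\otimes\Ob(\Delta^n))}\sd\bigl(1^\sharp(\Delta^m)\otimes 1^\sharp(\Delta^n)\bigr)
\]
that additionally satisfy the one-sided conditions defining $\Span_{1\frac12}$. We then transport the Cartesian conditions along the localization $Z_{m,n}$ of \zcref{prop:product_localization}, using \zcref{prop:transfer_L-saturated_decorations} in the one-sided truncated form of the corollary just before \zcref{thm:universal_property_Span_k_1/2}. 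This identifies the space with functors $\sd(\Delta^m)\times\sd(\Delta^n)\to\scr{C}$ satisfying the one-sided $1$-truncated mixed Segal conditions, glued along the vertices of $\Delta^n$.

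We now read off these conditions. Mixed Segal cubes with $l+j>2$ only involve $l,j\le1$ when $m\le1$, so the conditions are few. In the $\Delta^m$-direction with $l=1$, $j=0$ ($l+j=1$ is not $>2$, but we are over thin simplices), the conditions force $\sd(\Delta^m)$ to factor through the first-vertex map $\fv\colon\sd(\Delta^m)\to\Delta^m$, as in \zcref{cor:TwSpan112}. In the $\Delta^n$-direction, the $1^\sharp$ decoration together with \zcref{lem:characterization_Cartesian_in_Span1} forces factorization through $\kappa_n\colon\sd(\Delta^n)\to\Tw([n])$ and the adequacy condition (\zcref{lem:twisted}, \zcref{rmk:kappa_adequate}). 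By \zcref{lem:sdEx}-type arguments, localizations are stable under products, so functors factor through $\Delta^m\times\Tw([n])$. The remaining mixed $(l,j)=(1,1)$ conditions become either automatic, or, over the vertices glued in with $\Delta^m_\sharp$, exactly the invertibility of the $\Delta^m$-direction on $\langle i,i\rangle$.

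We expect the main obstacle to be this last bookkeeping step: checking that the one-sided inner/outer mixed Segal condition at $l+j=2$ with $l=1$ odd translates precisely into the $2\Corr$ description, namely that $2$-morphisms are morphisms in $\scr{C}$ and not spans. We would first test this on $m=n=1$ by drawing $\sd(\Delta^1)\times\sd(\Delta^1)$ as in \zcref{fig:zigzag_11flat}. Naturality in $[n]$ and $\Delta^m_t$ then follows since every map used ($\fv$, $\kappa_n$, $L$, $\ell$) is natural.
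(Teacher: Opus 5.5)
Your skeleton matches the paper's. You unwind $N^{\seg}_2(\Span_{1\frac{1}{2}}(\scr{C}))_n$ into functors $\sd(\Delta^m)\times\sd(\Delta^n)\to\scr{C}$ that invert $\sd(\Delta^m)\times\Ob(\Delta^n)$, are Cartesian for $1^\sharp(\Delta^n)$, and satisfy the one-sided $1$-truncated mixed Segal conditions. You then descend along the product localization $(\fv,\kappa_n)$ to $\Delta^m\times\Tw([n])$, handling the $\Tw$-variable by \zcref{lem:characterization_Cartesian_in_Span1} and \zcref{rmk:kappa_adequate}.

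The gap is the step you defer as bookkeeping. It is the entire content of the proposition, and your sketch of it is incorrect. The gluing with $\Delta^m_\sharp\otimes\Ob(\Delta^n)$ inverts every morphism of $\sd(\Delta^m)\times\{\{i\}\}$, but it says nothing about $\sd(\Delta^m)\times\{U\}$ for non-singleton $U$. Also, \zcref{cor:TwSpan112} is about localizing at endpoint-preserving maps in one variable, not about $\fv$. So nothing you cite produces the factorization through $\fv$.

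What has to be shown is this: given vertexwise invertibility and Cartesianness in the $\Delta^n$-variable, the one-sided mixed Segal conditions hold if and only if, for every $U\in\sd(\Delta^n)$, the minimum-preserving maps $V\times U\to V'\times U$ are inverted. In particular, the $(1,1)$ condition is neither automatic nor about $\langle i,i\rangle$. For $V=U=\{0,1\}$ the inner mixed cube is the square of \zcref{fig:1-Segal}. Its edge $(\{0,1\},\{1\})\to(\{0\},\{1\})$ is inverted by the vertex condition, so it is a pullback exactly when the minimum-preserving edge $(\{0,1\},\{0,1\})\to(\{0\},\{0,1\})$ is invertible. This is precisely where $2$-morphisms become morphisms of $\scr{C}$ rather than spans.

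The missing argument is an induction on $\#V$ and $\#U$, starting from that square. A minimum-preserving map deleting one element of $V$ occurs as a prong of a one-sided mixed Segal cube for $(V,U)$. The inductive hypothesis makes that cube degenerate along the prong's direction. \zcref{lem:degenerate} then says the cube is a limit cube exactly when the prong is inverted.

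Your claim that for $m\le 1$ the cubes with $l+j>2$ ``only involve $l,j\le1$'' is false, and would make them vacuous. The cubes with $l=1$, $j\ge2$ are exactly what propagates the inversion to larger $U$.

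Finally, restricting to $m\le1$ is not justified by the targets being $1$-categories. The objects $[0]$ and $[1]$ do not detect composition, so you need naturality in $\Delta^m_\flat$ for all $m$; only the $\Delta^m_+$ with $m\ge 2$ add nothing. The uniform inductive argument handles all $m$ at once.
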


\begin{proof}
  Tracing through the definitions, it suffices to show that for any decorated simplex $\Delta^m_t$ the natural maps
  \[
    \begin{tikzcd}
      (\fv,\kappa_{n}):\sd(\Delta^m)\times \sd(\Delta^n) \arrow[r] &\Delta^m\times \Tw([n])
    \end{tikzcd}
  \]
  induce equivalences between
  \begin{enumerate}
    \item Functors out of the left hand side to $\scr{C}$ which send every morphism in $\sd(\Delta^m)\times\Ob(\Delta^n)$ to an equivalence, are Cartesian with respect to the 1-truncated marking on $\Delta^{n}$, and respect the one-sided $1$-truncated mixed Segal conditions.
    \item Functors out of the right hand side to $\scr{C}$ which send every morphism in $\Delta^m\times \Ob(\Delta^n)$ to equivalences and which send $\{i\}\times \Tw([n])$ to a functor of adequate triples.
  \end{enumerate}
  To prove this statement, it suffices by \zcref{lem:characterization_Cartesian_in_Span1} and \zcref{lem:sdEx} to show that a functor satisfying the first two properties of condition (i) also satisfies the one-sided $1$-truncated mixed Segal conditions if and only if, for every $U\in \sd(\Delta^n)$, it maps minimum-preserving morphisms in $\sd(\Delta^{m})\times \{U\}$ to equivalences. In particular, it suffices to show this for all minimum-preserving maps in $\sd(\Delta^{m})$ between subsets that differ by one element.

  Observe that every minimum-preserving map $V^\prime\times U \subset V\times U$ with $\#(V^\prime)=\#(V)-1$ arises as the prong of a one-sided $1$-truncated mixed Segal cube for $U\times V$ and vice versa. We argue by induction on the cardinality of $V$ and $U$ with the base case displayed in \zcref{fig:1-Segal}. For the general case, it follows inductively that for every minimum preserving map as above, the cube is degenerate along the direction of this map. Hence, the minimum preserving map is an equivalence if and only if the cube is a limit cube.

  \begin{figure}[htb]
    \begin{tikzpicture}
      \begin{scope}
        \drawGrid{2}{2}
        \begin{scope}[on background layer]
          \drawLine{3pt}{red}  (00.center) --  (10.center) -- (11.center);
        \end{scope}
      \end{scope}

      \begin{scope}[xshift=3cm]
        \drawGrid{2}{2}
        \begin{scope}[on background layer]
          \drawLine{3pt}{red}  (00.center) --  (01.center) ;
        \end{scope}
      \end{scope}

      \begin{scope}[yshift=-3cm]
        \drawGrid{2}{2}
        \begin{scope}[on background layer]
          \drawLine{3pt}{red} (01.center) --  (11.center);
        \end{scope}
      \end{scope}

      \begin{scope}[yshift=-3cm,xshift=3cm]
        \drawGrid{2}{2}
        \begin{scope}[on background layer]
          \drawLine{3.5pt}{red} (01.center) -- (01.center);
        \end{scope}
      \end{scope}
      \draw[->] (1.5,-0.5) to (2.5,-0.5);
      \draw[->] (1.5,-3.5) to (2.5,-3.5);
      \draw[->] (0.5,-1.5) to (0.5, -2.5);
      \draw[->] (3.5,-1.5) to (3.5,-2.5);
    \end{tikzpicture}
    \caption{The Segal cube diagram associated to the object $(\{0,1\}, \{0,1\})\in \sd(\Delta^1)\times \sd(\Delta^1)$. Since the lower horizontal map is sent to an equivalence, the square is pullback if and only if the top horizontal map is sent to an equivalence.}\label{fig:1-Segal}
  \end{figure}
\end{proof}

It is immediate from the construction that the diagram
\[
  \begin{tikzcd}
    2\Corr(\scr{C}) \arrow[r,"\simeq"]
    & \Span_{1\frac{1}{2}}(\scr{C}) \\
    \scr{C}\arrow[u,"\iota_{\scr{C}}^n"]\arrow[r,"\simeq"'] & \Span_{\frac{1}{2}}(\scr{C})\arrow[u,hookrightarrow]
  \end{tikzcd}
\]
commutes, and so by \zcref{thm:stefanich_UP}, the equivalence constructed in \zcref{prop:base_comparison} is essentially unique with this property, establishing the dimension $2$ case of \zcref{thm:compare_models}.

\begin{prop}\label{prop:sat_BC}
  The inclusion
  \[
    \begin{tikzcd}
      j_{\scr{C}}^{n+1}:&[-3em]\scr{C}\arrow[r,"\simeq"] & \Span_{\frac{1}{2}}(\scr{C}) \arrow[r] & \Span_{n\frac{1}{2}}(\scr{C})
    \end{tikzcd}
  \]
  satisfies the $n$-fold left Beck--Chevalley conditions.
\end{prop}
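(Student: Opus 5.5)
We prove the statement by induction on $n$, reducing the $n$-fold left Beck--Chevalley condition to the hom-category description of \zcref{subsec:Hom_in_Span}. In Stefanich's formulation, a functor $F:\scr{C}\to\scr{D}$ satisfies the $1$-fold left Beck--Chevalley condition if $F(g)$ admits a left adjoint for every morphism $g$ of $\scr{C}$, and for every pullback square in $\scr{C}$ the associated mate transformation is invertible. The $n$-fold condition then requires that, for $x,y\in\scr{C}$, the induced functor $\scr{C}_{/x\times y}\to\scr{D}(F x,F y)$ satisfies the $(n-1)$-fold condition. This recursive structure matches the equivalence $\Span_{n\frac{1}{2}}(\scr{C})(x,y)\simeq \Span_{(n-1)\frac{1}{2}}(\scr{C}_{/x\times y})$ established above.

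\textbf{Base case ($n=1$).} The morphism $g:a\to b$ of $\scr{C}$ maps, in $\Span_{1\frac{1}{2}}(\scr{C})$, to the span $a\xleftarrow{=}a\xrightarrow{g}b$. Its candidate left adjoint is the reversed span $b\xleftarrow{g}a\xrightarrow{=}a$. The unit and counit are $2$-morphisms of $\Span_{1\frac12}$, i.e.\ one-sided $2$-simplices, equivalently maps in $\scr{C}$ compatible with the legs. The counit is the diagonal $a\to a\times_b a$, and the unit is $g:a\to b$ viewed as a map of spans over $b\times b$ and $a\times a$, oriented appropriately. We write these down using \zcref{cor:TwSpan112}, so that everything reduces to adequate functors on $\Tw([2])$, and we verify the triangle identities in $\scr{C}$; these reduce to the pullback identities for the diagonal. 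For a pullback square $a'\to a$, $b'\to b$ over $g$, the mate computed via composition by pullback is the canonical comparison $a'\to b'\times_b a$, which is an equivalence exactly because the square is a pullback. Alternatively, the base case follows from \zcref{prop:base_comparison} together with \zcref{thm:stefanich_UP} applied to $2\Corr(\scr{C})$, since $\iota^2_{\scr{C}}$ satisfies the condition tautologically; this route is cleaner, so we take it.

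\textbf{Inductive step.} Using the hom equivalence, we must check that the functor on hom-categories induced by $j^{n+1}_{\scr{C}}$,
\[
  \scr{C}_{/x\times y}\to \Span_{n\frac12}(\scr{C})(x,y)\simeq \Span_{(n-1)\frac12}(\scr{C}_{/x\times y}),
\]
agrees with $j^{n}_{\scr{C}_{/x\times y}}$. Granting this, the inductive hypothesis applies to $\scr{C}_{/x\times y}$, which has finite limits. We also need the level-one adjoints and mates in $\Span_{n\frac12}(\scr{C})$; these come from the base case, since $\Span_{1\frac12}(\scr{C})\subset\Span_{n\frac12}(\scr{C})$ is an inclusion of a locally full subcategory and adjunctions are preserved by functors.

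\textbf{Main obstacle.} The hardest part is identifying the hom-functor with $j^n$. This requires tracing $\psi_n$ from \zcref{const:mapcatloc-nat_trans} on simplices $\sd(\Delta^k_t)\star\partial\Delta^1$. We expect a direct check on $\tDelta$: a $k$-simplex of $\scr{C}_{/x\times y}$ sent through $j$ becomes a functor $\sd(\Delta^{k+1})\to\scr{C}$ obtained from the first-vertex map. Under $\psi$, this becomes the first-vertex map on $\sd(\Delta^k)$ joined with $x,y$, which is $j^n$. Naturality in $\tDelta$ then gives the identification by density.
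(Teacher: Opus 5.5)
Your proposal is correct and follows essentially the paper's route. You get the $1$-fold condition from \zcref{prop:base_comparison} together with \zcref{thm:stefanich_UP}, push it along $\Span_{1\frac12}(\scr{C})\subset\Span_{n\frac12}(\scr{C})$, identify the hom-functor with $j^{n}_{\scr{C}_{/x\times y}}$ via the hom-category theorem and a compatibility check along $\psi_n$, and induct. The one point to state precisely is that the functor $\scr{C}_{/x\times y}\to\Span_{n\frac12}(\scr{C})(x,y)$ is not induced by $j^{n+1}_{\scr{C}}$ alone but by its extension $F\colon 2\Corr(\scr{C})\to\Span_{n\frac12}(\scr{C})$ (which the paper obtains from \zcref{thm:stefanich_UP}), and your recursive description of the $n$-fold condition is what the paper cites as Stefanich's Lemma~4.2.9.
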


\begin{proof}
  It follows immediately from \zcref{prop:base_comparison} that $j_{\scr{C}}^{n+1}$ satisfies the $1$-fold Beck--Chevalley conditions. By \zcref{thm:stefanich_UP}, we thus obtain a commutative diagram below.
  \[
    \begin{tikzcd}
      2\Corr(\scr{C})\arrow[rr,"F"] & & \Span_{n\frac{1}{2}}(\scr{C})\\
      & \scr{C}\arrow[ul,"\iota_{\scr{C}}^{n+1}"] \arrow[ur,"j_{\scr{C}}^{n+1}"'] &
    \end{tikzcd}
  \]
  Taking hom-objects at objects $z$ and $w$, we obtain a functor
  \[
    \begin{tikzcd}
      \Map_{2\Corr(\scr{C})}(z,w)\arrow[r] & \Span_{(n-1)\frac{1}{2}}(\scr{C}_{/z\times w})
    \end{tikzcd}
  \]
  which by construction commutes with the identification $  \Map_{2\Corr(\scr{C})}(z,w)\simeq \scr{C}_{/z\times w}$ and the functor $j_{\scr{C}_{/z\times w}}^n$. Indeed, this can be seen by chasing through an analog of \zcref{const:mapcatloc-nat_trans} for $\Tw$ and show that it is compatible with the one given in \zcref{const:mapcatloc-nat_trans} itself under the natural map $\sd \to \Tw$. But now, by \cite[Lemma 4.2.9]{stefanich_higher_2020}, $j_{\scr{C}}^{n+1}$ satisfies the $n$-fold left Beck--Chevalley conditions if and only if $j_{\scr{C}_{/z\times w}}$ satisfies the $(n-1)$-fold left Beck--Chevalley conditions. Inductively, we reduce to the case $n=1$, which holds by \zcref{prop:base_comparison}.
\end{proof}

\begin{cor}
  There is an essentially unique functor $F_n: n\Corr(\scr{C})\to \Span_{(n-1)\frac{1}{2}}(\scr{C})$ such that the diagram
  \[
    \begin{tikzcd}
      n\Corr(\scr{C}) \arrow[rr,"F_n"] & & \Span_{(n-1)\frac{1}{2}}(\scr{C}) \\
      & \scr{C}\arrow[ul,"\iota_{\scr{C}}^n"] \arrow[ur,"j_{\scr{C}}^n"'] &
    \end{tikzcd}
  \]
  commutes.
\end{cor}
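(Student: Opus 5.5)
The corollary follows directly from Stefanich's universal property, \zcref{thm:stefanich_UP}, applied with target $n$-category $\scr{D}=\Span_{(n-1)\frac{1}{2}}(\scr{C})$. First I will check that this target really is an $n$-category. By \zcref{prop:fibrancy_truncated_span_1-half}, $\Span_{(n-1)\frac{1}{2}}(\scr{C})$ is a fibrant decorated simplicial set. By construction, all of its simplices of dimension greater than $n$ are thin, so it is $n$-complicial. It therefore lies in $\Cat_n \subset \Cat_\infty$, and maps into it from $n\Corr(\scr{C})$ can be computed in $\Cat_n$.

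Next, \zcref{prop:sat_BC} (applied with $n-1$ in place of $n$) says that the composite
\[
  j_{\scr{C}}^{n}\colon \scr{C}\xrightarrow{\simeq}\Span_{\frac{1}{2}}(\scr{C})\hookrightarrow \Span_{(n-1)\frac{1}{2}}(\scr{C})
\]
satisfies the $(n-1)$-fold left Beck--Chevalley condition. So $j_{\scr{C}}^n$ is a point of the subspace of $\Map_{\Cat_n}(\scr{C},\Span_{(n-1)\frac{1}{2}}(\scr{C}))$ that appears in \zcref{thm:stefanich_UP}.

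That theorem says restriction along $\iota_{\scr{C}}^n$ is an equivalence from $\Map_{\Cat_n}(n\Corr(\scr{C}),\Span_{(n-1)\frac{1}{2}}(\scr{C}))$ onto this subspace. The fiber of the restriction map over $j_{\scr{C}}^n$ is therefore contractible. A point of this fiber is exactly a functor $F_n$ together with a commuting triangle $F_n\circ\iota_{\scr{C}}^n\simeq j_{\scr{C}}^n$, and contractibility is exactly the claimed essential uniqueness.

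The only place I expect to need care is the hypotheses of \zcref{thm:stefanich_UP}. It is stated for $1$-categories with pullbacks, and our $\scr{C}$ has finite limits, so that is fine. One should also confirm that the Beck--Chevalley notion established in \zcref{prop:sat_BC} is Stefanich's $(n-1)$-fold \emph{left} one. Since that proposition was proved by reducing via \cite[Lemma 4.2.9]{stefanich_higher_2020} to the case $n=1$, which is \zcref{prop:base_comparison}, the two notions agree and nothing further needs checking.
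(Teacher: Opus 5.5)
Your proposal is correct and follows the paper's proof, which is just the observation that the corollary follows from \zcref{thm:stefanich_UP} combined with \zcref{prop:sat_BC}, reindexed from $n$ to $n-1$ exactly as you do. Your two additional checks make explicit what the paper leaves implicit: that $\Span_{(n-1)\frac{1}{2}}(\scr{C})$ is an $n$-category, by \zcref{prop:fibrancy_truncated_span_1-half} and the thinness of all simplices above dimension $n$; and that essential uniqueness is exactly the contractibility of the fiber of the restriction map over $j^n_{\scr{C}}$.
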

\begin{proof}
  This follows from \zcref{thm:stefanich_UP,prop:sat_BC}.
\end{proof}

\begin{prop}\label{prop:functor_is_equiv}
  For every $n\geq 2$, the functor $F_n$ is an equivalence.
\end{prop}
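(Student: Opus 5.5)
We argue by induction on $n\geq 2$. The base case $n=2$ is \zcref{prop:base_comparison}: there we have an explicit equivalence $2\Corr(\scr{C})\simeq\Span_{1\frac{1}{2}}(\scr{C})$ under $\scr{C}$. By the uniqueness part of \zcref{thm:stefanich_UP}, it agrees with $F_2$, so $F_2$ is an equivalence.

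For the inductive step we use that a functor of $n$-categories is an equivalence if and only if it is essentially surjective on objects and induces equivalences on all hom-$(n-1)$-categories.

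\emph{Essential surjectivity.} Both $n\Corr(\scr{C})$ and $\Span_{(n-1)\frac{1}{2}}(\scr{C})$ have the objects of $\scr{C}$ as objects, and $F_n$ commutes with the maps from $\scr{C}$. Hence $F_n$ is bijective on equivalence classes of objects.

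\emph{Homs.} Fix $z,w\in\scr{C}$. By \cite{stefanich_higher_2020} there is an identification
\[
n\Corr(\scr{C})(z,w)\simeq (n-1)\Corr(\scr{C}_{/z\times w}),
\]
compatible with the inclusions $\iota^{n-1}_{\scr{C}_{/z\times w}}$. On our side, the hom-category theorem of \zcref{subsec:Hom_in_Span} gives
\[
\Span_{(n-1)\frac{1}{2}}(\scr{C})(z,w)\simeq\Span_{(n-2)\frac{1}{2}}(\scr{C}_{/z\times w}).
\]
Under these two identifications we want the induced map on homs, $F_n(z,w)$, to be a functor
\[
(n-1)\Corr(\scr{C}_{/z\times w})\to\Span_{(n-2)\frac{1}{2}}(\scr{C}_{/z\times w})
\]
that commutes with the inclusions of $\scr{C}_{/z\times w}$. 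This is exactly the compatibility already used in the proof of \zcref{prop:sat_BC}: one compares \zcref{const:mapcatloc-nat_trans} with its twisted-arrow analogue along $\kappa$ and checks that the comparison restricts to $j^{n-1}_{\scr{C}_{/z\times w}}$ on $\scr{C}_{/z\times w}$.

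Given this, the uniqueness part of \zcref{thm:stefanich_UP}, applied to $\scr{C}_{/z\times w}$, identifies $F_n(z,w)$ with $F_{n-1}$ for the category $\scr{C}_{/z\times w}$. Note that $\scr{C}_{/z\times w}$ again has finite limits, so the inductive hypothesis applies to it. Hence $F_{n-1}$ is an equivalence, and therefore so is $F_n(z,w)$.

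The main obstacle is checking that the hom-identifications on the two sides really transport $F_n(z,w)$ to a functor under $\scr{C}_{/z\times w}$. This requires the inclusion $j^n_{\scr{C}}$ to restrict, on hom-categories, to $j^{n-1}_{\scr{C}_{/z\times w}}$, and the same for Stefanich's $\iota$. We would verify this on $\tDelta$ via the explicit maps $\psi_n$, using that the image of $\scr{C}\simeq\Span_{\frac{1}{2}}(\scr{C})$ consists of the simplices factoring through $\fv$.
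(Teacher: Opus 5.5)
Your overall structure matches the paper's. It has the same induction anchored at \zcref{prop:base_comparison}, the same essential surjectivity from the triangle under $\scr{C}$, and the same identification of the hom-functor $F_n(z,w)$ with $F_{n-1}$ for $\scr{C}_{/z\times w}$ via the uniqueness in \zcref{thm:stefanich_UP}. The gap is in the step you flag as the main obstacle: the mechanism you propose there does not work. The functors $j^n_{\scr{C}}$ and $\iota^n_{\scr{C}}$ have source the $1$-category $\scr{C}$, whose hom-objects are the mapping spaces $\Map_{\scr{C}}(z,w)$, not $\scr{C}_{/z\times w}$. On homs, $j^n_{\scr{C}}$ only reaches the groupoid of graphs $(\id,f)\colon z\to z\times w$ inside $\scr{C}_{/z\times w}$. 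So the requirement that $j^n_{\scr{C}}$ (or $\iota^n_{\scr{C}}$) ``restrict on hom-categories to'' $j^{n-1}_{\scr{C}_{/z\times w}}$ (resp. $\iota^{n-1}_{\scr{C}_{/z\times w}}$) does not make sense as stated. A hom-wise check along $\psi_n$ and $\fv$ would at best constrain $F_n(z,w)$ on that subgroupoid. This is not enough to place $F_n(z,w)$ under all of $\scr{C}_{/z\times w}$, which is what the uniqueness clause for $F_{n-1}$ needs. The non-invertible morphisms of $\scr{C}_{/z\times w}$ (maps of spans) are invisible to any functor out of $\scr{C}$, so the fact that $F_n$ is under $\scr{C}$ cannot by itself tell you what $F_n(z,w)$ does on them.

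The missing idea is the paper's: promote the triangle under $\scr{C}$ to a triangle under $2\Corr(\scr{C})$. Consider the composite $F_n\circ\bigl(2\Corr(\scr{C})\to n\Corr(\scr{C})\bigr)$, and the composite of the equivalence $2\Corr(\scr{C})\simeq\Span_{1\frac{1}{2}}(\scr{C})$ from \zcref{prop:base_comparison} with the inclusion into $\Span_{(n-1)\frac{1}{2}}(\scr{C})$. Both restrict to $j^n_{\scr{C}}$ on $\scr{C}$. By the uniqueness part of \zcref{thm:stefanich_UP} for $2\Corr(\scr{C})$ (mapping into $\Span_{(n-1)\frac{1}{2}}(\scr{C})^{\leq 2}$), they therefore agree. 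Passing to homs now gives a triangle under $2\Corr(\scr{C})(z,w)\simeq\scr{C}_{/z\times w}$. The left leg is $\iota^{n-1}_{\scr{C}_{/z\times w}}$. The right leg is $j^{n-1}_{\scr{C}_{/z\times w}}$ by exactly the argument of \zcref{prop:sat_BC}. That argument is about the hom-functor of a functor out of $2\Corr(\scr{C})$, not out of $\scr{C}$, which is why you need the promotion before you can cite it. With this step in place, the rest of your argument goes through as written.
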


\begin{proof}
  For fixed $n$, the commutative diagram
  \[
    \begin{tikzcd}
      n\Corr(\scr{C}) \arrow[rr,"F_n"] & & \Span_{(n-1)\frac{1}{2}}(\scr{C}) \\
      & \scr{C}\arrow[ul,"\iota_{\scr{C}}^n"] \arrow[ur,"j_{\scr{C}}^n"'] &
    \end{tikzcd}
  \]
  tells us that the functor $F_n$ is essentially surjective. Moreover, this commutative diagram can be promoted to a commutative diagram
  \[
    \begin{tikzcd}
      n\Corr(\scr{C}) \arrow[rr,"F_n"] & & \Span_{(n-1)\frac{1}{2}}(\scr{C}) \\
      & 2\Corr(\mathcal{C}) \arrow[ul] \arrow[ur] &
    \end{tikzcd}
  \]

  Passing to hom-objects gives us a commutative diagram
  \[
    \begin{tikzcd}
      (n-1)\Corr(\scr{C}_{/x\times y}) \arrow[rr,"{\Map_{F_n}(x,y)}"] & & \Span_{(n-2)\frac{1}{2}}(\scr{C}_{/x\times y}) \\
      & \scr{C}_{/x\times y} \arrow[ul,"\iota_{\scr{C}}^{n-1}"] \arrow[ur,"j_{\scr{C}}^{n-1}"'] &
    \end{tikzcd}
  \]
  By the essential uniqueness of $F_{n-1}$, it follows that ${\Map_{F_n}(x,y)}\simeq F_{n-1}$, and that $F_n$ is fully faithful (and thus an equivalence) if and only if $F_{n-1}$ is an equivalence.

  Inductively, we can thus reduce to showing that $F_2$ is an equivalence, which follows from \zcref{prop:base_comparison} together with the essential uniqueness of $F_2$.
\end{proof}

\begin{cor} \label{cor:functor_is_equiv_sym_mon}
  For every $n\geq 2$, the functor $F_n$ is an equivalence of symmetric monoidal $n$-categories, compatible with the functor from $\mathcal{C}$.
\end{cor}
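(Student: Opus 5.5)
We already know from \zcref{prop:functor_is_equiv} that $F_n$ is an equivalence of underlying $n$-categories compatible with the functors from $\scr{C}$. So the only remaining content is to promote $F_n$ to a symmetric monoidal equivalence. Since the underlying functor is an equivalence, a symmetric monoidal lift is essentially a symmetric monoidal refinement. The plan is to obtain that refinement from the functoriality and naturality of both sides in $\scr{C}\in\Cat_1^{\lex}$, and then use that both symmetric monoidal structures are induced by the Cartesian structure on $\scr{C}$.

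First, we upgrade $F_n$ to a natural transformation between functors $\Cat_1^{\lex}\to\Cat_n$. On our side, \zcref{cor:upgrade_Span_infty_functor} makes $\Span_\infty$ a limit preserving functor. Passing to the subobjects $\Span_{(n-1)\frac{1}{2}}$ should preserve this, since these are defined by conditions on simplices preserved by finite-limit preserving functors and compatible with products. On Stefanich's side, $n\Corr$ is functorial in $\scr{C}$ and preserves finite products (\cite{stefanich_higher_2020}). The essential uniqueness in \zcref{thm:stefanich_UP} then gives naturality of $F_n$ in $\scr{C}$. Indeed, for a finite-limit preserving $g:\scr{C}\to\scr{C}'$, both composites $n\Corr(\scr{C})\to\Span_{(n-1)\frac{1}{2}}(\scr{C}')$ restrict along $\iota^n_{\scr{C}}$ to $j^n_{\scr{C}'}\circ g$, so they agree. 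To get genuine coherent naturality rather than objectwise agreement, I would phrase this as an equivalence of mapping spaces in $\Fun(\Cat_1^{\lex},\Cat_n)$. The restriction along $\iota$ induces an equivalence on spaces of natural transformations, since it does so pointwise and mapping spaces in functor categories are ends.

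Second, both functors preserve finite products, so the natural equivalence $F_n$ is a natural equivalence of product preserving functors $\Cat_1^{\lex}\to\Cat_n$. Such functors induce functors on commutative monoid objects, and a natural equivalence between them induces equivalences of the resulting symmetric monoidal $n$-categories. Applying this to $\scr{C}$ with its Cartesian symmetric monoidal structure, which is a commutative monoid in $\Cat_1^{\lex}$, recovers precisely the structure of \zcref{cor:monoidal_structure_Span_infty} on our side and Stefanich's structure on $n\Corr(\scr{C})$. The compatibility with $\scr{C}$ follows because $\iota$ and $j$ are themselves natural and product preserving.

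The main obstacle is the coherence in the first step. We must know that Stefanich's symmetric monoidal structure is indeed the one induced by product preservation of $n\Corr$ as a functor on $\Cat_1^{\lex}$, and that the universal property of \zcref{thm:stefanich_UP} holds naturally in $\scr{C}$, not just for each $\scr{C}$ separately. I would first check in \cite{stefanich_higher_2020} how the monoidal structure is constructed. If it is built via product preservation, the argument above goes through directly. Otherwise, I would compare the two structures using the symmetric monoidal version of the universal property.
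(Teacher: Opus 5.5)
Your argument is correct and is essentially the paper's. The paper deduces the corollary in one line from the fact that both $n\Corr$ and $\Span_{(n-1)\frac{1}{2}}$ are limit-preserving functors $\Cat_1^{\lex}\to\Cat_\infty$, and it offers as an alternative exactly your fallback, the symmetric monoidal universal property \cite[Corollary 1.2.2]{stefanich_higher_2020}. Your explicit upgrade of $F_n$ to a natural transformation in $\scr{C}$, via the essential uniqueness in \zcref{thm:stefanich_UP}, is a step the paper leaves implicit but which the product-preservation argument genuinely needs in order to transport the monoidal structures.
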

\begin{proof}
  This follows directly from the fact that both $n\Corr$ and $\Span_{(n-1)\frac{1}{2}}$ are limit preserving functors $\Cat^\lex \to \Cat_\infty$.

  Alternatively, one can also run the argument above, but instead, using~\cite[Corollary 1.2.2]{stefanich_higher_2020}, which is a symmetric monoidal variant of \zcref{thm:stefanich_UP}.
\end{proof}

\begin{rmk} \label{rmk:span_infty_colim}
  Combining \zcref{prop:functor_is_equiv,lem:span_infty_colimit_span_lower}, we see that
  \[
    \Span_\infty(\mathcal{C}) \simeq \colim_{n\in \mathbb{N}} n\Corr(\mathcal{C}).
  \]

  The comparison results above automatically imply the agreement between our model for $\Span_n(\mathcal{C})$ and Haugseng's~\cite{haugseng_iterated_2018} since both are the underlying $(\infty,n)$-category of $\Span_{n\frac{1}{2}}(\mathcal{C})$, denoted as $\Span_n^+(\mathcal{C})$ over there. In particular, \zcref{lem:span_infty_colimit_span_lower} also implies that
  \[
    \Span_\infty(\mathcal{C}) \simeq \colim_{n\in \mathbb{N}} \Span_n(\mathcal{C}),
  \]
  where $\Span_n(\mathcal{C})$ refers to Haugseng's model here.

  Thus, the main results of this paper, \zcref{intro:thm:SpanUP_I_internal,intro:thm:functor_UP_inftycat}, characterize \emph{mapping into} these colimits.
\end{rmk}

\appendix
\renewcommand{\thesubsubsection}{\Alph{section}.\arabic{subsubsection}}

\section{Orientals, Gray tensors, and truncations}\label{app:loc}

This appendix is independent of the rest of the paper, and is used solely to establish the localization result \zcref{thm:the_localization}. The goal is to relate strict $2$-categorical Gray tensor products of strictly $2$-truncated orientals to $(\infty,2)$-categorical Gray tensor products of $(\infty,2)$-truncated orientals. In this appendix, we will work mostly model-independently, specializing to a chosen model only at the very last step.

\begin{defn}
  The \emph{$n$\textsuperscript{th} oriental} $\mathbb{O}^n$ is the strict $\omega$-category freely generated by the simplices of $\Delta^n$. See \cite{street_algebra_1987} for an explicit characterization.
\end{defn}

\begin{defn}
  We call a strict $\omega$-category $\scr{C}$ \emph{gaunt} if any invertible $k$-morphism is an identity.
\end{defn}

\begin{rmk}
  The strict $\omega$-category $\mathbb{O}^n$ is gaunt. It is, in fact a so-called \emph{strong Steiner complex}.
\end{rmk}

\begin{ntt}
  For $n\in \NN \cup \{\infty\}$, we write $\Cat_n^g$ for the category of strict, gaunt, $n$-categories. We denote by
  \[
    \begin{tikzcd}
      \tau_n: &[-3em]\Cat_\infty\arrow[r] & \Cat_n
    \end{tikzcd}
  \]
  and
  \[
    \begin{tikzcd}
      \tau_n^g: &[-3em]\Cat_\infty \arrow[r] & \Cat_n^g
    \end{tikzcd}
  \]
  for the left adjoints to the inclusions.
\end{ntt}

\begin{rmk}
  Note that if $\tau_n(\scr{C})$ is gaunt, then there is a canonical equivalence $\tau_n(\scr{C})\simeq \tau_n^g(\scr{C})$ provided by the universal property.
\end{rmk}

\begin{ntt}
  We denote by $\otimes_2$ the Gray tensor product of $(\infty,2)$-categories and by $\otimes$ the Gray tensor product of $(\infty,\infty)$-categories. We will use the comparisons described in the introduction of \cite{loubaton_squares_2025} to make use of multiple models for $\otimes_2$. We will write $\otimes_2^s$ for the Gray tensor product of strict $2$-categories.
\end{ntt}

\begin{lem}\label{lem:nat_equiv_weak_2_trunc}
  There are natural equivalences of $(\infty,2)$-categories
  \[
    \tau_2(\mathbb{O}^n)\otimes_2\tau_2(\mathbb{O}^m)\simeq \tau_2(\mathbb{O}^n\otimes \mathbb{O}^m).
  \]
\end{lem}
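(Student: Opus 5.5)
We want $\tau_2(\mathbb{O}^n)\otimes_2\tau_2(\mathbb{O}^m)\simeq \tau_2(\mathbb{O}^n\otimes \mathbb{O}^m)$, naturally in $([n],[m])\in\Delta\times\Delta$. The plan is to show that $\tau_2$ is compatible with Gray tensor products in general: for any $(\infty,\infty)$-categories $\scr{A},\scr{B}$ we expect a natural equivalence $\tau_2(\scr{A}\otimes\scr{B})\simeq \tau_2\scr{A}\otimes_2\tau_2\scr{B}$. Specializing to $\scr{A}=\mathbb{O}^n$, $\scr{B}=\mathbb{O}^m$ then gives the lemma.

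The argument is a Yoneda computation. For an $(\infty,2)$-category $\scr{E}$, I would chain the equivalences
\[
\Map(\tau_2(\scr{A}\otimes\scr{B}),\scr{E})\simeq\Map(\scr{A}\otimes\scr{B},\scr{E})\simeq\Map(\scr{A},\Fun^{\oplax}(\scr{B},\scr{E}))\simeq\Map(\scr{A},\Fun^{\oplax}(\scr{B},\scr{E})^{\leq 2}).
\]
The last step needs $\Fun^{\oplax}(\scr{B},\scr{E})$ to already be an $(\infty,2)$-category when $\scr{E}$ is. To prove this, I would test it against $\tDelta$. An $\Delta^k_+$-simplex with $k>2$ of $\Fun^{\oplax}(\scr{B},\scr{E})$ corresponds to a map $\Delta^k_+\otimes\scr{B}\to\scr{E}$. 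Every simplex of $\Delta^k\otimes \Delta^j$ of dimension $>2$ is sent into $\scr{E}$, where it is thin, so $\Delta^k_\flat\to\Delta^k_+$ lifts. Concretely, $\Delta^k_\flat\otimes\scr{B}\to\Delta^k_+\otimes\scr{B}$ is an equivalence after $\tau_2$ for $k>2$. Granting this, the chain continues:
\[
\Map(\tau_2\scr{A},\Fun^{\oplax}(\scr{B},\scr{E}))\simeq\Map(\tau_2\scr{A}\otimes\scr{B},\scr{E}).
\]
Running the same argument in the second variable gives $\Map(\tau_2\scr{A}\otimes\tau_2\scr{B},\scr{E})$ in $\Cat_\infty$. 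So $\tau_2(\scr{A}\otimes\scr{B})\simeq\tau_2(\tau_2\scr{A}\otimes\tau_2\scr{B})$.

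It remains to identify $\tau_2(\tau_2\scr{A}\otimes\tau_2\scr{B})$ with $\tau_2\scr{A}\otimes_2\tau_2\scr{B}$. For this I would use the comparisons of \cite{loubaton_squares_2025}: the $(\infty,2)$-Gray product is obtained by applying $\tau_2$ to the $(\infty,\infty)$-Gray product of the underlying $2$-categories, or equivalently $\otimes_2$ has $\Fun^{\oplax}(-,-)$ restricted to $\Cat_2$ as internal hom. Naturality is automatic, since every step is a natural equivalence of corepresented functors.

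The main obstacle is the claim that $\Fun^{\oplax}(\scr{B},\scr{E})$ is $2$-truncated when $\scr{E}$ is; equivalently, that $\otimes$ preserves $\tau_2$-equivalences in each variable. Model-categorically I would check the following. For $k>2$, the map $\Delta^k_\flat\otimes\Delta^j_s\to\Delta^k_+\otimes\Delta^j_s$ only adds thin simplices of dimension $\ge k>2$. It is therefore a $2$-trivial anodyne extension: a pushout of the $2$-triviality extensions $\Delta^d_\flat\to\Delta^d_+$ with $d>2$, since marking additional simplices is such a pushout. I would then conclude using the fact that the $2$-complicial model structure is a left Bousfield localization compatible with $\otimes$ (Verity's monoidality of the Gray tensor product). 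If this compatibility is not available in the needed form, the fallback is to check the lifting property against $2$-complicial sets directly on generators of $\tDelta$, using \zcref{prop:tDelta_dense}.
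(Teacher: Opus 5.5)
Your proposal is correct, but it argues differently from the paper. The paper's proof is a citation. Campion's Theorem~3.14 \cite{campion_gray_2023} says the localization $\Cat_\omega^f\to\Cat_2$ is strong monoidal for the Gray products. In his construction, the inclusion of strong Steiner complexes is also strong monoidal. Since $\mathbb{O}^n$ is a strong Steiner complex, the lemma is immediate.

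You instead prove the compatibility of $\tau_2$ with $\otimes$ by the reflection criterion: the local objects are closed under both Gray internal homs. Your verification of this is sound. Suppose $(x,y)$ is thin in $\Delta^k_+\otimes\Delta^j_s$ but not in $\Delta^k_\flat\otimes\Delta^j_s$. Then some $\sqcup^1_{p,q}x$ must equal the top simplex of $\Delta^k$, which forces $p=k$ and hence dimension at least $k>2$. So the map is a pushout of $2$-triviality extensions. The mirror statement handles $\Fun^{\lax}$, the other internal hom that your ``second variable'' step implicitly uses.

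What your route buys:
\begin{itemize}
\item It gives $\tau_2(\scr{A}\otimes\scr{B})\simeq\tau_2(\tau_2\scr{A}\otimes\tau_2\scr{B})$ for all $\scr{A},\scr{B}$, not just orientals.
\item It avoids the Steiner-complex machinery and stays inside the complicial combinatorics the paper already uses.
\end{itemize}

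Two remarks.

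First, you do not need the monoidality of the $2$-complicial model structure. The pushout observation already yields a $2$-complicial trivial cofibration. Passing from $\Delta^j_s$ to non-representable inputs (e.g.\ $\tau_2\mathbb{O}^n$) only uses two facts, both of which the paper assumes anyway: $\otimes$ on $\Cat_\infty$ preserves colimits in each variable, and it is presented on $\tDelta$ by Verity's Gray product.

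Second, your route does not remove the need for an external comparison. You still need $A\otimes_2 B\simeq\tau_2(A\otimes B)$ for $A,B\in\Cat_2$. This is exactly the $(\infty,2)$-level content of Campion's theorem, or of the comparisons in \cite{loubaton_squares_2025}. So your argument proves the reduction from arbitrary inputs to $2$-truncated ones, but it is not independent of that comparison.
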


\begin{proof}
  By Theorem 3.14 of \cite{campion_gray_2023}, the localization functor
  \[
    \begin{tikzcd}
      \Cat_\omega^f\arrow[r] & \Cat_2
    \end{tikzcd}
  \]
  is strong monoidal, and by the construction of the gray tensor in  \emph{op. cit.}, the inclusion of strong Steiner complexes into $\Cat_\omega^f$ is strong monoidal. The composite of these two functors agrees with the composite of $\tau_2$ with the inclusion of strong Steiner complexes into $\Cat_\infty$, and so the latter composite is also strong monoidal. Since $\mathbb{O}^n$ is a strong Steiner complex, the lemma follows.
\end{proof}

\begin{lem}\label{lem:graygaunt}
  The $(\infty,2)$-categories $\tau_2((\mathbb{O}^1)^{\otimes n})$, $\tau_2(\mathbb{O}^n)$, and $\tau_2(\mathbb{O}^n\otimes \mathbb{O}^m)$ are all gaunt.
\end{lem}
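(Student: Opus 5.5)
We must show that $\tau_2((\mathbb{O}^1)^{\otimes n})$, $\tau_2(\mathbb{O}^n)$ and $\tau_2(\mathbb{O}^n\otimes\mathbb{O}^m)$ are gaunt. The plan is to identify each with a known strict gaunt $2$-category, using one structural fact: the composite of the inclusion of strong Steiner complexes into $\Cat_\omega^f$ with the localization $\Cat_\omega^f\to\Cat_2$ is computed by the strict gaunt $2$-truncation $\tau_2^g$, which keeps $0$- and $1$-cells and identifies $2$-cells related by higher cells. By Theorem 3.14 of \cite{campion_gray_2023}, as in \zcref{lem:nat_equiv_weak_2_trunc}, this localization is monoidal and compatible with the Steiner-complex Gray tensor. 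Since $(\mathbb{O}^1)^{\otimes n}$, $\mathbb{O}^n$ and $\mathbb{O}^n\otimes\mathbb{O}^m$ are all strong Steiner complexes (products of Steiner complexes are Steiner), it will suffice to prove the following claim.

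\emph{Claim.} For a strong Steiner complex $K$, the $(\infty,2)$-category $\tau_2(K)$ agrees with the strict $2$-category $\tau_2^{g}(K)$, and the latter is gaunt.

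Gauntness of $\tau_2^g(K)$ I expect to be routine. The $1$-cells of a Steiner complex form the free category on a loop-free graph, since the basis is loop-free. Hence the only invertible $1$-morphisms are identities, and any $2$-cell that is invertible modulo $3$-cells is an identity, because the Steiner basis admits a grading and a partial order in which no nonidentity $2$-cell has an inverse.

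The main obstacle is the first part of the claim: showing that the $(\infty,2)$-localization of the $\omega$-category $K$ introduces no non-strict homotopy and no spurious invertible cells. My first attempt would go through complicial models. By \cite[Theorem 2.1]{maehara_orientals_2023}, the Street nerve of a strong Steiner complex presents the same $\infty$-category as $K$. Applying $2^\sharp(-)$, which presents $\tau_2$ by \cite[2.2.1.15]{loubaton_complicial_2024}, I would then show that the resulting complicial set is equivalent to the Duskin-type nerve of the strict $2$-category $\tau_2^g(K)$. This should follow because homotopy categories of hom-$(\infty,1)$-categories of Steiner complexes are posets, by the loop-freeness of the basis.

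If this comparison proves hard, the alternative is to use Campion's description directly. He identifies the image of $\Cat_\omega^f$ in $\Cat_n$ with gaunt objects obtained by strict truncation for Steiner complexes, and I expect his proof of Theorem 3.14 to contain this statement implicitly. With the claim in hand, all three cases follow at once.
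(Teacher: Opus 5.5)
There is a genuine gap. Your Claim, that $\tau_2(K)$ agrees with the strict gaunt truncation for every strong Steiner complex $K$, is not a reduction. It implies that $\tau_2(K)$ is gaunt for all such $K$, so it is at least as strong as the lemma. The ``structural fact'' in your first paragraph is exactly this statement: as used here, Campion's Theorem~3.14 only gives monoidality of the localization, not that it is computed by strict truncation. You then leave the Claim unproved, and neither suggested route closes it.

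The complicial route fails at its stated justification. Gauntness of $\tau_2(K)$ needs the hom-$(\infty,1)$-categories of $\tau_2(K)$ to have discrete mapping spaces. These spaces arise by inverting everything in the hom-$\omega$-categories $K(x,y)(f,g)$, so this is a statement about homotopy types. Knowing that homotopy categories are posets only controls $\pi_0$, so it says nothing about this. There are two further problems with that route:
\begin{itemize}
\item $2^\sharp$ of the Street nerve is not fibrant, so identifying it with the Duskin nerve of the strict truncation is itself a nontrivial weak-equivalence claim.
\item \cite[Theorem 2.1]{maehara_orientals_2023} concerns orientals, not arbitrary strong Steiner complexes.
\end{itemize}
Your second route is an expectation about what Campion's proof might contain, not an argument. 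Your ``routine'' gauntness of the strict truncation is also only sketched, though that is secondary.

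The missing idea is the reduction the paper uses, which needs no general truncation theorem for Steiner complexes. By \cite[Corollary A.3.6]{masuda_algebra_2024}, $\mathbb{O}^n$ is a retract of the Gray cube $(\mathbb{O}^1)^{\otimes n}$. Hence $\mathbb{O}^n\otimes\mathbb{O}^m$ is a retract of $(\mathbb{O}^1)^{\otimes(n+m)}$. Since $\tau_2$ preserves retracts, and gaunt $(\infty,2)$-categories are closed under retracts (retracts of discrete spaces are discrete), only the Gray cubes need treatment. For those, monoidality of $\tau_2$ gives $\tau_2((\mathbb{O}^1)^{\otimes n})\simeq\tau_2(\mathbb{O}^1)^{\otimes_2 n}$. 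By Maehara's construction of $\otimes_2$ via $\Theta_2$ \cite{maehara_gray_2021}, this is computed by the strict Gray tensor of copies of $[1]$, an explicit gaunt strict $2$-category. So the only identification of an $(\infty,2)$-truncation with a strict $2$-category is for the Gray cube, where the literature supplies it.
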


\begin{proof}
  By \cite[Corollary A.3.6 (1)]{masuda_algebra_2024}, $\mathbb{O}^n$ is a retract of $(\mathbb{O}^1)^{\otimes n}$, and it follows that $\mathbb{O}^n\otimes \mathbb{O}^m$ is a retract of  $(\mathbb{O}^1)^{\otimes (n+m)}$. By the construction of $\otimes_2$ in \cite{maehara_gray_2021}, the Gray tensor $\otimes_2$ agrees with the strict gray tensor on $\Theta_2$, and in particular on copies of $\tau_2(\mathbb{O}^1)$, so it follows that $\tau_2((\mathbb{O}^1)^{\otimes n})$ are gaunt. Since gaunt $n$-categories are stable under retract, the lemma follows.
\end{proof}

\begin{prop}\label{prop:2trunc_gray_nat_equiv}
  There are natural equivalences of $(\infty,2)$-categories
  \[
    \tau_2(\mathbb{O}^n)\otimes_2\tau_2(\mathbb{O}^m)\simeq \tau_2^g(\mathbb{O}^n)\otimes_2^s\tau_2^g(\mathbb{O}^m).
  \]
\end{prop}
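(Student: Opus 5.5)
The plan is to compare both sides with $\tau_2(\mathbb{O}^n\otimes\mathbb{O}^m)$, using gauntness to pass from the weak $(\infty,2)$-world to the strict gaunt world. By \zcref{lem:nat_equiv_weak_2_trunc}, the left-hand side is already naturally equivalent to $\tau_2(\mathbb{O}^n\otimes \mathbb{O}^m)$. By \zcref{lem:graygaunt}, this $(\infty,2)$-category is gaunt, and so is each $\tau_2(\mathbb{O}^n)$. The preceding remark then gives natural equivalences $\tau_2(\mathbb{O}^n)\simeq\tau_2^g(\mathbb{O}^n)$ and $\tau_2(\mathbb{O}^n\otimes\mathbb{O}^m)\simeq \tau_2^g(\mathbb{O}^n\otimes\mathbb{O}^m)$. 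It therefore remains to produce a natural equivalence
\[
  \tau_2^g(\mathbb{O}^n\otimes\mathbb{O}^m)\simeq \tau_2^g(\mathbb{O}^n)\otimes_2^s\tau_2^g(\mathbb{O}^m),
\]
that is, to show that the strict gaunt $2$-truncation is monoidal on orientals.

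For that step I would argue on the strict side. The orientals $\mathbb{O}^n$ are strong Steiner complexes, and the $\omega$-categorical Gray tensor of Steiner complexes is computed by the tensor product of the underlying augmented directed complexes. The strict $2$-truncation of a Steiner $\omega$-category is given by quotienting its $3$-cells and above, i.e.\ imposing the relations coming from the $3$-dimensional generators. One checks that the strict Gray tensor $\otimes_2^s$ of the $2$-truncations has the same generators in dimensions $\le 2$: products of cells of total dimension $\le 2$. It also has the same relations: the interchange and naturality $2$-cells of $\otimes_2^s$ correspond exactly to the $3$-cells of $\mathbb{O}^n\otimes\mathbb{O}^m$ arising as products of $(1,2)$-, $(2,1)$- and $(3,0)$/$(0,3)$-cells. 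Concretely, the comparison map $\tau_2^g(\mathbb{O}^n\otimes\mathbb{O}^m)\to\tau_2^g(\mathbb{O}^n)\otimes_2^s\tau_2^g(\mathbb{O}^m)$ comes from the universal property of $\tau_2^g$ applied to the strict lax-natural (Gray) structure map $\mathbb{O}^n\otimes\mathbb{O}^m\to \tau_2^g\mathbb{O}^n\otimes_2^s\tau_2^g\mathbb{O}^m$. Naturality in $(n,m)\in\Delta\times\Delta$ is automatic since all constructions are functorial in the simplicial maps.

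An alternative route for the last step, avoiding Steiner presentations, is to use \zcref{lem:graygaunt}'s retract argument. Both sides are natural retracts of the corresponding construction for $(\mathbb{O}^1)^{\otimes n}$ and $(\mathbb{O}^1)^{\otimes m}$. For cubes, the weak $\otimes_2$ agrees with $\otimes_2^s$ by the construction in \cite{maehara_gray_2021}, since all objects lie in $\Theta_2$-generated gaunt categories. One then transports along the retractions $\mathbb{O}^n\rightleftarrows(\mathbb{O}^1)^{\otimes n}$ of \cite{masuda_algebra_2024}, checking that the retraction maps are compatible with both tensor products.

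The main obstacle is exactly this identification of the strict Gray tensor of truncations with the truncation of the $\omega$-Gray tensor, together with its naturality in all simplicial maps (not only the retractions, which are not natural in $\Delta$). I would attempt the Steiner-complex generators-and-relations comparison first, as it is natural on the nose.
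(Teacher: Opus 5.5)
Your proposal is correct and follows the paper's proof. You identify the left side with $\tau_2(\mathbb{O}^n\otimes\mathbb{O}^m)$ via \zcref{lem:nat_equiv_weak_2_trunc}, use gauntness from \zcref{lem:graygaunt} to replace $\tau_2$ by $\tau_2^g$, and then use monoidality of $\tau_2^g$; the only difference is that the paper simply asserts this last monoidality, whereas you sketch a justification through generators and relations of Steiner complexes (rightly preferring this to the retract route, which is not natural in $\Delta$).
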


\begin{proof}
  By \zcref{lem:nat_equiv_weak_2_trunc}, we have a natural equivalences
  \[
    \tau_2(\mathbb{O}^n)\otimes_2\tau_2(\mathbb{O}^m)\simeq \tau_2(\mathbb{O}^n\otimes \mathbb{O}^m).
  \]
  Since the $(\infty,2)$-category on the right is gaunt and the localization $\tau_2^g$ is monoidal, we have further natural equivalences
  \[
    \tau_2(\mathbb{O}^n\otimes \mathbb{O}^m)\simeq \tau_2^g(\mathbb{O}^n\otimes \mathbb{O}^m)\simeq \tau_2^g(\mathbb{O}^n)\otimes_2^s\tau_2^g(\mathbb{O}^m)
  \]
  completing the proof.
\end{proof}

\begin{cor}\label{cor:gray_prod_nat_equiv_scaled}
  Considering $\mathfrak{C}^{\sc}[\Delta^n]$ as a strict $2$-category, there are equivalences
  \[
    \begin{tikzcd}
      \mathfrak{C}^{\sc}[\Delta^n_\flat\otimes_2\Delta^m_\flat]\arrow[r] & \mathfrak{C}^{\sc}[\Delta^n]\otimes_2^s \mathfrak{C}^{\sc}[\Delta^m_\flat]
    \end{tikzcd}
  \]
  natural in $\Delta\times \Delta$.
\end{cor}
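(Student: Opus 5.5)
The plan is to obtain \zcref{cor:gray_prod_nat_equiv_scaled} by passing \zcref{prop:2trunc_gray_nat_equiv} through a concrete model of $(\infty,2)$-categories, namely scaled simplicial sets and their rigidification to marked-simplicial-set-enriched categories. There are two identifications to make. First, the $(\infty,2)$-category presented by the scaled simplicial set underlying $\Delta^n_\flat$ (the $2$-truncation, with thin $2$-simplices the degenerate ones) is $\tau_2(\mathbb{O}^n)$. Moreover, $\mathfrak{C}^{\sc}[\Delta^n]$ is a strict $2$-category that is isomorphic to $\tau_2^g(\mathbb{O}^n)$: its hom-posets are the posets of subsets $\{a\}\subseteq S\subseteq\{a,\dots,b\}$ containing the endpoints, which are exactly the hom-categories of the $2$-truncated oriental. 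Second, the scaled Gray product $\Delta^n_\flat\otimes_2\Delta^m_\flat$ presents $\tau_2(\mathbb{O}^n)\otimes_2\tau_2(\mathbb{O}^m)$. For this second point I would invoke the model comparisons cited from \cite{loubaton_squares_2025}: Verity's complicial Gray product followed by $2$-truncation agrees with the scaled Gray product. Combining this with \zcref{lem:nat_equiv_weak_2_trunc} gives the claim.

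With these identifications in hand, \zcref{prop:2trunc_gray_nat_equiv} supplies an equivalence in $\Cat_2$ from $\mathfrak{C}^{\sc}[\Delta^n_\flat\otimes_2\Delta^m_\flat]$ to $\tau_2^g(\mathbb{O}^n)\otimes_2^s\tau_2^g(\mathbb{O}^m)\cong\mathfrak{C}^{\sc}[\Delta^n]\otimes_2^s\mathfrak{C}^{\sc}[\Delta^m]$. To realize it as an actual map of enriched categories, I would write it down directly, since both sides have object set $[n]\times[m]$. The source has free-ish homs built from chains in the product, and the target is a strict $2$-category. The map sends a generating chain $\mathbf{a}<\mathbf{b}$ in $[n]\times[m]$ to the horizontal-then-vertical composite of the two coordinate $1$-cells. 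Higher simplices of hom-spaces are sent to the canonical interchange $2$-cells. These choices are natural in face and degeneracy maps in each variable, which gives naturality in $\Delta\times\Delta$. By the universal property of $\mathfrak{C}^{\sc}$ as a left adjoint, it suffices to specify this assignment on simplices of $\Delta^n\times\Delta^m$ compatibly with the scaling.

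To see that the map is an equivalence, note that it is bijective on objects. It then remains to check that it agrees, up to the identifications above, with the abstract equivalence of \zcref{prop:2trunc_gray_nat_equiv}. I would do this by a rigidity argument: the target is gaunt by \zcref{lem:graygaunt}, so the relevant mapping spaces into it are discrete. Hence any two natural maps that agree on generators, that is, on $\Delta^1\times\Delta^1$, agree everywhere. This is the same pattern as \zcref{lem:equiv_gray_loc_det_by_nat} and \zcref{lem:unique_equiv_gray_loc}.

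I expect the main obstacle to be the compatibility of models, namely identifying the scaled Gray product with $\tau_2$ of the complicial or $\infty$-categorical Gray product naturally in $\Delta\times\Delta$. If the literature comparison only provides an equivalence natural up to coherent homotopy, gauntness of the target should rescue strict naturality, because natural transformations into a gaunt $2$-category form a set.
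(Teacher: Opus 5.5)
Your proposal is correct and follows essentially the paper's route. You transport \zcref{prop:2trunc_gray_nat_equiv} into scaled simplicial sets via the model comparisons. You use gauntness of the target to make the naturality strict, and the adjunction $\mathfrak{C}^{\sc}\dashv N^{\sc}$ to turn the result into a map of enriched categories. The paper gets the model comparison from Maehara's identification of $\Delta^n_\flat$ with $\tau_2(\mathbb{O}^n)$ plus the scaled Gray product as a model of $\otimes_2$; your detour through the complicial Gray product reaches the same identification.

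Your extra step, writing the map down explicitly and matching it to the abstract one by rigidity on $\Delta^1\times\Delta^1$, is what the paper does afterwards in \zcref{prop:L_nat_equiv}, using \zcref{lem:equiv_gray_loc_det_by_nat,lem:unique_equiv_gray_loc}. It is not needed for this corollary. Also, that rigidity really rests on the hom-categories of the target being posets, not merely on mapping spaces into a gaunt object being discrete.
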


\begin{proof}
  We work in the model of scaled simplicial sets. First note that $\mathfrak{C}^{\sc}[\Delta^n]$ can be identified with the gaunt $2$-truncation of the $n$\textsuperscript{th} oriental and that $\mathfrak{C}^{\sc}[\Delta^n]\otimes_2^s \mathfrak{C}^{\sc}[\Delta^m_\flat]$ can be viewed via the hom-wise nerve as a fibrant simplicially enriched category.

  Since the target is gaunt we can take the naturality of the equivalence of \zcref{prop:2trunc_gray_nat_equiv} to be strict naturality. By \cite[Theorem 2.1]{maehara_orientals_2023}, we see that the minimally scaled simplicial set $\Delta^n_\flat$ is equivalent to $\tau_2(\mathbb{O}^n)$, and by the functoriality of fibrant replacement, this equivalence is natural in $\Delta$. We thus obtain a natural weak equivalence of scaled simplicial sets
  \[
    \begin{tikzcd}
      \Delta^n_\flat\otimes_2\Delta^m_\flat \arrow[r]  & N^{\sc}(\mathfrak{C}^{\sc}[\Delta^n]\otimes_2^s \mathfrak{C}^{\sc}[\Delta^m_\flat]).
    \end{tikzcd}
  \]
  Since $\mathfrak{C}^{\sc}\dashv N^{\sc}$ is a Quillen equivalence, the adjoint maps give the desired natural equivalence.
\end{proof}

\section{Homotopy colimits over complicial categories of elements}

In this appendix, we prove two technical model-categorical results we rely on for the model-categorical versions of our main theorems. Both of these results follow directly from standard techniques in the theory of Reedy categories. For the sake of brevity, we will not recapitulate the theory of (elegant) Reedy categories here, and we direct the reader to \cite[Chapter 15]{hirschhorn_model_2009} for background. Though we use results from a variety of papers on Reedy category, we will follow the terminology and notation of \cite{hirschhorn_model_2009} for consistence.

\begin{defn}
  The category $\tDelta$ admits the structure of an elegant Reedy category by appendix C of \cite{ozornova_model_2020}. This Reedy structure on $\tDelta$ is defined as follows.
  \begin{enumerate}
    \item The direct subcategory is generated by the injective maps $\Delta^n_\flat\to \Delta^m_\flat$, together with the map $\Delta^n_\flat\to \Delta^n_+$.
    \item The inverse subcategory is generated by the surjective maps $\Delta^n_\flat\to \Delta^m_\flat$ and the face maps $\Delta^n_+\to \Delta^{n-1}_\flat$.
  \end{enumerate}
  The degree of the object $\Delta^n_\flat$ is $2n-1$, and the degree of the object $\Delta^n_+$ is $2n$.
\end{defn}

Since $\tDelta$ is an elegant Reedy category, for any $(K,tK)\in \Set_\Delta^\dec$, the category $\tDelta_{/(K,tK)}$ inherits the structure of a Reedy category by \cite[Proposition 1.1.2.2]{loubaton_complicial_2024}.

\begin{lem}
  Let $(K,tK)$ be a decorated simplicial set. Then the Reedy category $\tDelta_{/(K,tK)}$ has
  fibrant constants.
\end{lem}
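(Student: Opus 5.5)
To show that $\tDelta_{/(K,tK)}$ has fibrant constants, I would use the standard criterion from Hirschhorn's framework, in the form in which Hirschhorn states it, since the statement is purely about the shape of this Reedy category: a Reedy category $\scr{R}$ has fibrant constants if and only if, for every object $\alpha$ of $\scr{R}$, the matching category $\partial(\alpha\downarrow\scr{R}^-)$ is either empty or connected. The plan is to verify this connectivity condition directly for every object $\alpha=(\Delta^n_t\to (K,tK))$ of the slice. The Reedy structure on the slice is inherited from $\tDelta$, so its inverse subcategory consists of morphisms over $(K,tK)$ whose underlying map lies in $\tDelta^-$. Hence the matching category of $\alpha$ is the category of factorizations of $\alpha$ through non-identity inverse maps $\Delta^n_t\to\Delta^k_s$, together with compatible maps $\Delta^k_s\to (K,tK)$. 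Since composites with $(K,tK)$ are determined by the source, this is the full subcategory of the matching category of $\Delta^n_t$ in $\tDelta$ on those inverse maps through which $\alpha$ factors.

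For $t=\flat$, the inverse maps out of $\Delta^n_\flat$ are the surjections $\Delta^n_\flat\to\Delta^k_\flat$. The situation is then the classical one for $\Delta_{/K}$, treated via Eilenberg--Zilber. The key input is that a simplex $\sigma$ of $K$ has a unique factorization $\sigma=\sigma'\circ s$ with $s$ a surjection and $\sigma'$ nondegenerate. Every degeneracy through which $\sigma$ factors therefore factors through $s$, so $s$ is initial in the matching category whenever $s\neq\id$. The category is thus empty if $\sigma$ is nondegenerate and connected otherwise. Thinness causes no trouble here: a map $\Delta^k_\flat\to (K,tK)$ carries no decoration constraint.

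For $t=+$, the inverse maps out of $\Delta^n_+$ are the face maps $\Delta^n_+\to\Delta^{n-1}_\flat$ and their composites with surjections. As far as I can see from the stated generators there is no map $\Delta^n_+\to\Delta^k_+$ other than the identity, though I would confirm this against Appendix C of \cite{ozornova_model_2020}. One has to be careful about what a face map $\Delta^n_+\to\Delta^{n-1}_\flat$ means: to be order preserving and surjective-flavoured it should be a codegeneracy $s_i$ regarded on the thin simplex, and I would pin this down in that reference. Granting that reading, $\alpha$ factors through $\Delta^{n-1}_\flat$ via $s_i$ exactly when the underlying simplex $\sigma$ is degenerate along $s_i$. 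The matching category is then the category of degeneracies of $\sigma$, and Eilenberg--Zilber again supplies an initial object when it is nonempty.

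The main obstacle I expect is exactly this bookkeeping for thin simplices. One must check that every non-identity inverse map out of $\Delta^n_+$ factors through the canonical inverse map $\Delta^n_+\to\Delta^n_\flat\to\cdots$, or else through a chosen minimal degeneracy, so that the unique Eilenberg--Zilber factorization still yields a terminal/initial object. If that factorization fails, the fallback is to show that any two factorizations are joined by a zigzag through their common refinement, which exists because $\Delta$ has absolute pushouts of pairs of split epimorphisms. That would give connectedness directly.
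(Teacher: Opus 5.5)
Your proposal is correct and follows the paper's proof. Both reduce, via Hirschhorn's Proposition 15.10.2, to showing that the matching category of $\sigma:\Delta^n_t\to(K,tK)$ is empty or connected. Both then note that every non-identity inverse map has target some $\Delta^k_\flat$; you are right that the paper's ``face maps $\Delta^n_+\to\Delta^{n-1}_\flat$'' must mean codegeneracies. So for $t=\flat$ and $t=+$ alike the matching category is just the category of degeneracies of the underlying simplex, and Eilenberg--Zilber finishes the argument.

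One small correction: in Hirschhorn's convention the Eilenberg--Zilber degeneracy onto the nondegenerate simplex is \emph{terminal}, not initial, in the matching category, since every other degeneracy of $\sigma$ maps uniquely to it. Also, there is no decorated map $\Delta^n_+\to\Delta^n_\flat$, so your fallback arguments are unnecessary.
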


\begin{proof}
  Per \cite[Proposition 15.10.2]{hirschhorn_model_2009}, it suffices for us to see that the matching category for some $\sigma:\Delta^n_t\to (K,tK)$ is either empty or connected. If $\sigma$ represents a non-degenerate simplex of $K$, then the matching category is clearly empty.

  If, on the other hand, $\sigma$ is degenerate, there is a unique non-degenerate $k$-simplex $\gamma$ of $K$ such that $\sigma$ is degenerate on $\gamma$. Since the non-identity morphisms of the inverse subcategory can only have targets of the form $\Delta^k_\flat$, it follows that $\gamma:\Delta^k_\flat\to (K,tK)$ is a terminal object in the matching category, completing the proof.
\end{proof}

\begin{lem}\label{lem:Reedy_cof_diag1}
  The diagram
  \[
    \begin{tikzcd}[row sep=0em]
      (\Delta_m)_{/(K,tK)} \arrow[r] & \msSet\\
      (\sigma:\Delta^n_t\to (K,tK)) \arrow[r,mapsto] & \Delta^n_t
    \end{tikzcd}
  \]
  is Reedy cofibrant.
\end{lem}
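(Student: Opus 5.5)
We prove Reedy cofibrancy directly from the definition in \cite[Chapter 15]{hirschhorn_model_2009}. For each object $\sigma:\Delta^n_t\to (K,tK)$ of $\tDelta_{/(K,tK)}$ (here $(\Delta_m)_{/(K,tK)}$ is read as this slice), we show that the latching map
\[
  L_\sigma \coloneqq \colim_{\partial(\tDelta_{/(K,tK)} \downarrow \sigma)} \Delta^{n'}_{t'} \longrightarrow \Delta^n_t
\]
is a cofibration in the complicial model structure, i.e.\ a monomorphism of underlying simplicial sets. Since the slice inherits its Reedy structure from $\tDelta$, the latching category of $\sigma$ is the latching category of $\Delta^n_t$ in $\tDelta$: the non-identity direct maps $\Delta^{n'}_{t'}\to\Delta^n_t$, with the structure map to $(K,tK)$ forced by composition. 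The latching object therefore depends only on $\Delta^n_t$, and the argument reduces to a computation in $\tDelta$.

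There are two cases.
\begin{enumerate}
  \item If $t=\flat$, the direct maps into $\Delta^n_\flat$ are the proper injections $\Delta^{n'}_\flat\to\Delta^n_\flat$. No map $\Delta^{n'}_+\to\Delta^n_\flat$ is direct, since the marked top simplex of $\Delta^{n'}_+$ would have to land on a thin, hence degenerate, simplex. The latching object is then the usual colimit of faces, namely $\partial\Delta^n$ with minimal decoration. The map $\partial\Delta^n_\flat\to\Delta^n_\flat$ is a monomorphism.
  \item If $t=+$, the direct maps into $\Delta^n_+$ are the proper face inclusions $\Delta^{n'}_\flat\to\Delta^n_+$ with $n'<n$, together with the map $\Delta^n_\flat\to\Delta^n_+$. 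The maps $\Delta^{n'}_+\to\Delta^n_+$ for $n'<n$ are excluded: they are not among the generators, and a face of the marked top cell is not thin in $\Delta^n_+$. The latching category has terminal object $\Delta^n_\flat\to\Delta^n_+$, because every face inclusion factors through it uniquely. Hence the latching object is $\Delta^n_\flat$, and the map $\Delta^n_\flat\to\Delta^n_+$ is a monomorphism, i.e.\ a cofibration.
\end{enumerate}

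The main obstacle is the careful identification of the direct subcategory of $\tDelta$. We must check which maps are composites of the generating ones, and confirm that the latching categories are exactly as described, with no extra maps such as $\Delta^{n'}_+\to\Delta^n_+$. This should follow from the explicit Reedy structure in \cite[Appendix C]{ozornova_model_2020}. One must also check that the latching colimit is computed in $\Set_\Delta^{\dec}$ as expected, i.e.\ that the decoration of a colimit is the image of the decorations. Once these are settled, cofibrancy is just the statement that both latching maps are monomorphisms.
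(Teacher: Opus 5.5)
Your proposal is correct and follows the same route as the paper: compute the latching objects, which depend only on the underlying object of $\tDelta$ exactly as for the ordinary simplex category, and observe that the latching maps are monomorphisms. Your case split is in fact slightly more precise than the paper's one-line description of the latching object as the ``unmarked boundary'': for $\sigma$ with source $\Delta^n_+$, the latching category has terminal object $\Delta^n_\flat\to\Delta^n_+$, so the latching object is $\Delta^n_\flat$ rather than $\partial\Delta^n$, and the latching map is still a monomorphism.
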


\begin{proof}
  This is completely parallel to the case of usual simplex category. In every case, the latching object associated to $\sigma:\Delta^n_{t}\to \scr{C}$ is the unmarked boundary of the simplex $\sigma$, and so the latching map is clearly a cofibration.
\end{proof}

\begin{cor}\label{cor:K_t_hocolim}
  For any decorated simplicial set $K_t$,
  \[
    K_t\simeq \hocolim_{\tDelta_{/K_t}} \Delta^n_{t}.
  \]
\end{cor}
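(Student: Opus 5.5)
The plan is to apply the standard Reedy-categorical criterion for homotopy colimits over a category of elements. By \zcref{lem:Reedy_cof_diag1}, the tautological diagram $\tDelta_{/K_t}\to \Set_\Delta^{\dec}$, $(\sigma:\Delta^n_t\to K_t)\mapsto \Delta^n_t$, is Reedy cofibrant, where $\tDelta_{/K_t}$ carries the Reedy structure inherited from the elegant Reedy structure on $\tDelta$. By the preceding lemma, this Reedy category has fibrant constants. By \cite[Theorem 19.9.1]{hirschhorn_model_2009}, the colimit functor $\Fun(\tDelta_{/K_t},\Set_\Delta^{\dec})\to \Set_\Delta^{\dec}$ is then left Quillen with respect to the Reedy model structure. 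Consequently, the ordinary $1$-categorical colimit of a Reedy cofibrant diagram computes its homotopy colimit. Here $\Set_\Delta^{\dec}$ is given the complicial model structure, in which every object is cofibrant.

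It then remains to identify the strict colimit. I claim that $\colim_{\tDelta_{/K_t}} \Delta^n_t \cong K_t$ as decorated simplicial sets; this is the density (co-Yoneda) statement for $\tDelta\subset \Set_\Delta^{\dec}$. I would check it in two steps.

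\begin{itemize}
  \item On underlying simplicial sets, the colimit over the full subcategory of $\flat$-simplices is already $K$. The additional objects $\Delta^n_+\to K_t$ only contribute maps $\Delta^n_\flat\to\Delta^n_+$ that are the identity on underlying simplicial sets, so they do not change the underlying simplicial set.
  \item On decorations, a simplex of the colimit is thin exactly when it is the image of a thin simplex of some $\Delta^n_t$ in the diagram. The thin simplices of the $\Delta^n_t$ are the degenerate ones together with the top simplex of each $\Delta^n_+$. The images of the latter are precisely the thin simplices $\Delta^n_+\to K_t$, and these exhaust $tK$.
\end{itemize}

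Combining the two steps gives $K_t\simeq \hocolim_{\tDelta_{/K_t}}\Delta^n_t$.

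The main obstacle I expect is getting the right form of the Reedy criterion. The cited result is usually stated for the cosimplicial or simplicial case, or for Reedy categories with fibrant constants. One must check that "fibrant constants" on $\tDelta_{/K_t}$ is exactly what is needed to make $\colim$ left Quillen, and not the dual condition of cofibrant constants. If needed, I would instead verify directly that the constant-diagram functor is right Quillen: constant diagrams on fibrant objects are Reedy fibrant because the matching categories are empty or connected.
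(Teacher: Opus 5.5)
Your proposal is correct and follows the paper's argument: the paper likewise combines Reedy cofibrancy of the tautological diagram (\zcref{lem:Reedy_cof_diag1}) with fibrant constants on $\tDelta_{/K_t}$, and concludes via Hirschhorn's Theorem 15.10.8 that the strict colimit computes the homotopy colimit. A small citation fix: the left-Quillen statement you attribute to Theorem 19.9.1 is in fact Theorem 15.10.8, whereas 19.9.1 directly gives that $\hocolim\to\colim$ is a weak equivalence in a simplicial model category such as the complicial one, so either reference suffices. Your explicit check that the strict colimit is $K_t$, on underlying simplicial sets and on decorations, fills in what the paper asserts without comment.
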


\begin{proof}
  Since $K_t$ is the strict colimit of the diagram from \zcref{lem:Reedy_cof_diag1}, and this diagram is Reedy cofibrant, the result follows from \cite[Theorem 15.10.8]{hirschhorn_model_2009}.
\end{proof}

We conclude with a similar result for the functor $\sd$ of \zcref{defn:sd_SD_LKE}.

\begin{lem}
  The composite
  \[
    \begin{tikzcd}
      \tDelta_{/K_t} \arrow[r] & \Set_\Delta^{\dec} \arrow[r,"\sd"] & \Set_\Delta
    \end{tikzcd}
  \]
  is Reedy cofibrant with respect to the Joyal model structure.
\end{lem}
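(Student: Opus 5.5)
We have to show that the diagram $\sigma\mapsto \sd(\Delta^n_t)$ on $\tDelta_{/K_t}$ is Reedy cofibrant. That is, for each $\sigma:\Delta^n_t\to K_t$ the latching map
\[
L_\sigma \sd \coloneqq \colim_{\partial(\tDelta_{/K_t})_{/\sigma}} \sd(\Delta^k_s) \longrightarrow \sd(\Delta^n_t)
\]
should be a monomorphism, since the Joyal cofibrations are exactly the monomorphisms. The proof runs parallel to \zcref{lem:Reedy_cof_diag1}. We compute the latching object using the Reedy structure on $\tDelta$, where the direct maps are the injections $\Delta^k_\flat\to\Delta^n_\flat$ and the decoration maps $\Delta^n_\flat\to\Delta^n_+$. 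We then observe that $\sd$ ignores decorations on objects of $\tDelta$, so that $\sd(\Delta^n_\flat)=\sd(\Delta^n_+)=\sd(\Delta^n)$.

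There are two cases according to the decoration of the source.

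When $\sigma$ has source $\Delta^n_\flat$, the latching category consists of the proper faces $\Delta^k_\flat\hookrightarrow\Delta^n_\flat$ with $k<n$. No direct map $\Delta^k_+\to\Delta^n_\flat$ exists, since decorations must be preserved. The latching object is therefore $\colim_{\text{proper faces}}\sd(\Delta^k)$. Because $\sd$ preserves the relevant colimits of simplicial sets, being a left Kan extension, and the subsets $\sd(\Delta^{F})\subset\sd(\Delta^n)$ for faces $F$ intersect as the subdivisions of the intersected faces, this colimit is $\sd(\partial\Delta^n)$. Concretely, it is the union of the nerves of the subposets $\Power_*(F)^\op$ inside $\sd(\Delta^n)$. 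The latching map is then the inclusion $\sd(\partial\Delta^n)\hookrightarrow\sd(\Delta^n)$, which is a monomorphism.

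When $\sigma$ has source $\Delta^n_+$, the latching category contains the faces $\Delta^k_\flat\to\Delta^n_+$ together with $\Delta^n_\flat\to\Delta^n_+$. The latter is terminal among these objects, because every proper face factors uniquely through it. Hence the latching object is $\sd(\Delta^n_\flat)=\sd(\Delta^n)$, and the latching map is $\sd(\Delta^n_\flat\to\Delta^n_+)$, which is the identity and so also a monomorphism. Here one must handle degenerate $\sigma$ in the slice, but the latching category only involves direct maps into $\sigma$. Direct maps in $\tDelta_{/K_t}$ are direct maps of $\tDelta$ over $K_t$, so the computation is the same as in $\tDelta$.

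The main obstacle is the identification of the colimit in the first case with $\sd(\partial\Delta^n)$. The point to check is that the colimit of nerves of the subposets $\Power_*(F)^\op$ over the face poset is their union. This holds because these are nerves of downward-closed-in-$\Power$ subposets: a simplex of $\sd(\Delta^n)$, i.e.\ a chain of subsets, lies in $\sd(\Delta^F)$ exactly when its largest element is contained in $F$. The simplex therefore lies in a unique minimal face, namely its largest element, and the colimit glues along exactly these intersections. With this, the latching maps are monomorphisms in both cases, and the diagram is Reedy cofibrant for the Joyal model structure.
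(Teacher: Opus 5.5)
Your proof is correct and is essentially the paper's argument carried out by hand. The paper only notes that $\sd$ preserves colimits (hence latching objects) and monomorphisms, so Reedy cofibrancy is transported from the preceding lemma for $\sigma\mapsto\Delta^n_t$; you instead compute the two kinds of latching maps directly as $\sd(\partial\Delta^n)\hookrightarrow\sd(\Delta^n)$ and $\id_{\sd(\Delta^n)}$. In the $\Delta^n_+$ case the latching object is $\sd(\Delta^n_\flat)$, via the terminal object $\Delta^n_\flat\to\Delta^n_+$, and your explicit treatment of it is if anything more accurate than the preceding lemma's description of every latching object as an unmarked boundary.
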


\begin{proof}
  Because $\sd$ preserves colimits, it preserves latching objects. Since $\sd$ additionally preserves cofibrations, it preserves Reedy cofibrancy.
\end{proof}

\begin{cor}\label{cor:sd_hocolim}
  For any decorated simplicial set $K_t$, there is an equivalence in the Joyal model structure
  \[
    \sd(K_t)\simeq \hocolim_{\tDelta_{/K_t}} \sd(\Delta^n_{t}).
  \]
\end{cor}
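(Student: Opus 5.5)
The plan is to mirror the proof of \zcref{cor:K_t_hocolim}, replacing the Reedy cofibrant diagram of decorated simplices by its image under $\sd$ and working in the Joyal model structure on $\Set_\Delta$. By \zcref{defn:sd_SD_LKE}, $\sd:\Set_\Delta^{\dec}\to\Set_\Delta$ is the left Kan extension of $\sd:\tDelta\to\Set_\Delta$ along $\psi$. Since $\psi$ is fully faithful, this left Kan extension is a genuine extension: it agrees with $\sd$ on $\tDelta$. Moreover, it preserves all colimits, being a left adjoint, with right adjoint $X\mapsto \Hom(\sd(\psi(-)),X)$. The first step is therefore to record the strict identification
\[
  \sd(K_t)\cong \sd\Big(\colim_{\tDelta_{/K_t}}\Delta^n_t\Big)\cong \colim_{\tDelta_{/K_t}}\sd(\Delta^n_t).
\]
This uses that $K_t$ is the strict colimit over its category of elements $\tDelta_{/K_t}$, exactly as in \zcref{cor:K_t_hocolim}.

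Next, I invoke the lemma just established: the diagram $\tDelta_{/K_t}\to\Set_\Delta$, $\sigma\mapsto\sd(\Delta^n_t)$, is Reedy cofibrant for the Joyal model structure. The earlier lemma also shows that $\tDelta_{/K_t}$ has fibrant constants. So \cite[Theorem 15.10.8]{hirschhorn_model_2009} applies: for a Reedy cofibrant diagram over a Reedy category with fibrant constants, the strict colimit computes the homotopy colimit. Combined with the first step, this gives $\sd(K_t)\simeq\hocolim_{\tDelta_{/K_t}}\sd(\Delta^n_t)$ in the Joyal model structure.

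The only point needing care, and the main potential obstacle, is the Reedy cofibrancy input, specifically that $\sd$ preserves cofibrations (monomorphisms). I would verify this directly: a monomorphism of decorated simplicial sets is a relative cell complex built from the generators $\partial\Delta^n_\flat\to\Delta^n_\flat$ and $\Delta^n_\flat\to\Delta^n_+$. Applying $\sd$ gives, respectively, the inclusion $\sd(\partial\Delta^n)\to\sd(\Delta^n)$, which is a monomorphism, and an identity, since the decoration does not affect the underlying $\sd(\Delta^n)$. The colimit-preserving $\sd$ therefore sends the saturated class to monomorphisms. With that checked, the latching maps of $\sd$ of the diagram are images of the boundary inclusions, hence monomorphisms, and the argument goes through.
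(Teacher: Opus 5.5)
Your proposal is correct and takes the same route as the paper. There, too, $\sd(K_t)$ is the strict colimit of $\sigma\mapsto\sd(\Delta^n_t)$ over $\tDelta_{/K_t}$ because $\sd$ preserves colimits; the preceding lemmas give Reedy cofibrancy in the Joyal model structure and fibrant constants, and one concludes via \cite[Theorem 15.10.8]{hirschhorn_model_2009} exactly as in \zcref{cor:K_t_hocolim}. Your explicit check that $\sd$ sends the generators $\partial\Delta^n_\flat\to\Delta^n_\flat$ and $\Delta^n_\flat\to\Delta^n_+$ to monomorphisms just fills in the ``$\sd$ preserves cofibrations'' step that the paper asserts without detail.
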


\printbibliography

\end{document}